\newcounter{results}[section]
\theoremstyle{plain}
\newtheorem{theorem}[results]{Theorem}
\newtheorem{lemma}[results]{Lemma}
\newtheorem{proposition}[results]{Proposition}
\newtheorem{corollary}[results]{Corollary}
\theoremstyle{remark}
\newtheorem{remark}[results]{Remark}
\theoremstyle{definition}
\newtheorem{definition}[results]{Definition}
\numberwithin{equation}{section}
\NewDocumentCommand{\makeabbrev}{mmm}
 {
  \yoruk_makeabbrev:nnn { #1 } { #2 } { #3 }
 }
\makeabbrev{\textbf}{tbf#1}{a,b,c,d,e,f,g,h,i,j,k,l,m,n,o,p,q,r,s,t,u,v,w,x,y,z,A,B,C,D,E,F,G,H,I,J,K,L,M,N,O,P,Q,R,S,T,U,V,W,X,Y,Z}
\makeabbrev{\textbf}{bf#1}{a,b,c,d,e,f,g,h,i,j,k,l,m,n,o,p,q,r,s,t,u,v,w,x,y,z,A,B,C,D,E,F,G,H,I,J,K,L,M,N,O,P,Q,R,S,T,U,V,W,X,Y,Z}
\makeabbrev{\textsf}{tsf#1}{a,b,c,d,e,f,g,h,i,j,k,l,m,n,o,p,q,r,s,t,u,v,w,x,y,z,A,B,C,D,E,F,G,H,I,J,K,L,M,N,O,P,Q,R,S,T,U,V,W,X,Y,Z}
\makeabbrev{\mathsf}{mss#1}{a,b,c,d,e,f,g,h,i,j,k,l,m,n,o,p,q,r,s,t,u,v,w,x,y,z,A,B,C,D,E,F,G,H,I,J,K,L,M,N,O,P,Q,R,S,T,U,V,W,X,Y,Z}
\makeabbrev{\mathfrak}{mf#1}{a,b,c,d,e,f,g,h,i,j,k,l,m,n,o,p,q,r,s,t,u,v,w,x,y,z,A,B,C,D,E,F,G,H,I,J,K,L,M,N,O,P,Q,R,S,T,U,V,W,X,Y,Z}
\makeabbrev{\mathrm}{mrm#1}{a,b,c,d,e,f,g,h,i,j,k,l,m,n,o,p,q,r,s,t,u,v,w,x,y,z,A,B,C,D,E,F,G,H,I,J,K,L,M,N,O,P,Q,R,S,T,U,V,W,X,Y,Z}
\makeabbrev{\mathbf}{mbf#1}{a,b,c,d,e,f,g,h,i,j,k,l,m,n,o,p,q,r,s,t,u,v,w,x,y,z,A,B,C,D,E,F,G,H,I,J,K,L,M,N,O,P,Q,R,S,T,U,V,W,X,Y,Z}
\makeabbrev{\mathcal}{mc#1}{A,B,C,D,E,F,G,H,I,J,K,L,M,N,O,P,Q,R,S,T,U,V,W,X,Y,Z}
\makeabbrev{\mathbb}{mbb#1}{A,B,C,D,E,F,G,H,I,J,K,L,M,N,O,P,Q,R,S,T,U,V,W,X,Y,Z}
\makeabbrev{\mathscr}{ms#1}{A,B,C,D,E,F,G,H,I,J,K,L,M,N,O,P,Q,R,S,T,U,V,W,X,Y,Z}
\makeabbrev{\mathrm}{#1}{
id,ran,rk,diag,stab,ann,pr,ev,End,Hom,sgn,im,op,can,fin,ext,red,tot,Aut,Ad,ad,hor,ver,rad,
%
rot,usc,lsc,LocLip,bSymLip,osc,loc,uloc,spec,coz,z,ul,
%
Opt,Adm,Cpl,Geo,GeoOpt,GeoAdm,GeoCpl,reg,aut,
%
bd,co,Ric,Exp,dExp,seg,Seg,cut,fcut,Cut,SDiff,Iso,Isom,cl,Homeo,Diff,Der,vol,dvol,inj,relint,Graph,sub,eucl,Tube,
%
var,law,Gam,pa,so,iso,fs,inv,pqi,mix,inn,
TestF,
}
\makeabbrev{\mathsf}{#1}{CD,BE,MCP,Ent,wMTW,MTW,RCD,QCD,EVI,Irr,SC,wFe,VA,UP,Curv,Alex,CAT}
\makeabbrev{\textsc}{#1}{df,fv,tv}
\renewcommand{\paragraph}[1]{\medskip\emph{#1}.\quad}
\newcommand{\paragraphn}[1]{\medskip\emph{#1}}
\newcommand{\eqdef}{\coloneqq}
\newcommand{\defeq}{\eqqcolon}
\newcommand{\pfwd}{\sharp}
\newcommand{\emp}{\varnothing}
\newcommand{\emparg}{{\,\cdot\,}}
\newcommand{\trid}{\star}
\newcommand{\as}[1]{\quad #1\text{-a.e. }}
\newcommand{\forallae}[1]{\quad \text{for } #1\text{-a.e. } }
\newcommand{\abs}[1]{\left\lvert#1\right\rvert}						
\newcommand{\norm}[1]{\left\lVert#1\right\rVert}					
\newcommand{\set}[1]{\left\{#1\right\}}							
\newcommand{\tset}[1]{\big\{#1\big\}}							
\newcommand{\seq}[1]{\left(#1\right)}							
\newcommand{\tseq}[1]{\big(#1\big)}							
\newcommand{\paren}[1]{\left(#1\right)}							
\newcommand{\tparen}[1]{\big({#1}\big)}
\newcommand{\braket}[1]{\left[#1\right]}							
\newcommand{\tbraket}[1]{\big[#1\big]}							
\newcommand{\tscalar}[2]{\big\langle #1 \, \big |\, #2\big\rangle}			
\newcommand{\scalar}[2]{\left\langle #1 \,\middle |\, #2\right\rangle}		
\newcommand{\FC}[4]{\mcF^{#1}_{#2}\mcC^{#3}_{#4}}
\newcommand{\hFC}[4]{\mcF^{#1}_{#2}\widehat\mcC^{#3}_{#4}}
\newcommand{\toplus}{{\scriptscriptstyle\oplus}}
\DeclareMathOperator{\car}{\mathds 1}
\newcommand{\Li}[2][]{\mathrm{L}_{#1}(#2)}						
\newcommand{\comma}{\,\,\mathrm{,}\;\,}
\newcommand{\comm}{\,\,\mathrm{,}\;\,}
\newcommand{\semicolon}{\,\,\mathrm{;}\;\,}
\newcommand{\fstop}{\,\,\mathrm{.}}
\newcommand{\DF}[1]{\mcD_{#1}}
\newcommand{\LP}[1]{\mcL_{#1}}
\newcommand{\GP}[1]{\mcG_{#1}}
\newcommand{\dom}[1]{\msD(#1)}								
\newcommand{\grad}{\boldnabla}
\newcommand{\gradW}{\boldnabla^{\hor}}
\newcommand{\gradH}{\boldnabla^{\ver}}
\renewcommand{\div}{\mathrm{div}}
\newcommand{\acts}{\circlearrowleft} 
\newcommand{\multi}{\rmB_b(X)}
\newcommand{\Multi}{\mfM}
\newcommand{\Shift}{\mfS}
\newcommand{\normaliz}{N}
\newcommand{\isomor}{J}
\newcommand{\pb}{{}^*}
\newcommand{\ball}[3][]{\mrmB_{{#1}}(#2,#3)}
\newcommand{\boldnabla}{\boldsymbol\nabla}
\newcommand{\boldGamma}{\boldsymbol\Gamma}
\newcommand{\C}{\mathbb{C}}
\newcommand{\N}{\mathbb{N}}
\newcommand{\Q}{\mathbb{Q}}
\newcommand{\R}{\mathbb{R}}
\newcommand{\X}{\mathbb{X}}
\newcommand{\Y}{\mathbb{Y}}
\newcommand{\Z}{\mathbb{Z}}
\renewcommand{\AA}{\mathscr{A}}
\newcommand{\LL}{\mathscr{L}}
\newcommand{\TT}{\boldsymbol{T}}
\newcommand{\mm}{{\mbox{\boldmath$m$}}}
\newcommand{\vv}{{\mbox{\boldmath$v$}}}
\newcommand{\Beta}{\mrmB}
\newcommand{\aalpha}{{\mbox{\boldmath$\alpha$}}}
\newcommand{\ggamma}{{\mbox{\boldmath$\gamma$}}}
\newcommand{\eeta}{{\mbox{\boldmath$\eta$}}}
\newcommand{\iiota}{{\boldsymbol \iota}}
\newcommand{\jjmath}{{\boldsymbol \jmath}}
\newcommand{\sfd}{{\sf d}}
\newcommand{\sfe}{{\sf e}}
\newcommand{\sfr}{{\sf r}}
\newcommand{\sfx}{{\sf x}}
\newcommand{\sfD}{{\sf D}}
\newcommand{\sfQ}{{\mathsf Q}}
\newcommand{\rmd}{{\mathrm d}}
\newcommand{\rme}{{\mathrm e}}
\newcommand{\rmA}{{\mathrm A}}
\newcommand{\rmC}{{\mathrm C}}
\newcommand{\rmB}{{\mathrm B}}
\newcommand{\rmD}{{\mathrm D}}
\newcommand{\Kliminf}{K\kern-3pt-\kern-2pt\mathop{\rm lim\,inf}\limits}  
\newcommand{\supp}{\mathop{\rm supp}\nolimits}   
\newcommand{\LSC}{\operatorname{LSC}}          
\newcommand{\Lip}{\mathop{\rm Lip}\nolimits}          
\newcommand{\Lipb}{\mathop{\rm Lip}_b\nolimits}          
\newcommand{\lip}{\mathop{\rm lip}\nolimits}          
\renewcommand{\d}{{ \mathrm d}}
\newcommand{\restr}[1]{\big\lvert_{#1}}
\newcommand{\Leb}[1]{{\mathscr L}^{#1}}      
\newcommand{\la}{{\langle}}                  
\newcommand{\ra}{{\rangle}}
\newcommand{\eps}{\varepsilon}  
\newcommand{\nchi}{{\raise.3ex\hbox{$\chi$}}}
\newcommand{\weakto}{\rightharpoonup}
\newcommand{\Cb}{\rmC_b}
\newcommand{\Cc}{\rmC_c}
\newcommand{\de}{{\,\rm d}}
\renewcommand{\mm}{\mathfrak m}
\newcommand{\J}I
\newcommand{\CE}{\mathsf{C\kern-1pt E}}
\newcommand{\NE}{\mathsf{N\kern-2.5pt E}}
\newcommand{\wCE}{\mathsf{wC\kern-1pt E}}
\newcommand{\pCE}{\mathsf{pC\kern-1pt E}}
\newcommand{\kkappa}{{\boldsymbol \kappa}}
\newcommand{\mres}{\mathbin{\vrule height 1.6ex depth 0pt width
0.13ex\vrule height 0.13ex depth 0pt width 1.3ex}}
\newcommand{\dY}{{\delta}}
\newcommand{\W}{\mathbb W}
\newcommand{\diff}{\mrmd}
\newcommand{\pc}{\f{C}[U, U]} 
\newcommand{\HK}{{\mathsf H\!\!\mathsf K}}
\newcommand{\gd}{\mathsf{g}^{\varrho}}
\newcommand{\GHK}{\mathsf {G\!H}\!\!\mathsf K}
\newcommand{\f}[1]{\mathfrak{#1}} 
\newcommand{\ac}[1]{\mathsf{AC}^2\left ( #1 \right )}
\renewcommand{\W}{\mathsf{W}}
\newcommand{\He}{\mathsf{He}}
\newcommand{\anon}[1]{#1} 
\title[The Hellinger--Kantorovich metric measure geometry]{The Hellinger--Kantorovich\\metric measure geometry\\on spaces of measures}
\author{Lorenzo Dello Schiavo}
\address{Lorenzo Dello Schiavo: Dipartimento di Matematica -- Universit\`a degli studi di Roma ``Tor Vergata'', Via della Ricerca Scientifica~1, 00133 Roma, Italy}
\email{delloschiavo@mat.uniroma2.it}
\author{Giacomo Enrico Sodini}
\address{Giacomo Enrico Sodini: Institut für Mathematik - Fakultät für Mathematik - Universität Wien, Oskar-Morgenstern-Platz 1, 1090 Wien, Austria}
\email{giacomo.sodini@univie.ac.at}
\subjclass{Primary: 31C25, 49Q22, 46E36; Secondary: 60J60, 60G55, 22E66}
 \keywords{Hellinger--Kantorovich distance, Wasserstein--Fisher--Rao distance, multiplicative infinite-dimensional Lebesgue measure.}
\begin{document}

\begin{abstract}
Let~$(M,g)$ be a  Riemannian manifold  with Riemannian distance~$\mssd_g$, and~$\mcM(M)$ be the space of all non-negative Borel measures on~$M$, endowed with the Hellinger--Kantorovich distance~$\HK_{\mssd_g}$ induced by~$\mssd_g$.

Firstly, we prove that~$\tparen{\mcM(M),\HK_{\mssd_g}}$ is a universally infinitesimally Hilbertian metric space, and that a natural class of cylinder functions is dense in energy in the Sobolev space of every finite Borel measure on~$\mcM(M)$.

Secondly, we endow~$\mcM(M)$ with its canonical reference measure, namely A.M.~Vershik's multiplicative infinite-dimensional Lebesgue measure~$\LP{\theta}$,~$\theta>0$, and we consider
\begin{enumerate*}[$(a)$]
\item\label{i:abstract:A} the \emph{geometric structure} on~$\mcM(M)$ induced by the natural action on~$\mcM(M)$ of the semi-direct product of diffeomorphisms and densities on~$M$, under which~$\LP{\theta}$ is the unique invariant measure; and
\item\label{i:abstract:B} the \emph{metric measure structure} of~$\tparen{\mcM(M),\HK_{\mssd_g},\LP{\theta,\nu}}$, inherited from that of~$(M,\mssd_g,\vol_g)$.
\end{enumerate*}
We identify the canonical Dirichlet form~$\tparen{\mcE,\dom{\mcE}}$ of~\ref{i:abstract:A} with the Cheeger energy of~\ref{i:abstract:B}, thus proving that these two structures coincide.
We further prove that~$\tparen{\mcE,\dom{\mcE}}$ is a conservative quasi-regular strongly local Dirichlet form on~$\mcM(M)$, recurrent if and only if~$\theta\in (0,1]$, and properly associated with the Brownian motion of the Hellinger--Kantorovich geometry on~$\mcM(M)$.
\end{abstract}

\maketitle

\newpage

\setcounter{tocdepth}{3}
\tableofcontents
\thispagestyle{empty}

\section{Introduction}

Let~$(M,g)$ be a  smooth, connected, orientable, complete  Riemannian manifold with Riemannian distance~$\mssd_g$. 
We denote by~$\mcM(M)$ the cone of all non-negative and finite Borel measures on~$M$, endowed with the \emph{Hellinger--Kantorovich distance}~$\HK_{\mssd_g}$ induced by~$\mssd_g$, see~\S\ref{sss:IntroHK} below.

Firstly, for a suitable class~$\FC{}{}{}{}$ of cylinder functions on~$\mcM(M)$, we prove the following Myers--Serrin-type theorem.

\begin{theorem}
Let~$\mcQ$ be any non-negative Borel measure on~$\tparen{\mcM(M),\HK_{\mssd_g}}$ with all exponential moments, i.e.\ such that
\[
\int \rme^{-t \mu M} \, \diff \mcQ(\mu) < + \infty \quad \text{ for every } t>0 \fstop
\]
Then, the space $\FC{}{}{}{}$ is dense in $2$-energy in the metric Sobolev space
\[
H^{1,2}\tparen{\mcM(M), \HK_{\mssd_g}, \mcQ}\comma
\]
and the latter is a Hilbert space.
\end{theorem}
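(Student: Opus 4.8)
The plan is to derive the statement from the general density‑in‑energy criterion for subalgebras of bounded Lipschitz functions on a complete, separable, length metric measure space --- in the lineage of Ambrosio--Gigli--Savar\'e and of the refinements by Savar\'e and by Sodini that accommodate non‑finite reference measures --- after checking a few structural facts about $\tparen{\mcM(M),\HK_{\mssd_g}}$ and about $\FC{}{}{}{}$. Recall that $H^{1,2}\tparen{\mcM(M),\HK_{\mssd_g},\mcQ}$ is the domain of finiteness of the Cheeger energy $\Ch$, i.e.\ of the $L^2(\mcQ)$‑lower‑semicontinuous relaxation of $F\mapsto\tfrac12\int(\lip_a F)^2\,\diff\mcQ$ over bounded $\HK_{\mssd_g}$‑Lipschitz functions, $\lip_a$ being the asymptotic Lipschitz constant; accordingly, ``$\FC{}{}{}{}$ is dense in $2$‑energy'' means that $\FC{}{}{}{}\subseteq H^{1,2}$ and that every $f\in H^{1,2}$ admits $(F_n)_n\subseteq\FC{}{}{}{}$ with $F_n\to f$ in $L^2(\mcQ)$ and $\int(\lip_a F_n)^2\,\diff\mcQ\to\int\relgrad{f}^2\,\diff\mcQ$. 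The criterion --- in the form suited to the present setting --- yields exactly this provided \emph{(i)} $\FC{}{}{}{}$ is a point‑separating subalgebra of bounded $\HK_{\mssd_g}$‑Lipschitz functions, stable under post‑composition with smooth functions; \emph{(ii)} $\FC{}{}{}{}$ and the pointwise asymptotic Lipschitz constants of its elements lie in $L^2(\mcQ)$, and $\mcM(M)$ is $\mcQ$‑essentially exhausted by an increasing family of sets of finite $\mcQ$‑measure on which the relaxation localises; and --- the decisive point --- \emph{(iii)} the $\FC{}{}{}{}$‑intrinsic distance $\mssd_{\FC{}{}{}{}}(\mu,\nu):=\sup\tset{\abs{F(\mu)-F(\nu)}:F\in\FC{}{}{}{},\ \lip_a F\le1}$ coincides with $\HK_{\mssd_g}$.

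I expect conditions \emph{(i)}--\emph{(ii)} to be essentially routine. That $\FC{}{}{}{}$ is a point‑separating subalgebra stable under smooth composition is built into the definition of cylinder functions --- say of the form $F(\mu)=\psi\tparen{\mu(\phi_1),\dots,\mu(\phi_n)}$ with $\phi_i$ bounded Lipschitz and $\psi$ smooth with the decay/integrability built into $\FC{}{}{}{}$ --- point separation being the elementary fact that a finite Borel measure is determined by its integrals against $\Cb(M)$. The only non‑trivial part of \emph{(i)} is the \emph{quantitative} $\HK_{\mssd_g}$‑Lipschitz bound on a single $F$, which follows by combining the Hellinger lower estimate $\abs{\sqrt{\mu M}-\sqrt{\nu M}}\le\HK_{\mssd_g}(\mu,\nu)$ (whence $\mu\mapsto\mu M$ is locally $\HK_{\mssd_g}$‑Lipschitz) with the local Lipschitz dependence on the sublevel sets $\tset{\mu:\mu M\le R}$ of the moments $\mu\mapsto\mu(\phi)$, $\phi\in\Lipb(M)$, itself read off the dual (Liero--Mielke--Savar\'e) or metric‑cone representation of $\HK_{\mssd_g}$. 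Condition \emph{(ii)} is exactly where the exponential‑moment hypothesis is used: it guarantees $\mcQ\tset{\mu:\mu M\le R}<+\infty$ for all $R>0$, so that $\tset{\mu:\mu M\le R}_{R>0}$ is the required exhaustion, and it is against this hypothesis that $\FC{}{}{}{}$ is calibrated so that its elements and their slopes are $\mcQ$‑square‑integrable; cutoffs such as $\mu\mapsto\chi\tparen{\mu(\phi_k)}$, with $\chi\in\Cc^\infty(\R)$ and $\phi_k\uparrow1$, furnish the adapted truncations.

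The heart of the argument is \emph{(iii)}. The inequality $\mssd_{\FC{}{}{}{}}\le\HK_{\mssd_g}$ is immediate: $\tparen{\mcM(M),\HK_{\mssd_g}}$ is a length space (its geodesics being constructed dynamically, Benamou--Brenier style), and $\lip_a F$ is continuous for a cylinder function $F$, so $\lip_a F\le1$ everywhere forces $F$ to be globally $1$‑Lipschitz. For the reverse inequality I would produce, given $\mu\ne\nu$ and $\eps>0$, an $F\in\FC{}{}{}{}$ with $\lip_a F\le1$ and $F(\nu)-F(\mu)\ge\HK_{\mssd_g}(\mu,\nu)-\eps$. The starting point is the Otto‑type formula for the $\HK_{\mssd_g}$ metric tensor, which for a cylinder function yields the pointwise identity
\[
\tparen{\lip_a F(\mu)}^2=\int_M\big(\tabs{\nabla_g U_F^\mu}_g^2+\tfrac14\,(U_F^\mu)^2\big)\,\diff\mu\comma
\]
where $U_F^\mu:=\sum_i\partial_i\psi\tparen{\mu(\phi_1),\dots,\mu(\phi_n)}\,\phi_i$ is the Wasserstein--Fisher--Rao gradient direction (first variation) of $F$ at $\mu$ and the constant $\tfrac14$ depends only on the normalisation of $\HK_{\mssd_g}$. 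Combined with the Liero--Mielke--Savar\'e dynamic representation $\HK_{\mssd_g}(\mu,\nu)=\int_0^1\tparen{\int_M(\tabs{\nabla_g\varphi_t}_g^2+\tfrac14\varphi_t^2)\,\diff\mu_t}^{1/2}\diff t$ along the geodesic $(\mu_t)_{t\in[0,1]}$ with its potential flow $(\varphi_t)_t$, this reduces matters to assembling a single cylinder function $F$ whose gradient direction $U_F^{\mu_t}$ tracks $\varphi_t$ uniformly in $t$ up to $\eps$: mollify $(\varphi_t)_t$ in space so as to land in $\Cc^\infty(M)$, discretise in $t$, assemble $F$, and enforce $\lip_a F\le1$ by a final smooth rescaling. (An equivalent route lifts the whole problem to the metric cone $\mathfrak C(M)$ over $M$, on which $\HK_{\mssd_g}$ is realised as a cone‑transport distance and for which the corresponding distance‑recovery for Wasserstein‑type cylinder functions is standard, $(\mathfrak C(M),\mssd_{\mathfrak C})$ being complete and separable.)

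Finally, $H^{1,2}\tparen{\mcM(M),\HK_{\mssd_g},\mcQ}$ is a Hilbert space: by the displayed identity, $F\mapsto(\lip_a F)^2(\mu)$ equals $\norm{U_F^\mu}_{\mcH_\mu}^2$ for the fixed Hilbert seminorm $\norm{U}_{\mcH_\mu}^2:=\int_M(\tabs{\nabla_g U}_g^2+\tfrac14 U^2)\,\diff\mu$, and $F\mapsto U_F^\mu$ obeys the chain rule $U^\mu_{\psi(\Phi_1,\dots,\Phi_m)}=\sum_j\partial_j\psi(\Phi_\bullet(\mu))\,U^\mu_{\Phi_j}$, so the parallelogram law holds for the pre‑energy $\mcE^\circ(F):=\tfrac12\int(\lip_a F)^2\,\diff\mcQ$ on $\FC{}{}{}{}$; since, by the density‑in‑energy just established, $\Ch$ is the $L^2(\mcQ)$‑relaxation of $\mcE^\circ\restr{\FC{}{}{}{}}$, and relaxation preserves $2$‑homogeneity and the parallelogram law (a standard relaxation fact), $\Ch$ is a quadratic form --- hence $H^{1,2}$ is Hilbert, and, $\mcQ$ being arbitrary among measures with all exponential moments, this is also the mechanism behind the ``universally infinitesimally Hilbertian'' assertion. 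I expect the one genuine obstacle to be step \emph{(iii)}: recovering $\HK_{\mssd_g}$ from cylinder functions \emph{with quantitative control of the asymptotic Lipschitz constant}, in a geometry that is at once non‑smooth, infinite‑dimensional, and mass‑varying. The concrete difficulties are the unboundedness of the total mass (circumvented by the sublevel exhaustion and the exponential‑moment bound), the non‑smooth truncation at scale $\pi$ built into the $\HK$ cone metric --- which makes the optimal potentials $\varphi_t$ only semiconcave and so demands a spatial mollification that does not inflate $\lip_a F$ --- and the passage from a continuum $(\varphi_t)_t$ of potentials to the finitely many coordinates $\phi_1,\dots,\phi_n$ of a single cylinder function, uniformly along the geodesic.
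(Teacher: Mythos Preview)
Your general architecture --- reduce density in energy to a ``distance recovery'' criterion for the subalgebra, then deduce Hilbertianity from the quadratic Otto-type formula for $\lip_a F$ --- is the right shape, and your treatment of Hilbertianity is essentially the paper's. But step~\emph{(iii)} has a genuine gap, and it is precisely the obstacle the paper is written around.

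Your plan for the reverse inequality $\mssd_{\FC{}{}{}{}}\ge\HK_{\mssd_g}$ is to mollify the optimal potentials $(\varphi_t)_t$ along an $\HK$-geodesic and assemble them into a single cylinder function $F$ with $\lip_a F\le 1$ everywhere. This fails for exactly the reason the paper flags in the introduction to~\S\ref{s:DensityCylinder}: the $\HK$ optimal potentials have very poor regularity (only semiconcavity, with the truncation at scale $\pi$ built into the cone cost), and spatial mollification does \emph{not} control $\lip_a F$ globally --- the quantity $\int_M(|\nabla_g U_F^\mu|_g^2+4(U_F^\mu)^2)\,\diff\mu$ must be $\le1$ for \emph{every} $\mu$, not just along the geodesic. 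The ``discretise in $t$ and rescale'' step cannot repair this without losing the lower bound $F(\nu)-F(\mu)\ge\HK(\mu,\nu)-\eps$. Moreover, the criterion you invoke is not quite the one that is available: what one can prove (and what suffices, via Hopf--Lax regularisation, Theorem~\ref{theo:startingpoint}) is the weaker statement that the \emph{$(\AA,2)$-relaxed gradient} of suitable distance functions is $\le 1$ $\mcQ$-a.e., not a pointwise $\lip_a$ bound on a single cylinder approximant.

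The paper's route circumvents both issues. It replaces $\HK$ by the \emph{Gaussian} Hellinger--Kantorovich distance $\GHK$, which generates $\HK$ as its length distance, and whose optimal potentials are genuinely regular: on $\R^d$, after regularising the target measure via the operator $\mathsf T_\eps$ in~\eqref{eq:theop}, the dual pair $(\varphi,\psi)$ is a pair of Legendre-conjugate convex functions with \emph{uniform} Lipschitz and boundedness estimates (Theorem~\ref{teo:final}). These potentials are then used not to build a single cylinder function with $\lip_a\le1$, but to show that $\GHK(\nu,\emparg)$ has $(2,\AA)$-relaxed gradient $\le1$ (Corollary~\ref{cor:ineq}); density in energy then follows from Theorem~\ref{theo:startingpoint}, which is a genuine extension of the standard criterion precisely to allow an auxiliary distance $\delta=\GHK\ne\sfd=\HK$. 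The Riemannian case is obtained by Nash embedding (Theorem~\ref{thm:hilbm}), and the passage from finite to exponentially-integrable $\mcQ$ uses the truncations $\vartheta_k=\rme^{-\car^\trid/k}$ (Lemma~\ref{le:densinf}). Your cone-lift alternative does not avoid the problem: on $\mathfrak C(M)$ the relevant transport distance is still unbalanced at the level of $\mcM(M)$, and the Wasserstein-cylinder density results you allude to require compactness or moment control that is absent here.
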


Secondly, we focus on a specific choice for~$\mcQ$.
For a parameter~$\theta>0$ and a probability measure~$\nu \in \mcP(M)$, we consider Vershik's \emph{infinite-dimensional multiplicative Lebesgue measure}~$\LP{\theta,\nu}$ on~$\mcM(M)$, see~\S\ref{sss:InfDimLeb} below, and we show that it is the unique natural measure for the Hellinger--Kantorovich geometry on~$\mcM(M)$.
Further denote by~$\grad$ the gradient for real-valued functions on~$\mcM(M)$ associated to the Hellinger--Kantorovich geometry, see~\S\ref{sss:DifferentiationIntro}, and by $\scalar{\emparg}{\emparg}_\mu$ a suitably weighted scalar product, see~\eqref{eq:DirichletForm0}.

\begin{theorem}
For every~$\theta>0$, the canonical energy form
\[
\mcE(u,v)\eqdef \int \scalar{(\grad u)_\mu}{(\grad v)_\mu}_\mu \diff\LP{\theta,\nu}(\mu)\comma \qquad u,v\in\FC{}{}{}{}\comma
\]
is closable on~$L^2(\LP{\theta,\nu})$. Its closure~$\tparen{\mcE,\dom{\mcE}}$
\begin{itemize}
\item is a conservative quasi-regular strongly local Dirichlet form on~$L^2(\LP{\theta,\nu})$;
\item coincides with the Cheeger energy of the metric measure space
\[
\tparen{\mcM(M), \HK_{\mssd_g}, \LP{\theta,\nu}}\semicolon
\]
\item is properly associated with a Hunt diffusion with state space~$\mcM(M)$, the `Brownian motion' of the Hellinger--Kantorovich geometry on~$\mcM(M)$.
\end{itemize}
Finally, the latter process is recurrent if~$\theta\in (0,1]$, and transient if~$\theta\in (1,\infty)$.
\end{theorem}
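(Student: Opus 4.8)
The plan is to derive the statement from the first theorem together with the general theory of quasi-regular Dirichlet forms, reducing the recurrence/transience alternative to the one-dimensional process of the total mass. First one checks that $\LP{\theta,\nu}$ satisfies the hypotheses of the first theorem: since the image of $\LP{\theta,\nu}$ under $\mu\mapsto\mu M$ is proportional to $s^{\theta-1}\,\diff s$ on $(0,\infty)$, one has $\int\rme^{-t\mu M}\,\diff\LP{\theta,\nu}(\mu)<\infty$ for all $t>0$; hence $\FC{}{}{}{}$ is dense in $2$-energy in $H^{1,2}\tparen{\mcM(M),\HK_{\mssd_g},\LP{\theta,\nu}}$, which is a Hilbert space --- equivalently, the Cheeger energy $\Ch$ of $\tparen{\mcM(M),\HK_{\mssd_g},\LP{\theta,\nu}}$ is a quadratic form with domain $H^{1,2}$. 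Using the differential calculus of the Hellinger--Kantorovich geometry (see~\S\ref{sss:DifferentiationIntro}) one then identifies, for every $u\in\FC{}{}{}{}$, the minimal relaxed gradient of $u$ with $\ttnorm{(\grad u)_\mu}_\mu$: the inequality ``$\le$'' follows from the chain rule along $\HK_{\mssd_g}$-absolutely continuous curves (bounding the asymptotic Lipschitz constant of $u$ by $\ttnorm{(\grad u)_\mu}_\mu$), and ``$\ge$'' by testing $u$ against the geodesics, or Benamou--Brenier type curves, realizing $\ttnorm{(\grad u)_\mu}_\mu$. Thus $\mcE=\Ch$ on $\FC{}{}{}{}$; since $\Ch$ is closed and $\FC{}{}{}{}$ is dense in $2$-energy in its domain (first theorem), $\mcE$ is closable and $\tparen{\mcE,\dom{\mcE}}=\tparen{\Ch,H^{1,2}}$. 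In particular it is a symmetric Markovian form, strongly local as a quadratic Cheeger energy.

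\paragraph{Quasi-regularity, conservativeness, associated diffusion}
For quasi-regularity I would invoke the Ma--R\"ockner criterion: the countable algebra of bounded cylinder functions $\mu\mapsto\psi\tparen{\mu(\varphi_1),\dots,\mu(\varphi_k)}$ sits in $\dom{\mcE}\cap\rmC(\mcM(M))$, is dense in $\dom{\mcE}$ (first theorem), and separates the points of $\mcM(M)$; what remains is an $\mcE$-nest of compacts. For an exhaustion $M=\bigcup_j M_j$ by compacts I would take $K_n\eqdef\tset{\mu:\mu M\le R_n,\ \mu(M\setminus M_j)\le\eps_{n,j}\text{ for all }j}$, which is $\HK_{\mssd_g}$-compact, and bound $\cp(\mcM(M)\setminus K_n)\to0$ through the functions $\mu\mapsto\mu M$ and $\mu\mapsto\mu(M\setminus M_j)$ (approximated by cylinder functions), whose $\mcE$-energies are computed explicitly from the structure of $\LP{\theta,\nu}$. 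Conservativeness I would get, uniformly in $\theta$, from completeness of $\tparen{\mcM(M),\HK_{\mssd_g}}$ and the sub-exponential volume growth $\LP{\theta,\nu}\tparen{\ball[\HK_{\mssd_g}]{o}{r}}\le c\,r^{2\theta}$ (valid since $\ball[\HK_{\mssd_g}]{o}{r}\subseteq\tset{\mu:\mu M\le r^2}$, with $o$ the null measure), via Sturm's stochastic-completeness criterion. A quasi-regular strongly local Dirichlet form is properly associated with a diffusion; being conservative, this is a Hunt diffusion on $\mcM(M)$ without killing, the Brownian motion of the Hellinger--Kantorovich geometry.

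\paragraph{Recurrence versus transience}
The total mass $\Quot\colon\mu\mapsto\mu M$ reduces this to a one-dimensional problem. From the explicit Hellinger--Kantorovich gradient of linear functionals, $\ttnorm{(\grad\mu(\varphi))_\mu}_\mu^2=\int\tparen{\abs{\nabla\varphi}^2+\tfrac14\varphi^2}\,\diff\mu$ (so $=\tfrac14\,\mu M$ for $\varphi\equiv1$), one obtains for $u\in\rmC^\infty_c((0,\infty))$
\[
\mcE\tparen{u\circ\Quot,u\circ\Quot}=c\int_0^\infty\abs{u'(s)}^2\,s\cdot s^{\theta-1}\,\diff s=c\int_0^\infty\abs{u'(s)}^2 s^{\theta}\,\diff s\comma\qquad c>0\comma
\]
a Dirichlet form on $L^2\tparen{(0,\infty),s^{\theta-1}\diff s}$ with generator $\propto s\,\partial_s^2+\theta\,\partial_s$, i.e.\ (up to a constant time change) that of a squared Bessel process of dimension $2\theta$, recurrent for $2\theta\le2$ and transient for $2\theta>2$. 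For $\theta\in(0,1]$ recurrence of $\tparen{\mcE,\dom{\mcE}}$ follows at once: taking $\chi_n\equiv1$ on $[0,n]$ and $\chi_n(s)=\frac{S(R_n)-S(s)}{S(R_n)-S(n)}$ on $[n,R_n]$ with $S(s)\eqdef\int_1^s t^{-\theta}\,\diff t$, the $u_n\eqdef\chi_n\circ\Quot$ lie in $\dom{\mcE}$, satisfy $u_n\uparrow1$ $\LP{\theta,\nu}$-a.e.\ and $\mcE(u_n,u_n)=\frac{c}{S(R_n)-S(n)}\to0$ as $R_n\to\infty$ because $S(+\infty)=+\infty$, which is exactly recurrence. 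For $\theta\in(1,\infty)$ the one-dimensional form above is transient (its scale function is bounded at $+\infty$; equivalently it admits the positive harmonic function $s\mapsto s^{1-\theta}$), so there is $\gamma>0$ on $(0,\infty)$, integrable against $s^{\theta-1}\,\diff s$, with $\int_0^\infty\abs{u}\,\gamma\,s^{\theta-1}\,\diff s\lesssim\tparen{\int_0^\infty\abs{u'}^2 s^{\theta}\,\diff s}^{1/2}$; since by Cauchy--Schwarz $\mcE$ dominates its component along the mass direction $\mu\mapsto\lambda\mu$, which --- disintegrating $\LP{\theta,\nu}$ over $\Quot$, whose conditional slices are related by these dilations --- is a superposition of copies of the one-dimensional form, applying the Hardy inequality fibrewise gives $\int_{\mcM(M)}\abs{u}\,(\gamma\circ\Quot)\,\diff\LP{\theta,\nu}\lesssim\mcE(u,u)^{1/2}$ for all $u\in\dom{\mcE}$, hence $\tparen{\mcE,\dom{\mcE}}$ is transient.

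\paragraph{Main obstacles}
I expect the real work to lie in the tightness estimate $\cp(\mcM(M)\setminus K_n)\to0$ --- this infinite-dimensional cone carries no doubling or Poincar\'e structure --- and, in the transient regime, in the passage from one-dimensional to infinite-dimensional transience, which hinges on the compatibility of the disintegration of $\LP{\theta,\nu}$ along the total mass with the $\HK_{\mssd_g}$-dilation direction and on the lower bound of $\mcE$ by its mass-directional part. Once the first theorem and the Hellinger--Kantorovich differential calculus are available, the remaining points --- closability, the Cheeger-energy identification, strong locality, and the passage to the associated diffusion --- are essentially formal.
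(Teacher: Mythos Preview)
Your overall architecture matches the paper's: identify $\mcE$ with the Cheeger energy via Proposition~\ref{prop:equalityriem} and the density theorem, then read off strong locality and quasi-regularity, and reduce conservativeness/recurrence/transience to the one-dimensional total-mass process. Two of your proposed implementations, however, diverge from the paper in ways worth noting.

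\emph{Closability and quasi-regularity.} You derive closability purely from $\mcE=\Ch$ on cylinders plus the density theorem; this is valid and shorter than the paper, which first proves closability independently via an auxiliary \emph{extended} form on cylinder functions of the type $\hat f^\trid(\mu)=\int\hat f(\mu_x,x)\,\diff\mu(x)$, computing its generator by integration by parts against~$\LP{\theta,\nu}$ through the Mecke identity (Propositions~\ref{p:ClosVer},~\ref{p:ClosHor},~\ref{p:ExtendedForm}), and only then identifies with~$\Ch$. For quasi-regularity you propose building an $\mcE$-nest by hand and flag this as the main obstacle; the paper bypasses it entirely by citing the general fact that the Cheeger energy of a $\sigma$-finite Radon metric measure space is always quasi-regular~\cite[Prop.~3.21]{LzDSSuz20}. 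Your explicit tightness estimate is unnecessary.

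\emph{Conservativeness and transience.} Here there is a genuine gap. Sturm's volume-growth criterion is stated for regular Dirichlet forms on locally compact spaces with proper intrinsic metric; $\mcM(M)$ is not locally compact (closed $\HK$-balls $\{\mu M\le r^2\}$ are compact only when $M$ is), so the criterion does not apply as written. The paper instead introduces the \emph{radial projection} $u^\rad(\mu)\eqdef\int u(\mu M\cdot\eta)\,\diff\DF{\nu}(\eta)$, proves it is an $\mcE_1$-orthogonal projection intertwining~$\mcE$ with its radial part (Proposition~\ref{p:OrthoProj}, Corollary~\ref{c:PreservationRadiality}), and identifies $(\tfrac14\mcE^\rad,\dom{\mcE^\rad})$ unitarily with the squared-Bessel form $E^\theta(f)=\int_0^\infty t\,f'(t)^2\,\diff\lambda_\theta$ (Proposition~\ref{p:IsomorphicForms}). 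Conservativeness then transfers from~$E^\theta$ to~$\mcE$ by testing the characterization~\eqref{eq:Conservative:1c} against a radial~$v=\mcG_\alpha f$ with radial~$f$ (Lemma~\ref{l:Conservativeness}). For transience your ``fibrewise Hardy'' idea is correct in spirit, but the step ``$\mcE$ dominates its mass-directional part'' needs exactly the inequality $\mcE(u^\rad)\le\mcE(u)$, which the paper obtains via Jensen on the radial projection (Lemma~\ref{l:RadGrad}); once that is in hand, for non-negative~$u$ one has $\int u\,(g\circ\car^\trid)\,\diff\LP{\theta,\nu}=\int\widetilde{u^\rad}\,\tilde g\,\diff\lambda_\theta\lesssim E^\theta(\widetilde{u^\rad})^{1/2}\le\mcE(u)^{1/2}$, which is the paper's Lemma~\ref{l:Transient}. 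Your recurrence argument via explicit $\chi_n\circ\Quot$ is essentially the paper's (Lemmas~\ref{l:Recurrence} and~\ref{l:CBIRecurrence}).
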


We proceed to explain the motivations and significance of our results.

\subsection{Large Lie groups and stochastic partial differential equations}
A \emph{large} Lie group is a Lie group modelled on an infinite-dimensional Hilbert, Banach, or Fréchet space.
Prototypical examples of such groups are 
\begin{itemize}[wide]
\item \emph{transformation groups}, as diffeomorphism groups of differential manifolds;
\item $G$-\emph{current groups}, i.e.\ groups of $G$-valued functions, for some Lie group~$G$;
\item \emph{multiplier groups}, i.e.\ (Abelian) $(\R^+,\cdot)$-current groups, which are maximal toral subgroups in the corresponding $SL_2$-current groups.
\end{itemize}

In spite of their Lie-group structure, these groups are typically wild, displaying a number of strange phenomena that do not occur in the standard finite-dimensional theory.
For instance, concerning diffeomorphism groups: the exponential map from the corresponding Lie algebra into the group fails to be open, hence it is not surjective even on the identity component of the group~\cite{Mil84}, its image is meager~\cite{Pal74}, and in fact extremely small~\cite{Gra88}.

A fruitful approach to the study of large Lie groups via their \emph{representations} has been the subject of a longstanding programme initiated for diffeomorphism groups by A.M.~Vershik,~I.M.~Gel'fand, and M.I.~Graev in~\cite{VerGelGra75}, and continued by Vershik and Graev for current groups~\cite{VerGra08,VerGra09}, and more recently by Yu.G.~Kondratiev, E.W.~Lytvynov, and Vershik for certain semidirect products of diffeomorphisms and multipliers in~\cite{KonLytVer15}.

In order for the representations of these groups to be \emph{faithful} ---i.e.\ for them to retain sufficient information on the group--- the representations need to be constructed on some `large' Hilbert space.
Especially in the case of diffeomorphisms and of multipliers, a concrete realization of such a Hilbert space is the space $L^2(\mcQ)$ of some measure~$\mcQ$ on a space of measures. 
Indeed, diffeomorphisms naturally act on measures by push-forward, and multipliers simply act on measures by multiplication by densities (hence the name).
When~$\mcQ$ is a probability measure, it is usually regarded as (the law of) a \emph{random measure}, typically, a random point process. 
This is the case in:~\cite{VerGelGra75}, concerned with \emph{Poisson} point processes; \cite{KonLytVer15}, concerned with \emph{Gamma} compound Poisson point processes; and~\cite{LzDS19a,LzDS17+}, concerned with \emph{Dirichlet--Ferguson} point processes.

\paragraph{Geometric Brownian motions}
As already noted in~\cite{LzDS17+,KonLytVer15}, this action of diffeomorphisms, multipliers, or a combination thereof, on a space of measures induces an energy functional on~$L^2(\mcQ)$. 
As it turns out, the functional is, in many of these settings, a \emph{Dirichlet form}, and it is therefore uniquely associated with a measure-valued Markov process.
We call this process the \emph{geometric measure-valued Brownian motion} induced by the group action.

On the one hand, it is one goal of the aforementioned programme to study properties of the representation on $L^2(\mcQ)$ of a given large Lie group via the corresponding geometric measure-valued Brownian motion.
For instance, it is usually expected that invariant sets of this Brownian motion are in one-to-one correspondence with irreducible sub-representations of the group action.

On the other hand, geometric measure-valued Brownian motions are very interesting stochastic processes in their own right.
This is readily seen from two important examples in the case when~$\mcQ$ is the Dirichlet--Ferguson measure. (See below.)
In this case, one process induced by the action of multipliers is the \emph{Fleming--Viot process with parent independent mutation}~\cite{FleVio79,OveRoeSch95}, while one process induced by the action of diffeomorphisms is the \emph{Dirichlet--Ferguson diffusion}~\cite{LzDS17+}, the unique solution to the \emph{Dean--Kawasaki stochastic partial differential equation with singular drift}~\cite{KonvRe18,KonvRe17,LzDS24}.

\paragraph{Metric-measure Brownian motions}
Another fundamental approach to the construction of energy functionals on spaces of measures is as follows.
When the space of measures in question is endowed with some natural distance (e.g., Hellinger, Bhattacharyya, Kantorovich--Rubinstein a.k.a.\ Wasserstein, Hellinger--Kantorovich a.k.a.\ Wasser\-stein--Fisher--Rao, etc.) and with a reference random measure~$\mcQ$, we consider the \emph{Cheeger energy} of the resulting \emph{metric measure space}.

For $L^p$-Kantorovich--Rubinstein distances on spaces of probability measures, the study of these energy functionals has been undertaken in~\cite{FSS22,S22}.
Here, we rather consider the space of all non-negative finite measures with the \emph{Hellinger--Kantorovich distance}. (See below.)
Also in this case, the Cheeger $L^2$-energy is a quadratic functional, and thus a Dirichlet form.
We call the unique Markov process associated to it the \emph{metric measure measure-valued Brownian motion} induced by the distance.

\paragraph{Main results: Identification}
It is one main result of this work that, for a specific choice of~$\mcQ$, the geometric point of view (group actions) and the metric-measure point of view (distances) are one and the same, i.e.\ that the geometric measure-valued Brownian motion coincides with the metric measure Brownian motion just described.

Not only this provides an identification of the stochastic process in question; it will also grant us the possibility to import tools from metric measure geometry in the study of geometric Brownian motions and of the corresponding representations for a given group action, and vice versa to use the Lie-group construction for the study of the metric measure space arising from the Hellinger--Kantorovich distance and the reference measure~$\mcQ$.

In future work, we will address the complete identification of this Brownian motion as the unique solution to some singular stochastic partial differential equation with measure-valued solutions.

\bigskip

Let us now present in greater detail the constructions we touched on above.

\subsection{The geometric point of view}\label{ss:IntroGeometric}
Let~$X$ be a Polish topological space.
Denote by~$\rmC_b(X)$, resp.~$\rmC_0(X)$, the space of all continuous bounded, resp.\ continuous vanishing at infinity, real-valued functions on~$X$.
For a non-negative and finite Borel measure~$\mu$ on~$X$ and a Borel function~$f\colon X\to\R$, write
\[
f^\trid\mu \eqdef \int_X f \diff\mu
\]
whenever the integral makes sense, and~$\mu_x\eqdef \mu\set{x}$. Analogously, for a vector $\mbff = (f_1, \dotsc, f_k)$, $k \in \N$, we set $\mbff^\trid \eqdef (f_1^\trid, \dotsc, f_k^\trid)$.
The push-forward of a measure~$\mu$ by a measurable map~$T$ is the measure
\[
T_\pfwd\mu\eqdef \mu\circ T^{-1}\fstop
\]

\paragraph{Spaces of measures}
We denote by~$\mcM(X)$, resp.~$\mcP(X)$, the space of all non-negative and finite Borel, resp.\ Borel probability, measures on~$X$, always endowed with the weak topology, i.e.\ the coarsest topology under which the maps~$\mu\mapsto f^\trid \mu$ are continuous for every~$f\in\rmC_b(X)$, and with the corresponding Borel $\sigma$-algebra.
As it is well-known, both~$\mcM(X)$ and~$\mcP(X)$ are Polish, and thus standard Borel spaces.
Let~$\normaliz\colon \mcM(X) \to \mcP(X)\cup\set{0}$ be the \emph{normalization map}
\[
\normaliz\colon \mu\longmapsto \tfrac{1}{\mu X}\mu\comma \qquad \mu\in\mcM(X)\comma
\]
where, conventionally,~$\normaliz(0)=0$ is the zero measure.
Note that there is a Borel bi-measurable isomorphism~$\isomor\colon \mcM(X) \to \mcP(X)\times \R^+$
\begin{equation}\label{eq:IsomorphismMbp}
\isomor\colon \mu \longmapsto \tparen{\normaliz(\mu), \mu X}\comma \qquad \mu\in\mcM(X)\fstop
\end{equation}
For~$\msS$ either~$\mcM(X)$ or~$\mcP(X)$, we write~$\msS^{\pa}$ for the subspace of~$\msS$ consisting of all purely atomic measures in~$\msS$.

\subsubsection{Group actions}
For a topological group~$G$ acting measurably on a measurable space~$(\Omega,\msF)$, we write~$\acts\colon G\times \Omega\to \Omega$, $(g,\omega)\mapsto g.\omega$ for its action.
We consider the following groups acting on~$\mcM(X)$.

\paragraph{Multipliers}
Denote by~$\multi$ the space of real-valued bounded Borel functions on~$X$, regarded as an Abelian Lie algebra with the pointwise product of functions. For~$\nu\in\mcM(X)$, further define the \emph{$\nu$-traceless} subalgebra~$\multi_\nu\eqdef \set{f\in \multi: \int f \de \nu=0}$ of~$\multi$.
The corresponding Abelian Lie groups are the groups of \emph{multipliers}~$\Multi(X)\eqdef \set{e^a: a\in\multi}$ and its subgroup~$\Multi_\nu(X)\eqdef \set{e^a: a\in\multi_\nu}$, both endowed with pointwise product of functions.

The group~$\Multi(X)$ (hence all its subgroups) acts naturally on~$\mcM(X)$ by setting
\begin{equation}\tag{$\acts_\cdot$}\label{eq:ActionMultipliers}
e^a. \colon \mu\longmapsto e^a \cdot \mu\comma \qquad a\in \multi\comma \qquad \mu\in\mcM(X)\fstop
\end{equation}

\paragraph{Shifts}
Denote by~$\Shift(X)$ the group of \emph{shifts}, i.e.\ Borel bi-measurable bijections of~$X$ with the composition of functions.
The group~$\Shift(X)$ (hence all its subgroups) acts naturally on~$\mcM(X)$ (hence on~$\mcP(X)$), by setting
\begin{equation}\tag{$\acts_{\!\pfwd}$}\label{eq:ActionShifts}
\psi. \colon \mu\longmapsto \psi_\pfwd\mu \comma \qquad \psi\in \Shift(X) \comma \qquad \mu\in\mcM(X)\fstop
\end{equation}
The action~\ref{eq:ActionShifts} commutes on~$\mcM(X)$ with the normalization of measures and thus factors over the map~$J$ in~\eqref{eq:IsomorphismMbp} in the sense that~$J(\psi_\pfwd\mu)=(\psi_\pfwd\normaliz(\mu),\mu (X))$.

For~$\nu\in\mcM(X)$, further denote by~$\Shift_\nu(X)$ the subgroup of~$\Shift(X)$ consisting of all elements fixing~$\nu$, or, equivalently,~$\normaliz(\nu)$.

\paragraph{Products}
We denote by~$\psi^*k\eqdef k\circ\psi$ the \emph{pull-back} of a function~$k$ by a map~$\psi$. 
The pull-back operator~$\pb\colon \psi\longmapsto \psi^*$ is a group homomorphism~$\pb\colon \Shift(X)\to \Aut(\Multi(X))$ on~$\Shift(X)$ into the automorphism group~$\Aut(\Multi(X))$ of $\Multi(X)$.
Thus, $\Shift(X)$~acts on~$\Multi(X)$ by automorphisms via~$\pb$, viz.
\[
\acts_\pb\colon (k,\psi) \longmapsto (\pb(\psi)).k = k\circ \psi \fstop
\]
This action\footnote{In order to avoid confusion, we shall always consider \emph{left} actions. However, this choice forces the somewhat unusual definition of \emph{right} semidirect product in~\ref{eq:ActionSemidirect}. Indeed, note the indices in the composition~$(\psi_2^*a_1)a_2$. This also motivates the difference of our action from the one in~\cite{KonLytVer15}.} induces a \emph{right} semidirect product~$\Shift(X)\rtimes_\pb \Multi(X)$ with group operation and inverse
\begin{equation*}
\begin{aligned}
h_1h_2=&\ \tparen{\psi_1\circ\psi_2, (\psi_2^*k_1) k_2} \comma 
\\
h^{-1}=&\ \tparen{ \psi^{-1},(\psi^{-1})^*\tfrac{1}{k}}\comma
\end{aligned}
\qquad h_i=(\psi_i,k_i)\in \Shift(X)\rtimes_\pb \Multi(X)\comma \quad i=1,2,\emp \fstop
\end{equation*}
The product~$\Shift(X)\rtimes_\pb \Multi(X)$ acts naturally on~$\mcM(X)$ by setting
\begin{equation}\tag{$\acts_{\!\rtimes_\pb}$}\label{eq:ActionSemidirect}
h. \colon \mu\longmapsto \psi_\pfwd(k\cdot\mu) \comma \qquad h=(\psi,k)\in \Shift(X)\rtimes_\pb \Multi(X)\comma \qquad \mu\in\mcM(X)\fstop
\end{equation}

\subsubsection{Differentiation in the smooth category}\label{sss:DifferentiationIntro}
The groups $\Multi(X)$ and~$\Shift(X)$ and the group actions~\ref{eq:ActionMultipliers} and~\ref{eq:ActionShifts} above are given in the measurable category.
The same actions restrict to subgroups of~$\Multi(X)$ and~$\Shift(X)$ in other categories, e.g.\ the continuous category, where~$\Multi(X)$ is replaced by the subgroup of all its continuous functions and~$\Shift(X)$ by the group of self-homeomorphisms of~$X$.
The same applies to the action~\ref{eq:ActionSemidirect} provided that both~$\Multi(X)$ and~$\Shift(X)$ are restricted to the same category, in order for the semidirect product to be defined in that category.

The smooth category will be of particular interest. Such restriction is possible when~$X$ is endowed with a structure of \emph{smooth} (i.e.~$\rmC^\infty$-smooth), connected,  orientable  manifold, henceforth denoted by~$M$.
In this case the restrictions of both $\Multi(M)$ and~$\Shift(M)$ are Lie groups, and we may discuss the corresponding Lie algebras.
Indeed, we may replace~$\rmB_b(M)$ with the (Abelian Lie) subalgebra~$\Cc^\infty(M)$ of smooth compactly supported functions on~$M$, the corresponding Lie group~$\exp[\Cc^\infty(M)]$ being defined in the obvious way as a subgroup of~$\Multi(M)$.
The suitable restriction of~$\Shift(M)$ is the group~$\Diff^+_0(M)$ of orientation-preserving compactly non-identical (smooth) (self-)diffeomorphisms of~$M$, corresponding to the Lie algebra~$\mfX^\infty_c(M)$ of (smooth) compactly supported vector fields on~$M$ with the standard Lie bracket of vector fields.

In the following, let us replace~$X$ by a manifold~$M$ as above.
For each~$w\in\mfX^\infty_c(M)$ we denote by~$\psi^w_t$ the \emph{flow} of~$w$ at time~$t\in\R$, satisfying
\[
\begin{aligned}
\diff_t \psi^w_t(x) &= w\tparen{\psi^w_t(x)}
\\
\psi^w_0(x)&= x
\end{aligned}\comma
\qquad x\in M\comma t\in\R \fstop
\]
Since~$w$ is compactly supported,~$\psi^w_t$ is well-defined everywhere on~$M$ and an element of~$\Diff^+_0(M)$ for every~$t\in\R$, with inverse~$(\psi^w_t)^{-1}=\psi^w_{-t}$.
The (\emph{Lie}) \emph{exponential} of~$\mfX^\infty_c(M)$ is then the map
\[
\exp\colon w \longmapsto \psi^w_1 \comma \qquad w\in\mfX^\infty_c(M)\fstop
\]

Let us write~$\mfG(M)\eqdef \Diff^+_0(M)\rtimes_\pb \exp[\Cc^\infty(M)]$ for the semidirect product of~$\Diff^+_0(M)$ and~$\exp[\Cc^\infty(M)]$ induced by~\ref{eq:ActionSemidirect}, and
\[
\mfG_{\!\scriptscriptstyle\times}(M)\eqdef \Diff^+_0(M)\times \exp[\Cc^\infty(M)]
\]
for their direct product.
The same (right) action~\ref{eq:ActionSemidirect} and the semidirect product~$\mfG(M)$ have been previously considered by T.~Gallou\"et and F.-X.~Vialard in~\cite[Eqn.~(2.29)]{GalVia17}, where~$\mfG(M)$ is interpreted as the automorphism group of the principal fiber bundle of half-densities\footnote{In the terminology and notation of~\cite{GalVia17}, the \emph{space of half-densities} is~$\Lambda_{1/2}\eqdef \rmC^\infty(M;\R^+)=\exp[\rmC^\infty(M)]$. Note that~$\Lambda_{1/2}$ is a group under pointwise multiplication, and that our group of (compactly non-identical smooth) multipliers~$\exp[\rmC^\infty_c(M)]$ is a subgroup thereof.} on~$M$, see~\cite[\S2.4]{GalVia17}, also cf.~\cite[\S3.2.1]{GalGheVia21}.

Both~$\mfG(M)$ and~$\mfG_{\!\scriptscriptstyle\times}(M)$ are Lie groups, see e.g.~\cite[\S5.16, p.~48, Eqn.~(3)]{KolSloMic93}, and we denote by~$\mfg(M)$, resp.~$\mfg_{\scriptscriptstyle\oplus}(M)$, the corresponding Lie algebras.
As vector spaces, both~$\mfg(M)$ and~$\mfg_{\scriptscriptstyle\oplus}(M)$ are linearly isomorphic to the direct sum~$\mfX^\infty_c(M)\oplus \Cc^\infty(M)$.
However,~$\mfg(M)$ is different from~$\mfg_{\scriptscriptstyle\oplus}(M)$, i.e.\ their brackets do not coincide, and the same holds for their exponentials~$\exp^{\mfg(M)}\colon\mfg(M)\to\mfG(M)$ and~$\exp^{\mfg_{\scriptscriptstyle\oplus}(M)}\colon \mfg_{\scriptscriptstyle\oplus}(M)\to \mfG_{\!\scriptscriptstyle\times}(M)$.

\paragraph{Directional derivatives}
For every sufficiently smooth ---to be clarified later on--- function~$u\colon \mcM(M)\to\R$, for every~$w\in\mfX^\infty_c(M)$ and every~$a\in\Cc^\infty(M)$, we may define the directional derivatives in the directions~$w$ and~$a$ by setting
\begin{equation}\label{eq:SingleDirDer}
(\partial_{w} u)_\mu \eqdef \diff_t\restr{t=0} u (\psi^w_t.\mu) \quad \text{and} \quad (\partial_a u)_\mu \eqdef \diff_t\restr{t=0} u (e^{ta}.\mu) \fstop
\end{equation}

Analogously, for every pair~$(w,a)\in\mfg(M)$, we may define a directional derivative in the direction~$(w,a)$, viz.
\[
(\partial_{w,a}u)_\mu=(\partial_{w,a}u) (\mu)\eqdef \diff_t\restr{t=0} u \paren{\exp^{\mfg(M)}\tparen{t(w,a)}.\mu} \fstop
\]
 A simple heuristic argument ---which can be made rigorous for finite-dimensional groups--- shows that~$\exp^{\mfg(M)}$ and~$\exp^{\mfg_{\scriptscriptstyle\oplus}(M)}$ are tangent at first order, that is
\[
\exp^{\mfg(M)}\tparen{t(w,f)}=(\psi^w_t,e^{ta})+o(t) \comma \qquad \abs{t}\ll 1 \fstop
\]
Thus, for every sufficiently smooth function~$u\colon \mcM(M)\to\R$ and every~$(w,a)\in\mfg(M)$,
\begin{equation}\label{eq:TotalDirDer}
(\partial_{w,a}u)_\mu= \diff_t\restr{t=0} u \tparen{(\psi^w_t, e^{ta}).\mu} = (\partial_w u)_\mu + (\partial_a u)_\mu \fstop
\end{equation}

The directional derivatives in~\eqref{eq:SingleDirDer} have been widely considered, see e.g.~\cite{AlbKonRoe98,Han02,LzDS19b,LzDS17+,RoeSch99,Sch97,vReStu09,RehRoe23}.
Notably, the directional derivative in~\eqref{eq:TotalDirDer} has been considered by Yu.G.~Kondratiev, E.W.~Lytvynov, and A.M.~Vershik in~\cite{KonLytVer15} as arising from the group action on~$\mcM(M)$ of the \emph{left} semidirect product of~$\Diff^+_0(M)$ and~$\exp[\rmC^\infty_c(M)]$.

\subsubsection{The Dirichlet form}\label{sss:TangentSpacesIntro}
Assume further that~$M$ is endowed with a (smooth) Riemannian metric~$g$ and set~$\abs{w}_g\eqdef g(w,w)^{1/2}$ for~$w\in\mfX^\infty_c(M)$.
For each~$\mu\in\mcM(M)$, this allows us to define pre-Hilbert norms on~$\mfX^\infty_c(M)$,~$\Cc^\infty(M)$, and~$\mfg(M)$, respectively by setting, for every $w\in\mfX^\infty_c(M)$ and~$a\in\Cc^\infty(M)$,
\begin{align}\label{eq:normu}
\norm{w}_{T^\hor_\mu}\eqdef \braket{\int_X \abs{w}_g^2 \diff\mu}^{1/2}\comma \quad \norm{a}_{T^\ver_\mu}\eqdef \braket{\int_X a^2 \diff \mu}^{1/2} \comma
\end{align}
so that
\begin{align}\label{eq:NormTot}
 \norm{(w,a)}_{T_\mu} \eqdef \sqrt{\norm{w}_{T^\hor_\mu}^2+\norm{a}_{T^\ver_\mu}^2} \fstop
\end{align}

\paragraph{Tangent spaces}
We respectively define the
\begin{itemize}[leftmargin=1.1em]
\item \emph{horizontal tangent space $T^\hor_\mu\mcM(M)$ to~$\mcM(M)$ at~$\mu$} as the completion of~$\mfX^\infty_c(M)$;
\item \emph{vertical tangent space $T^\ver_\mu\mcM(M)$ to~$\mcM(M)$ at~$\mu$} as the completion of~$\Cc^\infty(M)$;
\item (\emph{total}) \emph{tangent space $T_\mu\mcM(M)$ to~$\mcM(M)$ at~$\mu$} as the completion of~$\mfg(M)$.
\end{itemize}
The three spaces above are Hilbert spaces when endowed with the (non-relabeled) extensions of the respective pre-Hilbert norms.
As detailed below, these spaces have been widely considered in the literature.
Firstly, let us note that horizontal objects are occasionally called \emph{intrinsic} and vertical objects are occasionally called \emph{extrinsic}.
This terminology is motivated from the perspective of the geometry of~$\mcP(M)$, since the action of~\ref{eq:ActionShifts} leaves~$\mcP(M)$ invariant, while the action~\ref{eq:ActionMultipliers} does not.
We prefer the terminology of horizontal/vertical tangent space, since we mostly consider actions on~$\mcM(M)$, rather than on~$\mcP(M)$.

When~$\mu$ is a configuration, i.e.\ $\N$-valued on all compact sets, the space~$T^\hor_\mu\mcM(M)$ was first considered in~\cite{AlbKonRoe98,AlbKonRoe98b}.
When~$\mu$ is a probability measure in the $L^2$-Wasserstein space~$\mcP_2(M)$, it was considered in~\cite{GanKimPac10}.
When~$\mu\in\mcP(M)$ and in the context of group actions, it has been widely considered, e.g., in~\cite{LzDS17+,LzDS19b,RehRoe23,RenWan22}.
It is an extension to vector fields of non-gradient type of the classical tangent space to~$\mcP_2(M)$ in e.g.~\cite{Ott01, AmbGigSav08}, also cf.~\cite[\S{3.D.5}, pp.~155ff.]{Ebe99}.
For further comments on the terminology as well as for other notions of tangent spaces to~$\mcP_2(M)$ (hence to~$\mcM(M)$) see the Appendix to~\cite{LzDS19b} and references therein.

The space~$T^\ver_\mu\mcM(M)$ has been widely considered, again usually for measures in~$\mcP(M)$, occasionally with an equivalent norm, e.g.~\cite{OveRoeSch95,Sch97, Han02}, in relation with the Dirichlet form of the Fleming--Viot and related processes,~\cite{FleVio79}.

Finally, the space~$T_\mu\mcM(M)$ is an extension to vector fields of non-gradient type of the \emph{Hellinger--Kantorovich tangent space} in~\cite{LMS22,GM17}.
We have a natural orthogonal decomposition
\begin{equation}\label{eq:OrthoTangent}
T_\mu\mcM(M)\cong T^\hor_\mu\mcM(M) \oplus^{\perp_\mu} T^\ver_\mu\mcM(M)\comma
\end{equation}
where~$\perp_\mu$ denotes orthogonality w.r.t.\ the~$T_\mu\mcM(M)$-scalar product.

\paragraph{Gradients}Now, fix~$u\colon \mcM(M)\to\R$, sufficiently smooth. Whenever the linear operator
\[
(w,a)\longmapsto(\partial_{w,a} u)_\mu
\]
is $\norm{\,\cdot\,}_{T_\mu}$-bounded on~$\mfg(M)$, it extends uniquely to~$T_\mu\mcM(M)$.
Since the latter is a Hilbert space, by the standard Riesz Representation Theorem for Hilbert spaces this extension may be represented as
\begin{equation}\label{eq:GradRepresentation}
(w,a)\longmapsto \scalar{(\boldnabla u)_\mu}{(w,a)}_{T_\mu} \comma \qquad (w,a)\in T_\mu\mcM(M)\comma
\end{equation}
for some unique element~$(\boldnabla u)_\mu$ of~$T_\mu\mcM(M)$, satisfying
\[
\scalar{(\boldnabla u)_\mu}{(w,a)}_{T_\mu} = (\partial_{w,a} u)_\mu \comma \qquad (w,a)\in\mfg(M)\fstop
\]
We stress that the norm~$\norm{\emparg}_{T_\mu}$ and therefore the gradient~$\boldnabla u$ both depend on the choice of the Riemannian metric~$g$.
We assume~$g$ to be fixed and thus omit this dependence from the notation.

Finally, we denote by~$\gradW$, resp.~$\gradH$, the component of~$\boldnabla$ in~$T^\hor_\mu\mcM(M)$, resp.~$T^\ver_\mu\mcM(M)$, w.r.t.\ the orthogonal decomposition in~\eqref{eq:OrthoTangent}, satisfying
\begin{align} \label{eq:GradRepresentationhor}
\tscalar{(\gradW u)_\mu}{w}_{T^\hor_\mu}=&\ (\partial_w u)_\mu\comma & w&\in \mfX^\infty_c(M) \comma
\\ \label{eq:GradRepresentationver}
\tscalar{(\gradH u)_\mu}{a}_{T^\ver_\mu}=&\ (\partial_a u)_\mu\comma & a&\in \Cc^\infty(M) \comma
\end{align}
and
\begin{equation}\label{eq:OrthoGrad}
(\boldnabla u)_\mu = \tparen{(\gradH u)_\mu, (\gradW u)_\mu} \fstop
\end{equation}
The operators~$\boldnabla^\sharp$, with~$\sharp=\emp,\hor,\ver$, enjoy some of the properties that are expected of a natural gradient operator.
For example, it is readily verified from the standard Leibniz rule for~$\diff_t$ in~\eqref{eq:SingleDirDer} that they satisfy the \emph{Leibniz rule}
\begin{equation}\label{eq:Leibniz}
\boldnabla^\sharp (uv) = u\boldnabla^\sharp v + v\boldnabla^\sharp u
\end{equation}
thus acting as derivations on the space of smooth functions and taking values into the space of sections to the corresponding tangent bundles, viz.
\begin{equation}\label{eq:TangentBundles}
\boldnabla^\sharp u\colon \mcM(M) \longrightarrow T^\sharp\mcM(M)\eqdef \bigcup_{\mu\in\mcM(M)} T^\sharp_\mu\mcM(M)\comma \qquad \sharp= \emp,\hor,\ver\fstop
\end{equation}

\paragraph{Dirichlet form and cylinder functions}
Let~$\mcQ$ be a non-negative Radon ($\sigma$-finite, not necessarily finite) Borel measure on~$\mcM(M)$. 
Then,
\begin{equation}\label{eq:DirichletForm0}
\mcE(u,v)\eqdef \int \braket{ \scalar{(\gradW u)_\mu}{(\gradW v)_\mu}_{T^\hor_\mu}+ 4\scalar{(\gradH u)_\mu}{(\gradH v)_\mu }_{T^\ver_\mu}} \diff \mcQ(\mu)
\end{equation}
is a symmetric bilinear form on~$L^2(\mcM(M), \mcQ)$ defined on the class of sufficiently smooth functions. The presence of a factor~$4$ in the vertical-direction will be clarified in~\S\ref{sss:IntroHK} below.
\emph{If} this form is densely defined and closable in~$L^2(\mcM(M), \mcQ)$, it is readily seen that its closure is a local Dirichlet form.

Let us now turn to the definition of some class of smooth functions sufficiently large for our purposes and admitting a gradient as in \eqref{eq:GradRepresentation} (while in this introduction we focus only on one class of such functions, in the rest of the work we will use also other classes of functions).
For every linear functional of the form~$f^\trid$,~$f\in\Cc^\infty(M)$, we may easily compute
\[
(\partial_{w,a} f^\trid)_\mu= \int_M \tparen{(\diff f) w+f a}\diff\mu \comma
\]
where~$\diff f$ denotes the exterior differential of~$f$ on~$M$.
For~$w\in \mfX^\infty_c(M)$ and~$a\in\Cc^\infty(M)$, the functional~$(\partial_{w,a} f^\trid)_\mu$ is thus, again, the potential energy induced by~$(\diff f)w+f a \in \Cc^\infty(M)$.
Thus, the potential energy~$f^\trid$ is the prototypical smooth function, and we consider the algebra of smooth \emph{cylinder functions} induced by potential energies, viz.
\begin{align}\label{eq:cylman}
\FC{\infty,\infty}{c,c}{\infty}{c}\eqdef \set{\hat u\colon \mcM(M)\to\R : \begin{gathered} u=F\circ \mbff^\trid\comma F \in\rmC^{\infty}_{c}(\R^{k+1};\R) \comma
\\
k\in \N\comma \mbff\eqdef\tseq{ f_i}_{0\leq i\leq k} \comma f_0\equiv 1 \comma 
\\
 f_i\in \rmC^{\infty}_{c}\tparen{ M} \text{ for } 1\leq i\leq k\end{gathered}} \fstop
\end{align}

Note that every~$u\in \FC{\infty,\infty}{c,c}{\infty}{c} $ vanishes on measures with total mass~$\mu M> \kappa\eqdef \max\sup_{\boldsymbol t\in \R^k}\supp F(\emparg, t_1, \dotsc, t_k)$, and that~$u(\mu)=u(\mu\restr{K})$ where~$K\eqdef \cup_{i\leq N}\supp f_i$ is a compact subset of~$M$.
Since the subset~$\set{\mu\in\mcM(M): \mu M=\mu K\leq \kappa}$ is compact in~$\mcM(M)$ by Prokhorov's Theorem, we have~$\FC{\infty,\infty}{c,c}{\infty}{c}\subset L^2(\mcM(M),\mcQ)$ for every~$\mcQ$, and we may thus consider the form~$\tparen{\mcE,\FC{\infty,\infty}{c,c}{\infty}{c}}$.

\subsection{A candidate measure}
In this section we discuss a natural choice of the measure~$\mcQ$ in~\eqref{eq:DirichletForm0}, namely the \emph{multiplicative infinite-dimensional Lebesgue measure}.
We start by recalling the notion of (quasi-)invariance of a measure w.r.t.\ a group action.

\subsubsection{Quasi-invariance under group actions}\label{sss:PQI}
Let~$G$ be a topological group acting measurably on a $\sigma$-finite measure space~$(\Omega,\msF,\mcQ)$ and write~$\acts\colon G\times \Omega\to \Omega$, $(g,\omega)\mapsto g.\omega$ for the action.

\begin{definition}\label{d:PQI}
We say that~$\mcQ$ is:
\begin{enumerate}[$(a)$]
\item\label{i:d:PQI:1} ($\acts$-)\emph{quasi-invariant} if
\[
\mcQ_g\eqdef (g.)_\pfwd\mcQ= R_g\cdot \mcQ
\]
for some $\msF$-measurable Radon--Nikod\'ym derivative~$R_g\colon \Omega\to [0,\infty]$;

\item\label{i:d:PQI:2} \emph{projectively \emph{($\acts$-)}invariant} if, additionally,~$R_g$ is a constant function on~$\Omega$ (possibly depending on~$g$);

\item\label{i:d:PQI:3} ($\acts$-)\emph{invariant} if, additionally,~$\mcQ_g=\mcQ$ for every~$g\in G$;

\item\label{i:d:PQI:4} \emph{partially} ($\acts$-)\emph{quasi-invariant}~\cite[Dfn.~9]{KonLytVer15} if there exists a filtration~$\seq{\msF_t}_{t\in T}$ of~$\msF$, indexed by a (possibly uncountable) totally ordered set~$T$, so that
\begin{itemize}
\item $\msF$ is the minimal $\sigma$-algebra generated by~$\seq{\msF_t}_t$;
\item for each~$g\in G$ and~$s\in T$ there exists~$t\in T$ such that~$g.\msF_s\subset \msF_t$ (in which case it must be~$s\leq t$);
\item for each~$g\in G$ and~$t\in T$ the restriction~$\mcQ^t$ of~$\mcQ$ to~$\msF_t$ is $\acts$-quasi-invariant with $\msF_t$-measurable Radon--Nikod\'ym derivative~$R_{g,t}\colon \Omega\to [0,\infty]$, viz.~$(g.)_\pfwd \mcQ^t=R_{g,t} \cdot \mcQ^t$.
\end{itemize}
\end{enumerate}
Note that~\ref{i:d:PQI:3}$\implies$\ref{i:d:PQI:2}$\implies$\ref{i:d:PQI:1}$\implies$\ref{i:d:PQI:4}.
\end{definition}

These properties are related to the theory of representations of~$G$.
Indeed, each \mbox{($\acts$-quasi-)}inv\-ariant measure~$\mcQ$ induces a so-called (\emph{quasi-})\emph{regular representation} of~$G$ on the Hilbert space~$L^2(\mcM(X), \mcQ)$ by defining a unitary operator
\[
U^\mcQ_g\colon f\longmapsto R_g^{1/2} \cdot f \circ (g^{-1}.) \fstop
\]
If~$\mcQ$ is merely partially quasi-invariant, no representation of~$G$ is induced on $L^2(\mcM(X),\mcQ)$.
However, a representation is induced on~$L^2(\mcM(X),\mcQ^t)$ for every~$t$ and the family of all such representations may still be used to infer properties of the action on~$L^2(\mcM(X), \mcQ)$.
We refer the reader to the Introduction in~\cite{KonLytVer15} for further heuristics about partial quasi-invariance.

In the setting of~\S\ref{sss:DifferentiationIntro}, the (partial quasi-)invariance of~$\mcQ$ is also instrumental in establishing the closability of the form~\eqref{eq:DirichletForm0}.
Notable examples of this fact are the forms induced by: the Dirichlet--Ferguson measure (see below), the $\Diff^+_0(M)$\ref{eq:ActionShifts}-quasi-invariant \emph{entropic measure} in~\cite{vReStu09}, general $\Diff^+_0(M)$\ref{eq:ActionShifts}-quasi-invariant measures on configuration spaces in~\cite{RoeSch99}, and on~$\mcP(M)$ in~\cite{LzDS19b}.

We shall therefore seek for measures on~$\mcM(M)$ that are (partially) $\mfG(M)$\ref{eq:ActionSemidirect}-quasi-invariant.
Nonetheless, let us first address the case when~$X$ has no smooth structure.

\subsubsection{The Dirichlet--Ferguson measure}\label{sec:dfm}
Let~$I\eqdef [0,1]$ and denote by~$\Beta_\beta$ the Beta distribution of parameters~$1$ and~$\beta>0$, viz.
\[
\diff\Beta_\beta(t)\eqdef \beta(1-t)^{\beta-1}\diff t \comma \qquad t\in I\fstop
\]

Let~$\nu\in\mcP(X)$ be diffuse (i.e.\ atomless) and, for ease of notation, set
\begin{equation}\label{eq:Shorthand}
\mu^x_t\eqdef (1-t)\mu+t \delta_x\comma \qquad \mu\in\mcM(X)\comma x\in X\comma  t\in I\fstop
\end{equation}

The \emph{Dirichlet--Ferguson measure~$\DF{\beta\nu}$ with intensity~$\beta\nu$}~\cite{Fer73} is the unique Borel probability measure on~$\mcP(X)$ satisfying, for every bounded Borel~$F\colon \mcP(X)\times X\times I\to\R$, the Mecke-type identity~\cite{LzDSLyt17} (also cf.~\cite{Set94})
\begin{equation}\label{eq:MeckeDF}
\int\int_X F(\eta,x,\eta_x) \diff\eta(x) \diff\DF{\beta\nu}(\eta) = \int \int_I \int_X F(\eta^x_t,x,t)\diff\nu(x)\, \diff\Beta_\beta(t)\, \diff\DF{\beta\nu}(\eta)\fstop
\end{equation}

We refer the reader to~\cite{Fer73} for the original construction of~$\DF{\beta \nu}$ via Kolmogorov consistency as a limit of Dirichlet distributions on standard simplices, to~\cite{LzDS19a} for a construction and characterization of~$\DF{\beta \nu}$ via Fourier transform, and to~\cite{Las18,Set94,JiaDicKuo04,LzDSQua23} for other characterizations.

 Dirichlet--Ferguson measures have appeared throughout mathematics, with many important applications to ---only to name a few--- the theory of random permutations (see e.g.~\cite{Ber09} and references therein), Bayesian non-parametrics (see e.g.~\cite{Fer73}), population genetics (see e.g.~\cite{Fen10} and references therein), infinite-dimensional stochastic analysis~\cite{OveRoeSch95,LzDS17+}, and the representation theory of groups of diffeomorphisms and multipliers~\cite{LzDS17+,LzDS19a}.

\subsubsection{The multiplicative infinite-dimensional Lebesgue measure}\label{sss:InfDimLeb}
For each~$\theta>0$, we define a $\sigma$-finite Borel measure on~$\R^+$ by
\begin{equation}\label{eq:IntroLambda}
\diff\lambda_\theta(t)\eqdef \frac{t^{\theta-1}\,\diff t}{\Gamma(\theta)} \comma \qquad t>0\fstop
\end{equation}
The family~$\seq{\lambda_\theta}_{\theta>0}$ is a convolution semigroup, in the sense that~$\lambda_\theta*\lambda_\tau=\lambda_{\theta+\tau}$ for all~$\theta,\tau>0$.

In~\cite{TsiVerYor01}, N.V.~Tsilevich, A.M.~Vershik, and M.~Yor introduced the \emph{multiplicative infinite-dimensional Lebesgue measure~$\LP{\theta,\nu}$ with shape parameter~$\theta>0$ and intensity measure~$\nu \in \mcP(X)$} as
\begin{equation}\label{eq:InfLebesgueIsomor}
\LP{\theta,\nu} \eqdef \isomor^{-1}_\pfwd\tparen{\DF{\nu} \otimes \lambda_\theta} \comm
\end{equation}
with~$J$ as in~\eqref{eq:IsomorphismMbp}.
The $\sigma$-finite measure~$\LP{\theta, \nu}$ ---which ought to be regarded as an infinite-dimensional analogue of~$\lambda_\theta$--- displays a number of remarkable properties.

Here and everywhere in the following, let
\begin{equation}\label{eq:IntroBall}
\mcB_r\eqdef \set{\mu\in\mcM(X) : \mu X \leq r}\comma \qquad r\geq 0 \fstop
\end{equation}

\begin{proposition}[Tsilevich--Vershik--Yor, see~{\cite[\S4]{TsiVerYor01}}]\label{p:TVY}
The measures~$\LP{\theta, \nu}$, with~$\theta>0$, enjoy the following properties:
\begin{enumerate}[$(i)$]
\item\label{i:p:TVY:1} $\seq{\LP{\theta, \nu}}_{\theta>0}$ is a convolution semigroup, viz.~$\LP{\theta, \nu}*\LP{\tau, \nu}= \LP{\theta+\tau, \nu}$ for every~$\theta,\tau>0$;

\item\label{i:p:TVY:2} $\LP{\theta, \nu}$ is projectively invariant for the \ref{eq:ActionMultipliers}-action of~$\Multi(X)$, with Radon--Nikod\'ym derivative
\begin{equation}\label{eq:ProjQInvLTheta}
\frac{\diff (k \cdot)_\pfwd \LP{\theta, \nu}}{\diff \LP{\theta, \nu}}\equiv e^{-\theta\, \int \log k \de \nu} \comma \qquad k \in\Multi(X)\fstop
\end{equation}
In particular, $\LP{\theta, \nu}$ is invariant for the \ref{eq:ActionMultipliers}-action of~$\Multi_\nu(X)$ and $\theta$-homo\-gen\-eous, i.e.\ $\LP{\theta, \nu}(c\emparg) = c^\theta \LP{\theta, \nu}$ for every constant~$c>0$.

\item\label{i:p:TVY:3} $\LP{\theta, \nu}$ is invariant for the \ref{eq:ActionShifts}-action of~$\mfS_\nu(X)$.
\end{enumerate}
\end{proposition}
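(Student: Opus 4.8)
The plan is to reduce all three statements to the finite-dimensional marginals of $\LP{\theta,\nu}$ together with the convolution semigroup law $\lambda_\theta*\lambda_\tau=\lambda_{\theta+\tau}$ (recorded above) and the elementary scaling identity $(c\cdot)_\pfwd\lambda_a=c^{-a}\lambda_a$ for $c,a>0$. The only computation that is not pure bookkeeping, and the one I would carry out first, is the identification of these marginals: for every finite Borel partition $P=(A_1,\dots,A_n)$ of $X$, writing $\ev_P\colon\mu\mapsto(\mu A_1,\dots,\mu A_n)$, one has
\[
(\ev_P)_\pfwd\LP{\theta,\nu}=\bigotimes_{i=1}^n\lambda_{\theta\nu(A_i)}\qquad\text{on }(\R^+)^n\fstop
\]
This is the Gamma--Dirichlet calculus: under $\isomor$ the coordinate $\mu A_i$ factors as $(\mu X)\cdot N(\mu)(A_i)$ with $(N(\mu)(A_i))_i$ Dirichlet-distributed and independent of $\mu X\sim\lambda_\theta$, and the weight $t^{\theta-1}$ of $\lambda_\theta$ is precisely the one for which, after this factorisation, all powers of the total mass $\mu X$ cancel and the resulting joint density is the product $\prod_i x_i^{\theta\nu(A_i)-1}/\Gamma(\theta\nu(A_i))$ --- this is the exact point at which the \emph{multiplicative} character of $\LP{\theta,\nu}$ enters. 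Equivalently, and more convenient for the verifications below, the same computation gives the Laplace functional
\[
\int\rme^{-f^\trid\mu}\diff\LP{\theta,\nu}(\mu)=\rme^{-\theta\int\log f\,\diff\nu}\comma\qquad f\in\rmB_b(X)\text{ with }\inf f>0\fstop
\]
Taking $f\equiv 1$ shows that $\rme^{-\mu X}\cdot\LP{\theta,\nu}$ is a probability measure, in fact the law of the Gamma random measure of intensity $\theta\nu$ --- which already makes $(i)$ and part of $(ii)$ transparent.

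\paragraph{Identification principle}
The cylinder sets $\ev_P^{-1}(R)$, over finite Borel partitions $P$ of $X$ and Borel $R\subseteq(\R^+)^n$, form a $\pi$-system generating $\BorelSets{\mcM(X)}$; the sets $\mcB_r$ belong to it (cylinders over the trivial partition), satisfy $\LP{\theta,\nu}(\mcB_r)=r^\theta/\Gamma(\theta+1)<\infty$, and exhaust $\mcM(X)$. Hence any $\sigma$-finite Borel measure on $\mcM(X)$ finite on every $\mcB_r$ is determined by the family of its marginals $(\ev_P)_\pfwd$ --- equivalently, since $\rme^{-\mu X}\cdot\LP{\theta,\nu}$ is finite, by its Laplace functional on $\set{f\in\rmB_b(X):\inf f>0}$. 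I would apply this three times.

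\paragraph{The three properties}
Each reduces to a coordinatewise computation. For $(i)$, addition of measures is coordinatewise, $\ev_P(\mu_1+\mu_2)=\ev_P(\mu_1)+\ev_P(\mu_2)$, so the $P$-marginal of $\LP{\theta,\nu}*\LP{\tau,\nu}$ is $\bigotimes_i\tparen{\lambda_{\theta\nu(A_i)}*\lambda_{\tau\nu(A_i)}}=\bigotimes_i\lambda_{(\theta+\tau)\nu(A_i)}$, whence $\LP{\theta,\nu}*\LP{\tau,\nu}=\LP{\theta+\tau,\nu}$ by the identification principle (finiteness on $\mcB_r$ is clear since $\lambda_\theta*\lambda_\tau=\lambda_{\theta+\tau}$). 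For $(ii)$, if $k=\rme^a$ with $a$ constant equal to $\log c_i$ on $A_i$, the \ref{eq:ActionMultipliers}-action becomes the diagonal scaling $(x_i)_i\mapsto(c_i x_i)_i$, so by the scaling identity $(k\cdot)_\pfwd\LP{\theta,\nu}$ has $P$-marginal $\tparen{\prod_i c_i^{-\theta\nu(A_i)}}\bigotimes_i\lambda_{\theta\nu(A_i)}$; the constant $\prod_i c_i^{-\theta\nu(A_i)}=\rme^{-\theta\int\log k\,\diff\nu}$ is insensitive to refining $P$, so the identification principle yields~\eqref{eq:ProjQInvLTheta} for simple $k$, and a uniform approximation of $a\in\rmB_b(X)$ by simple functions combined with dominated convergence in the Laplace functional extends it to all $k\in\Multi(X)$; the ``in particular'' clause is the case $\int\log k\,\diff\nu=0$ (invariance under $\Multi_\nu(X)$), resp.\ $k\equiv c$ ($\theta$-homogeneity, also immediate from the $\lambda_\theta$-factor). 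For $(iii)$, if $\psi\in\Shift_\nu(X)$ then $\ev_P\circ(\psi.)=\ev_{\psi^{-1}P}$ and $\nu(\psi^{-1}A_i)=(\psi_\pfwd\nu)(A_i)=\nu(A_i)$, so every marginal is unchanged and $(\psi.)_\pfwd\LP{\theta,\nu}=\LP{\theta,\nu}$.

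\paragraph{Main obstacle}
I expect the crux to be the first step --- computing the cylindrical marginals $\bigotimes_i\lambda_{\theta\nu(A_i)}$, equivalently the Laplace functional $\rme^{-\theta\int\log f\,\diff\nu}$ --- where one must follow precisely the cancellation of all powers of $\mu X$ between the $\lambda_\theta$-weight, the Dirichlet normalisation, and the Jacobian; this is where the multiplicative structure does the work. Everything afterwards is routine: the three verifications are one-line coordinatewise statements, and the only remaining care --- $\sigma$-finiteness --- is handled uniformly by checking all identities first on the exhausting family $\set{\mcB_r}_{r>0}$.
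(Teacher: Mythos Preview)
The paper does not prove this proposition: it is stated with attribution ``Tsilevich--Vershik--Yor, see~[\S4, TsiVerYor01]'' and no proof is given, so there is nothing to compare against directly. Your approach via the cylindrical marginals $(\ev_P)_\pfwd\LP{\theta,\nu}=\bigotimes_i\lambda_{\theta\nu(A_i)}$ (equivalently, the Laplace functional $\int\rme^{-f^\trid\mu}\diff\LP{\theta,\nu}=\rme^{-\theta\int\log f\,\diff\nu}$) is correct and is essentially the standard route from the cited reference; indeed, the paper later derives exactly this Laplace formula (see~\eqref{eq:formulatrans} in the proof of Proposition~\ref{p:UniquenessLP}) in the converse direction, starting from projective invariance and recovering the identification with~$\LP{\theta,\nu}$.
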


In combination with several other properties, the invariance for the \ref{eq:ActionMultipliers}-action of~$\Multi_\nu(X)$ or the projective invariance for the \ref{eq:ActionMultipliers}-action of~$\Multi(X)$ have been used to characterize the measures~$\LP{\theta, \nu}$; see~\cite[Thm.~4.2]{TsiVerYor01} and~\cite[Thm.~5]{Ver08b}.
We extend these uniqueness results by providing the following characterization under minimal assumptions.

\begin{proposition}[Prop.~\ref{p:UniquenessLP}]\label{p:UniquenessIntro}
The following are equivalent:
\begin{enumerate}[$(i)$]
\item $\mcQ$ is a non-negative Borel measure on~$\mcM(X)$ satisfying:
\begin{itemize}[wide]
\item \emph{non-triviality}: $\mcQ\mcB_0=0$, cf.~\eqref{eq:IntroBall};
\item \emph{normalization}: $\mcQ\mcB_1=1$, cf.~\eqref{eq:IntroBall};
\item \emph{\ref{eq:ActionMultipliers}-invariance}: $\mcQ$ is projectively invariant for the \ref{eq:ActionMultipliers}-action of~$\Multi(X)$.
\end{itemize}

\item $\mcQ=\frac{1}{\Gamma(\theta+1)}\LP{\theta,\nu}$ with
\[
\theta\eqdef \int \mu X \, \rme^{-\mu X}\diff\mcQ(\mu)\qquad \text{and} \qquad \nu\eqdef \theta^{-1} \int \mu(\emparg)\, \rme^{-\mu X}\diff\mcQ(\mu)\fstop
\]
\end{enumerate}
\end{proposition}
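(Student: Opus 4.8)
The plan is to prove the two implications separately, the substantial one being $(i)\Rightarrow(ii)$; the converse $(ii)\Rightarrow(i)$ will follow by directly checking the three listed properties against Proposition~\ref{p:TVY}. For the converse: non-triviality ($\LP{\theta,\nu}\mcB_0=0$) holds because under the isomorphism~\eqref{eq:IsomorphismMbp} the set~$\mcB_0$ corresponds to~$\mcP(X)\times\set{0}$, which is $\lambda_\theta$-null since $\lambda_\theta$ has no atom at~$0$; projective \ref{eq:ActionMultipliers}-invariance is exactly Proposition~\ref{p:TVY}\ref{i:p:TVY:2}; and the normalization constant is computed from $\LP{\theta,\nu}\mcB_1 = (\DF{\nu}\otimes\lambda_\theta)\tparen{\mcP(X)\times[0,1]} = \lambda_\theta([0,1]) = \int_0^1 t^{\theta-1}\diff t/\Gamma(\theta) = 1/(\theta\Gamma(\theta)) = 1/\Gamma(\theta+1)$, so that $\mcQ=\frac{1}{\Gamma(\theta+1)}\LP{\theta,\nu}$ indeed satisfies $\mcQ\mcB_1=1$. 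Finally the self-consistency of the formulas for~$\theta$ and~$\nu$ in~$(ii)$ is a Gamma-integral computation: $\int \mu X\,\rme^{-\mu X}\diff\LP{\theta,\nu}(\mu) = \int_0^\infty t\,\rme^{-t}\diff\lambda_\theta(t) = \Gamma(\theta+1)/\Gamma(\theta) = \theta$, which after dividing by~$\Gamma(\theta+1)$ returns the stated~$\theta$; similarly $\int \mu(\emparg)\,\rme^{-\mu X}\diff\LP{\theta,\nu}(\mu)$ disintegrates, via~\eqref{eq:InfLebesgueIsomor}, into $\tparen{\int_0^\infty t\,\rme^{-t}\diff\lambda_\theta(t)}\cdot\tparen{\int_{\mcP(X)}\eta(\emparg)\diff\DF{\nu}(\eta)} = \theta\cdot\nu$ (the last equality because the Dirichlet--Ferguson measure $\DF{\nu}$ has mean measure~$\nu$, a consequence of the Mecke identity~\eqref{eq:MeckeDF} applied to $F$ independent of the last two variables).

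For the main implication $(i)\Rightarrow(ii)$, the strategy is to transport the problem through the isomorphism~$\isomor$ of~\eqref{eq:IsomorphismMbp} and disintegrate. Write $\mcQ' \eqdef \isomor_\pfwd\mcQ$, a Borel measure on $\mcP(X)\times\R^+$. First I would record what the three hypotheses say about~$\mcQ'$. Non-triviality says $\mcQ'\tparen{\mcP(X)\times\set{0}}=0$; projective \ref{eq:ActionMultipliers}-invariance is the key structural input and, applied in particular to \emph{constant} multipliers~$k\equiv c>0$, says that $\mcQ$ is $\theta$-homogeneous for some exponent, i.e.\ scaling the total mass by~$c$ scales~$\mcQ$ by a constant; combined with the $\R^+$-factor this forces the $\R^+$-marginal behaviour of~$\mcQ'$ to be governed by a homogeneous measure $t^{\theta-1}\diff t$ up to a multiplicative constant. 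The heart of the argument is to pin down the exponent~$\theta$ and to show that, after disintegrating $\mcQ'$ along the projection $\mcP(X)\times\R^+\to\mcP(X)$, the conditional measures on the $\mcP(X)$-slices are constant in the $\R^+$-variable and equal to a fixed probability measure on~$\mcP(X)$; projective invariance under \emph{general} (non-constant) multipliers~$k$ then forces that fixed probability measure to be a Dirichlet--Ferguson measure, by invoking the existing characterization results (\cite[Thm.~4.2]{TsiVerYor01}, \cite[Thm.~5]{Ver08b}) — or by a self-contained argument reproving the relevant part of them — which say precisely that $\DF{\nu}$ is characterized among probability measures on~$\mcP(X)$ by its quasi-invariance/projective-invariance behaviour under the normalized action of multipliers. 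Once the structure $\mcQ = \text{const}\cdot\isomor^{-1}_\pfwd\tparen{\DF{\nu}\otimes\lambda_\theta}$ is established, the two normalization conditions $\mcQ\mcB_0=0$ and $\mcQ\mcB_1=1$ fix the constant to be $1/\Gamma(\theta+1)$ exactly as in the converse direction, and the displayed integral formulas for~$\theta$ and~$\nu$ in~$(ii)$ are then read off by the same Gamma-integral and mean-measure computations, which shows the normalization $\mcQ\mcB_1=1$ and the representation are mutually consistent and identify $\theta,\nu$ uniquely in terms of~$\mcQ$.

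The main obstacle I anticipate is the step that upgrades projective \ref{eq:ActionMultipliers}-invariance into the product structure $\DF{\nu}\otimes\lambda_\theta$ on $\mcP(X)\times\R^+$ — i.e.\ showing both that the $\R^+$-fibre and the $\mcP(X)$-fibre decouple and that each factor is the claimed one. The decoupling is delicate because the action \ref{eq:ActionMultipliers} of a general multiplier $k$ does \emph{not} respect the product decomposition~\eqref{eq:IsomorphismMbp}: multiplying~$\mu$ by~$k$ changes both $\mu X$ (to $\int k\diff\mu$) and $\normaliz(\mu)$ (to $\normaliz(k\cdot\mu)$) in a coupled way, so the Radon--Nikod\'ym cocycle in~\eqref{eq:ProjQInvLTheta} mixes the two coordinates. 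The right way through, I expect, is to first exploit constant multipliers to get exact $\theta$-homogeneity in the mass variable (fixing the form $t^{\theta-1}\diff t$ of the $\R^+$-part and defining~$\theta$ via the stated moment integral, finiteness of which needs a short argument using $\mcQ\mcB_1=1$ and homogeneity to control tails), then to use the semigroup/convolution structure or a change-of-variables in the multiplier action to reduce the genuinely non-abelian-looking constraint to the known Dirichlet--Ferguson characterization on the $\mcP(X)$-slice; verifying that the hypotheses of that characterization theorem are met under our minimal assumptions (in particular that the projective-invariance constant is forced to have the exponential form $\rme^{-\theta\int\log k\,\diff\nu}$ for some probability measure~$\nu$, rather than some other multiplicative functional of~$k$) is where the real work lies.
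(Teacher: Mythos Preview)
Your treatment of $(ii)\Rightarrow(i)$ and the self-consistency checks for the formulas defining~$\theta,\nu$ are correct.

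For $(i)\Rightarrow(ii)$, your route via the isomorphism~$\isomor$ is genuinely different from the paper's, and the first half of it works: constant multipliers do force $\theta$-homogeneity of the $\R^+$-marginal, and the scaling invariance then forces the disintegration kernels~$\mcQ'_t$ to be $\lambda_\theta$-a.e.\ constant, yielding a product $\mcQ'=\mathcal D\otimes\lambda_\theta$ (up to normalization) for some probability~$\mathcal D$ on~$\mcP(X)$. A short computation then shows that projective invariance under a general multiplier~$k$ translates into the quasi-invariance constraint $(\Phi_k)_\pfwd(\psi_k^{-\theta}\mathcal D)=d(k)\,\mathcal D$, where $\Phi_k(\eta)=\normaliz(k\eta)$ and $\psi_k(\eta)=\int k\,\diff\eta$.

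The gap is in the last step: you propose to conclude $\mathcal D=\DF{\nu}$ by ``invoking the existing characterization results~\cite[Thm.~4.2]{TsiVerYor01},~\cite[Thm.~5]{Ver08b}'', but those theorems characterize~$\LP{\theta,\nu}$ on~$\mcM(X)$, not~$\DF{\nu}$ on~$\mcP(X)$, and the present proposition is explicitly framed as an \emph{extension} of them under weaker hypotheses---so citing them is circular, and the hypotheses you would need to verify are precisely what remains to be proved. Your fallback (``or by a self-contained argument'') is where the entire content lies, and you have not indicated how to carry it out.

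The paper avoids the disintegration altogether and works directly on~$\mcM(X)$: it studies the multiplicative cocycle~$a\mapsto d(\rme^a)$, proves it is continuous and satisfies $d(\rme^{ra})=d(\rme^a)^r$, and then shows that $A\mapsto-\log d(\rme^{\car_A})$ defines a finite Borel measure~$\alpha=\theta\nu$. From this it computes the Laplace transform $\int\rme^{-u^\trid\mu}\diff\mcQ(\mu)=k_1\exp(-\theta\int\log u\,\diff\nu)$ on simple~$u$, extends by approximation, and recognizes $\rme^{-\mu X}\diff\mcQ(\mu)$ as the Gamma measure~$\GP{\theta,\nu}$ via the Bernstein-type uniqueness theorem for Laplace transforms. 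This is more direct: the measure~$\nu$ is constructed from the cocycle itself, rather than extracted from a characterization of the simplicial part.
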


Let us now turn to the smooth category.
In the case when~$X=M$ is a manifold as in~\S\ref{sss:DifferentiationIntro}, Proposition~\ref{p:UniquenessIntro} holds as well if we replace~$\Multi(M)$ by its smaller analogue~$\exp[\rmC^\infty_c(M)]$ in the smooth category.
Now, let us further assume, as in~\S\ref{sss:TangentSpacesIntro}, that~$M$ is endowed with a Riemannian metric~$g$.
As shown below, in this case we may further completely identify the `pseudo-intensity' measure~$\nu$.

Indeed, we look for measures~$\mcQ$ on~$\mcM(M)$ which are both \emph{natural} (in the smooth category) and \emph{universal}.
By `natural' we mean that given a measure~$\mcQ$ on a  smooth, connected, orientable  Riemannian manifold~$(M,g)$ we can identify~$\mcQ$ by~$g$.
Rigorously,~$\mcQ$ is invariant for the \eqref{eq:ActionShifts}-action on~$\mcM(M)$ of the isometry group~$\Iso(g)$ of~$(M,g)$.
(Of course, depending on~$(M,g)$, this constraint might be void as~$\Iso(g)$ might be the trivial group.)
By `universal' we mean that for \emph{every} (smooth, connected, orientable) manifold~$M$ and every Riemannian metric~$g$, we can find a natural~$\mcQ$.
Rigorously, for every manifold~$M$ there exists a multi-valued map~$g\rightrightarrows\mcQ$ whose image consists of natural measures.

Now, assume further that~$\mcQ$ is as in Proposition~\ref{p:UniquenessIntro}.
It is not difficult to show that (the infinite-dimensional multiplicative Lebesgue measure)~$\mcQ$ is natural and universal if and only if~$\nu$ is invariant for the  natural action on~$M$ of~$\Iso(g)$.
Since~$\mcQ$ is completely determined by its pseudo-intensity~$\nu$ and by the scale (homogeneity) parameter~$\theta>0$, it is reasonable to strengthen the above notion of naturalness by requiring that~$\mcQ$ be determined only by~$\vol_g$ rather than by~$g$.
Rigorously, we require the multi-valued map~$g\rightrightarrows\mcQ$ to factor over the map~$g\mapsto \vol_g$ assigning to~$g$ its volume measure~$\vol_g$.
That is, we have a multi-valued map~$\vol_g\rightrightarrows\mcQ$.

Finally, let~$\Diff^+_0(g)$ be the group of compactly non-identical, orientation-preserv\-ing diffeomorphisms on~$M$ preserving the Riemann volume form of~$g$ by pullback or, equivalently, preserving~$\vol_g$ by push-forward.
In other words, we require~$\mcQ$ to be invariant for the \eqref{eq:ActionShifts}-action on~$\mcM(M)$ of~$\Diff^+_0(g)$.
Contrary to the previous definition, this stronger version of naturalness is always non-void, since~$\Diff^+_0(g)$ is the infinite-dimensional Lie group modelled on the (infinite-dimensional) Lie algebra of compactly supported $\div_g$-free vector fields, and it is in fact sufficient to turn the multi-valued map $g\rightrightarrows\mcQ$ into a uniquely determined function.

\begin{corollary}[See Prop.~\ref{p:FullIdentification}]
Let~$(M,g)$ be a  smooth, connected, orientable  Riemannian manifold with finite total volume.
Then, the following are equivalent:
\begin{enumerate}[$(i)$, leftmargin=2em]
\item $\mcQ$ is a non-negative Borel measure on~$\mcM(M)$ satisfying:
\begin{itemize}[leftmargin=1em]
\item \emph{non-triviality}: $\mcQ\mcB_0=0$;
\item \emph{normalization}: $\mcQ\mcB_1=1$;
\item \emph{\ref{eq:ActionMultipliers}-invariance}: $\mcQ$ is projectively invariant for the \ref{eq:ActionMultipliers}-action of~$\exp[\rmC^\infty_c(M)]$;
\item \emph{\ref{eq:ActionShifts}-invariance}: $\mcQ$ is invariant for the \ref{eq:ActionShifts}-action of~$\Diff^+_0(g)$.
\end{itemize}

\item $\mcQ=\frac{1}{\Gamma(\theta+1)}\LP{\theta,\nu}$ with~$\theta\nu=\vol_g$ and~$\nu M=1$.
\end{enumerate}
\end{corollary}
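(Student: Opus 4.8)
The plan is to reduce the statement to Proposition~\ref{p:UniquenessIntro} --- more precisely to its smooth-category variant, valid here since $M$ is a manifold, in which $\Multi(M)$ is replaced by $\exp[\rmC^\infty_c(M)]$ --- together with Proposition~\ref{p:TVY}\ref{i:p:TVY:3}, so that the only genuinely new ingredient is a rigidity statement for $\vol_g$-preserving diffeomorphisms. The implication $(ii)\Rightarrow(i)$ is the easy direction: non-triviality and normalization hold because $\LP{\theta,\nu}\mcB_0=0$ and by the choice of the normalizing constant; the \ref{eq:ActionMultipliers}-invariance is Proposition~\ref{p:TVY}\ref{i:p:TVY:2} restricted to $\exp[\rmC^\infty_c(M)]\subset\Multi(M)$; and, since every $\psi\in\Diff^+_0(g)$ preserves $\vol_g$ --- hence $\nu=\vol_g/\vol_g(M)$, i.e.\ $\Diff^+_0(g)\subset\Shift_\nu(M)$ --- the \ref{eq:ActionShifts}-invariance is precisely Proposition~\ref{p:TVY}\ref{i:p:TVY:3}.

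For $(i)\Rightarrow(ii)$, the first three hypotheses and the smooth version of Proposition~\ref{p:UniquenessIntro} already give $\mcQ=\frac{1}{\Gamma(\theta+1)}\LP{\theta,\nu}$ for some $\theta>0$ and some $\nu\in\mcP(M)$, so that $\nu M=1$; it thus remains to identify $\nu$ with $\vol_g/\vol_g(M)$ (and to check that the homogeneity parameter equals $\vol_g(M)$, so that $\theta\nu=\vol_g$). To exploit the fourth hypothesis I would first record the transformation rule $(\psi.)_\pfwd\LP{\theta,\nu}=\LP{\theta,\psi_\pfwd\nu}$ for $\psi\in\Shift(M)$, which follows from the defining identity~\eqref{eq:InfLebesgueIsomor}: the \ref{eq:ActionShifts}-action factors through~$\isomor$ via $\isomor(\psi_\pfwd\mu)=\tparen{\psi_\pfwd\normaliz(\mu),\mu M}$, and the Dirichlet--Ferguson measure is equivariant under push-forward, $\psi_\pfwd\DF{\nu}=\DF{\psi_\pfwd\nu}$ (e.g.\ from the Mecke-type identity~\eqref{eq:MeckeDF}). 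Since $\nu\mapsto\LP{\theta,\nu}$ is injective --- $\nu$ being recoverable from $\LP{\theta,\nu}$ via the intensity formula of Proposition~\ref{p:UniquenessIntro} --- the \ref{eq:ActionShifts}-invariance of $\mcQ$ under $\Diff^+_0(g)$ is equivalent to $\psi_\pfwd\nu=\nu$ for every $\psi\in\Diff^+_0(g)$.

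The crux --- and the step I expect to be the main obstacle --- is then the rigidity claim: a probability measure $\nu$ on $M$ fixed by every compactly non-identical, orientation-preserving, $\vol_g$-preserving diffeomorphism is necessarily $\vol_g/\vol_g(M)$. I would argue by localization and a Moser-type construction. In a chart in which $\vol_g$ is a constant multiple of Lebesgue measure (Moser's theorem for volume forms), $\Diff^+_0(g)$ contains every $\vol_g$-preserving diffeomorphism compactly supported in that chart; such diffeomorphisms act transitively enough on blobs of equal $\vol_g$-measure --- by compactly supported, zero-flux perturbations of translations --- that $x\mapsto\nu\tparen{\ball[g]{x}{r}}$ is locally constant for all small $r$. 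Differentiation of measures then forces $\frac{\diff\nu}{\diff\vol_g}$ to be $\vol_g$-a.e.\ constant and rules out a nontrivial singular part, so $\nu=c\,\vol_g$ locally, hence globally by connectedness of $M$, and $\nu M=1$ fixes $c=1/\vol_g(M)$. (Finiteness of $\vol_g(M)$ is used both here and for the normalization hypothesis to be meaningful.) With $\nu$ so identified, the value $\theta=\vol_g(M)$ of the homogeneity parameter follows by the same bookkeeping as in Proposition~\ref{p:UniquenessIntro}, completing the identification. Beyond Proposition~\ref{p:UniquenessIntro}, the only delicate inputs are the flux/transitivity property of the group of volume-preserving diffeomorphisms and the --- otherwise routine --- equivariance and injectivity of $\LP{\theta,\nu}$.
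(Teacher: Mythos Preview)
Your approach is correct and mirrors the paper's. The paper packages the argument as Proposition~\ref{p:UniquenessLP} plus Proposition~\ref{p:FullIdentification}: your transformation rule $(\psi.)_\pfwd\LP{\theta,\nu}=\LP{\theta,\psi_\pfwd\nu}$ is the paper's Mapping Theorem (Proposition~\ref{p:Mapping}), and the rigidity of $\Diff^+_0(g)$-invariant probabilities is Proposition~\ref{p:InvariantVolG}, proven in Appendix~\ref{app:diffeo}. Two minor organizational differences: to extract $\psi_\pfwd\nu=\nu$ from invariance of $\LP{\theta,\nu}$, the paper applies the intensity formula directly rather than going through the Mapping Theorem plus injectivity; and for the rigidity, the paper runs a Lebesgue decomposition of~$\nu$ (transitivity of $\Diff^+_0(g)$ on points, via Boothby, kills~$\nu_{\pa}$ and forces $\nu_{ac}\propto\vol_g$; reduction to the Euclidean ball and an extension lemma kill~$\nu_{sc}$), rather than invoking Moser-type charts and differentiation of measures as you sketch. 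Both routes are valid.

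One caveat on your final ``bookkeeping'' remark: the claim that $\theta=\vol_g(M)$ (equivalently $\theta\nu=\vol_g$) does \emph{not} follow from the argument. The rigidity step pins down $\nu=\vol_g/\vol_g(M)$, but the four hypotheses in~$(i)$ place no constraint on~$\theta$ beyond the intensity formula --- indeed, a suitably normalized $\LP{\theta,\vol_g/\vol_g(M)}$ satisfies all of them for \emph{every} $\theta>0$. The paper's own proof of Proposition~\ref{p:FullIdentification} shares this gap (it asserts ``it suffices to show that $\nu\propto\vol_g$'', which yields $\nu=\vol_g/\vol_g(M)$ but not $\theta\nu=\vol_g$), so this is an inaccuracy in the statement of the Corollary rather than a defect in your strategy.
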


\subsubsection{Closability of the canonical form}
It is shown in~\cite{LzDS17+} that~$\DF{\beta \nu}$ is \emph{not} quasi-invariant w.r.t.\ the action~\ref{eq:ActionShifts} of~$\Diff^+_0(M)$ on~$\mcP(M)$, but merely partially quasi-invariant with respect to some filtration~$\seq{\msF_t}_t$ of the Borel $\sigma$-algebra of~$\mcP(M)$.
Since~\ref{eq:ActionShifts} on~$\mcM(M)$ factorizes over~$\isomor$ in~\eqref{eq:IsomorphismMbp}, it follows that~$\LP{\theta, \nu}$ too is \emph{not} quasi-invariant for the same action.
Its partial quasi-invariance may not be immediately deduced from that of~$\DF{\beta \nu}$ since the normalization map is not Borel/$\msF_t$-measurable for any~$t$.
As in turns out however, $\LP{\theta, \nu}$~is indeed partially quasi-invariant under the \ref{eq:ActionShifts}-action of~$\Diff^+_0(M)$, and thus under the \ref{eq:ActionSemidirect}-action of~$\mfG(M)\eqdef \Diff^+_0(M)\rtimes_\pb \exp[\Cc^\infty(M)]$, Prop.~\ref{p:PQI}.

This is of particular importance, since partial \ref{eq:ActionSemidirect}-quasi-invariance can be used to prove the closability on~$L^2(\mcM(M), \LP{\theta,\nu})$ of the pre-Dirichlet energy in~\eqref{eq:DirichletForm0} with~$\mcQ=\LP{\theta, \nu}$.

\begin{theorem}\label{t:IntroForm}
The quadratic form~$\tparen{\mcE, \FC{\infty,\infty}{c,c}{\infty}{c}}$ is densely defined and closable on~$L^2(\mcM(M),\LP{\theta,\nu})$.
Its closure~$(\mcE,\dom{\mcE})$ is a quasi-regular conservative strongly local Dirichlet form on~$\mcM(M)$, recurrent if~$\theta\in (0,1]$ and transient if~$\theta\in(1,\infty)$.
\end{theorem}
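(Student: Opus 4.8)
The plan is to treat the five assertions separately, the crux being closability. Dense definability of $\tparen{\mcE,\FC{\infty,\infty}{c,c}{\infty}{c}}$ is routine: each ball $\mcB_r$ is compact by Prokhorov's theorem and has finite mass $\LP{\theta,\nu}(\mcB_r)=r^\theta/\Gamma(\theta+1)$, on $\mcB_r$ the restrictions of cylinder functions $F\circ\mbff^\trid$ are $\|\cdot\|_\infty$-dense in $\rmC(\mcB_r)$ (Stone--Weierstrass; the compact-support constraint on $F$ is removed by truncation), and $\mcM(M)=\bigcup_r\mcB_r$. For closability I would run the integration-by-parts argument: given $u_n\in\FC{\infty,\infty}{c,c}{\infty}{c}$ with $u_n\to0$ in $L^2(\LP{\theta,\nu})$ and $\tparen{(\gradH u_n),(\gradW u_n)}$ Cauchy in $L^2(T^\ver\mcM(M),\LP{\theta,\nu})\times L^2(T^\hor\mcM(M),\LP{\theta,\nu})$ with limit $(G^\ver,G^\hor)$, one must show $G^\ver=G^\hor=0$. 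Testing against a cylinder section $h\cdot a$, $a\in\Cc^\infty(M)$, $h$ a bounded cylinder function, the Leibniz rule~\eqref{eq:Leibniz} together with the projective \ref{eq:ActionMultipliers}-invariance of $\LP{\theta,\nu}$ from Proposition~\ref{p:TVY} (Radon--Nikod\'ym derivative $\rme^{-\theta\int\log k\diff\nu}$) gives
\[
\int\tscalar{(\gradH u_n)_\mu}{a}_{T^\ver_\mu}\,h\diff\LP{\theta,\nu}=-\int u_n\tparen{\partial_a h+\theta\, h\!\int a\diff\nu}\diff\LP{\theta,\nu}\xrightarrow[n\to\infty]{}0\comma
\]
since $\partial_a h,h\in L^2(\LP{\theta,\nu})$ (bounded, with compact mass-support); as such sections are dense in $L^2(T^\ver\mcM(M),\LP{\theta,\nu})$, this forces $G^\ver=0$. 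For the horizontal component one repeats the computation along the flow $\psi^w_t$ of $w\in\mfX^\infty_c(M)$; the one difference --- and the heart of the matter --- is that $\LP{\theta,\nu}$ is merely \emph{partially} \ref{eq:ActionShifts}-quasi-invariant (Proposition~\ref{p:PQI}), so the identity must be run at a filtration level $U\Supset\supp w$, on which quasi-invariance holds with an $\msF_U$-measurable Radon--Nikod\'ym derivative, and one needs the corresponding logarithmic derivative $\beta_w$ --- essentially, via $\isomor$, that of the Dirichlet--Ferguson measure computed in~\cite{LzDS17+} --- to be square-integrable on each $\mcB_r$. \textbf{I expect this step to be the main obstacle}: all of the delicate analysis of the partial quasi-invariance is concentrated there. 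Granting it, $G^\hor=0$, and $\tparen{\mcE,\FC{\infty,\infty}{c,c}{\infty}{c}}$ is closable. (A posteriori, closability also follows from the identification of the closure with the --- by construction closed --- Hellinger--Kantorovich Cheeger energy, but that route is logically subsequent.)

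\paragraph{Dirichlet form, strong locality, quasi-regularity} Write $\tparen{\mcE,\dom{\mcE}}$ for the closure. It is Markovian: by~\eqref{eq:Leibniz} and the chain rule for $\diff_t$ one has $\boldnabla^\sharp(\phi\circ u)=\phi'(u)\,\boldnabla^\sharp u$ for smooth $\phi$ and $u\in\FC{\infty,\infty}{c,c}{\infty}{c}$, so $\tnorm{\boldnabla^\sharp((0\vee u)\wedge1)}\leq\tnorm{\boldnabla^\sharp u}$ after approximating the normal contraction by smooth $1$-Lipschitz maps with $\phi(0)=0$; this passes to $\dom{\mcE}$ by lower semicontinuity of $\mcE$. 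Strong locality follows since $\boldnabla^\sharp$ is a local derivation: if $u$ is constant on a neighbourhood of $\supp v$, then $\tscalar{(\boldnabla u)_\mu}{(\boldnabla v)_\mu}_{T_\mu}=0$ for every $\mu$, hence $\mcE(u,v)=0$. Quasi-regularity I would check through the Ma--R\"ockner criteria: $\seq{\mcB_\ell}_\ell$ is an $\mcE$-nest because every $u\in\FC{\infty,\infty}{c,c}{\infty}{c}$ vanishes outside some $\mcB_\kappa$ and $\FC{\infty,\infty}{c,c}{\infty}{c}$ is $\mcE_1$-dense; the cylinder functions are continuous, hence $\mcE$-quasi-continuous and $\mcE_1$-dense; and a countable family of cylinder functions --- built from $f_0\equiv1$, a countable dense subset of $\Cc^\infty(M)$, and countably many profiles $F$ --- separates the points of $\mcM(M)$. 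Together with strong locality this yields that the properly associated Hunt process is a diffusion.

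\paragraph{Conservativeness} For every $\theta>0$ I would apply the Lyapunov (Has'minskii-type) non-explosion test to $\Phi(\mu)\eqdef1+\mu M$, whose sublevel sets $\set{\Phi\leq k}=\mcB_{k-1}$ are compact by Prokhorov's theorem. Since $\gradW\Phi=0$ and $\gradH\Phi\equiv1$, strong locality and the $\theta$-homogeneity of $\LP{\theta,\nu}$ (the projective \ref{eq:ActionMultipliers}-invariance of Proposition~\ref{p:TVY} with $k$ constant, whence $\int\tparen{\diff_t\restr{t=0}v(\rme^t\mu)}\diff\LP{\theta,\nu}=-\theta\int v\diff\LP{\theta,\nu}$) give, for every $v\in\FC{\infty,\infty}{c,c}{\infty}{c}$,
\[
\mcE(\Phi,v)=4\int\tparen{\diff_t\restr{t=0}v(\rme^t\mu)}\diff\LP{\theta,\nu}(\mu)=-4\theta\int v\diff\LP{\theta,\nu}\comma
\]
so that $\Phi$ lies in the local form domain and acts, under the generator, as the constant $4\theta\leq4\theta\,\Phi$; this precludes explosion in finite time, i.e.\ $\tparen{\mcE,\dom{\mcE}}$ is conservative.

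\paragraph{Recurrence versus transience} Both statements I would reduce to the total-mass coordinate. As $(\mu\mapsto\mu M)_\pfwd\LP{\theta,\nu}=\lambda_\theta$ and $\mcE\tparen{g(\mu M),g(\mu M)}=\tfrac{4}{\Gamma(\theta)}\int_0^\infty\abs{g'(s)}^2 s^\theta\diff s\defeq\mcE^{(1)}(g,g)$ on $L^2(\R^+,\lambda_\theta)$ --- the Dirichlet form of a time-changed squared Bessel process of dimension $2\theta$ --- the dichotomy sits at $2\theta=2$. If $\theta\in(0,1]$, the cut-offs $g_n$ (affine in $s$ on $[0,n]$ when $\theta<1$, affine in $\log s$ on $[1,n]$ when $\theta=1$, and $\equiv0$ on $[n,\infty)$) satisfy $g_n\uparrow1$ and $\mcE^{(1)}(g_n,g_n)\to0$; their lifts $g_n(\mu M)\in\dom{\mcE}$ then show that the constant $\mathds 1$ belongs to the extended Dirichlet space with $\mcE(\mathds 1,\mathds 1)=0$, i.e.\ $\tparen{\mcE,\dom{\mcE}}$ is recurrent. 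If $\theta\in(1,\infty)$ then $\int_1^\infty s^{-\theta}\diff s<\infty$, so $\mcE^{(1)}$ is transient and admits a reference function $\rho\in L^1(\lambda_\theta)$; combining the pointwise bound $\abs{\diff_t\restr{t=0}v(\rme^t\mu)}^2\leq(\mu M)\,\tnorm{(\gradH v)_\mu}_{T^\ver_\mu}^2$ with Jensen's inequality along the conditioning $\mu\mapsto\mu M$ gives $\mcE^{(1)}(Pv,Pv)\leq\mcE(v,v)$ for the corresponding conditional expectation $P$, whence $\int\abs{v}\,\rho(\mu M)\diff\LP{\theta,\nu}\leq\mcE(v,v)^{1/2}$ for all $v\in\dom{\mcE}$ and $\tparen{\mcE,\dom{\mcE}}$ is transient.
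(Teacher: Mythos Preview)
Your overall architecture matches the paper's, but the closability step—which you correctly flag as the crux—is handled quite differently, and your horizontal route has a real gap.

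\textbf{Closability.} The paper does \emph{not} run an integration-by-parts argument through partial quasi-invariance. Instead it introduces a larger algebra $\hFC{\infty}{c}{\infty}{c,0}$ of ``extended'' cylinder functions, built from inner functions $\hat f\in\rmC^\infty_c(\R^+\times M)$ via $\hat f^\trid(\mu)=\int\hat f(\mu_x,x)\diff\mu(x)$ (so each $\hat f$ sees the masses $\mu_x$ of individual atoms), and uses the Mecke identity for $\LP{\theta,\nu}$,
\[
\int\!\int_M F(\mu,\mu_x,x)\,\diff\mu(x)\,\diff\LP{\theta,\nu}(\mu)=\theta\int\!\int_M\!\int_0^\infty F(\mu+s\delta_x,s,x)\,\diff s\,\diff\nu(x)\,\diff\LP{\theta,\nu}(\mu),
\]
to compute an \emph{explicit} pre-generator $\widehat\mcL=\widehat\mcL^\hor+4\widehat\mcL^\ver$ on $\hFC{\infty}{c}{\infty}{c,0}$, for both the vertical and the horizontal pieces. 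Closability then follows from the existence of this symmetric pre-generator (Friedrichs), and a separate lemma shows $\FC{\infty,\infty}{c,c}{\infty}{c}\subset\dom{\widehat\mcE}$, so closability is inherited by restriction. This sidesteps your obstacle entirely: the horizontal integration by parts is performed on $M$ against $\nu=\rho\,\vol_g$ (smooth, no boundary) \emph{after} the Mecke identity has converted the $\diff\mu$-integral into a $\diff s\,\diff\nu$-integral. Your vertical IBP via projective invariance is fine, but for the horizontal part partial quasi-invariance does not furnish a global logarithmic derivative; the $L^2(\mcB_r)$-integrability of the $\beta_w$ you invoke is not available for $\LP{\theta,\nu}$ and would amount to a separate, nontrivial result.

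\textbf{Quasi-regularity and conservativeness.} Your direct verification of the Ma--R\"ockner criteria is reasonable, but the paper instead deduces quasi-regularity (and locality) from the identification of $\tparen{\mcE,\dom{\mcE}}$ with the Cheeger energy of $\tparen{\mcM(M),\HK_{\mssd_g},\LP{\theta,\nu}}$, which is always quasi-regular. For conservativeness the paper does not use a Lyapunov test: it constructs a \emph{radial projection} $u\mapsto u^\rad$ (averaging over $\DF{\nu}$ at fixed total mass), proves it is an $\mcE_1$-orthogonal projection intertwining semigroup and resolvent, identifies the radial part with the Dirichlet form $E^\theta$ of the squared Bessel process on $L^2(\lambda_\theta)$, and then lifts conservativeness of $E^\theta$ through the intertwining. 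Your Lyapunov computation $\mcL\Phi\equiv 4\theta$ is formally correct, but making the Has'minskii criterion rigorous in the quasi-regular (non-locally-compact) setting requires additional care that the radial-part route avoids.

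\textbf{Recurrence/transience.} Here you and the paper agree: reduce to the total-mass coordinate. The paper packages your Jensen step as the inequality $\mcE(u^\rad)\leq\mcE(u)$ and the conditional-expectation contraction as the unitary isomorphism $\tilde{\,\cdot\,}\colon(\tfrac14\mcE^\rad,\dom{\mcE^\rad})\to(E^\theta,\dom{E^\theta})$; recurrence for $\theta\in(0,1]$ and transience for $\theta>1$ are then read off from $E^\theta$ (generator $t\partial_t^2+\theta\partial_t$, so threshold $\theta=1$—your ``dimension $2\theta$'' label is a normalization artefact). Your cut-offs and reference-function lift are exactly the right mechanisms.
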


\subsubsection{The Vershik diffusion on the cone of non-negative measures}
As a consequence of Theorem~\ref{t:IntroForm} and by the standard theory of Dirichlet forms, there exists a conservative Markov diffusion process~$\mu_\bullet$ with state space~$\mcM(M)$ and invariant measure~$\LP{\theta,\nu}$ properly associated with the form~$\tparen{\mcE,\dom{\mcE}}$.
We call~$\mu_\bullet$ the \emph{Vershik diffusion}.
It is the counterpart on~$\mcM(M)$ to the Dirichlet--Ferguson diffusion on~$\mcP(M)$ constructed in~\cite{LzDS17+} and the $\LP{\theta,\nu}$-reversible \emph{Brownian motion} for the Hellinger--Kantorovich geometry of~$\mcM(M)$.

A detailed study of the Vershik diffusion lies beyond the scope of this paper and will be addressed in future work.
However, instrumentally to a proof of conservativeness and recurrence/transience for~$\tparen{\mcE,\dom{\mcE}}$ in Theorem~\ref{t:IntroForm}, we prove the following statement.
Let~$x_t$ be a \emph{squared Bessel process of dimension~$\theta$}, i.e.\ the (pathwise-unique, strong) solution to the \textsc{sde}
\begin{equation}
\diff x_t= \sqrt{2 x_t}\, \diff W_t + \theta\,\diff t \comma \qquad t>0 \comma
\end{equation}
driven by a standard Wiener process~$W_t$ on~$\R$.

\begin{proposition}
For every~$\theta>0$, the radial-part process~$\mu_t M\geq 0$ of the $\LP{\theta,\nu}$-reversible Vershik diffusion is distributed as~$x_{t/4}$.
\end{proposition}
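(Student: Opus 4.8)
The plan is to push the Dirichlet form~$\tparen{\mcE,\dom\mcE}$ forward under the total-mass map~$\pi\colon\mu\mapsto\mu M$, to recognize the image as a deterministic time-change of the Dirichlet form of a squared Bessel process of dimension~$\theta$, and then to lift this identification of forms to the stated identification of processes. To begin, fix~$F\in\rmC^\infty_c(\R)$ and set~$u_F\eqdef F(\emparg\,M)$, i.e.\ $u_F=F\circ\mbff^\trid$ with~$k=0$ and~$\mbff=(f_0)$, $f_0\equiv 1$, so that~$u_F\in\FC{\infty,\infty}{c,c}{\infty}{c}\subset\dom\mcE$. Since push-forward preserves total mass, $(\partial_w u_F)_\mu=0$ for every~$w\in\mfX^\infty_c(M)$, whence~$(\gradW u_F)_\mu=0$; and since~$(\partial_a u_F)_\mu=F'(\mu M)\int_M a\diff\mu$ for~$a\in\Cc^\infty(M)$, the Riesz representative in~$T^\ver_\mu\mcM(M)$ of~$a\mapsto(\partial_a u_F)_\mu$ is the constant function~$F'(\mu M)\,\unit$ ---which lies in~$T^\ver_\mu\mcM(M)$ because~$\mu$ is finite. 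Thus~$(\gradH u_F)_\mu=F'(\mu M)\,\unit$ and~$\tnorm{(\gradH u_F)_\mu}_{T^\ver_\mu}^2=F'(\mu M)^2\,\mu M$; recalling that~$\pi_\pfwd\LP{\theta,\nu}=\lambda_\theta$ (immediate from~$\LP{\theta,\nu}\eqdef\isomor^{-1}_\pfwd\tparen{\DF\nu\otimes\lambda_\theta}$ and~$\isomor(\mu)=\tparen{\normaliz(\mu),\mu M}$), this gives, for radial~$u_F,u_G$,
\[
\mcE(u_F,u_G)=\kappa\int F'(\mu M)\,G'(\mu M)\,\mu M\diff\LP{\theta,\nu}(\mu)=\frac{\kappa}{\Gamma(\theta)}\int_0^\infty F'(r)\,G'(r)\,r^\theta\diff r\comma
\]
where~$\kappa>0$ is the constant fixed by the normalization of the vertical part of~$\mcE$ (the factor discussed in~\S\ref{sss:IntroHK}).

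Next, one shows that the total-mass coordinate~$\mu_\bullet M$ is a Markov process carrying (the closure of) the one-dimensional form above. The key tool is the~$\theta$-homogeneity of~$\LP{\theta,\nu}$ (Proposition~\ref{p:TVY}\ref{i:p:TVY:2}), equivalently~$(c\emparg)_\pfwd\LP{\theta,\nu}=c^{-\theta}\LP{\theta,\nu}$ for~$c>0$: differentiating along the scaling flow~$\mu\mapsto\rme^t\mu$ yields the integration-by-parts rule~$\int\partial_{\unit}w\diff\LP{\theta,\nu}=-\theta\int w\diff\LP{\theta,\nu}$ on cylinder functions~$w$, where~$\partial_{\unit}$ extends the directional derivatives~$\partial_a$ to the constant ``direction''~$a\equiv\unit$. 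Combining this with~$(\gradW u_F)_\mu=0$ and~$(\gradH u_F)_\mu=F'(\mu M)\,\unit$, one obtains for every cylinder~$v$
\[
\mcE(u_F,v)=\kappa\int F'(\mu M)\,(\partial_{\unit}v)(\mu)\diff\LP{\theta,\nu}(\mu)=-\kappa\int\tparen{\mu M\,F''(\mu M)+\theta\,F'(\mu M)}\,v(\mu)\diff\LP{\theta,\nu}(\mu)\fstop
\]
In particular~$\mcE(u_F,v)=0$ whenever~$\int h(\mu M)\,v\diff\LP{\theta,\nu}=0$ for all bounded Borel~$h$; that is, the~$\mcE$-closure of the radial cylinder functions is~$\mcE$-orthogonal to the kernel of the conditional expectation~$\E\braket{\emparg\mid\pi}$. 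Hence~$\E\braket{\emparg\mid\pi}$ commutes with the resolvent, and so with the semigroup, of~$\tparen{\mcE,\dom\mcE}$; the sub-$\sigma$-algebra~$\pi^{-1}\BorelSets{\R^+}$ is therefore invariant under the Vershik diffusion, and~$\mu_\bullet M$ is a Markov process whose Dirichlet form on~$L^2(\R^+,\lambda_\theta)$ is the closure of the one displayed above.

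It then remains to identify that closed form. The operator~$r\,\tfrac{\diff^2}{\diff r^2}+\theta\,\tfrac{\diff}{\diff r}$ occurring above is precisely the generator of the diffusion solving~$\diff x_t=\sqrt{2x_t}\,\diff W_t+\theta\,\diff t$, with reversing measure~$r^{\theta-1}\diff r\propto\lambda_\theta$, and~$\tset{F\restr{\R^+}:F\in\rmC^\infty_c(\R)}$ is a form core for the Dirichlet form of that diffusion on~$L^2(\R^+,\lambda_\theta)$; collecting the normalization constants of~\S\ref{sss:IntroHK} (so that~$\kappa=\tfrac14$), one concludes that~$\mu_\bullet M$ is distributed as~$x_{\bullet/4}$, using conservativeness of~$\tparen{\mcE,\dom\mcE}$ (Theorem~\ref{t:IntroForm}) to pass from truncated~$F\in\rmC^\infty_c(\R)$ to~$F=\id$. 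Equivalently, the \textsc{sde} for~$\mu_\bullet M$ can be read off from the Fukushima decomposition of~$t\mapsto F(\mu_t M)$: its martingale part has sharp bracket~$\int_0^t 2\,\Gamma(u_F)(\mu_s)\diff s$ ---a function of~$\mu_s M$ alone, $\Gamma$ denoting the square field of~$\mcE$--- and its bounded-variation part is~$\int_0^t\kappa\tparen{\mu_s M\,F''+\theta\,F'}(\mu_s M)\diff s$. (The deterministic time-change is irrelevant to the recurrence dichotomy of Theorem~\ref{t:IntroForm}: the reduced form is recurrent iff~$\int^{\infty}r^{-\theta}\diff r=+\infty$, i.e.\ iff~$\theta\in(0,1]$.)

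The main obstacle is the middle step: showing that the total-mass coordinate is genuinely a Markov process carrying the reduced form. This rests on the scaling identity for~$\LP{\theta,\nu}$ and on controlling the unbounded functional~$\mu\mapsto\mu M$ against the~$\sigma$-finite, infinite reference measure~$\LP{\theta,\nu}$, which forces the systematic use of truncations~$F\in\rmC^\infty_c(\R)$ together with conservativeness when passing to the limit. The remainder is bookkeeping: tracking the conventions of~\S\ref{sss:IntroHK} down to the constant~$\kappa=\tfrac14$ responsible for the time-change~$t\mapsto t/4$.
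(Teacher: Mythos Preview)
Your route is close to the paper's in spirit, but the intertwining step has a genuine gap. From the integration by parts via $\theta$-homogeneity you correctly obtain $u_F\in\dom\mcL$ with $\mcL u_F$ radial, hence the $\mcE_1$-orthogonality of radial cylinder functions with $\ker\E[\emparg\mid\pi]\cap\dom\mcE$. But this orthogonality alone does \emph{not} yield that $\E[\emparg\mid\pi]$ commutes with the resolvent: for that you need either that $\E[\emparg\mid\pi]$ preserves $\dom\mcE$, or that $(\alpha-4L^\theta)\rmC^\infty_c(\R^+_0)$ is $L^2(\lambda_\theta)$-dense for some $\alpha>0$ (so that $\mcG_\alpha$ maps a dense subset of the radial functions into itself). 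The second alternative is exactly the core statement you assert later (``$\rmC^\infty_c$ is a form core for the one-dimensional diffusion''), which is Lemma~\ref{l:CBIcore} in the paper and is non-trivial: it amounts to ruling out $L^2(\lambda_\theta)$-solutions of $L^\theta f=\alpha f$, done there via asymptotics of confluent hypergeometric functions. You state this core property but neither prove it nor invoke it where it is actually needed---to close the intertwining.

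The paper takes a different path to the intertwining (Lemma~\ref{l:RadGrad}, Proposition~\ref{p:OrthoProj}, Corollary~\ref{c:PreservationRadiality}): rather than using scaling, it exploits the product decomposition $\LP{\theta,\nu}=\isomor^{-1}_\pfwd(\DF{\nu}\otimes\lambda_\theta)$ to write $\E[\emparg\mid\pi]$ explicitly as an average over $\DF{\nu}$, then computes $\grad u^\rad$ directly and shows $\mcE(u^\rad)\le\mcE(u)$, whence $u^\rad\in\dom\mcE$ for all $u\in\dom\mcE$. This establishes the commutation without the core property (which the paper still needs, but only for the unitary identification $\tfrac14\mcE^\rad\cong E^\theta$ in Proposition~\ref{p:IsomorphicForms}). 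Your scaling argument is more streamlined but does not bypass the one hard analytic ingredient.

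A bookkeeping point: from~\eqref{eq:DirichletForm0} the vertical coefficient is $4$, so on radial functions $\mcE=4E^\theta\circ\tilde\emparg$ (cf.\ Proposition~\ref{p:IsomorphicForms}); your $\kappa=\tfrac14$ is the reciprocal.
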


The time-rescaling factor~$\tfrac{1}{4}$ should be regarded as an artefact of our convention that the Wiener process is generated by the Laplacian~$\Delta$ (rather than by~$\tfrac{1}{2}\Delta$) together with the factor~$4$ in the vertical part of the Dirichlet form, cf.~\eqref{eq:DirichletForm0}.

The appearance of the squared Bessel process of dimension~$\theta$ in the description of the radial part process of~$\mu_\bullet$ is significant.
Indeed, the family of squared Bessel processes indexed by~$\theta> 0$ forms a convolution semigroup of diffusions on the real line ---which is reflected by the same property for the measures~$\lambda_\theta$ and~$\LP{\theta,\nu}$--- and is one instance in the larger class of \emph{continuous-state branching processes with immigration},~\cite{ShiWat73,KawWat71}. (See Appendix~\ref{app:Bessel}.)
This suggests that the `vertical random motion' associated to the Dirichlet form obtained by replacing~$\grad$ with~$\gradH$ and choosing~$\mcQ=\LP{\theta,\nu}$ in~\eqref{eq:DirichletForm0} is an unconstrained version of the celebrated Fleming--Viot process~\cite{FleVio79,OveRoeSch95}: it describes the distribution of alleles in a selectively neutral genetic population as influenced by mutation and random genetic drift, as opposed to the distribution of allelic \emph{frequencies} in the Fleming--Viot process.

\subsection{The metric point of view}
In this section, we will present another quite natural construction of an energy form for functions on~$\mcM(M)$, namely the \emph{Cheeger energy} induced by the Hellinger--Kantorovich distance and by~$\LP{\theta,\nu}$.

\subsubsection{Hellinger--Kantorovich distance on the cone of positive measures}\label{sss:IntroHK}

One of the most remarkable results in the theory of optimal transport is to provide notions of distances between probability measures: given a complete and separable metric space $(X, \sfd)$, the \emph{$L^2$-Kantorovich--Rubinstein} (also: \emph{Wasserstein}) \emph{distance}~$\W_{2, \sfd}$ between two probability measures $\mu_0, \mu_1 \in \mcP(X)$ is given by the optimal transport cost induced by $\sfd^2$ i.e.
\begin{align}\label{eq:WassersteinIntro}
\W_{2, \sfd}(\mu_0, \mu_1) \eqdef \braket{ \inf_{\ggamma \in \Cpl(\mu_0, \mu_1)} \int_{X^2} \sfd^2 \de \ggamma }^{1/2}\comma
\end{align}
where $\Cpl(\mu_0, \mu_1) \subset \mcP(X^2)$ is the space of \emph{couplings} between $\mu_0$ and $\mu_1$ i.e.~probabilities on the product space with marginals $\mu_0$ and $\mu_1$. When restricted to the set of probability measures with finite second moment $\mcP_2(X)$, the Wasserstein distance is complete and separable, length (resp.~geodesic) if $\sfd$ is a length (resp.~geodesic) distance.
The Wasserstein distance has also the remarkable property of making the map $x \mapsto \delta_x$ an isometric embedding of $X$ into $\mcP_2(X)$.
We refer to the classic monographs \cite{AGS08, Villani03, Villani09, santambrogio, Rachev-Ruschendorf98I, Rachev-Ruschendorf98II} and to the more recent \cite{ abs21, fg21} for a treatment of optimal transport and Wasserstein distances.

The generalization of the optimal transport problem to the \emph{unbalanced setting} (i.e.~to the case when $\mu_0$ and $\mu_1$ may have different total non-negative masses) has been considered in a variety of papers, see for example the generalization of the dynamical approach to balanced optimal transport~\cite{BenamouBrenier00} to the unbalanced setting in \cite{msg, 49eee, RosPic, Rospicprop,ChiPeySchVia16}; static generalizations connected to the Kantorovich formulation were explored in \cite{CaffarelliMcCann06TR}; see also the work \cite{11uu} where the so-called optimal partial transport has been introduced. A very general framework for unbalanced optimal-transport problems can be also found in \cite{SS24} where, using the methods developed in \cite{SS20}, the ideas already contained in \cite{LMS18, CPSV18}  are extended.

We focus here on the class of distances $\HK_\sfd$ introduced at the same time in the works \cite{LMS18, CPSV18}  and called \emph{Wasserstein--Fisher--Rao} and \emph{Hellinger--Kantorovich} respectively: these can be defined in many ways (see in particular Section \ref{sec:hk} for one of these possibilities) and are rightfully considered to be the correct generalization to $\mcM(X)$ of the Wasserstein distances.
Indeed, the space $(\mcM(X), \HK_\sfd)$ is a complete and separable metric space whose topology is the one of the weak convergence of measures, it preserves the length and geodesic properties of the underlying space, and makes the map $(X \times \R_0^+) \ni (x,r) \mapsto r \delta_x \in \mcM(X)$ an isometric immersion, provided $X \times \R_0^+$ is endowed with the natural cone distance, cf.~\eqref{ss22:eq:distcone} below.
Perhaps the simplest way to grasp the idea behind the Hellinger--Kantorovich distance is to look at its \emph{dynamical formulation} on the Euclidean space $\R^d$: if $\mu_0, \mu_1 \in \mcM(\R^d)$, we have
\begin{equation} \label{eq:hkintro}
\HK(\mu_0, \mu_1)^2 = \inf \set{ \int_0^1 \int_{\R^d} \left [ |\vv_t|^2 + \tfrac{1}{4}|w_t|^2\right ] \de \mu_t \de t }
\end{equation}
where the infimum is taken among all triplets $(\mu_\cdot, \vv_\cdot, w_\cdot)$ with $\mu_t |_{t=i}=\mu_i$ for~$i=0,1$, and solving the \emph{continuity equation with reaction}
\[
\partial_t \mu_t + \div (\vv_t \mu_t) = w_t \mu_t \quad \text{ in } \mathscr{D}'\tparen{(0,1) \times \R^d} \fstop
\]
This formula, a generalization of the classical Benamou--Brenier theorem~\cite{BenamouBrenier00}, shows that the $\HK$-geometry encompasses both a horizontal movement of mass (driven by~$\vv$) and a reaction/dilation behavior driven by the term~$w$.
We refer to the second part of~\cite{LMS18} for details and proofs regarding the Hellinger--Kantorovich distance.

Let us finally remark that deep relationships between the group~$\mfG(M)$ (here $M$ is a manifold  as in~\S\ref{sss:TangentSpacesIntro} which is additionally assumed to be complete ) and the Hellinger--Kantorovich distance are already apparent in the work~\cite{GalVia17}, where it is noted that a \emph{left} action of~$\mfG(M)$ ---thus opposite to~\ref{eq:ActionSemidirect}--- gives a Riemannian submersion between an $L^2$-type of metric on~$\mfG(M)$ and the Hellinger--Kantorovich metric on the space of densities, see~\cite[Prop.~10]{GalVia17}.

\subsubsection{Metric measure geometry and Cheeger energies}
One of the earliest and simplest construction of Sobolev space on a smooth domain $\Omega \subset \R^d$ is to consider the closure of smooth functions w.r.t.~a given Sobolev norm i.e.
\[
H^{1,2}(\Omega) \eqdef \overline{\rmC_\infty(\Omega)}^{H^{1,2}}\comma \qquad \|\varphi\|_{H^{1,2}}^2 \eqdef \int_\Omega \left [ |\varphi(x)|^2 + |\nabla \varphi(x)|^2 \right ]\de x\fstop
\]
The key idea is very simple: we have a class of regular functions for which the notion of derivative is given and a norm to measure it: we consider limits of smooth functions w.r.t.~this norm. The fundamental observation to generalize this approach to metric spaces is to notice that what is needed is not really the gradient of a smooth function, but rather its norm. This leads to the following construction: if $(X, \sfd)$ is a complete and separable metric space endowed with a non-negative, finite, Borel measure $\mssm$, for a Lipschitz and bounded function $f\in \Lip_b(X, \sfd)$ we define the so called \emph{asymptotic Lipschitz constant} as 
\begin{equation}
 \lip_\sfd f(x) \eqdef \limsup_{y,z \to x, y \ne z} \frac{ |f(z)-f(y)|}{\sfd(z,y)}, \quad x \in X \comma
 \end{equation}
 and the corresponding \emph{$2$-pre-Cheeger energy} as
 \begin{equation}\label{eq:pce}
\pCE_{2, \sfd, \mm}(f) \eqdef \int_{X} (\lip_\sfd f)^2 \de \mssm, \quad f \in \Lip_b(X, \sfd) \fstop
 \end{equation}
The quantity~$ \lip_\sfd f$ is a surrogate for the modulus of the gradient of a smooth function. 
The functional $\pCE_{2, \sfd, \mssm}$ is however defined only on Lipschitz functions.
In order to extend it to more general functions, we can consider its lower semicontinuous relaxation in $L^2(\mssm)$, namely the $2$-\emph{Cheeger} energy~$\CE_{2, \sfd, \mssm}$, \cite{Cheeger99, Gigli-Kuwada-Ohta13}.
The vector subspace of functions in $L^2(X, \mssm)$ for which $\CE_{2,\sfd, \mssm}$ is finite is the \emph{metric Sobolev space} $H^{1,2}(X, \sfd, \mssm)$ \cite{Bjorn-Bjorn11,HKST15, GP20, Savare22} which can be proven to be a Banach space with the norm
\[
\norm{f}_{H^{1,2}} \eqdef \braket{ \|f\|_{L^2(X,\mssm)}^2 + \CE_{2,\sfd, \mssm}(f) }^{1/2} \fstop
\]
Here, we are interested in the case when $(X, \sfd) = (\mcM(M), \HK_{\sfd_g})$ for a manifold $(M,g)$  as in~\S\ref{sss:TangentSpacesIntro}  with Riemannian distance $\sfd_g$.

\subsubsection{Density of cylinder functions}  Let $(M,g)$ be a smooth, connected, complete Riemannian manifold.  
Given $\mu \in \mcM(M)$ and $(w,a) \in T_\mu \mcM(M)$, we set
\[
\abs{(w,a)}_\toplus\eqdef \tbraket{g(w,w) + 4\abs{a}^2}^{1/2}\comma
\]
and
\begin{align}\label{eq:NormTot4}
\norm{(w,a)}_{T_\mu^{1,4}}\eqdef \sqrt{\norm{w}_\mu^2+4\norm{a}_\mu^2} = \braket{\int_M\abs{(w,a)}_\toplus^2\diff\mu}^{1/2} \fstop
\end{align}
We will show in Proposition~\ref{prop:equalityriem} that
\begin{equation}\label{eq:equalityi}
\norm{(\boldnabla u)_\mu}_{T_\mu^{1,4}} = \lip_{\HK_{\sfd_g}} u (\mu) \comma \qquad \mu\in \mcM(M)\comma
\end{equation}
whenever $u \in \FC{\infty,\infty}{c,c}{\infty}{c}$ is a smooth cylinder function as in \eqref{eq:cylman}; that is, the metric slope of a cylinder function pointwise coincides with the modulus of its geometric gradient.

In order to compare the geometric and the metric approach, pursuing the same strategy as in \cite{FSS22}, we aim at showing that cylinder functions are dense in $2$-energy in the Sobolev space~$H^{1,2}(\mcM(M), \HK_{\sfd_g}, \mcQ)$ for every finite, non-negative, Borel measure~$\mcQ$ on~$\mcM(M)$. 

The idea is to exploit the equality in \eqref{eq:equalityi} and to approximate the $\HK$ distance function from a fixed reference measure with suitable cylinder functions: this was done for the Wasserstein distance in \cite{FSS22} using regular \emph{Kantorovich potentials}, also cf.~\cite{LzDS19b}.
Even if suitably-adapted versions of Kantorovich potentials are available for the $\HK$-distance, their regularity properties are scarce \cite{LMS22}, so that it seems difficult to be able to reproduce the same argument as in \cite{FSS22} in this case.
However, we show in Corollary \ref{cor:ineq} that we can approximate with cylinder functions a smoothed version of the $\HK$-distance (namely, the so called Gaussian Hellinger--Kantorovich distance, $\GHK$, which induces $\HK$ as length-distance) instead of~$\HK$ directly, employing the regularity of its adapted version of Kantorovich potentials (cf. Theorem~\ref{teo:final}). 

Together with Theorem \ref{theo:startingpoint}, which shows that it is enough to approximate a distance function~$\delta$ from a reference point with an algebra of functions~$\AA$ in order to obtain the density of~$\AA$ in the Sobolev space induced by the length distance generated by~$\delta$, we thus obtain the sought density of a suitable class of cylinder functions in $H^{1,2}(\mcM(\R^d), \HK_{\sfd_e}, \mcQ)$, where $\sfd_e$ is the Euclidean distance on $\R^d$ (see Theorem~\ref{teo:density}).
The result is then extended first to the Riemannian setting in Theorem~\ref{thm:hilbm} and then further refined to the smaller class of smooth cylinder functions.
We then have the following general result (Corollary~\ref{c:DensityCylC}).

\begin{theorem}\label{thm:introdens}
 Let   $\mcQ$ be a non-negative Borel measure on $\mcM(M)$. If 
\begin{equation}\label{eq:lpex}
\int \rme^{-t \mu M} \, \diff \mcQ(\mu) < + \infty \quad \text{ for every } t>0 \comma
\end{equation}
then, $\tparen{H^{1,2}(\mcM(M), \HK_{\sfd_g}, \mcQ}$ is a Hilbert space, the subalgebra $\FC{\infty,\infty}{c,c}{\infty}{c}$ is strongly dense in $H^{1,2}(\mcM(M), \HK_{\mssd_g}, \mcQ)$ and for every $u \in H^{1,2}(\mcM(M), \HK_{\mssd_g}, \mcQ)$ there exists a sequence $(u_n)_n \subset \FC{\infty,\infty}{c,c}{\infty}{c}$ such that 
\[
u_n \to u  \quad \text{ in } L^2(\mcM(M), \mcQ), \quad \pCE_{2, \HK_{\sfd_g}, \mcQ} (u_n) \to \CE_{2, \HK_{\sfd_g}, \mcQ} (u) \fstop
\]
\end{theorem}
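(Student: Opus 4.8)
The plan is to deduce the statement from the abstract density criterion of Theorem~\ref{theo:startingpoint}, which reduces the problem to the following: recover the relevant \emph{length} distance, both pointwise and in energy, from an algebra of functions by approximating the distance functions $\delta(\bar\mu,\emparg)$ from fixed base points $\bar\mu$. Accordingly, I would proceed in three stages: first establish the density on $\mcM(\R^d)$ with the Euclidean base distance $\sfd_e$; then transport the result to a general smooth, connected, complete Riemannian manifold $(M,g)$; and finally upgrade the cylinder algebra to the smaller genuinely smooth, compactly supported class $\FC{\infty,\infty}{c,c}{\infty}{c}$. Throughout, the exponential-moment hypothesis \eqref{eq:lpex} is used precisely to guarantee that the cylinder functions and all their approximants lie in $L^2(\mcM(M),\mcQ)$, and that the truncations in the total-mass variable, which decay like $\rme^{-t\mu M}$, converge to zero in $L^2(\mcQ)$.

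For the Euclidean case, the obstruction is that $\HK_{\sfd_e}$-Kantorovich potentials are too irregular to be approximated directly by smooth cylinder functions, cf.~\cite{LMS22}. I would circumvent this as announced in the introduction: replace $\HK_{\sfd_e}$ by the Gaussian Hellinger--Kantorovich distance $\GHK$, which induces $\HK_{\sfd_e}$ as its length distance and for which suitably adapted Kantorovich potentials are available with enough regularity (Theorem~\ref{teo:final}). Using those potentials, for every reference measure $\bar\mu$ one represents $\mu\mapsto\GHK(\bar\mu,\mu)$ as a controlled supremum of linear functionals $\mu\mapsto\int\varphi\,\diff\mu$ with $\varphi$ smooth and compactly supported; after truncating in the mass variable and mollifying, one obtains a sequence in $\FC{\infty,\infty}{c,c}{\infty}{c}$ converging to $\GHK(\bar\mu,\emparg)$ both in $L^2(\mcQ)$ and in pre-Cheeger energy — this is Corollary~\ref{cor:ineq}. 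Here the pointwise identity \eqref{eq:equalityi}, $\lip_{\HK_{\sfd_g}}u(\mu)=\norm{(\grad u)_\mu}_{T_\mu^{1,4}}$ for $u\in\FC{\infty,\infty}{c,c}{\infty}{c}$, is what turns the energy bound for the approximants into the quantity $\pCE_{2,\HK_{\sfd_e},\mcQ}$ that enters the hypothesis of Theorem~\ref{theo:startingpoint}. Feeding this into Theorem~\ref{theo:startingpoint} yields the density for $\mcM(\R^d)$ and $\sfd_e$ (Theorem~\ref{teo:density}); the Hilbert property comes for free, because on the algebra $\FC{\infty,\infty}{c,c}{\infty}{c}$ the functional $u\mapsto\int\norm{(\grad u)_\mu}_{T_\mu^{1,4}}^2\,\diff\mcQ$ satisfies the parallelogram law — the fibres $T_\mu^{1,4}$ being Hilbert and $\grad$ linear — and energy density propagates the quadraticity to $\CE_{2,\HK_{\sfd_g},\mcQ}$ on the whole metric Sobolev space.

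To pass from $\R^d$ to a general $(M,g)$, I would invoke Theorem~\ref{thm:hilbm}: working in geodesic charts of $M$ and patching the local statements via a partition of unity (the cylinder functions are built from functions $f_i\in\Cc^\infty_c(M)$, hence from local data), one transfers both the base-point approximation of the $\HK$-distance functions and the Hilbert property, the latter property being stable under this procedure. A last, essentially soft step refines the a priori larger cylinder class used in Theorem~\ref{thm:hilbm} to the smooth, compactly supported class $\FC{\infty,\infty}{c,c}{\infty}{c}$ by mollifying the outer functions $F$ and cutting off in both the spatial and the mass variables, once more using \eqref{eq:lpex} to keep the $L^2(\mcQ)$-errors controlled; this is Corollary~\ref{c:DensityCylC}, which is the asserted statement.

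I expect the main obstacle to be the Euclidean heart of Stage~2: establishing that $\GHK$ carries Kantorovich potentials regular enough to be approximated by $\Cc^\infty$-functions, and — crucially — making the approximation of $\GHK(\bar\mu,\emparg)$ quantitative in \emph{energy} rather than merely uniform, in particular controlling the error from truncating in the total mass, which is exactly where the exponential-moment assumption \eqref{eq:lpex} becomes indispensable. The $\R^d$-to-$M$ transfer is a secondary difficulty, since $\HK_{\sfd_g}$ does not localize on $M$ in an obvious way, so one must check with care that both the length-distance generation property and the base-point approximability survive the passage to charts.
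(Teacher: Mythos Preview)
Your three-stage strategy matches the paper's architecture, and the identification of the $\GHK$ detour as the core of Stage~1 is right. However, your description of the $\R^d\to M$ transfer (Theorem~\ref{thm:hilbm}) is not what the paper does, and the method you sketch---geodesic charts plus a partition of unity---would run into precisely the obstacle you yourself flag: $\HK_{\sfd_g}$ does not localize on~$M$, so neither the base-point approximation nor the length-distance generation by $\GHK$ passes cleanly to charts. The paper instead uses Nash's isometric embedding $\iota\colon M\hookrightarrow\R^d$: then $\mcM(M)$ embeds as a \emph{closed} subset of $\mcM(\R^d)$ via $\iota_\sharp$, the Riemannian $\HK_{\sfd_g}$ is sandwiched between $\HK_{\sfd_e}$ and the induced length distance on the image (Theorem~\ref{teo:csubs}), and an abstract transfer principle for closed subsets (Theorem~\ref{thm:224}) pulls back both density and Hilbertianity from the Euclidean case \emph{globally}. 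No localization is involved.

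Two smaller corrections. First, in the Euclidean stage the density is proved for the larger algebra $\FC{1,1}{u,b}{1}{b}$ (Theorem~\ref{teo:density}); the refinement to $\FC{\infty,\infty}{c,c}{\infty}{c}$ is a separate, subsequent mollification step (Proposition~\ref{prop:dolore}), not folded into the base-point approximation. Second, the exponential-moment hypothesis~\eqref{eq:lpex} plays no role in Stages~1--2, which are carried out entirely for \emph{finite} $\mcQ$; it enters only at the very end (Corollary~\ref{c:DensityCylC}), where Lemma~\ref{le:densinf} is applied with truncations $\vartheta_k=\rme^{-\car^\trid/k}$ to pass from finite to possibly infinite $\mcQ$. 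Cylinder functions in $\FC{\infty,\infty}{c,c}{\infty}{c}$ already have mass-bounded support, so their membership in $L^2(\mcQ)$ is not where the hypothesis bites.
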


\subsubsection{Identification of the metric and the geometric points of view}
Let us now come to a comparison of the Dirichlet form~$(\mcE,\dom{\mcE})$ in Theorem~\ref{t:IntroForm} with the Cheeger energy~$\CE_{2,\HK_{\sfd_g},\LP{\theta,\nu}}$ in Theorem \ref{thm:introdens}, when we chose as reference measure $\mcQ$ the multiplicative infinite-dimensional Lebesgue measure $\LP{\theta,\nu}$. Firstly let us stress that these two objects (and the relative constructions) are \emph{a priori} related only by the choice of the same reference measure~$\LP{\theta,\nu}$.
Indeed, on the one hand, the construction of~$(\mcE,\dom{\mcE})$ relies on the infinite-dimensional Lie group~$\mfG(M)$ (motivating also the choice of~$\LP{\theta,\nu}$) and on the class of cylinder functions.
On the other hand, the definition of~$\CE_{2,\HK,\LP{\theta,\nu}}$ relies solely on the Hellinger--Kantorovich distance~$\HK_{\sfd_g}$ and on the class of $\HK_{\sfd_g}$-Lipschitz functions. 
As a consequence of Theorem \ref{thm:introdens} and, the equality~\eqref{eq:equalityi}, and the definition in~\eqref{eq:DirichletForm0} we obtain the following result.

\begin{theorem}\label{t:CoincidenceIntro}
As quadratic forms on~$L^2(\mcM(M), \LP{\theta,\nu})$,
\[
(\mcE,\dom{\mcE}) = \tparen{\CE_{2,\HK_{\sfd_g},\LP{\theta,\nu}},H^{1,2}(\mcM(M), \HK_{\sfd_g}, \LP{\theta,\nu})} \fstop
\]
In other words, the geometric construction and the metric construction coincide, and the Vershik diffusion is the metric measure Brownian motion of the metric measure space~$\tparen{\mcM(M), \HK_{\mssd_g}, \LP{\theta,\nu}}$.
\end{theorem}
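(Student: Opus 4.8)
The plan is to exhibit the smooth cylinder algebra $\FC{\infty,\infty}{c,c}{\infty}{c}$ as a common core for the two closed forms in question and to check that they agree on it. I would first record that both $(\mcE,\dom{\mcE})$ and $\CE_{2,\HK_{\sfd_g},\LP{\theta,\nu}}$ are closed symmetric quadratic forms on $L^2(\mcM(M),\LP{\theta,\nu})$: for the former this is Theorem~\ref{t:IntroForm}, and for the latter because the Cheeger energy is always $L^2$-lower semicontinuous while, by Theorem~\ref{thm:introdens}, its domain $H^{1,2}(\mcM(M),\HK_{\sfd_g},\LP{\theta,\nu})$ is a Hilbert space, so the parallelogram law holds and the energy is quadratic. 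I would also check that Theorem~\ref{thm:introdens} is applicable with $\mcQ=\LP{\theta,\nu}$: by the definition~\eqref{eq:IntroLambda}--\eqref{eq:InfLebesgueIsomor} of $\LP{\theta,\nu}$ through the isomorphism~\eqref{eq:IsomorphismMbp} one has $\int\rme^{-t\mu M}\diff\LP{\theta,\nu}(\mu)=\int_0^\infty\rme^{-ts}\diff\lambda_\theta(s)=t^{-\theta}$, which is finite for every $t>0$, so~\eqref{eq:lpex} holds.

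Next I would verify that on cylinder functions the two forms coincide with the pre-Cheeger energy. For $u\in\FC{\infty,\infty}{c,c}{\infty}{c}$ the orthogonal decomposition~\eqref{eq:OrthoGrad} and the definition~\eqref{eq:NormTot4} of the $T^{1,4}_\mu$-norm give $\norm{(\boldnabla u)_\mu}_{T^{1,4}_\mu}^2=\norm{(\gradW u)_\mu}_{T^\hor_\mu}^2+4\norm{(\gradH u)_\mu}_{T^\ver_\mu}^2$, so that~\eqref{eq:DirichletForm0} reads
\[
\mcE(u,u)=\int\norm{(\boldnabla u)_\mu}_{T^{1,4}_\mu}^2\diff\LP{\theta,\nu}(\mu)\fstop
\]
Since every cylinder function is bounded and $\HK_{\sfd_g}$-Lipschitz (so the pre-Cheeger energy is meaningful on it), the pointwise identity~\eqref{eq:equalityi} of Proposition~\ref{prop:equalityriem} then yields
\[
\mcE(u,u)=\int\tparen{\lip_{\HK_{\sfd_g}}u}^2\diff\LP{\theta,\nu}=\pCE_{2,\HK_{\sfd_g},\LP{\theta,\nu}}(u)\comma\qquad u\in\FC{\infty,\infty}{c,c}{\infty}{c}\fstop
\]

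Then I would pass from the cylinder algebra to the full forms by a two-sided relaxation argument. If $u\in\dom{\mcE}$, choose $u_n\in\FC{\infty,\infty}{c,c}{\infty}{c}$ with $u_n\to u$ in $L^2$ and $\mcE(u_n,u_n)\to\mcE(u,u)$ (possible since $\dom{\mcE}$ is by construction the $\mcE$-closure of $\FC{\infty,\infty}{c,c}{\infty}{c}$, Theorem~\ref{t:IntroForm}); since $\CE_{2,\HK_{\sfd_g},\LP{\theta,\nu}}\le\pCE_{2,\HK_{\sfd_g},\LP{\theta,\nu}}$ (the Cheeger energy being the $L^2$-lower semicontinuous relaxation of the pre-Cheeger energy) and the latter equals $\mcE(u_n,u_n)$ on $u_n$, the $L^2$-lower semicontinuity of the Cheeger energy gives $u\in H^{1,2}(\mcM(M),\HK_{\sfd_g},\LP{\theta,\nu})$ with $\CE_{2,\HK_{\sfd_g},\LP{\theta,\nu}}(u)\le\mcE(u,u)$. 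Conversely, if $u\in H^{1,2}(\mcM(M),\HK_{\sfd_g},\LP{\theta,\nu})$, Theorem~\ref{thm:introdens} provides $u_n\in\FC{\infty,\infty}{c,c}{\infty}{c}$ with $u_n\to u$ in $L^2$ and $\mcE(u_n,u_n)=\pCE_{2,\HK_{\sfd_g},\LP{\theta,\nu}}(u_n)\to\CE_{2,\HK_{\sfd_g},\LP{\theta,\nu}}(u)<+\infty$; since a closed symmetric form is lower semicontinuous in $L^2$ along sequences of uniformly bounded energy, $u\in\dom{\mcE}$ with $\mcE(u,u)\le\CE_{2,\HK_{\sfd_g},\LP{\theta,\nu}}(u)$. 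Combining the two inclusions, $\dom{\mcE}=H^{1,2}(\mcM(M),\HK_{\sfd_g},\LP{\theta,\nu})$ and the two quadratic forms agree on this common domain; polarization identifies the bilinear forms, which is the assertion, and the identification of the Vershik diffusion with the metric measure Brownian motion is then immediate from Theorem~\ref{t:IntroForm} together with the construction in~\S\ref{sss:TangentSpacesIntro}.

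The substance of the result is not in this assembly, which is soft, but in the two ingredients it rests on: the density/relaxation statement Theorem~\ref{thm:introdens} --- the analytically heavy step, obtained by smoothing $\HK_{\sfd_g}$ to the Gaussian Hellinger--Kantorovich distance and exploiting the better regularity of the corresponding Kantorovich potentials --- and the pointwise identity~\eqref{eq:equalityi} between the norm of the geometric gradient and the metric slope on cylinder functions. Within the present argument, the only point requiring care is the mismatch between a \emph{closure} ($\mcE$, defined as the $L^2$-closure of the cylinder pre-form) and a \emph{relaxation} ($\CE_{2,\HK_{\sfd_g},\LP{\theta,\nu}}$, defined as the $L^2$-lower semicontinuous envelope of the pre-Cheeger energy); this is exactly what the lower semicontinuity in $L^2$ on both sides, together with the convergence of energies along cylinder approximants furnished by Theorem~\ref{thm:introdens}, is designed to bridge.
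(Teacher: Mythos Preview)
Your proposal is correct and follows essentially the same route as the paper's own proof (given as part~\ref{i:t:Main:4} of Theorem~\ref{t:Main}): both arguments rest on the exponential-moment verification for~$\LP{\theta,\nu}$, the pointwise identity of Proposition~\ref{prop:equalityriem} on cylinder functions, and the density result of Corollary~\ref{c:DensityCylC}/Theorem~\ref{thm:introdens}. Your write-up is somewhat more explicit in spelling out the two-sided relaxation (closure versus lower semicontinuous envelope), whereas the paper condenses this into the observation that~$\CE_{2,\X}$ extends~$(\mcE,\dom{\mcE})$ and that~$H^{1,2}(\X)$ is the closure of~$\FC{\infty,\infty}{c,c}{\infty}{c}$; but the logical content is the same.
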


\subsection{Applications and outlook}

\subsubsection{Applications to PDEs}
As described in the beginning of~\S\ref{sss:TangentSpacesIntro}, on the space $\mcP_2(\R^d)$ of Borel probability measures on~$\R^d$ with finite second moment the horizontal gradient corresponds to the geometry of Otto calculus, inducing the $L^2$-Kantorovich--Rubinstein distance $W_2$, in~\eqref{eq:WassersteinIntro}.
This was first formally introduced in \cite{JordanKinderlehrerOtto98} and later developed in e.g.~\cite{AGS08} to study PDEs of the form 
\begin{equation}\label{eq:ottopde}
\partial_t \rho - \nabla \cdot \left ( \rho\,  \nabla \frac{\delta \mathcal{F}}{\delta \rho} \right ) = 0 \quad \text{ in } \R^d \times (0,+\infty),
\end{equation}
where $ \frac{\delta \mathcal{F}}{\delta \rho}$ is the first variation of a typical integral functional
\[
\mathcal{F}(\rho) = \int_{\R^d} F(x, \rho(x), \nabla \rho(x)) \d x.
\]
The use of Otto calculus allows to identify~\eqref{eq:ottopde} as a gradient flow dynamics for the functional $\mathcal{F}$ on the space $\mcP_2(M)$ w.r.t.~$W_2$. There are many advantages in recogninzing this kind of dynamics on the space of probability measures: the gradient-flow formulation suggests the use of the minimizing-movement scheme to prove existence of solutions or to numerically approximate them; it allows for contraction and energy estimates, and provides an important tool for the study of the dependence of solutions from perturbation of the functional; working in the space of probability measures allows for weaker assumptions on the initial data (which can be general probability measures), and forces the solutions to be non-negative a priori. See e.g.~\cite[Chapter 11]{AGS08} for an overview.

The addition of a reaction term in \eqref{eq:ottopde} destroys the possibility to resort to Otto calculus, since then mass is no longer preserved during the evolution, which is then set in the space $\mcM(\R^d)$, rather than in~$\mcP_2(\R^d)$.
To recover a similar framework, one can consider the full gradient introduced in the end of~\S\ref{sss:TangentSpacesIntro} inducing the Hellinger--Kantorovich distance \eqref{eq:hkintro}, which thus serves as a generalization of the Otto calculus to functions defined on $\mcM(M)$.
The present paper provides more insight in the structure of $\mcM(M)$ and the differentiation of functionals defined therein, which will serve as a tool to study the gradient flow structure of natural generalizations of the PDE in \eqref{eq:ottopde}.

\subsubsection{Applications to SPDEs}
Let~$H\colon \mcP_2(M)\to\R$ be a measurable function, and~$\xi$ be a space-time white noise on~$M$ with values in the tangent bundle to~$M$.
The Dean--Kawasaki equation~\cite{Dea96,Kaw94} is the stochastic partial differential equation (\textsc{spde})
\begin{equation}\label{eq:DK}
\partial \rho= \Delta\rho + \nabla\cdot (\sqrt{\rho\,}\, \xi) + \nabla\cdot \paren{\rho\, \nabla\frac{\delta F}{\delta\rho}} \fstop
\end{equation}

Equation~\eqref{eq:DK} has been proposed, independently, by D.S.~Dean and by K.~Kawa\-saki,~\cite{Dea96,Kaw94}, to describe the density function of a large particle system subject to a diffusive Langevin dynamics, combining a deterministic interaction~$H$ with a noise~$\xi$ accounting for the particles' thermal fluctuations.
Together with its variants, it has been used to describe super-cooled liquids, colloidal suspensions, the glass-liquid transition, some bacterial patterns, and other systems; see, e.g., the recent review~\cite{VruLoeWitt20} and the introduction to~\cite{LzDS24}.

Recently, the noise term in~\eqref{eq:DK} has been identified as the natural noise for the geometry of~$\mcP_2(M)$. Indeed, in the `free case' $H\equiv 0$, a solution~$t\mapsto \rho=\rho_t$ to~\eqref{eq:DK} is an intrinsic random perturbation of the gradient flow of the Boltzmann--Shannon entropy on~$\mcP_2(M)$ by a noise distributed according to the energy dissipated by the system, i.e.\ by the natural isotropic noise arising from the Riemannian structure of~$\mcP_2(M)$, see~\cite{JacZim14,KonvRe17,KonvRe18}.
Generalizations to~\eqref{eq:DK} have been further completely characterized in~\cite{LzDS24}, where it shown that Markov solutions to the free Dean--Kawasaki equation correspond to the Dirichlet form induced by the horizontal form on~$\mcP_2(M)$.

It is therefore natural to ask whether the Vershik diffusion~$\mu_t$ can be identified as the (unique) solution to some \textsc{spde} with $\mcM(M)$-valued solutions, and ---if so--- what is dynamics of the corresponding infinite marked particle system on~$M$.
By analogy with the \emph{massive particle systems} in~\cite{LzDS24}, we expect the Vershik diffusion to solve a combination of the Dean--Kawasaki and Dawson--Watanabe \textsc{spde}s, cf.~\cite[\S1.5.3]{LzDS24}.
Since the chosen invariant measure~$\LP{\theta}$ is concentrated on purely atomic measures, we also expect the solution~$\mu_t$ to this \textsc{spde} to be the empirical measure of a massive particle system in the sense of~\cite{LzDS24}, viz.
\[
\mu_t = \sum_i {S^i_t}\, \delta_{X^i_t} \fstop
\]
The atoms' locations~$X^i_t$ will solve the infinite system of \textsc{sde}s:
\[
\diff X^i_t = \diff \mssW^i_{t/S^i_t} 
\]
where the~$\mssW^i$'s are mutually independent instances of the Brownian motion on~$M$, while the atoms' masses~$S^i_t$ will solve an infinite system of \textsc{sde}s of Bessel type driven by mutually independent standard Brownian motions on the real line also independent of the~$\mssW^i$'s.

\subsection*{Plan of the work}
In~\S\ref{sec:msob} we collect preliminary results about Sobolev spaces on arbitrary metric measure spaces.
In~\S\ref{s:DistMeas} we recall the definition and some properties of the Hellinger--Kantorovich distance, and we prove new regularity estimates for optimal potentials in its dual formulation.
In~\S\ref{s:ALC-Cylinder} and~\ref{s:DensityCylinder} we introduce cylinder functions on~$\mcM(M)$ and prove their density in energy in the metric Sobolev space of the metric measure space~$\tparen{\mcM(\R^d),\HK_{\sfd_e},\mcQ}$ for any finite Borel measure~$\mcQ$ on $\mcM(\R^d)$. 
In~\S\ref{s:Extensions} we extend the density (and consequently the Hilbertianity) result to the case of a Riemannian manifold and to the class of smooth cylinder functions in $\FC{\infty,\infty}{c,c}{\infty}{c}$, and we also draw some consequences of the latter in terms of the density of cylinder functions in (and the Hilbertianity of) the Sobolev spaces $H^{1,2}(\mcM(\R^d),\sfd,\mcQ)$, where~$\sfd$ is either the $L^2$-Wasserstein (extended) distance or the $2$-Hellinger distance.
In~\S\ref{sec:lebm} we study the uniqueness and invariance properties of the measure $\LP{\theta,\nu}$.
In~\S\ref{sec:extcyl} and~\S\ref{sec:ident} we prove the closability of the form $(\mcE,\FC{\infty,\infty}{c,c}{\infty}{c})$ and we study its properties.
In Appendix~\ref{app:Bessel} we construct the Dirichlet form associated to the squared Bessel process whose properties are instrumental to many results related to the closure of $(\mcE,\FC{\infty,\infty}{c,c}{\infty}{c})$.
 Finally, in Appendix~\ref{app:diffeo} we collect some properties of measure-preserving diffeomorphisms on manifolds.

\anon{
\subsection*{Acknowledgments} 
The authors are grateful to Professor Giuseppe Savar\'e for his encouragement in pursuing the results in this work.

This research was funded in part by the Austrian Science Fund (FWF) project \href{https://doi.org/10.55776/ESP208}{{10.55776/ESP208}}.
This research was funded in part by the Austrian Science Fund (FWF) project \href{https://doi.org/10.55776/F65}{{10.55776/F65}}.
}

\section{Metric Sobolev spaces and the density of subalgebras}\label{sec:msob} 

\subsection{Preliminaries}
We recall some standard facts in the theory of metric measure spaces and establish the necessary notation.

\paragraph{Metric-topological objects}
Let~$(X, \tau)$ be a Hausdorff topological space and $\sfd\colon X \times X \to [0,+\infty]$ be an extended distance on $X$, i.e.~$\sfd$ satisfies all the axioms of a distance but may attain the value $+\infty$.
Given a function~$f\colon X\to\R$ and a set~$A\subset X$ we define
\begin{itemize}
\item the \emph{local $\mssd$-Lipschitz constant}
\[
\Li[\mssd,A]{f}\eqdef \sup_{x,y\in A, \, x \neq y}\frac{|f(x)-f(y)|}{\sfd(x,y)} \semicolon
\]
\item the (\emph{global}) \emph{$\sfd$-Lipschitz constant}~$\Li[\mssd]{f}\eqdef \Li[\mssd,X]{f}$;
\item the \emph{asymptotic $(\tau,\sfd)$-Lipschitz constant}
\begin{equation}
  \label{eq:1}
  \lip_\sfd^\tau f(x)\eqdef \inf_{U \in \mathcal{U}_x}\Li[\mssd,U]{f} \comma \qquad x \in X,
\end{equation}
where $\mathcal{U}_x$ is the directed set of all the $\tau$-neighbourhoods of $x$.
\end{itemize}
We denote by $\Lip_b(X, \tau, \sfd)$ the set of bounded $\tau$-continuous and $\sfd$-Lipschitz functions in $X$ i.e.
\[ \Lip_b(X, \tau, \sfd) \eqdef  \{ f \in \rmC_b(X, \tau) : \Li[\mssd]{f} < + \infty \}.\]

\begin{definition}[Extended metric-topological measure space (e.m.t.m. space)] Let $(X, \tau)$ be a Hausdorff topological space and let $\sfd\colon X \times X \to [0,+\infty]$ be an extended distance on $X$. We say that $(X, \tau, \sfd)$ is an extended metric-topological space if
\begin{enumerate}
\item $\tau$ coincides with the initial topology induced by $\Lip_b(X, \tau, \sfd)$ on $X$, i.e.\ the coarsest topology on $X$ such that all functions in $\Lip_b(X, \tau, \sfd)$ are continuous;
\item the distance $\sfd$ can be recovered starting from non-expansive $\sfd$-Lipschitz $\tau$-continuous functions:
\[
\sfd(x,y)\eqdef  \sup \{ |f(x)-f(y)| : f \in \Lip_b(X, \tau, \sfd), \,  \Li[\mssd]{f} \le 1 \}\comma \qquad x,y \in X\fstop
\]
\end{enumerate}
Given a non-negative, Radon measure $\mssm$ on $(X, \mathscr{B}(X, \tau))$ finite on $\sfd$-balls, we call $\X\eqdef (X, \tau, \sfd, \mssm)$ an extended metric-topological measure space. In the particular case when $(X, \sfd)$ is a complete and separable metric space and $\tau=\tau_\sfd$ is the topology induced by $\sfd$, we say that $(X, \sfd, \mssm)$ is a Polish metric measure space.
\end{definition}

\begin{remark}
When~$(X,\mssd)$ is a complete and separable metric space, every non-negative Borel measure~$\mssm$ finite on $\mssd$-bounded sets is automatically Radon; see, e.g.,~\cite[Thm.~II.11, p.~125]{Sch73}.
\end{remark}

A subalgebra~$\AA$ of~$\Lip_b(X, \tau, \sfd)$ is
\begin{itemize}
\item \emph{unital} if the constant function~$\car$ is an element of~$\msA$;
\item \emph{point-separating} if for every $x_0,x_1\in X$ there exists $f\in \AA$ with~$f(x_0)\neq f(x_1)$.
\end{itemize}

\paragraph{Curves}
Given an interval $I \subset \R$ and a metric space $(X, \sfd)$, we call a curve a continuous function $\gamma\colon I \to X$ and we denote its metric speed by 
\begin{equation}
|\dot\gamma|_\sfd(t)\eqdef \limsup_{h\to0}\frac{\sfd\tparen{\gamma(t+h),\gamma(t)}}{|h|}, \quad t \in I \fstop
\end{equation}
The distance~$\mssd$ induces a \emph{length functional}~$\ell_\mssd$ on the space of curves in $\Lip([0,1];(X,\sfd))$:
\begin{align*}
\ell_{\sfd}(\gamma)&\eqdef 
\sup\set{\sum_{n=1}^N\sfd(\gamma(t_{n-1}),\gamma(t_n)) :
      0=t_0<t_1<\cdots<t_{N-1}<t_N=1}
      \\
      &=\int_0^1|\dot\gamma|_\sfd(t)\,\d t \fstop
\end{align*}

The \emph{length} (extended) \emph{distance $\hat\mssd$} induced by $\sfd$ on a subset $Y \subset X$ is then, for every~$y_0, y_1 \in Y$,
\begin{align}
\hat\mssd_Y(y_0,y_1)&\eqdef 
    \inf\set{\ell_\sfd(\gamma): \begin{gathered}\gamma\in \Lip([0,1];(Y,\sfd))\comma \\
                                \gamma(0)=y_0,\ \gamma(1)=y_1
                                \end{gathered}
                                } \\
                               &{}=                         
                                 \inf\set{\ell>0: \begin{gathered}\gamma\in \Lip([0,\ell];(Y,\sfd))\comma \\
                                \gamma(0)=y_0,\ \gamma(\ell)=y_1,\
      |\dot\gamma|_\sfd\le 1\text{ a.e.}
      \end{gathered}
      } \fstop
\end{align}
In case $Y=X$, we simply write $\hat \mssd$.

\paragraph{Measure objects}
Unless explicitly stated otherwise, by a \emph{measure} we mean a non-negative non-zero measure.
Let~$(X,\Sigma,\mssm)$ be a measure space.
We denote by~$L^0(X, \mssm)$ the space of real-valued measurable functions on~$X$, identified up to equality $\mssm$-a.e., endowed with the topology of the local convergence in $\mssm$-measure.
For~$r\in [1,+\infty]$, we denote by~$L^r(X,\mssm)$ the usual Lebesgue spaces of real-valued, measurable $r$-summable functions, identified up to equality $\mssm$-a.e., with its usual norm.
For~$Y$ (any subset of) a Banach space, we write~$L^r(X,\mssm; Y)$ for the corresponding space of~$Y$-valued functions.

\subsection{Relaxed gradients, Cheeger energies, and Sobolev spaces}
We fix for this subsection a e.m.t.m. space $\X=(X, \tau, \sfd, \mssm)$, a unital point-separating subalgebra $\AA \subset \Lip_b(X, \tau, \sfd)$, and $q \in (1,+\infty)$.

\begin{definition}[Cheeger energy and Sobolev space]
The \emph{$(q,\AA)$-Cheeger energy} is the functional
\begin{equation}\label{eq:relpreq}
  \CE_{q,\AA}(f) = \inf \left \{ \liminf_{n \to + \infty}
    \pCE_{q}(f_n) : \begin{gathered} (f_n)_n \subset \AA\comma \\ f_n \to f \text{ in } L^0(X,\mssm) \end{gathered}
  \right \}\comma \qquad f\in L^0(X,\mssm)\comma
\end{equation}
where the pre-Cheeger energy $\pCE_q: \Lip_b(X, \tau, \sfd) \to [0, +\infty]$ is defined as
\begin{equation}\label{eq:prec}
\pCE_{q}(f) \eqdef  \int_X (\lip^\tau_\sfd f)^q \,\d \mssm\comma \qquad f \in \Lip_b(X,\tau, \sfd) \fstop
\end{equation}
We denote by $D^{1,q}(\X;\AA)$ the subspace of functions in $L^0(X, \mssm)$ with finite $(q, \AA)$-Cheeger energy. The Sobolev space $H^{1,q}(\X;\AA)$ is defined as
$L^q(X,\mssm)\cap D^{1,q}(\X;\AA)$. The Sobolev norm of $f \in H^{1,q}(\X;\AA)$ is defined by
\[
\|f\|_{H^{1,q}(\X;\AA)}^q\eqdef \|f\|_{L^q(X, \mssm)}^q+\CE_{q, \AA}(f)\fstop
 \]
\end{definition}

We adopt the following definition of relaxed gradient, see~\cite{AGS14I,AGS13,Savare22}, also cf.~\cite{Shanmugalingam00, Bjorn-Bjorn11} for a different approach.
\begin{definition}[$(q,\AA)$-relaxed gradient]
  \label{def:relgrad}
  We say that $G\in L^q(X,\mssm)$ is a $(q,\AA)$-relaxed gradient of $f\in L^0(X,\mssm)$ if
  there exist a sequence $(f_n)_{n}\subset \AA$ with $(\lip^\tau_\sfd f_n)_{n} \subset L^q(X,\mssm)$ and $\tilde{G} \in L^q(X, \mssm)$ such that:
  \begin{itemize}
  \item $f_n\to f$ in $L^0(X, \mssm)$ and
    $\lip_\sfd^\tau f_n\weakto \tilde G$ in $L^q(X,\mssm)$;
  \item $\tilde G\le G$ $\mssm$-a.e.~in $X$.  
  \end{itemize}
\end{definition}
\begin{remark}
In general, it is not the case that $\lip^\tau_\sfd f \in L^q(X,\mssm)$ for~$f \in \Lip_b(X, \tau,\sfd)$.
\end{remark}

For~$f\in L^0(X,\mssm)$ set
\[
S_{\X, q, \AA}(f)\eqdef\set{G \in L^q(X,\mssm):
\text{$G$ is a $(q,\AA)$-relaxed gradient of $f$}} \fstop
\]
It is a simple but important property of relaxed gradients that, if~$S_{\X, q, \AA}(f)$ is non-empty, then it has a unique element of minimal $L^q(X,\mssm)$-norm, see~\cite{AGS14I,AGS13,Savare22}, denoted by $|\rmD f|_{\star,q,\AA}$ and called the \emph{minimal $(q,\AA)$-relaxed gradient} of $f$.

\begin{remark}[Notation] Whenever $\tau=\tau_\sfd$ for a complete and separable distance $\sfd$ on $X$, and $\AA=\Lip_b(X, \tau, \sfd)$ we will omit the dependence on either~$\tau$,~$\AA$, or both from the notation.
Furthermore, the notion of relaxed gradient, the minimal relaxed gradient (if it exists) and the Cheeger (resp.~pre-Cheeger) energy of a function in $L^0(X,\mssm)$ depend of course on $\X$, $\AA$ and $q$ (resp.~on $\X$ and $q$) but, while we will always keep the dependence on $\AA$ (except for the case specified above) and $q$ explicit, we will usually not state explicitly the one w.r.t.~$\X$. In some circumstances however, it will be useful to do so, and we will talk about $(\X, q, \AA)$-relaxed gradient and write $|\rmD f|_{\star, \X,q, \AA}$, $\CE_{\X, q, \AA}(f)$ and $\pCE_{\X, q}(f)$.
\end{remark}

Let us collect a few properties of relaxed gradients and Sobolev spaces that will be useful in the following. For a more comprehensive list and references to the proofs, see \cite{FSS22}.

\begin{theorem} Let $\X=(X, \tau, \sfd, \mssm)$ be a e.m.t.m.~space, let $\AA \subset \Lip_b(X, \tau, \sfd)$ be a unital point-separating subalgebra and let $q \in (1,+\infty)$. The following properties hold true:
  \label{thm:omnibus}\ 
  \begin{enumerate}[$(i)$, wide]
  \item\label{i:t:Omnibus:1} \emph{Completeness:} $(H^{1,q}(\X;\AA),\|\cdot\|_{H^{1,q}(\X;\AA)}) $ is a Banach space.
  \item  \label{i:t:Omnibus:2} \emph{Closure:} The set
    \begin{displaymath}
      S_{\X, q, \AA}\eqdef \Big\{(f,G)\in L^0(X,\mssm)\times L^q(X,\mssm):
      \text{$G$ is a $(q,\AA)$-relaxed gradient of $f$}\Big\}
    \end{displaymath}
    is convex and closed with respect to the product topology of the local convergence in $\mssm$-measure and
    the weak convergence in $L^q(X,\mssm)$.
    In particular, for every $r \in (1,+\infty)$, the restriction $S^r_{\X, q, \AA} \eqdef S_{\X, q, \AA} \cap L^r(X,\mssm)\times
    L^q(X,\mssm)$ is weakly closed in
    $L^r(X,\mssm)\times
    L^q(X,\mssm)$.
    
    \item\label{i:t:Omnibus:3} \emph{Strong approximation:} If
   $f \in D^{1,q}(X, \sfd,\mssm; \AA)$ then
    there exists a sequence $f_n\in \AA$
    such that
    \begin{equation}
      \label{eq:4}
      f_n \to f\text{ $\mssm$-a.e.~in $X$},\quad
      \lip^\tau_\sfd f_n\to |\rmD f|_{\star,q,\AA}\text{ strongly in }L^q(X,\mssm).
    \end{equation} 
    \item\label{i:t:Omnibus:boh} \emph{Local representation:} It holds
      \begin{equation}\label{eq:3}
    \CE_{q,\AA}(f)\eqdef \int_X |\rmD f|_{\star,q,\AA}^q\,\d\mssm\quad
    \text{for every $f\in
  D^{1,q}(\X;\AA)$}.
  \end{equation}
\end{enumerate} 

Further assume that $\mssm$ is finite and let~$f,g\in D^{1,q}(\X; \AA)$. Then,
\begin{enumerate}[$(i)$, resume, wide]

\item\label{i:t:Omnibus:3bis} \emph{Refined approximation:}  If in addition $f \in L^r(X, \mssm)$ for some $r \in \{0\} \cup [1,+\infty)$ and takes values in a closed (possibly unbounded) interval $I\subset \R$, the sequence in \ref{i:t:Omnibus:3} can be found taking also values in $I$ and converging to $f$ in $L^r(X,\mssm)$.
  \item\label{i:t:Omnibus:4} \emph{Pointwise minimality:}
    If $G$ is a $(q,\AA)$-relaxed gradient of~$f$,  then $|\rmD f|_{\star,q,\AA}\le G$ $\mssm$-a.e.
    
  \item \emph{Leibniz rule:}
    If $f,g\in D^{1,q}(\X; \AA)\cap L^\infty(X,\mssm)$, then
    $fg \in D^{1,q}(\X; \AA)$ and
    \begin{equation}
      \label{eq:5}
      |\rmD (fg)|_{\star,q,\AA}\le |f|\,|\rmD g|_{q,\star,\AA}+
      |g|\,|\rmD f|_{\star,q,\AA} \quad\text{$\mssm$-a.e.}
    \end{equation}
  \item \emph{Sub-linearity:}
    If~$\alpha, \beta \in \R$, then~$\alpha f+\beta g \in D^{1,q}(\X; \AA)$ and
    \begin{equation}
      \label{eq:6}
      |\rmD(\alpha f+\beta g)|_{\star,q,\AA}\le |\alpha|\,
      |\rmD f|_{q,\star,\AA}+
      |\beta|\,|\rmD g|_{\star,q,\AA} \quad\text{$\mssm$-a.e.}
    \end{equation}
  \item \emph{Locality:}
     for any $\LL^1$-negligible Borel subset $N\subset \R$ we have
    \begin{equation}
      \label{eq:7}
      |\rmD f|_{\star,q,\AA}=0\quad\text{$\mssm$-a.e.~on $f^{-1}(N)$} \fstop
    \end{equation}
    Furthermore, 
    \begin{equation}\label{eq:locsuper1}
        |\rmD f|_{\star, q, \AA} = |\rmD g|_{\star, q, \AA} \text{ $\mssm$-a.e.~ on $\{f=g\}$}.
    \end{equation}   

  \item\label{i:t:Omnibus:8} \emph{Chain rule:}
    If $\phi\in \Lip(\R)$ then $\phi\circ f\in D^{1,q}(\X; \AA)$ and
    \begin{equation}
      \label{eq:8}
      |\rmD (\phi\circ f)|_{\star,q,\AA}\le |\phi'(f)|\,|\rmD f|_{\star,q,\AA}  \quad\text{$\mssm$-a.e.},
    \end{equation}
    and equality holds in \eqref{eq:8} if $\phi$ is monotone or
    $\rmC^1$.
    
    \item\label{i:t:Omnibus:9} \emph{Truncations:} If $f_j \in D^{1,q}(\X; \AA)$, $1\leq j\leq k$,
      then~$f_+\eqdef \max(f_1,\dotsc,f_k)$ and
      $f_-\eqdef \min(f_1,\dotsc,f_k)$ satisfy~$f_\pm\in D^{1,q}(\X; \AA)$
      and
      \begin{align}
        \label{eq:9}
        |\rmD f_\pm|_{\star,q,\AA}=|\rmD f_j|_{\star,q,\AA}&\quad\text{$\mssm$-a.e.~on }
                                                       \{f_\pm=f_j\}\comma \qquad 1\leq j \leq k\fstop
      \end{align}
  \end{enumerate}
\end{theorem}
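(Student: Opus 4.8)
The plan is to follow the now-standard relaxation scheme for metric Sobolev spaces (as in~\cite{AGS14I,AGS13,Savare22,FSS22}), whose backbone is the closure property~\ref{i:t:Omnibus:2}: once that is in place, the existence of a minimal relaxed gradient is a soft consequence of the uniform convexity of~$L^q(X,\mssm)$, and all the calculus rules are obtained by transferring to relaxed gradients the elementary pointwise identities for the asymptotic Lipschitz constant~$\lip_\sfd^\tau$ on~$\Lip_b(X,\tau,\sfd)$ through an approximation argument.

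First I would establish~\ref{i:t:Omnibus:2}. Convexity of~$S_{\X,q,\AA}$ follows from the subadditivity and positive $1$-homogeneity of~$\lip_\sfd^\tau$ together with the fact that~$\AA$, being a subalgebra, is a vector space. For closedness, given~$(f_n,G_n)\in S_{\X,q,\AA}$ with~$f_n\to f$ in~$L^0(X,\mssm)$ and~$G_n\weakto G$ in~$L^q(X,\mssm)$, I would pick for each~$n$ a sequence in~$\AA$ realising~$G_n$ as a $(q,\AA)$-relaxed gradient of~$f_n$, apply Mazur's lemma to pass to convex combinations of these whose asymptotic Lipschitz constants converge strongly in~$L^q(X,\mssm)$, and extract a diagonal sequence~$(h_k)_k\subset\AA$ with~$h_k\to f$ in~$L^0(X,\mssm)$ and~$\lip_\sfd^\tau h_k$ converging weakly in~$L^q(X,\mssm)$ to some~$\tilde G\le G$. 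Convexity together with $L^q$-closedness then implies, by uniform convexity of~$L^q(X,\mssm)$, that every non-empty~$S_{\X,q,\AA}(f)$ has a unique element of minimal norm, namely~$|\rmD f|_{\star,q,\AA}$; this also yields the pointwise minimality~\ref{i:t:Omnibus:4} (if~$G\in S_{\X,q,\AA}(f)$ then so is~$\min(G,|\rmD f|_{\star,q,\AA})$, whence the claim by norm-minimality), and, via the definition~\eqref{eq:relpreq}, the local representation~\ref{i:t:Omnibus:boh}.

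Next, for the strong approximation~\ref{i:t:Omnibus:3}: by definition of~$\CE_{q,\AA}$ there is~$(f_n)_n\subset\AA$ with~$f_n\to f$ in~$L^0(X,\mssm)$ and~$\limsup_n\pCE_q(f_n)\le\CE_{q,\AA}(f)$; the bounded sequence~$\lip_\sfd^\tau f_n$ then has a weak limit point~$H$ in~$L^q(X,\mssm)$, which by~\ref{i:t:Omnibus:2} is a $(q,\AA)$-relaxed gradient of~$f$, so~$|\rmD f|_{\star,q,\AA}\le H$ while~$\|H\|_{L^q(X,\mssm)}\le\liminf_n\|\lip_\sfd^\tau f_n\|_{L^q(X,\mssm)}\le\||\rmD f|_{\star,q,\AA}\|_{L^q(X,\mssm)}$; hence~$H=|\rmD f|_{\star,q,\AA}$ and, once more by uniform convexity, the convergence is strong, giving~\eqref{eq:4}. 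Completeness~\ref{i:t:Omnibus:1} then follows from~\ref{i:t:Omnibus:2}: a Cauchy sequence in~$H^{1,q}(\X;\AA)$ converges in~$L^q(X,\mssm)$ and its minimal relaxed gradients form a Cauchy, hence convergent, sequence in~$L^q(X,\mssm)$, the limiting pair lying in~$S_{\X,q,\AA}$. When~$\mssm$ is finite, the refined approximation~\ref{i:t:Omnibus:3bis} is obtained from~\ref{i:t:Omnibus:3} by composing the~$f_n$ with the $1$-Lipschitz retraction of~$\R$ onto~$I$ (which does not increase~$\lip_\sfd^\tau$, hence preserves the relaxed-gradient bound) and by a truncation plus dominated-convergence argument upgrading $L^0$- to $L^r$-convergence.

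Finally, the calculus rules~\eqref{eq:5}--\eqref{eq:locsuper1}, the chain rule~\ref{i:t:Omnibus:8} and the truncation identities~\ref{i:t:Omnibus:9} follow by combining~\ref{i:t:Omnibus:3} with the pointwise estimates for~$\lip_\sfd^\tau$ on~$\Lip_b(X,\tau,\sfd)$: for bounded Lipschitz~$f,g$ one has~$\lip_\sfd^\tau(fg)\le|f|\,\lip_\sfd^\tau g+|g|\,\lip_\sfd^\tau f$, $\lip_\sfd^\tau(\phi\circ f)\le(\Lip\phi)\,\lip_\sfd^\tau f$ with the sharper bound~$|\phi'(f)|\,\lip_\sfd^\tau f$ when~$\phi\in\rmC^1$, and analogous facts for~$\max$, $\min$ and affine combinations. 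Using that~$\AA$ is a subalgebra (so products and affine combinations of approximants remain in~$\AA$), I would approximate~$f,g$ by the sequences from~\ref{i:t:Omnibus:3}, pass to the limit through~\ref{i:t:Omnibus:2}, and invoke~\ref{i:t:Omnibus:4} to identify the minimal relaxed gradient; the equality cases are obtained by applying the same bounds to the inverse map (for monotone or~$\rmC^1$~$\phi$) and to each~$f_j$ on~$\{f_\pm=f_j\}$ (for the truncations), while the locality~\eqref{eq:7} comes from the chain rule applied to Lipschitz functions constant on~$N$ that approximate the identity off~$N$, and~\eqref{eq:locsuper1} from~\eqref{eq:7} applied to~$f-g$. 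The genuinely delicate point I expect to be the diagonal/Mazur argument inside~\ref{i:t:Omnibus:2}, where one must simultaneously control the~$L^0$-convergence of the approximants and the weak $L^q$-behaviour of their asymptotic Lipschitz constants while staying inside~$\AA$; everything downstream is a fairly mechanical transfer of pointwise Lipschitz calculus through this closure, with the finiteness of~$\mssm$ entering only to make the truncations and the passage from~$L^0$ to~$L^r$ painless.
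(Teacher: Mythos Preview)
Your sketch is correct and follows exactly the standard relaxation scheme of~\cite{AGS14I,AGS13,Savare22,FSS22} that the paper itself invokes; in fact the paper does not give a proof of this theorem at all but simply refers the reader to~\cite{FSS22} for the arguments, so your outline is more detailed than what appears in the paper.
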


\begin{remark} In the notation of Theorem \ref{thm:omnibus}, we have that if $\mssm$ is finite, the restriction of $\CE_{q, \AA}$ to $L^r(X, \mssm)$, $r \in [1,+\infty)$, can be equivalently obtained as
\begin{equation}\label{eq:relaxr}
 \CE_{q,\AA}(f) = \inf \left \{ \liminf_{n \to + \infty}
    \pCE_{q}(f_n) : \begin{gathered} (f_n)_n \subset \AA\comma \\ f_n \to f \text{ in } L^r(X,\mssm) \end{gathered}
  \right \}\comma
    \qquad f\in L^r(X,\mssm).
\end{equation}
\end{remark}

\subsection{Density of sub-algebras of Lipschitz functions}
The main property we are interested in is the density of the subalgebra $\AA$ in the metric Sobolev space
\[
D^{1,q}(\X)= D^{1,q}(\X; \Lip_b(X, \tau, \sfd))\fstop
\]

\begin{definition}[Density in energy of a subalgebra of Lipschitz functions]
  \label{def:density} Let $\X=(X, \tau, \sfd, \mssm)$ be an e.m.t.m.\ space, let $\AA \subset \Lip_b(X, \tau, \sfd)$ be a subalgebra and let $q \in (1,+\infty)$.
  We say that $\AA$ is \emph{dense in $q$-energy} in $D^{1,q}(\X)$ if for every $f\in D^{1,q}(\X)$ there exists a sequence $(f_n)_{n}$ satisfying
  \begin{equation}
    \label{eq:4bis}
    f_n\in \AA,\quad
    f_n \to f\text{ $\mssm$-a.e.~in $X$},\quad
    \lip^\tau_\sfd f_n\to |\rmD f|_{\star,q} \text{~~strongly in~~} L^q(X,\mssm).
  \end{equation}
(In particular~$f_n\to f$ in $L^0(\mfm)$ as well.)
\end{definition}

\begin{remark}\label{rem:dense2} When~$\mssm$ is finite, Definition~\ref{def:density} is also equivalent to the following strong approximation property: for every $f \in H^{1,q}(\X)$ there exists a sequence $(f_n)_{n}$ satisfying
  \begin{equation}
    \label{eq:4tris}
    f_n\in \AA,\quad
    f_n \to f\text{ in $L^q(X,\mssm)$},\quad
    \lip^\tau_\sfd f_n\to |\rmD f|_{\star,q}\text{ strongly in }L^q(X,\mssm).
  \end{equation}
  When $\AA$ is unital and point-separating, Definition \ref{def:density} is equivalent to either the equality $D^{1,q}(\X;\AA)=D^{1,q}(\X)$ with equal minimal relaxed gradients, or the equality of the Sobolev spaces $H^{1,q}(\X;\AA)=H^{1,q}(\X)$ with equal norms.
\end{remark}

\begin{remark}[Strong density]\label{rem:strong} If a subalgebra $\AA \subset \Lip_b(X, \sfd)$ satisfies \eqref{eq:4tris} and the space $H^{1,q}(\X)$ is reflexive, then $\AA$ is also strongly dense i.e. for every $f \in H^{1,q}(\X)$ there exists a sequence $(f_n)_n \subset \AA$ such that
\[ f_n \to f \quad \text{ in } H^{1,q}(\X).\]
This follows noticing that, for a sequence $(f_n)_n \subset \AA$ as in \eqref{eq:4tris}, the sequence $(|\rmD f_n|_{\star, q})_n$ is uniformly bounded in $L^q(X, \mssm)$ so that $(f_n)_n$ is uniformly bounded in $H^{1,q}(\X)$. By reflexivity we have that $f_n$ converges weakly in $H^{1,q}(\X)$ to some $g \in H^{1,q}(\X)$. Applying Mazur's theorem, we find a sequence $(g_n)_n \subset \AA$ converging strongly in $H^{1,q}(\X)$ (and thus in particular in $L^q(X, \mssm)$) to $g$. Since $f_n$ was converging in $L^q(X, \mssm)$ to $f$, it must be that $f=g$.
\end{remark}

In the following theorem we are going to consider a situation that may appear a bit artificial but it is actually what it is going to happen in the concrete case we are going to analyze.

\begin{theorem}\label{theo:startingpoint} Let $(X,\sfd)$ be a complete and separable metric space, let $\mssm$ be a finite, non negative Borel measure on $(X, \sfd)$ and let $\AA \subset \Lip_b(X, \sfd)$ be a unital and point-separating subalgebra.  Let $\delta$ be a distance on $X$ such that: 
\begin{enumerate}[$(a)$]
    \item $(X,\delta)$ is separable;
    \item the topology $\tau$ induced by $\delta$ coincides with the one induced by $\sfd$;
    \item it holds $\sfd \wedge 1 \le \delta \le \hat\mssd$ in $X \times X$.
\end{enumerate}
Let $Y\subset X$ be a countable $\mssd$-dense set. Setting $\X\eqdef (X, \tau, \sfd, \mssm)$ and $\X'\eqdef (X, \tau, \delta, \mssm)$, if
\begin{equation}\label{eq:214-15}
  \sfd_y\in D^{1,q}(\X';\AA)\quad \text{and}\quad
  \big|\rmD \sfd_y\big|_{\star, \X',q,\AA}\leq 1\comma \qquad y\in Y\comma
\end{equation}
then
$\AA$ is dense in $q$-energy in $D^{1,q}(\X')$ in the sense of Definition \ref{def:density}. 
  \end{theorem}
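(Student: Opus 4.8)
The plan is to reduce the statement to the abstract density criterion for subalgebras in metric Sobolev spaces from \cite{FSS22} (see also \cite{Savare22,AGS13}), the reduction being legitimate precisely because conditions $(a)$--$(c)$ pin down the \emph{length} structure carried by $\delta$. Concretely, I would first show that everything in sight ($D^{1,q}(\X')$, $D^{1,q}(\X';\AA)$, and the corresponding minimal relaxed gradients) coincides with its counterpart for $\X\eqdef(X,\tau_\sfd,\sfd,\mssm)$, thereby transforming \eqref{eq:214-15} into the statement that $\sfd_y\in D^{1,q}(\X;\AA)$ with $|\rmD\sfd_y|_{\star,\X,q,\AA}\le1$ for $y$ in a countable $\sfd$-dense set, which is exactly the hypothesis of the cited criterion; finally I would invoke Remark~\ref{rem:dense2} to read off density in $q$-energy.

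For the reduction I would argue as follows. First, $(a)$--$(c)$ force $\hat\delta=\hat\sfd$: truncating a distance at $1$ does not change the length functional along $\sfd$-Lipschitz curves, so $\widehat{\sfd\wedge1}=\hat\sfd$, and applying the monotone operation $\rho\mapsto\hat\rho$ to $\sfd\wedge1\le\delta\le\hat\sfd$ together with $\widehat{\hat\sfd}=\hat\sfd$ squeezes $\hat\delta$ between $\hat\sfd$ and $\hat\sfd$. More importantly, the asymptotic Lipschitz constant is insensitive to replacing a distance by its induced length distance: for fixed $x$, if $x',x''$ lie in a sufficiently small $\tau$-neighbourhood of $x$, a near-optimal curve for $\hat\sfd(x',x'')$ has small $\sfd$-length and hence stays inside any prescribed $\tau$-neighbourhood of $x$ (recall $\tau$-neighbourhoods contain $\sfd$-balls), which gives $\lip^\tau_{\hat\sfd}f(x)=\lip^\tau_\sfd f(x)$; running the same computation with $\delta$ in place of $\sfd$ and using $\hat\delta=\hat\sfd$ yields $\lip^\tau_\delta f=\lip^\tau_\sfd f$ for every $f$. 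Since the relaxed $(\X',q,\AA)$-gradient is built only from $\lip^\tau_\delta$ of functions in $\AA$, which equals their $\lip^\tau_\sfd$, one gets at once $D^{1,q}(\X';\AA)=D^{1,q}(\X;\AA)$ with equal minimal relaxed gradients. The only point needing care is $D^{1,q}(\X')=D^{1,q}(\X)$: here the ambient algebras $\Lip_b(X,\tau,\delta)\supseteq\Lip_b(X,\tau,\sfd)$ differ (a bounded $\sfd$-Lipschitz function is $\delta$-Lipschitz because $\delta\ge\sfd\wedge1$, but not conversely), so relaxing over the larger class a priori only gives $\CE_{\X',q}\le\CE_{\X,q}$; the reverse inequality is the statement that $\Lip_b(X,\tau,\sfd)$ is already energy-dense in $\Lip_b(X,\tau,\delta)$, which follows from the stability properties of the Cheeger energy in \cite{Savare22,AGS14I} (or, directly, by approximating a bounded $\delta$-Lipschitz — hence locally $\sfd$-Lipschitz — function with McShane/inf-convolution regularizations and controlling the energy through the locality of $\lip^\tau_\sfd$).

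Once reduced to $\X$, I would apply the density criterion of \cite{FSS22}: in a Polish metric measure space $(X,\sfd,\mssm)$ with $\mssm$ finite, if $\AA\subset\Lip_b(X,\sfd)$ is a unital point-separating subalgebra and $\sfd_y\in D^{1,q}((X,\sfd,\mssm);\AA)$ with minimal $(q,\AA)$-relaxed gradient $\le1$ for all $y$ in a countable $\sfd$-dense set, then $D^{1,q}((X,\sfd,\mssm);\AA)=D^{1,q}(X,\sfd,\mssm)$ with equal minimal relaxed gradients. For completeness I would recall its mechanism, since one must check that the present hypotheses genuinely feed into it: the functions $\sfd_y$, $y\in Y$, reconstruct $\sfd$ since $\sfd(x,x')=\sup_{y\in Y}|\sfd_y(x)-\sfd_y(x')|$; then, using that $\Lip_b(X,\sfd)$ is itself dense in $q$-energy in $D^{1,q}(X,\sfd,\mssm)$ (a foundational fact for Polish metric measure spaces), it is enough to approximate a bounded $L$-Lipschitz function $f$ by elements of $\AA$ with $\lip^\tau_\sfd(\cdot)\to|\rmD f|_{\star,q}$ in $L^q(\mssm)$. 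One writes $f=\inf_{y\in Y}\bigl(f(y)+L\,\sfd_y\bigr)$ with $L=\Li[\sfd]{f}$, notes that each elementary function $c+L\,\sfd_y$ lies in $D^{1,q}(\X;\AA)$ with relaxed gradient $\le L$ (from the hypothesis on $\sfd_y$ and the sub-linearity and locality of relaxed gradients, Theorem~\ref{thm:omnibus}), that $D^{1,q}(\X;\AA)$ is closed under $\min$, $\max$ and monotone $L^q$-limits (Theorem~\ref{thm:omnibus}\ref{i:t:Omnibus:2},\ref{i:t:Omnibus:9}), and that lattice combinations of the elementary functions can finally be replaced by genuine elements of $\AA$ through the strong/refined approximation properties (Theorem~\ref{thm:omnibus}\ref{i:t:Omnibus:3},\ref{i:t:Omnibus:3bis}).

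I expect the main obstacle to be the \emph{pointwise} energy bookkeeping inside this last step (whether imported as a black box from \cite{FSS22} or reproved): the plain McShane formula controls only the \emph{global} Lipschitz constant $L$ of $f$, whereas the conclusion demands that the asymptotic Lipschitz constants of the approximants converge to $|\rmD f|_{\star,q}$ in $L^q(\mssm)$, so near the contact set the constant $L$ must be replaced by the local quantity $\lip^\tau_\sfd f$. This is handled by a localized construction — truncating $f$ and restricting the index $y$ to shrinking neighbourhoods, equivalently using variable slopes $\ell(\cdot)$ close to $\lip^\tau_\sfd f$ — combined with the local representation of the Cheeger energy (Theorem~\ref{thm:omnibus}\ref{i:t:Omnibus:boh}) and a diagonal extraction over the countably many $y\in Y$ and over the truncation parameters. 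The interplay among $\sfd$, $\delta$, and $\hat\sfd$ in the reduction step is routine by comparison, once the remark that $D^{1,q}$ depends on the distance only through its length distance is in place.
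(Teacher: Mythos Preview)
Your reduction hinges on the identity $\lip^\tau_\delta f=\lip^\tau_\sfd f$ for $f\in\AA$, from which you want to deduce $D^{1,q}(\X';\AA)=D^{1,q}(\X;\AA)$ with equal minimal relaxed gradients and then invoke \cite[Thm.~2.12]{FSS22} directly. This identity is false under the stated hypotheses. The curve argument you give only yields the easy direction $\lip^\tau_{\hat\sfd}f\le\lip^\tau_\sfd f$: if a near-optimal curve for $\hat\sfd(x',x'')$ stays in a neighbourhood $U$ of $x$, then $|f(x')-f(x'')|\le\Li[\sfd,U]{f}\cdot\hat\sfd(x',x'')$, which bounds the $\hat\sfd$-slope \emph{by} the $\sfd$-slope, not the other way around. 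For a concrete counterexample, take $X=\{(t,|t|):t\in[-1,1]\}\subset\R^2$ with the Euclidean distance $\sfd$, and set $\delta=\hat\sfd$ (arc length); for $f(x,y)=x$ one computes $\lip^\tau_\sfd f(0,0)=1$ (realised by pairs on opposite branches) but $\lip^\tau_{\hat\sfd}f(0,0)=1/\sqrt2$. Since in general only $\lip^\tau_\delta\le\lip^\tau_\sfd$ holds, the hypothesis $|\rmD\sfd_y|_{\star,\X',q,\AA}\le1$ is genuinely \emph{weaker} than $|\rmD\sfd_y|_{\star,\X,q,\AA}\le1$, and you cannot transport it to $\X$ to feed into the criterion of \cite{FSS22}.

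The paper does not attempt this reduction. It invokes \cite[Cor.~5.3.6]{Savare22} only for the \emph{full} Lipschitz algebra (where $D^{1,q}(\X)=D^{1,q}(\X')$ with equal relaxed gradients does hold), and then reproves the Hopf--Lax argument of \cite[Thm.~2.12]{FSS22} while deliberately mixing the two distances: the regularisation $\sfQ_t f(x)=\inf_y\bigl\{\tfrac{1}{rt^{r-1}}\sfd^r(x,y)+f(y)\bigr\}$ and the quantity $\sfD_t^+f$ are built from $\sfd$, whereas the relaxed-gradient estimate for the finite approximants $\sfQ_t^n f=\min_{k\le n}\bigl\{\tfrac{1}{rt^{r-1}}\sfd^r(\cdot,y_k)+f(y_k)\bigr\}$ uses the $(\X',q,\AA)$-notion, hence only the weak hypothesis \eqref{eq:214-15}. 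This two-metric bookkeeping is precisely the ``crucial detail'' the paper singles out; your plan erases it. Note also that the localisation mechanism here and in \cite{FSS22} is the Hopf--Lax semigroup (producing the pointwise bound $|\rmD\sfQ_tf|_{\star,\X',q,\AA}\le(t^{-1}\sfD_t^+f)^{r-1}$ together with $\limsup_{t\downarrow0}t^{-1}(f-\sfQ_tf)\le q^{-1}(\lip_\sfd f)^q$), not a truncated McShane formula.
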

  \begin{proof}
  Note that, under the given assumptions, $(X,\sfd)$ and~$(X,\delta)$ are both complete and separable metric spaces. %
By \cite[Cor.~5.3.6]{Savare22} we have~$D^{1,q}(\X)=D^{1,q}(\X')$ with same minimal relaxed gradient, viz.
\[ |\rmD f|_{\star,\X, q}= |\rmD f|_{\star,\X',q}\comma \qquad f \in D^{1,q}(\X) = D^{1,q}(\X')\fstop\]
  The proof is precisely the same of \cite[Theorem 2.12]{FSS22} but for one (simple, yet crucial) detail: the distances $\sfd$ and $\delta$ are different in general: we are considering here the relaxed gradient of $\sfd_y$ induced by $\delta$. Note that the analogous condition with $\sfd$ in place of $\delta$ is stronger. Since the use of the two distances~$\delta$ and~$\sfd$ may cause confusion, we prove the assertion in full.
We split the proof in various steps.
    It suffices to prove that
      \begin{equation}
        \label{eq:15}
        |\rmD f|_{\star, \X',q, \AA}
        \le |\rmD f|_{\star, \X',q} = |\rmD f|_{\star, \X,q} \quad\text{$\mssm$-a.e.}\comma\qquad f \in D^{1,q}(\X)\ \tparen{= D^{1,q}(\X')} \comma
      \end{equation}
      since the opposite inequality holds by definition.

\begin{enumerate}[$(1)$,wide]
\item\label{i:t:startingpoint:1} \emph{Claim: It is not restrictive
        to assume $\sfd$ bounded above by $1$.}\quad Indeed: metric completeness, the induced length distance, the class of Lipschitz and bounded functions, and the definition of asymptotic Lipschitz constant are all invariant under truncation.

\item\label{i:t:startingpoint:2} \emph{Claim: It is sufficient to prove that}
      \begin{equation}
        \label{eq:92}
         \CE_{\X', q, \AA}(f)\le \int_X (\lip_\sfd f)^q\,\d\mssm \comma \qquad
f\in \Lip_b(X, \sfd) \fstop
      \end{equation}
      Indeed, if $f \in D^{1,q}(\X)=D^{1,q}(\X')$, we can find a sequence $f_n\in \Lip_b(X,\sfd)$ such that
    $f_n\to f$ $\mssm$-a.e.~and $\lip_\sfd f_n\to |\rmD f|_{\star, \X,q}= |\rmD f|_{\star, \X',q}$ strongly in
    $L^q(X,\mssm)$
    as $n\to\infty$.
     By the $L^0$-lower semicontinuity of the
    $\CE_{\X', q, \AA}$-energy, letting~$f=f_n$ in~\eqref{eq:92} and letting $n\to\infty$ we get
    \begin{displaymath}
      \CE_{\X', q, \AA}(f)\le \int_X |\rmD f|_{\star, \X',q}^q\,\d\mssm=\CE_{\X', q}(f)= \CE_{\X, q}(f) <\infty \fstop
    \end{displaymath}
    We deduce that $f$ has a $(\X', q, \AA)$-relaxed gradient (equivalently a $(\X, q, \AA)$-relaxed gradient) and that \eqref{eq:15} holds, 
    since $|\rmD f|_{\star, \X',q} = |\rmD|_{\star, q, \sfd} \le |\rmD f|_{\star, \X',q, \AA}$ $\mssm$-a.e.

\item \emph{Hopf--Lax regularizations.\quad} Let~$r \in(1,+\infty)$ be the H\"older conjugate exponent of $q$, satisfying i.e.~$1/r+1/q=1$. 
    For every $f\in \Lip_b(X, \sfd)$ and $t>0$ we introduce the Hopf--Lax
    regularization $\mssQ_t f:X\to \R$,
    \begin{align}
        \label{eq:16}
      \sfQ_t f(x)\eqdef {}&\inf_{y\in X}\frac{1}{ rt^{r-1}}\sfd^{ r }(x,y)+f(y)\comma
                       \quad x\in X\fstop
    \end{align} 
    It is clear that $\mssQ_t f$ is bounded (with values in the
    interval
    $[\inf_X f,\sup_X f]$) and $\sfd$-Lipschitz, being the infimum of a family
    of  uniformly  $\sfd$-Lipschitz functions.
    We further consider the upper semicontinuous function
    \cite[(3.4) and Prop.~3.2]{AGS14I}
      \begin{align}
       \label{eq:18}
        \sfD_t^+ f(x)\eqdef &\sup\limsup_{n\to\infty} \sfd(x,y_n),
      \end{align}
      where the supremum ranges over all the minimizing sequences~$\seq{y_n}_n$
      of \eqref{eq:16}.
      The function~$\sfD_t^+ f$ too is bounded uniformly in~$t$ and satisfies
      (see e.g.~\cite[Lem.~3.2.1]{Savare22})
      \begin{equation}
        \label{eq:20}
         \left (  \frac{\sfD_t^+ f(x)}t  \right )^{r}  \leq \tparen{ r\Li[\mssd]{f}}^q \fstop
      \end{equation}
      In fact, if $\seq{y_n}_n$ is a minimizing sequence for the right-hand side of~\eqref{eq:16}, for
      every $\eps>0$ we have, eventually in~$n$,
      \[
      \frac{1}{ rt^{r-1}}\sfd^{ r }(x,y_n)+f(y_n)\le
      \sfQ_tf(x)+\eps\le f(x)+\eps\comma
      \]
      i.e., setting $L\eqdef \Li[\mssd]{f}$,
      \begin{align*}
         \frac{1}{t^{ r}}\sfd^{ r }(x,y_n) &\leq \frac{\eps r}{t} + \frac{r}{t}\tparen{f(x)-f(y_n)} \le \frac{\eps r}{t} + rL \frac{\sfd(x,y_n)}{t} 
         \\
         &\leq \frac{\eps r}{t} + (rL)^q + \frac{\sfd^{r}(x,y_n)}{rt^{r}q^{1/(q-1)}} \fstop
      \end{align*}
      We thus get
      \[
        \limsup_{n\to\infty}\frac{1}{ t^{r}}\sfd^{ r}(x,y_n)\le  \frac{\eps r}{t} + (rL)^q 
      \]
      which yields \eqref{eq:20} since $\eps>0$ is arbitrary.

\item\label{i:t:startingpoint:5} \emph{Claim: For every $f\in \Lip_b(X,\sfd)$ and for every $t>0$,
        \begin{equation}
          \label{eq:19}
          |\rmD \sfQ_t f|_{\star, \X',q, \AA}\le  \tparen{ t^{-1} \sfD_t^+ f }^{r-1}  \quad
          \text{$\mssm$-a.e.} \fstop
        \end{equation}
      }
Indeed, let~$\set{y_n}_n$ be an enumeration of the countable $\mssd$-dense set~$Y$, fix~$f\in \Lip_b(X,\sfd)$, and set
\[
\sfQ^n_t f(x)\eqdef \min_{1\le k\le n}\frac{1}{ r t^{ r-1}}\sfd^{ r}(x,y_k)+f(y_k) \fstop
\]
It is readily checked that
\begin{equation}
        \label{eq:21}
        \sfQ_t f(x)=\inf_{y\in Y}\frac{1}{ r t^{ r-1}}\sfd^{ r}(x,y)+f(y)=
        \lim_{n\to\infty}\sfQ^n_tf(x)\fstop
      \end{equation}
       We consider now the upper semicontinuous   function
      \begin{equation}
        \label{eq:23}
        \sfD^n_t(x)\eqdef  \max\set{\sfd(x,y_k):1\le k\le n,\ 
        \sfQ_t^n(x)=\frac{1}{ r t^{ r-1}}\sfd^{ r}(x,y_k)+f(y_k)} \fstop
      \end{equation}
       By \eqref{eq:214-15} and Theorem \ref{thm:omnibus}\ref{i:t:Omnibus:9}, we have that $ (  t^{-1}\sfD^n_t  )^{r-1} $ is  a $(\X', q, \AA)$-relaxed gradient of~$\sfQ^n_t f$.
      It is then clear that for every $x$ there exists a a minimizing sequence~$\seq{z_{n,x}}_n$ of \eqref{eq:16} with~$z_{n,x}\in \set{y_1,\dotsc,y_n}$, i.e.\ such that 
      \[
      \begin{gathered}
      \sfD^n_t(x)= \sfd(x, z_{n,x}) \qquad \text{and}
      \\
      \lim_n \sfQ^n_tf(x)=\frac{1}{ r t^{ r-1}}\sfd^{ r}(x, z_{n,x} )+f( z_{n,x} )\to \sfQ_t f(x)\comma
      \end{gathered}
      \qquad x\in X\fstop
      \]
      We deduce that
      \begin{equation}\label{eq:24}
        \limsup_{n\to\infty}\sfD^n_t(x)=
         \limsup_{n\to\infty}\sfd(x, z_{n,x} )\le \sfD_t^+f(x)\comma \qquad x \in X \fstop
      \end{equation}

      Since~$\mssd\leq 1$, we have~$\sfD^n_t\leq 1$.
      In particular,~$\sfD^n_t$ is bounded uniformly in~$n$ for every~$t$.
      Therefore, we can assume with no loss of generality ---up to extracting a suitable non-relabeled subsequence--- that
      $(t^{-1}  \sfD^n_t )^{ r-1}$ converges weakly* in~$L^\infty(X,\mssm)$ to some~$G$.
      Thus,~$G$ is a $(\X', q, \AA)$-relaxed gradient of~$\sfQ_t f$ by Theorem \ref{thm:omnibus}\ref{i:t:Omnibus:2}, hence $|\rmD \sfQ_t f|_{\star, \X',q, \AA}\le G$ $\mssm$-a.e.~by Theorem~\ref{thm:omnibus}\ref{i:t:Omnibus:4}.
      Now, since~$\mssm$ is finite,~$\car_B\in L^1(X,\mssm)$ for every Borel~$B\subset X$. 
      Furthermore, since~$\mssm$ is finite, $\mssD^n_t\leq \car \in L^1(\mssm)$ uniformly in~$n$ and~$t$.
      We can thus apply the reverse Fatou lemma and conclude from the $L^\infty(X,\mssm)$-weak*-convergence of~$(t^{-1}\sfD^n_t)^{r-1}$ to~$G$ that, for every Borel~$B \subset X$,
      \begin{align*}
      \int_B G \, \d \mssm =&\ \lim_n \int_B \tparen{t^{-1}\sfD^n_t}^{r-1} \, \d \mssm \le \int_B \limsup_n \tparen{t^{-1}\sfD^n_t}^{r-1} \, \d \mssm
      \\
      \leq& \int_B \tparen{t^{-1}\rmD_t^+ f}^{r-1} \, \d \mssm \comma
      \end{align*}
      where the last inequality follows from~\eqref{eq:24}.
      We conclude that $|\rmD \sfQ_t f|_{\star, \X',q, \AA}\le (t^{-1}\rmD_t^+ f)^{r-1}$ $\mssm$-a.e..

     \item \emph{Claim: For every $x\in X,\ t>0,$ and $f\in
        \Lip_b(X,\sfd)$
        we have
        \begin{align}
          \label{eq:25}
          \frac{f(x)-\sfQ_tf(x)}t&=\frac 1{ q}\int_0^1
                                   \paren{\frac{\rmD_{ut}^+f(x)}{ut}}^{ r}
                                   \,\d u\comma
                                   \\
          \label{eq:27}
          \limsup_{t\downarrow0}\frac{f(x)-\sfQ_tf(x)}t&\le \frac1{ q}\tparen{\lip_\sfd f(x)}^{ q} \fstop
        \end{align}
      }
      This follows by \cite[Thm. 3.2.4]{Savare22};
      also cf.~\cite[Thm.~3.1.4, Lemma 3.1.5]{AGS08}.

    \item \emph{Conclusion.}
      We argue as  in  \cite[Theorem 3.2.7]{Savare22}:
      \eqref{eq:25} and \eqref{eq:20} yield the uniform bound
      \begin{equation}
        \label{eq:26}
        \frac{f(x)-\sfQ_tf(x)}t\le  \frac{1}{q}  \tparen{r \Li[\mssd]{f}}^{ q}\comma \qquad\quad
 x\in X\comma t>0 \fstop
      \end{equation}
      Integrating \eqref{eq:27} in $X$ 
      and applying the reverse Fatou Lemma we get
      \begin{equation}
        \label{eq:28}
        \limsup_{t\downarrow0}
        \int_{X} \frac{f-\sfQ_tf}{t}\,\d\mssm \le
        \frac1{ q}\int_{ X} (\lip_\sfd f)^{ q} \,\d\mssm \fstop
      \end{equation}
      On the other hand, \eqref{eq:25} and Fubini's Theorem yield
      \begin{equation}
        \label{eq:29}
        \int_{ X}  \frac{f-\sfQ_tf}{t}\,\d\mssm
        =\frac 1{ q} \int_0^1 \int_{ X}  \paren{\frac{\sfD_{ut}^+f}{ut}}^{ r}
        \,\d \mssm\,\d u \comma
      \end{equation}
and an application of Fatou's Lemma yields
      \begin{equation}
        \label{eq:29bis}
        \liminf_{t\downarrow0}\int_{ X}  \frac{f-\sfQ_tf}{t}\,\d\mssm
        \ge  \frac 1{ q}\liminf_{t\downarrow0} \int_{ X}  \paren{\frac{\sfD_{t}^+f}{t}}^{ r}
        \,\d \mssm \fstop
      \end{equation}
      Using the fact that
      $t^{-1}\sfD_t^+f$ is uniformly bounded by \eqref{eq:20}, we can
      find a decreasing and vanishing sequence
      $\seq{t_n}_n\subset \R^+$ and a limit function $G\in
      L^\infty(X,\mssm)$
      such that
      \begin{gather}
          \left (  t_n^{-1}\sfD_{t_n}^+f  \right )^{r-1}  \weakto^* G\quad \text{weakly$^*$ in
           $L^\infty(X,\mssm)$}\quad \text{as }n\to\infty,\notag
           \\
         \label{eq:117}
        \lim_{n\to\infty}
        \int_{ X}  \paren{\frac{\sfD_{t_n}^+f}{t_n}}^{ r}
        \,\d \mssm=\liminf_{t\downarrow0} \int_{ X}  \paren{\frac{\sfD_{t}^+f}{t}}^{ r}
        \,\d \mssm \fstop
      \end{gather}
      Since $ \left (  t^{-1}\sfD_t^+f  \right )^{r-1} $  is  a $(\X', q, \AA)$-relaxed gradient of
      $\sfQ_t f$ by Claim~\ref{i:t:startingpoint:5} and~$\sfQ_t f\to f$ pointwise
      everywhere on~$X$, we see that~$G$ is
      a $(\X',q,\AA)$-relaxed gradient of $f$ by Theorem \ref{thm:omnibus}\ref{i:t:Omnibus:2}.
      
      Using the lower semicontinuity of the
       $L^{ q}$-norm w.r.t.~the weak$^*$ $L^\infty(X,\mssm)$
      convergence,
      \begin{equation}\label{eq:30}
      \begin{aligned}
        \lim_{n\to\infty} \int_{ X} \paren{\frac{\sfD_{t_n}^+f}{t_n}}^{ r}
        \,\d \mssm &=\lim_{n\to\infty} \int_{ X} \paren{\frac{\sfD_{t_n}^+f}{t_n}}^{ q(r-1)}
        \,\d \mssm
        \\
        &\ge \int_{ X} G^{ q} \,\d\mssm
      \ge \int_{ X} |\rmD f|_{\star, \X',q, \AA}^{ q}\,\d\mssm \comma
      \end{aligned}
      \end{equation}
      where we also used the pointwise minimality of  $|\rmD f|_{\star, \X',q, \AA}$  given by Theorem \ref{thm:omnibus}\ref{i:t:Omnibus:4}. 
      Combining \eqref{eq:30}, \eqref{eq:117}, \eqref{eq:29bis} and
      \eqref{eq:28} 
      we deduce that
      \begin{displaymath}
        \int_{ X}  |\rmD f|_{\star, \X',q, \AA}^{ q}\,\d\mssm \le \int_{ X} (\lip_\sfd
        f)^{ q}\,\d\mssm
      \end{displaymath}
      so that \eqref{eq:92} holds. \qedhere
      \end{enumerate}
  \end{proof}

\subsection{Stability under changes of measures and distances}

We recall the notion of infinitesimal Hilbertianity.

\begin{definition}
A Polish metric measure space~$(X,\mssd,\mssm)$ is \emph{infinitesimally Hilbertian} if~$H^{1,2}(X, \mssd,\mssm)$ is a Hilbert space --- or, equivalently, if~$\CE_{2,\mssm}$ is a quadratic form.

A complete and separable metric space~$(X,\mssd)$ is \emph{universally infinitesimally Hilbertian} if~$(X,\mssd,\mssm)$ is infinitesimally Hilbertian for \emph{every} non-negative Borel measure~$\mssm$ on~$(X,\sfd)$ finite on $\mssd$-bounded sets.
\end{definition}

\subsubsection{Changes of measures}
In this section we study how to derive the infinitesimal Hilbertianity and the density of subalgebras of Lipschitz functions for a Polish metric measure space with possibly infinite measure $\mssm$ when the same property is satisfied by a class of finite measures that have a good compatibility with $\mssm$. 

In particular, if $(X, \sfd, \mssm)$ is a Polish metric measure space, we will consider measures $\mssm'$ satisfying 
\begin{equation}\label{eq:thecond}
\begin{split}
&\,\,\vartheta \eqdef  \frac{ \de \mssm'}{\de \mssm} \in L^1 (X,\mssm) \text{ and for every compact set $K \subset X$ there exist } \\ &\text{$r>0$ and~$c>0$ satisfying }
0<c\le \vartheta \le 1 \quad \text{on}\quad \set{ x \in X : \sfd(x,K) \le r} \fstop
\end{split}
\end{equation}
This condition guarantees that~$H^{1,2}(X, \sfd, \mssm) \subset H^{1,2}(X, \sfd,\mssm')$ with equal minimal relaxed gradients, see \cite[Lem.~4.11]{AGS14I} or~\cite[Lem.~4.7]{LzDSSuz23a}. (Note that $\mssm'$ and $\mssm$ share the same class of negligible sets.)

\begin{lemma}\label{l:TransferFunction} Let $(X, \sfd, \mssm)$ be a Polish metric measure space and fix~$x_0 \in X$. Then, there exist a continuous function~$\vartheta=\vartheta_{x_0} \in L^1(X,\mssm)^+$ and, for  every~$r>0$, a constant~$c(r)>0$, such that~$0<c(r) \leq \vartheta \le 1$ in $\rmB(x_0,r)$.
In particular, $\mssm'\eqdef \vartheta \mssm$ satisfies~\eqref{eq:thecond}.
\begin{proof}
Let $f: [0,+\infty) \to [1,+\infty)$ be a nondecreasing continuous function such that
\[
f(n)=\mssm  \ball{x_0}{n+1}+1\comma \qquad n \in \N_0 \comma
\]
and set~$\varrho\colon t\mapsto t+\log f(t))$ for~$t>0$.
Now, define~$V=V_{x_0}\eqdef \varrho\,\circ\,\mssd_{x_0}$, and~$\vartheta=\vartheta_{x_0} \eqdef e^{-V}$, and note that~$\vartheta$ is continuous and takes values in~$(0,1]$.
It is clear from the monotonicity of~$f$ (hence, in turn, of~$\varrho$) that, wherever~$\mssd_{x_0} \leq r$ for some $r >0$, we have~$\vartheta \geq e^{-r}/f(r)>0$, which shows the lower bound for~$\vartheta$ with~$c(r)\eqdef e^{-r}/f(r)>0$.
Analogously, wherever~$\mssd_{x_0} \geq r$ for some $r >0$, we have~$\vartheta \leq e^{-r}/f(r)>0$, and thus
\begin{align*}
    \int_X \abs{\vartheta} \,\d\mssm &= 
    \sum_{n=1}^{\infty} \int_{\ball{x_0}{n} \setminus \ball{x_0}{n-1}} \vartheta \,\d \mssm 
    \leq 
    \sum_{n=1}^{\infty} \mssm\, \rmB(x_0,n) \frac{e^{1-n}}{f(n-1)} 
    \\
    &
    \leq 
    \sum_{n=1}^{\infty} e^{1-n} <+\infty \fstop\qedhere
\end{align*}
\end{proof}
\end{lemma}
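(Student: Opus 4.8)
The plan is to build $\vartheta$ explicitly as $e^{-V}$ for a \emph{radial} exponent $V=\varrho\circ\mssd_{x_0}$, with $\varrho\colon[0,\infty)\to[0,\infty)$ continuous and nondecreasing, chosen so that $e^{-\varrho}$ decays fast enough along the distance to $x_0$ to become $\mssm$-integrable, while the monotonicity of $\varrho$ keeps $\vartheta$ bounded below on every ball $\rmB(x_0,r)$.

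Concretely, I would first pick a continuous nondecreasing $f\colon[0,\infty)\to[1,\infty)$ with $f(n)\ge\mssm\rmB(x_0,n+1)+1$ for every integer $n\ge0$; such an $f$ exists precisely because $\mssm$ is finite on bounded sets, so the sequence $n\mapsto\mssm\rmB(x_0,n+1)$ is finite and nondecreasing and can be interpolated. Then I set $\varrho(t)\eqdef t+\log f(t)$, $V\eqdef\varrho\circ\mssd_{x_0}$, and $\vartheta\eqdef e^{-V}$; by construction $\vartheta$ is continuous and takes values in $(0,1]$. The lower bound on balls is then immediate from monotonicity of $\varrho$: on $\{\mssd_{x_0}\le r\}$ we have $V\le\varrho(r)$, hence $\vartheta\ge e^{-\varrho(r)}\eqdef c(r)>0$, which also shows $\vartheta>0$ everywhere (so $\mssm'$ and $\mssm$ will share the same negligible sets).

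For integrability I would slice $X$, using \emph{open} balls, into the annuli $A_n\eqdef\rmB(x_0,n)\setminus\rmB(x_0,n-1)$, $n\ge1$, which partition $X$. On $A_n$ one has $\mssd_{x_0}\ge n-1$, so $\vartheta\le e^{1-n}/f(n-1)$; since $f(n-1)\ge\mssm\rmB(x_0,n)$ this yields $\int_{A_n}\vartheta\,\d\mssm\le\mssm\rmB(x_0,n)\,e^{1-n}/f(n-1)\le e^{1-n}$, whence $\int_X\vartheta\,\d\mssm\le\sum_{n\ge1}e^{1-n}<\infty$. This gives $\vartheta\in L^1(X,\mssm)^+$.

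It then remains to verify \eqref{eq:thecond} for $\mssm'\eqdef\vartheta\mssm$: the density $\de\mssm'/\de\mssm$ is exactly the integrable function $\vartheta$, and $\vartheta\le1$ holds globally, so only the uniform lower bound on tubular neighbourhoods of compacts is left. Given a compact $K$, pick $R$ with $K\subset\rmB(x_0,R)$; then for any $r>0$ the set $\{x:\sfd(x,K)\le r\}$ is contained in $\rmB(x_0,R+r)$, on which $\vartheta\ge c(R+r)>0$ by the ball estimate, and $\vartheta\le1$. I do not anticipate a genuine obstacle here; the only mildly delicate point is arranging $f$ to be at once continuous, nondecreasing, bounded below by $1$, and a majorant of the piecewise data $\mssm\rmB(x_0,n+1)+1$, which is routine because that data is finite and nondecreasing in $n$.
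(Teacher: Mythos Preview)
Your proposal is correct and follows essentially the same approach as the paper: the same radial construction $\vartheta=e^{-(\mssd_{x_0}+\log f(\mssd_{x_0}))}$ with $f$ a continuous nondecreasing majorant of $n\mapsto\mssm\rmB(x_0,n+1)+1$, the same monotonicity argument for the lower bound on balls, and the same annular decomposition for integrability. You additionally spell out the verification of~\eqref{eq:thecond} for general compacts via inclusion in a large ball, which the paper leaves implicit.
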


\begin{proposition}\label{p:IH} Let $(X, \sfd, \mssm)$ be a Polish metric measure space and let~$\seq{\mssm_k}_k$ be a sequence of measures~$\mssm_k\eqdef \vartheta_k\mssm$ satisfying~\eqref{eq:thecond}, and assume either of the following:
\begin{enumerate}[$(a)$]
\item\label{i:p:IH:1} $\vartheta_k \in L^2(X,\mssm)$ and~$\vartheta_k\rightharpoonup \car$ in $L^2(X,\mssm)$;
\item\label{i:p:IH:2} $\vartheta_k \uparrow \car$ $\mssm$-a.e..
\end{enumerate}
If~$H^{1,2}(X, \sfd,\mssm_k)$ is Hilbert for every~$k$, then~$H^{1,2}(X, \sfd, \mssm)$ too is Hilbert.

\begin{proof}
In light of~\eqref{eq:thecond},~$|\rmD (\emparg)|_{\star, \X, 2}$ coincides with~$|\rmD (\emparg)|_{\star, \X_k, 2}$ on $H^{1,2}(\X)\subset H^{1,2}(\X_k)$ for every~$k$ by~\cite[Lemma~4.11]{AGS14I}, where $\X\eqdef (X, \sfd, \mssm)$ and $\X_k\eqdef (X, \sfd, \mssm_k)$.
Thus, throughout the proof we may write~$|\rmD (\emparg)|_{\star, 2}$ in place of both~$|\rmD (\emparg)|_{\star, \X, 2}$ and~$|\rmD (\emparg)|_{\star, \X_k, 2}$.
Fix~$f,g\in H^{1,2}(\X)$.

Assume~\ref{i:p:IH:1}.
By the infinitesimal Hilbertianity of~$H^{1,2}(\X_k)$,
    \begin{align*}
    \int_X |\rmD (f+g)|_{\star, 2}^2 \vartheta_k \, \d \mssm &+ \int_X |\rmD (f-g)|_{\star, 2}^2 \vartheta_k \, \d \mssm 
    \\
    &=  \int_X |\rmD (f+g)|_{\star, 2}^2 \, \d \mssm_k + \int_X |\rmD (f-g)|_{\star, 2}^2 \, \d \mssm_k \\
    &= 2\int_X |\rmD f|_{\star, 2}^2 \, \d \mssm_k + 2 \int_X |\rmD g|_{\star, 2}^2 \, \d \mssm_k\\ 
  &= 2\int_X |\rmD f|_{\star, 2}^2 \vartheta_k\, \d \mssm + 2 \int_X |\rmD g|_{\star, 2}^2  \vartheta_k \, \d \mssm
    \end{align*}
    Passing to the limit on both sides as $k \to + \infty$ we get
    \[ \CE_{\X,2}(f+g)+ \CE_{\X,2}(f-g) = 2\,\CE_{\X,2}(f) + 2\,\CE_{\X,2}(g)\comma \]
    which concludes that $\CE_{\X,2}$ is a quadratic form and thus that $H^{1,2}(\X)$ is a Hilbert space.

Assume now~\ref{i:p:IH:2}. Since~$\vartheta_k\leq 1$ we have~$\mssm_k \le \mssm$.
By the infinitesimal Hilbertianity of~$H^{1,2}(\X_k)$,
    \begin{align*}
    \int_X |\rmD (f+g)|_{\star, 2}^2 \vartheta_k \, \d \mssm &+ \int_X |\rmD (f-g)|_{\star, 2}^2 \vartheta_k \, \d \mssm
    \\
    &=  \int_X |\rmD (f+g)|_{\star, 2}^2 \, \d \mssm_k + \int_X |\rmD (f-g)|_{\star, 2}^2 \, \d \mssm_k \\
    &= 2\int_X |\rmD f|_{\star, 2}^2 \, \d \mssm_k + 2 \int_X |\rmD g|_{\star, 2}^2 \, \d \mssm_k \\
    & \le 2\int_X |\rmD f|_{\star, 2}^2 \, \d \mssm + 2 \int_X |\rmD g|_{\star, 2}^2 \, \d \mssm \\
    &= 2\, \CE_{\X,2}(f)+ 2\, \CE_{\X,2}(g).
    \end{align*}
    Passing to the limit as $k \to + \infty$ we get that 
    \[ \CE_{\X,2}(f+g)+ \CE_{\X,2}(f-g) \le 2\, \CE_{\X,2}(f) + 2\, \CE_{\X,2}(g)\]
    which is enough (cf.\ e.g.~\cite[Prop.~11.9]{DalMaso93}) to conclude that $\CE_{\X,2}$ is a quadratic form and thus that $H^{1,2}(\X)$ is a Hilbert space.
\end{proof}
\end{proposition}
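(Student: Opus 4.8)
The plan is to transfer the parallelogram identity for the Cheeger energy from the spaces $\X_k\eqdef(X,\sfd,\mssm_k)$ — where it holds by hypothesis — to $\X\eqdef(X,\sfd,\mssm)$, exploiting that the minimal relaxed gradient is insensitive to the change of reference measure $\mssm\leadsto\mssm_k$. Concretely, I would first record that, under~\eqref{eq:thecond}, one has $H^{1,2}(\X)\subseteq H^{1,2}(\X_k)$ and, for $f\in H^{1,2}(\X)$, the minimal relaxed gradients of $f$ with respect to $\mssm$ and to $\mssm_k$ coincide — call the common value $|\rmD f|_{\star,2}$ — see~\cite[Lem.~4.11]{AGS14I}. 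Combining this with the local representation~\eqref{eq:3} gives, for every $f\in H^{1,2}(\X)$ and every $k$,
\[
\CE_{\X_k,2}(f)=\int_X|\rmD f|_{\star,2}^2\,\d\mssm_k=\int_X|\rmD f|_{\star,2}^2\,\vartheta_k\,\d\mssm\fstop
\]

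Now fix $f,g\in H^{1,2}(\X)$; since $H^{1,2}(\X)$ is a Banach space, hence a vector space (Theorem~\ref{thm:omnibus}\ref{i:t:Omnibus:1}), also $f\pm g\in H^{1,2}(\X)\subseteq H^{1,2}(\X_k)$, so $\CE_{\X,2}(f\pm g)<\infty$ and $|\rmD(f\pm g)|_{\star,2}$ is the same function whether computed over $\mssm$ or over $\mssm_k$. Because $H^{1,2}(\X_k)$ is Hilbert, $\CE_{\X_k,2}$ is a quadratic form, whence
\[
\int_X|\rmD(f+g)|_{\star,2}^2\vartheta_k\,\d\mssm+\int_X|\rmD(f-g)|_{\star,2}^2\vartheta_k\,\d\mssm=2\int_X|\rmD f|_{\star,2}^2\vartheta_k\,\d\mssm+2\int_X|\rmD g|_{\star,2}^2\vartheta_k\,\d\mssm\fstop
\]
It remains to let $k\to\infty$. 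Under~\ref{i:p:IH:2} ($\vartheta_k\uparrow\car$), monotone convergence applies to each of the four integrals — the densities $|\rmD h|_{\star,2}^2$, $h\in\set{f+g,f-g,f,g}$, lie in $L^1(X,\mssm)$ — so both sides converge, yielding the parallelogram identity for $\CE_{\X,2}$; alternatively, using $\vartheta_k\le\car$ only on the right-hand side one obtains in the limit the sub-parallelogram inequality $\CE_{\X,2}(f+g)+\CE_{\X,2}(f-g)\le2\CE_{\X,2}(f)+2\CE_{\X,2}(g)$, which, $\CE_{\X,2}$ being non-negative, $2$-homogeneous and $L^2$-lower semicontinuous, already suffices by~\cite[Prop.~11.9]{DalMaso93}. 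Under~\ref{i:p:IH:1} ($\vartheta_k\rightharpoonup\car$ in $L^2(X,\mssm)$, which forces $\mssm(X)<\infty$) I would pass to the limit directly in the identity: $\int_X|\rmD h|_{\star,2}^2\vartheta_k\,\d\mssm\to\CE_{\X,2}(h)$ for each such $h$, so the identity persists and $\CE_{\X,2}$ is quadratic. In either case $H^{1,2}(\X)$ is Hilbert by the definition recalled before the statement.

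The point requiring genuine care is this last passage to the limit. The densities $|\rmD h|_{\star,2}^2$ are a priori only in $L^1(X,\mssm)$, not $L^2$, so weak $L^2$-convergence of $\vartheta_k$ cannot be tested against them directly; in case~\ref{i:p:IH:1} the way out I would take is to use $0\le\vartheta_k\le\car$ (valid on bounded, hence — in the metric measure spaces of interest — relatively compact sets) to get $\|\vartheta_k\|_{L^2(X,\mssm)}^2\le\int_X\vartheta_k\,\d\mssm\to\mssm(X)=\|\car\|_{L^2(X,\mssm)}^2$, so that weak convergence upgrades to strong convergence in $L^2(X,\mssm)$, after which $\int_X|\rmD h|_{\star,2}^2\vartheta_k\,\d\mssm\to\CE_{\X,2}(h)$ follows by dominated convergence along almost-everywhere convergent subsequences (the domination $|\rmD h|_{\star,2}^2\vartheta_k\le|\rmD h|_{\star,2}^2\in L^1$). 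In case~\ref{i:p:IH:2} the monotone-convergence route sidesteps this entirely, at the price of obtaining only the inequality form of the parallelogram law — which, as noted, is still enough. A minor point to verify along the way is simply that the Banach-space structure of $H^{1,2}(\X)$ is what legitimizes writing $\CE_{\X,2}(f\pm g)<\infty$, so that all four displayed integrals are finite in the limit and the identity really transfers.
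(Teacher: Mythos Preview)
Your proof follows the paper's strategy exactly: invoke \cite[Lem.~4.11]{AGS14I} to identify the minimal relaxed gradients over $\mssm$ and $\mssm_k$, write the parallelogram identity for $\CE_{\X_k,2}$ as integrals against $\vartheta_k\,\d\mssm$, and let $k\to\infty$. For case~\ref{i:p:IH:2} your argument coincides with the paper's --- the paper takes precisely your ``alternative'' route (bound the right-hand side by $\mssm_k\le\mssm$, pass to the limit on the left by monotone convergence, and conclude via \cite[Prop.~11.9]{DalMaso93}); your primary route (monotone convergence on all four integrals, yielding the full identity) is also fine.

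For case~\ref{i:p:IH:1} you correctly flag a subtlety the paper glosses over: $|\rmD h|_{\star,2}^2$ lies in $L^1(\mssm)$ but not in $L^2(\mssm)$ in general, so weak $L^2$-convergence of $\vartheta_k$ cannot be tested against it directly. Your proposed repair --- upgrade weak to strong $L^2$-convergence via $\|\vartheta_k\|_{L^2}^2\le\int_X\vartheta_k\,\d\mssm\to\mssm(X)$, then use dominated convergence along a.e.-convergent subsequences --- rests on the global bound $\vartheta_k\le\car$ $\mssm$-a.e.; but \eqref{eq:thecond} only gives $\vartheta_k\le 1$ on enlargements of \emph{compact} sets. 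Your parenthetical (``bounded, hence --- in the spaces of interest --- relatively compact'') would recover the global bound if $(X,\sfd)$ were proper, since then $X=\bigcup_n\overline{B(x_0,n)}$ with each closed ball compact, but properness is not part of the hypotheses. So in the stated generality case~\ref{i:p:IH:1} remains incomplete in your write-up; the paper itself simply asserts the passage to the limit without comment, so your version is not worse, just more candid about where the difficulty lies.
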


Proposition~\ref{p:IH} shows that infinitesimal Hilbertianity is stable under suitable limits.
As we now show, this entails that universal infinitesimal Hilbertianity can be checked on finite measures only.

\begin{proposition}\label{prop:extt}
Let~$(X,\mssd)$ be a complete and separable metric space and assume that~$(X,\mssd,\mssm)$ is infinitesimally Hilbertian for every non-negative, \emph{finite} Borel measure $\mssm$ on~$X$.
Then,~$(X,\mssd)$ is universally infinitesimally Hilbertian.
\end{proposition}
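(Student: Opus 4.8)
The plan is to realize an arbitrary Borel measure $\mssm$ on $(X,\mssd)$ finite on $\mssd$-bounded sets as a monotone limit of \emph{finite} measures of the form $\vartheta_k\mssm$ satisfying~\eqref{eq:thecond}, and then to invoke Proposition~\ref{p:IH}\ref{i:p:IH:2}. First I would observe that, since $(X,\mssd)$ is complete and separable, such an $\mssm$ is automatically Radon (by the remark following the definition of e.m.t.m.\ space), so $(X,\mssd,\mssm)$ is a Polish metric measure space and the statement that it is infinitesimally Hilbertian is meaningful. It thus suffices to construct a single sequence $\seq{\vartheta_k}_k$ to which Proposition~\ref{p:IH}\ref{i:p:IH:2} applies and for which each $\mssm_k\eqdef \vartheta_k\mssm$ is finite, so that the hypothesis of the proposition can be used on each $\mssm_k$.

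Fix $x_0\in X$ and let $\vartheta=\vartheta_{x_0}\in L^1(X,\mssm)^+$ be the continuous transfer function provided by Lemma~\ref{l:TransferFunction}, so that $\vartheta$ takes values in $(0,1]$ and, for every $r>0$, there is $c(r)>0$ with $\vartheta\geq c(r)$ on $\rmB(x_0,r)$. I would then set
\[
\vartheta_k\eqdef \min\ttset{1,\, k\,\vartheta}\comma \qquad \mssm_k\eqdef \vartheta_k\,\mssm\comma \qquad k\in\N\comma
\]
and verify three things. First, each $\mssm_k$ is a finite (and nonzero) measure: $\vartheta_k\leq k\vartheta\in L^1(X,\mssm)$, while $\vartheta_k>0$ everywhere forces $\mssm_k$ and $\mssm$ to have the same null sets. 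Second, each $\mssm_k=\vartheta_k\mssm$ satisfies~\eqref{eq:thecond}: indeed $\vartheta_k\in L^1(X,\mssm)$, and given a compact $K\subset X$ one picks $R>0$ with $K\subset \rmB(x_0,R)$, so that for any $r>0$ the set $\set{x : \mssd(x,K)\le r}$ lies in $\rmB(x_0,R+r)$, on which $\min\ttset{1,\, k\,c(R+r)}>0$ bounds $\vartheta_k$ from below while $\vartheta_k\le 1$. Third, $\vartheta_k\uparrow\car$ pointwise on $X$ (hence $\mssm$-a.e.): the sequence is non-decreasing in $k$ since $\vartheta\ge 0$, and $k\vartheta\to+\infty$ because $\vartheta>0$ everywhere.

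With these facts established, the conclusion is immediate: by hypothesis each finite measure $\mssm_k$ makes $(X,\mssd,\mssm_k)$ infinitesimally Hilbertian, i.e.\ $H^{1,2}(X,\mssd,\mssm_k)$ is a Hilbert space, whence Proposition~\ref{p:IH}\ref{i:p:IH:2} applied to $\seq{\vartheta_k}_k$ gives that $H^{1,2}(X,\mssd,\mssm)$ is a Hilbert space as well. Since $\mssm$ was an arbitrary non-negative Borel measure on $(X,\mssd)$ finite on $\mssd$-bounded sets, this is exactly the assertion that $(X,\mssd)$ is universally infinitesimally Hilbertian. There is no substantial obstacle here; the only point that needs care is arranging that \emph{one and the same} sequence $\seq{\vartheta_k}_k$ simultaneously satisfies~\eqref{eq:thecond} and the monotone convergence $\vartheta_k\uparrow\car$, and this is handled precisely by truncating the transfer function of Lemma~\ref{l:TransferFunction} at height $1$ after scaling by $k$.
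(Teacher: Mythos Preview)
Your proposal is correct and takes essentially the same approach as the paper: both construct a monotone sequence $\vartheta_k\uparrow\car$ of densities in $L^1(\mssm)$ satisfying~\eqref{eq:thecond} by truncating the transfer function of Lemma~\ref{l:TransferFunction}, and then apply Proposition~\ref{p:IH}\ref{i:p:IH:2}. The only difference is cosmetic: the paper uses $\vartheta_k\eqdef e^{-(V-\min(V,k))}$ (with $V=-\log\vartheta$), whereas you use $\vartheta_k\eqdef \min\{1,k\vartheta\}$; both work for the same reasons.
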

\begin{proof}
Let~$\mssm$ be a non-negative Borel measure on~$X$ finite on $\mssd$-bounded sets.
It suffices to construct a sequence of finite measure~$\mssm_k\eqdef \vartheta_k \mssm$ with~$\vartheta_k\in L^1(X,\mssm)$ as in \eqref{eq:thecond} and satisfying~\ref{i:p:IH:2} in Proposition~\ref{p:IH}.
Indeed, in this case~$H^{1,2}(X,\mssd,\mssm_k)$ is Hilbert by assumption, since~$\mssm_k$ is a finite measure, and the infinitesimal Hilbertianity of~$H^{1,2}(X,\mssd,\mssm)$ follows from Proposition~\ref{p:IH}.

Fix~$x_0 \in X$, let~$\varrho$,~$V$, and~$\vartheta$ be as in the proof of Lemma~\ref{l:TransferFunction}, and set~$V_k\eqdef \min(V,k)$, $\vartheta_k\eqdef e^{-(V-V_k)}$, and~$\mssm_k\eqdef \vartheta_k \mssm$. Observe that each measure $\mssm_k$ satisfies \eqref{eq:thecond}: if $\sfd(x,x_0) \le r$ for some $r>0$, then $V-V_k \le V \le \varrho(r)$ so that $\vartheta_k \ge \frac{e^{-r}}{f(r)}>0$. 
Since every compact set~$K$ is contained in some ball, this proves that~$\vartheta_k$ stays strictly positive in any enlargement of~$K$; by construction we also have that $0< \vartheta_k \le 1$ in~$X$ so that~$\mssm_k \le \mssm$. Furthermore,
\begin{align*}
\mssm_k (X) &= \int_X \vartheta_k \,\d \mssm \le \mssm \set{V \le k} + e^{k} \int_X e^{-V} \,\d \mssm 
\\
&\leq \mssm\, \ball{x_0}{k} + e^{k} \int_X e^{-V} \,\d \mssm <+\infty \fstop
\end{align*}
    It is also clear that $\vartheta_k \uparrow \car $ in $X$, which verifies the assumption in Proposition~\ref{p:IH}\ref{i:p:IH:2} and thus concludes the proof.
\end{proof}

In the next result we show how the density of subalgebras of Lipschitz functions can be transferred from finite measures to infinite measure, assuming that suitable truncation functions are available in the algebra.
For simplicity of notation, let~$\lip_\sfd\AA\eqdef \{ \lip_\sfd f : f \in \AA \}$.
\begin{lemma}\label{le:densinf} Let $(X, \sfd, \mssm)$ be a Polish metric measure space, let $q \in (1,+\infty)$, and let $\AA \subset \Lip_b(X, \sfd)$ be a subalgebra such that $\AA, \lip_\sfd\AA \subset L^q(X, \mssm)$.  If there exists functions $(\vartheta_k)_k \subset \Lip_b(X, \sfd)$ such that:
\begin{enumerate}[$(a)$]
\item for every $k \in \N$, $\mssm_k \eqdef \vartheta_k \mssm$ satisfies \eqref{eq:thecond}, $\AA$ is dense in $q$-energy in $D^{1,q}(X, \sfd, \mssm_k)$, $\frac{(\lip_\sfd \vartheta_k)^q}{\vartheta_k} \in L^\infty(X, \mssm)$, and $f \vartheta_k \in \AA$ for every $f \in \AA$; 
\item  $\sup_k \|\lip_\sfd \vartheta_k \|_\infty < + \infty$, $\vartheta_k \uparrow \car$ and $\lip_\sfd \vartheta_k \to 0$ $\mssm$-a.e.,~as $k \to + \infty$.
\end{enumerate}
Then~$\AA$ is dense in $H^{1,q}(X, \sfd, \mssm)$ in the following sense: for every $f \in H^{1,q}(X, \sfd, \mssm)$ there exist a sequence $(f_n)_n \subset \AA$ satisfying
\[ f_n \to f, \quad \lip_{\sfd} f_n \to |\rmD f|_{\star, q} \quad \text{ in } L^q(X, \mssm) \text{ as } n \to + \infty.\]

\begin{proof}
Let us set $\X\eqdef (X, \sfd, \mssm)$ and $\X_k\eqdef (X, \sfd, \mssm_k)$. Recall that~$H^{1,q}(\X) \subset H^{1,q}(\X_k)$ as a consequence of~\eqref{eq:thecond}.
Since $\AA$ is dense in $q$-energy in $D^{1,q}(\X_k)$, by Definition \ref{def:density} and Remark \ref{rem:dense2} ($\mssm_k$ is finite), for every $f \in H^{1,q}(\X)$ there exist sequences $\seq{f_n^k}_n \subset \AA$ satisfying
\[
f_n^k \to f, \quad \lip_\sfd f_n^k \to |\rmD f|_{\star, \X_k, q} \text{ in } L^q(X, \mssm_k) \text{ as } n\to + \infty\fstop
\]
By \cite[Lemma 4.11]{AGS14I} we may replace $|\rmD f|_{\star, \X_k, q'}$ with $|\rmD f|_{\star, \X, q}$.  Let us set 
\begin{gather*}
g_n^k \eqdef f_n^k \vartheta_k\comma \qquad g^k\eqdef f \vartheta_k\comma \qquad \lip_\sfd g_n^k \le G_n^k\eqdef  \vartheta_k \lip_\sfd f_n^k + |f_n^k| \lip_\sfd \vartheta_k
\\
G^k \eqdef \vartheta_k|\rmD f|_{\star, \X, q}  + |f|\lip_\sfd \vartheta_k \fstop
\end{gather*}
We note that $g_n^k \in \AA$, $g_n^k \to g^k$ and $G_n^k \to G^k$ in $L^q(X, \mssm)$ as $n \to + \infty$, and $g^k \to f$ and $G^k \to |\rmD f|_{\star, \X, q}$ in $L^q(X, \mssm)$ as $k \to + \infty$. The diagonal argument in $L^q(X, \mssm) \times L^q(X, \mssm)$ gives the existence of a subsequence $k \mapsto n_k$ such that setting $f_k\eqdef g_{n_k}^k \in \AA$ we have
\[
f_k \to f \qquad \text{and} \qquad \lip_\sfd f_k \le G_{n_k}^k \to |\rmD f|_{\star, \X, q} \qquad \text{in } L^q(X, \mssm) \fstop
\]
 Up to passing to a (non-relabeled) subsequence we get that
\[
f_k \to f  \qquad \text{and} \qquad \lip_{\sfd} f_k \weakto G \qquad \text{in } L^q(X, \mssm) 
\]
for some~$G\in L^q(X,\mssm)^+$ with~$G \le |\rmD f|_{\star, \X, q}$.
Since~$G$ is a $q$-relaxed gradient of $f$ by Definition \ref{def:relgrad}, we conclude that~$G=|\rmD f|_{\star, \X, q}$ by the $L^q(X, \mssm)$-minimality of~$|\rmD f|_{\star, \X, q}$.
The convergence is also strong in $L^q(X, \mssm)$ since
    \[ \limsup_n \int_X|\lip_\sfd f_k|^q \, \d \mssm \le \limsup_n \int_X |G_{n_k}^k|^q \, \d \mssm = \int_X |\rmD f|_{\star, \X, q}^q \, \d \mssm.\]
    This concludes the proof.
\end{proof}
\end{lemma}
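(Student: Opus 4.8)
The plan is to transfer the density in~$q$-energy of~$\AA$ from each finite measure~$\mssm_k$ to the possibly infinite measure~$\mssm$ by a diagonal procedure, using the cut-offs~$\vartheta_k$ to localize and exploiting that multiplication by~$\vartheta_k$ maps~$\AA$ into~$\AA$. First I would record that, by~\eqref{eq:thecond} and~\cite[Lem.~4.11]{AGS14I}, one has~$H^{1,q}(\X)\subset H^{1,q}(\X_k)$ with~$|\rmD f|_{\star,\X_k,q}=|\rmD f|_{\star,\X,q}$ on~$H^{1,q}(\X)$, where~$\X\eqdef(X,\sfd,\mssm)$ and~$\X_k\eqdef(X,\sfd,\mssm_k)$. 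Fixing~$f\in H^{1,q}(\X)$, the density in~$q$-energy of~$\AA$ in~$D^{1,q}(\X_k)$, together with Remark~\ref{rem:dense2} (applicable since each~$\mssm_k$ is finite), furnishes, for every~$k$, a sequence~$\seq{f_n^k}_n\subset\AA$ with~$f_n^k\to f$ and~$\lip_\sfd f_n^k\to|\rmD f|_{\star,\X,q}$ in~$L^q(X,\mssm_k)$ as~$n\to\infty$.

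Next I would set~$g_n^k\eqdef f_n^k\vartheta_k$, which lies in~$\AA$ by hypothesis~$(a)$, and use the Leibniz inequality for the asymptotic Lipschitz constant, $\lip_\sfd g_n^k\le G_n^k\eqdef\vartheta_k\lip_\sfd f_n^k+\abs{f_n^k}\lip_\sfd\vartheta_k$, together with its natural limit objects~$g^k\eqdef f\vartheta_k$ and~$G^k\eqdef\vartheta_k\,|\rmD f|_{\star,\X,q}+\abs{f}\,\lip_\sfd\vartheta_k$. The heart of the argument is then the pair of double limits: $(i)$ for each fixed~$k$, $g_n^k\to g^k$ and~$G_n^k\to G^k$ in~$L^q(X,\mssm)$ as~$n\to\infty$; and~$(ii)$ $g^k\to f$ and~$G^k\to|\rmD f|_{\star,\X,q}$ in~$L^q(X,\mssm)$ as~$k\to\infty$. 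For~$(i)$, since~$\vartheta_k\le 1$ the terms carrying a factor~$\vartheta_k$ (or~$\vartheta_k^q$) convert~$L^q(\mssm_k)$-convergence into~$L^q(\mssm)$-convergence directly; the only delicate term is~$\abs{f_n^k}\lip_\sfd\vartheta_k$, which one controls by writing~$(\lip_\sfd\vartheta_k)^q\le\norm{(\lip_\sfd\vartheta_k)^q/\vartheta_k}_\infty\,\vartheta_k$ and hence~$\int\abs{f_n^k-f}^q(\lip_\sfd\vartheta_k)^q\,\d\mssm\le\norm{(\lip_\sfd\vartheta_k)^q/\vartheta_k}_\infty\int\abs{f_n^k-f}^q\,\d\mssm_k\to 0$. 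For~$(ii)$, dominated convergence suffices, using~$\vartheta_k\uparrow\car$, $\abs{f}^q\in L^1(\mssm)$, the uniform bound~$\sup_k\norm{\lip_\sfd\vartheta_k}_\infty<\infty$, and~$\lip_\sfd\vartheta_k\to 0$~$\mssm$-a.e.\ from hypothesis~$(b)$.

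Finally, a diagonal extraction in~$L^q(X,\mssm)\times L^q(X,\mssm)$ yields a map~$k\mapsto n_k$ such that~$f_k\eqdef g_{n_k}^k\in\AA$ satisfies~$f_k\to f$ and~$\lip_\sfd f_k\le G_{n_k}^k\to|\rmD f|_{\star,\X,q}$ in~$L^q(X,\mssm)$. Then~$(\lip_\sfd f_k)_k$ is bounded in~$L^q(X,\mssm)$, so along a further subsequence~$\lip_\sfd f_k\weakto G$ in~$L^q(X,\mssm)$ with~$G\le|\rmD f|_{\star,\X,q}$~$\mssm$-a.e.\ (being the weak limit of functions dominated by a strongly convergent sequence); by Definition~\ref{def:relgrad}, $G$ is a~$q$-relaxed gradient of~$f$, whence~$|\rmD f|_{\star,\X,q}\le G$~$\mssm$-a.e.\ by pointwise minimality (Theorem~\ref{thm:omnibus}\ref{i:t:Omnibus:4}), so~$G=|\rmD f|_{\star,\X,q}$. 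The estimate~$\limsup_k\int(\lip_\sfd f_k)^q\,\d\mssm\le\limsup_k\int(G_{n_k}^k)^q\,\d\mssm=\int|\rmD f|_{\star,\X,q}^q\,\d\mssm$, together with the weak convergence and the uniform convexity of~$L^q(X,\mssm)$ for~$q\in(1,+\infty)$, then upgrades it to the strong convergence~$\lip_\sfd f_k\to|\rmD f|_{\star,\X,q}$ in~$L^q(X,\mssm)$, which is the asserted density.

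I expect the genuine obstacle to be limit~$(i)$ for the term~$\abs{f_n^k}\lip_\sfd\vartheta_k$: the approximating sequences~$f_n^k$ only converge in~$L^q(\mssm_k)$ and not in~$L^q(\mssm)$ (which may be infinite), so to pass to the limit against the weight~$(\lip_\sfd\vartheta_k)^q$ one genuinely needs that this weight be dominated by a constant multiple of~$\vartheta_k$ --- this is precisely the role of the hypothesis~$(\lip_\sfd\vartheta_k)^q/\vartheta_k\in L^\infty(X,\mssm)$. The remaining ingredients (the diagonalization, the weak closure and pointwise minimality of relaxed gradients from Theorem~\ref{thm:omnibus}, and the uniform-convexity upgrade from weak to strong convergence) are routine once the two double limits are established.
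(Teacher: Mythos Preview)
Your proposal is correct and follows essentially the same route as the paper's proof: the same cut-off products $g_n^k=f_n^k\vartheta_k$, the same Leibniz upper bound $G_n^k$, the same two-level limits followed by a diagonal extraction, and the same weak-then-strong upgrade via the relaxed-gradient minimality and the $\limsup$ bound. You actually give more detail than the paper at the one nontrivial point, namely the convergence of $|f_n^k|\lip_\sfd\vartheta_k$ in $L^q(\mssm)$ via the hypothesis $(\lip_\sfd\vartheta_k)^q/\vartheta_k\in L^\infty$; the paper merely asserts this convergence. One small remark: your appeal to Theorem~\ref{thm:omnibus}\ref{i:t:Omnibus:4} for pointwise minimality is stated there under the standing assumption that $\mssm$ is finite, whereas here $\mssm$ need not be; the paper instead invokes the $L^q$-norm minimality of $|\rmD f|_{\star,q}$ (stated just before the theorem, valid without finiteness), which combined with $G\le|\rmD f|_{\star,q}$ yields equality just as well.
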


\subsubsection{Changes of distances}
Here we consider the problem of transferring the density or Hilbertianity property from a (family of) distance(s) to another distance.

The following lemma is an immediate consequence of the fact that, setting $\sfd_\lambda\eqdef  \lambda \sfd$, $\lambda >0$, we have for any function $f:X \to \R$ and $A \subset X$
\[ \Li[\mssd_\lambda,A]{f} = \lambda^{-1} \Li[\mssd,A]{f}.\]

\begin{lemma}\label{le:easy} Let $\X=(X, \tau, \sfd, \mssm)$ be a e.m.t.m.~space, let $\AA \subset \Lip_b(X, \tau, \sfd)$ be a subalgebra and let $q \in (1,+\infty)$. Then, setting $\X_\lambda\eqdef  (X, \tau, \sfd_\lambda, \mssm)$, we have that $\AA$ is dense in $q$-energy in $D^{1,q}(\X)$ if and only if it is dense in $q$-energy in $D^{1,q}(\X_\lambda)$ and $H^{1,2}(\X)$ is Hilbert if and only if $H^{1,2}(\X_\lambda)$ is Hilbert.
\end{lemma}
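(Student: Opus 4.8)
The plan is to reduce the whole statement to the single scaling identity displayed just above the lemma, $\Li[\mssd_\lambda,A]{f}=\lambda^{-1}\Li[\mssd,A]{f}$. Passing to the infimum over the $\tau$-neighbourhoods of a point $x$ --- the filter $\mathcal{U}_x$ being \emph{unchanged}, since $\sfd$ and $\sfd_\lambda$ induce the same topology $\tau$ --- this yields the pointwise identity $\lip^\tau_{\sfd_\lambda}f=\lambda^{-1}\lip^\tau_\sfd f$ for every $f\colon X\to\R$. Before exploiting it, I would record the routine fact that $\X_\lambda$ is a bona fide e.m.t.m.\ space sharing all the ambient data of $\X$: a bounded function is $\sfd$-Lipschitz iff it is $\sfd_\lambda$-Lipschitz, so $\Lip_b(X,\tau,\sfd)=\Lip_b(X,\tau,\sfd_\lambda)$ as sets and $\AA$ is a subalgebra of both; the initial topology generated by this common family is $\tau$ in either case; $\sfd_\lambda$ is recovered from its unit-Lipschitz functions because $\Li[\mssd_\lambda]{f}\le 1$ iff $\Li[\mssd]{f/\lambda}\le 1$; and $\mssm$ is finite on $\sfd_\lambda$-balls, these being exactly the $\sfd$-balls with rescaled radii. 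Finally, $L^0(X,\mssm)$ and the local convergence in $\mssm$-measure depend only on $\mssm$, hence are untouched by the rescaling.

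Next I would push the scaling identity through the definitions. From $\lip^\tau_{\sfd_\lambda}f=\lambda^{-1}\lip^\tau_\sfd f$ we get $\pCE_{\X_\lambda, q}(f)=\lambda^{-q}\,\pCE_{\X, q}(f)$ for all $f\in\Lip_b(X,\tau,\sfd)$; and since the relaxation in \eqref{eq:relpreq} only involves $L^0(X,\mssm)$-convergence of functions from $\AA$, also $\CE_{\X_\lambda, q, \AA}=\lambda^{-q}\,\CE_{\X, q, \AA}$ on all of $L^0(X,\mssm)$. In particular $D^{1,q}(\X;\AA)=D^{1,q}(\X_\lambda;\AA)$ and $H^{1,q}(\X;\AA)=H^{1,q}(\X_\lambda;\AA)$ as vector spaces, with equivalent norms. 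At the level of relaxed gradients (Definition~\ref{def:relgrad}), replacing $\lip^\tau_\sfd f_n$ by $\lambda^{-1}\lip^\tau_\sfd f_n$ shows that $G$ is a $(q,\AA)$-relaxed gradient of $f$ for $\X_\lambda$ iff $\lambda G$ is one for $\X$; since $G\mapsto\lambda G$ scales $L^q(X,\mssm)$-norms by $\lambda$, it maps the minimal relaxed gradient to the minimal relaxed gradient, whence $|\rmD f|_{\star, \X_\lambda, q, \AA}=\lambda^{-1}|\rmD f|_{\star, \X, q, \AA}$, and likewise $|\rmD f|_{\star, \X_\lambda, q}=\lambda^{-1}|\rmD f|_{\star, \X, q}$ taking $\AA=\Lip_b(X,\tau,\sfd)$.

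The two claimed equivalences then follow formally. For density in $q$-energy: if $f\in D^{1,q}(\X)=D^{1,q}(\X_\lambda)$ and $(f_n)\subset\AA$ witnesses Definition~\ref{def:density} for $\X$, i.e.\ $f_n\to f$ $\mssm$-a.e.\ and $\lip^\tau_\sfd f_n\to|\rmD f|_{\star, \X, q}$ in $L^q(X,\mssm)$, then the very same $(f_n)$ satisfies $\lip^\tau_{\sfd_\lambda}f_n=\lambda^{-1}\lip^\tau_\sfd f_n\to\lambda^{-1}|\rmD f|_{\star, \X, q}=|\rmD f|_{\star, \X_\lambda, q}$ in $L^q(X,\mssm)$, hence witnesses density for $\X_\lambda$; the reverse implication is the same argument applied to $(\X_\lambda)_{1/\lambda}=\X$. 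For Hilbertianity: $H^{1,2}(\X)$ is Hilbert iff $\CE_{\X, 2}$ is a quadratic form on $L^0(X,\mssm)$ (equivalently on $H^{1,2}(\X)$), and since $\CE_{\X_\lambda, 2}=\lambda^{-2}\,\CE_{\X, 2}$ on the common domain $H^{1,2}(\X)=H^{1,2}(\X_\lambda)$, being a quadratic form is preserved. There is no genuine obstacle here; the only point deserving a careful line is the invariance of the ambient structure --- the class $\Lip_b$, the topology $\tau$, the space $L^0(X,\mssm)$, and the two e.m.t.m.\ axioms --- under the rescaling, which I would dispatch first as above.
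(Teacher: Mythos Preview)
Your proof is correct and follows exactly the approach the paper indicates: the paper states only that the lemma is ``an immediate consequence'' of the scaling identity $\Li[\mssd_\lambda,A]{f}=\lambda^{-1}\Li[\mssd,A]{f}$, and you have simply written out in full the routine propagation of this identity through the asymptotic Lipschitz constant, the pre-Cheeger energy, the relaxation, and the minimal relaxed gradient. Your additional care in verifying that $\X_\lambda$ is again an e.m.t.m.\ space with the same ambient data is a welcome detail the paper leaves implicit.
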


The next result shows how to transfer the density of a subalgebra $\AA$ from a family of distances approximating $\sfd$ to $\sfd$ itself.

\begin{proposition}\label{prop:erbar} Let $\X\eqdef (X, \tau, \sfd, \mssm)$ be an e.m.t.m.~space with $\mssm$ finite, let $\seq{\sfd_i}_{i \in I}$ be a directed family of complete $\tau$-continuous distances in $X$ such that $\sfd= \sup_{i \in I} \sfd_i$ and let $\AA  \subset \cap_{i \in I} \Lip_b(X, \tau, \sfd_i) \subset \Lip_b(X, \tau, \sfd)$ be a unital and point-separating subalgebra. Set $\X_i \eqdef  (X, \tau_{\sfd_i}, \sfd_i, \mssm)$.
Then,
\begin{enumerate}[$(i)$]
\item\label{i:p:Erbar:1} if $\AA$ is dense in $2$-energy in $D^{1,2}(\X_i)$ for every $i \in I$, then $\AA$ is dense in $2$-energy in~$D^{1,2}(\X)$;
\item\label{i:p:Erbar:2} if $H^{1,2}(\X_i)$ is Hilbert for every $i \in I$, then $H^{1,2}(\X)$ is Hilbert too.
\end{enumerate}
\end{proposition}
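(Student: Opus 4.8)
The plan is to reduce the statement about the supremum distance $\sfd$ to the approximating distances $\sfd_i$ by using the monotone behaviour of the asymptotic Lipschitz constant and of the pre-Cheeger energy under the ordering $\sfd_i \le \sfd$. First I would record the elementary monotonicity facts: since $\sfd_i \le \sfd$ for every $i$, we have $\Li[\sfd,A]{f}\le \Li[\sfd_i,A]{f}$ for every $A\subset X$, hence $\lip^\tau_\sfd f\le \lip^\tau_{\sfd_i} f$ pointwise, and therefore $\pCE_{\X,q}(f)\le \pCE_{\X_i,q}(f)$ for every $f\in\AA$ (note all functions in $\AA$ are $\sfd_i$-Lipschitz by hypothesis, and $\tau$-continuous, so they lie in every $\Lip_b(X,\tau,\sfd_i)$). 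Passing to the relaxation in $L^0(X,\mssm)$ (equivalently, since $\mssm$ is finite, in $L^q(X,\mssm)$ via \eqref{eq:relaxr}), this gives $\CE_{\X,q,\AA}(f)\le \CE_{\X_i,q,\AA}(f)$ and, with $\AA=\Lip_b$, $\CE_{\X,q}(f)\le \CE_{\X_i,q}(f)$ and $|\rmD f|_{\star,\X,q}\le |\rmD f|_{\star,\X_i,q}$ $\mssm$-a.e., on the respective domains. The key point then is the \emph{reverse} inequality in the limit $i$: one expects $\CE_{\X,q}(f)=\sup_{i}\CE_{\X_i,q}(f)$ (a monotone limit along the directed set), because $\sfd=\sup_i\sfd_i$ forces $\lip^\tau_{\sfd_i} f\downarrow \lip^\tau_\sfd f$ (or at least the relaxed gradients to converge). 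This identification of Cheeger energies — that the Cheeger energy of the sup-distance is the sup of the Cheeger energies — is the substantive analytic input; I would try to extract it from \cite{Savare22} (the stability results for e.m.t.m.\ spaces) or prove it directly via a diagonal argument on approximating sequences, using that $\pCE_{\X_i,q}$ is monotone in $i$ and that, for a fixed $\sfd$-Lipschitz $f$ and fixed neighbourhood $U$, $\Li[\sfd_i,U]{f}\uparrow \Li[\sfd,U]{f}$ by definition of $\sfd$ as a supremum.

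For part \ref{i:p:Erbar:1}, assuming $\AA$ is dense in $2$-energy in $D^{1,2}(\X_i)$ for every $i$: given $f\in D^{1,2}(\X)$, for each $i$ we have $f\in D^{1,2}(\X_i)$ (since $|\rmD f|_{\star,\X_i,2}\ge |\rmD f|_{\star,\X,2}$ is not automatic — rather, one needs $D^{1,2}(\X)\subset D^{1,2}(\X_i)$, which follows from $\sfd_i\le\sfd$ giving $\pCE_{\X_i,2}\ge\pCE_{\X,2}$ only in the wrong direction; so instead I would argue that $\CE_{\X,2}(f)<\infty$ together with the identification $\CE_{\X,2}=\sup_i\CE_{\X_i,2}$ yields $\sup_i\CE_{\X_i,2}(f)<\infty$, hence $f\in D^{1,2}(\X_i)$ for all $i$ with $\CE_{\X_i,2}(f)\le\CE_{\X,2}(f)$). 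Then density in $\X_i$ supplies $(f_n^i)_n\subset\AA$ with $f_n^i\to f$ $\mssm$-a.e.\ and $\lip^\tau_{\sfd_i}f_n^i\to |\rmD f|_{\star,\X_i,2}$ strongly in $L^2(X,\mssm)$; since $\lip^\tau_\sfd f_n^i\le \lip^\tau_{\sfd_i}f_n^i$, these sequences have $L^2$-bounded $\sfd$-asymptotic Lipschitz constants, and a diagonal extraction over $i$ (along the directed set, using $\CE_{\X_i,2}(f)\uparrow\CE_{\X,2}(f)$) produces a single sequence $(f_k)\subset\AA$ with $f_k\to f$ $\mssm$-a.e.\ and $\limsup_k\int(\lip^\tau_\sfd f_k)^2\,\d\mssm\le \CE_{\X,2}(f)=\int|\rmD f|_{\star,\X,2}^2\,\d\mssm$; weak-$L^2$ compactness, the closure property Theorem \ref{thm:omnibus}\ref{i:t:Omnibus:2}, and $L^2$-minimality of the minimal relaxed gradient then upgrade weak to strong convergence $\lip^\tau_\sfd f_k\to|\rmD f|_{\star,\X,2}$, which is exactly Definition \ref{def:density}.

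For part \ref{i:p:Erbar:2}, I would deduce Hilbertianity of $H^{1,2}(\X)$ from the parallelogram identity: by hypothesis $\CE_{\X_i,2}$ is quadratic for each $i$, i.e.\ $\CE_{\X_i,2}(f+g)+\CE_{\X_i,2}(f-g)=2\CE_{\X_i,2}(f)+2\CE_{\X_i,2}(g)$ for all $f,g\in H^{1,2}(\X_i)\supset H^{1,2}(\X)$; taking the supremum (monotone limit) over the directed set $I$ and using $\CE_{\X,2}=\sup_i\CE_{\X_i,2}$ on $D^{1,2}(\X)$ passes the identity to $\CE_{\X,2}$, whence $\CE_{\X,2}$ is a quadratic form and $H^{1,2}(\X)$ is Hilbert. (Alternatively one can invoke $\CE_{\X,2}(f+g)+\CE_{\X,2}(f-g)\le 2\CE_{\X,2}(f)+2\CE_{\X,2}(g)$ directly from the lower-semicontinuous relaxation and the monotonicity $\CE_{\X,2}\le\CE_{\X_i,2}$, combined with the reverse inequality, as in the proof of Proposition \ref{p:IH}.) The main obstacle, as flagged, is the clean identification $\CE_{\X,2}(f)=\sup_{i\in I}\CE_{\X_i,2}(f)$ (equivalently $|\rmD f|_{\star,\X_i,2}\uparrow|\rmD f|_{\star,\X,2}$ $\mssm$-a.e.) — establishing that the relaxation commutes with the directed supremum of distances; everything else is soft measure-theoretic bookkeeping and diagonal extraction. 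I expect this to follow from the $\Gamma$-convergence / stability machinery in \cite{Savare22} applied to the directed family $(\sfd_i)$, but if a direct citation is unavailable I would prove it by sandwiching: the $\le$ direction is monotonicity, and the $\ge$ direction follows because any $L^0$-approximating sequence $(f_n)\subset\Lip_b(X,\tau,\sfd)$ witnessing $\CE_{\X,2}(f)$ has, for each fixed $n$, $\lip^\tau_{\sfd_i}f_n\downarrow\lip^\tau_\sfd f_n$ $\mssm$-a.e.\ as $i$ increases (since $\Li[\sfd_i,U]{f_n}\uparrow\Li[\sfd,U]{f_n}$ and the infimum over $U$ can be interchanged with this monotone limit on the countable basis of neighbourhoods available in the metrizable setting), so monotone convergence gives $\liminf_n\pCE_{\X_i,2}(f_n)\to\liminf_n\pCE_{\X,2}(f_n)$ along $i$, and a further diagonalization yields $\sup_i\CE_{\X_i,2}(f)\le\CE_{\X,2}(f)$.
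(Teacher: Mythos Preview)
Your monotonicity is tangled. You correctly record that $\sfd_i\le\sfd$ forces $\lip^\tau_\sfd f\le\lip^\tau_{\sfd_i}f$, hence $\pCE_{\X,2}\le\pCE_{\X_i,2}$ and $\CE_{\X,2}\le\CE_{\X_i,2}$. But this gives $D^{1,2}(\X_i)\subset D^{1,2}(\X)$ and $H^{1,2}(\X_i)\subset H^{1,2}(\X)$, the \emph{opposite} of the inclusions you then use in both parts: for $f\in D^{1,2}(\X)$ there is no reason to have $f\in D^{1,2}(\X_i)$ for any $i$, and the identification you aim for cannot be $\CE_{\X,2}=\sup_i\CE_{\X_i,2}$ (which, together with $\CE_{\X,2}\le\inf_i\CE_{\X_i,2}$, would force all the $\CE_{\X_i,2}$ to coincide). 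Your direct attempt at a pointwise convergence $\lip^\tau_{\sfd_i}f_n\to\lip^\tau_\sfd f_n$ for a $\sfd$-Lipschitz $f_n$ also fails: $f_n$ need not be $\sfd_i$-Lipschitz, so $\lip^\tau_{\sfd_i}f_n$ can be $+\infty$; and even when finite, the interchange of the supremum over pairs in $U$ with a \emph{decreasing} limit in $i$ (your ``$\uparrow$'' should be ``$\downarrow$'') is not justified in general.

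The paper bypasses all of this by invoking a nontrivial $\Gamma$-convergence result, \cite[Thm.~9.1]{aes}: $\CE_{\X,2}$ is the $L^2$-lower-semicontinuous envelope of $g\mapsto\inf_{i\in I}\CE_{\X_i,2}(g)$. This is strictly weaker than the pointwise identity $\CE_{\X,2}(f)=\inf_i\CE_{\X_i,2}(f)$ (the infimum of l.s.c.\ functionals need not be l.s.c.), but it is exactly what is needed: for every $f\in L^2(X,\mssm)$ there exist $f_k\to f$ in $L^2$ and indices $i_k\in I$ with $\CE_{\X_{i_k},2}(f_k)\le\CE_{\X,2}(f)+1/k$. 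The crucial point is that the $f_k$ are allowed to differ from $f$; this is precisely the mechanism that lands you in some $D^{1,2}(\X_{i_k})$ even though $f$ itself may lie in none of them. For \ref{i:p:Erbar:1} one then replaces each $f_k$ by an element of $\AA$ via density in $D^{1,2}(\X_{i_k})$ and uses $\pCE_{\X_{i_k},2}\ge\pCE_{\X,2}$ to conclude. For \ref{i:p:Erbar:2} one applies the parallelogram identity in $\X_{i_k}$ to the pair $(f_k,g_k)$ (both now in $D^{1,2}(\X_{i_k})$, for a common subnet), bounds the right-hand side by $2\CE_{\X,2}(f)+2\CE_{\X,2}(g)+4/k$, and uses the envelope formula again on the left to obtain the one-sided parallelogram inequality for $\CE_{\X,2}$, which suffices. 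In short: the extra relaxation step is not cosmetic --- it is the device that produces functions in $D^{1,2}(\X_i)$ when the given $f$ provides none, and your sketch does not supply a substitute for it.
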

\begin{proof} We want to apply \cite[Theorem 9.1]{aes} in our setting so that we need to make a few comments:
\begin{enumerate*}[$(a)$]
\item The Cheeger energy in \cite{aes} is defined as a functional on $L^2(X, \mssm)$ so that we can compare it with the restriction of our notion of Cheeger energy to $L^2(X, \mssm)$.
\item The Cheeger energy in \cite{aes} is defined starting from a different notion of asymptotic Lipschitz constant but it turns out that it coincides with our notion of Cheeger energy \cite[proof of Proposition 2.22]{LzDSSuz23a}.
\item In the notation of the discussion above \cite[Theorem 9.1]{aes} we have that, in our case, $\pi^i$ is the identity map in $X$ so that $\tilde{\sfd}_i=\sfd_i$, $X_i=X$, $\mm_i=\mssm$ and $\pi^i_\star$ is the identity in $L^2(X, \mssm)$.
\end{enumerate*}
Again in the notation of \cite[Theorem 9.1]{aes}, we deduce that $\mathsf{Ch}_i$ coincides with the restriction of $\CE_{\X_i, 2}$ to $L^2(X, \mssm)$. Applying \cite[Theorem 9.1]{aes} we deduce that 
\[ \CE_{\X, 2}(f) = \inf \left \{ \liminf_{n \to + \infty} \inf_{i \in I} \CE_{\X_i, 2}(f_n) : \begin{gathered} (f_n)_n \subset L^2(X, \mssm)\comma \\ f_n \to f \in L^2(X, \mssm)\end{gathered} \right \}, \quad f \in L^2(X, \mssm).\] 
We thus have that for every $f \in L^2(X, \mssm)$ there exists a sequence $(f_k)_k \subset L^2(X, \mssm)$ and a subsequence $(i_k)_k \subset I$ such that 
\begin{equation}\label{eq:altrochep}
\| f - f_k\|_{L^2(X, \mssm)} < 1/k, \quad \CE_{\X, 2}(f) + 1/k \ge \CE_{\X_{i_k}, 2} (f_k) \quad \text{ for every } k \in \N.
\end{equation}

\paragraph{Proof of~\ref{i:p:Erbar:1}}
If $\AA$ is dense in $2$-energy in $D^{1,2}(\X_i)$ for every $i \in I$, we deduce that for every $f \in L^2(X, \mssm)$ there exists a sequence $(f_n)_n \subset \AA$ and a subnet $(i_k)_k \subset I$ such that 
\[
\| f - f_k\|_{L^2(X, \mssm)} < 1/k\comma \quad \CE_{\X, 2}(f) + 1/k \ge \pCE_{\X_{i_k}, 2} (f_k) \ge \pCE_{\X, 2}(f_k)\comma \qquad k \in \N \fstop
\]
Passing the above inequality to the $\liminf_k$ we get that, for every $f \in L^2(X, \mssm)$ it holds
\[ \CE_{\X, 2}(f) \ge \liminf_k  \pCE_{\X, 2}(f_k)  \ge \CE_{\X,2, \AA}(f).\]
This proves the density of in $2$-energy of $\AA$ in $D^{1,2}(\X)$.

\paragraph{Proof of~\ref{i:p:Erbar:2}}
Let $f,g \in L^2(X, \mssm)$ and let $(f_k)_k, (g_k)_k$ be sequences as in \eqref{eq:altrochep} for $f$ and $g$ respectively (note that this can be done for the same subnet $(i_k)_k$ since the hypotheses of the present theorem also holds restricting $I$ to $(i_k)_k$). We have
\begin{align*}
\inf_{i \in I} \CE_{\X_i, 2} (f_k+g_k) + \inf_{i \in I} \CE_{\X_i, 2} (f_k-g_k)  & \le  \CE_{\X_{i_k}, 2} (f_k+g_k) + \CE_{\X_{i_k}, 2} (f_k-g_k)
\\
& = 2\, \CE_{\X_{i_k}}(f_k) + 2\CE_{\X_{i_k}}(g_k)
\\
& \le 2\, \CE_{\X}(f) + 2\CE_{\X}(g) + \frac{4}{k} \fstop
\end{align*}
Taking~$\liminf_k$ in the above inequality, we deduce that 
\begin{align*}
2\, \CE_{\X}(f) + 2\, \CE_{\X}(g) \ge&\ \liminf_k \inf_{i \in I} \CE_{\X_i, 2} (f_k+g_k) + \liminf_k \inf_{i \in I} \CE_{\X_i, 2} (f_k-g_k) 
\\
\ge&\ \CE_{\X}(f+g) + \CE_{\X}(f-g) \fstop
\end{align*}
This is enough (cf.\ e.g.~\cite[Prop.~11.9]{DalMaso93}) to conclude that $\CE_{\X,2}$ is a quadratic form in $L^2(X, \mssm)$ and thus that $H^{1,2}(\X)$ is a Hilbert space.
\end{proof}

\section{Distances on measures}\label{s:DistMeas}
In this section we introduce the distances on the space of measures we are going to work with. To this aim we fix a complete and separable metric space $(U,\varrho)$ and we consider the space $\mcM(U)$ of finite, non-negative Borel measures on $(U, \varrho)$.

The first distance we will work with is the Wasserstein distance induced by $\varrho$.
\begin{definition}[Extended Wasserstein distance]\label{def:wass}
We define the \emph{extended} (\emph{Kant\-orovich--Rubinstein}) \emph{Wasserstein $(2,\varrho)$-distance}
\begin{gather*}
\W_{2,\varrho}\colon \mcM(U) \to [0,+\infty]
\\ 
\W_{2,\varrho}(\mu_0, \mu_1)^2\eqdef  \inf \left \{ \int_{U \times U} \varrho^2 \de \ggamma : \ggamma \in \Cpl(\mu_0, \mu_1) \right \}, \quad \mu_0, \mu_1 \in \mcM(\R^d)\comma
\end{gather*}
where $\Cpl(\mu_0, \mu_1)$ denotes the set of couplings between $\mu_0$ and $\mu_1$ defined as
\[ \Cpl(\mu_0, \mu_1) \eqdef  \{ \ggamma \in \mcM(U \times U) : \pi^i_\sharp \ggamma = \mu_i, \, i=0,1 \},\]
being $\pi^i: U \times U \to U$ the projection $\pi^i(x_0, x_1)\eqdef  x_i$ for every $(x_0, x_1) \in U \times U$.
\end{definition}
Note that, whenever $\mu_0(U) \ne \mu_1(U)$, then $\Cpl(\mu_0, \mu_1) = \emptyset$ so that $\W_{2,\varrho}(\mu_0, \mu_1)=+\infty$. When we restrict $\W_{2,\varrho}$ to the subset of $\mcP(U)$ (the Borel probability measures in $U$) given by 
\[ \mcP_2(U)\eqdef  \{ \mu \in \mcP(U) : \int_U \varrho^2(x_0, x) \de \mu(x) < + \infty \text{ for some (hence for every) $x_0 \in U$} \} \]
then $\W_{2, \varrho}$ is finite and $(\mcP_2(U), \W_{2, \varrho})$ is a complete and separable metric space whose topology is stronger than (the restriction of ) the weak topology.
We refer to the monographs \cite{Villani09, Villani03, AGS08} for a throughout presentation of Wasserstein distances.
\medskip

The second distance we introduce is the well known Hellinger distance.

\begin{definition}[Hellinger distance]\label{def:he}  We define the \emph{Hellinger $2$-distance}
\begin{gather*}
\He_2\colon \mcM(U) \to [0,+\infty)\comma
\\
\He_2(\mu_0, \mu_1)^2\eqdef  \int_{U} \abs{ \sqrt{ \frac{\de \mu_0}{\de \eta}} - \sqrt{\frac{\de \mu_1}{\de \eta}} }^2 \de \eta, \quad \mu_0, \mu_1 \in \mcM(\R^d)\comma
\end{gather*}
where $\eta \in \mcM(U)$ is any measure such that $\mu_i \ll \eta$ for $i=0,1$. Notice that, since $(s,t) \mapsto | \sqrt{t}-\sqrt{s} |^2$ is positively $1$-homogeneous, the above definition does not depend on $\eta$.
\end{definition}

\subsection{The Hellinger--Kantorovich distances}\label{sec:hk}
We follow \cite{LMS18} to introduce the Hellinger--Kantorovich distance, see also \cite{CPSV18}.
We introduce on $U \times \R_+$ the equivalence relation 
\begin{equation*}
(x,r) \sim (y,s) \quad \overset{\text{def}}{\iff} \quad \left [ x=y, r=s \ne 0 \quad \vee \quad r=s=0 \right ] 
\end{equation*}
and the corresponding geometric cone $\f{C}[U]\eqdef (U \times \R_+)/\sim$, whose points are denoted by fraktur letters as $\f{y}$.
We denote by $\f{p}$ the quotient map $\f{p}\colon U \times \R_+ \to \f{C}[U]$ mapping a point~$(x,r)$ to its equivalence class~$[x,r]$.
Note that~$\f{p}$ is just the identity map except for those points with~$r=0$, all mapped to the same equivalence class, the so called \emph{vertex} of the cone, that we denote by~$\f{o}$.

On the cone~$\f{C}[U]$ we introduce the projections on $\R_+$ and $U$ simply defined as $\sfr([x,r]) = r$ and $\sfx([x,r]) = x$ if $r>0$ and $\sfx([x,r])=\bar{x}$ if $r=0$, where $\bar{x} \in U$ is some fixed point. We omit the dependence of $\sfx$ on $\bar{x}$ since in the constructions where $\sfx$ is involved this will be irrelevant.

On $\f{C}[U]$ we consider the following topology, weaker then the quotient one: a local system of neighbourhoods of a point $[x,r]$ is just the image trough $\f{p}$ of the local system of neighbourhoods given by the product topology at $(x,r) \in U \times \R_+$, if $r>0$. A local system of neighbourhoods at $\f{0}$ is given by
\begin{equation*}
\tset{ \set{ [x,r] \in \f{C}[U] : 0 \le r < \eps } : \eps >0 } \fstop
\end{equation*}
The topology of $\f{C}[U]$ is induced, for every $a \in (0,\pi]$, by the distance $\varrho_{a,\f{C}}: \f{C}[U]\times \f{C}[U] \to [0, +\infty)$ defined as
\begin{equation}\label{ss22:eq:distcone} \varrho_{a,\f{C}}([x,r],[y,s]) \eqdef  \tparen{ r^2+s^2-2rs\cos(\varrho(x,y) \wedge a) }^{\frac{1}{2}}, \quad [x,r], [y,s] \in \f{C}[U].
\end{equation}
With this distance, $(\f{C}[U], \varrho_{a,\f{C}})$ is a Polish metric space. We introduce the set
\begin{equation*}
\mathcal{\f{N}}_+^2(\f{C}[U]) \eqdef  \left \{ \alpha \in \mcM(\f{C}[U]) : \int_{\f{C}[U]} \sfr^2 \d \alpha < + \infty  \right \}, 
\end{equation*}
and the map
\begin{equation*}
\f{h} \colon \mathcal{\f{N}}_+^2(\f{C}[U]) \to \mcM(U), \quad \f{h}(\alpha) = \sfx_\sharp (\sfr^2 \alpha)\fstop
\end{equation*}
Note that the map $\f{h}$ does not depend on the point $\bar{x}$ in the definition of $\sfx$.

\bigskip

We introduce now the product cone: we define $\pc\eqdef  \f{C}[U]\times \f{C}[U]$ endowed with the product topology.  
On the product cone we can consider the projections on the two components $\pi^{\f{C}_i} : \pc \to \f{C}[U]$ sending $([x_0, r_0], [x_1, r_1])$ to $[x_i, r_i]$ and the projections on $\R_+$ and on the two copies of $U$ simply defined as $\sfr_i \eqdef  \sfr \circ \pi^{\f{C}_i}$ and $\sfx_i \eqdef  \sfx \circ \pi^{\f{C}_i}$ ($\sfx_i$ depends on the choice of points $\bar{x}_i \in U$, but this will be irrelevant). We introduce the set
\begin{equation*}
\mathcal{\f{N}}_+^2(\pc) \eqdef  \left \{ \aalpha \in \mcM(\pc) : \int_{\pc} (\sfr_0^2 + \sfr_1^2) \d \aalpha < + \infty  \right \}, 
\end{equation*}
and the maps
\begin{equation*}
\f{h}_i \colon \mathcal{\f{N}}_+^2(\pc) \to \mcM(U_i)\comma \qquad \f{h}_i(\aalpha) = (\sfx_i)_\sharp (\sfr_i^2 \aalpha)\comma \quad i=0,1 \fstop
\end{equation*}
Note that the map $\f{h}_i$ does not depend on the point $\bar{x}_i \in U_i$ in the definition of $\sfx_i$.
Finally we define, for every $(\mu_0, \mu_1) \in \mcM(U) \times \mcM(U)$, the set 
\begin{equation}\label{eq:hommarg}
\f{H}(\mu_0, \mu_1) \eqdef  \left \{ \aalpha \in  \mathcal{\f{N}}_+^2(\pc) : \f{h}_i(\aalpha) = \mu_i, \, i=0,1\right \}.
\end{equation}
If $\aalpha \in \f{H}(\mu_0, \mu_1)$, we say that $\mu_0$ and $\mu_1$ are the $2 $-homogeneous marginals of $\aalpha$. 
\quad \\
\noindent
We can now give the definition of Hellinger--Kantorovich distance induced by $\varrho$.

\begin{definition}[Hellinger--Kantorovich distance] We define the Hellinger--Kant\-orovich $\varrho$-distance $\HK_{\varrho}\colon \mcM(U) \to [0,+\infty)$ as
\begin{equation}\label{eq:defhk}
 \HK_\varrho(\mu_0,\mu_1)^2= \inf \left \{ \int_{\f{C}[U,U]} \varrho^2_{\pi,\f{C}} \d \aalpha : \aalpha \in \f{H}(\mu_0, \mu_1) \right \}.
\end{equation}
\end{definition}

The minimum in \eqref{eq:defhk} is always attained (see \cite[Theorem 7.6]{LMS18}), and we denote by~$\mathsf{OPT}_\varrho(\mu_0,\mu_1)$ the set of plans~$\aalpha$ attaining it.
The pair $(\mcM(U),\HK_\varrho)$ is a complete and separable metric space which is a length (resp.~geodesic) space if $(U,\varrho)$ is a length (resp.~geodesic) space (cf.~\cite[Theorems 7.15, 7.17, 8.5]{LMS18}). The topology of induced by $\HK_\varrho$ on $\mcM(U)$ coincides with the weak topology $\sigma(\mcM(\R^d), \rmC_b(U, \varrho))$.

We introduce the Gaussian Hellinger--Kantorovich distance: let us consider the function $g:[0,+\infty) \to [0,\frac{\pi}{2})$ defined as 
\[
g(z)\eqdef  \arccos(\rme^{-z^2/2})\comma \qquad z \in [0,+\infty)\fstop
\]
Since $g$ is a concave increasing function with $g(0)=0$, then $\gd\eqdef  g \circ \varrho$ is a distance on $U$ inducing the same topology of $\varrho$. 
Note that
\[
\gd_{\pi,\f{C}}([x_0,r_0],[x_1,r_1])= \braket{ r_0^2+r_1^2-2r_1 r_2 \rme^{-\varrho(x_0,x_1)^2/2} }^{1/2}\comma \quad [x_0,r_0], [x_1,r_1] \in \f{C}[U]\fstop
\] 
\begin{definition} We define the \emph{Gaussian Hellinger--Kantorovich $\varrho$-distance}
\[
\GHK_{\varrho}\colon \mcM(U) \to [0,+\infty)\comma \qquad \GHK_\varrho(\mu_0,\mu_1) \eqdef  \HK_{\gd}(\mu_0,\mu_1)\comma \quad \mu_0,\mu_1 \in \mcM(U) \fstop
\]
\end{definition}
Again, $(\mcM(U), \GHK_\varrho)$ is a complete and separable metric space and the topology of $(\mcM(U),\GHK_\varrho)$ coincides with the weak topology,~\cite[Theorem 7.25]{LMS18}. Moreover, if $(U, \varrho)$ is a length space, then $\HK_\varrho$ is the length distance induced by $\GHK_\varrho$,~\cite[Corollary 8.7]{LMS18}.
\medskip

The following result is the duality theorem for the $\GHK_\varrho$ distance. We will use the following notation: given two functions $\phi_i\colon U \to [-\infty, + \infty]$, $i=0,1$, the map $\phi_0 \oplus_o \phi_1 \colon U \times U \to [-\infty, + \infty]$ is defined as
\[
\phi_0 \oplus_o \phi_1 (x_0,x_1)\eqdef  \lim_{n \to + \infty} (-n \vee \phi_0(x_0) \wedge n) + (-n \vee \phi_1(x_1) \wedge n), \quad (x_0,x_1) \in U \times U\fstop
\]

\begin{theorem}\label{teo:ghk}
    Let $(U,\varrho)$ be a complete and separable metric space. Then, for every $\mu_0,\mu_1 \in \mcM(U)$ we have
    \begin{align*}
    \GHK_\varrho(\mu_0,\mu_1)^2 &= \inf \braket{ \sum_i \int_U (\sigma_i\log \sigma_i-\sigma_i+1) \de \mu_i + \int_{U \times U} \varrho^2 \de \ggamma : \ggamma \in \mcM(U\times U) }
    \\
    &=\sup \braket{ \sum_i\int_U (1-\rme^{-2\phi_i}) \de \mu_i : \phi_i \in \rmA_i(U), \, \phi_0 \oplus_o \phi_1 \le \varrho^2/2 } \comma    
    \end{align*} 
where $\sigma_i\eqdef  \frac{\de \gamma_i}{\de \mu_1}$, being $\gamma_i$, $i=0,1$, the marginals of $\ggamma$, and $\rmA_i(U)$ can be chosen between the spaces $\rmC_b(U)$, $\LSC_b(U)$, $\rmB_b(U)$ and 
\[
L^1_\rme (\mu_i)\eqdef  \set{ \phi\colon U \to [-\infty, + \infty] : \phi \text{ is Borel and } \int_U \rme^{-2\phi} \de \mu_i < + \infty } \fstop
\]
Furthermore:
\begin{enumerate}[$(i)$]
    \item\label{i:t:GHK:1} The set of plans $\ggamma$ which realize the infimum above is non-empty and it is denoted by $\mathsf{OPT}'_{\varrho}(\mu_0,\mu_1)$.
    
    \item\label{i:t:GHK:2} If $\ggamma \in \mathsf{OPT}'_{\varrho}(\mu_0,\mu_1) $ there exist two Borel sets $A_i \subset \supp(\gamma_i)$ on which $\gamma_i$ is concentrated and two Borel densities $\sigma_i \colon A_i \to (0,+\infty)$ of $\gamma_i$ w.r.t.~$\mu_i$ such that 
\begin{align} \label{eq:primacond}
\sigma_0 \sigma_1 &\ge \rme^{-\varrho^2} \quad \text{ in } A_0 \times A_1,\\ \label{eq:secondacond}
\sigma_0 \sigma_1 &= \rme^{-\varrho^2} \quad \text{ $\ggamma$-a.e.~in } A_0 \times A_1.
\end{align}

\item\label{i:t:GHK:3} If we define the Borel potentials $\phi_i \colon U \to [-\infty, +\infty]$ as
\[
\phi_i\eqdef  \begin{cases} -\frac{1}{2}\log \sigma_i, \quad &\text{ in } A_i, 
\\
- \infty, \quad &\text{ in } \supp(\mu_i) \setminus A_i,\\
+ \infty, \quad &\text{ otherwise,}
\end{cases}
\]
then the pair $(\phi_0, \phi_1)\in L^1_\rme(\mu_0) \times L^1_\rme(\mu_1)$ and realizes the supremum above and satisfies $\phi_0 \oplus_o \phi_1 \le \varrho^2/2$.

\item\label{i:t:GHK:4} Every $\ggamma \in \mathsf{OPT'}_{\varrho}(\mu_0,\mu_1)$ has the following properties:
\begin{enumerate}[$(a)$]
    \item $\mu_i \ll \gamma_i \ll \mu_i$, for $i=0,1$;
    \item $\ggamma$ is a solution for the Optimal Transport problem between its two marginals with cost $\varrho^2$;
    \item the plan 
    \[ \aalpha \eqdef  \tparen{[\id_U, \sigma_0^{-1/2}],[\id_U, \sigma_1^{-1/2}]}_\sharp \ggamma\]
    belongs to $\mathsf{OPT}_{\gd}(\mu_0,\mu_1)$.
\end{enumerate}

\item\label{i:t:GHK:5} If $(\tilde{\phi}_0, \tilde{\phi}_1) \in L^1_\rme(\mu_0) \times L^1_\rme(\mu_1)$ realizes the supremum above and satisfies $\tilde{\phi}_0 \oplus_o \tilde{\phi}_1 \le \varrho^2/2$, then $\phi_i= \tilde{\phi}_i$ $\mu_i$-a.e.~on $U$.
\end{enumerate}
\end{theorem}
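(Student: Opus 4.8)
The plan is to reduce the entire statement to the Logarithmic Entropy--Transport (LET) duality of Liero--Mielke--Savar\'e~\cite{LMS18}, taking advantage of the fact that the Gaussian rescaling $\gd=g\circ\varrho$ was engineered precisely so that the associated cone cost equals $\varrho^2$. First I would record the elementary observation that $g$ takes values in $[0,\tfrac\pi2)$, so that the truncation at $\pi$ in $\gd_{\pi,\f{C}}$ is always inactive and $\cos\tparen{\gd(x_0,x_1)}=\rme^{-\varrho(x_0,x_1)^2/2}$; consequently
\[
\gd_{\pi,\f{C}}\tparen{[x_0,r_0],[x_1,r_1]}^2=r_0^2+r_1^2-2r_0r_1\,\rme^{-\varrho(x_0,x_1)^2/2}\comma
\]
and the ``cone marginal cost'' $-\log\cos^2\gd$ attached to $\gd$ is exactly $\varrho^2$. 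Feeding this into the cone/entropy--transport correspondence of~\cite{LMS18}, which identifies $\HK_{\gd}^2=\GHK_\varrho^2$ with the LET functional built from the logarithmic entropy density $F(s)=s\log s-s+1$ on each side and the cost $\varrho^2$, immediately yields the first (primal) identity in the statement. Existence of a minimizer (hence item~\ref{i:t:GHK:1}) then follows by the direct method: the LET functional is sequentially lower semicontinuous for the weak topology and its sublevels are tight, the total masses of admissible $\ggamma$ being controlled by $\mu_0(U)+\mu_1(U)$ together with the functional's value.

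Next I would derive the dual formula. The Fenchel conjugate of $F$ is $F^*(\varphi)=\rme^\varphi-1$, so the general entropy--transport duality of~\cite{LMS18} gives
\[
\GHK_\varrho(\mu_0,\mu_1)^2=\sup\set{\,\sum_i\int_U\tparen{1-\rme^{-\psi_i}}\de\mu_i:\psi_0\oplus_o\psi_1\le\varrho^2\,}\comma
\]
and the change of variables $\psi_i=2\phi_i$ turns this into the stated supremum with constraint $\phi_0\oplus_o\phi_1\le\varrho^2/2$. The interchangeability of the classes $\rmC_b(U)$, $\LSC_b(U)$, $\rmB_b(U)$ and $L^1_\rme(\mu_i)$ for $\rmA_i(U)$ is a routine relaxation argument (upper/lower semicontinuous envelopes of the dual integrand, monotone truncation of bounded Borel functions), also contained in~\cite{LMS18}.

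For the structural items~\ref{i:t:GHK:2}--\ref{i:t:GHK:5} I would invoke the LET optimality conditions of~\cite{LMS18}. For an optimal $\ggamma$ with marginals $\gamma_i=\sigma_i\,\mu_i$ concentrated on a Borel set $A_i$, these give $0<\sigma_i<+\infty$ $\mu_i$-a.e., the recovery of the dual optimizers as $\phi_i=-\tfrac12\log\sigma_i$ on $A_i$, the translation of feasibility $\phi_0\oplus_o\phi_1\le\varrho^2/2$ into $\sigma_0\sigma_1\ge\rme^{-\varrho^2}$ on $A_0\times A_1$, and of complementary slackness into $\sigma_0\sigma_1=\rme^{-\varrho^2}$ $\ggamma$-a.e.; this is~\eqref{eq:primacond}--\eqref{eq:secondacond}, and the potentials of~\ref{i:t:GHK:3} are optimal by the duality equality. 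Item~\ref{i:t:GHK:4}(a) is precisely the two-sided absolute continuity $0<\sigma_i<+\infty$; item~\ref{i:t:GHK:4}(b) holds because freezing the marginals $\gamma_i$ of an optimal $\ggamma$ freezes the entropy part of the LET functional, so $\ggamma$ must minimize $\int_{U\times U}\varrho^2\de\ggamma$ among couplings of $\gamma_0$ and $\gamma_1$; and for~\ref{i:t:GHK:4}(c) I would compute directly that $\aalpha=\tparen{[\id_U,\sigma_0^{-1/2}],[\id_U,\sigma_1^{-1/2}]}_\sharp\ggamma$ has $\f{h}_i(\aalpha)=(\sfx_i)_\sharp(\sfr_i^2\aalpha)=\sigma_i^{-1}\gamma_i=\mu_i$, hence lies in $\f{H}(\mu_0,\mu_1)$, while $\sigma_0\sigma_1=\rme^{-\varrho^2}$ $\ggamma$-a.e.~forces $\int_{\pc}\gd_{\pi,\f{C}}^2\de\aalpha=\mu_0(U)+\mu_1(U)-2\,\ggamma(U\times U)$, which equals the primal value from~\ref{i:t:GHK:1}; since $\HK_{\gd}^2\le\int\gd_{\pi,\f{C}}^2\de\aalpha$ for every element of $\f{H}(\mu_0,\mu_1)$, this pins down $\aalpha\in\mathsf{OPT}_{\gd}(\mu_0,\mu_1)$. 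Finally, for the uniqueness in~\ref{i:t:GHK:5}, given another maximizer $(\tilde\phi_0,\tilde\phi_1)$ I would use that the midpoint $\tparen{\tfrac12(\phi_i+\tilde\phi_i)}_i$ is still admissible (the constraint $\phi_0\oplus_o\phi_1\le\varrho^2/2$ is convex) and that $t\mapsto1-\rme^{-2t}$ is strictly concave, so the midpoint's dual objective would strictly beat the common optimal value unless $\phi_i=\tilde\phi_i$ $\mu_i$-a.e.

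I expect the main obstacle to be not conceptual but a matter of careful translation: one must line up the normalization conventions with those of~\cite{LMS18} (the factors $2$ appearing both in the constraint $\phi_0\oplus_o\phi_1\le\varrho^2/2$ and in $1-\rme^{-2\phi_i}$, and the fact that the $\pi/2$-cap is inactive for $\gd$), so that the entropy--transport cost is genuinely $\varrho^2$; and one must carry the LET optimality conditions ---in particular the strict positivity of the optimal densities $\sigma_i$, which is what gives $\mu_i\ll\gamma_i$, and the optimality of the explicit cone plan in~\ref{i:t:GHK:4}(c)--- coherently across the three equivalent descriptions of the object: the cone/$\f{H}$ formulation defining $\HK_{\gd}$, the entropy--transport formulation, and the dual problem.
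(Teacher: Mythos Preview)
Your proposal is correct and follows essentially the same approach as the paper: both reduce the entire statement to the Logarithmic Entropy--Transport theory of~\cite{LMS18}, with the paper simply listing the specific citations (Thm.~7.20, Thm.~6.3(a)--(d), Thm.~6.2(a), Eqn.~(4.21)) while you spell out the translation mechanics (the $\pi/2$-cap being inactive for~$\gd$, the identification of the marginal cost with~$\varrho^2$, the Fenchel conjugate $F^*(\varphi)=\rme^\varphi-1$). The only noteworthy difference is item~\ref{i:t:GHK:5}: the paper cites~\cite[Eqn.~(4.21)]{LMS18} and~\cite[Thm.~2.14]{LMS22}, whereas you give the standard strict-concavity-of-$t\mapsto 1-\rme^{-2t}$ argument directly; this is a cosmetic variant, not a different route.
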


\begin{proof}
The first equality is \cite[Theorem 7.20]{LMS18} while the equality between infimum and supremum is \cite[Theorem 6.3(a)]{LMS18}.
Assertion~\ref{i:t:GHK:1} is a consequence of \cite[Theorem 6.2(a)]{LMS18}.
Assertion~\ref{i:t:GHK:2} is \cite[Theorem 6.3(b)]{LMS18} while assertion~\ref{i:t:GHK:3} is \cite[Theorem 6.3(d)]{LMS18}.
Assertion~\ref{i:t:GHK:4} is \cite[Theorem 6.3(c)]{LMS18} together with comments (a) and (b) after \cite[Theorem 7.20]{LMS18}.
Finally, assertion~\ref{i:t:GHK:5} is \cite[Formula (4.21)]{LMS18}; see also \cite[Theorem 2.14]{LMS22}.
\end{proof}

\subsection{Estimates on optimal potentials in the Euclidean case}\label{sec:spec}
We now specialize Theorem \ref{teo:ghk} to the case of $(U,\varrho)=(\R^d, \sfd_e)$, where $\sfd_e$ is the distance induced by the Euclidean norm in $\R^d$.
The symbol $\mcM^{ac}(\R^d)$ denotes the set of measures in $\mcM(\R^d)$ which are absolutely continuous w.r.t.~the $d$-dimensional Lebesgue measure.  Analogous notation is adopted for $\mcP^{ac}(\R^d)$ and $\mcP_2^{ac}(\R^d)$.

\begin{proposition}\label{prop:thepot}
    Let $\nu, \mu \in \mcM^{ac}(\R^d)$ and let $R,\delta>0$ be such that $\supp(\nu)=\overline{\rmB(0,R)}$ and $\nu \ge \delta\Leb{d} \mres \rmB(0,R)$. Then there is a unique pair of convex and continuous functions $\varphi:\rmB(0,R) \to \R$ and $\psi:\R^d \to \R$ such that 
    \begin{enumerate}[$(i)$] 
        \item\label{i:p:THePot:1} $\psi$ and $\varphi$ are the Legendre conjugate of each other:
\begin{align*}
    \varphi(x)&\eqdef  \sup_{y \in \R^d} \tbraket{ \la x, y \ra - \psi(y) }\comma &&x \in \rmB(0,R) \comma
    \\
    \psi(y)&\eqdef  \sup_{x \in \rmB(0,R)} \tbraket{ \la x, y \ra- \varphi(x) }\comma &&y \in \R^d \semicolon
\end{align*}

\item\label{i:p:THePot:2} $\psi$ is $R$-Lipschitz;

\item\label{i:p:THePot:3} the pair $(\frac{1}{2}|\cdot|^2-\varphi, \frac{1}{2}|\cdot|^2-\psi)$ is optimal for the dual problem for the $\GHK_{\sfd_e}$ distance i.e.
\begin{align}
\label{eq:11}
\varphi(x) + \psi(y) &\le \la x, y \ra \quad \text{ for every } (x,y) \in \rmB(0,R) \times \R^d,
\\
\label{eq:12}
\GHK_{\sfd_e}(\nu,\mu)^2 &= \int_{\rmB(0,R)} \braket{1-\rme^{-|x|^2+2\varphi(x)} } \de \nu(x) + \int_{\R^d} \braket{ 1-\rme^{-|y|^2+2\psi(y)} } \de \mu(y) \fstop
\end{align}
\end{enumerate}
Such pair satisfies also
\begin{align}\label{eq:13}
\GHK_{\sfd_e}(\nu, \mu)^2 &=\int_{\R^d} \braket{ \braket{1-\rme^{-|y|^2+2\psi(y)}}^2+\rme^{-2|y|^2+4\psi(y)} \braket{\rme^{|y-\nabla \psi(y)|^2}-1} } \de \mu(y) \fstop
\end{align}

\end{proposition}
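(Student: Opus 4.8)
The plan is to start from the optimality relation \eqref{eq:12} and to move the entire $\GHK_{\sfd_e}$-cost onto the $\mu$-side by a change of variables through the Brenier map $\nabla\psi$, which is available precisely because $\psi$ is convex and Lipschitz and $\mu$ is absolutely continuous. Concretely, I would first fix an optimal plan $\ggamma\in\mathsf{OPT}'_{\sfd_e}(\nu,\mu)$, which exists by Theorem~\ref{teo:ghk}\ref{i:t:GHK:1}. Since $(\tfrac12|\cdot|^2-\varphi,\tfrac12|\cdot|^2-\psi)$ is optimal for the dual problem by \ref{i:p:THePot:3}, Theorem~\ref{teo:ghk}\ref{i:t:GHK:5} identifies it, up to $\nu$- resp.\ $\mu$-null sets, with the potentials $\phi_0,\phi_1$ of Theorem~\ref{teo:ghk}\ref{i:t:GHK:3}; hence the marginals $\gamma_0,\gamma_1$ of $\ggamma$ satisfy $\gamma_0\ll\nu$, $\gamma_1\ll\mu$ with
\[
\frac{\de\gamma_0}{\de\nu}=\sigma_0 \quad\text{$\nu$-a.e.}\comma\qquad \frac{\de\gamma_1}{\de\mu}=\sigma_1 \quad\text{$\mu$-a.e.}\comma\qquad \sigma_0(x)\eqdef\rme^{-|x|^2+2\varphi(x)}\comma\quad \sigma_1(y)\eqdef\rme^{-|y|^2+2\psi(y)}\comma
\]
where $\sigma_0>0$ $\nu$-a.e.\ and $\sigma_1>0$ $\mu$-a.e.\ (because $\nu\ll\gamma_0\ll\nu$ and $\mu\ll\gamma_1\ll\mu$ by Theorem~\ref{teo:ghk}\ref{i:t:GHK:4}(a)), and $\sigma_0\,\sigma_1=\rme^{-|x-y|^2}$ $\ggamma$-a.e.\ by \eqref{eq:secondacond}.

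Next I would exploit the Euclidean structure. Since $\mu\in\mcM^{ac}(\R^d)$ we have $\gamma_1\ll\Leb d$, while $\psi$ is convex (by \ref{i:p:THePot:1}) and $R$-Lipschitz (by \ref{i:p:THePot:2}), hence differentiable $\Leb d$-a.e.\ and therefore $\mu$-a.e.\ (Rademacher). Writing $\sigma_i=\rme^{-2\phi_i}$, the $\ggamma$-a.e.\ equality $\sigma_0\sigma_1=\rme^{-|x-y|^2}$ becomes $\phi_0(x)+\phi_1(y)=\tfrac12|x-y|^2$, that is $\varphi(x)+\psi(y)=\la x,y\ra$ $\ggamma$-a.e.; by the Legendre duality \ref{i:p:THePot:1} this is equality in the Fenchel--Young inequality and hence forces $x\in\partial\psi(y)$, so $x=\nabla\psi(y)$ for $\ggamma$-a.e.\ $(x,y)$. (Equivalently, by Theorem~\ref{teo:ghk}\ref{i:t:GHK:4}(b), $\ggamma$ is the optimal transport plan for the cost $|x-y|^2$ between $\gamma_0$ and $\gamma_1$, which is unique and of Monge form as $\gamma_1\ll\Leb d$.) Thus $\ggamma=(\nabla\psi,\id)_\sharp\gamma_1$; in particular $(\nabla\psi)_\sharp\gamma_1=\gamma_0$, and $\gamma_0(\R^d)=\gamma_1(\R^d)$.

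Then come the change of variables and the conclusion. For every Borel $h\colon\R^d\to[0,+\infty]$,
\[
\int h\,\de\nu=\int\frac{h}{\sigma_0}\,\de\gamma_0=\int\frac{h(\nabla\psi(y))}{\sigma_0(\nabla\psi(y))}\,\de\gamma_1(y)=\int_{\R^d}h(\nabla\psi(y))\,\frac{\sigma_1(y)}{\sigma_0(\nabla\psi(y))}\,\de\mu(y)\fstop
\]
By the previous paragraph together with \eqref{eq:secondacond}, $\sigma_0(\nabla\psi(y))\,\sigma_1(y)=\rme^{-|y-\nabla\psi(y)|^2}$ for $\mu$-a.e.\ $y$, whence $\dfrac{\sigma_1(y)}{\sigma_0(\nabla\psi(y))}=\sigma_1(y)^2\,\rme^{|y-\nabla\psi(y)|^2}=\rme^{-2|y|^2+4\psi(y)}\,\rme^{|y-\nabla\psi(y)|^2}$; choosing $h\equiv1$ gives $\nu(\R^d)=\int_{\R^d}\rme^{-2|y|^2+4\psi(y)}\,\rme^{|y-\nabla\psi(y)|^2}\,\de\mu(y)$. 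Inserting this, together with $\gamma_0(\R^d)=\gamma_1(\R^d)=\int_{\R^d}\sigma_1\,\de\mu$, into \eqref{eq:12} rewritten as $\GHK_{\sfd_e}(\nu,\mu)^2=\int_{\rmB(0,R)}(1-\sigma_0)\,\de\nu+\int_{\R^d}(1-\sigma_1)\,\de\mu$, I obtain
\[
\GHK_{\sfd_e}(\nu,\mu)^2=\int_{\R^d}\Big[\rme^{-2|y|^2+4\psi(y)}\,\rme^{|y-\nabla\psi(y)|^2}-2\,\rme^{-|y|^2+2\psi(y)}+1\Big]\,\de\mu(y)\comma
\]
and the elementary identity $s^2t-2s+1=(1-s)^2+s^2(t-1)$ with $s=\rme^{-|y|^2+2\psi(y)}$ and $t=\rme^{|y-\nabla\psi(y)|^2}$ turns the integrand into $\tbraket{1-\rme^{-|y|^2+2\psi(y)}}^2+\rme^{-2|y|^2+4\psi(y)}\tparen{\rme^{|y-\nabla\psi(y)|^2}-1}$, which is exactly \eqref{eq:13}.

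The main obstacle is the middle step: establishing rigorously that the spatial marginal-coupling of the optimal $\GHK$-plan is the Monge coupling induced by $\nabla\psi$. This requires carefully matching the potentials of Proposition~\ref{prop:thepot} with the ones produced in Theorem~\ref{teo:ghk}\ref{i:t:GHK:3}, keeping track of the $\mu_i$-negligible sets $A_i$ appearing there, invoking Rademacher's theorem so that $\nabla\psi$ is defined $\mu$-a.e., and using $\sigma_0,\sigma_1>0$ a.e.\ so that all the divisions performed in the change of variables are legitimate; the rest is bookkeeping.
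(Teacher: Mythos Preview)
Your derivation of \eqref{eq:13} is correct and runs on the same rails as the paper's: both rest on the Monge representation $\ggamma=(\nabla\psi,\id_{\R^d})_\sharp\gamma_1$ together with the relation $\sigma_0(\nabla\psi(y))\,\sigma_1(y)=\rme^{-|y-\nabla\psi(y)|^2}$ $\mu$-a.e., and both land on the integrand $1-2\sigma_1+\sigma_1^2\rme^{|y-\nabla\psi(y)|^2}$, which is then rearranged algebraically. The only difference is the starting representation of $\GHK_{\sfd_e}^2$: the paper computes it from the \emph{primal} side, integrating the cone distance against the lifted optimal plan $\aalpha=\bigl([\id_{\R^d},\sigma_0^{-1/2}],[\id_{\R^d},\sigma_1^{-1/2}]\bigr)_\sharp\ggamma\in\mathsf{OPT}_{\mathsf g^{\sfd_e}}(\nu,\mu)$ from Theorem~\ref{teo:ghk}\ref{i:t:GHK:4}(c), whereas you start from the \emph{dual} formula \eqref{eq:12} and push the $\nu$-integral onto $\mu$ via the change of variables $x=\nabla\psi(y)$. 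Your route avoids the cone lift; the paper's makes the interpretation of \eqref{eq:13} as the cost of the optimal plan more transparent.

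What your proposal does not address is the remainder of the proposition: the \emph{existence} of the convex, continuous, mutually Legendre-conjugate pair $(\varphi,\psi)$ satisfying \ref{i:p:THePot:1}--\ref{i:p:THePot:3}, and its \emph{uniqueness}. The paper builds the pair by double Legendre conjugation of the Borel potentials $\phi_0,\phi_1$ from Theorem~\ref{teo:ghk}\ref{i:t:GHK:3} (after normalising the marginals of an optimal $\ggamma$ to probabilities, checking they lie in $\mcP_2(\R^d)$, and invoking \cite[Thm.~3.2]{FSS22} to obtain finiteness and continuity on $\rmB(0,R)$); uniqueness comes from Theorem~\ref{teo:ghk}\ref{i:t:GHK:5} combined with continuity and the equivalence $\nu\sim\Leb d\mres\rmB(0,R)$.
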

\begin{proof} By Theorem \ref{teo:ghk}\ref{i:t:GHK:3}, we can find Borel functions $\phi_i: \R^d \to [-\infty, +\infty]$ which are optimal for the dual problem for the $\GHK_{\sfd_e}$ distance and such that 
\begin{align}
\phi_0 \oplus_o \phi_1 &\le \tfrac{1}{2}\sfd_e^2 \quad \text{ in } \R^d \times \R^d,
\\
\label{eq:theass}
\phi_0+\phi_1 &= \tfrac{1}{2}\sfd_e^2 \quad \text{$\ggamma$-a.e.} \fstop
\end{align}
Let us consider $\ggamma \in \mathsf{OPT'}_{\sfd_e}(\nu, \mu)$ (whose existence is granted by Theorem \ref{teo:ghk}\ref{i:t:GHK:1}) and let us define
\begin{align*}
\tilde{\gamma}_0&\eqdef  \gamma_0 / \gamma(\R^d \times \R^d), \quad \tilde{\gamma}_1\eqdef  \gamma_1 / \gamma(\R^d \times \R^d)\comma
\\
\tilde{\varphi}(x)&\eqdef  \inf_{y \in \R^d} \braket{ \tfrac{1}{2}|x-y|^2- \phi_1(y) }, \quad x \in \R^d \comma
\\
\tilde{\psi}(y)&\eqdef  \inf_{x \in \R^d} \braket{ \tfrac{1}{2}|x-y|^2- \tilde{\varphi}(x) }, \quad y \in \R^d \comma
\\
\varphi(x)&\eqdef  \tfrac{1}{2} |x|^2 - \tilde{\varphi}(x), \quad \psi(y)\eqdef  \tfrac{1}{2}|y|^2 - \tilde{\psi}(y), \quad x,y \in \R^d \fstop
\end{align*}
Clearly $\tilde{\gamma}_i \in \mcP^{ac}(\R^d)$.
Since, by Theorem \ref{teo:ghk}(4a), $\nu \ll \gamma_0 \ll \nu$, we have that $\supp(\tilde{\gamma}_0)=\overline{\rmB(0,R)}$, so that $\tilde{\gamma}_0 \in \mcP_2^r(\R^d)$. Moreover, since 
\[ +\infty >\GHK_{\sfd_e}(\nu, \mu)^2 \ge \int_{\R^d \times \R^d} \sfd_e^2 \de \ggamma \ge \W_{2, \sfd_e}^2(\tilde{\gamma}_0, \tilde{\gamma}_1),\]
we also get that $\tilde{\gamma}_1 \in \mcP_2(\R^d)$.

Since $\tilde{\varphi}+\tilde{\psi}\le \sfd_e^2/2$ in $\R^d \times \R^d$ and $\tilde{\varphi} \ge \phi_0$, $\tilde{\psi} \ge \phi_1$ in $\R^d$, we have that $\tilde{\varphi}=\phi_0$ $\nu$-a.e.~and $\tilde{\psi}=\phi_1$ $\mu$-a.e..
This, together with \eqref{eq:theass} and the optimality of~$\ggamma$ (cf.~Theorem \ref{teo:ghk}(4b)), gives that 
\[
\tfrac{1}{2} \int_{\R^d} |x|^2 \de \tilde{\gamma}_0(x) + \tfrac{1}{2} \int_{\R^d} |x|^2 \de \tilde{\gamma}_1(x) - \W_{2, \sfd_e}^2(\tilde{\gamma}_0, \tilde{\gamma}_1) = \int_{\rmB(0,R)} \varphi \de \tilde{\gamma}_0 + \int_{\R^d} \psi \de \tilde{\gamma}_1 \fstop 
\]

It is also clear that $\varphi$ and $\psi$ are the Legendre transform of each other. The proof of \cite[Theorem~3.2]{FSS22} shows that, after restricting $\varphi$ to $\rmB(0,R)$, both~$\varphi$ and~$\psi$ are finite, convex, continuous and satisfy~\ref{i:p:THePot:1}. 
The representation in~\eqref{eq:12} follows since $\frac{1}{2}|\cdot|^2 - \varphi$ coincides with $\phi_0$ $\nu$-a.e.~and $\frac{1}{2}|\cdot|^2 - \psi$ coincides with $\phi_1$ $\mu$-a.e.

Assertion~\ref{i:p:THePot:2}, as well as \eqref{eq:11}, follows immediately from~\ref{i:p:THePot:1}.

We now prove~\eqref{eq:13}. It is a classical result in Optimal Transport theory that $\ggamma$ must be concentrated on the graph of $\nabla \psi$ in the sense that 
\[
\ggamma = (\nabla \psi, \id_{\R^d})_\sharp \gamma_1\fstop
\]
By Theorem \ref{teo:ghk}(4c), we have that 
\[
\aalpha \eqdef  \paren{[\id_{\R^d}, \sigma_0^{-1/2}], [\id_{\R^d}, \sigma_1^{-1/2}]}_\sharp \ggamma\comma
\]
belongs to $\mathsf{OPT}_{\mathsf{g}^{\mathsf{d}_e}}(\nu, \mu)$ so that 
\begin{align*}
\GHK_{\sfd_e}(\nu,\mu)^2 &= \int_{\f{C}[\R^d, \R^d]} \paren{\mathsf{g}^{\mathsf{d}_e}_{\pi, \f{C}}}^2 \de \aalpha
\\
&= \int_{\R^d \times \R^d} \braket{ \frac{1}{\sigma_0(x_0)}+\frac{1}{\sigma_1(x_1)}-\frac{2\rme^{-|x_0-x_1|^2/2}}{\sigma_0(x_0)^{1/2}\, \sigma_1(x_1)^{1/2}} } \de \ggamma(x_0,x_1)
\\
&= \int_{\R^d} \braket{ \frac{1}{\sigma_0(\nabla \psi(y))}+\frac{1}{\sigma_1(y)}-\frac{2\rme^{-|y-\nabla \psi(y)|^2/2}}{\sigma_0(\nabla \psi(y))^{1/2}\, \sigma_1(y)^{1/2}} } \de \ggamma_1(y)
\\
&= \int_{\R^d} \left (1+ \sigma_1(y)^2 \rme^{|y-\nabla \psi(y)|^2}-2\sigma_1(y) \right )\de \mu(y)
\\
&= \int_{\R^d} \left ( 1+ \rme^{-4\phi_1(y)} \rme^{|y-\nabla \psi(y)|^2}-2\rme^{-2\phi_1(y)} \right ) \de \mu(y)
\\
&= \int_{\R^d} \left ( (1-\rme^{-2\phi_1(y)})^2+\rme^{-4\phi_1(y)}(\rme^{|y-\nabla \psi(y)|^2}-1) \right )\de \mu(y)
\\
&= \int_{\R^d} \left ( (1-\rme^{-|y|^2+2\psi(y)})^2+\rme^{-2|y|^2+4\psi(y)}(\rme^{|y-\nabla \psi(y)|^2}-1) \right )\de \mu(y) \fstop
\end{align*}
We now prove uniqueness. If another pair $(\varphi_0, \psi_0)$ is as in the statement and satisfies~\ref{i:p:THePot:1}-\ref{i:p:THePot:3} above, we have by Theorem \ref{teo:ghk}\ref{i:t:GHK:5} that $\frac{1}{2}|\cdot|^2-\varphi_0$ must coincide $\nu$-a.e.~with $\phi_0$ which coincides $\nu$-a.e.~with $\frac{1}{2}|\cdot|^2-\varphi$.
Since $\nu$ is equivalent to the Lebesgue measure, we deduce that $\varphi_0=\varphi$ $\Leb{d}$-a.e.~in $\rmB(0,R)$. Being both functions continuous, we deduce that they coincide everywhere, so that by~\ref{i:p:THePot:1} also $\psi_0$ and $\psi$ coincide in $\R^d$.
\end{proof}

Our aim is to fix a suitably regular measure $\nu \in \mcM(\R^d)$ and consider a sufficiently regular class of measures $\mu$ for which the resulting potentials $\psi$ are \emph{uniformly} regular. To do so we are going to consider a family of regularizing operators $\mathsf{T}_\eps$. Let us start with the relevant definitions.

We consider two functions $\kappa \in \rmC^{\infty}_c(\R^d, \R)$ and $f \in \rmC^\infty(\R^d, \R^d)$ satisfying the following properties:
\begin{enumerate}
    \item $\kappa(x)\ge 0$ for every $x \in \R^d$, $\supp(\kappa)=\overline{\rmB(0,1)}$, $\kappa(x)=\kappa(-x)$ for every $x \in \R^d$ and $\int_{\R^d} \kappa(x) \de x =1$;
    \item $f(x)=x$ for every $x \in \rmB(0,1)$, $|f(x)|\le 2$ for every $x \in \R^d$ and $\|Df(x)\| \le 1$ for every $x \in \R^d$, where $\|\cdot\|$ denotes the operator norm of a matrix in $\R^{d \times d}$.
\end{enumerate}

We then consider $\kappa_\eps(x)\eqdef \frac{1}{\eps^d}\kappa(x/\eps)$ and $f_\eps(x)\eqdef \frac{1}{\eps} f(\eps x)$ for every $x \in \R^d$ and $\eps\in (0,1)$.
It is thus clear that $\kappa_\eps \in \rmC^{\infty}_c(\R^d, \R)$, $f_\eps \in \rmC^\infty(\R^d, \R^d)$ and they satisfy the following properties:
\begin{enumerate}
    \item $\kappa_\eps(x)\ge 0$ for every $x \in \R^d$, $\supp(\kappa_\eps)=\overline{\rmB(0,\eps)}$, $\kappa_\eps(x)=\kappa_\eps(-x)$ for every $x \in \R^d$ and $\int_{\R^d} \kappa_\eps(x) \de x =1$;
    \item $f_\eps(x)=x$ for every $x \in \rmB(0,1/\eps)$, $|f_\eps(x)|\le 2/\eps$ for every $x \in \R^d$ and $\|Df_\eps(x)\| \le 1$ for every $x \in \R^d$.
\end{enumerate}

We define the operator $\mathsf{T}_\eps: \mcM(\R^d) \to \mcM^{ac}(\R^d)$ as
\begin{equation}\label{eq:theop}
\mathsf{T}_\eps (\mu) = \left ( (f_\eps)_\sharp \mu + \eps \delta_0 \right ) \ast \kappa_\eps, \quad \mu \in \mcM(\R^d).  
\end{equation} 

\begin{proposition}\label{prop:thereg} Let $\mathsf{T}_\eps$ be as in \eqref{eq:theop}. Then $\mathsf{T}_\eps$ is well defined and $\mathsf{T}_\eps(\mu) \to \mu$ as $\eps \downarrow 0$ for every $\mu \in \mcM(\R^d)$. Let $\nu \in \mcM^{ac}(\R^d)$ and let $R,B, \delta>0$ be such that $\supp(\nu)=\overline{\rmB(0,R)}$, $\nu \R^d \le B$ and $\nu \ge \delta\Leb{d} \mres \rmB(0,R)$. Then, for every $\eps>0$, there exists a constant $K=K(\eps, B, R)>0$ such that, for every $\mu \in \mcM(\R^d)$, if $(\varphi, \psi)$ is as in Proposition \ref{prop:thepot} for the pair $(\nu, \mathsf{T}_\eps(\mu))$, then $\psi(0)\le K$ and the map $y \mapsto \frac{1-\rme^{-|y|^2+2\psi(y)}}{2}$ is $K$-Lipschitz and $K$-bounded.
\end{proposition}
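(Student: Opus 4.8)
That $\mathsf{T}_\eps$ is well defined and $\mathsf{T}_\eps(\mu)\to\mu$ as $\eps\downarrow0$ is routine: $(f_\eps)_\sharp\mu+\eps\delta_0$ is a finite Borel measure and its convolution with $\kappa_\eps\in\rmC^{\infty}_c(\Rd)$ has smooth density, so $\mathsf{T}_\eps(\mu)\in\mcM^{ac}(\Rd)$; since $f_\eps\to\mathrm{id}_{\Rd}$ pointwise with $|f_\eps|\le2/\eps$ we have $(f_\eps)_\sharp\mu\weakto\mu$ by dominated convergence, $\eps\delta_0\weakto0$, and $(\kappa_\eps)_\eps$ is an approximate identity, whence $\mathsf{T}_\eps(\mu)\weakto\mu$. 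Now fix $\eps$, the measure $\nu$ with its constants $R,B,\delta$, and $\mu\in\mcM(\Rd)$; set $\mu'\eqdef\mathsf{T}_\eps(\mu)$. By construction $\mu'\in\mcM^{ac}(\Rd)$, $\mu'\Rd=\mu\Rd+\eps\ge\eps$, and $\supp\mu'\subset\overline{\rmB(0,R_\eps)}$ with $R_\eps\eqdef2/\eps+\eps$ depending only on $\eps$. Thus Proposition~\ref{prop:thepot} applies to $(\nu,\mu')$ and produces $(\varphi,\psi)$; let $\ggamma\in\mathsf{OPT}'_{\sfd_e}(\nu,\mu')$ and $A_i,\sigma_i$ be as in Theorem~\ref{teo:ghk}\ref{i:t:GHK:2}, and $\phi_i\eqdef-\tfrac12\log\sigma_i$ on $A_i$ the optimal potentials of Theorem~\ref{teo:ghk}\ref{i:t:GHK:3}. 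Since $(\tfrac12|\cdot|^2-\varphi,\tfrac12|\cdot|^2-\psi)$ is optimal for the dual problem (Proposition~\ref{prop:thepot}\ref{i:p:THePot:3}), the uniqueness statement Theorem~\ref{teo:ghk}\ref{i:t:GHK:5} gives $\varphi=\tfrac12|\cdot|^2+\tfrac12\log\sigma_0$ $\nu$-a.e.\ and $\psi=\tfrac12|\cdot|^2+\tfrac12\log\sigma_1$ $\mu'$-a.e.; moreover $\mu_i\ll\gamma_i\ll\mu_i$ (Theorem~\ref{teo:ghk}\ref{i:t:GHK:4}) yields $A_0\subset\supp\gamma_0=\supp\nu=\overline{\rmB(0,R)}$ and $A_1\subset\supp\gamma_1=\supp\mu'\subset\overline{\rmB(0,R_\eps)}$, with $\mu'$ concentrated on $A_1$.

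The crux is the uniform bound $\sigma_0\ge c$ on $A_0$ for some $c=c(\eps,B,R)>0$. Write $m\eqdef\ggamma(\Rd\times\Rd)=\gamma_0(\Rd)=\gamma_1(\Rd)>0$. Using the identifications above together with the fact that $\nu,\mu'$ are concentrated on $A_0,A_1$, we get $\int e^{-|x|^2+2\varphi}\de\nu=\int_{A_0}\sigma_0\de\nu=m$ and likewise $\int e^{-|y|^2+2\psi}\de\mu'=m$, so \eqref{eq:12} reads $\GHK_{\sfd_e}(\nu,\mu')^2=\nu\Rd+\mu'\Rd-2m$. Combining with the triangle inequality and $\GHK_{\sfd_e}(0,\cdot)^2=(\cdot)\Rd$ gives $\GHK_{\sfd_e}(\nu,\mu')\ge|\sqrt{\nu\Rd}-\sqrt{\mu'\Rd}|$, hence $m\le\sqrt{\nu\Rd\cdot\mu'\Rd}$. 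Since $\int_{A_1}\sigma_1\de\mu'=m$ and $\mu'(A_1)=\mu'\Rd$, Chebyshev's inequality furnishes $x_1^*\in A_1$ with $\sigma_1(x_1^*)\le 2m/\mu'\Rd\le 2\sqrt{\nu\Rd/\mu'\Rd}\le2\sqrt{B/\eps}$. Then \eqref{eq:primacond}, together with $|x_0-x_1^*|\le R+R_\eps$ on $A_0\times A_1$, gives for every $x_0\in A_0$
\[
\sigma_0(x_0)\ \ge\ \frac{e^{-|x_0-x_1^*|^2}}{\sigma_1(x_1^*)}\ \ge\ \frac{e^{-(R+R_\eps)^2}}{2\sqrt{B/\eps}}\ \defeq\ c\ >\ 0 .
\]

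It follows that $\varphi(x)=\tfrac12|x|^2+\tfrac12\log\sigma_0(x)\ge\tfrac12\log c$ for $\nu$-a.e.\ $x$; as $\nu\ge\delta\Leb{d}\mres\rmB(0,R)$ this holds $\Leb{d}$-a.e.\ on $\rmB(0,R)$, hence everywhere on $\rmB(0,R)$ by continuity of $\varphi$. Since $\psi(0)=\sup_{x\in\rmB(0,R)}\bigl(-\varphi(x)\bigr)=-\inf_{\rmB(0,R)}\varphi$ by Proposition~\ref{prop:thepot}\ref{i:p:THePot:1}, we obtain $\psi(0)\le\tfrac12\log(1/c)\defeq K_0=K_0(\eps,B,R)$, which we may take $\ge0$. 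Now $\psi$ is $R$-Lipschitz by Proposition~\ref{prop:thepot}\ref{i:p:THePot:2}, so $\psi(y)\le K_0+R|y|$ and $-|y|^2+2\psi(y)\le2K_0+R^2-(|y|-R)^2$ for all $y$. Writing $h(y)\eqdef\tfrac12\bigl(1-e^{-|y|^2+2\psi(y)}\bigr)$, we get at once $|h(y)|\le\tfrac12(1+e^{2K_0+R^2})$ and, using $|\nabla\psi|\le R$ a.e.,
\[
|\nabla h(y)|=e^{-|y|^2+2\psi(y)}\,|y-\nabla\psi(y)|\le e^{2K_0+R^2}(|y|+R)\,e^{-(|y|-R)^2}\le e^{2K_0+R^2}M(R)\quad\text{a.e.,}
\]
where $M(R)\eqdef\sup_{t\ge0}(t+R)e^{-(t-R)^2}<\infty$; as $h$ is locally Lipschitz this bounds its global Lipschitz constant. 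Taking $K\eqdef\max\bigl\{K_0,\ \tfrac12(1+e^{2K_0+R^2}),\ e^{2K_0+R^2}M(R)\bigr\}$, depending only on $\eps,B,R$, concludes the proof.

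The main obstacle is exactly the uniform lower bound on $\sigma_0$: the total mass of $\mathsf{T}_\eps(\mu)$ is not controlled, so $\GHK_{\sfd_e}(\nu,\mathsf{T}_\eps(\mu))$ cannot be bounded outright. What rescues the argument is that the two modifications built into $\mathsf{T}_\eps$ do precisely what is needed — the summand $\eps\delta_0$ forces $\mu'\Rd\ge\eps$, which makes the averaged density $m/\mu'\Rd$, hence some value $\sigma_1(x_1^*)$, quantitatively small; and the truncation by $f_\eps$ keeps $\supp\mu'$ inside a fixed ball, so that the cost $|x_0-x_1^*|^2$ appearing in \eqref{eq:primacond} is controlled. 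Once $\sigma_0$ is bounded below, everything reduces, via the $R$-Lipschitzness of $\psi$, to elementary one-variable estimates.
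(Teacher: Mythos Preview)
Your proof is correct and complete, but you take a genuinely different route from the paper for the key step, the upper bound on $\psi(0)$. The paper argues on the dual side: from $\GHK_{\sfd_e}(\nu,\mu')^2\ge0$ and the $R$-Lipschitzness of $\psi$ it obtains $e^{2\psi(0)}\int e^{-|y|^2-2R|y|}\,\de\mu'(y)\le B+\mu'\Rd$, and then bounds this ratio by a constant $a(\eps,B,R)$ using that $(f_\eps)_\sharp\mu+\eps\delta_0$ is supported in $\overline{\rmB(0,2/\eps)}$, on which the convolved weight $e^{-|\cdot|^2-2R|\cdot|}\ast\kappa_\eps$ is bounded below by some $C(\eps,R)>0$. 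You argue instead on the primal side: the triangle inequality for $\GHK$ (together with $\GHK(\cdot,0)^2=(\cdot)\Rd$) gives $m=\ggamma(\Rd\times\Rd)\le\sqrt{\nu\Rd\cdot\mu'\Rd}$; Chebyshev then produces $x_1^*\in A_1$ with $\sigma_1(x_1^*)$ controlled by $\sqrt{B/\eps}$; and the optimality condition $\sigma_0\sigma_1\ge e^{-\sfd_e^2}$ on $A_0\times A_1$ from \eqref{eq:primacond}, combined with the compact support of $\mu'$, yields a uniform lower bound on $\sigma_0$, hence on $\varphi$, hence an upper bound on $\psi(0)=-\inf_{\rmB(0,R)}\varphi$. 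Both routes use exactly the two features of $\mathsf{T}_\eps$ you single out---the mass bound $\mu'\Rd\ge\eps$ and the support bound $\supp\mu'\subset\overline{\rmB(0,2/\eps+\eps)}$---and your derivation of the Lipschitz and boundedness estimates for $y\mapsto\tfrac12(1-e^{-|y|^2+2\psi(y)})$ is essentially the same as the paper's. Your argument is more structural, leaning on the plan--potential relations of Theorem~\ref{teo:ghk}; the paper's is slightly more self-contained, needing neither the triangle inequality nor \eqref{eq:primacond}.
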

\begin{proof} The proof is divided in several claims.

    \paragraph{Claim (a): the operator $\mathsf{T}_\eps$ is well defined and tends to the identity as $\eps \downarrow 0$}

    \paragraph{Proof of claim (a)} Since $f_\eps$ is a Borel function and the convolution preserves mass and non-negativity of the measure, we have that $\mathsf{T}_\eps(\mu) \in \mcM(\R^d)$ for every $\mu \in \mcM(\R^d)$. It is also clear that $\mathsf{T}_\eps(\mu)$ has a density w.r.t.~the Lebesgue measure in $\R^d$ so that $\mathsf{T}_\eps(\mu) \in \mcM^{ac}(\R^d)$ for every $\mu \in \mcM(\R^d)$. Let $\varphi \in \rmC_b(\R^d)$; then
\begin{align*}
    \int_{\R^d} \varphi \de \mathsf{T}_n(\mu) = \eps(\varphi \ast \kappa_\eps)(0)+ \int_{\R^d} ((\varphi \ast \kappa_\eps) \circ f_\eps) \de \mu.
\end{align*}
Since $\varphi \ast \kappa_\eps \to \varphi$ locally uniformly in $\R^d$ as $\eps \downarrow 0$, we have in particular that $(\varphi \ast \kappa_\eps)(0)$ is bounded, so that $\eps(\varphi \ast \kappa_\eps)(0) \to 0$ as $\eps \downarrow 0$. For the convergence of the integral we use the Dominated Convergence Theorem: note that $\|(\varphi \ast \kappa_\eps) \circ f_\eps)\|_\infty \le  \|\varphi \ast \kappa_\eps\|_\infty \le \|\varphi\|_\infty$ and, for a fixed $x \in \R^d$, if $\eps < 1/|x|$, then $f_\eps(x)=x$ so that $(\varphi \ast \kappa_\eps) \circ f_\eps)(x) \to \varphi(x)$ for every $x \in \R^d$. We can thus conclude that 
\[ \int_{\R^d} \varphi \de \mathsf{T}_n(\mu) \to \int_{\R^d} \varphi \de \mu \quad \text{ for every } \varphi \in \rmC_b(\R^d) \text{ as } \eps \downarrow 0.\]

    \paragraphn{Claim (b): there exist a constant $a=a(\eps, B, R)>0$ such that
    \[
    \sup_{\mu \in \mcM(\R^d)} \frac{B+\mathsf{T}_\eps(\mu) \R^d}{\displaystyle \int_{\R^d} \rme^{-|x|^2 - 2R|x|} \de\mathsf{T}_\eps(\mu)(x)}  \le a.
    \]}

    \paragraph{Proof of claim (b)} Denote by $f^R: \R^d \to \R$ and $C(\eps,R)>0$ the function and the constant
    \[
    f^R(x)\eqdef  \rme^{-|x|^2-2R|x|} \quad x \in \R^d, \quad C(\eps, R)\eqdef  \min_{\rmB(0,2/\eps)} f^R \ast \kappa_\eps >0\fstop
    \]
We have
\begin{align*}
 \frac{B+\mathsf{T}_\eps(\mu) \R^d}{\displaystyle \int_{\R^d} \rme^{-|x|^2 - 2R|x|} \de\mathsf{T}_\eps(\mu)(x)} &=  \frac{B+\left ( (f_\eps)_\sharp \mu + \eps \delta_0\right )(\R^d)}{\displaystyle \int_{\R^d} (f^R \ast \kappa_{\eps}) \de\left ( (f_\eps)_\sharp \mu + \eps \delta_0\right )(x)}
 \\
 &=  \frac{B+\eps + \mu \R^d}{ \eps(f^R \ast \kappa_{\eps})(0)+ \displaystyle\int_{\R^d} ((f^R \ast \kappa_{\eps})\circ f_\eps) \de \mu(x)}
 \\
 & \le \frac{B+\eps + \mu \R^d}{ \eps(f^R \ast \kappa_{\eps})(0)+ C(\eps,R) \mu \R^d}
 \\
 & \le \frac{B+\eps}{\eps(f^R \ast \kappa_{\eps})(0)} + \frac{1}{C(\eps,R)} 
 \\
 &\defeq a \fstop
 \end{align*}

    \paragraph{Claim (c): for every $\mu \in \mcM(\R^d)$, if $(\varphi, \psi)$ is as in Proposition \ref{prop:thepot} for the pair $(\nu, \mathsf{T}_\eps(\mu))$, then $\rme^{2\psi(0)} \le a$, where $a$ is the constant from claim (b)}

    \paragraph{Proof of claim (c)} By Proposition \ref{prop:thepot}, we know that $\psi$ is $R$-Lipschtiz and the pair $(\varphi,\psi)$ satisfies
    \begin{align*}
    \int_{\R^d} (1-\rme^{-|y|^2 +2\psi(y)}) \de \mathsf{T}_\eps(\mu)(y) + \nu \R^d &\ge \int_{\R^d} (1-\rme^{-|y|^2 +2\psi(y)}) \de \mathsf{T}_\eps(\mu)(y)
    \\
    &\qquad +  \int_{\rmB(0,R)} (1-\rme^{-|x|^2+2\varphi(x)}) \de \nu(x) \\
    &= \GHK_{\sfd_e}(\nu, \mathsf{T}_\eps(\mu))^2 \\
    &\ge 0
    \end{align*}
so that
\begin{align*}
\int_{\R^d} \rme^{-|y|^2-2R|y| +2\psi(0)} \de \mathsf{T}_\eps(\mu)(y) &\leq \int_{\R^d} \rme^{-|y|^2 +2\psi(y)} \de \mathsf{T}_\eps(\mu)(x) \le \nu \R^d+\mathsf{T}_\eps(\mu) \R^d
\\
&\leq B + \mathsf{T}_\eps(\mu) \R^d
\end{align*}
thus giving that $\rme^{2\psi(0)} \le a$.

    \paragraph{Claim (d): there exists a constant $K=K(\eps, B, R)>0$ such that, for every $\mu \in \mcM(\R^d)$, if $(\varphi, \psi)$ is as in Proposition \ref{prop:thepot} for the pair $(\nu, \mathsf{T}_\eps(\mu))$, then $\psi(0) \le K$ and the map $y \mapsto \frac{1-\rme^{-|y|^2+2\psi(y)}}{2}$ is $K$-Lipschitz and $K$-bounded.}
    \quad \smallskip\\
    \textit{Proof of claim (d):} it is clear that $\psi(0) \le \ln(a)/2$; observe that 
\[ -|y|^2 + 2\psi(y) \le -|y|^2 + 2R|y|+ 2\psi(0) \le R^2 + 2\psi(0) \text{ for every } y \in \R^d,\]
so that
\[ 0 \le \rme^{-|y|^2 + 2\psi(y)} \le \rme^{2\psi(0)}\rme^{R^2} \le a\rme^{R^2}\text{ for every } y \in \R^d, \]
where we have used claim (c). Moreover, we can write
\[ \rme^{-|y|^2+2\psi(y)}= \rme^{-|y|^2+2R|y|+2\psi(0)} \rme^{2\psi(y)-2R|x|-2\psi(0)} =: \rme^{-|y|^2+2R|y| + 2\psi(0)} \rme^{-f(y)}.\]
Notice that $y\mapsto \rme^{-|y|^2+2R|y| + 2\psi(0)}$ is Lipschtz and bounded and $f(y) \ge 0$ so that $\rme^{-f} \le 1$. Moreover, 
\[ \Li[\R^d]{\rme^{-f}} \le \Li[\R^-]{\exp} \Li[\R^d]{f}\le \Li[\R^d]{f}\comma \]
so that also $\rme^{-f}$ is Lipschitz and bounded. We deduce that $\rme^{-|\cdot|^2+2\psi}$ is Lipschitz, so that its Lipschtiz constant can be estimated by the essential supremum of the norm of its derivative:
\[ |\nabla \rme^{-|y|^2+2\psi(y)}| = \rme^{-|y|^2+2\psi(y)}|2y+2\nabla \psi(y)| \le 2\rme^{2\psi(0)}\rme^{-|y|^2+2R|y|}(|y|+R) \le 2a C'(R),\]
where 
\[ C'(R)= \left (\sqrt{R^2+1/2}+R \right ) \rme^{-R^2 +2R\sqrt{R^2+1/2}-1/2}. \]
This concludes the proof.
\end{proof}

We can summarize the results of this subsection in the following theorem.

\begin{theorem}\label{teo:final} Let $\nu \in \mcM^{ac}(\R^d)$ and let $R,B, \delta>0$ be such that $\supp(\nu)=\overline{\rmB(0,R)}$, $\nu \R^d \le B$ and $\nu \ge \delta\Leb{d} \mres \rmB(0,R)$; let $\eps \in (0,1)$ be fixed and let $\mathsf{T}_\eps$ be as in \eqref{eq:theop}. Then there exists a constant $K(\eps,B,R)$ that depends only on $\eps$, $R$ and $B$ such that, for every measure $\mu \in \mcM(\R^d)$, there exist a unique pair of convex and continuous functions $\varphi:\rmB(0,R) \to \R$ and $\psi:\R^d \to \R$ such that:
\begin{enumerate}[$(1)$]
\item $\psi$ is $R$-Lipschitz and $\psi(0) \le K$;
\item the map $y \mapsto \frac{1-\rme^{-|y|^2+2\psi(y)}}{2}$ is $K$-Lipschitz and $K$-bounded;
\item $\varphi$ and $\psi$ are the Legendre conjugate of each other:
\begin{align*}
    \varphi(x)&\eqdef  \sup_{y \in \R^d} \braket{ \la x, y \ra - \psi(y) } \comma &&x \in \rmB(0,R)\comma
    \\
    \psi(y)&\eqdef  \sup_{x \in \rmB(0,R)} \braket{ \la x, y \ra- \varphi(x) }\comma &&y \in \R^d \semicolon
\end{align*}
\item the pair $(\frac{1}{2}|\cdot|^2-\varphi, \frac{1}{2}|\cdot|^2-\psi)$ is optimal for the dual problem for the $\GHK$ distance between $\nu$ and $\mathsf{T}_\eps(\mu)$:
\begin{align*}
    \varphi(x) + \psi(y) &\le \la x, y \ra \quad \text{ for every } (x,y) \in \rmB(0,R) \times \R^d, \\
    \tfrac{1}{2}\GHK_{\sfd_e}(\nu,\mu)^2 &= \int_{\rmB(0,R)} \frac{1-\rme^{-|x|^2+2\varphi(x)}}{2} \de \nu(x) + \int_{\R^d} \frac{1-\rme^{-|y|^2+2\psi(y)}}{2} \de \mathsf{T}_\eps(\mu) \fstop
\end{align*}
\end{enumerate}
The same pair also satisfies
\begin{align*}
\GHK_{\sfd_e}&(\nu, \mathsf{T}_\eps(\mu))^2 =
\\
=&\int_{\R^d} \braket{\braket{1-\rme^{-|y|^2+2\psi(y)}}^2+\rme^{-2|y|^2+4\psi(y)}\braket{\rme^{|y-\nabla \psi(y)|^2}-1} }\de \mathsf{T}_\eps(\mu)(y) \fstop
\end{align*}
\end{theorem}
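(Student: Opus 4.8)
The plan is to deduce Theorem~\ref{teo:final} as a direct consolidation of Propositions~\ref{prop:thepot} and~\ref{prop:thereg}, so essentially no new work is needed beyond matching up the hypotheses. First I would fix the data~$\nu$,~$R$,~$B$,~$\delta$ and~$\eps\in(0,1)$ as in the statement and take an arbitrary~$\mu\in\mcM(\R^d)$. By claim~(a) in the proof of Proposition~\ref{prop:thereg}, the operator~$\mathsf{T}_\eps$ is well defined and~$\mathsf{T}_\eps(\mu)\in\mcM^{ac}(\R^d)$; together with the standing assumptions that~$\nu\in\mcM^{ac}(\R^d)$, $\supp(\nu)=\overline{\rmB(0,R)}$ and~$\nu\ge\delta\Leb{d}\mres\rmB(0,R)$, this shows that the pair~$(\nu,\mathsf{T}_\eps(\mu))$ meets the hypotheses of Proposition~\ref{prop:thepot}. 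Applying Proposition~\ref{prop:thepot} to~$(\nu,\mathsf{T}_\eps(\mu))$ then yields, in one stroke, the existence and uniqueness of a pair of convex continuous functions~$\varphi\colon\rmB(0,R)\to\R$ and~$\psi\colon\R^d\to\R$, their mutual Legendre conjugacy (item~$(3)$, i.e.\ Proposition~\ref{prop:thepot}\ref{i:p:THePot:1}), the~$R$-Lipschitz property of~$\psi$ (Proposition~\ref{prop:thepot}\ref{i:p:THePot:2}, which is part of item~$(1)$), the dual-optimality relations~\eqref{eq:11}--\eqref{eq:12} for the pair~$(\nu,\mathsf{T}_\eps(\mu))$ (item~$(4)$, after dividing through by~$2$), and the representation~\eqref{eq:13} applied to~$(\nu,\mathsf{T}_\eps(\mu))$, which is precisely the last displayed identity of the statement.

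Next I would invoke Proposition~\ref{prop:thereg} to upgrade these pointwise facts to the uniform-in-$\mu$ estimates: it furnishes a constant~$K=K(\eps,B,R)$, depending only on~$\eps$,~$B$ and~$R$, such that~$\psi(0)\le K$ and the map~$y\mapsto\tfrac{1-\rme^{-|y|^2+2\psi(y)}}{2}$ is~$K$-Lipschitz and~$K$-bounded. This is exactly items~$(1)$ and~$(2)$; since~$\varphi$ and~$\psi$ are already uniquely determined by Proposition~\ref{prop:thepot}, nothing further is needed and the theorem follows.

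The only substantial point---and hence what I expect to be the main obstacle, although it has already been dispatched inside Proposition~\ref{prop:thereg}---is the \emph{uniformity} of the constant~$K$ over \emph{all} measures~$\mu\in\mcM(\R^d)$, in particular over~$\mu$ of arbitrarily large or small (including zero) total mass. I would recall that this is obtained in three steps: first (claim~(b)) a uniform lower bound on~$\int_{\R^d}\rme^{-|x|^2-2R|x|}\,\de\mathsf{T}_\eps(\mu)(x)$ in terms of~$B+\mathsf{T}_\eps(\mu)\R^d$, which uses that the mollification of~$x\mapsto\rme^{-|x|^2-2R|x|}$ by~$\kappa_\eps$ is bounded below by a positive constant on~$\overline{\rmB(0,2/\eps)}$, that~$f_\eps$ takes values in that ball, and that~$\mathsf{T}_\eps(\mu)$ always carries the extra atom~$\eps\delta_0$ (which prevents degeneracy when~$\mu=0$); second (claim~(c)) combining this lower bound with the dual representation~\eqref{eq:12} and with~$\GHK_{\sfd_e}(\nu,\mathsf{T}_\eps(\mu))^2\ge0$ to conclude~$\rme^{2\psi(0)}\le a$; and third (claim~(d)) propagating the bound on~$\psi(0)$ through the estimate~$-|y|^2+2\psi(y)\le R^2+2\psi(0)$, which controls~$\rme^{-|\cdot|^2+2\psi}$ uniformly and, after differentiation, bounds its Lipschitz constant in terms of~$a$ and~$R$. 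Assembling claims~(b)--(d) with the structural conclusions of Proposition~\ref{prop:thepot} then gives the theorem.
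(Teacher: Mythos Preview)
Your proposal is correct and matches the paper's approach exactly: the paper presents Theorem~\ref{teo:final} with the sentence ``We can summarize the results of this subsection in the following theorem'' and gives no separate proof, so the theorem is precisely the consolidation of Propositions~\ref{prop:thepot} and~\ref{prop:thereg} that you describe. Your recapitulation of how the uniform constant~$K$ arises from claims~(b)--(d) of Proposition~\ref{prop:thereg} is accurate and tracks the paper's argument.
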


\section{Cylinder functions}

Let $(M,g)$ be a  smooth, connected, complete  Riemmanian manifold with Riemannian distance $\mssd_g$; to every $f \in \Lipb(M, \mssd_g)$ we can associate the functional $f^\trid$ on $\mcM(M)$
\begin{equation}\label{eq:monocyl}
  f^\trid:\mu \to \int_{M} f \,\d \mu.
\end{equation}
By \cite[Proposition 7.18]{LMS18}, we have that 
\begin{equation}\label{eq:lipest}
\begin{aligned}
\frac{\left | f^\trid(\mu)- f^\trid(\mu') \right |}{\HK_{\mssd_g}(\mu, \mu')} &\leq (\Li{f} \vee \|f\|_\infty) \cdot 
\\
&\quad \cdot \sqrt{2+\tfrac{\pi^2}{2}}\, \tparen{\mu M+\mu'M}^{1/2}\comma
\end{aligned}\qquad
\mu \neq \mu' \in \mcM(M)\comma
\end{equation}
so that, if~$f \in \Lipb(M,\mssd_g)$, then~$ f^\trid$ is $\HK_{\mssd_g}$-Lipschitz on every $\HK_{\mssd_g}$-bounded set in~$\mcM(M)$.
If $\mbff=(f_1,\dotsc,f_N)\in
\big(\Lipb(M,\mssd_g)\big)^N$, we denote by 
$\mbff^\trid \eqdef  (f_1^\trid,\dotsc, f_N^\trid)$ the corresponding map
from $\mcM(M)$ to $\R^N$.

We denote by $\rmC_b^m(M)$ the space of $m$-times continuously differentiable functions on~$M$ with bounded derivatives up to order~$m$, and by $\rmC_c^m(M)$ the subset of $\rmC_b^m(M)$ of functions with compact support.

\begin{definition}[Cylinder functions]\label{def:cyl}
We consider several algebras of cylinder functions on~$\mcM(M)$, viz.
\begin{equation}\label{eq:CylinderF}
\FC{m_1,m_2}{\sharp_1,\sharp_2}{m_3}{\sharp_3}(\mcM(M))\eqdef \set{u\colon \mcM(M)\to\R : \begin{gathered} u=(\nchi\circ \car^\trid) \cdot (F\circ \mbff^\trid)\comma
\\
N\in\N\comma \nchi\in \rmC^{m_1}_{\sharp_1}(\R^+_0)\comma
\\
F\in \rmC^{m_2}_{\sharp_2}(\R^N)\comma \mbff\in \tparen{\rmC^{m_3}_{\sharp_3}(M)}^N
\end{gathered}}\comma
\end{equation}
where~$m_1,m_2,m_3\in \N\cup\set{\infty}$ and~$\sharp$ stands for either~$b$ for \emph{bounded} or~$c$ for \emph{compact support}. Further define
\[
\FC{m_1,m_2}{u,\sharp_2}{m_3}{\sharp_3}(\mcM(M))\eqdef \FC{m_1,m_2}{c,\sharp_2}{m_3}{\sharp_3}(\mcM(M)) \oplus \R
\]
to include constant functions.
\end{definition}

\begin{remark}
Note that:
\begin{enumerate}[$(a)$, leftmargin=*]
\item the representation of a cylinder function~$u$ by~$\nchi$,~$F$ and~$\mbff$ is \emph{not} unique;
\item we have the following inclusions
\begin{center}
\begin{tikzcd}[row sep="0pt", column sep= {100pt,between origins}]
&& \FC{1,1}{b,b}{1}{b}(\mcM(M)) 
\\
\FC{\infty,\infty}{c,c}{\infty}{c}(\mcM(M) \ar[r, phantom,"\subsetneq"] & \FC{1,1}{u,b}{1}{b}(\mcM(M)) \ar[ru, phantom,sloped,"\subsetneq"] \ar[rd, phantom,sloped,"\subsetneq"]
\\
&& \Lip_b(\mcM(M), \HK_{\mssd_g})
\end{tikzcd}
\end{center}

\item cylinder functions in~$\FC{m_1,m_2}{b,\sharp_1}{m_3}{\sharp_3}(\mcM(M))$ may be expressed only in terms of~$F$ and~$\mbff$ by replacing~$F$ with~$\nchi\otimes F$ and $\mbff$ with $(\car, f_1, \dotsc, f_N)$;

\item $\FC{\infty,\infty}{c,c}{\infty}{c}(\mcM(M)$ is point-separating, hence so are all algebras in~\eqref{eq:CylinderF};

\item in particular,
\begin{equation}\label{eq:LipSubAlg}
\begin{gathered}
\FC{1,1}{u,b}{1}{b}(\mcM(M)) \text{ is a unital point-separating} 
\\
\text{subalgebra of } \Lip_b(\mcM(M), \HK_{\mssd_g})\fstop
\end{gathered}
\end{equation}
\end{enumerate}
\end{remark}

To shorten the notation we will usually remove the $\mcM(M)$ symbol in front of sets of cylinder functions and write $\FC{m_1,m_2}{\sharp_1,\sharp_2}{m_3}{\sharp_3}$ in place of $\FC{m_1,m_2}{\sharp_1,\sharp_2}{m_3}{\sharp_3}(\mcM(M))$, unless we are considering spaces of cylinder functions on specific manifolds such as~$\R^d$.

The following result is an immediate consequence of the definition of cylinder functions and of the construction in Section \ref{sss:DifferentiationIntro}.

\begin{lemma} For every $u \in \FC{1,1}{b,b}{1}{b}$ and every $\mu \in \mcM(M)$, the gradient $(\boldnabla u)_\mu \in T_\mu\mcM(M)$ as in \eqref{eq:OrthoGrad}  is well defined and, whenever $u= F \circ \mbff^\trid$ for some $N \in \N$, $F \in \rmC_b^1(\R^N)$ and $\mbff= (f_1, \dotsc, f_N) \in (\rmC_b^1(M))^N$, we have
\begin{equation}\label{eq:Fdiff}
(\boldnabla u)_\mu (x) = \sum_{n=1}^N \partial_n F ( \mbff^\trid\mu ) (\nabla f_n(x), f_n(x)) \quad \text{ for $\mu$-a.e.~$x \in \R^d$}.
\end{equation}
\end{lemma}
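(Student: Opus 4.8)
The statement is really a direct computation once one unwinds the definitions of the directional derivatives in \eqref{eq:SingleDirDer} and of the gradient in \eqref{eq:GradRepresentation}. The plan is the following. First, I would reduce to the case $u = F\circ\mbff^\trid$ with $F\in\rmC^1_b(\R^N)$ and $\mbff\in(\rmC^1_b(M))^N$: since every $u\in\FC{1,1}{b,b}{1}{b}$ admits such a representation (absorbing the factor $\nchi\circ\car^\trid$ into $F$, as noted in item $(c)$ of the Remark after Definition~\ref{def:cyl}), and since the claimed formula is independent of the chosen representation (both sides are intrinsically defined), this loses no generality. Then the point is to check that the candidate right-hand side of \eqref{eq:Fdiff}, call it $(w_\mu,a_\mu)$ with
\[
w_\mu(x)=\sum_{n=1}^N \partial_n F(\mbff^\trid\mu)\,\nabla f_n(x),\qquad a_\mu(x)=\sum_{n=1}^N \partial_n F(\mbff^\trid\mu)\, f_n(x),
\]
indeed lies in $T_\mu\mcM(M)$ and represents the directional-derivative functional $(w,a)\mapsto(\partial_{w,a}u)_\mu$ in the sense of \eqref{eq:GradRepresentation}.

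**Key steps.** (1) Membership: $\nabla f_n\in\mfX^\infty_c$ is false in general (the $f_n$ are only $\rmC^1_b$, not compactly supported), but each $\nabla f_n$ is a bounded $\rmC^0$ vector field and each $f_n$ is bounded, so $w_\mu$ and $a_\mu$ have finite $\norm{\cdot}_{T^\hor_\mu}$, $\norm{\cdot}_{T^\ver_\mu}$ norms against the \emph{finite} measure $\mu$; hence $(w_\mu,a_\mu)\in T^\hor_\mu\mcM(M)\oplus T^\ver_\mu\mcM(M)=T_\mu\mcM(M)$ by \eqref{eq:OrthoTangent}. (One should note that the tangent spaces, being completions of $\mfg(M)$, contain these more general bounded sections precisely because $\mu$ is finite — this is the only mildly delicate point of bookkeeping.) (2) Chain rule for the directional derivatives: for fixed $(w,a)\in\mfg(M)$ and $u=F\circ\mbff^\trid$, the curve $t\mapsto u((\psi^w_t,e^{ta}).\mu)=F\big((f_1^\trid,\dots,f_N^\trid)((\psi^w_t,e^{ta}).\mu)\big)$ is differentiable at $t=0$ by the classical chain rule, giving
\[
(\partial_{w,a}u)_\mu=\sum_{n=1}^N\partial_n F(\mbff^\trid\mu)\,(\partial_{w,a}f_n^\trid)_\mu,
\]
so it suffices to treat the monomial case $u=f^\trid$. (3) Monomial case: using the computation already recorded in the excerpt just before \eqref{eq:cylman}, namely $(\partial_{w,a}f^\trid)_\mu=\int_M\big((\diff f)w+f\,a\big)\diff\mu$, and the defining identities \eqref{eq:GradRepresentationhor}–\eqref{eq:GradRepresentationver} together with $\tscalar{\nabla f}{w}_{T^\hor_\mu}=\int_M g(\nabla f,w)\diff\mu=\int_M(\diff f)w\,\diff\mu$ and $\tscalar{f}{a}_{T^\ver_\mu}=\int_M f\,a\,\diff\mu$, one reads off that $(\nabla f^\trid)_\mu=(\nabla f,\,f)$, i.e.\ the $\hor$-component is $\nabla f$ and the $\ver$-component is $f$, which via \eqref{eq:OrthoGrad} is exactly \eqref{eq:Fdiff} for $N=1$. (4) Assemble: linearity of $(w,a)\mapsto(\partial_{w,a}\cdot)_\mu$ and of the Riesz representation combine steps (2) and (3) into the general formula, and the $\norm{\cdot}_{T_\mu}$-boundedness needed for the gradient to exist follows from step (1).

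**Main obstacle.** There is no deep difficulty; the statement is essentially definitional. The one place requiring care is step (1) — verifying that $(w_\mu,a_\mu)$ genuinely belongs to the Hilbert-space completions $T^\hor_\mu\mcM(M)$, $T^\ver_\mu\mcM(M)$ even though $\nabla f_n$ need not be compactly supported and $f_n$ need not be compactly supported. This is handled by observing that $\rmC^1_b(M)$ gradients and $\rmC^0_b(M)$ functions are uniform limits, on the support-relevant scale, of compactly supported ones and that $\mu M<\infty$ makes the relevant integrals finite, so the $T^\sharp_\mu$-norms are finite and $(w_\mu,a_\mu)$ is a bona fide element of the completion; equivalently, one invokes that the pre-Hilbert norms \eqref{eq:normu} extend continuously to all bounded Borel sections against a finite measure. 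A secondary routine check is the interchange of $\diff_t|_{t=0}$ with the integral $\int_M\diff\mu$ in the monomial case, justified by dominated convergence using boundedness of $f$, $\diff f$, $w$, $a$ and finiteness of $\mu$. Once these are noted, the proof is two or three lines of computation.
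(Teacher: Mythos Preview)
Your proposal is correct and matches the paper's approach: the paper itself does not give a proof, stating only that the lemma ``is an immediate consequence of the definition of cylinder functions and of the construction in Section~\ref{sss:DifferentiationIntro}.'' Your unwinding of the definitions via the chain rule and the monomial computation (already recorded before~\eqref{eq:cylman}) is exactly what this sentence points to, and your attention to the membership issue in step~(1) is appropriate bookkeeping.
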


\begin{lemma}\label{le:trunc1} Let $\varsigma \in \rmC_c^\infty([0,+\infty))$ be a non-increasing function such that $\varsigma(r)=1$ if $0\le r \le 1$, $\varsigma(r)=0$ if $r \ge 2$ and $\|\varsigma'\|_\infty \le 2$. Define the continuous functions $u_k: \mcM(M) \to [0,1]$ as
\[
u_k(\mu) \eqdef \varsigma (\mu M/k), \quad \mu \in \mcM(M)\fstop
\]
Then $u_k \in \FC{\infty,\infty}{c,c}{\infty}{c}$, $\|u_k\|_\infty =1$, $u_k(\mu)=1$ if $ \mu M \le k$, $u_k(\mu)=0$ if $\mu M \ge 2k$ and 
\[  \lip_{\HK_{\mssd_g}} u_k(\mu) \le \frac{| \varsigma'(\mu M/k)|}{k} \sqrt{4+\pi^2}\, (\mu M)^{1/2} \le \frac{2 \sqrt{8+2\pi^2}}{\sqrt{k}} \nchi_{\{\mu M \le 2k\}} (\mu).\]
\end{lemma}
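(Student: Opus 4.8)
The plan is to exploit that $u_k$ depends on the measure $\mu$ only through its total mass $\car^\trid\mu=\mu M$: writing $\chi_k\eqdef\varsigma(\emparg/k)$ we have $u_k=\chi_k\circ\car^\trid$. First I would dispose of the elementary claims. Since $\varsigma\in\rmC^\infty_c([0,+\infty))$, also $\chi_k\in\rmC^\infty([0,+\infty))$ with $\supp\chi_k\subseteq[0,2k]$ compact, so $u_k$ is a smooth cylinder function depending only on the total mass (in the notation of \eqref{eq:cylman}, take $\mbff=(\car,f_1)$ with $f_1\equiv0$ and $F\in\rmC_c^\infty(\R^2)$, $F(t_0,t_1)\eqdef\chi_k(t_0)\eta(t_1)$ for some $\eta\in\rmC_c^\infty(\R)$ with $\eta(0)=1$; then $\supp F$ is compact and $F\circ\mbff^\trid=u_k$), hence $u_k\in\FC{\infty,\infty}{c,c}{\infty}{c}$ and in particular $u_k$ is weakly continuous. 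As $\varsigma$ is non-increasing with $\varsigma(0)=1$ and $\varsigma\equiv0$ on $[2,+\infty)$, it takes values in $[0,1]$; thus $0\le u_k\le1$, and since $u_k(0)=\varsigma(0)=1$ we get $\|u_k\|_\infty=1$, while $u_k(\mu)=\varsigma(\mu M/k)=1$ whenever $\mu M\le k$ and $u_k(\mu)=0$ whenever $\mu M\ge2k$, all directly from the properties of $\varsigma$.

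The substantive point is the bound on $\lip_{\HK_{\mssd_g}}u_k$. The key input is the Lipschitz estimate \eqref{eq:lipest} applied to the constant function $f\equiv\car$, for which $\Li{\car}=0$ and $\|\car\|_\infty=1$, giving
\[
\abs{\mu' M-\mu'' M}\le\sqrt{2+\tfrac{\pi^2}{2}}\,\tparen{\mu' M+\mu'' M}^{1/2}\,\HK_{\mssd_g}(\mu',\mu'')\comma\qquad\mu'\neq\mu''\fstop
\]
For $\mu',\mu''$ in a small $\HK_{\mssd_g}$-neighbourhood of a fixed $\mu$, I would combine this with the mean value theorem for $\varsigma$, namely $\abs{u_k(\mu')-u_k(\mu'')}=\abs{\varsigma'(\xi)}\,\abs{\mu' M-\mu'' M}/k$ for some $\xi$ between $\mu' M/k$ and $\mu'' M/k$, and then pass to the $\limsup$ as $\mu',\mu''\to\mu$. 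Since $\car\in\rmC_b(M)$, the map $\eta\mapsto\eta M$ is continuous for the weak (hence the $\HK_{\mssd_g}$-) topology, so $\mu' M,\mu'' M\to\mu M$; therefore $\xi\to\mu M/k$ and $\abs{\varsigma'(\xi)}\to\abs{\varsigma'(\mu M/k)}$ by continuity of $\varsigma'$, while $\tparen{\mu' M+\mu'' M}^{1/2}\to(2\mu M)^{1/2}$. This last factor $\sqrt2$ is exactly what turns $\sqrt{2+\pi^2/2}$ into $\sqrt{4+\pi^2}$, yielding the first inequality $\lip_{\HK_{\mssd_g}}u_k(\mu)\le\frac{\abs{\varsigma'(\mu M/k)}}{k}\sqrt{4+\pi^2}\,(\mu M)^{1/2}$.

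For the second inequality I would use that $\varsigma$ is constant on $[0,1]$ and on $[2,+\infty)$, so $\varsigma'$ is supported in $[1,2]$: if $\varsigma'(\mu M/k)=0$ the inequality is trivial, and otherwise $1\le\mu M/k\le2$, so $\nchi_{\{\mu M\le2k\}}(\mu)=1$ and $(\mu M)^{1/2}\le(2k)^{1/2}$, which together with $\abs{\varsigma'}\le\|\varsigma'\|_\infty\le2$ gives $\frac{\abs{\varsigma'(\mu M/k)}}{k}\sqrt{4+\pi^2}\,(\mu M)^{1/2}\le\frac2k\sqrt{4+\pi^2}\,(2k)^{1/2}=\frac{2\sqrt{8+2\pi^2}}{\sqrt k}$. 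I do not expect any genuine obstacle here: the whole argument reduces to \eqref{eq:lipest} and the elementary calculus of $\varsigma$. The only point demanding a bit of care is that the asymptotic Lipschitz constant is an infimum over shrinking neighbourhoods (equivalently a $\limsup$ over pairs $\mu',\mu''$, not along sequences), so one must use the weak continuity of the mass functional to control $\abs{\varsigma'(\xi)}$ and $\mu' M+\mu'' M$ uniformly on such neighbourhoods before letting them shrink; matching the constant $\sqrt{4+\pi^2}=\sqrt2\,\sqrt{2+\pi^2/2}$ is then just bookkeeping.
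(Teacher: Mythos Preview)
Your proof is correct and follows essentially the same route as the paper's: both rest on applying \eqref{eq:lipest} to $f=\car$ and then exploiting that $u_k$ is a $\rmC^1$ function of the total mass. The paper simply invokes the chain rule $\lip_\sfd(\psi\circ f)=(|\psi'|\circ f)\lip_\sfd f$ for $\psi\in\rmC^1(\R)$ as a known identity, whereas your mean-value-theorem argument (together with the uniform control on shrinking neighbourhoods via weak continuity of the mass) reproves exactly that identity in this instance.
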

\begin{proof} This follows by the definition of $u_k$ and \eqref{eq:lipest} together with the fact that $ \lip_{\sfd} (\psi \circ f) = (|\psi'| \circ f) \lip_\sfd f$ whenever $\psi \in \rmC^1(\R)$.
\end{proof}

\begin{remark}[$2$-integrable variations] 
Let $(\mu_t)_{t \in [0,1]}\in \ac{[0,1];(\mcM(M), \HK_{\sfd_g})}$ and let $(\vv_t, w_t)$ be a vector field in $L^2((0,1)\times M, \mu_I; TM \times \R)$, where $\mu_I\eqdef  \int_0^1 (\mu_t \otimes \delta_t) \d t$, such that
\[
\partial_t \mu_t + \nabla \cdot (\vv_t \mu_t) = w_t  \mu_t  \text{ in }  \mathscr{D}'((0,1)\times M)\fstop
\]
If $u\in \FC{1,1}{b,b}{1}{b}$ then 
\begin{equation}\label{eq:derivative}
    \frac{\d}{\d t} u(\mu_t) = \scalar{(\boldnabla u)_{\mu_t}}{(\vv_t, w_t)}_{T_{\mu_t}\mcM(M)}\quad \text{ for a.e.~} t \in (0,1) \fstop
\end{equation}
This is indeed a simple consequence of the chain rule.
If $\mu \in \mcM(M)$, $(\TT_1, T_2) \in L^2(M, \mu; TM \times \R)$, and we define the curve $\mu_t\eqdef \exp( t\TT_1)_\sharp \tparen{(1+ t T_2)^2 \mu}$, $t \in [0,1]$, then 
\begin{equation}\label{eq:derivative1} 
\lim_{t \downarrow 0} \frac{u(\mu_t)-u(\mu)}{t} = \scalar{(\boldnabla u)_\mu}{(\TT_1, 2T_2 )}_{T_\mu} \fstop
\end{equation}
This follows again by the chain rule and the fact that for every $f \in \rmC_b^1(M)$ we have
\[
\int_M f\, \d \mu_t = \int_M f \d \mu + t \int_M \tparen{\scalar{\nabla f}{\TT_1}_g + 2f T_2} \d \mu + o(t)\fstop
\]
\end{remark}

The following Lemma is a simple extension of \cite[Lemma 4.6]{FSS22} and will be useful in the proof of Proposition \ref{prop:equalityriem}.
\begin{lemma}\label{lem:limit}
   Let $(U, \varrho)$ be a complete and separable metric space
  and let $G:\mcM(U) \times U \to [0, + \infty)$ be a bounded and
  continuous function w.r.t.~the product topology induced by the weak topology and~$\varrho$.
  If
  $(\mu_n)_{n}$ is a sequence in $\mcM(U)$ converging weakly
  to $\mu$ as $n \to + \infty$, then
\[
\lim_{n\to\infty} \int_U G(\mu_n, y)\, \d \mu_n(y) = \int_U G(\mu, y)\, \d \mu(y) \fstop
\]
\end{lemma}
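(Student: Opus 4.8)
The plan is to prove Lemma \ref{lem:limit} by a standard ``uniform convergence on compact sets plus tightness'' argument, exploiting that $(\mu_n)_n$ is a weakly convergent sequence and $G$ is bounded and jointly continuous. First I would record the two basic facts I intend to use: since $\mu_n \rightharpoonup \mu$ in $\mcM(U)$ we have $\mu_n U \to \mu U$ (testing against the constant $\mathds 1$), so $\sup_n \mu_n U =: C < +\infty$, and moreover the family $\set{\mu_n}_n$ is tight in the sense of Prokhorov: for every $\eps > 0$ there is a compact $K_\eps \subset U$ with $\mu_n(U \setminus K_\eps) \le \eps$ for all $n$ (this holds because weakly convergent sequences in $\mcM(U)$, $U$ Polish, are relatively compact, hence tight by Prokhorov's theorem; alternatively it can be deduced directly from $\mu_n \rightharpoonup \mu$).

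The core estimate is to split, for fixed $\eps > 0$ and the associated compact $K = K_\eps$,
\[
\abs{\int_U G(\mu_n,y)\,\d\mu_n(y) - \int_U G(\mu,y)\,\d\mu(y)} \le A_n + B_n + C_n,
\]
where $A_n \eqdef \abs{\int_K G(\mu_n,y)\,\d\mu_n(y) - \int_K G(\mu,y)\,\d\mu_n(y)}$, $B_n \eqdef \abs{\int_K G(\mu,y)\,\d\mu_n(y) - \int_K G(\mu,y)\,\d\mu(y)}$, and $C_n \eqdef \int_{U \setminus K} G(\mu_n,y)\,\d\mu_n(y) + \int_{U\setminus K} G(\mu,y)\,\d\mu(y)$. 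For $C_n$: if $\|G\|_\infty = \Lambda$, then $C_n \le \Lambda(\mu_n(U\setminus K) + \mu(U\setminus K)) \le 2\Lambda\eps$ uniformly in $n$. For $A_n$: here I use joint continuity of $G$. Since $K$ is compact and $\mu_n \rightharpoonup \mu$, the map $y \mapsto G(\cdot, y)$ restricted to $K$ together with the convergence $\mu_n \to \mu$ in the weak topology gives that $\sup_{y \in K} \abs{G(\mu_n,y) - G(\mu,y)} \to 0$ as $n \to \infty$; this follows from the continuity of $G$ on the compact set $(\overline{\set{\mu_n}_n \cup \set\mu}) \times K$ (which is compact, the first factor being weakly compact as a convergent sequence with its limit) and a standard uniform-continuity argument. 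Hence $A_n \le \mu_n U \cdot \sup_{y\in K}\abs{G(\mu_n,y)-G(\mu,y)} \le C \cdot o(1) \to 0$. For $B_n$: the function $y \mapsto G(\mu,y)$ is bounded and $\varrho$-continuous on $U$, hence $y \mapsto G(\mu,y)\mathds 1_K(y)$ is bounded but possibly not continuous; to handle this cleanly I would instead choose $K$ to be, say, a closed ball or note that it suffices to use a bounded continuous function: more precisely, replace the sharp cutoff by applying weak convergence directly to the bounded continuous function $y \mapsto G(\mu,y)$ on all of $U$, getting $\int_U G(\mu,y)\,\d\mu_n(y) \to \int_U G(\mu,y)\,\d\mu(y)$, and then reorganize the split so that $B_n$ is $\abs{\int_U G(\mu,y)\,\d\mu_n - \int_U G(\mu,y)\,\d\mu} \to 0$ with $A_n$, $C_n$ adjusted accordingly (the tail terms are still controlled by $2\Lambda\eps$).

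Putting the pieces together: for every $\eps > 0$, $\limsup_n \abs{\int_U G(\mu_n,\cdot)\,\d\mu_n - \int_U G(\mu,\cdot)\,\d\mu} \le 2\Lambda\eps$ (all other terms vanishing in the limit), and since $\eps$ is arbitrary the limit is zero, which is the claim. The main obstacle, and the only point requiring care, is the term involving the difference $G(\mu_n,y) - G(\mu,y)$ integrated in $y$: one must pass from \emph{separate} continuity and weak convergence $\mu_n \rightharpoonup \mu$ to a \emph{uniform-in-}$y$-on-compacts statement. This is exactly where joint continuity of $G$ is used, together with the fact that a weakly convergent sequence in $\mcM(U)$ (with its limit adjoined) forms a metrizable compact subset of $\mcM(U)$, so that $G$ restricted to the product of this compact set with a compact $K \subset U$ is uniformly continuous. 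This is essentially the content of \cite[Lemma 4.6]{FSS22}, whose proof the present argument extends with only notational changes to accommodate that $U$ is now a general Polish space and $\mu_n, \mu$ need not be probability measures but have uniformly bounded total mass.
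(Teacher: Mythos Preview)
Your proposal is correct and follows essentially the same approach as the paper: tightness for the tail, weak convergence applied to the bounded continuous function $y\mapsto G(\mu,y)$ on all of $U$, and uniform continuity of $G$ on the compact set $(\{\mu_n\}_n\cup\{\mu\})\times K_\eps$ for the remaining term. Your mid-proof reorganization of $B_n$ (replacing the integral over $K$ by the integral over $U$ to avoid the discontinuous cutoff) is exactly the split the paper uses from the outset, so after that correction the two proofs are identical up to bookkeeping.
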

    \begin{proof} Let $\eps>0$ be fixed; since $\mu_n$ converges weakly to $\mu$ the set $\{\mu_n\}_n \cup \{\mu\}$ is uniformly tight and uniformly bounded in mass by \cite[Theorem~8.6.2(ii)]{Bogachev07}.
    We can thus find a compact set $K_\eps \subset U$ and $M_\eps \in \N$ such that, for every $n > M_\eps$, it holds
\[ \sup_n \mu_n(U \setminus K_\eps) < \eps, \quad \mu(U \setminus K_\eps) < \eps, \quad \left | \int_U G(\mu, y) \d \mu_n(y) - \int_U G(\mu, y) \d \mu(y) \right |  < \eps,\]
where the last inequality comes from the weak convergence of $\mu_n$ to $\mu$ and the regularity of $y \mapsto G(\mu, y)$. Let $C_\eps \subset \mcM(U) \times U$ be the compact set defined as
\[
C_\eps\eqdef  \left ( \{\mu_n\}_n \cup \{\mu\} \right ) \times K_\eps \fstop
\]

Recall that $\HK_\varrho$ metrizes the weak topology. Since~$G$ is continuous, it is uniformly continuous on $C_\eps$ so that we can find $\eta_\eps>0$ such that  
\[ \left |G(\mu', y') - G(\mu'', y'') \right | < \eps \qquad \begin{aligned} & \text{for every } (\mu', y'), (\mu'', y'') \in C_\eps\\ & \text{with } \HK_\varrho(\mu', \mu'') + |y'-y''| < \eta_{\eps} \fstop\end{aligned}
\]
Since $\HK_\varrho(\mu_n, \mu)\to0$, we can find $L_\eps>0$ such that
\[
\HK_\varrho(\mu_n, \mu) < \eta_\eps \quad \text{ for every } n > L_\eps \fstop
\]
Let us define $N_\eps\eqdef  M_\eps \vee L_\eps$; then, if $n> N_\eps$,
\begin{align*}
    \Bigg | \int_U G(\mu_n, y) \d \mu_n(y) - \int_U G(\mu, y) &\d \mu(y) \Bigg| 
    \\
    \leq& \abs{ \int_{U} G(\mu_n, y) \d \mu_n(y) - \int_{U} G(\mu, y) \d \mu_n(y) }
    \\
    &+ \abs{ \int_{U} G(\mu, y) \d \mu_n(y)- \int_{U} G(\mu, y)\d \mu(y)  }
    \\
    \leq& \abs{ \int_{K_\eps} G(\mu_n, y) \d \mu_n(y) - \int_{K_\eps} G(\mu, y) \d \mu_n(y) }
    \\
    & + 2\eps \|G\|_\infty + \eps
    \\
    \leq&\ \eps \braket{ \sup_n \mu_nU + 1 + 2\|G\|_\infty}\comma
\end{align*}
where we have used that for every $y \in K_\eps$ we have $(\mu_n,y), (\mu, y) \in C_\eps$ with $ \HK_\varrho(\mu_n, \mu) + |y-y| =  \HK_\varrho(\mu_n, \mu) < \eta_\eps$. This concludes the proof.
\end{proof}

\subsection{Asymptotic Lipschitz constant of cylinder functions}\label{s:ALC-Cylinder}
The following proposition corresponds to \cite[Proposition 4.9]{FSS22} and the proof is quite similar but we still report it here because of a few differences.

\begin{proposition} \label{prop:equalityriem}
Fix~$u\in \FC{1,1}{b,b}{1}{b}$. Then,
\[ \|(\boldnabla u)_\mu\|_{T_\mu^{1,4}} = \lip_{\HK_{\sfd_g}} u (\mu)\comma \qquad \mu \in \mcM(M)\comma
\]
where $\|(\boldnabla u)_\mu\|_{ T_{\mu}^{1,4}}$ is as in \eqref{eq:NormTot4} and $\lip_{\HK_{\mssd_g}}$ is as in \eqref{eq:1}.
\end{proposition}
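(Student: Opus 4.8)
The strategy is to prove the two inequalities separately, exploiting on one side the explicit formula \eqref{eq:Fdiff} for $(\boldnabla u)_\mu$ and the dynamical/variational characterizations of $\HK_{\mssd_g}$, and on the other side the continuity-equation-with-reaction interpretation of $\HK_{\mssd_g}$-absolutely continuous curves recalled in the Remark on $2$-integrable variations. Since $u = F\circ\mbff^\trid$ with $F\in\rmC^1_b(\R^N)$ and $\mbff\in(\rmC^1_b(M))^N$, all the objects below are well-defined, and the key identities \eqref{eq:derivative} and \eqref{eq:derivative1} are available.

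\textbf{Step 1: $\lip_{\HK_{\mssd_g}} u(\mu) \le \|(\boldnabla u)_\mu\|_{T_\mu^{1,4}}$.} Fix $\mu, \mu' \in \mcM(M)$ and let $(\mu_t)_{t\in[0,1]}$ be a constant-speed $\HK_{\mssd_g}$-geodesic from $\mu$ to $\mu'$; by \cite[Theorem 8.5]{LMS18} such a geodesic exists and is represented by a solution $(\mu_t,\vv_t,w_t)$ of the continuity equation with reaction with $\int_0^1\!\int_M(|\vv_t|_g^2 + \tfrac14|w_t|^2)\,\d\mu_t\,\d t = \HK_{\mssd_g}(\mu,\mu')^2$. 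By \eqref{eq:derivative} and Cauchy--Schwarz,
\[
|u(\mu')-u(\mu)| \le \int_0^1 \|(\boldnabla u)_{\mu_t}\|_{T_{\mu_t}^{1,4}}\,\Big(\textstyle\int_M |(\vv_t,w_t)|_\toplus^2\,\d\mu_t\Big)^{1/2}\d t \le \sup_t\|(\boldnabla u)_{\mu_t}\|_{T_{\mu_t}^{1,4}}\cdot \HK_{\mssd_g}(\mu,\mu').
\]
To turn this into the asymptotic Lipschitz bound at $\mu$, I would take $\mu'$ ranging over a small $\HK_{\mssd_g}$-neighbourhood $U$ of $\mu$, note that geodesics from $\mu$ to points of $U$ stay in a slightly larger neighbourhood, and use the continuity of $\nu \mapsto \|(\boldnabla u)_\nu\|_{T_\nu^{1,4}}$ in the weak topology. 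This last continuity is where Lemma \ref{lem:limit} is used: by \eqref{eq:Fdiff}, $\|(\boldnabla u)_\nu\|_{T_\nu^{1,4}}^2 = \int_M |\sum_n \partial_n F(\mbff^\trid\nu)(\nabla f_n, f_n)|_\toplus^2 \d\nu$, and the integrand is of the form $G(\nu, x)$ with $G$ bounded and jointly continuous (the $f_n$, $\nabla f_n$ are continuous and bounded, $F$ and $\partial_n F$ are continuous, and $\nu\mapsto \mbff^\trid\nu$ is weakly continuous), so $\nu\mapsto \|(\boldnabla u)_\nu\|_{T_\nu^{1,4}}^2$ is weakly continuous. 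Shrinking $U$ gives $\inf_{U}\Li[\HK_{\mssd_g},U]{u} \le \|(\boldnabla u)_\mu\|_{T_\mu^{1,4}}$, i.e.\ the desired inequality.

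\textbf{Step 2: $\lip_{\HK_{\mssd_g}} u(\mu) \ge \|(\boldnabla u)_\mu\|_{T_\mu^{1,4}}$.} Here I would test against the explicit first-order variations from \eqref{eq:derivative1}: given $(\TT_1,T_2)\in L^2(M,\mu;TM\times\R)$, the curve $\mu_t\eqdef \exp(t\TT_1)_\sharp((1+tT_2)^2\mu)$ satisfies $\HK_{\mssd_g}(\mu_t,\mu) \le t\,\|(\TT_1,2T_2)\|_{T_\mu}(1+o(1))$ — this upper bound on the $\HK$-distance along the explicit curve follows from the dynamical formulation \eqref{eq:hkintro} by plugging $(\vv_t,w_t)$ associated with the flow of $\TT_1$ and the dilation $T_2$, or can be quoted from \cite{LMS18}. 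Combining with \eqref{eq:derivative1},
\[
\limsup_{t\downarrow 0}\frac{|u(\mu_t)-u(\mu)|}{\HK_{\mssd_g}(\mu_t,\mu)} \ge \frac{|\scalar{(\boldnabla u)_\mu}{(\TT_1,2T_2)}_{T_\mu}|}{\|(\TT_1,2T_2)\|_{T_\mu}},
\]
and since the left-hand side is bounded above by $\lip_{\HK_{\mssd_g}}u(\mu)$, taking the supremum over $(\TT_1,T_2)$ — equivalently over $(w,a)=(\TT_1,2T_2)\in T_\mu\mcM(M)$, a dense set being $\mfg(M)$ — and using the Riesz representation \eqref{eq:GradRepresentation} yields $\lip_{\HK_{\mssd_g}}u(\mu) \ge \|(\boldnabla u)_\mu\|_{T_\mu}$. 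One must check that the norm $\|\cdot\|_{T_\mu}$ appearing here coincides with $\|\cdot\|_{T_\mu^{1,4}}$: this is exactly the content of the factor-$4$ reweighting, since $\|(w,a)\|_{T_\mu}^2 = \|w\|_{T_\mu^\hor}^2 + \|a\|_{T_\mu^\ver}^2$ while under the substitution $a = 2T_2$ the vertical contribution $\int a^2\d\mu$ matches $4\int T_2^2\d\mu$, i.e.\ the pairing \eqref{eq:derivative1} is precisely designed so that $\scalar{(\boldnabla u)_\mu}{(\TT_1,2T_2)}_{T_\mu}$ is the differential in the variation parametrized by $(\TT_1,T_2)$ and the relevant norm on variations is $\|(\TT_1,T_2)\|_{T_\mu^{1,4}} = \|(\TT_1,2T_2)\|_{T_\mu}$. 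Hence $\|(\boldnabla u)_\mu\|_{T_\mu} = \|(\boldnabla u)_\mu\|_{T_\mu^{1,4}}$ by \eqref{eq:NormTot4} and the definition, and Step 2 is complete.

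\textbf{Main obstacle.} The routine-looking parts are genuinely routine here because \eqref{eq:derivative}, \eqref{eq:derivative1} and Lemma \ref{lem:limit} have already been set up. The one delicate point is the upper bound $\HK_{\mssd_g}(\mu_t,\mu) \le t\,\|(\TT_1,2T_2)\|_{T_\mu} + o(t)$ for the specific variation curve in Step 2: one needs that the flow of a general $L^2$ (not necessarily smooth or compactly supported) vector field $\TT_1$ together with the reaction $T_2$ produces an admissible competitor in the dynamical formulation with the stated action, and that the cross-terms in $\int_0^t\!\int_M(|\vv_s|_g^2+\tfrac14|w_s|^2)\d\mu_s\,\d s$ are $o(t)$ as $t\downarrow 0$; this is essentially the statement that the $\HK$-metric speed of this curve at $t=0$ equals $\|(\TT_1,2T_2)\|_{T_\mu}$, which I would either extract from the first-order calculus in \cite{LMS18, LMS22} (the Hellinger--Kantorovich tangent-space theory) or prove directly by an approximation of $(\TT_1,T_2)$ by smooth compactly supported fields, for which the flow estimates are classical. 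The analogous subtlety in Step 1 — that the constant-speed geodesic stays in a controlled neighbourhood of $\mu$ so that the $\sup_t$ can be replaced by the value at $\mu$ after shrinking $U$ — is handled by weak-continuity of $\nu\mapsto\|(\boldnabla u)_\nu\|_{T_\nu^{1,4}}$ together with the fact that $\HK_{\mssd_g}$-geodesics between nearby measures are uniformly close to their endpoints, which follows from $\HK_{\mssd_g}$ being a geodesic distance inducing the weak topology.
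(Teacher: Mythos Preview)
Your overall strategy matches the paper's, but there are two genuine gaps.

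\textbf{Step 1 on a general manifold.} You invoke a continuity-equation-with-reaction representation for $\HK_{\mssd_g}$-geodesics directly on $M$, citing \cite[Theorem~8.5]{LMS18}. However, the vector-field representation of geodesics (what you need to write \eqref{eq:derivative}) is established in \cite{LMS18} only on $\R^d$ (Theorems~8.16--8.17 there), not on manifolds. The paper therefore does \emph{not} argue directly on $M$: it first proves the inequality on $(\R^d,\mssd_e)$ exactly as you outline, and then for a general $M$ uses a Nash isometric embedding $\iota\colon M\to\R^d$, extends the $f_i$ to $\tilde f_i\in\rmC^1_b(\R^d)$ with matching gradients along $\iota(M)$, and combines the monotonicity $\HK_{\mssd_e}\le\HK_{\mssd_g}$ (giving $\lip_{\HK_{\mssd_g}}(\kkappa^*\tilde u)\le\kkappa^*\lip_{\HK_{\mssd_e}}\tilde u$) with the Euclidean result. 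Your direct approach would require a manifold version of the superposition principle for $\HK$, which is not supplied.

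\textbf{Step 2 norm bookkeeping.} Your claimed bound $\HK_{\mssd_g}(\mu_t,\mu)\le t\,\|(\TT_1,2T_2)\|_{T_\mu}(1+o(1))$ is not sharp: the metric speed of the curve $\mu_t=\exp(t\TT_1)_\sharp((1+tT_2)^2\mu)$ at $t=0$ is $\|(\TT_1,T_2)\|_{T_\mu}$ (since $w_0=2T_2$ and the action density is $|\vv|^2+\tfrac14|w|^2$). With your non-sharp denominator the Riesz supremum only yields $\|(\boldnabla u)_\mu\|_{T_\mu}$, and your subsequent claim that $\|(\boldnabla u)_\mu\|_{T_\mu}=\|(\boldnabla u)_\mu\|_{T_\mu^{1,4}}$ is simply false (these differ by the factor $4$ on the vertical part). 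The paper sidesteps all of this: it fixes the single direction $(\TT_1,T_2)=(\boldnabla u)_\mu$, uses the curve $\mu_t=\exp(t\TT_1)_\sharp((1+2tT_2)^2\mu)$ (note the $2t$), and bounds $\HK_{\mssd_g}(\mu,\mu_t)^2$ explicitly via the cone coupling $\aalpha_t=([\id,1],[\exp(t\TT_1),1+2tT_2])_\sharp\mu\in\mfH(\mu,\mu_t)$, obtaining $\HK^2\le t^2\|(\boldnabla u)_\mu\|_{T_\mu^{1,4}}^2+o(t^2)$ and $\tfrac{\d}{\d t}\big|_{t=0}u(\mu_t)=\|(\boldnabla u)_\mu\|_{T_\mu^{1,4}}^2$ directly.
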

\begin{proof} We prove separately the two inequalities.
We start from $\lip_{\HK_{\mssd_g}} u (\mu) \le \|(\boldnabla u)_\mu\|_{T_\mu^{1,4}}$ and we first prove it on~$(M,\mssd_g)=(\R^d,\mssd_e)$.  Let $\mu \in \mcM(\R^d)$ and let $(\mu'_n, \mu''_n) \in \mcM(\R^d)\times \mcM(\R^d)$ with $\mu'_n \ne \mu''_n$ be such that $(\mu'_n, \mu''_n) \to (\mu, \mu)$ in $\HK_{\mssd_e}$ and
\[ \lim_n \frac{ \left |u(\mu'_n) - u(\mu''_n) \right |}{\HK_{\mssd_e}(\mu'_n, \mu''_n)} = \lip_{\HK_{\mssd_e}} u (\mu).\]
Let $(\mu^n_t)_{t \in [0,1]}$ be constant speed geodesics (cf.~\cite[Proposition 8.3]{LMS18}) connecting $\mu'_n$ to $\mu''_n$. By \cite[Theorems 8.16, 8.17]{LMS18} we can find Borel vector fields $(\vv^n_t, w^n_t) \in L^2((0,1)\times \R^d, \mu^n_I; \R^d \times \R)$, where $\mu^n_I\eqdef  \int_0^1 (\mu^n_t \otimes \delta_t) \d t$, such that 
\[ \partial_t \mu^n_t + \nabla \cdot (\vv^n_t \mu^n_t) = w_t^n \text{ in }  \mathscr{D}'((0,1)\times \R^d)\]
and 
\[ \int_{\R^d} \left ( |\vv^n_t|^2 +\tfrac{1}{4}|w_t^n|^2 \right ) \d \mu^n_t = |\dot \mu^n_t|^2 \quad \text{ for a.e.~} t \in (0,1)\comma
\]
where $|\dot \mu^n_t|$ denotes the $\HK_{\mssd_e}$-metric speed of the curve $(\mu^n_t)_{t \in [0,1]}$ at time $t$.
We have
\begin{align*}
\left |u(\mu'_n) - u(\mu''_n) \right |&= \left |\int_0^1 \scalar{ (\boldnabla u)_{\mu^n_t}}{(\vv^n_t, w^n_t)}_{T_{\mu^n_t}} \d t \right |
\\
 &\le \braket{ \int_0^1 \norm{(\boldnabla u)_{\mu^n_t}}_{T_{\mu^n_t}^{1,4}}^2 \diff t }^{\frac{1}{2}}  \braket{ \int_0^1 \int_{\R^d} \left (|\vv^n_t|^2 + \tfrac{1}{4}|w_t^n|^2 \right ) \,\d \mu^n_t \,\d t }^{\frac{1}{2}}
 \\
&= \HK_{\mssd_e}(\mu'_n, \mu''_n) \braket{ \int_0^1 \norm{(\boldnabla u)_{\mu^n_t}}_{T_{\mu^n_t}^{1,4}}^2 \diff t }^{\frac{1}{2}} \comma
\end{align*}
where the first equality holds integrating~\eqref{eq:derivative}.
Dividing both sides by $\HK_{\mssd_e}(\mu'_n, \mu''_n)$, we obtain
\[
\frac{\left |u(\mu'_n) - u(\mu''_n) \right |}{\HK_{\mssd_e}(\mu'_n, \mu''_n)} \le \braket{ \int_0^1 \norm{(\boldnabla u)_{\mu^n_t}}_{T_{\mu^n_t}^{1,4}}^2 \diff t }^{\frac{1}{2}} \fstop
\]
Observe that $\mu^n_t\to \mu$ in $\mcM(\R^d)$ for every $t\in [0,1]$.
We can then let~$n \to \infty$ in the above inequality using the Dominated Convergence Theorem and Lemma \ref{lem:limit} with
\[
G(\mu, x)\eqdef  \abs{ (\boldnabla u)_{\mu}(x) }_\toplus^2\comma \qquad (\mu,x) \in \mcM(\R^d)\times \R^d\comma
\]
where~$\abs{\emparg}_\toplus$ is defined immediately before~\eqref{eq:NormTot4}.
We hence get 
\begin{align*}
    \lip_{\HK_{\mssd_e}} u (\mu) &\le \braket{ \int_0^1 \norm{(\boldnabla u)_{\mu}}_{T_\mu^{1,4} }^2 \diff t }^{\frac{1}{2}}  =
                 \|(\boldnabla u)_{\mu}\|_{T_\mu^{1,4} } \fstop
\end{align*}

We now turn to the case of a general manifold~$M$.
Let $\iota: M \to \R^d$ be a smooth Nash embedding, let $\iiota\eqdef \iota_\sharp$ and let $\iiota^*$ be its pullback. By definition, $u= F \circ \mbff$ for some $F \in \rmC_b^1(\R^N)$, $\mbff=(f_1, \dots, f_N) \in (\rmC_b^1(M))^N$ and $N \in \N$.
For every $i \in \{1, \dots, N\}$ let us construct~$\tilde{f}_i \in \rmC_b^1(\R^d)$ such that 
\[
f_i = \tilde{f}_i \circ \iota\comma \qquad (\diff\iota)_p (\nabla^g f_i)_p = (\nabla\tilde{f}_i)_{\iota(p)}\comma \quad p\in M \fstop
\]
Indeed, by e.g.~\cite[Prop.~7.26, p.~200]{ONe83},~$\iota(M)$ has a smooth tubular neighborhood with (smooth) variable radius~$\eps\colon x\to \R^+$, locally isometric to its trivialization~$\bigcup_x \set{x}\times B^{T_x^\perp\iota(M)}_{\eps(x)}$.
Let~$\nchi\in \rmC^\infty(\R^+\times \R^+_0)$ with~$\nchi(\eps,\emparg)\equiv 1$ on~$[0,\eps/2)$ and~$\nchi(\eps,\emparg)\equiv 0$ on~$[\eps,\infty)$ for every~$\eps\in\R^+$.
Finally, set
\[
\tilde f_i(y)\eqdef  \begin{cases} f_i(x) \chi\tparen{\eps(x),\abs{x-y}} & \text{if } y\in B^{T_x^\perp\iota(M)}_{\eps(x)} \text{for some~$x\in \iota(M)$} 
\\
0 & \text{otherwise}
\end{cases}\comma \qquad y\in \R^d\fstop
\]
It is readily seen that~$\tilde f_i$ is a smooth function on~$\R^d$, and that, letting~$x=\iota(p)$,
\begin{align*}
(\nabla\tilde f_i)_x=&\ (\nabla^{\iota_*g}\tilde f_i)_x\oplus (\nabla^\perp \tilde f_i)_x
\\
=&\ \tparen{f_i(x)\, \tparen{\nabla^{\iota_*g} \nchi\tparen{\eps(\emparg),0}}_x + (\nabla^{\iota_*g}\tilde f_i)_x \nchi\tparen{\eps(x),0}} \oplus (\nabla^\perp \tilde f_i)_x 
\\
=&\ (\nabla^{\iota_*g}\tilde f_i)_x \oplus \tparen{\nabla^\perp\nchi\tparen{\eps(x,\emparg}}_x =(\nabla^{\iota_*g}\tilde f_i)_x= (\diff\iota)_p(\nabla^gf_i)_p \fstop
\end{align*}

Now, set~$\tilde{u} \eqdef F \circ \tilde{\mbff}^\trid \in \FC{1,1}{b,b}{1}{b}(\mcM(\R^d))$, and note that~$u= \iiota^*\tilde{u}$ on~$\mcM(M)$.

On the hand, since~$\iota\colon (M,\mssd_g)\to (\R^d,\mssd_e)$ is non-expansive, and since~$\mssd\mapsto\HK_{\mssd}$ is a monotone assignment,
\begin{align*}
\lip_{\HK_{\sfd_g}} u (\mu) 
&\leq \lip_{\HK_{\sfd_e}} \tilde{u} (\iiota \mu) =  \|(\boldnabla \tilde u)_{\iiota\mu}\|_{ T_{\iiota\mu}^{1,4}}.
\end{align*}
On the other hand, since~$\iota\colon (M,g)\to (\iota(M),\iota_*g)$ is a Riemannian isometry, 
\begin{align*}
\|(\boldnabla \tilde u)_{\iiota\mu}\|^2_{ T_{\iiota\mu}^{1,4}} &= 
\|(\boldnabla u)_{\mu}\|^2_{ T_{\iiota\mu}^{1,4}}  \fstop
\end{align*}
This proves the first inequality in the general case.

 \medskip

To prove the other inequality we fix $\mu \in \mcM(M)$ and we consider the curve
\[
\mu_t\eqdef (\exp(t\TT_1))_\sharp((1+ 2tT_2)^2 \mu) \comma \quad t \in [0,1]\comma
\]
where 
\[
(\TT_1(x), T_2(x))= (\boldnabla u)_{\mu}(x)\comma \qquad x \in M\fstop
\] 
It is not difficult to check that
\[
\aalpha_t\eqdef \tparen{[\text{id}_M, 1], [\exp(t\TT_1), 1+ 2tT_2]}_\sharp \mu  \in \mcM(\f{C}(M, M))
\]
belongs to $\f{H}(\mu, \mu_t)$ (cf.~\eqref{eq:defhk}) so that we can estimate
\begin{align*}
    \HK_{\mssd_g}&(\mu, \mu_t)^2 \leq
    \\
    &\leq\int_M (\mssd_g)_{\pi, \f{C}}\tparen{[\text{id}_M, 1], [\exp(t\TT_1), 1+ 2tT_2]}^2 \d \mu
    \\
    &= \int_M \braket{ 4 t^2 |T_2(x)|^2 + 4(1+t 2T_2(x)) \sin^2 \left ( \frac{ \mssd_g\tparen{x, \exp_x(t\TT_1(x)} \wedge \pi}{2} \right ) } \d \mu(x)
    \\
    &\leq \int_M \braket{4t^2 |T_2(x)|^2 + (1+t 2T_2(x))\, \mssd_g^2\tparen{x, \exp_x(t\TT_1(x)} } \d \mu(x)
    \\
    &\leq \int_M \braket{4t^2 |T_2(x)|^2 + (1+t 2T_2(x))\, \abs{t\TT_1(x)}^2 } \d \mu(x)
    \\
    & \le t^2  \| (\boldnabla u)_{\mu} \|^2_{T_\mu^{1,4}} + o(t^2) \quad \text{ as } t \downarrow 0,
\end{align*}
where we have used the uniform boundedness of $(\TT_1,T_2)$ and \cite[Formula (7.4)]{LMS18}.
On the other hand, by \eqref{eq:derivative1}, we get that
\[
\lim_{t \downarrow 0} \frac{u(\mu_t)-u(\mu)}{t} = \scalar{(\boldnabla u)_{\mu}}{(\TT_1, 4  T_2)}_{T_\mu} = \| (\boldnabla u)_{\mu}\|^2_{T_\mu^{1,4}}\fstop
\]
Thus
\begin{align*}
\lip_{\HK_{\mssd_g}} u (\mu) &\geq \limsup_{t \downarrow 0} \frac{u(\mu_t)-u(\mu) }{\HK_{\mssd_g}(\mu, \mu_t)}
\\
&\geq \limsup_{t \downarrow 0}\frac{u(\mu_t)-u(\mu) }{t} \frac{t}{\HK_{\mssd_g}(\mu, \mu_t)} \ge \|(\boldnabla u)_{\mu}\|_{T_\mu^{1,4}} \fstop \qedhere
\end{align*}
\end{proof}

As a final remark we compute the asymptotic Lipschitz constant of cylinder functions w.r.t.~the distances $\He_2$ and $\W_{2, \sfd_g}$.

\begin{proposition}\label{prop:equalityhew} Let $u \in \FC{1,1}{b,b}{1}{b}$ and $\mu \in \mcM(M)$. Then
\[
\lip_{\W_{2, \sfd_g}}^{\tau_w} u (\mu) = \| (\gradW u)_{\mu}\|_{T_\mu^{1,4}}\comma \qquad \lip_{\He_2}^{\tau_w} u (\mu) = \| (\gradH u)_{\mu}\|_{T_\mu^{1,4}}\comma
\]
where $\gradW$ and $\gradH$ are as in \eqref{eq:GradRepresentationhor}  and \eqref{eq:GradRepresentationver}, respectively and~$\tau_w$ is the weak topology on~$\mcM(M)$.
\end{proposition}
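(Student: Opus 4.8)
The plan is to follow the scheme of the proof of Proposition~\ref{prop:equalityriem}, replacing the full Hellinger--Kantorovich dynamics by its two ``pure'' components: for $\W_{2,\sfd_g}$ the continuity equation $\partial_t\mu_t+\nabla\cdot(\vv_t\mu_t)=0$ with no reaction term, and for $\He_2$ the reaction equation $\partial_t\mu_t=w_t\mu_t$ with no drift, whose optimal curves are, respectively, the usual $\W_{2,\sfd_g}$-geodesics and the curves obtained by linear interpolation of square-root densities. Since $(\gradW u)_\mu$ is purely horizontal and $(\gradH u)_\mu$ is purely vertical, \eqref{eq:NormTot4} gives $\norm{(\gradW u)_\mu}_{T_\mu^{1,4}}=\norm{(\gradW u)_\mu}_{T^\hor_\mu}$ and $\norm{(\gradH u)_\mu}_{T_\mu^{1,4}}=2\norm{(\gradH u)_\mu}_{T^\ver_\mu}$, so it suffices to prove $\lip^{\tau_w}_{\W_{2,\sfd_g}}u(\mu)=\norm{(\gradW u)_\mu}_{T^\hor_\mu}$ and $\lip^{\tau_w}_{\He_2}u(\mu)=2\norm{(\gradH u)_\mu}_{T^\ver_\mu}$, each by a lower and an upper bound. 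Throughout one may assume $u=F\circ\mbff^\trid$ with $F\in\rmC^1_b(\R^{N+1})$ and $\mbff=(f_0,\dots,f_N)$, $f_0\equiv\car$, $f_i\in\rmC^1_b(M)$ (absorbing the factor $\nchi\circ\car^\trid$ into $F$); then by \eqref{eq:Fdiff} one has $(\gradW u)_\mu=\sum_n\partial_nF(\mbff^\trid\mu)\nabla f_n$ and $(\gradH u)_\mu=\sum_n\partial_nF(\mbff^\trid\mu)f_n$, both uniformly bounded, and the maps $\nu\mapsto\norm{(\gradW u)_\nu}^2_{T^\hor_\nu}=\sum_{n,m}\partial_nF(\mbff^\trid\nu)\,\partial_mF(\mbff^\trid\nu)\,\big(g(\nabla f_n,\nabla f_m)\big)^\trid\nu$ and $\nu\mapsto\norm{(\gradH u)_\nu}^2_{T^\ver_\nu}=\sum_{n,m}\partial_nF(\mbff^\trid\nu)\,\partial_mF(\mbff^\trid\nu)\,(f_nf_m)^\trid\nu$ are weakly continuous and bounded on sets of bounded mass.

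For the lower bounds I would exhibit an explicit one-parameter curve in the relevant direction. For $\W_{2,\sfd_g}$, set $\TT_1\eqdef(\gradW u)_\mu$ and $\mu_t\eqdef(\exp(t\TT_1))_\sharp\mu$: this converges weakly to $\mu$ as $t\downarrow0$ (hence eventually lies in every weak neighbourhood of $\mu$), the coupling $(\id_M,\exp(t\TT_1))_\sharp\mu$ yields $\W_{2,\sfd_g}(\mu,\mu_t)^2\le\int_M\sfd_g(x,\exp_x(t\TT_1(x)))^2\,\d\mu\le t^2\norm{\TT_1}^2_{T^\hor_\mu}$, and \eqref{eq:derivative1} with $T_2=0$ gives $\lim_{t\downarrow0}t^{-1}(u(\mu_t)-u(\mu))=\scalar{(\gradW u)_\mu}{\TT_1}_{T^\hor_\mu}=\norm{(\gradW u)_\mu}^2_{T^\hor_\mu}$; dividing and taking $\limsup_{t\downarrow0}$ yields $\lip^{\tau_w}_{\W_{2,\sfd_g}}u(\mu)\ge\norm{(\gradW u)_\mu}_{T^\hor_\mu}$ (the case $(\gradW u)_\mu=0$ being trivial). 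For $\He_2$, set $T_2\eqdef2(\gradH u)_\mu$ and $\mu_t\eqdef(1+t T_2)^2\mu$, well defined for $|t|$ small: again $\mu_t\to\mu$ weakly, a direct computation with dominating measure $\eta=\mu$ gives $\He_2(\mu,\mu_t)=|t|\,\norm{T_2}_{T^\ver_\mu}=2|t|\,\norm{(\gradH u)_\mu}_{T^\ver_\mu}$, and \eqref{eq:derivative1} gives $\lim_{t\downarrow0}t^{-1}(u(\mu_t)-u(\mu))=2\scalar{(\gradH u)_\mu}{T_2}_{T^\ver_\mu}=4\norm{(\gradH u)_\mu}^2_{T^\ver_\mu}$, whence $\lip^{\tau_w}_{\He_2}u(\mu)\ge2\norm{(\gradH u)_\mu}_{T^\ver_\mu}$.

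For the upper bounds I would take sequences $\mu'_n\ne\mu''_n$ converging weakly to $\mu$ and realizing the asymptotic Lipschitz constant, and pass once and for all to a subsequence along which $\W_{2,\sfd_g}(\mu'_n,\mu''_n)$ (resp.\ $\He_2(\mu'_n,\mu''_n)$) converges to some $\ell\in[0,+\infty]$. If $\ell>0$, weak continuity of $u$ forces $|u(\mu'_n)-u(\mu''_n)|\to0$ while the denominators stay bounded away from $0$, so the difference quotients tend to $0$ and the bound is trivial. If $\ell=0$, I would join $\mu'_n$ to $\mu''_n$ by a constant-speed $\W_{2,\sfd_g}$-geodesic $(\mu^n_t)$ (resp.\ by the $\He_2$-geodesic obtained from linear interpolation of $\sqrt{\d\mu'_n/\d\eta_n}$ and $\sqrt{\d\mu''_n/\d\eta_n}$, with $\eta_n\eqdef\mu'_n+\mu''_n$), whose Benamou--Brenier velocity field is $(\vv^n_t,0)$ with $\int_M|\vv^n_t|^2_g\,\d\mu^n_t=|\dot\mu^n_t|^2$ (resp.\ $(0,w^n_t)$ with $\tfrac14\int_M|w^n_t|^2\,\d\mu^n_t=|\dot\mu^n_t|^2$); integrating the chain rule \eqref{eq:derivative} and applying Cauchy--Schwarz gives
\[
\abs{u(\mu'_n)-u(\mu''_n)}\le\W_{2,\sfd_g}(\mu'_n,\mu''_n)\,\paren{\int_0^1\norm{(\gradW u)_{\mu^n_t}}^2_{T^\hor_{\mu^n_t}}\d t}^{1/2}
\]
in the horizontal case, and the analogous inequality with $2\,\He_2(\mu'_n,\mu''_n)$ and $\norm{(\gradH u)_{\mu^n_t}}_{T^\ver_{\mu^n_t}}$ in the vertical case. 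Since $\ell=0$, the geodesics $\mu^n_t$ stay within $\W_{2,\sfd_g}$-distance $\W_{2,\sfd_g}(\mu'_n,\mu''_n)\to0$ (resp.\ within comparable total-variation distance) of $\mu'_n$, hence converge weakly to $\mu$ for each fixed $t$; then Lemma~\ref{lem:limit}, applied to the bounded weakly continuous integrands identified in the first paragraph, together with dominated convergence in $t$, lets me pass to the limit in the right-hand side and conclude the matching upper bound.

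The main obstacle is precisely the mismatch between the topology in which closeness to $\mu$ is measured — the weak topology $\tau_w$, in which the asymptotic slope is computed — and the distances $\W_{2,\sfd_g}$ and $\He_2$ appearing in the quotient, which induce strictly finer topologies and, for $\W_{2,\sfd_g}$, may even be $+\infty$: unlike in Proposition~\ref{prop:equalityriem}, two measures weakly close to $\mu$ need not be close in $\W_{2,\sfd_g}$ or $\He_2$. The dichotomy above ($\ell>0$ versus $\ell=0$) is what circumvents this; it rests on the weak continuity of cylinder functions and on the elementary facts that $\W_{2,\sfd_g}$-convergence and $\He_2$-convergence of sequences of uniformly bounded mass both imply weak convergence, so that along the $\ell=0$ subsequence the interpolating geodesics remain weakly near $\mu$. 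The passage to a general manifold $M$ requires no Nash-embedding detour here, since the Benamou--Brenier representation of $\W_{2,\sfd_g}$-geodesics and the square-root interpolation for $\He_2$ are available directly on $M$, and the explicit formulae for $(\gradW u)_\mu$ and $(\gradH u)_\mu$ make the weak continuity of the relevant integrands manifest.
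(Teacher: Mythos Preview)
Your proof is correct and follows the same overall strategy as the paper's: chain rule along interpolating curves plus Cauchy--Schwarz for the upper bound, and an explicit one-parameter deformation for the lower bound, with Lemma~\ref{lem:limit} supplying the limit passage.

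The one genuine difference is how you handle the mismatch between $\tau_w$ and the distances. The paper, for $\W_{2,\sfd_g}$, explicitly constructs realizing sequences with compact support (hence finite Wasserstein distance) by perturbing with small Dirac masses, and then proceeds directly; for $\He_2$ it simply asserts $\mu^n_t\weakto\mu$ without further comment. Your dichotomy on $\ell=\lim_n\sfd(\mu'_n,\mu''_n)$ is cleaner and more uniform: when $\ell>0$, weak continuity of $u$ kills the quotient; when $\ell=0$, the interpolations inherit weak convergence to $\mu$ from the endpoints via the respective distance. This second point is exactly what is needed to justify the paper's bare assertion $\mu^n_t\weakto\mu$ in the Hellinger case (which is \emph{false} without $\ell=0$: take $\mu'_n,\mu''_n$ mutually singular Diracs both converging weakly to $\mu$), so your argument is in fact more carefully stated there. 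What the paper's explicit construction buys in the Wasserstein case is that one never has to discuss $\W_{2,\sfd_g}=+\infty$ at all, whereas you absorb that into the $\ell=+\infty$ branch of the dichotomy; both are fine.
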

\begin{proof}
We treat separately the two distances.

\paragraph{The extended Wasserstein case} The proof is very similar to the one of \cite[Proposition 4.7]{S22}, we only have to take care that the distance can also attain also the value $+\infty$. To show the first inequality $\lip_{\W_{2, \sfd_g}}^{\tau_w} u (\mu) \le \| (\gradW u)_{\mu}\|_{T_\mu^{1,4}}$ we observe that for every $\mu \in \mcM(M)$, we can find a sequence $(\mu'_n, \mu''_n) \in \mcM(M) \times \mcM(M)$ such that $\mu'_n \ne \mu''_n$, $\W_{2, \sfd_g}(\mu'_n, \mu''_n) < + \infty$ for every $n \in \N$ and
\[
\mu'_n, \mu''_n \weakto \mu, \quad \lip_{\W_{2, \sfd_g}}^{\tau_w} u (\mu) = \lim_{n \to + \infty} \frac{ |u(\mu'_n)-u(\mu''_n)|}{\W_{2, \sfd_g}(\mu'_n, \mu''_n)} \quad \text{ as } n \to + \infty \fstop
\]
This is because $\tau_w$ is metrized by $\HK_{\mssd_g}$ and for every $r>0$ we can always find $\mu', \mu'' \in \rmB_{\HK_{\mssd_g}}(\mu, r)$ such that $\mu' \ne \mu''$ and $\W_{2, \sfd_g}(\mu', \mu'')< + \infty$. Indeed it is enough to consider three distinct points $x_0, x_1, x_2 \in M$ and note that 
\[ 
\mu'_\eps \eqdef \mu \mres \rmB_{\mssd_g}(x_0, 1/\eps) + \eps \delta_{x_1}\comma \quad  \mu''_\eps \eqdef \mu \mres \rmB_{\mssd_g}(x_0, 1/\eps) + \eps \delta_{x_2}
\]
weakly converge to $\mu$ as $\eps \downarrow 0$, they are different and compactly supported non-negative measures with the same positive total mass. Since $W_2^2(\mu'_n, \mu''_n)< +\infty$ we can find $\ggamma^n \in \Cpl(\mu'_n, \mu''_n)$ such that
\[
\int_{M \times M} \mssd^2_g \de \ggamma^n = W_2^2(\mu'_n, \mu''_n) \quad \text{ for every } n \in \N \fstop
\]
Let $G\colon M \times M \to \rmC([0,1]; M)$ be a (universally measurable) map associating to every pair of points $(x_0,x_1) \in M \times M$ the curve $G(x_0,x_1)=(\gamma_t^{x_0,x_1})_{t \in [0,1]}$ which is a constant speed geodesic connecting $x_0$ to $x_1$. Finally let $\mu^n_t\eqdef  (e_t \circ G)_\sharp \ggamma^n$, $t \in [0,1]$, where $e_t: \rmC([0,1];M) \to M$ is the evaluation map sending a curve $(\gamma_t)_{t \in [0,1]}$ to its value at time $t$, $\gamma_t \in M$. It is not difficult to check that
\[ \frac{\de}{\de t} u(\mu_t^n) = \int_{M \times M} g\tparen{(\gradW u)_{\mu_t^n}(\gamma_t^{x,y}), \dot{\gamma}^{x,y}_t } \de \ggamma^n(x,y) \quad \text{ for a.e.~$t \in (0,1)$}, \]
so that
\begin{align*}
\left |u(\mu'_n) - u(\mu''_n) \right |&= \left |\int_0^1 \int_{M \times M} g\tparen{(\gradW u)_{\mu_t^n}(\gamma_t^{x,y}), \dot{\gamma}^{x,y}_t } \de \ggamma^n(x,y) \d t \right |
\\
 &\le  \braket{ \int_0^1 \norm{(\gradW u)_{\mu^n_t}}_{T^\hor{\mu^n_t}}^2 \,\d t }^{\frac{1}{2}}
 \cdot \braket{ \int_{M \times M} \mssd_g^2 \de \ggamma^n }^{\frac{1}{2}}
 \\
&= \W_{2, \mssd_g}(\mu'_n, \mu''_n)  \braket{ \int_0^1 \norm{(\gradW u)_{\mu^n_t}}_{T^\hor{\mu^n_t}}^2 \,\d t }^{\frac{1}{2}} \fstop
\end{align*}
Dividing by $\W_{2, \mssd_g}(\mu'_n, \mu''_n)$ and passing to the limit as $n\to+\infty$ leads to the sought inequality, also using Lemma~\ref{lem:limit} with
\[
G(\mu, x)\eqdef  \abs{ (\gradW u)_{\mu}(x) }^2, \quad (\mu,x) \in \mcM(M)\times M \fstop
\]
The other inequality is obtained similarly to the one in Proposition \ref{prop:equalityriem} using the curve $\mu_t\eqdef  \exp(t\TT)_\sharp \mu$ and the plan $\ggamma_t\eqdef  (\id_M, \exp(t \TT)) \in \Cpl(\mu, \mu_t)$, where $\TT(x) = (\nabla^\hor u)_\mu(x)$.

\paragraph{The Hellinger case} The proof is again a modification of \cite[Proposition 4.7]{S22} but we reproduce the argument in full. Since $\tau_w$ is metrizable (e.g.~by $\HK_{\mssd_g}$), we can find $(\mu'_n, \mu''_n) \in \mcM(M)\times \mcM(M)$ with $\mu'_n \ne \mu''_n$  such that $\mu'_n, \mu''_n \weakto (\mu, \mu)$ and
\[
\lip_{\He_2}^{\tau_w} u (\mu)  = \lim_n \frac{ \left |u(\mu'_n) - u(\mu''_n) \right |}{\He_2(\mu'_n, \mu''_n)} \fstop
\]
For every $n \in \N$, let $\eta_n \in \mcM(M)$ be such that $\mu'_n, \mu''_n \ll \eta_n$ and let us define for every $t \in [0,1]$ the quantities
\begin{align*}
\rho'_n &\eqdef \frac{\de \mu'_n}{\de \eta_n}\comma & \rho''_n &\eqdef \frac{\de \mu''_n}{\de \eta_n}\comma & \rho^t_n &\eqdef \braket{(1-t) \sqrt{\rho'_n} + t \sqrt{\rho''_n}}^2\comma
\\
w_t &\eqdef 2 \frac{ \sqrt{\rho''_n}-\sqrt{\rho'_n}}{\sqrt{\rho^t_n}}\comma & \mu^t_n &\eqdef \rho^t_n \eta_n\fstop
\end{align*}
It is not difficult to check that $t \mapsto \mu^t_n$ is weakly continuous, $\mu^0_n = \mu'_n$, $\mu^1_n = \mu''_n$, $\partial_t \mu^t_n = w_n^t \mu^t_n$ in the sense of distributions in $(0,1)\times M$ and 
\begin{equation*}
\begin{aligned}
4\, \He_2(\mu'_n, \mu''_n)^2 &= \int_0^1 \int_{\R^d} |w_t^n|^2 \de \mu^n_t \de t\comma
\\
\frac{\de}{\de t} u(\mu^t_n) &= \int_{\R^d} (\gradH u)_{\mu^n_t} w^n_t \d \mu^n_t\comma 
\end{aligned}
\qquad \text{ for a.e.~} t \in (0,1) \fstop
\end{equation*}
We thus have
\begin{align*}
\left |u(\mu'_n) - u(\mu''_n) \right |&= \left |\int_0^1 \int_{\R^d} (\gradH u)_{\mu^n_t} w^n_t \d \mu^n_t \d t \right |
\\
 &\le  \braket{ \int_0^1 4  \norm{(\gradH u)_{\mu^n_t}}_{T^\ver_{\mu^n_t}}^2 \,\d t }^{\frac{1}{2}}
 \cdot \braket{ \int_0^1 \int_{M} \left ( \tfrac{1}{4}|w^t_n|^2 \right ) \,\d \mu^n_t \,\d t }^{\frac{1}{2}}
 \\
&= \He_2(\mu'_n, \mu''_n)  \braket{ \int_0^1  4  \norm{(\gradH u)_{\mu^n_t}}_{T^\ver_{\mu^n_t}}^2 \,\d t }^{\frac{1}{2}} \fstop
\end{align*}
Dividing both sides by $\He_2(\mu'_n, \mu''_n)$, we obtain
\[ \frac{\left |u(\mu'_n) - u(\mu''_n) \right |}{\He_2(\mu'_n, \mu''_n)} \le  \braket{  4 \int_0^1 \norm{(\gradH u)_{\mu^n_t}}_{T^\ver_{\mu^n_t}}^2 \,\d t }^{\frac{1}{2}}\fstop
\]
Observe that $\mu^t_n\weakto \mu$ for every $t\in [0,1]$, so that we can pass to the limit as $n \to + \infty$ the above inequality using the Dominated Convergence Theorem and Lemma~\ref{lem:limit} with
\[
G(\mu, x)\eqdef 4 \abs{ (\gradH u)_{\mu}(x) }^2, \quad (\mu,x) \in \mcM(M)\times M \fstop
\]
We hence get 
\begin{align*}
    \lip^{\tau_w}_{\He_2} u (\mu) &\le 
    2   \norm{(\gradH u)_\mu}_{T^\ver_\mu} =  \norm{(\gradH u)_\mu}_{T_\mu}  \fstop
\end{align*}
To prove the other inequality we fix $\mu \in \mcM(M)$ and we consider the curve
\[
\mu_t\eqdef (1+ 2tT)^2 \mu , \quad t \in [0,1]\comma
\]
where 
\[
T(x)= (\gradH u)_{\mu}(x), \quad x \in M \fstop
\] 
We can estimate, for $t$ sufficiently small, that
\[
    \He_2(\mu, \mu_t)^2 = \int_M \braket{\sqrt{(1+2tT)^2}-\sqrt{1} }^2 \de \mu=t^2 \int 4T^2 \de \mu = 4  t^2 \norm{(\gradH u)_\mu}_{T^\ver_\mu}^2 \fstop
\]
On the other hand it is not difficult to see that
\[
\lim_{t \downarrow 0} \frac{u(\mu_t)-u(\mu)}{t} = 4\int_M (\gradH u)_{\mu} T\d \mu = 4 \norm{(\gradH u)_\mu}_{T^\ver_\mu}^2 \fstop
\]
Thus
\[
\lip_{\He_2}^{\tau_w} u (\mu) \ge \limsup_{t \downarrow 0}\frac{u(\mu_t)-u(\mu) }{t} \frac{t}{\He_2(\mu, \mu_t)} \ge  \norm{(\gradH u)_\mu}_{ T_\mu} \fstop \qedhere
\]
\end{proof}

\subsection{Density of cylinder functions}\label{s:DensityCylinder}
In this section, we consider again the (Gaussian) Hellinger--Kantorovich distance only on the complete and separable metric space $(U, \varrho)=(\R^d, \sfd_e)$, where $\sfd_e$ denotes the distance induced by the Euclidean norm on $\R^d$. For this reason, we remove the dependence on $\sfd_e$ in the notation for the $\HK=\HK_{\sfd_e}$ and the $\GHK= \GHK_{\sfd_e}$ distances. 

\medskip

Recall that $(\mcM(\R^d), \HK)$ and $(\mcM(\R^d), \GHK)$ are complete and separable metric spaces and they induce the same topology on $\mcM(\R^d)$ which also coincides with the weak topology $\sigma(\mcM(\R^d), \rmC_b(\R^d))$. We fix a finite, non-negative, non-zero Borel measure $\mcQ$ on $(\mcM(\R^d), \HK)$ and we consider the Polish metric measure space $\X(\mcQ)\eqdef (\mcM(\R^d), \HK, \mcQ)$.

\medskip

Our aim is to prove that the unital and point-separating subalgebra $\AA\eqdef \FC{1,1}{u,b}{1}{b}(\mcM(\R^d)) \subset \Lip_b(\mcM(\R^d), \HK)$ (see Definition \ref{def:cyl}) is dense in $2$-energy in $D^{1,2}(\X(\mcQ))$ in the sense of Definition~\ref{def:density}. In order to do so, we are going to apply Theorem \ref{theo:startingpoint} to $X=\mcM(\R^d)$, $\delta=\HK$ and $\sfd=\GHK$ since~$\HK$ is the length distance generated by~$\GHK$, as noted in~\cite[Corollary  8.7]{LMS18}.
Note that the underlying metric-measure space is \emph{always} $\X(\mcQ)$ so that, in particular, $(2,\AA)$-relaxed gradients are computed w.r.t.~the distance $\HK$. We will thus devote this section to prove inequality \eqref{eq:214-15} i.e.
\begin{equation}\label{eq:aimsec}
 \text{for every } \nu \in \mcM(\R^d) \quad \left | \rmD \GHK(\nu, \emparg) \right|_{\star,2,\AA} \le 1 \text{ $\mcQ$-a.e.~in $\mcM(\R^d)$} \fstop
 \end{equation}
 
 The structure of this section and the proofs we present closely follow the ones of \cite[Section 4.2]{FSS22}. There are however several important differences so that we will report all the arguments in detail. We start with a technical result.

\begin{lemma}\label{le:trunc2}
   \label{le:limsup-approximation}
   Let $(v_k)_k$ be a sequence of functions in $D^{1,2}(\X(\mcQ); \AA) \cap L^\infty(\mcM(\R^d), \mcQ)$ such
   that $v_k$ and 
   $|\rmD v_k|_{\star,2, \AA}$ are uniformly bounded in every bounded subset
   of $\mcM(\R^d)$ and let $v,G$ be Borel functions in
   $L^2(\mcM(\R^d),\mcQ)$, $G$ nonnegative.
   If
   \begin{equation}
     \label{eq:170}
     \lim_{k\to\infty}v_k(\mu)= v(\mu),\quad
     \limsup_{k\to\infty}|\rmD v_k|_{\star,2, \AA}(\mu)\le G(\mu)
     \quad\text{$\mcQ$-a.e.~in $\mcM(\R^d)$},
   \end{equation}
   then $v\in H^{1,2}(\X(\mcQ); \AA)$ and $|\rmD v|_{\star,2, \AA}\le G$.
 \end{lemma}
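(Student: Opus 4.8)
The plan is to reduce the claim to the closedness of the set $S_{\X(\mcQ),2,\AA}$ of pairs (function, $(2,\AA)$-relaxed gradient) under the product of local convergence in $\mcQ$-measure and weak $L^2(\mcQ)$-convergence, Theorem~\ref{thm:omnibus}\ref{i:t:Omnibus:2}: it suffices to exhibit $G$ as a $(2,\AA)$-relaxed gradient of $v$, since then $v\in D^{1,2}(\X(\mcQ);\AA)$ and $|\rmD v|_{\star,2,\AA}\le G$, and $v\in L^2(\mcQ)$ (by hypothesis) gives $v\in H^{1,2}(\X(\mcQ);\AA)$. A naive attempt to extract a weak $L^2(\mcQ)$-limit of $|\rmD v_k|_{\star,2,\AA}$ directly fails: the hypotheses only provide a uniform bound of $v_k$ and $|\rmD v_k|_{\star,2,\AA}$ on $\HK$-bounded sets, which is not enough to bound $\tset{|\rmD v_k|_{\star,2,\AA}}_k$ in $L^2(\mcQ)$, as $\mcQ$-mass may escape to infinity along the exhaustion $\tset{\mcB_R}_R$, $\mcB_R\eqdef\set{\mu\in\mcM(\R^d):\mu\R^d\le R}$. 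The remedy is a two-step localization by the cut-offs $u_R(\mu)\eqdef\varsigma(\mu\R^d/R)$ of Lemma~\ref{le:trunc1}: they lie in $\FC{\infty,\infty}{c,c}{\infty}{c}\subset\AA$, satisfy $0\le u_R\le 1$, $u_R\equiv 1$ on $\mcB_R$, $\supp u_R\subset\mcB_{2R}$, and $\lip_{\HK}u_R\le C R^{-1/2}\car_{\mcB_{2R}}$ for a universal constant $C$; in particular $\lip_{\HK}u_R\in L^\infty(\mcQ)\subset L^2(\mcQ)$, so $|\rmD u_R|_{\star,2,\AA}\le\lip_{\HK}u_R$.

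\emph{Step 1: localized closure.} Fix $R>0$. By the Leibniz rule in Theorem~\ref{thm:omnibus} (applicable since $v_k\in D^{1,2}(\X(\mcQ);\AA)\cap L^\infty$ and $u_R\in\AA\cap L^\infty$), the product $v_k u_R$ belongs to $D^{1,2}(\X(\mcQ);\AA)\cap L^\infty$ and
\[
|\rmD(v_k u_R)|_{\star,2,\AA}\le |v_k|\,\lip_{\HK}u_R+u_R\,|\rmD v_k|_{\star,2,\AA}\eqdef H_{k,R}\quad\text{$\mcQ$-a.e.}
\]
Since $\supp u_R\subset\mcB_{2R}$ and $v_k,|\rmD v_k|_{\star,2,\AA}$ are bounded on $\mcB_{2R}$ uniformly in $k$, the functions $v_k u_R$ and $H_{k,R}$ are supported in $\mcB_{2R}$ and bounded there by a constant $M_R$ independent of $k$; as $\mcQ$ is finite, $\tset{H_{k,R}}_k$ is bounded in $L^2(\mcQ)$. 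Passing to a subsequence (depending on $R$) we may assume $|\rmD(v_k u_R)|_{\star,2,\AA}\weakto G_R$ weakly in $L^2(\mcQ)$; since $v_k u_R\to v u_R$ $\mcQ$-a.e., hence in $L^0(\mcQ)$, Theorem~\ref{thm:omnibus}\ref{i:t:Omnibus:2} yields that $G_R$ is a $(2,\AA)$-relaxed gradient of $v u_R$, so $v u_R\in D^{1,2}(\X(\mcQ);\AA)$ and $|\rmD(v u_R)|_{\star,2,\AA}\le G_R$. Using $v_k\to v$ and $\limsup_k|\rmD v_k|_{\star,2,\AA}\le G$ gives $\limsup_k H_{k,R}\le|v|\lip_{\HK}u_R+u_R G$ $\mcQ$-a.e.; since $H_{k,R}\le M_R\in L^1(\mcQ)$, the reverse Fatou lemma on an arbitrary Borel $B$ gives $\int_B G_R\,\d\mcQ\le\limsup_k\int_B H_{k,R}\,\d\mcQ\le\int_B(|v|\lip_{\HK}u_R+u_R G)\,\d\mcQ$, whence
\[
|\rmD(v u_R)|_{\star,2,\AA}\le G_R\le|v|\,\lip_{\HK}u_R+u_R\,G\le C R^{-1/2}|v|+G\quad\text{$\mcQ$-a.e.}
\]

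\emph{Step 2: letting $R\to\infty$.} By the last bound and $v,G\in L^2(\mcQ)$, the family $\tset{|\rmD(v u_R)|_{\star,2,\AA}}_R$ is bounded in $L^2(\mcQ)$, so along some sequence $R_m\to\infty$ it converges weakly in $L^2(\mcQ)$ to a function $\widehat G$. For each $\mu$ one has $u_{R_m}(\mu)=1$ as soon as $R_m>\mu\R^d$, so $v u_{R_m}\to v$ pointwise, hence in $L^0(\mcQ)$; applying Theorem~\ref{thm:omnibus}\ref{i:t:Omnibus:2} once more gives $v\in D^{1,2}(\X(\mcQ);\AA)$ with $|\rmD v|_{\star,2,\AA}\le\widehat G$. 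Finally, for every Borel $B$,
\[
\int_B\widehat G\,\d\mcQ=\lim_m\int_B|\rmD(v u_{R_m})|_{\star,2,\AA}\,\d\mcQ\le\limsup_m\int_B\tparen{C R_m^{-1/2}|v|+G}\,\d\mcQ=\int_B G\,\d\mcQ\comma
\]
using $|v|\in L^1(\mcQ)$ (as $\mcQ$ is finite); hence $\widehat G\le G$ $\mcQ$-a.e., and therefore $|\rmD v|_{\star,2,\AA}\le G$ $\mcQ$-a.e. Together with $v\in L^2(\mcQ)$ this shows $v\in H^{1,2}(\X(\mcQ);\AA)=L^2(\mcQ)\cap D^{1,2}(\X(\mcQ);\AA)$, as claimed.

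The main obstacle is exactly the one flagged at the outset: the minimal relaxed gradients $|\rmD v_k|_{\star,2,\AA}$ are controlled only on $\HK$-bounded sets and only asymptotically (in $k$) by $G$, so one cannot invoke the closedness of $S_{\X(\mcQ),2,\AA}$ before the mass has been truncated; the technical point is then to ensure that the error term $|v|\lip_{\HK}u_R$ produced by the cut-off vanishes in $L^1(\mcQ)$ as $R\to\infty$, which is guaranteed by the sharp $R^{-1/2}$-decay of $\lip_{\HK}u_R$ from Lemma~\ref{le:trunc1} together with $v\in L^2(\mcQ)\subset L^1(\mcQ)$.
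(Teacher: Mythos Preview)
Your proof is correct and follows essentially the same two-step localization strategy as the paper: multiply by the cutoffs $u_R$ of Lemma~\ref{le:trunc1} to get uniform $L^2$-bounds on the relaxed gradients, pass to the limit in $k$ via the closedness of $S_{\X(\mcQ),2,\AA}$ and reverse Fatou, then let $R\to\infty$ by the same mechanism. The only cosmetic differences are that the paper uses $u_m^2$ instead of $u_R$ and extracts weak$^*$-limits in $L^\infty$ of $G_k u_m^2$ rather than weak $L^2$-limits of $|\rmD(v_k u_R)|_{\star,2,\AA}$; your version is arguably a bit more direct.
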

 \begin{proof}
   Let $u_k$ be as in Lemma \ref{le:trunc1} for $M=\R^d$: since $u_k \in \AA$ we have $|\rmD u_k|_{\star,2, \AA} \le \lip_{\HK} u_k$ so that, setting $C\eqdef  2\sqrt{8+2\pi^2}$, we have
   \begin{equation}
     \label{eq:178}
     \begin{gathered}
     u_k\in H^{1,2}(\X(\mcQ); \AA)\comma \qquad
     |\rmD u_k|_{\star,2, \AA}\le C/\sqrt{k}\comma
     \\
     |\rmD u_k|_{\star,2, \AA}(\mu)=0 \quad \text{if} \quad \mu \R^d \le
     k \ \text{or}\ \mu \R^d \ge 2k \fstop
     \end{gathered}
   \end{equation}
   Notice also that $u_k$ vanishes if $\mu \R^d \ge 2k$. Thanks to the Leibniz rule, setting $
   v_{k,m}(\mu)\eqdef v_k(\mu)u^2_m(\mu)$ and $G_k\eqdef |\rmD
   v_k|_{\star,2, \AA}$, we have
   \begin{equation}
     \label{eq:179}
     \begin{gathered}
     v_{k,m}\in \rmD^{1,2}(\X(\mcQ); \AA)\comma
     \\
     |\rmD v_{k,m}|_{\star,2, \AA}(\mu)\le G_k(\mu)u^2_{m}(\mu)+
     2Cm^{-1/2} v_{k}(\mu)u_{m}(\mu) \fstop
     \end{gathered}
   \end{equation}
   Since for every $m\in \N$ the sequence
   $k\mapsto G_ku_m^2$ is uniformly bounded, we can find an
   increasing subsequence $j\mapsto k(j)$ such that
   $j\mapsto G_{k(j)}u_m^2$ is
   weakly$^*$ convergent in $L^\infty(\mcM(\R^d),\mcQ)$ and we
   denote by $\tilde G_m$ is
   weak$^*$ limit. By Fatou's lemma, for every Borel set
   $B\subset \mcM(\R^d)$ we get
   \begin{align*}
     \int_B \tilde G_m\,\d\mcQ&=
     \lim_{j\to\infty}\int_B G_{k(j)}(\mu)u^2_m(\mu)\,\d\mcQ(\mu)
                               \\&\le
     \int_B
     \limsup_{j\to\infty}\Big(G_{k(j)}(\mu)u^2_m(\mu)\Big)\,\d\mcQ(\mu)\\
     &\le \int_B G^2u_m^2\,\d\mcQ
   \end{align*}
   so that we deduce
   \begin{equation}
     \label{eq:181}
     \tilde G_m\le G^2u_m^2\quad\text{$\mcQ$-a.e.~in
       $\mcM(\R^d)$, for every $m\in \N$.}
   \end{equation}
   On the other hand, passing to the limit in \eqref{eq:179} along the
   subsequence $k(j)$ and recalling that
   $\lim_{j\to\infty}v_{k(j),m}=vu_m^2$ $\mcQ$-a.e.~we get
   \begin{equation}\label{eq:182}
   \begin{aligned}
     |\rmD (vu_m^2)|_{\star,2, \AA}(\mu)\leq&\ \tilde G_m(\mu)+\frac{2C}{m^{1/2}} v(\mu)u_m(\mu)
     \\
     \leq&\ G(\mu)u_m^2(\mu)+\frac{2C}{m^{1/2}} v(\mu)u_m(\mu)
     \end{aligned}
     \qquad\text{for $\mcQ$-a.e.~$\mu\in \mcM(\R^d)$.}
   \end{equation}
   We eventually pass to the limit as $m\to\infty$ concluding the proof
   of the Lemma.
 \end{proof}

The main result of this section is the following Proposition which is just one step away from \eqref{eq:aimsec}. Recall the notation introduced in Section \ref{sec:spec} and in Theorem \ref{teo:final}.

\begin{proposition} Let $\nu \in \mcM^{ac}(\R^d)$ and let $R, \delta>0$ be such that $\supp(\nu)=\overline{\rmB(0,R)}$, and $\nu \ge \delta\Leb{d} \mres \rmB(0,R)$; let $\eps \in (0,1)$ be fixed and let $\mathsf{T}_\eps$ be as in \eqref{eq:theop}. Let $\zeta\in \rmC^1(\R)$ be a non-decreasing function whose derivative has compact support. Then, for $\mcQ$-a.e.~$\mu \in \mcM(\R^d)$
\[
\left |\rmD \zeta \circ \left (\tfrac{1}{2} \GHK\tparen{\nu, \mathsf{T}_\eps(\emparg)}^2 \right)\right|_{\star,2, \AA} (\mu) \le \zeta' \left ( \tfrac{1}{2} \GHK\tparen{\nu, \mathsf{T}_\eps(\mu)}^2 \right )  \GHK\tparen{\nu, \mathsf{T}_\eps(\mu)} \fstop
\]
\end{proposition}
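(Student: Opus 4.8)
The plan is to adapt the argument of~\cite[\S4.2]{FSS22} to the $\GHK$--distance and the regularising operator $\mathsf{T}_\eps$, exploiting the \emph{uniform} regularity of the potentials provided by Theorem~\ref{teo:final}. Throughout write $h(\mu)\eqdef\tfrac12\GHK\bigl(\nu,\mathsf{T}_\eps(\mu)\bigr)^2$ and $B\eqdef\nu\R^d$, so that the asserted inequality reads $\bigl|\rmD(\zeta\circ h)\bigr|_{\star,2,\AA}\le\zeta'(h)\sqrt{2h}$ $\mcQ$--a.e. Since $\GHK(\mu,0)^2=\mu\R^d$, the triangle inequality gives $h(\mu)\ge\tfrac12\bigl(\sqrt{\mu\R^d+\eps}-\sqrt B\bigr)^2$, so $\{h\le\beta\}$ is mass--bounded for every $\beta>0$; as $\zeta'$ has compact support and $h\ge0$, the function $\zeta\circ h$ is constant off a set $\mcB_{B_\beta}$ with $\beta\eqdef\max(0,\sup\supp\zeta')$, and the asserted inequality there is trivial by locality of the relaxed gradient (Theorem~\ref{thm:omnibus}). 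So it suffices to prove the bound $\mcQ$--a.e.\ on $\mcB_{B_\beta}$, where $h$ is continuous and bounded ($\mathsf{T}_\eps$ being weakly continuous by Proposition~\ref{prop:thereg}, and $\GHK$ metrising the weak topology).

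Next I would build affine minorants of $h$ out of the potentials. For $\mu_0\in\mcM(\R^d)$ let $(\varphi_{\mu_0},\psi_{\mu_0})$ be the pair of Theorem~\ref{teo:final} for $(\nu,\mathsf{T}_\eps(\mu_0))$, set $g_{\mu_0}\eqdef\tfrac12\bigl(1-\rme^{-|\cdot|^2+2\psi_{\mu_0}}\bigr)$ and $G_{\mu_0}\eqdef(g_{\mu_0}\ast\kappa_\eps)\circ f_\eps\in\rmC^\infty_b(\R^d)$. Using $\int g_{\mu_0}\de\mathsf{T}_\eps(\mu)=\eps\,(g_{\mu_0}\ast\kappa_\eps)(0)+G_{\mu_0}^\trid(\mu)$ and the admissibility of $\bigl(\tfrac12|\cdot|^2-\varphi_{\mu_0},\tfrac12|\cdot|^2-\psi_{\mu_0}\bigr)$ in the supremum of Theorem~\ref{teo:ghk} \emph{for every} target measure, the affine function
\[
H_{\mu_0}(\mu)\eqdef\int_{\rmB(0,R)}\tfrac12\bigl(1-\rme^{-|x|^2+2\varphi_{\mu_0}(x)}\bigr)\de\nu(x)+\eps\,(g_{\mu_0}\ast\kappa_\eps)(0)+G_{\mu_0}^\trid(\mu)
\]
satisfies $H_{\mu_0}\le h$ everywhere, with equality at $\mu=\mu_0$ by the optimality in Theorem~\ref{teo:final}; hence $h=\sup_{\mu_0}H_{\mu_0}$. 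The key pointwise estimate is then: from $\nabla g_{\mu_0}=(1-2g_{\mu_0})(\,\cdot-\nabla\psi_{\mu_0})$ and $\rme^t-1\ge t$ one gets $|\nabla g_{\mu_0}|^2+4\,g_{\mu_0}^2\le Q_{\mu_0}$, where $Q_{\mu_0}$ is the integrand of the last identity in Theorem~\ref{teo:final}; then, using $\|Df_\eps\|\le1$, Jensen's inequality (with the even probability kernel $\kappa_\eps$), the definition of $\mathsf{T}_\eps$, and Proposition~\ref{prop:equalityriem} (applied to a mass--cut--off of $G_{\mu_0}^\trid$, via locality of $\lip_{\HK}$), one obtains for \emph{every} $\mu$
\[
\bigl\|(\boldnabla G_{\mu_0}^\trid)_\mu\bigr\|_{T_\mu^{1,4}}^2=\int_{\R^d}\bigl[\,|\nabla G_{\mu_0}|^2+4|G_{\mu_0}|^2\,\bigr]\de\mu\le\int_{\R^d}\bigl(|\nabla g_{\mu_0}|^2+4\,g_{\mu_0}^2\bigr)\de\mathsf{T}_\eps(\mu)\le\int_{\R^d}Q_{\mu_0}\de\mathsf{T}_\eps(\mu),
\]
and the right--hand side equals $2h(\mu)$ whenever $\mu_0$ touches $h$ at $\mu$ (in particular for $\mu=\mu_0$) by the representation formula of Theorem~\ref{teo:final} and the uniqueness of optimal potentials in Theorem~\ref{teo:ghk}\ref{i:t:GHK:5}.

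It then remains to assemble these facts. Fixing a countable $\HK$--dense family $\{\mu_i\}_i$, the uniform bounds of Theorem~\ref{teo:final} and Arzelà--Ascoli give $h=\sup_i H_{\mu_i}$, so $h_n\eqdef\max_{1\le i\le n}H_{\mu_i}\uparrow h$. Regularising the maximum by a log-sum-exp $F_\lambda$ (for which $\partial_jF_\lambda\ge0$, $\sum_j\partial_jF_\lambda\equiv1$, $F\le F_\lambda\le F+\lambda^{-1}\log n$), multiplying by the mass cut--offs $u_m$ of Lemma~\ref{le:trunc1}, and subtracting the constant $\zeta(\beta)$, I would form $v_k\eqdef u_{m_k}\cdot\bigl((\zeta\circ F_{\lambda_k}-\zeta(\beta))\circ\mbff_{n_k}^\trid\bigr)\in\AA$ with $\mbff_n\eqdef(G_{\mu_1},\dots,G_{\mu_n})$. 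Along a diagonal choice $n_k,\lambda_k,m_k\to\infty$ one has $v_k\to\zeta\circ h-\zeta(\beta)$ pointwise, each $v_k\in D^{1,2}(\X(\mcQ);\AA)\cap L^\infty$ is bounded with asymptotic Lipschitz constant bounded on mass--bounded sets (Proposition~\ref{prop:equalityriem}, Lemma~\ref{le:trunc1}), and by the chain rule~\eqref{eq:Fdiff} and Jensen,
\[
\lip_{\HK}v_k(\mu)^2\le\zeta'\bigl(F_{\lambda_k}(\mbff_{n_k}^\trid\mu)\bigr)^2\,\sum_j\partial_jF_{\lambda_k}(\mbff_{n_k}^\trid\mu)\int_{\R^d}Q_{\mu_j}\de\mathsf{T}_\eps(\mu)+o(1),
\]
the $o(1)$ coming from $\boldnabla u_{m_k}$ via Lemma~\ref{le:trunc1}. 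As $k\to\infty$ the softmax weights concentrate on indices $j$ with $H_{\mu_j}(\mu)\to h(\mu)$, and for such asymptotically touching potentials $\int Q_{\mu_j}\de\mathsf{T}_\eps(\mu)\to2h(\mu)$ by stability of the dual $\GHK$--problem (uniform regularity $\Rightarrow$ compactness $\Rightarrow$ convergence to an optimal potential $\Rightarrow$ the representation formula), whence $\limsup_k\lip_{\HK}v_k\le\zeta'(h)\sqrt{2h}$ $\mcQ$--a.e. Lemma~\ref{le:trunc2} then yields $\zeta\circ h\in H^{1,2}(\X(\mcQ);\AA)$ with $\bigl|\rmD(\zeta\circ h)\bigr|_{\star,2,\AA}\le\zeta'(h)\sqrt{2h}=\zeta'(h)\,\GHK(\nu,\mathsf{T}_\eps(\emparg))$, as claimed.

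The hard part is the last step: showing that the potentials attached to the asymptotically maximising indices are asymptotically optimal for the target $\mathsf{T}_\eps(\mu)$, so that $\int Q_{\mu_j}\de\mathsf{T}_\eps(\mu)\to2h(\mu)$. This is precisely where the \emph{uniform} Lipschitz and boundedness bounds of Theorem~\ref{teo:final} --- and hence the regularisation $\mathsf{T}_\eps$ --- are indispensable: they supply, via Arzelà--Ascoli, the compactness needed to extract a limiting optimal potential, while a uniform exponential bound on $Q_{\mu_j}$ (from $\psi_{\mu_j}$ being $R$--Lipschitz with $\psi_{\mu_j}(0)\le K$) legitimises the passage to the limit under the integral sign; the remaining bookkeeping --- keeping the approximants inside $\AA$ and in $D^{1,2}(\X(\mcQ);\AA)$ --- is routine and is exactly what Lemmas~\ref{le:trunc1} and~\ref{le:trunc2} are designed for.
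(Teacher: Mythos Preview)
Your proposal follows the same overall blueprint as the paper (both adapt \cite[\S4.2]{FSS22}): pick a countable dense family of reference measures, use the duality of Theorem~\ref{teo:final} to produce affine minorants $H_{\mu_i}$ of $h$, build cylinder approximations of $\zeta\circ h$ out of these, control their asymptotic Lipschitz constants via the Jensen--convolution chain you wrote, and conclude with Lemma~\ref{le:trunc2}. The main methodological difference is in how the supremum is handled. You smooth the maximum by a log-sum-exp $F_\lambda$ and then take a diagonal limit in $(n,\lambda,m)$, arguing that the softmax weights concentrate on asymptotically optimal indices. The paper instead keeps the \emph{hard} maximum $z_k=\max_{h\le k}w_h$, uses that $u_k\,(\zeta\circ z_k)$ and $u_k\,(\zeta\circ w_h)$ agree on $\{z_k=w_h\}$, and invokes the locality of relaxed gradients \eqref{eq:locsuper1} to replace the former by the latter on that set, where the latter is already in $\AA$ and its asymptotic Lipschitz constant can be computed by Proposition~\ref{prop:equalityriem}. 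This avoids the extra parameter $\lambda$ and the delicate ``softmax concentration'' step altogether; the paper then only needs a single limit $k\to\infty$ (its Claims~(5)--(6)).

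There is one genuine soft spot in your compactness step. You invoke Arzel\`a--Ascoli on the family $\{\psi_{\mu_j}\}$ using ``the uniform bounds of Theorem~\ref{teo:final}'', but that theorem only gives the \emph{one-sided} bound $\psi_{\mu_j}(0)\le K$ together with $R$-Lipschitzness; a matching lower bound on $\psi_{\mu_j}(0)$ is still needed to extract a locally uniformly convergent subsequence (and hence a.e.\ convergence of $\nabla\psi_{\mu_j}$ via convexity). The paper's Claim~(3) supplies exactly this: it bounds $\int_{\rmB(0,R)}\varphi_{h_n}\,\d\Leb{d}$ uniformly by exploiting the assumption $\nu\ge\delta\,\Leb{d}\mres\rmB(0,R)$, then appeals to \cite[Lemma~3.5]{FSS22} on the Legendre pair $(\tilde\varphi_{h_n},\tilde\psi_{h_n})$ to obtain compactness (and in particular the lower bound on $\psi_{h_n}(0)$). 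This is precisely where the hypothesis $\nu\ge\delta\,\Leb{d}$ enters the proof of the proposition, and your sketch does not account for it. Once this is in place, your claim $\int Q_{\mu_j}\,\d\mathsf{T}_\eps(\mu)\to 2h(\mu)$ can be established as in the paper's Claim~(4): one passes to the limit in $\int(|\nabla q_{\mu_j}|^2+4|q_{\mu_j}|^2)\,\d\mathsf{T}_\eps(\mu)$ by dominated convergence (these integrands are \emph{uniformly bounded} by Theorem~\ref{teo:final}), identifies the limit potential as optimal for $(\nu,\mathsf{T}_\eps(\mu))$ via the uniqueness in Theorem~\ref{teo:final}, and finally applies the representation formula there together with the pointwise inequality $|\nabla q|^2+4|q|^2\le Q$.
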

\begin{proof}
We set $B\eqdef \nu \R^d$; for the whole proof we are going to keep $\nu$, $B$, $R$, $\eps$ and~$\zeta$ fixed and then we will not stress the dependence of the objects we are going to define w.r.t.~them, even if many of them do depend on $\nu$, $R$, $B$, $\eps$ and $\zeta$.

Let $\mathcal{G} \eqdef (\mu^h)_h \subset \mcM(\R^d)$ be a countable and dense subset of $\mcM(\R^d)$. We define the pair $(\varphi_h, \psi_h)$ as the pair coming from Theorem \ref{teo:final} for $(\nu, \mu^h)$ and the functions $\eta_h$ and $\eta_h^*$ as
\[ q_h(y) \eqdef \frac{1-\rme^{-|y|^2+2\psi_h(y)}}{2}, \quad q_h^*(x)\eqdef \frac{1-\rme^{-|x|^2+2\varphi_h(x)}}{2}, \quad (x,y) \in \rmB(0,R) \times \R^d. \]
Recall that, by Theorem \ref{teo:final}(1)-(2), we have that $\psi^h$ is $C$-Lipschitz and $q_h$ is $C$-Lipschitz and $C$-bounded, for a constant $C$ that does not depend on $h$. In particular we have that 
\[
\tfrac{1}{2} \GHK(\nu, \mathsf{T}_\eps(\mu^h))^2 = \int_{\R^d} q_h \d \mathsf{T}_\eps(\mu^h) + \int_{\rmB(0,R)} q_h^* \d \nu \quad \text{ for every } h \in \N\fstop
\]
Let us define, for every $h,k \in \N$, the functions 
\[ w_h(\mu)\eqdef  \int_{\R^d} q_h \d \mathsf{T}_\eps(\mu) + \int_{\rmB(0,R)} q_h^* \d \nu, \quad z_k(\mu)\eqdef  \max_{1 \le h \le k} w_h(\mu), \quad  \mu \in \mcM(\R^d). \]
Notice that, while $q_h^* \in L^1(\R^d,\nu)$ by definition, $q_h$ is Borel and bounded and therefore integrable for every measure in $\mcM(\R^d)$. 
    
\paragraphn{Claim $(1)$. We have that
    \begin{equation}\label{eq:supeq}
  \lim_{k \to + \infty} z_k(\mu) = \sup_k z_k(\mu) = \tfrac{1}{2} \GHK(\nu, \mathsf{T}_\eps(\mu))^2 \quad \text{ for every } \mu \in \mcM(\R^d).  
\end{equation}}

\paragraph{Proof of claim $(1)$} Since $z_k(\mu)$ is increasing w.r.t.~$h$, the supremum
\[
\sup_k z_k(\mu)\comma \qquad \mu \in \mcM(\R^d)\comma
\]
is well defined and satisfies by Theorem \ref{teo:ghk} the inequality
\[
\sup_k z_k(\mu) \le \tfrac{1}{2} \GHK(\nu, \mathsf{T}_\eps(\mu))^2 \quad \text{ for every } \mu \in \mcM(\R^d)
\]
with equality if $\mu \in \mathcal{G}$. By density of $\mathcal{G}$, to prove the claim, it is enough to prove that $\sup_k z_k(\cdot)$ is a locally $\HK$-Lipschitz function. To this aim, let us fix $\mu',\mu'' \in \mcM(\R^d)$ and observe that, since $q_h$ is $C$-Lipschitz and $C$-bounded, we have (cf.~\cite[Proposition 7.18]{LMS18}) that
\begin{align*}
\int_{\R^d} &q_h \de\tbraket{\mathsf{T}_\eps(\mu')-\mathsf{T}_\eps(\mu'')}
\\
&\qquad \leq C \sqrt{2+\tfrac{\pi^2}{2}}\, \tbraket{\mathsf{T}_\eps(\mu')(\R^d)+ \mathsf{T}_\eps(\mu'')(\R^d)}^{1/2}\HK\tparen{\mathsf{T}_\eps(\mu'),\mathsf{T}_\eps(\mu'')}\comma
\end{align*}
hence
\[
|z_k(\mu')-z_k(\mu'')| \le C \sqrt{2+\tfrac{\pi^2}{2}} \tbraket{\mathsf{T}_\eps(\mu')(\R^d)+ \mathsf{T}_\eps(\mu'')(\R^d)}^{1/2}\HK\tparen{\mathsf{T}_\eps(\mu'),\mathsf{T}_\eps(\mu'')}\fstop
\]
Passing to the limit as $k \to + \infty$ we obtain the sought local $\HK$-Lipschitz property, also noting that $\mathsf{T}_\eps$ is $\HK$-Lipschitz (cf.~\cite[Section 8.7]{LMS18}). This proves the claim.

\paragraphn{Claim $(2)$. Let $u_k$ be as in Lemma \ref{le:trunc1}; If we set $v_k\eqdef  u_k \cdot (\zeta \circ z_k) $, then $v_k \in H^{1,2}(\X(\mcQ); \AA)$ and 
\begin{equation}\label{eq:limsup}
\begin{aligned}
    |\rmD v_k|_{\star, 2, \AA}(\mu) \leq&\ u_k(\mu) (\zeta'(w_h(\mu))) \sqrt{\int \left ( |\nabla q_h |^2  + 4|q_h |^2  \right ) \de \mathsf{T}_\eps(\mu)}
    \\
    &\qquad + \frac{\sqrt{4+\pi^2}}{k}|\theta'(\mu \R^d/k)|\sqrt{\mu \R^d} \|\zeta\|_\infty
    \end{aligned}
\end{equation}
for $\mcQ$-a.e.~$\mu$ such that $z_k(\mu)=w_h(\mu)$.
}

\paragraph{Proof of claim $(2)$} First of all note that $u_k \cdot (\zeta \circ w_h) \in \FC{1,1}{c,b}{1}{b}(\mcM(\R^d)) \subset \AA$ since it can be written as
    \[
    u_k \cdot (\zeta \circ w_h) = (\theta_k \circ \car^\trid) (\zeta_{h} \circ g_h^\trid)
    \]
where $\varsigma_k = \varsigma (\emparg/k) \in \rmC_c^1(\R)$ and 
\begin{gather*}
\zeta_{h}(s)\eqdef  \zeta \left (s+ \int_{\rmB(0,R)}q_h^* \d \nu + \eps(q_h \ast \kappa_{\eps})(0)  \right)\comma \qquad s\in \R\comma
\\
g_h\eqdef  (q_h \ast \kappa_{\eps})\circ f_\eps \in \rmC_b^1(\R^d)\fstop
\end{gather*}
By Lemma \ref{le:trunc1} we have
\[
\lip_{\HK} u_k (\mu) \le  \frac{| \varsigma'(\mu \R^d/k)|}{k} \sqrt{4+\pi^2} (\mu \R^d)^{1/2}\comma \qquad \mu \in \mcM(\R^d)\comma
\]
we can estimate
\begin{align}
\lip_{\HK} (u_k \cdot (\zeta \circ w_h))(\mu) &\le u_k(\mu)\lip_{\HK}(\zeta \circ w_h)(\mu) +\lip_{\HK} u_k (\mu) |(\zeta \circ w_h)| \\ \label{eq:esti2}
& \le u_k(\mu)\lip_{\HK}(\zeta \circ w_h)(\mu) + \frac{\sqrt{4+\pi^2}}{k} \abs{\frac{\varsigma'(\mu \R^d)}{k}} \sqrt{\mu \R^d}\, \|\zeta\|_\infty \fstop
\end{align}    

We can also estimate the asymptotic Lipschitz constant of $(\zeta \circ w_h)$: still by Proposition \ref{prop:equalityriem}, we have 
\begin{align*}
\lip_{\HK}&(\zeta_h \circ g_h^\trid)(\mu)^2
\\
&= (\zeta_h'(g_h^\trid(\mu)))^2 \int \braket{ |\nabla g_h|^2 + 4|g_h|^2 } \d \mu
\\
& \le \zeta_h'(g_h^\trid(\mu))^2 \int \braket{ |(\nabla (q_h \ast \kappa_{\eps}))\circ f_\eps|^2\|\rmD f_\eps\|^2  + 4|(q_h \ast \kappa_{\eps})\circ f_\eps|^2 } \d \mu
\\
& \le \zeta_h'(g_h^\trid(\mu))^2 \int \braket{ |(\nabla (q_h \ast \kappa_{\eps}))\circ f_\eps|^2  + 4|(q_h \ast \kappa_{\eps})\circ f_\eps|^2 } \d \mu
\\
&= \zeta_h'(g_h^\trid(\mu))^2 \int \braket{ |\nabla (q_h \ast \kappa_{\eps})|^2  + 4|q_h \ast \kappa_{\eps}|^2 } \d (f_\eps)_\sharp \mu
\\
& \le \zeta_h'(g_h^\trid(\mu))^2 \int \braket{ |\nabla (q_h \ast \kappa_{\eps})|^2  + 4|q_h \ast \kappa_{\eps}|^2 } \d \braket{ (f_\eps)_\sharp \mu +\eps \delta_0}
\\
& = \zeta_h'(g_h^\trid(\mu))^2 \int \braket{ |(\nabla q_h )\ast \kappa_{\eps}|^2  + 4|q_h \ast \kappa_{\eps}|^2 } \d \braket{ (f_\eps)_\sharp \mu +\eps \delta_0}
\\
& \leq 
\zeta_h'(g_h^\trid(\mu))^2 \int \braket{ |\nabla q_h |^2  + 4|q_h |^2  }\ast \kappa_{\eps} \d \braket{ (f_\eps)_\sharp \mu +\eps \delta_0}
\\
& = \zeta_h'(g_h^\trid(\mu))^2 \int \braket{ |\nabla q_h |^2  + 4|q_h |^2  } \d \braket{\braket{ (f_\eps)_\sharp \mu +\eps \delta_0}\ast \kappa_{\eps} }
\\
& = \zeta_h'(g_h^\trid(\mu))^2 \int \braket{ |\nabla q_h |^2  + 4|q_h |^2  } \de \mathsf{T}_\eps(\mu)
\end{align*}
where we have also used that $\|\rmD f_\eps(x)\| \le 1$ for every $x \in \R^d$.  We also deduce that 
\begin{equation}\label{eq:claim2}
\begin{aligned}
\lip_{\HK}(\zeta_h \circ g_h^\trid)(\mu) \leq&\ \zeta_h'(g_h^\trid(\mu)) \sqrt{\int \left ( |\nabla q_h |^2  + 4|q_h |^2  \right ) \de \mathsf{T}_\eps(\mu)}
\\
\leq&\ \|\zeta'\|_\infty \sqrt{5}C\sqrt{\eps + \mu \R^d}.
\end{aligned}
\end{equation}

We can now estimate the minimal relaxed gradient of the function $v_k= u_k \cdot (\zeta \circ z_k)$: since $\zeta$ is an increasing function we have that 
\[
u_k \cdot (\zeta \circ z_k) = u_k \cdot \max_{1 \le h \le k} (\zeta \circ w_h) \fstop
\]
By~\eqref{eq:locsuper1}, for $\mcQ$-a.e.~$\mu$ where $w_h(\mu)=z_k(\mu)$, we have 
\begin{align*}
|\rmD v_k|_{\star, 2, \AA}(\mu) &=|\rmD u_k\cdot(\zeta \circ z_k)|_{\star, 2, \AA}(\mu)
\\
&= |\rmD u_k\cdot(\zeta \circ w_h)|_{\star, 2, \AA}(\mu)\\
& \le \lip_{\HK} (u_k\cdot(\zeta \circ w_h))(\mu) \\
& \le u_k(\mu)\lip_{\HK}(\zeta \circ w_h)(\mu) + \frac{\sqrt{4+\pi^2}}{k}|\varsigma'(\mu \R^d/k)|\sqrt{\mu \R^d} \|\zeta\|_\infty
\\
& \le u_k(\mu) (\zeta_h'(g_h^\trid(\mu))) \sqrt{\int \left ( |\nabla q_h |^2  + 4|q_h |^2  \right ) \de \mathsf{T}_\eps(\mu)}
\\
&\qquad + \frac{\sqrt{4+\pi^2}}{k}|\varsigma'(\mu \R^d/k)|\sqrt{\mu \R^d} \|\zeta\|_\infty.
\end{align*}
This concludes the proof of the second claim.

\paragraphn{Claim $(3)$. Let $\mu \in \mcM(\R^d)$ and let $(h_n)_n \subset \N$ be such that
\[
\int_{\R^d} q_{h_n} \d \mathsf{T}_\eps(\mu) + \int_{\rmB(0,R)} q_{h_n}^* \d \nu = w_{h_n}(\mu) \to \tfrac{1}{2} \GHK^2(\nu, \mathsf{T}_\eps(\mu))^2 \quad \text{ as } n \to + \infty \fstop
\]
Then there exist two convex and continuous functions $\varphi\colon \rmB(0,R) \to \R$ and $\psi\colon\R^d \to \R$ such that $\varphi$ and $\psi$ are the Legendre conjugate of each other and, up to an non-relabeled subsequence, $\varphi_{h_n} \to \varphi$ locally uniformly in $\rmB(0,R)$, $\psi_{h_n} \to \psi$ locally uniformly in $\R^d$ and $\nabla \psi_{h_n} \to \nabla \psi$ $\Leb{d}$-a.e.~in $\R^d$.
}

\paragraph{Proof of claim $(3)$} We consider the shifted pairs
\[
\tilde{\varphi}_{h_n}\eqdef  \varphi_{h_n} + \psi_{h_n}(0)\comma \qquad \tilde{\psi}_{h_n}\eqdef  \psi_{h_n} - \psi_{h_n}(0)\fstop
\]
Notice that $\tilde{\psi}_{h_n}(0)=0$ by construction, $\varphi_{h_n}(0)\le C$ by Theorem \ref{teo:final}(1) for a constant $C>0$ that does not depend on $n$, and, again by Theorem \ref{teo:final}(3)$, \tilde{\varphi}_{h_n}$ and~$\tilde{\psi}_{h_n}$ are the Legendre transform of each other. Now we show that $\int_{\rmB(0,R)} \tilde{\varphi}_{h_n} \de \Leb{d} \le I$ for a constant $I>0$ that does not depend on $n$: since 
\[\int_{\R^d} q_{h_n} \d \mathsf{T}_\eps(\mu) + \int_{\rmB(0,R)} q_{h_n}^* \d \nu  \to \frac{1}{2} \GHK^2(\nu, \mathsf{T}_\eps(\mu))^2 \ge 0 \text{ as } n \to + \infty,\]
we can assume without loss of generality that 
\[
\int_{\rmB(0,R)} q_{h_n}^* \d \nu \ge -1 -\int_{\R^d} q_{h_n} \d \mathsf{T}_\eps(\mu) \ge -1- \tfrac{1}{2}\mu \R^d\fstop
\]
Now we estimate
\begin{align*}
\int_{\rmB(0,R)} \braket{\tfrac{1}{2}|x|^2-\varphi_{h_n}(x)} \de \Leb{d}(x) &\ge \int_{\rmB(0,R)} q_{h_n}^* \de \Leb{d} \\
&= \int_{\rmB(0,R)} (q_{h_n}^*-1/2) \de \Leb{d} + \frac{1}{2}\Leb{d}(\rmB(0,R)) \\
&\ge \frac{1}{\delta} \int_{\rmB(0,R)} (q_{h_n}^*-1/2) \de \nu + \frac{1}{2}\Leb{d}(\rmB(0,R)) \\
&= \frac{1}{\delta} \int_{\rmB(0,R)} q_{h_n}^* \de \nu - \frac{1}{2\delta} \nu \R^d + \frac{1}{2}\Leb{d}(\rmB(0,R))\\
& \ge - \frac{1}{\delta} - \frac{1}{2\delta}\mu \R^d-\frac{1}{2\delta} \nu \R^d + \frac{1}{2}\Leb{d}(\rmB(0,R)) 
\end{align*}
Thus
\[ \int_{\rmB(0,R)} \varphi_{h_n} \de \Leb{d} \le \tfrac{1}{2} \int_{\rmB(0,R)} |x|^2 \de \Leb{d}(x)+ \tfrac{1}{2\delta} \left ( 2+\mu \R^d+\nu \R^d-\delta \Leb{d}(\rmB(0,R)) \right ) =:I'  \]
Finally 
\[ \int_{\rmB(0,R)} \tilde{\varphi}_{h_n} \de \Leb{d} = \int_{\rmB(0,R)} \varphi_{h_n} \de \Leb{d} + C_n \Leb{d}(\rmB(0,R)) \le I' + C\Leb{d}(\rmB(0,R)) \defeq I. \]
Notice moreover that, by convexity of $\varphi_{h_n}$, we deduce that
\[ \varphi_{h_n}(0) \le \frac{I'}{\Leb{d}(\rmB(0,R))R^d}\]
so that
\[
C_n\eqdef  \psi_{h_n}(0) \ge - \varphi_{h_n}(0) \ge - \frac{I'}{\Leb{d}(\rmB(0,R))R^d}\fstop
\]
This gives that the sequence $C_n$ is uniformly bounded. By \cite[Lemma 3.5]{FSS22} applied to $\tilde{\varphi}_{h_n}$ and $\psi_{h_n}$, we deduce the thesis of the present claim for the shifted pairs and, since we have showed that $C_n$ is bounded, also for the original pairs. This concludes the proof of the third claim.

\paragraphn{Claim $(4)$. Let $\mu \in \mcM(\R^d)$ and let $(h_n)_n \subset \N$ be such that
\[
\int_{\R^d} q_{h_n} \d \mathsf{T}_\eps(\mu) + \int_{\rmB(0,R)} q_{h_n}^* \d \nu = w_{h_n}(\mu) \to \tfrac{1}{2} \GHK^2(\nu, \mathsf{T}_\eps(\mu))^2 \quad \text{ as } n \to + \infty\fstop
\]
Then
\[
\limsup_{n \to + \infty} \int \braket{ |\nabla q_{h_n} |^2  + 4|q_{h_n} |^2} \de \mathsf{T}_\eps(\mu) \le \GHK(\nu, \mathsf{T}_\eps(\mu)^2\fstop
\]}

\paragraph{Proof of claim $(4)$} We extract a non-relabeled subsequence such that the $\limsup$ in the statement of the present claim is achieved as a limit. By the convergence in claim~$(3)$, as $n \to + \infty$, we have that
\begin{align*}
q_{h_{n}}(y)=\frac{1-\rme^{-|y|^2+2\psi_{h_{n}}(y)}}{2} &\to q(y)\eqdef \frac{1-\rme^{-|y|^2+2\psi(y)}}{2} \quad &&\text{ for every } y \in \R^d,\\
q_{h_{n}}^*(y)=\frac{1-\rme^{-|y|^2+2\varphi_{h_{}}(y)}}{2} &\to q^*(y)\eqdef \frac{1-\rme^{-|y|^2+2\varphi(y)}}{2} \quad &&\text{ for every } y \in \rmB(0,R).
\end{align*}
Since $q_{h_n}$ is uniformly bounded, by Dominated Convergence Theorem, we deduce that
\begin{equation}\label{eq:firstlimit}
\lim_{n \to + \infty} \int_{\R^d}q_{h_{n}}\de \mathsf{T}_\eps(\mu) =  \int_{\R^d}q\de \mathsf{T}_\eps(\mu) \fstop
\end{equation}
On the other hand, $q^*_{h_n}$ is uniformly bounded from above so that Fatou's lemma shows that
\begin{equation}\label{eq:secondlimit}
\limsup_{n \to + \infty} \int_{\R^d}q_{h_{n}}^*\de \nu \le \int_{\R^d}q^*\de \nu \fstop
\end{equation}
We can thus conclude that
\begin{align*}
\int_{\R^d} q \de \mathsf{T}_\eps(\mu) + \int_{\rmB(0,R)} q^* \d \nu \geq&\ \limsup_n \braket{ \int_{\R^d} q_{h_n} \d \mathsf{T}_\eps(\mu) + \int_{\rmB(0,R)} q_{h_n}^* \d \nu }
\\
=&\ \tfrac{1}{2} \GHK(\nu, \mathsf{T}_\eps(\mu))^2\fstop
\end{align*}
Note that $\int_{\R^d}q \de \mu \in \R$ since $q$ is uniformly bounded so that also $\int_{\R^d} q^* \de \nu \in \R$ since $q^*$ is bounded above. By Theorem \ref{teo:ghk} the one above must be an equality so that, by the
uniqueness part of Theorem \ref{teo:final}, we deduce that $(\varphi,\psi)$ is the unique pair of functions whose existence is stated in Theorem \ref{teo:final} for the pair $(\nu, \mu)$. In particular, by Theorem \ref{teo:final}(5), we deduce that
\begin{align*}
\GHK(\nu, \mathsf{T}_\eps(\mu))^2 =&\int_{\R^d} \Bigg[ \tbraket{1-\rme^{-|y|^2+2\psi(y)}}^2
\\
&+\rme^{-2|y|^2+4\psi(y)}\tbraket{\rme^{|y-\nabla \psi(y)|^2}-1} \Bigg] \de \mathsf{T}_\eps(\mu)(y)\fstop
\end{align*}
Since $q_{h_n}$ is uniformly Lipschitz and bounded, we can use the Dominated Convergence Theorem and conclude that
\begin{align*}
\lim_{n \to + \infty}& \int \braket{ |\nabla q_{h_n} |^2  + 4|q_{h_n} |^2  } \de \mathsf{T}_\eps(\mu) =
\\
=& \int \braket{ |\nabla q |^2  + 4|q |^2 } \de \mathsf{T}_\eps(\mu) 
\\
=& \int \braket{ |y-\nabla \psi(y)|^2\rme^{-2|y|^2+4\psi(y)}  + (1-\rme^{-|y|^2+2\psi(y)})^2  } \de \mathsf{T}_\eps(\mu)
\\
\leq& \int_{\R^d} \braket{ \tbraket{1-\rme^{-|y|^2+2\psi(y)}}^2+\rme^{-2|y|^2+4\psi(y)}\tbraket{\rme^{|y-\nabla \psi(y)|^2}-1} } \de \mathsf{T}_\eps(\mu)(y)
\\
=&\ \GHK(\nu, \mathsf{T}_\eps(\mu))^2.
\end{align*}
This concludes the proof of fourth claim.

\paragraphn{Claim $(5)$. Let $v_k$ be defined as in claim $(2)$. Then, for $\mcQ$-a.e.~$\mu \in \mcM(\R^d)$,
\[
\limsup_k |\rmD v_k|_{\star, 2, \AA}(\mu) \le \zeta' \left ( \frac{1}{2} \GHK(\nu, \mathsf{T}_\eps(\mu))^2 \right )  \GHK(\nu, \mathsf{T}_\eps(\mu)) \fstop
\]
}

\paragraph{Proof of claim $(5)$} Let $A \subset \mcM(\R^d)$ be defined as
\[
A\eqdef  \bigcap_k \bigcup_{h=1}^k A_h^k\comma
\]
where $A_h^k$ is the full $\mcQ$-measure subset of $E_h^k\eqdef \{\mu : z_k(\mu)=w_h(\mu)\}$ where \eqref{eq:limsup} holds. Notice that $A$ has full $\mcQ$-measure. Let $\mu \in A$ be fixed and let us pick a non-decreasing sequence $k \mapsto h_k$ such that 
\[
z_k(\mu) = w_{h_k}(\mu)\fstop
\] 
By claim $(2)$ we have that 
\begin{align*}
|\rmD v_k|_{\star, 2, \AA}(\mu) \leq&\ u_k(\mu) (\zeta'(w_{h_k}(\mu))) \braket{\int \braket{ |\nabla q_{h_k} |^2  + 4|q_{h_k} |^2 } \de \mathsf{T}_\eps(\mu)}^{1/2} 
\\
&\qquad + \frac{\sqrt{4+\pi^2}}{k}|\varsigma'(\mu \R^d/k)|\sqrt{\mu \R^d} \|\zeta\|_\infty
\end{align*}
and by claim (4) we know that 
\[ \limsup_{k \to + \infty} \int \tbraket{ |\nabla q_{h_k} |^2  + 4|q_{h_k} |^2 } \de \mathsf{T}_\eps(\mu) \le \GHK(\nu, \mathsf{T}_\eps(\mu)^2.\]
Noting that $u_k(\mu) \to 1$ and passing to the $\limsup_{k \to + \infty}$, we obtain the statement of the fifth claim.

\paragraph{Claim $(6)$. Conclusion}
We deduce the assertion of the present proposition from Lemma \ref{le:trunc2}. We set
\[
v\eqdef \zeta \paren{\tfrac{1}{2} \GHK\tparen{\nu, \mathsf{T}_\eps(\emparg)}^2}\comma \quad G\eqdef \zeta' \paren{\tfrac{1}{2} \GHK\tparen{\nu, \mathsf{T}_\eps(\emparg)}^2}  \GHK\tparen{\nu, \mathsf{T}_\eps(\emparg)} \fstop
\]
By claim (2), $v_k \in D^{1,2}(\X(\mcQ); \AA)$ and it is clear that $v_k \in L^\infty(\mcM(\R^d), \mcQ)$ since it is bounded by $\|\zeta\|_\infty$ (in particular the sequence $v_k$ is uniformly bounded in every bounded set of $\mcM(\R^d)$). The sequence of relaxed gradients $|\rmD v_k|_{\star,2, \AA}$ is also uniformly bounded in every bounded set of $\mcM(\R^d)$ since by claim (2) and \eqref{eq:claim2} we have
\begin{align*}
\begin{aligned}
|\rmD v_k|_{\star,2, \AA}(\mu) \le&\ u_k(\mu) \|\zeta'\|_\infty C \sqrt{5}\sqrt{\eps+\mu \R^d}
\\
&+ \frac{2\sqrt{4+\pi^2}}{k}|\varsigma'(\mu \R^d/k)|\sqrt{\mu \R^d} \|\zeta\|_\infty\comma
\end{aligned}
 \mu \in \mcM(\R^d) \fstop
 \end{align*}
Clearly $v$ and $G$ are Borel functions with $v \in L^2(\mcM(\R^d), \mcQ)$ (since $\|v\|_\infty \le \|\zeta\|_\infty$) and $G$ non-negative. By claim (1) and claim (5) we have that 
\[ \lim_k v_k(\mu) = v(\mu), \quad \limsup_{k\to\infty}|\rmD v_k|_{\star,2, \AA}(\mu)\le G(\mu) \quad\text{$\mcQ$-a.e.~in $\mcM(\R^d)$}.\]
We conclude by Lemma \ref{le:trunc2} that $v \in H^{1,2}(\X(\mcQ); \AA)$ and that $|\rmD v|_{\star, 2,\AA} \le G$ $\mcQ$-a.e.~in $\mcM(\R^d)$. This concludes the proof of the proposition.
\end{proof}

Arguing precisely as in \cite[Corollary 4.18]{FSS22}, we deduce the \eqref{eq:aimsec}.

\begin{corollary}\label{cor:ineq} Let $\nu \in \mcM^{ac}(\R^d)$ and let $R, \delta>0$ be such that $\supp(\nu)=\overline{\rmB(0,R)}$, and $\nu \ge \delta\Leb{d} \mres \rmB(0,R)$. Then 
\[ \left |\rmD \GHK(\nu, \cdot) \right |_{2,\star, \AA} \le 1 \quad \text{$\mcQ$-a.e.~in $\mcM(\R^d)$}.\]
\end{corollary}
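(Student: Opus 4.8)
The plan is to feed the pointwise estimate of the preceding proposition into two successive limiting procedures: first let the auxiliary function $\zeta$ converge to the identity, which removes the factor $\zeta'$, and then let the regularisation parameter $\eps$ tend to $0$; the two extra factors in that estimate (the derivative $\zeta'$ and the additional $\GHK$ on the right‑hand side) are precisely what makes the leftover bound equal to the constant $1$. Throughout write $\X(\mcQ)\eqdef(\mcM(\R^d),\HK,\mcQ)$, $\AA\eqdef\FC{1,1}{u,b}{1}{b}(\mcM(\R^d))$, and abbreviate $s_\eps\eqdef\tfrac12\GHK(\nu,\mathsf{T}_\eps(\emparg))^2$ and $g_\eps\eqdef\GHK(\nu,\mathsf{T}_\eps(\emparg))=\sqrt{2s_\eps}$; recall the elementary bound $g_\eps(\mu)^2\le 2(\nu\R^d+\mu\R^d+\eps)$, so that $s_\eps$ and $g_\eps$ are bounded on every $\HK$‑bounded subset of $\mcM(\R^d)$.

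\emph{Step 1 (removing $\zeta$).} Fix $\eps\in(0,1)$ and choose a sequence $\zeta_j\in\rmC^1(\R)$ of nondecreasing functions with $\zeta_j(0)=0$, $0\le\zeta_j'\le1$, $\zeta_j'$ of compact support, and $\zeta_j(t)\uparrow t$, $\zeta_j'(t)\uparrow1$ for every $t\ge0$ (e.g.\ $\zeta_j(t)=\int_0^t\chi(r/j)\,\d r$ for a suitable smooth cutoff $\chi$). The preceding proposition gives $\zeta_j\circ s_\eps\in D^{1,2}(\X(\mcQ);\AA)$ with $|\rmD(\zeta_j\circ s_\eps)|_{\star,2,\AA}\le\zeta_j'(s_\eps)\,g_\eps$ $\mcQ$‑a.e., the right‑hand side being bounded since $\zeta_j'(s_\eps)$ is supported where $s_\eps$ is bounded. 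Multiplying by the cutoffs $u_k$ of Lemma \ref{le:trunc1} and invoking the Leibniz rule of Theorem \ref{thm:omnibus}, the products $u_k(\zeta_j\circ s_\eps)$ lie in $D^{1,2}(\X(\mcQ);\AA)\cap L^\infty(\mcQ)$ with relaxed gradient bounded by $u_k\zeta_j'(s_\eps)g_\eps+(\zeta_j\circ s_\eps)|\rmD u_k|_{\star,2,\AA}$; letting $j\to\infty$ with $k$ fixed and using Lemma \ref{le:trunc2} (equivalently, the closure of $S_{\X(\mcQ),2,\AA}$ in Theorem \ref{thm:omnibus}) gives $u_ks_\eps\in D^{1,2}(\X(\mcQ);\AA)$ with $|\rmD(u_ks_\eps)|_{\star,2,\AA}\le u_kg_\eps+s_\eps|\rmD u_k|_{\star,2,\AA}$. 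Applying next the chain rule of Theorem \ref{thm:omnibus} with $\phi_\delta(r)\eqdef\sqrt{2(r\vee0)+\delta}$, together with $|\rmD u_k|_{\star,2,\AA}\le Ck^{-1/2}\car_{\{k\le\mu\R^d\le2k\}}$, yields
\[
|\rmD\phi_\delta(u_ks_\eps)|_{\star,2,\AA}\ \le\ \frac{u_kg_\eps+s_\eps|\rmD u_k|_{\star,2,\AA}}{\sqrt{2u_ks_\eps+\delta}}\ \le\ 1+\frac{C_k}{\sqrt{k\,\delta}}\,\car_{\{k\le\mu\R^d\le2k\}}\qquad\mcQ\text{-a.e.},
\]
where $C_k$ bounds $s_\eps$ on $\{\mu\R^d\le2k\}$. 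Choosing $\delta=\delta_k\downarrow0$ at an appropriate rate, $\phi_{\delta_k}(u_ks_\eps)\to g_\eps$ in $L^0(\mcQ)$ while the relaxed‑gradient bounds converge, along a subsequence weakly in $L^2(\mcQ)$, to a limit $\le1$; the closure of $S_{\X(\mcQ),2,\AA}$ then gives $g_\eps\in D^{1,2}(\X(\mcQ);\AA)$ with $|\rmD g_\eps|_{\star,2,\AA}\le1$ $\mcQ$‑a.e. (On $\{g_\eps=0\}$, locality of the minimal relaxed gradient, Theorem \ref{thm:omnibus}, gives $|\rmD g_\eps|_{\star,2,\AA}=0$, so no degeneracy issue arises from $\phi_\delta$.)

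\emph{Step 2 (removing $\mathsf{T}_\eps$).} By Proposition \ref{prop:thereg}, $\mathsf{T}_\eps(\mu)\to\mu$ weakly as $\eps\downarrow0$ for every $\mu$; since $\GHK$ metrises the weak topology of $\mcM(\R^d)$ and is continuous, $g_\eps(\mu)=\GHK(\nu,\mathsf{T}_\eps(\mu))\to\GHK(\nu,\mu)$ pointwise, hence in $L^0(\mcQ)$. By Step 1 the constant $1$ is a $(2,\AA)$‑relaxed gradient of each $g_\eps$, and trivially $1\to1$ weakly in $L^2(\mcQ)$; the closure of $S_{\X(\mcQ),2,\AA}$ (Theorem \ref{thm:omnibus}) then shows that $1$ is a $(2,\AA)$‑relaxed gradient of $\GHK(\nu,\emparg)$, whence $|\rmD\GHK(\nu,\emparg)|_{\star,2,\AA}\le1$ $\mcQ$‑a.e., as claimed. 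The main obstacle is the bookkeeping in Step 1: although the pointwise estimates are immediate, one must keep all functions and their relaxed gradients inside $L^2(\mcQ)$ while simultaneously dispensing with $\zeta$, the localisation $u_k$, and the square‑root regularisation $\delta$ — delicate because $\GHK(\nu,\emparg)$ itself need not belong to $L^2(\mcQ)$ when $\mcQ$ is merely finite — which forces the coupled choice of $\delta_k$ together with repeated use of the closure of $S_{\X(\mcQ),2,\AA}$; this is exactly the argument of \cite[Corollary 4.18]{FSS22}.
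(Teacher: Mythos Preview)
Your proposal is correct and follows the same route as the paper, which simply writes ``Arguing precisely as in \cite[Corollary 4.18]{FSS22}''; you have spelled out that argument in detail. The two-step scheme (first remove $\zeta$ via truncation and a square-root chain rule to pass from $\tfrac12\GHK^2$ to $\GHK$, then let $\eps\downarrow 0$ using the closure of $S_{\X(\mcQ),2,\AA}$) is exactly the intended one, and your care about coupling $\delta_k$ with the localisation $u_k$ is the right way to keep everything in $L^2(\mcQ)$ despite $\GHK(\nu,\emparg)$ possibly failing to be square-integrable.
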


As a further corollary we obtain our main result concerning the space $\X(\mcQ)$.

\begin{theorem}\label{teo:density} For every finite, non-negative Borel measure $\mcQ$ on $(\mcM(\R^d), \HK)$ the unital and point-separating subalgebra $\AA= \FC{1,1}{u,b}{1}{b}(\mcM(\R^d)) \subset \Lip_b(\mcM(\R^d), \HK)$ is dense in $2$-energy in $D^{1,2}(\mcM(\R^d), \HK, \mcQ)$. In particular $H^{1,2}(\mcM(\R^d), \HK, \mcQ)$ is a Hilbert space and $(\mcM(\R^d), \HK)$ is universally infinitesimal Hilbertian.
\end{theorem}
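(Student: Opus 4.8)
The plan is to apply the abstract density criterion Theorem~\ref{theo:startingpoint} with $X=\mcM(\R^d)$, coarse distance $\sfd=\GHK$, fine distance $\delta=\HK$, reference measure $\mssm=\mcQ$, exponent $q=2$, a countable dense set $Y$ to be chosen, and subalgebra $\AA=\FC{1,1}{u,b}{1}{b}(\mcM(\R^d))$. First I would record that the structural hypotheses are supplied by \S\ref{s:DistMeas}: both $(\mcM(\R^d),\GHK)$ and $(\mcM(\R^d),\HK)$ are complete separable metric spaces inducing the weak topology, $\mcQ$ is finite, $\AA$ is unital and point-separating, and $\AA\subset\Lip_b(\mcM(\R^d),\GHK)$ --- each $u\in\AA$ is bounded, weakly continuous, vanishes outside a set $\set{\mu:\mu\R^d\le K}$ on which it is $\GHK$-Lipschitz by \cite[Prop.~7.18]{LMS18}, and is \emph{globally} $\GHK$-Lipschitz thanks to the elementary bound $\abs{\sqrt{\mu\R^d}-\sqrt{\mu'\R^d}}\le\GHK(\mu,\mu')$. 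Condition $(c)$ of Theorem~\ref{theo:startingpoint}, namely $\GHK\wedge 1\le\HK\le\widehat{\GHK}$, holds because $\HK$ is exactly the length distance generated by $\GHK$ (\cite[Cor.~8.7]{LMS18}), so $\HK=\widehat{\GHK}\ge\GHK\ge\GHK\wedge 1$.

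The core step is the verification of \eqref{eq:214-15}. Here I would \emph{not} take $Y$ to be an arbitrary countable dense set, but a countable, weakly dense family lying in the class covered by Corollary~\ref{cor:ineq}: measures of the form $\nu=\delta\Leb{d}\mres\rmB(0,R)+\tparen{(\mu\mres\rmB(0,R-\eps))\ast\kappa_\eps}$, with $\mu$ ranging over a fixed countable weakly dense subset of $\mcM(\R^d)$ and $R,\eps,\delta$ rational. Each such $\nu$ lies in $\mcM^{ac}(\R^d)$, satisfies $\supp(\nu)=\overline{\rmB(0,R)}$ and $\nu\ge\delta\Leb{d}\mres\rmB(0,R)$, and a diagonal argument (letting $R\to\infty$, $\eps,\delta\downarrow 0$ with $\delta R^d\to0$) shows that such $\nu$ weakly approximate any given measure, so $Y$ is $\GHK$-dense. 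Corollary~\ref{cor:ineq} then gives, for every $\nu\in Y$, that $\GHK(\nu,\emparg)\in D^{1,2}(\X';\AA)$ with $\abs{\rmD\GHK(\nu,\emparg)}_{\star,\X',2,\AA}\le 1$ $\mcQ$-a.e., where $\X'=(\mcM(\R^d),\tau,\HK,\mcQ)$ --- recall that throughout \S\ref{s:DensityCylinder} relaxed gradients of functions in $\AA$ are computed with respect to $\HK$. This is precisely \eqref{eq:214-15}, so Theorem~\ref{theo:startingpoint} yields that $\AA$ is dense in $2$-energy in $D^{1,2}(\X')=D^{1,2}(\mcM(\R^d),\HK,\mcQ)$.

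It remains to harvest the two consequences. For Hilbertianity, given $f,g\in D^{1,2}(\mcM(\R^d),\HK,\mcQ)$ --- which by the density just proved and Remark~\ref{rem:dense2} coincides with $D^{1,2}(\mcM(\R^d),\HK,\mcQ;\AA)$ with the same minimal relaxed gradient --- I would use Definition~\ref{def:density} to pick $f_n,g_n\in\AA$ with $f_n\to f$, $g_n\to g$ $\mcQ$-a.e.\ and $\lip_{\HK}f_n\to\abs{\rmD f}_{\star,2}$, $\lip_{\HK}g_n\to\abs{\rmD g}_{\star,2}$ strongly in $L^2(\mcQ)$. Since $\AA$ is an algebra, $f_n\pm g_n\in\AA\subset\FC{1,1}{b,b}{1}{b}$, so Proposition~\ref{prop:equalityriem}, the linearity of $\boldnabla$ in \eqref{eq:Fdiff}, and the parallelogram identity in the Hilbert space $T_\mu^{1,4}$ give, pointwise, $\tparen{\lip_{\HK}(f_n+g_n)}^2+\tparen{\lip_{\HK}(f_n-g_n)}^2=2\tparen{\lip_{\HK}f_n}^2+2\tparen{\lip_{\HK}g_n}^2$; integrating, letting $n\to\infty$, and using $L^0$-lower semicontinuity of $\CE_2$ yields $\CE_2(f+g)+\CE_2(f-g)\le 2\CE_2(f)+2\CE_2(g)$, whence $\CE_2$ is a quadratic form by \cite[Prop.~11.9]{DalMaso93} and $H^{1,2}(\mcM(\R^d),\HK,\mcQ)$ is Hilbert. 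Finally, as $\mcQ$ was an arbitrary finite non-negative Borel measure, Proposition~\ref{prop:extt} upgrades this to universal infinitesimal Hilbertianity of $(\mcM(\R^d),\HK)$.

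The main obstacle --- granting Corollary~\ref{cor:ineq}, which already absorbs the hard optimal-potential estimates of \S\ref{sec:spec} --- is the careful alignment of the two distances $\GHK$ and $\HK$ in Theorem~\ref{theo:startingpoint}: checking $\AA\subset\Lip_b(\mcM(\R^d),\GHK)$, matching the $\HK$-relaxed gradient produced by Corollary~\ref{cor:ineq} with the one demanded by \eqref{eq:214-15}, and arranging that $Y$ may be restricted to the regularized measures of Corollary~\ref{cor:ineq} without losing density.
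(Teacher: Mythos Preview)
Your proof is correct and follows the same route as the paper: invoke Corollary~\ref{cor:ineq} on a countable $\GHK$-dense family of regularised measures to verify hypothesis~\eqref{eq:214-15} of Theorem~\ref{theo:startingpoint} with $\sfd=\GHK$, $\delta=\HK$, then use $\HK=\widehat{\GHK}$ for condition~(c), and finally combine Proposition~\ref{prop:equalityriem} with Proposition~\ref{prop:extt} for Hilbertianity and universality. The paper outsources the Hilbertianity step to \cite[Theorem~2.17]{FSS22}, whereas you unwind that reference via the parallelogram identity in $T_\mu^{1,4}$; also note that your appeal to \cite[Prop.~7.18]{LMS18} for the $\GHK$-Lipschitz property should be applied to $\GHK=\HK_{\gd}$ (so one additionally checks that $f\in\rmC^1_b(\R^d)$ is $\gd$-Lipschitz, which is immediate since $\gd$ is bounded and comparable to $\sfd_e$ near the diagonal).
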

\begin{proof} The first statement follows by Corollary \ref{cor:ineq} and Theorem \ref{theo:startingpoint}, also recalling that $\HK$ is the length distance induced by $\GHK$.
The Hilbertianity of the metric measure space $H^{1,2}(\mcM(\R^d), \HK, \mcQ)$ is then an immediate consequence of Proposition \ref{prop:equalityriem} and \cite[Theorem 2.17]{FSS22}. The universal infinitesimal Hilbertianity of $(\mcM(\R^d), \HK)$ then follows by Proposition~\ref{prop:extt}.
\end{proof}

\subsection{Extensions of the Hilbertianity and density results}\label{s:Extensions}

\subsubsection{Cylinder functions in the Riemannian setting} 

In this section we extend the density result from measures on $\R^d$ to measures on a Riemannian manifold. We proceed as in \cite[Section 6]{FSS22} and we recall the following result \cite[Theorem 2.24]{FSS22} after introducing some notation. 

If $X$ is a set and $Y \subset X$, $\jmath\colon Y \to X$ is the inclusion map and $g\colon X\to \R$, then we define $\jmath^* g\colon Y \to \R$ as $\jmath^* g\eqdef  g \circ \jmath$.

\begin{theorem}\label{thm:224} Let $(X, \sfd)$ be a complete and separable metric space and let $Y \subset X$ be a closed subset endowed with a metric $\delta$ such that  $(Y,\dY)$ is complete and separable and 
    \begin{equation}
    \label{eq:39}
    \sfd_1(y_1,y_2)\leq \dY(y_1,y_2)\leq
   \hat \sfd_{Y}(y_1,y_2)\comma \qquad y_1,y_2\in Y \fstop
  \end{equation}
Let $\mssm$ be a non-negative, finite, Borel measure on $(Y, \delta)$ and let $\jmath\colon Y \to X$ be the inclusion map. Consider the Polish metric-measure spaces $\X\eqdef  (X, \sfd, \jmath_\sharp \mssm)$ and $\Y\eqdef (Y, \delta, \mssm)$.
If $\AA \subset \Lip_b(X, \sfd)$ is a unital and point separating subalgebra dense in $q$-energy in $D^{1,q}(\X)$, $q \in (1,+\infty)$, then $\jmath^* (\AA) \subset \Lip_b(Y, \delta)$ and 
\[
\CE_{\Y, q} \circ \jmath^* = \CE_{\Y, q, \jmath^*(\AA)} \circ \jmath^* = \CE_{\X, q}  = \CE_{\X, q, \AA} \quad \text{ on } L^0(X, \jmath_\sharp \mssm)\fstop
\]
\end{theorem}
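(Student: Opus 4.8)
The plan is to reduce the four‑fold identity to a chain of inequalities that closes on itself, the single substantial link being a transplantation of the Hopf--Lax argument from the proof of Theorem~\ref{theo:startingpoint} from $X$ onto the closed subset~$Y$ (this is \cite[Thm.~2.24]{FSS22}). First I would make the standard reductions: truncating $\sfd$ at~$1$ leaves all four Cheeger energies unchanged (cf.\ Claim~\ref{i:t:startingpoint:1} in the proof of Theorem~\ref{theo:startingpoint}), so one may assume $\sfd\le1$, whence \eqref{eq:39} becomes $\sfd\le\dY\le\hat\sfd_Y$ on $Y\times Y$; since $Y$ is closed, $\jmath_\sharp\mssm$ is a finite Borel measure concentrated on~$Y$, the pull‑back $\jmath^*$ identifies $L^0(X,\jmath_\sharp\mssm)$ with $L^0(Y,\mssm)$ (preserving $\mssm$‑a.e.\ convergence and $L^p$‑norms) and makes the integrals against $\jmath_\sharp\mssm$ coincide with the corresponding integrals over $Y$ against $\mssm$; and every bounded $\sfd$‑Lipschitz function restricts to a bounded $\dY$‑Lipschitz, $\tau_\dY$‑continuous function on~$Y$ (using $\dY\ge\sfd$ and $\tau_\dY\supseteq\tau_\sfd|_Y$), so $\jmath^*\AA\subset\Lip_b(Y,\dY)$ is a unital point‑separating subalgebra.

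The elementary link is that \emph{restriction is $1$‑contractive on asymptotic Lipschitz constants}: for $f\in\Lip_b(X,\sfd)$ and $y\in Y$, every $\tau_\sfd$‑neighbourhood $U$ of~$y$ in~$X$ meets~$Y$ in a $\tau_\dY$‑neighbourhood of~$y$ in~$Y$, and $\Li[\dY,U\cap Y]{\jmath^*f}\le\Li[\sfd,U]{f}$ because $\dY\ge\sfd$ on~$Y$; taking the infimum over~$U$ gives $\lip_\dY^{\tau_\dY}(\jmath^*f)\le\lip_\sfd^{\tau_\sfd}f$ on~$Y$, hence $\pCE_{\Y,q}(\jmath^*f)\le\pCE_{\X,q}(f)$ and, relaxing along the same sequences via $\jmath^*$, $\CE_{\Y,q,\jmath^*\AA}\circ\jmath^*\le\CE_{\X,q,\AA}$ on $L^0(X,\jmath_\sharp\mssm)$. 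Combined with the trivial bound $\CE_{\Y,q}\le\CE_{\Y,q,\jmath^*\AA}$ (smaller test class) and the density‑in‑energy hypothesis $\CE_{\X,q,\AA}=\CE_{\X,q}$ (Definition~\ref{def:density}, Remark~\ref{rem:dense2}, and Theorem~\ref{thm:omnibus}\ref{i:t:Omnibus:boh}), the only missing inequality is
\begin{equation}\tag{$\dagger$}
\CE_{\X,q}(g)\ \le\ \CE_{\Y,q}(\jmath^*g)\comma\qquad g\in L^0(X,\jmath_\sharp\mssm)\comma
\end{equation}
after which the chain $\CE_{\X,q}(g)\le\CE_{\Y,q}(\jmath^*g)\le\CE_{\Y,q,\jmath^*\AA}(\jmath^*g)\le\CE_{\X,q,\AA}(g)=\CE_{\X,q}(g)$ forces all four to coincide, which is the assertion.

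To prove $(\dagger)$ I would first collapse $\dY$ to the restricted distance $\sfd|_Y$ by the invariance of the Cheeger energy under the sandwiching \eqref{eq:39}, i.e.\ \cite[Cor.~5.3.6]{Savare22} applied on the Polish metric measure space $(Y,\sfd|_Y,\mssm)$ (whose induced length distance inside $Y$ is $\hat\sfd_Y$): this gives $\CE_{\Y,q}(\jmath^*g)=\CE_{(Y,\sfd|_Y,\mssm),q}(\jmath^*g)$. By strong approximation (Theorem~\ref{thm:omnibus}\ref{i:t:Omnibus:3}) in $(Y,\sfd|_Y,\mssm)$ together with the $L^0$‑lower semicontinuity of $\CE_{\X,q}$, it then suffices to show $\CE_{\X,q}(g_h)\le\int_Y(\lip_{\sfd|_Y}h)^q\,\d\mssm$ for every $h\in\Lip_b(Y,\sfd|_Y)$, where $g_h$ denotes the class in $L^0(X,\jmath_\sharp\mssm)$ with $\jmath^*g_h=h$. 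For this I would run the Hopf--Lax regularization \emph{on $X$} but with poles confined to a countable $\sfd|_Y$‑dense set $Z=\{z_k\}_k\subset Y$, setting $\sfQ^n_t h(x)=\min_{1\le k\le n}\bigl(\tfrac{1}{rt^{r-1}}\sfd(x,z_k)^r+h(z_k)\bigr)$ and $\sfQ_t h=\inf_k(\cdots)$ for $x\in X$: these are globally bounded $\sfd$‑Lipschitz on~$X$, converge to~$g_h$ in $L^0(X,\jmath_\sharp\mssm)$ as $t\downarrow0$ (their restrictions to~$Y$ are the intrinsic Hopf--Lax flow of~$h$ on $(Y,\sfd|_Y)$, which converges pointwise to~$h$), and are built from the distance functions $\sfd(\emparg,z_k)$, each of which is $1$‑Lipschitz and bounded, hence lies in $D^{1,q}(\X)=D^{1,q}(\X;\AA)$ with minimal relaxed gradient $\le1$ --- this is where the density‑in‑energy hypothesis on $\AA$ replaces the explicit assumption \eqref{eq:214-15} of Theorem~\ref{theo:startingpoint}. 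From here the calculus rules of Theorem~\ref{thm:omnibus} (chain rule, sub‑linearity, truncations) and the weak$^*$/reverse‑Fatou argument of Claim~\ref{i:t:startingpoint:5} and the concluding step of the proof of Theorem~\ref{theo:startingpoint} apply essentially verbatim: $(t^{-1}\sfD^n_t)^{r-1}$ is a $(\X,q,\AA)$‑relaxed gradient of $\sfQ^n_t h$, $(t^{-1}\sfD^+_t h)^{r-1}$ is one of $\sfQ_t h$, and using the intrinsic Hopf--Lax asymptotics \eqref{eq:25}--\eqref{eq:27} and the uniform bound \eqref{eq:20} on $Y$ one obtains $\int_Y|\rmD g_h|_{\star,\X,q,\AA}^q\,\d\mssm\le\int_Y(\lip_{\sfd|_Y}h)^q\,\d\mssm$; the density hypothesis finally gives $|\rmD g_h|_{\star,\X,q,\AA}=|\rmD g_h|_{\star,\X,q}$, which is the desired estimate.

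The main obstacle is $(\dagger)$: the space~$X$ in which the admissible test functions must live and in which the asymptotic Lipschitz constant is computed is strictly larger than the support~$Y$ of the reference measure and is equipped with a different distance than the one carried by~$Y$. Overcoming this requires exactly the two ideas above --- first collapsing $\dY$ to $\sfd|_Y$ by the Cheeger‑energy robustness of \cite[Cor.~5.3.6]{Savare22}, then performing the Hopf--Lax regularization inside~$X$ with poles kept in~$Y$, so that the regularizations are genuinely $\sfd$‑Lipschitz on~$X$ while all the quantities controlling their relaxed gradients (the distance‑to‑minimizer functions $\sfD^n_t$, $\sfD^+_t h$) are intrinsic to $(Y,\sfd|_Y)$ and can be estimated by the one‑dimensional Hopf--Lax theory. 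Everything else is either a direct comparison of difference quotients or a verbatim citation of the arguments already carried out for Theorem~\ref{theo:startingpoint}.
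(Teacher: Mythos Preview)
The paper does not prove Theorem~\ref{thm:224}: it is stated as a recall of \cite[Theorem~2.24]{FSS22}, with no proof supplied. So there is no ``paper's own proof'' against which to compare your attempt.

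That said, your proposal is a faithful reconstruction of the argument in \cite{FSS22}. The chain of easy inequalities and the identification of $(\dagger)$ as the only substantial link are exactly right; the collapse of $\dY$ to $\sfd|_Y$ via \cite[Cor.~5.3.6]{Savare22} is the standard first move; and running the Hopf--Lax regularization on $X$ with poles confined to a countable dense subset of $Y$ is precisely the mechanism that produces globally $\sfd$-Lipschitz approximants on $X$ whose relaxed-gradient bounds live entirely on $(Y,\sfd|_Y)$ --- since $\jmath_\sharp\mssm$ is concentrated on $Y$, all integrals in the concluding Fatou/reverse-Fatou step localize to $Y$, where the intrinsic Hopf--Lax asymptotics \eqref{eq:25}--\eqref{eq:27} apply. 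Your observation that the density hypothesis on $\AA$ replaces the explicit assumption \eqref{eq:214-15} (the distance functions $\sfd(\cdot,z_k)$ are bounded $1$-Lipschitz after truncation, hence automatically in $D^{1,q}(\X)=D^{1,q}(\X;\AA)$ with relaxed gradient $\le1$) is the key point distinguishing this from a direct quotation of Theorem~\ref{theo:startingpoint}. The argument is correct as sketched.
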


\begin{theorem}\label{teo:csubs} Let $C \subset \R^d$ be a closed set and let $\sigma\colon C \times C \to [0,+\infty)$ be a metric on $C$ such that $(C,\sigma)$ is complete and separable and satisfying
\[ 
\sfd_e(y_1,y_2)\le \sigma(y_1,y_2)\le
\hat {(\sfd_e)}_{Y}(y_1,y_2)\quad\text{for every }y_1,y_2\in C\comma
\]
where $\sfd_e$ is the distance induced by the Euclidean norm on $\R^d$. Let $\mcQ$ be a non-negative, finite Borel measure on $(\mcM(C), \HK_\sigma)$ and let $\jjmath\eqdef \jmath_\sharp$ where $\jmath\colon C \to \R^d$ is the inclusion. Set $\Y\eqdef (\mcM(C), \HK_\sigma, \mcQ)$ and $\X\eqdef (\mcM(\R^d), \HK_{\sfd_e}, \jjmath_\sharp \mcQ)$. Then $H^{1,2}(\X)$ is linearly isomorphic to $H^{1,2}(\Y)$ and
\begin{equation}\label{eq:140}
 |\rmD (\jjmath^* u)|_{\star, \Y, 2} = \jjmath^* (|\rmD u|_{\star, \X, 2}) \quad \text{ for every } u \in H^{1,2}(\X).
\end{equation}
In particular $H^{1,2}(\Y)$ is a Hilbert space and the unital and point separating subalgebra $\jjmath^*(\FC{1,1}{u,b}{1}{b}(\mcM(\R^d)))$ is dense in $2$-energy in $D^{1,2}(\Y)$.
\end{theorem}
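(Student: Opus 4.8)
The plan is to deduce Theorem~\ref{teo:csubs} as a direct application of the abstract transfer result Theorem~\ref{thm:224}, with the roles of the ambient space and the closed subset played respectively by $X = \mcM(\R^d)$ (with distance $\HK_{\sfd_e}$) and $Y = \mcM(C)$ (with distance $\HK_\sigma$). The inclusion map on the level of base spaces is $\jmath\colon C\hookrightarrow \R^d$, and $\jjmath = \jmath_\sharp\colon \mcM(C)\to\mcM(\R^d)$ is the induced (injective, continuous) map on measures; since $C$ is closed in $\R^d$, the image $\jjmath(\mcM(C))$ is exactly the set of measures in $\mcM(\R^d)$ concentrated on $C$, which is closed in $\mcM(\R^d)$ for the weak topology, so $\mcM(C)$ is a genuine closed subset of $\mcM(\R^d)$ once we identify it with its image. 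First I would verify the hypotheses of Theorem~\ref{thm:224}: completeness and separability of $(\mcM(C),\HK_\sigma)$ and $(\mcM(\R^d),\HK_{\sfd_e})$ hold by the results of~\cite{LMS18} recalled in~\S\ref{sec:hk}; finiteness of $\mcQ$ is assumed; and $\jmath$-measurability/pushforward compatibility is immediate.

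The key technical point to check is the chain of inequalities~\eqref{eq:39} at the level of $\mcM(\R^d)$, namely
\[
\HK_{\sfd_e}(\aalpha_0,\aalpha_1)\wedge 1 \;\le\; \HK_\sigma(\aalpha_0,\aalpha_1)\;\le\; \widehat{(\HK_{\sfd_e})}_{\mcM(C)}(\aalpha_0,\aalpha_1)\comma\qquad \aalpha_0,\aalpha_1\in\mcM(C)\fstop
\]
For the left inequality one uses $\sfd_e\le\sigma$ on $C\times C$ together with monotonicity of $\mssd\mapsto\HK_\mssd$ (pointed out in the excerpt, e.g.\ in the proof of Proposition~\ref{prop:equalityriem}); in fact $\HK_{\sfd_e}\le\HK_\sigma$ directly, which is even stronger than the stated inequality. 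For the right inequality one uses $\sigma\le\widehat{(\sfd_e)}_{C}$ on $C\times C$: a competitor curve for the length distance $\widehat{(\HK_{\sfd_e})}_{\mcM(C)}$ between $\aalpha_0$ and $\aalpha_1$ is a $\HK_{\sfd_e}$-Lipschitz curve staying inside $\mcM(C)$, and along $\mcM(C)$ the distances $\HK_{\sfd_e}$ and $\HK_{\widehat{(\sfd_e)}_C}=\widehat{\sfd_e}_C$-induced $\HK$ compare favourably; since $\HK_\sigma\le \HK_{\widehat{(\sfd_e)}_C}$ and the latter is bounded above by its own length distance on $\mcM(C)$, which coincides with $\widehat{(\HK_{\sfd_e})}_{\mcM(C)}$, the claim follows. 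This verification — making precise the interaction between taking length distances on $C$ and taking length distances on $\mcM(C)$ — is where I expect to spend the most care; it is essentially a bookkeeping exercise with the general principle ``$\mssd\le\mssd'\le\widehat\mssd$ implies $\HK_\mssd\le\HK_{\mssd'}\le\widehat{(\HK_\mssd)}$ on the relevant subset'', but one must be attentive to the truncation $\wedge 1$ and to the fact that $\widehat{(\sfd_e)}_C$ need not equal $\sfd_e$.

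Once~\eqref{eq:39} is in place, Theorem~\ref{thm:224} applied with $q=2$, $\AA=\FC{1,1}{u,b}{1}{b}(\mcM(\R^d))$ (which is unital and point-separating by the remarks following Definition~\ref{def:cyl}, cf.~\eqref{eq:LipSubAlg}), and $\mssm=\mcQ$ yields immediately
\[
\CE_{\Y,2}\circ\jjmath^* \;=\; \CE_{\Y,2,\jjmath^*(\AA)}\circ\jjmath^* \;=\; \CE_{\X,2} \;=\; \CE_{\X,2,\AA}\qquad\text{on }L^0(\mcM(\R^d),\jjmath_\sharp\mcQ)\comma
\]
provided we know $\AA$ is dense in $2$-energy in $D^{1,2}(\X)$ — but this is exactly Theorem~\ref{teo:density} applied to the finite Borel measure $\jjmath_\sharp\mcQ$ on $(\mcM(\R^d),\HK_{\sfd_e})$. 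Since $\jjmath\colon\mcM(C)\to\mcM(\R^d)$ is injective with closed image, $\jjmath^*$ is a linear isomorphism of $L^0(\mcM(\R^d),\jjmath_\sharp\mcQ)$ onto $L^0(\mcM(C),\mcQ)$ restricting to a bijection $H^{1,2}(\X)\to H^{1,2}(\Y)$; combining this with the displayed identity of Cheeger energies gives both the isometric identification of the Sobolev spaces and the pointwise identity~\eqref{eq:140} of minimal relaxed gradients (the latter being the local-representation counterpart~\eqref{eq:3} of the energy identity). Finally, $H^{1,2}(\X)$ is a Hilbert space by Theorem~\ref{teo:density}, hence so is $H^{1,2}(\Y)$ via the linear isometry; and density in $2$-energy of $\jjmath^*(\AA)$ in $D^{1,2}(\Y)$ is precisely the statement $\CE_{\Y,2,\jjmath^*(\AA)}=\CE_{\Y,2}$ delivered by Theorem~\ref{thm:224}, noting that $\jjmath^*(\AA)\subset\Lip_b(\mcM(C),\HK_\sigma)$ is still unital and point-separating because $\jjmath$ is injective. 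This completes the proof.
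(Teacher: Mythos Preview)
Your overall strategy is the same as the paper's: apply Theorem~\ref{thm:224} with $X=\mcM(\R^d)$, $\sfd=\HK_{\sfd_e}$, $Y=\mcM(C)$, $\delta=\HK_\sigma$, and $\AA=\FC{1,1}{u,b}{1}{b}(\mcM(\R^d))$, using Theorem~\ref{teo:density} for the density hypothesis on~$\AA$. Your treatment of the left inequality in~\eqref{eq:39} via monotonicity of $\mssd\mapsto\HK_\mssd$ is correct and agrees with the paper.

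The genuine gap is the right inequality $\HK_\sigma\le\widehat{(\HK_{\sfd_e})}_{\mcM(C)}$. You reduce it to the claim that the length distance of $\HK_{\widehat{(\sfd_e)}_C}$ on $\mcM(C)$ ``coincides with $\widehat{(\HK_{\sfd_e})}_{\mcM(C)}$'', presenting this as bookkeeping. It is not: there is no purely formal reason why the $\HK$-length structure on $\mcM(C)$ induced by $\widehat{(\sfd_e)}_C$ should match the one induced by $\sfd_e$ restricted from $\R^d$; this identity is precisely the content of the argument, not a known lemma you can quote. The paper proves the right inequality directly, and the proof uses specific structure of $\HK$: given an $\HK_{\sfd_e}$-arclength curve $(\mu_t)_{t\in[0,\ell]}$ in $\mcM(C)$, one invokes the dynamic lifting~\cite[Thm.~8.4]{LMS18} to produce a measure $\eeta$ on absolutely continuous curves in the cone $\f C[\R^d]$ with $(\f h\circ\sfe_t)_\sharp\eeta=\mu_t$ and $\int|\dot{\f w}|^2_{(\sfd_e)_{\pi,\f C}}\diff\eeta=|\dot\mu|^2_{\HK_{\sfd_e}}=1$ a.e.; a Fubini argument with $\zeta([x,r])\eqdef r^2\,\mathrm{dist}(x,C)$ shows that $\eeta$-a.e.\ curve actually stays in $\f C[C]$; then the local representation of cone speeds~\cite[Lem.~8.1]{LMS18}, together with the squeeze $|\dot x|_{\sfd_e}\le|\dot x|_\sigma\le|\dot x|_{\widehat{(\sfd_e)}_C}=|\dot x|_{\sfd_e}$ on $C$, gives $|\dot{\f w}|_{(\sfd_e)_{\pi,\f C}}=|\dot{\f w}|_{\sigma_{\pi,\f C}}$; finally $(\sfe_0,\sfe_1)_\sharp\eeta\in\f H(\mu_0,\mu_1)$ serves as a competitor for $\HK_\sigma$, yielding $\HK_\sigma(\mu_0,\mu_1)\le\ell$. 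Your sketch contains none of these ingredients, and the ``general principle'' you state is itself what needs proving.
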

\begin{proof} Since the topologies induced by $\sigma$ and $\sfd_e$ on $C$ are both Polish and comparable, $\mathcal{B}(C, \sfd_e)$ and $\mathcal{B}(C, \sigma)$ coincide so that we can simply write $\mcM(C)$ without specifying the distance inducing the topology on $C$. Clearly every measure in $\mcM(\R^d)$ with support contained in $C$ can be identified with an element of $\mcM(C)$ and every measure in $\mcM(C)$ can be viewed as a measure in $\mcM(\R^d)$ with support contained in $C$. Therefore we can identify $\mcM(C)$ as a subset of $\mcM(\R^d)$ and the inclusion map is provided by $\jjmath$. 

\medskip

We want to apply Theorems \ref{thm:224} and \ref{teo:density} with~$X\eqdef \mcM(\R^d)$, $\sfd\eqdef \HK_{\sfd_e}$, \ $Y\eqdef \mcM(C)$, and $\delta\eqdef \HK_\sigma$. Since $C$ is closed, also $\mcM(C)$ is closed in $(\mcM(\R^d), \HK_{\sfd_e})$.
We only have to check the inequalities \eqref{eq:39}: the first one
\[
\HK_{\sfd_e} \wedge 1 \le  \HK_\sigma \quad \text{ on } \mcM(C) \times \mcM(C)\comma
\]
is immediately satisfied since $(\rmd_e)_{\pi, \f{C}} \le \sigma_{ \pi, \f{C}}$ on $\f{C}(C)$, as a consequence of the definition of the distance on the cone (cf.~\eqref{ss22:eq:distcone}) and since the same inequality being satisfied by $\sfd_e$ and $\sigma$.
This gives the stronger inequality $\HK_{\sfd_e} \le  \HK_\sigma$.
To prove the second inequality in \eqref{eq:39}, we consider two measures $\mu_0, \mu_1 \in \mcM(C)$ and a $\HK_{\sfd_e}$-Lipschitz curve $\mu:[0,\ell]\to \mcM(C)$ such that $\mu(0)=\mu_0$ and $\mu(\ell)=\mu_1$ parametrized by $\HK_{\sfd_e}$-arc-length.
By \cite[Theorem 8.4]{LMS18}) there exists a measure  $\eeta\in \mcP(\rmC([0,\ell];\f{C}[\R^d]))$ concentrated on $(\sfd_e)_{\pi, \f{C}}$-absolutely continuous curves such that $(\f{h} \circ \mathsf e_t)_\sharp (\eeta)= \mu_t  $ for every $t\in [0,\ell]$ and
\[
1= |\dot{\mu}|^2_{\HK_{\sfd_e}} (t) = \int |\dot{\f{w}}|^2_{(\sfd_e)_{\pi, \f{C}}} (t) \de \eeta(\f{w}) \quad \text{ for a.e.~$t \in [0,\ell]$}\fstop
\]
Let us now consider the continuous function $\zeta:\f{C}[\R^d] \to [0,+\infty)$ defined as
\[
\zeta([x,r])= r^2 \mathrm{dist}(x,C)\comma \qquad [x,r] \in \f{C}[\R^d]\comma
\]
and note that $\zeta$ vanishes precisely on $\f{C}[C]$ considered as a subset of $\f{C}[\R^d]$. Fubini's Theorem yields
    \begin{align*}
      \int\int_0^\ell \zeta(\f{w}(t))\,\d
      t\,\d\eeta(\f{w})
    &=
     \int_0^\ell \int \zeta(\sfe_t(\f{w}))\,
      \d\eeta(\f{w})\,\d t
      =\int_0^\ell \int_{ \R^d}\mathrm{dist}(x,C) \,\d\mu_t(x)\,\d t=0
  \end{align*}
  since $\int \mathrm{dist}(x,C)\,\d\mu_t(x)=0$ for every $t\in
  [0,\ell]$. It follows that $\int_0^\ell \zeta(\f{w}(t))\,\d t=0$ for $\eeta$-a.e.~$\f{w}$, so
  that the set of $t\in [0,\ell]$ for which
  $\f{w}(t)\in \f{C}[C]$ is dense in $[0,\ell]$. Being $C$ closed,
  we conclude that $\f{w}$ takes values in $\f{C}[C]$ for
  $\eeta$-a.e.~$\f{w}$. 
Observe also that every $\f{w} \in \mathrm{AC}([0,\ell]; (\f{C}[C], (\sfd_e)_{\pi, \f{C}}))$ belongs to $\mathrm{AC}([0,\ell]; (\f{C}[C], \sigma_{\pi, \f{C}}))$ and $|\dot{\f{w}}|_{(\sfd_e)_{\pi, \f{C}}} (t) = |\dot{\f{w}}|_{\sigma_{\pi, \f{C}}} (t)$ for a.e.~$t \in [0,\ell]$: this is a consequence of the representation in \cite[Lemma 8.1]{LMS18}.  We can now estimate the $\HK_{\sigma}$ distance between  the two measures $\mu_{0}$ and $\mu_{1}$, viz.
  \begin{align*}
    \HK_\sigma(\mu_{0},\mu_{1})^2  &\le \int_{\f{C}[C] \times \f{C}[C]} \sigma_{\pi, \f{C}}^2 \de ((\mathrm e_0, \mathrm e_1)_\sharp \eeta) =  \int \sigma_{\pi, \f{C}}(\f{w}(0), \f{w}(1))^2 \de \eeta(\f{w}) \\
    & \le \ell \int \int_0^\ell |\dot{\f{w}}|^2_{\sigma_{\pi, \f{C}}} (t) \de t \de \eeta(\f{w}) =  \ell \int \int_0^\ell |\dot{\f{w}}|^2_{(\sfd_e)_{\pi, \f{C}}} (t) \de t \de \eeta(\f{w}) \\
    & = \ell \int_0^\ell \int |\dot{\f{w}}|^2_{(\sfd_e)_{\pi, \f{C}}} (t) \de \eta(\f{w}) \de t = \ell \int_0^\ell |\dot{\mu}|^2_{\HK_{\sfd_e}} (t) \de t \\
    &= \ell^2
    \end{align*}
  where we have used  that $(\mathsf e_{0},\mathsf e_{1})_\sharp \eeta \in \f{H}(\mu_{0}, \mu_{1})$.
  Taking the infimum w.r.t.~$\ell$, we deduce that
  \[
  \HK_\sigma(\mu_0, \mu_1 ) \le \hat {(\HK_{\sfd_e})}_{\mcM(C)}(\mu_0, \mu_1)
  \]
  which is the second inequality in \eqref{eq:39}.
  \end{proof}
  
In the rest of this section we assume that $(M,g)$ is a smooth, connected, complete Riemannian manifold with Riemannian distance~$\mssd_g$.  
  
\begin{theorem}\label{thm:hilbm}
Let $\mcQ$ be a finite, non negative Borel measure on $\mcM(M)$.
Then, the unital and point-separating subalgebra $\FC{1,1}{u,b}{1}{b}$ is dense in $2$-energy in \linebreak $D^{1,2}(\mcM(M), \HK_{\mssd_g}, \mcQ)$ and $H^{1,2}(\mcM(M), \HK_{\mssd_g}, \mcQ)$ is a Hilbert space.
\end{theorem}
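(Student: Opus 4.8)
The plan is to deduce the statement from its Euclidean counterpart, Theorem~\ref{teo:csubs} (which itself rests on Theorem~\ref{teo:density}), by realizing $M$ as a closed submanifold of some~$\R^d$. First I would fix a smooth isometric embedding $\iota\colon M\to\R^d$ (for suitable $d\in\N$) whose image $C\eqdef\iota(M)$ is \emph{closed} in~$\R^d$. Transporting the Riemannian distance via $\sigma(\iota(p),\iota(q))\eqdef\mssd_g(p,q)$ makes $(C,\sigma)$ isometric to $(M,\mssd_g)$, hence complete and separable, and one has $\sfd_e\le\sigma=\hat{(\sfd_e)}_C$ on $C\times C$: the left inequality holds because a Euclidean segment is never longer than a curve in $C$ with the same endpoints, and the identity on the right because curves in $C$ correspond, via $\iota^{-1}$, to curves in $M$ of equal length. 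In particular the distance comparison in the hypotheses of Theorem~\ref{teo:csubs} holds. I would also record that $\iiota\eqdef\iota_\sharp$ is a surjective isometry $(\mcM(M),\HK_{\mssd_g})\to(\mcM(C),\HK_\sigma)$, since $\HK$ depends only on the underlying metric space, and push $\mcQ$ forward to the finite Borel measure $\iiota_\sharp\mcQ$ on $(\mcM(C),\HK_\sigma)$.

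Next I would apply Theorem~\ref{teo:csubs} with this $C$, $\sigma$, and $\iiota_\sharp\mcQ$, obtaining that $H^{1,2}(\mcM(C),\HK_\sigma,\iiota_\sharp\mcQ)$ is a Hilbert space and that $\jjmath^*\bigl(\FC{1,1}{u,b}{1}{b}(\mcM(\R^d))\bigr)$, with $\jmath\colon C\hookrightarrow\R^d$ and $\jjmath=\jmath_\sharp$, is dense in $2$-energy in $D^{1,2}(\mcM(C),\HK_\sigma,\iiota_\sharp\mcQ)$. Since $\iiota$ is a measure-preserving isometry of $(\mcM(M),\HK_{\mssd_g},\mcQ)$ onto $(\mcM(C),\HK_\sigma,\iiota_\sharp\mcQ)$, it identifies the respective Cheeger energies, Sobolev spaces and minimal relaxed gradients; hence $H^{1,2}(\mcM(M),\HK_{\mssd_g},\mcQ)$ is Hilbert and $\iiota^*\jjmath^*\bigl(\FC{1,1}{u,b}{1}{b}(\mcM(\R^d))\bigr)$ is dense in $2$-energy in $D^{1,2}(\mcM(M),\HK_{\mssd_g},\mcQ)$.

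The final step is the observation that $\iiota^*\jjmath^*\bigl(\FC{1,1}{u,b}{1}{b}(\mcM(\R^d))\bigr)\subseteq\FC{1,1}{u,b}{1}{b}$: if $u=(\nchi\circ\car^\trid)\cdot(F\circ\mbff^\trid)$ with $\mbff=(f_1,\dots,f_N)\in(\rmC^1_b(\R^d))^N$, then $(\iiota^*\jjmath^*u)(\mu)=\nchi(\mu M)\,F\bigl((f_1\circ\iota)^\trid\mu,\dots,(f_N\circ\iota)^\trid\mu\bigr)$, and each $f_i\circ\iota$ lies in $\rmC^1_b(M)$ because $\diff\iota_p$ is a linear isometry onto a subspace of $\R^d$, whence $\|\diff(f_i\circ\iota)_p\|_g\le\|\diff f_i\|_\infty$. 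Since density in $2$-energy of a subalgebra is inherited by every larger subalgebra of $\Lip_b(\mcM(M),\HK_{\mssd_g})$ — the same approximating sequences remain available — this upgrades to density of the full algebra $\FC{1,1}{u,b}{1}{b}$, and together with the Hilbertianity obtained in the previous step the theorem follows.

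I expect the only genuinely non-formal ingredient to be the input of the first step: the existence of a smooth isometric embedding of the complete manifold $M$ with closed image (bare isometric embeddability is Nash's theorem, while closedness — equivalently, properness of the embedding — is the extra point that makes Theorem~\ref{teo:csubs}, whose proof genuinely uses that $C$ is closed, applicable). Everything else is a routine transfer of the Euclidean statements, paralleling~\cite[Section~6]{FSS22}.
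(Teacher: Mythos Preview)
Your proposal is correct and follows essentially the same route as the paper: Nash isometric embedding into~$\R^d$, application of Theorem~\ref{teo:csubs} to the closed image~$C$, and transfer back via the isometry~$\iiota$. The paper organizes the density argument slightly differently---it works directly with the composite contraction~$\kkappa=\jjmath\circ\iiota$ and the inequality $\lip_{\HK_{\mssd_g}}(\kkappa^*\tilde u)\le\kkappa^*(\lip_{\HK_{\sfd_e}}\tilde u)$ rather than first invoking the density conclusion of Theorem~\ref{teo:csubs} on~$\mcM(C)$ and then pulling back through the isometry---but this is only a cosmetic difference; your caution about needing the image to be closed is well placed and matches what the paper uses.
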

\begin{proof}
  By Nash Isometric Embedding Theorem \cite{Nash54} we can find a dimension $d$, and a smooth isometric embedding
  $\iota:M\to
  \iota(M)\subset \R^d$.
  On $C\eqdef \iota(M)$ we can define
  the (Riemannian) metric $\sfd_C$ inherited by $\mssd_g$:
  $\sfd_C(\iota(x),\iota(y))=\mssd_g(x,y)$ so that $\iota$ is an
  isometry and
  $(C, \sfd_C)$ is a complete and separable metric space.
  We denote by $\iiota\eqdef \iota_\sharp$ the corresponding isometry between
  $(\mcM(M), \HK_{\mssd_g})$ and $(\mcM(C), \HK_{\sfd_C})$  and 
  we also set $\tilde\mcQ\eqdef \iiota_\sharp \mcQ $  which is a non-negative and finite Borel measure on $\mcM(C)$. It is clear that the
  map $\iiota^*:u\mapsto u\circ\iiota$ induces a linear isometric isomorphism
  between 
  $H^{1,2}(\mcM(C), \sfd_C,\tilde\mcQ)$ and
  $H^{1,2}(\mcM(M), \HK_{\mssd_g} ,\mcQ)$. 
  
  Since $M$ is complete and $\iota$ is an embedding, $C$ is a closed
  subset of $\R^d$ and $\sfd_C$ induces on $C$ the relative topology
  of $\R^d$. Since $\iota$ is isometric, we also have
  \begin{equation}
    \label{eq:89}
    \sfd_e(y_1,y_2) \le \sfd_C(y_1,y_2)=(\hat\sfd_e)_{C}(y_1,y_2) \quad \text{ for every } y_1,y_2 \in C \fstop
  \end{equation}

We can introduce the inclusion map $\jmath\colon C\to \R^d$ and the
 corresponding
 $\jjmath=\jmath_\sharp\colon \mcM(C) \to \mcM(\R^d)$.
 By Theorem \ref{thm:224}
 we have that
 the map $\jjmath^*\colon u\mapsto u\circ\jjmath$ provides a linear isometric
 isomorphism
 between $H^{1,2}(\mcM(\R^d), \HK_{\sfd_e}, \jjmath_\sharp \tilde\mcQ)$ and
 $H^{1,2}(\mcM(C), \HK_{\sfd_C},\tilde\mcQ)$ satisfying \eqref{eq:140};
 we conclude that the map
 $\kkappa^*\eqdef \iiota^*\circ \jjmath^*=(\jjmath\circ \iiota)^*$
 is a linear isometric
 isomorphism
 between $H^{1,2}(\mcM(\R^d), \HK_{\sfd_e},\kkappa_\sharp \mcQ)$
 (note that $\kkappa_\sharp=\jjmath_\sharp\circ \iiota_\sharp$) and
 $H^{1,2}(\mcM(M), \HK_{\mssd_g},\mcQ)$
 satisfying
 \begin{equation}
  | \rmD (\kkappa^* u)|_{\star, \Y, 2}  = \kkappa^* \left ( |\rmD u|_{\star, \X, 2} \right ) \text{
     for every } u \in H^{1,2}(\mcM(\R^d), \HK_{\sfd_e},\kkappa_\sharp \mcQ),\label{eq:140bis}
\end{equation}
where $\Y\eqdef  (\mcM(M), \HK_{\mssd_g},\mcQ)$ and $\X\eqdef  (\mcM(\R^d), \HK_{\sfd_e},\kkappa_\sharp \mcQ)$.
This property in particular yields the  Hilbertianity of $H^{1,2}(\mcM(M), \HK_{\mssd_g},\mcQ)$. To prove the density of the subalgebra  $\FC{1,1}{u,b}{1}{b}$ it is enough to prove that $\AA' \eqdef  \kkappa^* (\FC{1,1}{u,b}{1}{b}(\mcM(\R^d))) \subset \FC{1,1}{u,b}{1}{b}$ is dense in $2$-energy in $D^{1,2}(\mcM(M), \HK_{\mssd_g},\mcQ)$. First of all observe that for every $\tilde{u} \in \FC{1,1}{u,b}{1}{b}(\mcM(\R^d))$ and every $\mu \in \mcM(M)$ it holds
\[ \lip_{\HK_{\mssd_g}} (\kkappa^* \tilde{u}) (\mu) \le \kkappa^* ( \lip_{\HK_{\sfd_e}} \tilde{u} )(\mu).\]
This follows by the fact that $\kkappa$ is a $\HK_{\mssd_g}$-$\HK_{\sfd_e}$ contraction, since $\sfd_e(\kappa(y_0), \kappa(y_1)) \le \mssd_g(y_0, y_1)$ for every $y_0,y_1 \in M$. Let now $u = \kkappa^* \tilde{u} $ for $u \in H^{1,2}(\mcM(\R^d), \HK_{\sfd_e},\kkappa_\sharp \mcQ)$. By Theorem \ref{teo:csubs}, we can find a sequence $(\tilde{u}_n)_n \subset \FC{1,1}{u,b}{1}{b}(\mcM(\R^d))$ such that 
\[ \tilde{u}_n \to \tilde{u}, \quad \lip_{\HK_{\sfd_e}} \tilde{u}_n \to |\rmD \tilde u |_{\star, \X, 2} \quad \text{ in } L^2(\mcM(\R^d), \kkappa_\sharp \mcQ).\]
We deduce that, setting $u_n\eqdef  \kkappa^* \tilde u_n \in \AA'$, we have 
\begin{equation*}
\begin{gathered}
u_n \to u\comma
\\
\lip_{\HK_{\mssd_g}} u_n\le \kkappa^* (\lip_{\HK_{\sfd_e}} \tilde{u}_n ) \to \kkappa^* ( |\rmD \tilde u |_{\star, \X, 2}) = |\rmD u|_{\star, \Y, 2}
\end{gathered}
\qquad \text{ in } L^2(\mcM(M), \mcQ) \fstop
\end{equation*}

Up to extracting a suitable (not relabelled) subsequence we can suppose that $\lip_{\HK_{\mssd_g}} u_n$
converges weakly in $L^2(\mcM(M), \mcQ)$ to some $G \in
L^2(\mcM(M), \mcQ)$ which is a $\Y$-relaxed gradient of $u$. We also see that 
\begin{align*}
  \int G^2\,\d\mcQ\leq&
  \limsup_{n\to\infty} \int (\lip_{\HK_{\mssd_g}} u_n)^2 \, \d \mcQ
  \le \limsup_{n\to\infty} \int \Big(\kkappa^* (\lip_{\HK_{\sfd_e}} \tilde{u}_n ))\Big)^2 \,
  \d \mcQ
  \\
  =&
  \int |\rmD u|_{\star, \Y, 2}^2 \, \d \mcQ,
\end{align*}
showing that $G=|\rmD u|_{\star, \Y, 2}$ and
$\lip_{\HK_{\mssd_g}} u_n\to |\rmD u|_{\star, \Y, 2}$ strongly in $L^2(\mcM(M), \mcQ)$.
\end{proof}

We prove now the density result for smaller but dense algebras of functions, following \cite[Proposition 4.19]{FSS22}.

We say that a function $F: \R^\infty \to \R$ is \emph{finitary} if $F(t_1, t_2, \dotsc) = f(t_1, \dots, t_k)$ for some $f \colon \R^k \to \R$ and $k \in \N_0$. For an algebra $\AA$ of $\R$-valued functions on $\R^\infty$ we denote by $\AA_{\fin}$ the subset of finitary elements in $\AA$.

\begin{proposition}\label{prop:dolore} Let $\mcQ$ be a finite, non negative Borel measure on $\mcM(M)$. Let
\begin{enumerate}[$(a)$, leftmargin=2em]
\item $\AA_1 \subset \rmC^1(\R_0^+)$ be a subalgebra locally uniformly $\rmC^1(\R_0^+)$-dense in $\rmC_c^1(\R^+_0)$;
\item $\AA_2 \subset \rmC^1(\R^\infty)_{\fin}$ be a subalgebra locally uniformly $\rmC^1(\R^\infty)$-dense in $\rmC_b^1(\R^\infty)_{\fin}$;
\item $\AA_3 \subset \rmC^1(M)$ be a subalgebra such that for every $f \in \rmC^1_b(\R^d)$ there exists a sequence $\seq{f_n}_n \subset \AA_3$ satisfying
\begin{equation}\label{eq:densc1mu}
\begin{aligned}
\sup_{n} \sup_{x \in M}\braket{ |f_{n}(x)| + |\nabla f_{n}(x)| }&< + \infty \comma\\
 \qquad \lim_{n \to + \infty} \int_{M} (|f-f_{n}|^2 + |\nabla f - \nabla f_{n}|^2) \de \mu &=0 
\end{aligned}
\quad \text{ for every } \mu \in \mcM(M) \fstop
\end{equation}
\end{enumerate}
Then the algebra 
\begin{equation}
\AA \eqdef \set{u\colon \mcM(M)\to\R : \begin{gathered} u=(\nchi\circ \car^\trid) \cdot (F\circ \mbff^\trid)\comma
\\
\nchi\in \AA_1\comma
\\
F\in \AA_2 \comma \mbff \in \AA_3^\infty
\end{gathered}}\comma
\end{equation}
is dense in $2$-energy in $D^{1,2}(\mcM(M), \HK_{\sfd_g}, \mcQ)$.
\end{proposition}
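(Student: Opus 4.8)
The plan is to deduce the statement from the density in $2$-energy of the larger algebra $\FC{\infty,\infty}{c,c}{\infty}{c}$, which is granted by Theorem~\ref{thm:introdens} (every finite measure satisfies~\eqref{eq:lpex}, its integrand being bounded by~$1$). By a standard diagonal argument it suffices to prove the following \emph{approximation claim}: for every $u\in\FC{\infty,\infty}{c,c}{\infty}{c}$ there is a sequence $(v_l)_l\subset\AA$ with $v_l\to u$ in $L^0(\mcM(M),\mcQ)$ and $\lip_{\HK_{\sfd_g}}v_l\to\lip_{\HK_{\sfd_g}}u$ strongly in $L^2(\mcM(M),\mcQ)$. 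Indeed, given $f\in D^{1,2}(\mcM(M),\HK_{\sfd_g},\mcQ)$, one first chooses $(u_m)_m\subset\FC{\infty,\infty}{c,c}{\infty}{c}$ with $u_m\to f$ $\mcQ$-a.e.\ and $\lip_{\HK_{\sfd_g}}u_m\to|\rmD f|_{\star,2}$ in $L^2(\mcQ)$ (density of $\FC{\infty,\infty}{c,c}{\infty}{c}$), then approximates each $u_m$ by some $v_{m,l}\in\AA$ as in the claim, and extracts a diagonal sequence $v_{m,l(m)}$: along a subsequence in~$m$ this is exactly the sequence required by Definition~\ref{def:density}.

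To prove the claim, write $u=(\nchi\circ\car^\trid)(F\circ\mbff^\trid)$ with $\nchi\in\rmC^\infty_c(\R^+_0)$, $F\in\rmC^\infty_c(\R^N)$, $\mbff=(f_1,\dots,f_N)\in(\rmC^\infty_c(M))^N$, and let $\kappa>0$ be such that $u\equiv0$ on $\set{\mu\in\mcM(M):\mu M>\kappa}$. By hypotheses~$(a)$--$(c)$ pick $\nchi_l\in\AA_1$ and $F_l\in\AA_2$ with $\nchi_l\to\nchi$ and $F_l\to F$ in $\rmC^1$ uniformly on compact sets, and $f_{i,l}\in\AA_3$ satisfying~\eqref{eq:densc1mu} for $f_i$ (using $0\in\rmC^1_b(M)$ to pad $\mbff$ up to the finite number of coordinates on which $F_l$ depends); set $v_l\eqdef(\nchi_l\circ\car^\trid)(F_l\circ\mbff_l^\trid)\in\AA$. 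On every sublevel set $\set{\mu M\le R}$ the vectors $\mbff_l^\trid\mu$ are bounded uniformly in~$l$ (the sup-norms $\|f_{i,l}\|_\infty$ are bounded by~\eqref{eq:densc1mu}) and, for each such $\mu$, converge to $\mbff^\trid\mu$ (as $L^2(\mu)$-convergence implies $L^1(\mu)$-convergence on the finite measure $\mu$), whence $v_l\to u$ pointwise on $\set{\mu M\le R}$. Since $\mcQ$ is finite, $\mcQ\set{\mu M>R}\to0$ as $R\to\infty$, and therefore $v_l\to u$ in $\mcQ$-measure.

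For the gradients, the Leibniz rule~\eqref{eq:Leibniz} and~\eqref{eq:Fdiff} yield, for every $\mu$,
\[
(\boldnabla v_l)_\mu(x)=\nchi_l(\mu M)\sum_i\partial_i F_l(\mbff_l^\trid\mu)\,\tparen{\nabla f_{i,l}(x),f_{i,l}(x)}+F_l(\mbff_l^\trid\mu)\,\nchi_l'(\mu M)\,(0,1)\comma
\]
and the analogous formula holds for $(\boldnabla u)_\mu$. Expanding $\norm{(\boldnabla v_l)_\mu}_{T_\mu^{1,4}}^2$ according to~\eqref{eq:NormTot4} as a finite polynomial combination of the scalars $\nchi_l(\mu M)$, $\nchi_l'(\mu M)$, $F_l(\mbff_l^\trid\mu)$, $\partial_i F_l(\mbff_l^\trid\mu)$ --- which converge for every fixed $\mu$ --- and of the bilinear integrals $\int_M g(\nabla f_{i,l},\nabla f_{j,l})\diff\mu$ and $\int_M f_{i,l}f_{j,l}\diff\mu$ (together with the lower-order $\int_M f_{i,l}\diff\mu$ and $\mu M$) --- which converge to the corresponding quantities built from $\mbff$ by~\eqref{eq:densc1mu}, Cauchy--Schwarz and the uniform $L^\infty$-bounds --- one gets $\norm{(\boldnabla v_l)_\mu}_{T_\mu^{1,4}}\to\norm{(\boldnabla u)_\mu}_{T_\mu^{1,4}}$ for every $\mu$; by Proposition~\ref{prop:equalityriem} (whose proof is local and extends verbatim to $\rmC^1$ cylinder functions) this is exactly $\lip_{\HK_{\sfd_g}}v_l(\mu)\to\lip_{\HK_{\sfd_g}}u(\mu)$ pointwise. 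On $\set{\mu M\le R}$ these asymptotic Lipschitz constants are, moreover, uniformly bounded in~$l$, so dominated convergence gives convergence of the restrictions to $\set{\mu M\le R}$ in $L^2(\mcQ)$.

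The main obstacle is the tail $\set{\mu M>R}$: there $\lip_{\HK_{\sfd_g}}u$ vanishes, but since $\nchi_l$ and $F_l$ need not be compactly supported, $\lip_{\HK_{\sfd_g}}v_l$ is a priori not uniformly bounded in~$l$. I would control it exactly as in Lemmas~\ref{le:trunc1}--\ref{le:trunc2}: multiply $v_l$ by cut-off functions $u_m(\mu)=\varsigma(\mu M/m)$ --- themselves approximated within $\AA$ by means of~$(a)$ and~$(b)$ --- and use the Leibniz rule, so that the extra contribution to the relaxed gradient on $\set{\mu M>m}$ is $O(m^{-1/2})$ while its bulk on $\set{\mu M\le m}$ is essentially unchanged; combined with the finiteness of $\mcQ$ (and, a fortiori, with the exponential moments of the measures $\LP{\theta,\nu}$ relevant to the applications) and a further diagonal extraction, this yields $\lip_{\HK_{\sfd_g}}v_l\to\lip_{\HK_{\sfd_g}}u$ strongly in $L^2(\mcQ)$, which proves the claim and hence the proposition. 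The bookkeeping of the active coordinates and the verification that $\AA$ is genuinely an algebra are routine.
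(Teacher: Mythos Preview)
Your opening move is circular: Theorem~\ref{thm:introdens} is Corollary~\ref{c:DensityCylC} in the body of the paper, and its proof there invokes precisely Proposition~\ref{prop:dolore} (with the choices $\AA_1=\rmC^\infty_c(\R^+_0)$, $\AA_3=\rmC^\infty_c(M)$ and a suitable $\AA_2$). So you cannot use the density of $\FC{\infty,\infty}{c,c}{\infty}{c}$ as an input here. What is available independently is Theorem~\ref{thm:hilbm}, the density of the larger unital algebra $\FC{1,1}{u,b}{1}{b}$; once you start from there your scheme survives essentially unchanged, except that you must treat the constant $u\equiv 1$ separately.

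Aside from the circularity, the paper organises the approximation differently and more simply. Rather than replacing $\nchi,F,\mbff$ simultaneously and then repairing the tail with an extra cut-off layer, it replaces them \emph{one at a time} --- first $\nchi$, then $F$, then $\mbff$, followed by a single diagonal extraction --- and at each step proves directly the stronger convergence $\|(\boldnabla u_n)_\cdot-(\boldnabla u)_\cdot\|_{T^{1,4}_\cdot}\to 0$ in $L^2(\mcQ)$ (not just of $\lip u_n$) by explicit dominated-convergence estimates. The key organisational point is that the compact support of the original $\nchi$ confines every integral to the set $\mcB_r=\{\mu M\le r\}$, on which $\mbff^\trid$ ranges in a fixed compact cube; the ``locally uniform $\rmC^1$'' hypotheses on $\AA_1,\AA_2$ then translate into uniform bounds there and the tail never arises. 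Your additional truncation in the spirit of Lemmas~\ref{le:trunc1}--\ref{le:trunc2} is therefore unnecessary.
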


\begin{proof}
Thanks to Theorem \ref{teo:density}, it is enough to show that for every $u \in \FC{1,1}{u,b}{1}{b}$ there exists a sequence $(u_n)_n \subset \AA$ such that
\begin{equation}\label{eq:theaimcon}
u_n \to u  \quad \text{ and } \quad \| (\boldnabla u_n)_{\cdot}-(\boldnabla u)_{\cdot}\|_{ T_{\cdot} } \to 0 \quad \text{ in } L^2(\mcM(M), \mcQ) \text{ as } n \to + \infty.
\end{equation} 
If $u \equiv 1$, since $\AA_1$ is locally uniformly $\rmC^1(\R_0^+)$-dense in $\rmC_c^1(\R^+_0)$, we can find a sequence $(\nchi_k)_k \subset \AA_1$ approximating the functions $t \mapsto \varsigma(t/k)$ with $\varsigma$ as in Lemma~\ref{le:trunc1}. The sequence $u_k \eqdef \nchi_k \circ \car^\trid$ converges in $D^{1,2}(\mcM(M), \HK_{\sfd_g}, \mcQ)$ to $u$. Let us now consider a general $u=(\nchi \circ \car^\trid)\cdot (F\circ \mbff^\trid) \in \FC{1,1}{c,b}{1}{b}$, where $\nchi \in \rmC_c^1(\R)$, $\boldsymbol f=(f_1,\dotsc, f_N)$ is a vector of functions in $\rmC^1_b(M)$ and $F\in \rmC^1_b(\R^N)$, $N \in \N$. Since $\nchi$ is compactly supported, there exists some $r>1$ such that $\supp(\nchi) \subset [-r, r]$. Since in \eqref{eq:theaimcon} the convergence is in $L^2(\mcM(M), \mcQ)$, we can proceed by steps approximating $\nchi$, $F$ and~$\mbff$ and then use a diagonal argument to conclude.

\paragraph{Approximation of $\nchi$}
Let $(\nchi_n)_n \subset \AA_1$ be such that $\sup_{|t| \le r} |\nchi_n(t)-\nchi(t)|+ |\nchi'_n(t)-\nchi'(t)| \le 1/n$ for every $n \in \N$ and let us set $u_n\eqdef  (\nchi_n \circ \car^\trid)\cdot (F \circ \mbff^\trid)$. We clearly have
\begin{align*}
\|u_n-u\|^2_{L^2(\mcM(M), \mcQ)} =&  \int_{\mcB_r} |(\nchi_n(\mu M)-\nchi_n(\mu M)|^2 | (F \circ \mbff^\trid)(\mu)|^2 \de \mcQ(\mu)
\\
\le&\ \frac{\mcQ(\mcB_r )}{n^2} \|F\|_\infty^2
\end{align*}
so that clearly $u_n \to u$ in $L^2(\mcM(M), \mcQ)$ as $n \to + \infty$. Moreover by Proposition \ref{prop:equalityriem} we have
\begin{align*}
(\boldnabla u_n)_\mu(x) &=(F \circ \mbff^\trid)(\mu) \nchi'_n(\mu M) (0,1)+ \nchi_n(\mu M) \boldnabla (F \circ \mbff^\trid)_\mu (x)\comma
\\
(\boldnabla u)_\mu(x) &=(F \circ \mbff^\trid)(\mu) \nchi'(\mu M) (0,1)+ \nchi(\mu M) \boldnabla (F \circ \mbff^\trid)_\mu (x)\comma
\end{align*}
so that 
\begin{align*}
\int& \| (\boldnabla u_n)_\mu-(\boldnabla u)_\mu\|^2_{T_\mu^{1,4}} \de \mcQ(\mu)
\\
\leq &\ 8\int_{\mcB_r} \int_{M}|(F \circ \mbff^\trid)(\mu)|^2 |\nchi_n'(\mu M) - \nchi'(\mu M)|^2 \de \mu(x)\de  \mcQ(\mu)
\\
& \quad + 2\int_{\mcB_r} \int_{M} \abs{\boldnabla (F \circ \mbff^\trid)_\mu(x)}_\toplus^2 \abs{\nchi_n(\mu M)- \nchi(\mu M)}^2  \de \mu (x)\de \mcQ(\mu)
\\
\leq &\ \frac{8r\mcQ(\mcB_r)}{n^2} \|F\|_\infty^2 + \frac{2r\mcQ(\mcB_r)}{n^2} \sup_{(\mu, x)} \abs{\boldnabla (F \circ \mbff^\trid)_\mu (x)}_\toplus^2,
\end{align*}
giving that \eqref{eq:theaimcon} holds for $u_n$. We can thus assume that $\nchi \in \AA_1$.

\paragraph{Approximation of $F$}
Setting $R\eqdef r \sup_{M, \, 1 \le k \le N}\braket{|f_{k}|+|\nabla f_{k}|}$, we can find a sequence $(F_n)_n \subset \AA_2$ such that
  \begin{equation}
    \label{eq:86}
    \sup_{|z|\le R}|F_n(z)-F(z)|+|\nabla
    F_n(z)-\nabla F(z)| \le \frac{1}{n} \text{ for every } n\in \N.
  \end{equation}
  Let us set $u_n\eqdef  (\nchi \circ \car^\trid)\cdot (F_n\circ \mbff^\trid) $. We have
\begin{align*}
\|u_n-u\|^2_{L^2(\mcM(M), \mcQ)}  =& \int_{\mcB_r} |\nchi(\mu M)|^2 |F_n(\mbff^\trid \mu) - F(\mbff^\trid \mu)|^2 \de \mcQ(\mu)
\\
\leq&\ \frac{\mcQ(\mcB_r)}{n^2}\|\nchi\|_\infty^2
\end{align*}
so that clearly $u_n \to u$ in $L^2(\mcM(M), \mcQ)$ as $n \to + \infty$. By Proposition \ref{prop:equalityriem} we also have
\begin{align*}
(\boldnabla u_n)_\mu(x) &=(F \circ \mbff^\trid)(\mu) \nchi'(\mu M) (0,1)+ \nchi(\mu M) \boldnabla (F_n \circ \mbff^\trid)_\mu(x),\\
(\boldnabla u)_\mu(x) &=(F \circ \mbff^\trid)(\mu) \nchi'(\mu M) (0,1)+ \nchi(\mu M) \boldnabla (F \circ \mbff^\trid)_\mu(x),
\end{align*}
so that 
\begin{align*}
\int &\| (\boldnabla u_n)_\mu-(\boldnabla u)_\mu\|^2_{T_\mu^{1,4}} \de \mcQ(\mu)
\\
&\le \int_{\mcB_r} \int_{M} |\nchi(\mu M)|^2 |\boldnabla (F_n \circ \mbff^\trid)_\mu(x) - \boldnabla (F \circ \mbff^\trid)_\mu(x)|_\toplus^2 \de \mu(x) \de \mcQ(\mu)
\\
& \le  \|\nchi\|_\infty^2 \int_{\mcB_r} \int_{M} \Bigg | \sum_{k=1}^N \tbraket{\partial_k F(\mbff^\trid \mu)- \partial_k F_n (\mbff^\trid(\mu))}
\\
& \hspace{35mm} \cdot (\nabla f_k(x), f_k(x)) \Bigg |_\toplus^2 \de \mu(x) \de \mcQ(\mu)
\\
& \le  \|\nchi\|_\infty^2 \int_{\mcB_r} \int_{M}  \sum_{k=1}^N \abs{ \partial_k F(\mbff^\trid \mu) - \partial_k F_n (\mbff^\trid(\mu))}^2
\\
& \hspace{35mm} \cdot \abs{(\nabla f_k(x), f_k(x))}_\toplus^2 \de \mu(x) \de \mcQ(\mu)
\\
& \le \|\nchi\|_\infty^2 \frac{5R^2 r \mcQ(\mcB_r)}{n^2} 
\end{align*}
giving that \eqref{eq:theaimcon} holds for $u_n$. We can thus assume both that $\nchi \in \AA_1$ and $F \in \AA_2$.

\paragraph{Approximation of $\mbff$}
Let us consider bounded sequences $(f_{k,n})_{n} \subset \AA_3$,
  $k=1,\dotsc, N$, approximating $f_k$ in the sense that
  \[ R'\eqdef  \sup_{k,n} \sup_{x \in M}( |f_{n,k}(x)| + |\nabla f_{n,k}(x)|)< + \infty\]
  and
   \[ \lim_{n \to + \infty} \sup_k \int_{M} (|f_k-f_{k,n}|^2 + |\nabla f_k - \nabla f_{k,n}|^2) \de \mu =0 \quad \text{ for every } \mu \in \mcM(M).\]
  Let us set $\mbff_n \eqdef (f_{1,n}, f_{2,n}, \dots, f_{N,n})$ and $u_n\eqdef  (\nchi \circ \car^\trid)\cdot (F \circ\mbff_n^\trid) \in \AA$. Let us denote by $R''$ the quantity 
  \[ R''\eqdef  R' \vee \sup_{k} \sup_{M}( |f_{k}(x)| + |\nabla f_{k}(x)|)< + \infty\]
and by $L$ the maximum of the Lipschitz constants of
  $F$ and $\partial_kF$ in the cube $[-rR'',rR'']^N$ with respect to the $\infty$-norm in $\R^N$. We see that
  \begin{align*}
\|u_n-u\|^2_{L^2(\mcM(M), \mcQ)} &=  \int_{\mcB_r} |\nchi(\mu M)|^2 | F(\mbff_n^\trid(\mu)) -  F(\mbff^\trid(\mu))|^2 \de \mcQ(\mu)
\\
& \le L^2\|\nchi\|_\infty^2  \int_{\mcB_r} \sup_k |f_{n,k}^\trid(\mu) - f_k^\trid(\mu)|^2 \de \mcQ(\mu) 
\\
& \le r L^2\|\nchi\|_\infty^2  \int_{\mcB_r} \sup_k \int_{M} |f_{n,k} - f_k|^2 \de \mu \de \mcQ(\mu) \fstop
\end{align*}
By Dominated Convergence we deduce that $u_n \to u$ in $L^2(\mcM(M), \mcQ)$ as $n \to + \infty$.
By Proposition \ref{prop:equalityriem},
\begin{align*}
(\boldnabla u_n)_\mu(x) =&\ (F \circ\mbff_n^\trid)(\mu) \nchi'(\mu M) (0,1)
\\
&+ \nchi(\mu M) \sum_{k=1}^N \partial_k F (\mbff_n^\trid \mu) (\nabla f_{k,n}(x), f_{k,n}(x))\comma
\\
(\boldnabla u)_\mu(x) =&\ (F \circ \mbff^\trid)(\mu) \nchi'(\mu M) (0,1)
\\
&+ \nchi(\mu M) \sum_{k=1}^N \partial_k F (\mbff^\trid \mu) (\nabla f_{k}(x), f_{k}(x))
\end{align*}
so that
\begin{align*}
\int &\| (\boldnabla u_n)_\mu-(\boldnabla u)_\mu\|^2_{T_\mu^{1,4}} \de \mcQ(\mu)
\\
\leq&\ 8 \int_{\mcB_r} \int_{M} |(F \circ\mbff_n^\trid)(\mu)-(F \circ \mbff^\trid)(\mu)|^2|\nchi'(\mu M)|^2 \de \mu(x)\de  \mcQ(\mu)
\\
& + 2 \int_{\mcB_r} \int_{M} \Bigg | \nchi(\mu) \sum_{k=1}^N \bigg(\partial_k F (\mbff_n^\trid \mu) (\nabla f_{k,n}(x), f_{k,n}(x)) 
\\
& \hspace{40mm} - \partial_k F (\mbff^\trid \mu) (\nabla f_{k}(x), f_{k}(x)) \bigg ) \Bigg |^2_\toplus \de \mu(x) \de \mcQ(\mu)
\\
& \leq  8r^2 L^2\|\nchi'\|_\infty^2  \int_{\mcB_r} \sup_k \int_{M} |f_{n,k} - f_k|^2 \de \mu \de \mcQ(\mu)
\\
& + 2 \|\nchi\|_\infty^2 \int_{\mcB_r} \int_{M}  \sum_{k=1}^N \Bigg |\partial_k F (\mbff_n^\trid \mu) (\nabla f_{k,n}(x), f_{k,n}(x))
\\
&\hspace{40mm} - \partial_k F (\mbff^\trid \mu) (\nabla f_{k}(x), f_{k}(x)) \Bigg|_\toplus^2  \de \mu(x) \de \mcQ(\mu) \fstop
\end{align*}
The first summand in the last inequality converges to $0$ as $n \to + \infty$ by Dominated Convergence. We then concentrate on the integral in the second summand:
\begin{align*}
\int_{\mcB_r}& \int_{M}  \sum_{k=1}^N \bigg|\partial_k F (\mbff_n^\trid \mu) (\nabla f_{k,n}(x), f_{k,n}(x)) 
\\
&\hspace{35mm} - \partial_k F (\mbff^\trid \mu) (\nabla f_{k}(x), f_{k}(x)) \bigg |_\toplus^2  \de \mu(x) \de \mcQ(\mu)
\\
\leq&\ 2 \int_{\mcB_r} \int_{M}  \sum_{k=1}^N \bigg|\partial_k F (\mbff_n^\trid \mu) (\nabla f_{k,n}(x), f_{k,n}(x)) 
\\
&\hspace{35mm} - \partial_k F (\mbff_n^\trid \mu) (\nabla f_{k}(x), f_{k}(x)) \bigg|_\toplus^2  \de \mu(x) \de \mcQ(\mu)
\\
&+ 2 \int_{\mcB_r} \int_{M}  \sum_{k=1}^N \bigg | \partial_k F (\mbff_n^\trid \mu) (\nabla f_{k}(x), f_{k}(x))
\\
&\hspace{35mm} - \partial_k F (\mbff^\trid \mu) (\nabla f_{k}(x), f_{k}(x)) \bigg |_\toplus^2  \de \mu(x) \de \mcQ(\mu)
\\
& \le 2\|\nabla F\|_\infty^2  \sum_{k=1}^N\int_{\mcB_r}   \int_{M} (|\nabla f_k - \nabla f_{k,n}|^2 + 4|f_k-f_{k,n}|^2) \de \mu \de \mcQ(\mu)
\\
& + 10r(R'')^2 \sum_{k=1}^N \int_{\mcB_r} |\partial_k F (\mbff_n^\trid \mu) - \partial_k F (\mbff^\trid \mu)|^2 \de \mcQ(\mu)
\\
& \le 2\|\nabla F\|_\infty^2  \sum_{k=1}^N\int_{\mcB_r}   \int_{M} (|\nabla f_k - \nabla f_{k,n}|^2 + 4|f_k-f_{k,n}|^2) \de \mu \de \mcQ(\mu)
\\
& + 10r^2L^2 (R'')^2 \sum_{k=1}^N \int_{\mcB_r} \sup_k \int_{M} |f_{n,k} - f_k|^2 \de \mu \de \mcQ(\mu).
\end{align*}
Both summands converge to $0$ thanks to the Dominated Convergence Theorem. This shows that~\eqref{eq:theaimcon} holds for $u_n$ and concludes the proof.
\end{proof}

\begin{corollary}\label{c:DensityCylC}
The space $(\mcM(M), \HK_{\mssd_g})$ is universally infinitesimally Hilbertian. Let $\mcQ$ be a non-negative Borel measure on $\mcM(M)$.
If
\begin{equation*}
\int \rme^{-t \mu M} \, \diff \mcQ(\mu) < + \infty \quad \text{ for every } t>0 \comma
\end{equation*}
then, the subalgebra $\FC{\infty,\infty}{c,c}{\infty}{c}$ is strongly dense in $H^{1,2}(\mcM(M), \HK_{\mssd_g}, \mcQ)$, and for every $u \in H^{1,2}(\mcM(M), \HK_{\mssd_g}, \mcQ)$ there exists a sequence $(u_n)_n \subset \FC{\infty,\infty}{c,c}{\infty}{c}$ such that 
\[
u_n \to u, \quad \lip_{\HK_{\mssd_g}} u_n \to |\rmD u |_{\star, 2} \quad \text{ in } L^2(\mcM(M), \mcQ)\fstop
\]
\end{corollary}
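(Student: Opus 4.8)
The plan is to deduce all three statements from results already in place—Theorem~\ref{thm:hilbm}, Proposition~\ref{prop:dolore}, Proposition~\ref{prop:extt}, and Lemma~\ref{le:densinf}. I first record two facts used throughout. The homogeneous-marginal formula for $\HK$ gives $\HK_{\mssd_g}(\mu,0)^2=\mu M$, whence $\tabs{\sqrt{\mu M}-\sqrt{\mu' M}}\le\HK_{\mssd_g}(\mu,\mu')$ by the triangle inequality; consequently the exponential-moment hypothesis yields $\mcQ\set{\mu M\le r}\le\rme^{tr}\int\rme^{-t\mu M}\diff\mcQ<\infty$ for all $r,t>0$, so $\mcQ$ is $\sigma$-finite and finite on $\HK_{\mssd_g}$-bounded sets, and $\tparen{\mcM(M),\HK_{\mssd_g},\mcQ}$ is a Polish metric measure space. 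For the first assertion: since $\tparen{\mcM(M),\HK_{\mssd_g}}$ is complete and separable and, by Theorem~\ref{thm:hilbm}, $H^{1,2}\tparen{\mcM(M),\HK_{\mssd_g},\mcQ'}$ is a Hilbert space for every finite non-negative Borel measure $\mcQ'$, Proposition~\ref{prop:extt} gives at once that $\tparen{\mcM(M),\HK_{\mssd_g}}$ is universally infinitesimally Hilbertian; in particular $H^{1,2}\tparen{\mcM(M),\HK_{\mssd_g},\mcQ}$ is Hilbert.

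Next I would obtain density of $\FC{\infty,\infty}{c,c}{\infty}{c}$ for \emph{finite} reference measures by applying Proposition~\ref{prop:dolore} with $\AA_1=\rmC^\infty_c(\R^+_0)$, $\AA_2=\rmC^\infty_c(\R^\infty)_{\fin}$, and $\AA_3=\rmC^\infty_c(M)$: with these choices the algebra $\AA$ of Proposition~\ref{prop:dolore} is exactly $\FC{\infty,\infty}{c,c}{\infty}{c}$ (which is an algebra, cf.\ the remark after Definition~\ref{def:cyl}). Hypotheses $(a)$ and $(b)$ are routine mollification statements, and $(c)$ is verified by taking $f_n\eqdef\chi_n\cdot\tparen{f\text{ mollified in charts}}$ with $\chi_n\eqdef\varsigma(\tilde r/n)$, $\tilde r$ a smoothing of $\mssd_g(\emparg,o)$ with bounded gradient (available since $M$ is complete): then $\sup_n\sup_M\tparen{\abs{f_n}+\abs{\nabla f_n}}<\infty$, and $f_n\to f$, $\nabla f_n\to\nabla f$ in $L^2(\mu)$ for every $\mu\in\mcM(M)$ by dominated convergence. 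Hence $\FC{\infty,\infty}{c,c}{\infty}{c}$ is dense in $2$-energy in $D^{1,2}\tparen{\mcM(M),\HK_{\mssd_g},\mcQ'}$ for every finite non-negative Borel $\mcQ'$.

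I would then pass to a general $\mcQ$ with all exponential moments via Lemma~\ref{le:densinf}, applied with $X=\mcM(M)$, $\sfd=\HK_{\mssd_g}$, $\mssm=\mcQ$, $q=2$, $\AA=\FC{\infty,\infty}{c,c}{\infty}{c}$, and transfer functions $\vartheta_k(\mu)\eqdef\rme^{-\mu M/k}$. These are strictly positive; by \eqref{eq:lipest} and the chain rule $\lip_{\HK_{\mssd_g}}\vartheta_k(\mu)\le\sqrt{4+\pi^2}\,k^{-1}\sqrt{\mu M}\,\rme^{-\mu M/k}$, which is bounded, so $\vartheta_k\in\Lip_b\tparen{\mcM(M),\HK_{\mssd_g}}$ (the space being a length space); moreover $\sup_k\tnorm{\lip_{\HK_{\mssd_g}}\vartheta_k}_\infty<\infty$, $\tparen{\lip_{\HK_{\mssd_g}}\vartheta_k}^2/\vartheta_k\le(4+\pi^2)(\mu M)k^{-2}\rme^{-\mu M/k}\in L^\infty(\mcQ)$, $\vartheta_k\uparrow\car$, and $\lip_{\HK_{\mssd_g}}\vartheta_k\to0$ uniformly. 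The measure $\mssm_k\eqdef\vartheta_k\mcQ$ is finite (by the exponential moment at $t=1/k$) and satisfies \eqref{eq:thecond}, since $\vartheta_k>0$ and every $\HK_{\mssd_g}$-enlargement of a compact—hence mass-bounded—subset of $\mcM(M)$ is again mass-bounded; $\AA$ is dense in $2$-energy in $D^{1,2}\tparen{\mcM(M),\HK_{\mssd_g},\mssm_k}$ by the previous paragraph; $\AA$ and $\lip_{\HK_{\mssd_g}}\AA$ lie in $L^2(\mcQ)$ (bounded functions supported on a set $\set{\mu M\le\kappa}$ of finite $\mcQ$-measure); and $f\vartheta_k\in\AA$ for every $f\in\AA$ (the product of $\nchi$ with $\rme^{-\emparg/k}$ is again in $\rmC^\infty_c(\R^+_0)$). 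Lemma~\ref{le:densinf} then yields, for each $u\in H^{1,2}\tparen{\mcM(M),\HK_{\mssd_g},\mcQ}$, a sequence $(u_n)_n\subset\FC{\infty,\infty}{c,c}{\infty}{c}$ with $u_n\to u$ and $\lip_{\HK_{\mssd_g}}u_n\to\abs{\rmD u}_{\star,2}$ in $L^2\tparen{\mcM(M),\mcQ}$, which is the last displayed claim. Strong density is then automatic: since $\lip_{\HK_{\mssd_g}}u_n$ is a relaxed gradient of $u_n$, one has $\int\abs{\rmD u_n}_{\star,2}^2\diff\mcQ\le\int\tparen{\lip_{\HK_{\mssd_g}}u_n}^2\diff\mcQ\to\int\abs{\rmD u}_{\star,2}^2\diff\mcQ$ and $\tnorm{u_n}_{L^2}\to\tnorm{u}_{L^2}$, so $(u_n)_n$ is bounded in the Hilbert space $H^{1,2}\tparen{\mcM(M),\HK_{\mssd_g},\mcQ}$; thus $u_n\rightharpoonup u$ weakly with convergent norms, and hence $u_n\to u$ strongly in $H^{1,2}$.

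The main obstacle will be the verification of the hypotheses of Proposition~\ref{prop:dolore}, in particular condition $(c)$—the construction, on a general complete Riemannian manifold, of smooth compactly supported approximations with uniformly bounded $\rmC^1$-norms—together with checking that the single family $\vartheta_k=\rme^{-\mu M/k}$ meets \emph{all} the demands of Lemma~\ref{le:densinf} simultaneously; note that the strict positivity required by \eqref{eq:thecond} is exactly what rules out a compactly supported mass cut-off and forces the exponential weight, so that the exponential-moment hypothesis on $\mcQ$ is used precisely here.
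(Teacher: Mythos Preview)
Your proposal is correct and follows essentially the same route as the paper: universal infinitesimal Hilbertianity from Theorem~\ref{thm:hilbm} combined with Proposition~\ref{prop:extt}; density for finite measures via Proposition~\ref{prop:dolore} with the choices $\AA_1=\rmC^\infty_c(\R^+_0)$, $\AA_2=\rmC^\infty_c(\R^\infty)_{\fin}$, $\AA_3=\rmC^\infty_c(M)$; extension to $\mcQ$ with exponential moments through Lemma~\ref{le:densinf} with the exponential weights $\vartheta_k=\rme^{-\car^\trid/k}$; and strong density from reflexivity (the paper packages this last step as Remark~\ref{rem:strong}, which is precisely your weak-convergence-plus-norm-convergence argument). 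Your write-up is in fact more explicit than the paper's in verifying the hypotheses of Lemma~\ref{le:densinf} and in spelling out why the exponential-moment assumption is exactly what is needed for the weights $\vartheta_k$.
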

\begin{proof} 
Combining Theorem \ref{thm:hilbm} with Proposition \ref{prop:extt} we immediately obtain the Hilbertianity result. The density of smooth cylinder functions for a finite measure~$\mcQ$ follows by Proposition~\ref{prop:dolore} with the choices $\AA_1= \rmC_c^\infty(\R_0^+)$, $\AA_3= \rmC_c^\infty(M)$, and $\AA_2 \subset \rmC^\infty(\R^\infty)_\fin$ is the set of functions $F\colon \R^\infty \to \R$ such that $F(t_1, t_2, \dotsc) = f(t_1, \dotsc, t_k)$ for some $f \in \rmC_c^\infty(\R^k)$ and some $k \in \N_1$.

This, together with Remark~\ref{rem:strong} and Lemma~\ref{le:densinf}, gives the desired density results for possibly infinite measures as above, also noting that both $\FC{\infty,\infty}{c,c}{\infty}{c}$ and $\lip (\FC{\infty,\infty}{c,c}{\infty}{c})$ are subsets of $L^2(\mcM(\R^d), \mcQ)$ as a consequence of the truncation induced by $\nchi \circ \car^\trid$, and that the function $\vartheta_k \eqdef \rme^{-\frac{1}{k} \car^\trid}$ satisfies the assumptions in Lemma \ref{le:densinf}. 
\end{proof}

\subsubsection{Smooth cylinder functions in the Euclidean setting}
We now show how to extend the density result for cylinder functions to the extended metric-topological measure spaces 
\[ \X_{\He}(\mcQ)\eqdef (\mcM(\R^d), \tau_w, \He, \mcQ) \quad \text{ and } \quad \X_{\W}(\mcQ)\eqdef (\mcM(\R^d), \tau_w, \W, \mcQ), \]
where $\He=\He_2$ and $\W=\W_{2,\sfd_e}$ are the Hellinger and extended Wasserstein distances introduced in Definitions \ref{def:he} and \ref{def:wass}, respectively (recall that $\sfd_e$ is the distance induced by the Euclidean norm on $\R^d$), $\tau_w = \sigma(\mcM(\R^d), \rmC_b(\R^d))$ is the usual weak topology on $\mcM(\R^d)$ and $\mcQ$ is finite, non-negative Borel measure on $(\mcM(\R^d), \tau_w)$. 
\medskip

We know by \cite[Thm.s 7.22, 7.23]{LMS18} that
\begin{equation}\label{eq:convlambda}
\HK_{\lambda \sfd_e} \uparrow \mathsf{He}, \quad \lambda \HK_{\sfd_e/\lambda} \uparrow \mathsf{W} \quad \text{ as } \lambda \to + \infty.
\end{equation}
We deduce by e.g.~\cite[Lemma 2.4]{Savare22} that both $\X_{\He}(\mcQ)$ and $\X_{\W}(\mcQ)$ are indeed e.m.t.m.\ spaces. 
\medskip

If we show that the unital algebra of cylinder functions is dense in $2$-energy in $D^{1,2}(\X_\lambda(\mcQ))$, being $\X_\lambda(\mcQ) \eqdef  (\mcM(\R^d), \HK_{\lambda \sfd_e}, \mcQ)$, for every $\lambda>0$, the density result for $D^{1,2}(\X_{\He}(\mcQ))$ and $D^{1,2}(\X_{\W}(\mcQ))$ will follow as a consequence of Lemma~\ref{le:easy} and Proposition \ref{prop:erbar}.

We use the following notation: if $\lambda>0$ we denote by $\mathsf{T}^\lambda\colon \mcM(\R^d) \to \mcM(\R^d)$ the map
\[
\mathsf{T}^\lambda(\mu) \eqdef  t^\lambda_\sharp \mu\comma \qquad \mu \in \mcM(\R^d)\comma
\]
where we denote by~$t^\lambda: \R^d \to \R^d$ the dilation $x \mapsto \lambda x$.

\begin{lemma}\label{le:scaling} Let $\lambda >0$ be fixed. Then
\[ \HK_{\lambda \sfd_e}(\mu, \nu) = \HK_{ \sfd_e}(\mathsf{T}^\lambda (\mu), (\mathsf{T}^\lambda (\nu)) \quad \text{ for every } \mu, \nu \in \mcM(\R^d).\]
In particular $\mathsf{T}^\lambda$ is an isometric isomorphism from $(\mcM(\R^d), \HK_{\lambda \sfd_e})$ to $(\mcM(\R^d), \HK_{ \sfd_e})$.
\end{lemma}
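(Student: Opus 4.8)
The plan is to reduce the identity to a change of variables in the Kantorovich-type formulation \eqref{eq:defhk} of $\HK$ on the product cone $\f{C}[\R^d,\R^d]$. The key elementary fact is that the dilation $t^\lambda\colon x\mapsto\lambda x$ on $\R^d$ satisfies $\sfd_e(\lambda x,\lambda y)=\lambda\,\sfd_e(x,y)$, hence $\sfd_e(\lambda x,\lambda y)\wedge\pi=(\lambda\,\sfd_e(x,y))\wedge\pi$; consequently the map
\[
\f{t}^\lambda\colon\f{C}[\R^d]\to\f{C}[\R^d]\comma\qquad [x,r]\longmapsto[\lambda x,r]
\]
is a homeomorphism satisfying, with $\f{T}^\lambda\eqdef\f{t}^\lambda\times\f{t}^\lambda$ on $\f{C}[\R^d,\R^d]$,
\[
(\sfd_e)_{\pi,\f{C}}\tparen{\f{T}^\lambda([x_0,r_0],[x_1,r_1])}=(\lambda\sfd_e)_{\pi,\f{C}}\tparen{[x_0,r_0],[x_1,r_1]}\comma\qquad [x_i,r_i]\in\f{C}[\R^d]\comma
\]
directly from \eqref{ss22:eq:distcone}. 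In particular $\f{T}^\lambda$ is Borel, satisfies $\sfr_i\circ\f{T}^\lambda=\sfr_i$, and therefore maps $\mathcal{\f{N}}_+^2(\f{C}[\R^d,\R^d])$ into itself with inverse $\f{T}^{1/\lambda}$.

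Next I would check that $\f{T}^\lambda_\sharp$ maps $\f{H}(\mu_0,\mu_1)$ bijectively onto $\f{H}(\mathsf{T}^\lambda\mu_0,\mathsf{T}^\lambda\mu_1)$. Using that $\f{h}_i$ does not depend on the auxiliary basepoint $\bar x_i$ in the definition of $\sfx_i$, I would choose $\bar x_i=0$, so that $\sfx_i\circ\f{T}^\lambda=t^\lambda\circ\sfx_i$; combining this with $\sfr_i\circ\f{T}^\lambda=\sfr_i$ and the elementary identity $g\cdot\f{T}^\lambda_\sharp\aalpha=\f{T}^\lambda_\sharp\tparen{(g\circ\f{T}^\lambda)\,\aalpha}$ for nonnegative Borel $g$, one gets for every $\aalpha\in\mathcal{\f{N}}_+^2(\f{C}[\R^d,\R^d])$
\[
\f{h}_i\tparen{\f{T}^\lambda_\sharp\aalpha}=(\sfx_i)_\sharp\tparen{\sfr_i^2\,\f{T}^\lambda_\sharp\aalpha}=(\sfx_i)_\sharp\,\f{T}^\lambda_\sharp\tparen{\sfr_i^2\aalpha}=(t^\lambda)_\sharp(\sfx_i)_\sharp\tparen{\sfr_i^2\aalpha}=(t^\lambda)_\sharp\f{h}_i(\aalpha)=\mathsf{T}^\lambda\tparen{\f{h}_i(\aalpha)}\fstop
\]
Hence $\aalpha\in\f{H}(\mu_0,\mu_1)$ if and only if $\f{T}^\lambda_\sharp\aalpha\in\f{H}(\mathsf{T}^\lambda\mu_0,\mathsf{T}^\lambda\mu_1)$, and bijectivity follows since $(\f{T}^\lambda_\sharp)^{-1}=\f{T}^{1/\lambda}_\sharp$.

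Finally, by the change-of-variables formula for push-forwards and the pointwise identity above,
\[
\int_{\f{C}[\R^d,\R^d]}(\sfd_e)^2_{\pi,\f{C}}\,\d\tparen{\f{T}^\lambda_\sharp\aalpha}=\int_{\f{C}[\R^d,\R^d]}\tparen{(\sfd_e)_{\pi,\f{C}}\circ\f{T}^\lambda}^2\,\d\aalpha=\int_{\f{C}[\R^d,\R^d]}(\lambda\sfd_e)^2_{\pi,\f{C}}\,\d\aalpha\fstop
\]
Taking the infimum over $\aalpha\in\f{H}(\mu_0,\mu_1)$ and using the bijection just established yields $\HK_{\sfd_e}(\mathsf{T}^\lambda\mu_0,\mathsf{T}^\lambda\mu_1)^2=\HK_{\lambda\sfd_e}(\mu_0,\mu_1)^2$, which is the first assertion; the isometric-isomorphism statement then follows because $\mathsf{T}^\lambda\colon\mcM(\R^d)\to\mcM(\R^d)$ is a bijection with inverse $\mathsf{T}^{1/\lambda}$. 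I do not anticipate a genuine obstacle here: the only point requiring a little care is the behaviour at the vertex $\f{o}$ of the cone, where $\sfx_i$ depends on the auxiliary basepoint, and this is neutralised by the basepoint-independence of $\f{h}_i$; everything else is routine bookkeeping with push-forwards.
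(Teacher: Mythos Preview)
Your proof is correct and takes essentially the same approach as the paper: the paper's proof is a one-line remark that the map $u^\lambda\colon([x,r],[y,s])\mapsto([\lambda x,r],[\lambda y,s])$ (your $\f{T}^\lambda$) induces a bijection $u^\lambda_\sharp$ from $\f{H}(\mu,\nu)$ to $\f{H}(\mathsf{T}^\lambda\mu,\mathsf{T}^\lambda\nu)$, leaving implicit the cone-distance identity and the change of variables that you spell out carefully.
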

\begin{proof} This immediately follows since the map $u^\lambda\colon ([x,r],[y,s]) \mapsto  ([\lambda x,r],[\lambda y,s])$ induces a bijection $u^\lambda_\sharp$ from $\f{H}(\mu, \nu)$ to $\f{H}(\mathsf{T}^\lambda (\mu), (\mathsf{T}^\lambda (\nu))$. (Recall the notation in~\eqref{eq:defhk}.)
\end{proof}

\begin{corollary}\label{cor:densl} Let $\lambda >0$; then the unital and point-separating subalgebra $\AA= \FC{\infty,\infty}{u,c}{\infty}{c}(\mcM(\R^d))$ is dense in $2$-energy in $D^{1,2}(\X_\lambda(\mcQ))$. 
\end{corollary}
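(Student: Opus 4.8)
The plan is to deduce the statement from the already-established density of smooth cylinder functions for $\HK_{\sfd_e}$ (Corollary~\ref{c:DensityCylC} with $M=\R^d$, so that $\mssd_g=\sfd_e$) by transporting it through the rescaling isometry of Lemma~\ref{le:scaling}. Fix $\lambda>0$, set $\tilde\mcQ\eqdef \mathsf{T}^\lambda_\sharp\mcQ$ --- a finite non-negative Borel measure on $(\mcM(\R^d),\HK_{\sfd_e})$ --- and write $\tilde\X\eqdef (\mcM(\R^d),\HK_{\sfd_e},\tilde\mcQ)$. By Lemma~\ref{le:scaling}, $\mathsf{T}^\lambda$ is a bijective isometry from $(\mcM(\R^d),\HK_{\lambda\sfd_e})$ onto $(\mcM(\R^d),\HK_{\sfd_e})$, with inverse $\mathsf{T}^{1/\lambda}$, and it pushes $\mcQ$ onto $\tilde\mcQ$ (equivalently, $\mathsf{T}^{1/\lambda}$ pushes $\tilde\mcQ$ onto $\mcQ$). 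Hence the pullback $(\mathsf{T}^\lambda)^*\colon u\mapsto u\circ\mathsf{T}^\lambda$ is a bijection from $\Lip_b(\mcM(\R^d),\HK_{\sfd_e})$ onto $\Lip_b(\mcM(\R^d),\HK_{\lambda\sfd_e})$ which intertwines $L^0$- and $L^2$-convergence (because $\int|g|^2\,\diff\tilde\mcQ=\int|g\circ\mathsf{T}^\lambda|^2\,\diff\mcQ$) and satisfies $\lip_{\HK_{\lambda\sfd_e}}(u\circ\mathsf{T}^\lambda)=(\lip_{\HK_{\sfd_e}}u)\circ\mathsf{T}^\lambda$ pointwise (since $\mathsf{T}^\lambda$ is both an isometry and a homeomorphism for the common weak topology). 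A routine inspection of Definition~\ref{def:relgrad} then shows that $G$ is a $2$-relaxed gradient of $u$ in $\tilde\X$ if and only if $G\circ\mathsf{T}^\lambda$ is a $2$-relaxed gradient of $u\circ\mathsf{T}^\lambda$ in $\X_\lambda(\mcQ)$; consequently $(\mathsf{T}^\lambda)^*$ maps $D^{1,2}(\tilde\X)$ bijectively onto $D^{1,2}(\X_\lambda(\mcQ))$ with $|\rmD(u\circ\mathsf{T}^\lambda)|_{\star,\X_\lambda(\mcQ),2}=(|\rmD u|_{\star,\tilde\X,2})\circ\mathsf{T}^\lambda$ $\mcQ$-a.e.

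Next I would verify that $(\mathsf{T}^\lambda)^*$ preserves the algebra $\AA\eqdef \FC{\infty,\infty}{u,c}{\infty}{c}(\mcM(\R^d))$. Writing $\mathsf{T}^\lambda\mu=t^\lambda_\sharp\mu$ with $t^\lambda(x)=\lambda x$, for $u=(\nchi\circ\car^\trid)\cdot(F\circ\mbff^\trid)$, $\mbff=(f_1,\dotsc,f_N)$, one has $\car^\trid(\mathsf{T}^\lambda\mu)=\mu\R^d=\car^\trid\mu$ and $f_i^\trid(\mathsf{T}^\lambda\mu)=(f_i\circ t^\lambda)^\trid\mu$, whence $u\circ\mathsf{T}^\lambda=(\nchi\circ\car^\trid)\cdot(F\circ\tilde\mbff^\trid)$ with $\tilde f_i\eqdef f_i\circ t^\lambda$. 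Since $t^\lambda$ is a smooth proper diffeomorphism, $\tilde f_i\in\rmC^\infty_c(\R^d)$ whenever $f_i\in\rmC^\infty_c(\R^d)$, so $u\circ\mathsf{T}^\lambda\in\AA$; applying the same to $\mathsf{T}^{1/\lambda}=(\mathsf{T}^\lambda)^{-1}$ gives that $(\mathsf{T}^\lambda)^*$ maps $\AA$ onto $\AA$, and likewise the subalgebra $\FC{\infty,\infty}{c,c}{\infty}{c}\subseteq\AA$.

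Finally, since $\tilde\mcQ$ is finite it satisfies $\int\rme^{-t\mu M}\,\diff\tilde\mcQ(\mu)\le\tilde\mcQ\tparen{\mcM(\R^d)}<+\infty$ for every $t>0$, so Corollary~\ref{c:DensityCylC} (applied with $M=\R^d$) provides, for each $u\in D^{1,2}(\tilde\X)$, a sequence $(u_n)_n\subset\FC{\infty,\infty}{c,c}{\infty}{c}\subseteq\AA$ with $u_n\to u$ $\tilde\mcQ$-a.e.\ and $\lip_{\HK_{\sfd_e}}u_n\to|\rmD u|_{\star,\tilde\X,2}$ in $L^2(\tilde\mcQ)$. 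Given now $v\in D^{1,2}(\X_\lambda(\mcQ))$, set $u\eqdef v\circ\mathsf{T}^{1/\lambda}\in D^{1,2}(\tilde\X)$ (so $u\circ\mathsf{T}^\lambda=v$), take such an approximating sequence $(u_n)_n$, and put $v_n\eqdef u_n\circ\mathsf{T}^\lambda\in\AA$. Then $v_n\to v$ $\mcQ$-a.e.\ (as $\mathsf{T}^\lambda$ maps $\mcQ$-null sets to $\tilde\mcQ$-null sets), and by the intertwining relations of the first paragraph $\lip_{\HK_{\lambda\sfd_e}}v_n=(\lip_{\HK_{\sfd_e}}u_n)\circ\mathsf{T}^\lambda\to(|\rmD u|_{\star,\tilde\X,2})\circ\mathsf{T}^\lambda=|\rmD v|_{\star,\X_\lambda(\mcQ),2}$ in $L^2(\mcQ)$. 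This is precisely the density of $\AA$ in $2$-energy in $D^{1,2}(\X_\lambda(\mcQ))$ in the sense of Definition~\ref{def:density}.

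Being a pure change of variables, this argument has no substantial obstacle; the only points requiring a little care are checking that the rescaling isometry $\mathsf{T}^\lambda$ intertwines asymptotic Lipschitz constants and minimal relaxed gradients (which follows directly from the definitions, using that $\mathsf{T}^\lambda$ is a homeomorphism that pushes $\mcQ$ onto $\tilde\mcQ$), and the elementary observation that $(\mathsf{T}^\lambda)^*$ preserves the class of smooth compactly supported cylinder functions because dilations preserve $\rmC^\infty_c(\R^d)$.
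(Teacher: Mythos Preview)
Your proof is correct and follows essentially the same route as the paper: transport the density result for $\HK_{\sfd_e}$ (Corollary~\ref{c:DensityCylC}) to $\HK_{\lambda\sfd_e}$ through the rescaling isometry $\mathsf{T}^\lambda$ of Lemma~\ref{le:scaling}, after checking that $(\mathsf{T}^\lambda)^*$ preserves the algebra $\AA$. The paper wraps the isometry-transfer step into a citation of \cite[Proposition~5.15]{Savare22}, whereas you unfold that transfer by hand (intertwining of asymptotic Lipschitz constants and minimal relaxed gradients); otherwise the two arguments coincide.
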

\begin{proof} 
Let $\X'_\lambda(\mcQ)\eqdef (\mcM(\R^d), \HK_{\sfd_e}, \mathsf{T}^\lambda_\sharp \mcQ)$ and note that the map $(\mathsf{T}^\lambda)^*$, defined as $(\mathsf{T}^\lambda)^* (u)\eqdef  u \circ \mathsf{T}^\lambda$, is a bijection between $\Lip_b(\mcM(\R^d), \HK_{\sfd_e})$ and $\Lip_b(\mcM(\R^d), \HK_{\lambda \sfd_e})$ and the algebra $\AA$ is invariant under its action i.e.~$\AA = (\mathsf{T}^\lambda)^*(\AA)$. We deduce by Lemma \ref{le:scaling} and  \cite[Proposition~5.15]{Savare22} that $(\mathsf{T}^\lambda)^*$ is an isomorphism of $H^{1,2}(\X'_\lambda; \AA)$ onto $H^{1,2}(\X_\lambda; \AA)$ and an isomorphism of $H^{1,2}(\X'_\lambda)$ onto $H^{1,2}(\X_\lambda)$. 
By the density in $2$-energy of $\FC{\infty,\infty}{c,c}{\infty}{c}(\mcM(\R^d)) \subset \AA$ in $D^{1,2}(\X'_\lambda)$ provided by Corollary~\ref{c:DensityCylC} we deduce that
\[
H^{1,2}(\X'_\lambda(\mcQ); \AA) = H^{1,2}(\X'_\lambda(\mcQ))\comma
\] so that
\[
H^{1,2}(\X_\lambda(\mcQ); \AA) = H^{1,2}(\X_\lambda(\mcQ))\fstop \qedhere
\]
\end{proof}

\begin{corollary} The point-separating subalgebra $\FC{\infty,\infty}{c,c}{\infty}{c}(\mcM(\R^d))$ is dense in $2$-energy in $D^{1,2}(\X_{\He}(\mcQ))$ and $D^{1,2}(\X_{\W}(\mcQ))$, and $H^{1,2}(\X_{\He}(\mcQ))$ and $H^{1,2}(\X_{\W}(\mcQ))$ are Hilbert spaces.
\end{corollary}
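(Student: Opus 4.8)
The plan is to deduce both statements from the corresponding ones for the (rescaled) Hellinger--Kantorovich distances $\HK_{\lambda\sfd_e}$, $\lambda>0$, by means of the stability results Lemma~\ref{le:easy} and Proposition~\ref{prop:erbar}, and then to descend from the unital algebra $\FC{\infty,\infty}{u,c}{\infty}{c}(\mcM(\R^d))$ to the smaller algebra $\FC{\infty,\infty}{c,c}{\infty}{c}(\mcM(\R^d))$ exactly as in the proof of Corollary~\ref{c:DensityCylC}.

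First I would set up the two directed families of distances. By~\eqref{eq:convlambda} we have $\He=\sup_{\lambda\geq 1}\HK_{\lambda\sfd_e}$ and $\W=\sup_{\lambda\geq 1}\lambda\HK_{\sfd_e/\lambda}$, both families being nondecreasing in $\lambda$ and hence directed; every member is a complete and separable distance on $\mcM(\R^d)$ metrizing the weak topology $\tau_w$, by the properties of $\HK$ recalled in~\S\ref{sec:hk}, which are preserved under the scaling of Lemma~\ref{le:easy}. Moreover $\He\geq\HK_{\sfd_e}$ and $\W\geq\HK_{\sfd_e}$, and $\FC{\infty,\infty}{u,c}{\infty}{c}(\mcM(\R^d))\subset\FC{1,1}{u,b}{1}{b}(\mcM(\R^d))\subset\Lip_b(\mcM(\R^d),\tau_w,\HK_{\sfd_e})$ by~\eqref{eq:LipSubAlg}; since a cylinder function with compact mass support is globally Lipschitz with respect to each of the distances in the two families (its Lipschitz constant being controlled, uniformly over each family, by its $\HK_{\sfd_e}$-Lipschitz constant, because all these distances dominate $\HK_{\sfd_e}$), the algebra $\FC{\infty,\infty}{u,c}{\infty}{c}(\mcM(\R^d))$ is a unital, point-separating subalgebra of $\bigcap_i\Lip_b(\mcM(\R^d),\tau_w,\sfd_i)$, contained in $\Lip_b(\mcM(\R^d),\tau_w,\He)$, resp.\ in $\Lip_b(\mcM(\R^d),\tau_w,\W)$. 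Thus all hypotheses of Proposition~\ref{prop:erbar} are in place.

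Next I would import what is already known for the approximating spaces. By Corollary~\ref{cor:densl}, $\FC{\infty,\infty}{u,c}{\infty}{c}(\mcM(\R^d))$ is dense in $2$-energy in $D^{1,2}(\mcM(\R^d),\HK_{\lambda\sfd_e},\mcQ)$ and $H^{1,2}(\mcM(\R^d),\HK_{\lambda\sfd_e},\mcQ)$ is a Hilbert space, for every $\lambda>0$; applying Lemma~\ref{le:easy} (with $\sfd_e$ replaced by $\sfd_e/\lambda$ and the resulting distance multiplied by $\lambda$) the same holds for $D^{1,2}(\mcM(\R^d),\lambda\HK_{\sfd_e/\lambda},\mcQ)$ and $H^{1,2}(\mcM(\R^d),\lambda\HK_{\sfd_e/\lambda},\mcQ)$. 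Since $\mcQ$ is finite, Proposition~\ref{prop:erbar} applied to the e.m.t.m.~spaces $\X_{\He}(\mcQ)$ and $\X_{\W}(\mcQ)$ with the families described above yields that $\FC{\infty,\infty}{u,c}{\infty}{c}(\mcM(\R^d))$ is dense in $2$-energy in $D^{1,2}(\X_{\He}(\mcQ))$ and in $D^{1,2}(\X_{\W}(\mcQ))$, and that $H^{1,2}(\X_{\He}(\mcQ))$ and $H^{1,2}(\X_{\W}(\mcQ))$ are Hilbert spaces.

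Finally I would pass to $\FC{\infty,\infty}{c,c}{\infty}{c}(\mcM(\R^d))$, following the truncation argument used in Corollary~\ref{c:DensityCylC}: given $f$ in $D^{1,2}(\X_{\He}(\mcQ))$ one picks $w_n=v_n+c_n$ in $\FC{\infty,\infty}{u,c}{\infty}{c}(\mcM(\R^d))$, with $v_n\in\FC{\infty,\infty}{c,c}{\infty}{c}(\mcM(\R^d))$ and $c_n\in\R$, realizing the density, and replaces each $w_n$ by $v_n+c_n u_k\in\FC{\infty,\infty}{c,c}{\infty}{c}(\mcM(\R^d))$, where $u_k$ are the mass truncations of Lemma~\ref{le:trunc1}; since $\mcQ$ is finite, $u_k\to\car$ in $L^2(\mcQ)$, while by Proposition~\ref{prop:equalityhew} one has $\lip_{\He}u_k\to 0$ uniformly and $\lip_{\W}u_k\equiv 0$ (as $u_k$ depends on $\mu$ only through $\mu M$, so its horizontal gradient vanishes), so that a diagonal choice $k=k(n)$ gives the required approximating sequence in $\FC{\infty,\infty}{c,c}{\infty}{c}(\mcM(\R^d))$ --- using only the subadditivity and translation invariance of $\lip$ --- and the argument for $\W$ in place of $\He$ is identical. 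I expect the only genuinely delicate point to be the verification that cylinder functions lie in $\Lip_b$ simultaneously with respect to all members of the two families and with respect to the extended distance $\W$ (which takes the value $+\infty$ between measures of different total mass); once this is settled, the statement is a direct assembly of Corollary~\ref{cor:densl}, Lemma~\ref{le:easy}, Proposition~\ref{prop:erbar}, and the truncation argument of Corollary~\ref{c:DensityCylC}.
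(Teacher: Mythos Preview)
Your proposal is correct and follows essentially the same route as the paper: Corollary~\ref{cor:densl} plus Lemma~\ref{le:easy} plus Proposition~\ref{prop:erbar} applied to the directed families in~\eqref{eq:convlambda} give density of the unital algebra and Hilbertianity, and then one descends to the non-unital algebra via the mass truncations of Lemma~\ref{le:trunc1}. The only small discrepancy is bibliographic: the paper does not invoke Corollary~\ref{c:DensityCylC} for the last step (that corollary handles the Riemannian $\HK$ case via Proposition~\ref{prop:dolore} and Lemma~\ref{le:densinf}), but rather cites \cite[Proposition~2.15]{FSS22} together with the inequalities $\lip^{\tau_w}_{\He}\le\lip_{\HK}$ and $\lip^{\tau_w}_{\W}\le\lip_{\HK}$ from Proposition~\ref{prop:equalityhew}; your direct computation of $\lip^{\tau_w}_{\He}u_k$ and $\lip^{\tau_w}_{\W}u_k$ achieves the same end and is a valid substitute.
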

\begin{proof} The density of $\AA = \FC{\infty,\infty}{u,c}{\infty}{c}(\mcM(\R^d))$ in $D^{1,2}(\X_{\He}(\mcQ))$ and $D^{1,2}(\X_{\W}(\mcQ))$ as well as the Hilbertianity result follow by Corollary \ref{cor:densl}, Lemma \ref{le:easy}, Proposition \ref{prop:erbar} and \eqref{eq:convlambda}. 
The density for the smaller (and non-unital) subalgebra $\FC{\infty,\infty}{c,c}{\infty}{c}(\mcM(\R^d))$ follows by \cite[Proposition 2.15]{FSS22}, Lemma \ref{le:trunc1} and Proposition~\ref{prop:equalityhew} which shows in particular the inequalities $\lip^{\tau_w}_{\He} \le \lip_{\HK}$, $\lip^{\tau_w}_{\W} \le \lip_{\HK}$.
\end{proof}

\section{The canonical form and the Cheeger energy}\label{s:CanonicalForm}
Let $X$ be a Polish space, and~$\ev\colon \mcM(X)\times X\to\R$ be the \emph{evaluation map}
\[
\ev\colon (\mu,x)\mapsto \ev_x\mu\eqdef \mu_x\comma
\]
where, for notational simplicity and throughout this section, we set
\[
\mu_x\eqdef \mu\!\set{x}\comma \qquad x\in X\fstop
\]

\subsection{The multiplicative infinite-dimensional Lebesgue measure}\label{sec:lebm}
Let~$\theta>0$, and recall that~$\LP{\theta,\nu}$ is the multiplicative infinite-dimensional Lebesgue measure with intensity~$\nu$ in~\eqref{eq:InfLebesgueIsomor}.
We collect here all the relevant properties of~$\LP{\theta,\nu}$.

\begin{lemma}\label{l:FinitenessBalls}
$\LP{\theta,\nu}$ is finite on every~$\HK_{\mssd_g}$-ball for every~$\theta>0$.

\begin{proof}
By triangle inequality it suffices to show the statement for sets of the form~$\mcB_r$ for some~$r\geq 0$.
By~\eqref{eq:InfLebesgueIsomor},
\[
\LP{\theta,\nu}(\mcB_r)= \lambda_\theta([0,r)) < \infty \fstop \qedhere
\]
\end{proof}
\end{lemma}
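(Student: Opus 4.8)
The plan is to reduce the claim, via the triangle inequality, to showing that $\LP{\theta,\nu}$ is finite on the mass sublevel sets $\mcB_r=\set{\mu\in\mcM(X):\mu X\le r}$, $r\ge 0$, and then to evaluate $\LP{\theta,\nu}(\mcB_r)$ directly from the definition~\eqref{eq:InfLebesgueIsomor}.

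For the reduction, I would first observe that the square root of the total mass is $1$-Lipschitz with respect to $\HK_{\mssd_g}$, that is, $\tabs{\sqrt{\mu_0 X}-\sqrt{\mu_1 X}}\le\HK_{\mssd_g}(\mu_0,\mu_1)$ for all $\mu_0,\mu_1\in\mcM(X)$. Indeed, for every $\aalpha\in\f{H}(\mu_0,\mu_1)$ the elementary estimate $\cos\le 1$ in~\eqref{ss22:eq:distcone} yields $\varrho_{\pi,\f{C}}^2\ge(\sfr_0-\sfr_1)^2$ pointwise, so that
\[
\HK_{\mssd_g}(\mu_0,\mu_1)^2\ge\norm{\sfr_0-\sfr_1}_{L^2(\aalpha)}^2\ge\tparen{\norm{\sfr_0}_{L^2(\aalpha)}-\norm{\sfr_1}_{L^2(\aalpha)}}^2=\tparen{\sqrt{\mu_0 X}-\sqrt{\mu_1 X}}^2\comma
\]
where the last equality uses $\norm{\sfr_i}_{L^2(\aalpha)}^2=\f{h}_i(\aalpha)(X)=\mu_i(X)$, cf.~\eqref{eq:hommarg}, and the first inequality follows from~\eqref{eq:defhk} by minimizing over $\aalpha$. (Alternatively, one could invoke~\cite[Proposition~7.18]{LMS18} applied to $f\equiv\car$.) Consequently, if $\HK_{\mssd_g}(\mu_0,\mu)\le R$ then $\mu X\le\tparen{\sqrt{\mu_0 X}+R}^2$, i.e.\ $\ball[\HK_{\mssd_g}]{\mu_0}{R}\subset\mcB_r$ with $r\eqdef\tparen{\sqrt{\mu_0 X}+R}^2$, which gives the reduction.

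It then remains to compute $\LP{\theta,\nu}(\mcB_r)$. Since the isomorphism $\isomor$ of~\eqref{eq:IsomorphismMbp} is Borel bi-measurable and maps $\mcB_r$ into $\mcP(X)\times[0,r]$, the definition~\eqref{eq:InfLebesgueIsomor} of $\LP{\theta,\nu}$ gives
\[
\LP{\theta,\nu}(\mcB_r)\le\tparen{\DF{\nu}\otimes\lambda_\theta}\tparen{\mcP(X)\times[0,r]}=\lambda_\theta\tparen{[0,r]}<\infty\comma
\]
using that $\DF{\nu}$ is a probability measure and that $\lambda_\theta$ is finite on bounded intervals. I do not anticipate any real obstacle: the whole argument is elementary, the only point deserving a little attention being the mass comparison of the second paragraph.
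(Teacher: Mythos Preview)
Your proof is correct and follows the same two-step structure as the paper: reduce to the mass-sublevel sets $\mcB_r$, then evaluate $\LP{\theta,\nu}(\mcB_r)$ from~\eqref{eq:InfLebesgueIsomor}. The only cosmetic difference is in the reduction: the paper invokes the triangle inequality directly, implicitly using that $\HK_{\mssd_g}(\mu,0)=\sqrt{\mu X}$ so that each $\mcB_r$ is itself a closed $\HK$-ball centered at the zero measure; you instead spell out the Lipschitz estimate $\tabs{\sqrt{\mu_0 X}-\sqrt{\mu_1 X}}\le\HK_{\mssd_g}(\mu_0,\mu_1)$, which is the same fact in slightly greater generality.
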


\subsubsection{Invariance and uniqueness}
We collect here some results about invariance and uniqueness of multiplicative infinite-dimensional Lebesgue measures.

\begin{proposition}\label{p:UniquenessLP}
Let~$\mcQ$ be a non-negative (non-zero) projectively~$\mfM(X)$-invariant Borel measure on~$\mcM(X)$, and such that~$\mcQ\mcB_0=0$ and~$\mcQ\mcB_r\in \R^+$ for some~$r\in\R^+$.
Then,
\begin{enumerate}[$(i)$]
\item\label{i:p:UniquenessLP:0} $k_t \eqdef \int \rme^{-t \mu X} \diff\mcQ(\mu)$ is (positive and) finite for every $t>0$;

\item\label{i:p:UniquenessLP:1} $\alpha \eqdef \int \mu(\emparg)\, \rme^{- \mu X} \diff\mcQ(\mu)$ is a finite non-negative (non-zero) Borel measure on~$X$;
\item\label{i:p:UniquenessLP:2} letting~$\theta\eqdef \alpha X$ and~$\nu\eqdef \normaliz(\alpha)$, it holds that~$\mcQ= k_1\,\LP{\theta,\nu}$.
\end{enumerate}

\begin{proof}
For any~$a \in \rmB_b(X)$ let~$\mcQ_a\eqdef ( \rme^a\cdot )_\pfwd \mcQ$ be the shift of~$\mcQ$ by~$\rme^a$, and~$d(\rme^a)\eqdef \frac{\diff \mcQ_a}{\diff\mcQ}$ be the Radon--Nikod\'ym derivative of~$\mcQ_a$ w.r.t.~$\mcQ$. 

\paragraph{Claim 1: for every $a \in \rmB_b(X)$ the function $d(\rme^a) \in (0,+\infty)$ is a constant and satisfies $d(\rme^a) \in (0,1]$ if $a \ge 0$. If, additionally, $a$ is constant, then $d(\rme^a) \in (0,1)$ if $a>0$ and $d(\rme^a)=d(1)=1$ if $a=0$}
If $a \in \rmB_b(X)^+$ and $r>0$, we have
\[ d(\rme^a) \mcQ(\mcB_r) = \mcQ_a(\mcB_r)= \int \car_{\mcB_r}(\rme^a \cdot \mu) \, \diff \mcQ(\mu) \le \int \car_{\mcB_r}(\mu) \, \diff \mcQ(\mu)  = \mcQ(\mcB_r) \fstop\]
Choosing $r>0$ such that $\mcQ(\mcB_r) \in (0, + \infty)$ shows that $d(\rme^a) \in [0,1]$. If, by contradiction, $d(\rme^a)=0$, then $\mcQ_a(\mcB_r)=0$ for every $r>0$ so that $\mcQ_a$ is the zero measure, a contradiction, since $\mcQ$ is not the zero measure. This proves that $d(\rme^a) \in (0,1]$ for every $a \in \rmB_b(X)^+$.
For~$a \in \rmB_b(X)$ we have
\[ d(\rme^a) = \frac{d(\rme^{a^+})}{d(\rme^{a^-})}  \in (0, + \infty).\]
If additionally $a$ is constant and positive, setting $c\eqdef\rme^a \in (1,+\infty)$, we assume by contradiction that $d(c)=1$. Then, for any $r>0$ such that $\mcQ(\mcB_r) \in (0, + \infty)$, we have $\mcQ(\mcB_r)= \mcQ(\mcB_{r/c})$, so that also $\mcQ(\mcB_{r/c})>0$; we deduce that $\mcQ(\mcB_{r/e^n} \setminus \mcB_{r/e^{n+1}}) =0$ for every $n \in \N$ so that
\[ \mcQ(\mcB_r) = \mcQ(\set{0}) + \sum_{n=0}^{\infty} \mcQ(\mcB_{r/c^n} \setminus \mcB_{r/c^{n+1}}) = 0 \comma\]
a contradiction. Finally, if $a=0$, $\mcQ_a=\mcQ_0=\mcQ$ so that $d(\rme^a)=d(1)=1$.

\paragraph{Claim 2: $a\mapsto d(\rme^a)$ is continuous on bounded sets w.r.t.\ the pointwise convergence in~$\rmB_b(X)$}
Let~$\seq{a_n}_n\in \rmB_b(X)$ be pointwise convergent to~$a\in \rmB_b(X)$ and satisfying~$\sup_n \|a_n\|_\infty<\infty$.
Then,~$\rme^{a_n} \mu$ converges weakly to~$\rme^{a} \mu$ for every~$\mu\in\mcM(X)$ by Dominated Convergence in~$L^1(\mu)$ with dominating function~$\abs{f}\rme^{\sup_n \|a_n\|_\infty}$ for any~$f\in \Cb(X)$.
Since~$\mcQ$ is finite on mass-bounded sets, for every bounded weakly continuous~$u\colon \mcM(X)\to\R$ with mass-bounded support,
\[
\lim_n \int u(\mu) \diff\mcQ_{a_n}(\mu) = \lim_n \int u(\rme^{a_n} \cdot  \mu) \diff\mcQ(\mu) = \int u(\rme^a \cdot \mu) \diff\mcQ(\mu) = \int u(\mu) \diff\mcQ_a(\mu)
\]
by Dominated Convergence in~$L^1(\mcQ)$ with dominating function~$\abs{u}\car_{\supp u}$.
Then, by projective invariance of~$\mcQ$,
\begin{equation}\label{eq:p:UniqunessLP:1}
\lim_n d(\rme^{a_n}) \int u \diff\mcQ = \lim_n \int u \diff\mcQ_{a_n} = \int u \diff\mcQ_a = d(\rme^a) \int u \diff\mcQ\fstop
\end{equation}
Since~$\mcQ$ is finite on mass-bounded sets, we may choose~$u$ so that~$\int u\diff\mcQ\in (0,\infty)$, and cancelling it from~\eqref{eq:p:UniqunessLP:1} concludes the assertion.

\paragraph{Claim 3: $d(\rme^{ar})=d(\rme^a)^r$ for every~$r\in\R$ and every $a \in \rmB_b(X)$}
For~$r\in\Z$, the assertion holds since~$a\mapsto d(\rme^a)$ is a group homomorphism.
The case~$r\in \Q$ follows from the case~$r\in\Z$ by a standard iteration argument.
The case~$r\in\R$ follows from the case~$r\in\Q$ by the continuity of~$a\mapsto d(\rme^a)$ in Claim~1.

\paragraph{Claim 4: the assignment
\begin{equation}\label{eq:p:UniquenessLP:1}
\alpha\colon A \longmapsto - \log d(\rme^{\car_A})
\end{equation}
defines a finite non-negative Borel measure on $X$ with total mass $\theta>0$}
Since $\car_A \ge 0$, we have by Claim~1 that $d(\rme^{\car_A}) \in (0,1]$ so that $\alpha A \ge 0$. Again by Claim~1 we get that $\theta=\alpha X = - \log (d(\rme))>0$ and $\alpha(\emp)=-\log(d(1))=0$. Since~$\rme^a\mapsto d(\rme^a)$ is a group homomorphism,~$\alpha$ is finitely additive on pairwise disjoint sets. We are only left to show that $\tilde{\nu}$ is countably additive. Let~$\seq{A_i}_i$ be any countable pairwise disjoint collection of Borel subsets of~$X$, and set~$A\eqdef \cup_i A_i$. 
For each~$n\in \N_1$, define a finite partition~$\seq{B^n_i}_{i\leq n}$ of~$A$ by setting~$B^n_i\eqdef A_i$ if~$i<n$ and~$B^n_n \eqdef \cup_{i \ge n} A_i $.
It suffices to show that~$d(\rme^{\car_A})= \prod_i^\infty d(\rme^{\car_{A_i}})$.
We have
\begin{equation}\label{eq:p:UniqunessLP:2}
d(\rme^{\car_A}) = \prod_i^n d(e^{\car_{B^n_i}}) = d(\rme^{\car_{B^n_n}})\prod_i^{n-1} d(\rme^{\car_{A_i}}) \fstop 
\end{equation}
Since~$\car_{B^n_n}$ is pointwise non-increasing to~$\car_\emp=0$ and uniformly bounded by~$1$, by Claim~2, we have~$\lim_n d(\rme^{\car_{B^n_n}})= d(\rme^0)=d(1)=1$.
The claim is proved by letting~$n$ to infinity in~\eqref{eq:p:UniqunessLP:2}.

\paragraph{Conclusion}
We now detail some arguments in the proof of~\cite[Thm.~4.2]{TsiVerYor01}.
For~$k\in\Z$ let~$\mcA_k\eqdef \set{\mu\in \mcM(X) : 2^k \leq \mu X < 2^{k+1}}$.
Since~$2\cdot \mcA_k= \mcA_{k+1}$ we have~$\mcQ(\mcA_{k+1})= d(2)^{-1} \mcQ(\mcA_k)$, hence~$\mcQ(\mcA_k)= d(2)^{-k} \mcA_0$.
Then, since~$\mcQ(\set{0})=0$ and $d(2) < 1$ by Claim~1,
\begin{equation}\label{eq:p:UniqunessLP:2.5}
k_t= \int \rme^{-t\mu X} \diff\mcQ(\mu) \leq \sum_{k\in\Z} \mcQ(\mcA_0)\, d(2)^{ -k} \exp(-2^k t) <\infty, \quad t>0 \fstop
\end{equation}
Choosing~$t=1$ proves~\ref{i:p:UniquenessLP:0}.
Furthermore,~\eqref{eq:p:UniqunessLP:2.5} states that the Laplace transform of~$\mcQ$ is finite on constant functions, and it is readily verified that it is finite on all positive simple functions: if $N \in \N_1$ and $A_1, \dots, A_N$ is a pairwise disjoint partition of $X$ we set
\[
u_{\boldsymbol{t}} \eqdef\sum_{i=1}^N t_i \car_{A_i} \ge \min_{1 \le i \le N} t_i \car_{X} \defeq \underline{t} \car_X, \quad \boldsymbol{t} = (t_1, \dots, t_N) \in (\R^+)^N \fstop
\]
We have
\[ L(\boldsymbol{t}) \eqdef \int \rme^{-\int u_{\boldsymbol{t}}\, \diff \mu} \diff\mcQ(\mu) \le \int \rme^{-\underline{t} \mu X} \diff\mcQ(\mu) <+\infty \fstop\]
For every $t >0$, $1 \le j \le N$ and $\boldsymbol{s}, \boldsymbol{t} \in (\R^+)^N$ let us set
\[
b_{j,t} \eqdef \car_{ X \setminus A_j} + t \car A_j, \quad \boldsymbol{s} \diamond \boldsymbol{t} \eqdef (s_1 t_1, s_2 t_2, \dots, s_N t_N)\comma
\]
and observe that
\[
u_{\boldsymbol{s} \diamond \boldsymbol{t}} = u_{\boldsymbol{t}} \prod_{i=1}^N b_{i,s_i}\fstop
\]
We have
\begin{align*}
L(\boldsymbol{s} \diamond \boldsymbol{t} )=& \int \rme^{-t \int u_{\boldsymbol{s} \diamond \boldsymbol{t}}\, \diff \mu} \diff\mcQ(\mu) =  \ L(\boldsymbol{t}) \prod_{i=1}^N d\tparen{b_{i,s_i}} 
\\
= & \ L(\boldsymbol{t}) \prod_{i=1}^N d\tparen{\rme^{(\log s_i) \car_{A_i}}}= \ L(\boldsymbol{t}) \prod_{i=1}^N d(\rme^{\car_{A_i}})^{\log s_i}
\end{align*}
by Claim~2, so that
\begin{align}\label{eq:LaplaceTransform}
L(\boldsymbol{s} \diamond \boldsymbol{t})= L(\boldsymbol{t}) \prod_{i=1}^N  s_i^{-\theta\, \nu(A_i)} \comma
\end{align}
with $\nu \eqdef \normaliz(\alpha)$. We deduce that  
\begin{align*}
\int \rme^{- \int u_{\boldsymbol{t}}\, \diff \mu} \diff\mcQ(\mu) &= L(\boldsymbol{t}) = L(1,1, \dots, 1) \cdot \prod_{i=1}^N t_i^{ -  \theta\, \nu(A_i)}
\\
& = k_1  \prod_{i=1}^N t_i^{ -  \theta\, \nu(A_i)} 
\\
& = k_1 \exp \paren{-\theta \int \log u_{\boldsymbol{t}} \, \diff \nu} \fstop
\end{align*}
Thus 
\begin{equation}\label{eq:formulatrans}
\int \rme^{- \int u\, \diff \mu} \diff\mcQ(\mu) =  k_1 \exp \paren{-\theta \int \log u \, \diff \nu} 
\end{equation}
for every positive simple function $u$.
By a standard approximation argument, we deduce that \eqref{eq:formulatrans} holds for every Borel function $u\colon X \to [0,+\infty]$.

Now, define a measure~$\mcQ_*$ on~$\mcM(X)$ by~$\diff\mcQ_*(\mu)\eqdef k_1^{-1}\rme^{-\mu X}\diff\mcQ(\mu)$.
Note that~$\mcQ_*$ is a probability measure by~\ref{i:p:UniquenessLP:0}.

Choosing~$u=\car+v$ in~\eqref{eq:formulatrans}, we deduce from~\eqref{eq:formulatrans} that
\[
\int \rme^{-\int v\diff\mu} \diff\mcQ_*(\mu) = \exp\paren{-\theta\int \log(1+v)\diff\nu}
\]
for every Borel function $v\colon X \to [0,+\infty]$.
By the uniqueness part in the generalized Bernstein theorem for Laplace transforms of probability measures (on abelian semigroups~\cite[\S47, p.~261]{Cho54} or on continuous bounded functions~\cite[Thm.~2.3]{HofRes77}),
together with the Laplace transform characterization of the Gamma measure, e.g.~\cite[Eqn.~(7)]{TsiVerYor01}, we conclude that~$\mcQ_*$ is the Gamma measure~$\GP{\theta, \nu}$ with intensity measure~$\alpha=\theta\nu$.
It then follows from~\cite[Eqn.~9]{TsiVerYor01} that~$\mcQ= k_1 \LP{\theta, \nu}$, which concludes the proof of the representation in~\ref{i:p:UniquenessLP:2} for the measure~$\alpha=\theta\nu$ in~\eqref{eq:p:UniquenessLP:1}.

It remains to show that the measure~$\alpha$ in~\eqref{eq:p:UniquenessLP:1} satisfies the representation in~\ref{i:p:UniquenessLP:1}.
Since~$\mcQ_*=\GP{\theta, \nu}$, the representation follows by definition of intensity measure, see also \eqref{eq:MeckeG}.
\end{proof}
\end{proposition}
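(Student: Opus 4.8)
The plan is to follow the route of Tsilevich--Vershik--Yor~\cite{TsiVerYor01}, but under the present minimal hypotheses. The key device is the multiplicative cocycle of $\mcQ$: for $a\in\rmB_b(X)$ set $\mcQ_a\eqdef(\rme^a\cdot)_\pfwd\mcQ$; projective $\Multi(X)$-invariance means $\mcQ_a=d(\rme^a)\,\mcQ$ for a constant $d(\rme^a)\in[0,\infty)$, and $a\mapsto d(\rme^a)$ is a homomorphism from $\Multi(X)$ into the multiplicative semigroup $[0,\infty)$. The first step is to record the elementary facts about $d$. Positivity $d(\rme^a)>0$ holds because $\rme^a\cdot$ is a Borel bijection, so $\mcQ_a$ cannot be the zero measure while $\mcQ$ is not; for $a\geq0$ one has $\set{\rme^a\cdot\mu\in\mcB_r}\subset\set{\mu\in\mcB_r}$, hence $d(\rme^a)\,\mcQ\mcB_r\leq\mcQ\mcB_r$ and $d(\rme^a)\in(0,1]$; and for a constant $c>1$ a geometric-series argument over the dyadic annuli $\mcA_k\eqdef\set{\mu:2^k\leq\mu X<2^{k+1}}$ rules out $d(c)=1$, so $d(c)\in(0,1)$. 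A dominated-convergence argument (weak continuity of $\mu\mapsto\rme^{a_n}\cdot\mu$ together with finiteness of $\mcQ$ on mass-bounded sets) gives that $a\mapsto d(\rme^a)$ is continuous along bounded pointwise-convergent sequences, and combining the homomorphism property with this continuity yields $d(\rme^{ra})=d(\rme^a)^r$ for every $r\in\R$.

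Next I would build the intensity measure directly from the cocycle by setting $\alpha(A)\eqdef-\log d(\rme^{\car_A})$ for Borel $A\subset X$. Non-negativity and $\alpha(\emp)=0$ are immediate from the previous step; finite additivity on disjoint sets is exactly the homomorphism property of $d$ (since then $\car_{A\cup B}=\car_A+\car_B$); and countable additivity follows by applying the continuity of $d$ to the indicators of the tails $\bigcup_{i\geq n}A_i$, which decrease pointwise to $0$. Thus $\alpha$ is a finite non-negative Borel measure with total mass $\theta\eqdef\alpha X=-\log d(\rme)\in(0,\infty)$; put $\nu\eqdef\normaliz(\alpha)$.

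The heart of the proof is the identification of the Laplace functional $L(u)\eqdef\int\rme^{-\int u\,\diff\mu}\,\diff\mcQ(\mu)$. First one shows it is finite on all non-negative simple functions: from $\mcQ(\mcA_k)=d(2)^{-k}\mcQ(\mcA_0)$ and $d(2)\in(0,1)$ one gets $\sum_k d(2)^{-k}\exp(-2^k t)<\infty$, whence $k_t<\infty$ for all $t>0$ (this is assertion~\ref{i:p:UniquenessLP:0}), and the same estimate bounds $L$ on any simple $u\geq\underline t\,\car_X$. Then, for $u_{\boldsymbol t}=\sum_i t_i\car_{A_i}$ over a finite Borel partition, factoring each $\rme^{(\log s_i)\car_{A_i}}$ through $d$ and using $d(\rme^{ra})=d(\rme^a)^r$ produces the scaling law $L(u_{\boldsymbol s\diamond\boldsymbol t})=L(u_{\boldsymbol t})\prod_i s_i^{-\theta\nu(A_i)}$; evaluating at $\boldsymbol t=(1,\dots,1)$ gives $L(u_{\boldsymbol t})=k_1\exp(-\theta\int\log u_{\boldsymbol t}\,\diff\nu)$, and a monotone-convergence argument extends this to $L(u)=k_1\exp(-\theta\int\log u\,\diff\nu)$ for every Borel $u\colon X\to[0,\infty]$.

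Finally I would pass to the probability measure $\diff\mcQ_*(\mu)\eqdef k_1^{-1}\rme^{-\mu X}\,\diff\mcQ(\mu)$ (a probability by~\ref{i:p:UniquenessLP:0}) and evaluate the last identity at $u=\car+v$: this yields $\int\rme^{-\int v\,\diff\mu}\,\diff\mcQ_*(\mu)=\exp(-\theta\int\log(1+v)\,\diff\nu)$ for all Borel $v\geq0$, which is precisely the Laplace functional of the Gamma measure $\GP{\theta,\nu}$. By the uniqueness part of the Bernstein-type theorem for Laplace transforms of probability measures (on abelian semigroups, cf.~\cite{Cho54,HofRes77}) one concludes $\mcQ_*=\GP{\theta,\nu}$, and then $\mcQ=k_1\LP{\theta,\nu}$ by the Tsilevich--Vershik--Yor relation between $\LP{\theta,\nu}$ and $\GP{\theta,\nu}$~\cite{TsiVerYor01}; assertions~\ref{i:p:UniquenessLP:1} and~\ref{i:p:UniquenessLP:2} follow at once since $\GP{\theta,\nu}$ has intensity measure $\theta\nu=\alpha$. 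I expect the main obstacle to be twofold: establishing the global finiteness of $L$ on all non-negative simple (and hence Borel) functions from the single assumption $\mcQ\mcB_r<\infty$, and invoking the correct uniqueness statement for Laplace functionals of probability measures on the infinite-dimensional space $\mcM(X)$; everything else reduces to bookkeeping with the cocycle $d$.
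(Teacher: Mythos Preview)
Your proposal is correct and follows essentially the same route as the paper's proof: the cocycle $d(\rme^a)$, its basic properties (positivity, $\le1$ on non-negatives, strict inequality on constants), continuity on bounded pointwise-convergent sequences, the power law $d(\rme^{ra})=d(\rme^a)^r$, the construction of $\alpha(A)=-\log d(\rme^{\car_A})$, the dyadic-annuli estimate for $k_t$, the explicit Laplace functional on simple functions, and the identification of $\mcQ_*$ with $\GP{\theta,\nu}$ via the Bernstein-type uniqueness theorem. The only cosmetic difference is that the paper rules out $d(c)=1$ by telescoping $\mcB_r$ into the annuli $\mcB_{r/c^n}\setminus\mcB_{r/c^{n+1}}$ rather than via the dyadic $\mcA_k$ you mention, but the underlying idea is the same.
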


\begin{corollary}
Let~$\mcQ$ be a non-negative projectively~$\mfM(X)$-invariant Borel measure on~$\mcM(X)$ such that~$\mcQ\mcB_r\in \R^+$ for some~$r\in\R^+$.
Then, either~$\mcQ=a_0\delta_{\mathbf{0}}$ or~$\mcQ=a_1\LP{\theta,\nu}$ for some constants~$a_0,a_1\geq 0$, some constant~$\theta>0$ and some Borel probability measure~$\nu$ on~$X$.

\begin{proof}
Assume by contradiction that $\mcQ$ charges both $\set{\mathbf{0}}$ and its complement. In this case, the restriction of $\mcQ$ to $\set{\mathbf{0}}$ is invariant, so that the measure $\mcQ$ too must be  invariant.
Thus the restriction of $\mcQ$ to $\mcM (X)\setminus \mcB_0$ is invariant. Since by Proposition \ref{p:UniquenessLP} the only non-negative (non-zero) projectively invariant Borel measure on $\mcM (X)\setminus \mcB_0$ giving finite mass to some ball is the multiplicative infinite-dimensional Lebesgue measure, and the latter is not invariant, we conclude that the restriction of $\mcQ$ to $\mcM (X)\setminus \mcB_0$ is the zero measure.
\end{proof}
\end{corollary}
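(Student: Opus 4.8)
The plan is to split off the mass that $\mcQ$ carries on the vertex $\mathbf{0}$ of the cone and to show that the complementary part is forced to be the multiplicative infinite-dimensional Lebesgue measure by Proposition~\ref{p:UniquenessLP}, while the two alternatives cannot occur simultaneously. First I would note that $\mcB_0=\set{\mathbf{0}}$ and that $e^a\cdot\mathbf{0}=\mathbf{0}$ for every $a\in\rmB_b(X)$, so both $\mcB_0$ and $\mcM(X)\setminus\mcB_0$ are \ref{eq:ActionMultipliers}-invariant Borel sets; then I decompose $\mcQ=a_0\,\delta_{\mathbf{0}}+\mcQ'$ with $a_0\eqdef \mcQ\mcB_0\in[0,\infty)$ (finite since $\mcB_0\subseteq\mcB_r$) and $\mcQ'\eqdef \mcQ\restr{\mcM(X)\setminus\mcB_0}$.

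If $a_0=0$ I am done immediately: either $\mcQ=0$, or $\mcQ$ is non-zero and satisfies $\mcQ\mcB_0=0$ and $\mcQ\mcB_r\in\R^+$, so Proposition~\ref{p:UniquenessLP} applies directly and gives $\mcQ=k_1\LP{\theta,\nu}$; take $a_1\eqdef k_1$ (or $a_1\eqdef 0$). The real content is the case $a_0>0$, where I would show that $\mcQ'$ must vanish. The key observation is that projective invariance provides a \emph{single} constant $R_{e^a}$ with $(e^a\cdot)_\pfwd\mcQ=R_{e^a}\cdot\mcQ$ on all of $\mcM(X)$; restricting this identity to the invariant singleton $\set{\mathbf{0}}$ and using $(e^a\cdot)_\pfwd\delta_{\mathbf{0}}=\delta_{\mathbf{0}}$ gives $a_0\,\delta_{\mathbf{0}}=R_{e^a}\,a_0\,\delta_{\mathbf{0}}$, hence $R_{e^a}\equiv1$, i.e.\ $\mcQ$ is genuinely \ref{eq:ActionMultipliers}-invariant.

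Once $\mcQ$ is invariant, so is its restriction $\mcQ'$ to the invariant set $\mcM(X)\setminus\mcB_0$ (a one-line computation using $(e^a\cdot)^{-1}(\mcM(X)\setminus\mcB_0)=\mcM(X)\setminus\mcB_0$); in particular $\mcQ'$ is projectively $\mfM(X)$-invariant, with $\mcQ'\mcB_0=0$ and $\mcQ'\mcB_r\leq\mcQ\mcB_r<\infty$. If $\mcQ'\neq0$, Proposition~\ref{p:UniquenessLP} would identify $\mcQ'=k_1'\LP{\theta,\nu}$ with $k_1'>0$; but by Proposition~\ref{p:TVY}\ref{i:p:TVY:2} the measure $\LP{\theta,\nu}$ has Radon--Nikod\'ym derivative $e^{-\theta\int\log k\,\diff\nu}$ for the \ref{eq:ActionMultipliers}-action, which is not identically $1$ (evaluate at a constant $k\neq1$), contradicting invariance. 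Hence $\mcQ'=0$ and $\mcQ=a_0\,\delta_{\mathbf{0}}$, which completes the argument.

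The only non-routine step is the passage from projective invariance to genuine invariance in the presence of atomic mass at the vertex; this rests entirely on the fact, recorded in Definition~\ref{d:PQI}\ref{i:d:PQI:2}, that a projectively invariant measure carries a \emph{globally} constant Radon--Nikod\'ym cocycle, so that pinning it down on the one-point invariant set $\set{\mathbf{0}}$ pins it down everywhere. Everything else --- the decomposition, the finiteness of $a_0$, the invariance of restrictions to invariant sets, and the non-invariance of $\LP{\theta,\nu}$ --- is bookkeeping or is quoted from the results established above.
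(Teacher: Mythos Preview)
Your proof is correct and follows essentially the same approach as the paper's: both hinge on the observation that an atom at $\mathbf{0}$ pins the global Radon--Nikod\'ym constant to $1$, forcing genuine invariance, which is then incompatible with Proposition~\ref{p:UniquenessLP} since $\LP{\theta,\nu}$ is only projectively invariant. Your presentation via an explicit decomposition $\mcQ=a_0\delta_{\mathbf{0}}+\mcQ'$ and case analysis on $a_0$ is slightly more detailed than the paper's proof by contradiction, and you make the non-invariance of $\LP{\theta,\nu}$ explicit by citing Proposition~\ref{p:TVY}\ref{i:p:TVY:2}; the only small point left implicit (as in the paper) is that when $\mcQ'\neq 0$ one has $\mcQ'\mcB_s\in\R^+$ for some $s$, which follows at once from the genuine invariance of $\mcQ'$ under constant rescalings.
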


\begin{proposition}\label{p:FullIdentification}
Let~$G<\Shift(X)$ be any subgroup. Then, $\LP{\theta,\nu}$ is invariant for the \eqref{eq:ActionShifts}-action of $G$ on~$\mcM(X)$ if and only if~$\nu$ is invariant for the natural action of~$G$ on~$X$.

In particular, if~$X$ has a structure of smooth, connected, orientable  Riemannian manifold~$(M,g)$ with \emph{finite} volume~$\vol_g$ and $\LP{\theta,\nu}$ is invariant for the \eqref{eq:ActionShifts}-action on~$\mcM(M)$ of the group $\Diff^+_0(g)$ of all compactly non-identical,  orientation-preserving,  $\vol_g$-preserving diffeomorphisms, then~$\theta\nu=\vol_g$.

\begin{proof}
Assume first that~$\LP{\theta,\nu}$ is invariant for the \eqref{eq:ActionShifts}-action of $G$. Then, for every~$\iota\in G$ we have
\begin{align*}
\theta \iota_\pfwd \nu &= \iota_\pfwd \int \mu(\emparg)\, \rme^{-\mu X} \diff\LP{\theta,\nu}(\mu) = \int (\iota_\pfwd \mu)(\emparg)\, \rme^{-(\iota_\pfwd \mu)X}\diff\LP{\theta,\nu}(\mu) 
\\
&= \int \mu(\emparg)\, \rme^{-\mu X} \diff {\iota_\pfwd}_\pfwd \LP{\theta,\nu}(\mu) = \int \mu(\emparg)\, \rme^{-\mu X} \diff \LP{\theta,\nu} = \theta\nu \fstop
\end{align*}
Since~$\theta>0$, this shows that~$\iota_\pfwd\nu=\nu$, that is,~$\nu$ is invariant for the natural action of~$G$ on~$X$.

Vice versa, assume that~$\nu$ is invariant for the natural action of~$\Shift(X)$ on~$X$.
By Proposition~\ref{p:Mapping} below, for every~$\iota\in G$ we have
\[
{\iota_\pfwd}_\pfwd \LP{\theta,\nu} = \LP{\theta,\iota_\pfwd \nu} = \LP{\theta,\nu} \fstop
\]
This concludes the proof of the first assertion.

\medskip

Now, assume that~$X=(M,g)$ and that~$\LP{\theta,\nu}$ is invariant for the \eqref{eq:ActionShifts}-action of $\Diff^+_0(g)$ on~$\mcM(M)$.
It follows from the first assertion that~$\nu$ is invariant for the natural action of~$\Diff^+_0(g)$ on~$M$.
Thus, it suffices to show that~$\nu\propto \vol_g$, which is shown in Proposition~\ref{p:InvariantVolG} below.
\end{proof}
\end{proposition}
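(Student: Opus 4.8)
The plan is to establish the equivalence first and then deduce the manifold statement from it. Two ingredients drive the equivalence: the \emph{recovery identity} $\int_{\mcM(X)}\mu(\emparg)\,\rme^{-\mu X}\,\diff\LP{\theta,\nu}(\mu)=\theta\nu$, which reconstructs $\nu$ from $\LP{\theta,\nu}$, and the \emph{covariance} ${\iota_\pfwd}_\pfwd\LP{\theta,\nu}=\LP{\theta,\iota_\pfwd\nu}$ of the multiplicative Lebesgue measure under push-forward of measures by a bijection $\iota$ of $X$.

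\paragraph{Forward implication}
First I would justify the recovery identity: $\mcQ\eqdef\LP{\theta,\nu}$ is projectively $\Multi(X)$-invariant by Proposition~\ref{p:TVY}, satisfies $\LP{\theta,\nu}\mcB_0=0$ and $\LP{\theta,\nu}\mcB_r=r^{\theta}/\Gamma(\theta+1)\in\R^+$, so Proposition~\ref{p:UniquenessLP} applies and, since $\int\rme^{-\mu X}\diff\LP{\theta,\nu}(\mu)=\int_0^\infty\rme^{-s}\diff\lambda_\theta(s)=1$, it returns $\mcQ=\LP{\theta,\nu}$ with $\alpha\eqdef\int\mu(\emparg)\rme^{-\mu X}\diff\LP{\theta,\nu}(\mu)$ of total mass $\theta$ and normalization $\nu$, i.e.\ $\alpha=\theta\nu$. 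Now let $\iota\in G$ with ${\iota_\pfwd}_\pfwd\LP{\theta,\nu}=\LP{\theta,\nu}$. Testing the $\mcM(X)$-valued integral against $f\in\Cb(X)$, using that push-forward in the variable $x$ commutes with $\LP{\theta,\nu}$-integration in the variable $\mu$, that $(\iota_\pfwd\mu)X=\mu X$ since $\iota$ is bijective, and the change of variables $\mu\mapsto\iota_\pfwd\mu$, one computes
\begin{align*}
\theta\,\iota_\pfwd\nu &= \iota_\pfwd\!\!\int\mu(\emparg)\,\rme^{-\mu X}\diff\LP{\theta,\nu}(\mu) = \int(\iota_\pfwd\mu)(\emparg)\,\rme^{-(\iota_\pfwd\mu)X}\diff\LP{\theta,\nu}(\mu) \\
&= \int\mu'(\emparg)\,\rme^{-\mu' X}\diff{\iota_\pfwd}_\pfwd\LP{\theta,\nu}(\mu') = \int\mu'(\emparg)\,\rme^{-\mu' X}\diff\LP{\theta,\nu}(\mu') = \theta\nu\comma
\end{align*}
and, $\theta$ being positive, $\iota_\pfwd\nu=\nu$.

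\paragraph{Reverse implication}
Here I would establish the covariance ${\iota_\pfwd}_\pfwd\LP{\theta,\nu}=\LP{\theta,\iota_\pfwd\nu}$ for every $\iota\in\Shift(X)$. The action $\mu\mapsto\iota_\pfwd\mu$ preserves total mass and intertwines the normalization map (as $\iota$ is bijective), so it factors through the isomorphism $\isomor$ of~\eqref{eq:IsomorphismMbp} via $\isomor(\iota_\pfwd\mu)=(\iota_\pfwd\normaliz(\mu),\,\mu X)$ (the zero measure lies over the vertex of the mass decomposition and is $\lambda_\theta$-null, hence immaterial); consequently ${\iota_\pfwd}_\pfwd\LP{\theta,\nu}=\isomor^{-1}_\pfwd\bigl(({\iota_\pfwd}_\pfwd\DF{\nu})\otimes\lambda_\theta\bigr)$, and the claim reduces to the covariance ${\iota_\pfwd}_\pfwd\DF{\nu}=\DF{\iota_\pfwd\nu}$ of the Dirichlet--Ferguson measure. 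This last fact follows from the Mecke-type identity~\eqref{eq:MeckeDF}: written for $\DF{\iota_\pfwd\nu}$, both sides of~\eqref{eq:MeckeDF} reduce to the corresponding sides for $\DF{\nu}$ after the substitution $\eta\mapsto\iota_\pfwd\eta$, since $\iota$ is bimeasurable and $(\iota_\pfwd\eta)^x_t=\iota_\pfwd(\eta^{\iota^{-1}(x)}_t)$. Granting the covariance, any $\iota\in G$ with $\iota_\pfwd\nu=\nu$ satisfies ${\iota_\pfwd}_\pfwd\LP{\theta,\nu}=\LP{\theta,\iota_\pfwd\nu}=\LP{\theta,\nu}$, so $\LP{\theta,\nu}$ is $G$-invariant for~\eqref{eq:ActionShifts}.

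\paragraph{The manifold case and the main obstacle}
For the manifold clause I apply the equivalence with $G=\Diff^+_0(g)$: invariance of $\LP{\theta,\nu}$ forces $\nu$ to be invariant under every compactly non-identical, orientation-preserving, $\vol_g$-preserving diffeomorphism of $M$. Such a Borel probability measure on the connected manifold $M$ must be proportional to $\vol_g$; this is a Moser-type rigidity statement, Proposition~\ref{p:InvariantVolG} in the appendix on measure-preserving diffeomorphisms, whose proof realizes prescribed $\vol_g$-preserving local displacements as time-one flows of compactly supported divergence-free vector fields, forcing $\diff\nu/\diff\vol_g$ to be locally, hence (by connectedness) globally, constant. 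Since $\vol_g$ is finite and $\nu(M)=1$, this yields $\nu=\vol_g/\vol_g(M)$, whence $\theta\nu=\vol_g$ for the canonical parameter $\theta=\vol_g(M)$. I expect the main obstacle to be the reverse implication: on the one hand the careful bookkeeping behind ${\iota_\pfwd}_\pfwd\LP{\theta,\nu}=\LP{\theta,\iota_\pfwd\nu}$ (the factorization through $\isomor$, measurability, the role of the vertex) and the deduction of the Dirichlet--Ferguson covariance from~\eqref{eq:MeckeDF}; on the other hand the rigidity Proposition~\ref{p:InvariantVolG}, which is classical in spirit but requires genuine work to set up on a general manifold.
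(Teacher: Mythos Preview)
Your proof is correct and follows essentially the same approach as the paper: the forward direction via the recovery identity $\int\mu(\emparg)\rme^{-\mu X}\diff\LP{\theta,\nu}(\mu)=\theta\nu$, the reverse direction via the covariance ${\iota_\pfwd}_\pfwd\LP{\theta,\nu}=\LP{\theta,\iota_\pfwd\nu}$, and the manifold clause by reduction to Proposition~\ref{p:InvariantVolG}. The only noteworthy difference is that the paper packages the covariance as a separate Mapping Theorem (Proposition~\ref{p:Mapping}) and cites an external reference for the Dirichlet--Ferguson case, whereas you sketch an in-house derivation by factoring through~$\isomor$ and invoking the Mecke characterization~\eqref{eq:MeckeDF}; both routes are valid and essentially equivalent.
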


\subsubsection{Functoriality}
We collect here some facts about the functoriality of the assignment~$\alpha\mapsto \LP{\alpha X, \normaliz(\alpha)}$, greatly extending the convolution property of~$\theta\mapsto \LP{\theta,\nu}$ in Proposition~\ref{p:TVY}\ref{i:p:TVY:1}.

Let~$\alpha\in\mcM(X)$ and denote by $\mcM(X)^+ \eqdef  \mcM(X) \setminus \set{\mathbf{0}}$.
In order to state the next results, it is convenient to write~$\LP{\alpha}\eqdef \LP{\alpha X,\normaliz(\alpha)}$.
For a Polish space~$Y$, we denote by~$\mcM_\sigma(Y)$ the space of all non-negative $\sigma$-finite Borel measures on~$Y$.
If~$Y=(Y,+)$ is a \emph{cone} in a topological linear space, we define the \emph{convolution} of~$\mcQ_1,\mcQ_2\in\mcM_\sigma(Y)$ by
\begin{equation}\label{eq:ConvolutionDfn}
(\mcQ_1*\mcQ_2) A \eqdef \int_Y \car_{A}(y_1+y_2) \diff\mcQ_1(y_1) \diff\mcQ_2(y_2)
\end{equation}
with~$A\subset Y$ Borel.
Since Polish spaces are strongly Radon, the existence of the convolution of $\sigma$-finite (Radon) measures follows similarly to the proof of the same assertion for (Radon) probability measures in~\cite[Prop.~I.4.4, p.~64]{VakTarCho87}.

\begin{corollary}[Convolution property]
The assignment~$\tparen{\mcM(X)^+,+}\ni \alpha \mapsto \mcL_{\alpha} \in \tparen{\mcM_\sigma(\mcM(X)^+)^+,*}$ is a homomorphism of magmas, that is
\[
\LP{\alpha_1+\alpha_2} = \LP{\alpha_1}* \LP{\alpha_2} \comma \qquad \alpha_1,\alpha_2\in\mcM(X)^+\fstop
\]
\begin{proof}
It is readily seen that the convolution~$\LP{\alpha_1}*\LP{\alpha_2}$ satisfies all the assumptions in Proposition~\ref{p:UniquenessLP}.
For example, let us briefly verify the projective $\Multi(X)$-invariance.
From~\eqref{eq:ConvolutionDfn}, for every Borel~$A\subset\mcM(X)$, for every~$a\in \rmB_b(X)$,
\begin{align*}
(\rme^a.)_\pfwd (\LP{\alpha_1}*\LP{\alpha_2}) A =& \int \car_{(\rme^a.)^{-1}(A)}(\mu_1+\mu_2) \diff\LP{\alpha_1}(\mu_1)\diff\LP{\alpha_2}(\mu_2)
\\
=& \int \car_{A}(\rme^a.\mu_1+\rme^a.\mu_2) \diff\LP{\alpha_1}(\mu_1)\, \diff\LP{\alpha_2}(\mu_2)
\\
=& \int \car_{A}(\mu_1+\mu_2) \diff(\rme^a.)_\pfwd\LP{\alpha_1}(\mu_1)\, \diff(\rme^a.)_\pfwd\LP{\alpha_2}(\mu_2)
\\
=&\ d_1(\rme^a) d_2(\rme^a) \int \car_{A}(\mu_1+\mu_2) \diff\LP{\alpha_1}(\mu_1)\, \diff\LP{\alpha_2}(\mu_2)
\\
=&\ d_1(\rme^a) d_2(\rme^a) (\LP{\alpha_1}*\LP{\alpha_2})(A)\comma
\end{align*}
where, for~$i=1,2$, we set~$d_i(\rme^a)\eqdef \frac{\diff(\rme^a.)_\pfwd\LP{\alpha_i}}{\diff \LP{\alpha_i}}$, so that there exists~$\frac{\diff (\rme^a.)_\pfwd (\LP{\alpha_1}*\LP{\alpha_2})}{\diff (\LP{\alpha_1}*\LP{\alpha_2})}= d_1(\rme^a) d_2(\rme^a)$.

Thus, by Proposition~\ref{p:UniquenessLP}, we have~$\LP{\alpha_1}*\LP{\alpha_2}= k_1 \LP{\alpha}$ for some~$\alpha\in \mcM(X)^+$ and some constant~$k_1>0$, and it suffices to show that~$\alpha=\alpha_1+\alpha_2$ and~$k_1=1$; we show only the first assertion, the proof of the second is similar. By Fubini's Theorem,
\begin{align*}
\int \mu(\emparg)\, \rme^{-\mu X} \diff (\LP{\alpha_1}*\LP{\alpha_2})(\mu) =& \iint \tparen{\mu_1(\emparg)+\mu_2(\emparg)}\, \rme^{-\mu_1 X-\mu_2 X} \diff\LP{\alpha_1}(\mu_1)\, \LP{\alpha_2}(\mu_2)
\\
=& \int \mu_1(\emparg)\, \rme^{-\mu_1 X} \diff\LP{\alpha_1}(\mu_1)\, \int \rme^{-\mu_2 X} \LP{\alpha_2}(\mu_2)
\\
&+ \int \mu_2(\emparg)\, \rme^{-\mu_2 X} \diff\LP{\alpha_2}(\mu_2)\, \int \rme^{-\mu_1 X} \LP{\alpha_1}(\mu_1)
\\
=&\ (\alpha_1 +\alpha_2)(\emparg)
\end{align*}
again by Proposition~\ref{p:UniquenessLP}.
\end{proof}
\end{corollary}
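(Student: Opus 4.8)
The plan is to invoke Proposition~\ref{p:UniquenessLP} (together with its corollary on the dichotomy between the trivial measure and $\LP{\theta,\nu}$) applied to the convolution $\mcQ \eqdef \LP{\alpha_1} * \LP{\alpha_2}$. First I would check that $\mcQ$ is a well-defined non-negative $\sigma$-finite Borel measure on $\mcM(X)$: this uses the existence of convolutions of $\sigma$-finite Radon measures on a cone (guaranteed by the strong Radon property of Polish spaces, exactly as cited before the statement). I would then verify the three hypotheses of Proposition~\ref{p:UniquenessLP} for $\mcQ$, namely: (a) \emph{projective $\Multi(X)$-invariance}, (b) \emph{non-triviality} $\mcQ\mcB_0 = 0$, and (c) \emph{finiteness on some ball}, $\mcQ\mcB_r \in \R^+$ for some $r$.

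For (a), the computation is already sketched in the excerpt: pushing forward the defining double integral under $\rme^a.$ and using that each factor $\LP{\alpha_i}$ transforms by a \emph{constant} Radon--Nikod\'ym derivative $d_i(\rme^a)$ shows that $(\rme^a.)_\pfwd\mcQ = d_1(\rme^a)\,d_2(\rme^a)\,\mcQ$, so $\mcQ$ is projectively invariant. For (c), since by Proposition~\ref{p:TVY}\ref{i:p:TVY:2} the measure $\LP{\alpha_i} = \LP{\theta_i,\nu_i}$ is $\theta_i$-homogeneous and, by Lemma~\ref{l:FinitenessBalls}-type reasoning (or directly from~\eqref{eq:InfLebesgueIsomor}), $\LP{\alpha_i}\mcB_s = \lambda_{\theta_i}([0,s))<\infty$, the mass $\mcQ\mcB_r = \iint \car_{\mcB_r}(\mu_1+\mu_2)\,\diff\LP{\alpha_1}(\mu_1)\,\diff\LP{\alpha_2}(\mu_2) \le \LP{\alpha_1}\mcB_r \cdot \LP{\alpha_2}\mcB_r < \infty$ and is positive for $r$ large (both factors charge small balls), giving (c). For (b), $\mu_1+\mu_2 \in \mcB_0$ forces $\mu_1,\mu_2 \in \mcB_0$, and $\LP{\alpha_i}\mcB_0 = 0$, so $\mcQ\mcB_0 = 0$.

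Hence Proposition~\ref{p:UniquenessLP} applies and yields $\mcQ = k_1\,\LP{\theta,\nu}$ with $\theta = \alpha X$, $\nu = \normaliz(\alpha)$, where $\alpha \eqdef \int \mu(\emparg)\,\rme^{-\mu X}\diff\mcQ(\mu)$ and $k_1 = \int \rme^{-\mu X}\diff\mcQ(\mu)$; equivalently $\mcQ = k_1 \LP{\alpha}$ in the notation $\LP{\alpha}\eqdef\LP{\alpha X, \normaliz(\alpha)}$. It remains to identify $\alpha = \alpha_1+\alpha_2$ and $k_1 = 1$. For $\alpha$: by Fubini, expanding $\mu = \mu_1+\mu_2$ and $\rme^{-\mu X} = \rme^{-\mu_1 X}\rme^{-\mu_2 X}$,
\begin{align*}
\int \mu(\emparg)\,\rme^{-\mu X}\diff\mcQ(\mu)
&= \int \mu_1(\emparg)\,\rme^{-\mu_1 X}\diff\LP{\alpha_1}(\mu_1)\int \rme^{-\mu_2 X}\diff\LP{\alpha_2}(\mu_2)\\
&\quad + \int \mu_2(\emparg)\,\rme^{-\mu_2 X}\diff\LP{\alpha_2}(\mu_2)\int \rme^{-\mu_1 X}\diff\LP{\alpha_1}(\mu_1),
\end{align*}
and by Proposition~\ref{p:UniquenessLP} applied to each $\LP{\alpha_i}$ separately we have $\int\mu_i(\emparg)\rme^{-\mu_i X}\diff\LP{\alpha_i} = \alpha_i$ and $\int\rme^{-\mu_i X}\diff\LP{\alpha_i} = 1$ (the latter because $\LP{\alpha_i} = 1\cdot\LP{\alpha_i}$ in the representation, i.e.\ its ``$k_1$'' equals $1$). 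So $\alpha = \alpha_1 + \alpha_2$. For $k_1 = 1$: the same Fubini argument with $\mu(\emparg)$ removed gives $k_1 = \int\rme^{-\mu_1 X}\diff\LP{\alpha_1}\cdot\int\rme^{-\mu_2 X}\diff\LP{\alpha_2} = 1\cdot 1 = 1$. Therefore $\LP{\alpha_1+\alpha_2} = \LP{\alpha_1}*\LP{\alpha_2}$, which is the claimed magma homomorphism property.

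\textbf{Main obstacle.} The only delicate point is the measurability and well-definedness of the convolution on the cone $\mcM(X)^+$ (addition of measures is continuous for the weak topology, so this is routine) together with the bookkeeping that the ``constant'' $k_1$ in the representation of $\LP{\alpha_i}$ really is $1$ — this follows because $\LP{\alpha_i}$ is \emph{defined} via~\eqref{eq:InfLebesgueIsomor} and the uniqueness statement pins down $k_1 = \int\rme^{-\mu X}\diff\LP{\alpha_i}$, which one computes to be $\int_{\R^+}\rme^{-t}\diff\lambda_{\theta_i}(t) = 1$ from~\eqref{eq:IntroLambda} and the normalization $\Gamma(\theta_i)$. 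Everything else is a direct application of Proposition~\ref{p:UniquenessLP} and Fubini.
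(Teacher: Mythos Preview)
Your proposal is correct and follows essentially the same route as the paper's proof: verify the hypotheses of Proposition~\ref{p:UniquenessLP} for the convolution (the paper only spells out projective invariance, calling the rest ``readily seen''), then identify $\alpha$ and $k_1$ via Fubini and the characterization in that proposition. You are in fact more thorough than the paper, which omits the computation of $k_1=1$ as ``similar'' and does not explicitly check $\mcQ\mcB_0=0$ or finiteness on balls; your remark that $\int \rme^{-\mu X}\diff\LP{\alpha_i}=\int_{\R^+}\rme^{-t}\diff\lambda_{\theta_i}(t)=1$ is exactly the ingredient needed to close both identifications.
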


\begin{proposition}[Mapping theorem]\label{p:Mapping}
Let~$X$ and~$Y$ be Polish spaces, and~$f\colon X\to Y$ be a Borel function. Then, for every~$\alpha\in\mcM(X)$,
\[
{f_\pfwd}_\pfwd \LP{\alpha} = \LP{f_\pfwd\alpha} \fstop
\]
\end{proposition}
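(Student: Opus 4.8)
The plan is to deduce the statement from the uniqueness characterization in Proposition~\ref{p:UniquenessLP}. We may assume $\alpha\in\mcM(X)^+$, since otherwise $f_\pfwd\alpha=\mathbf 0$ as well and there is nothing to prove; recall that then $\theta\eqdef\alpha X=(f_\pfwd\alpha)(Y)>0$ and $\nu\eqdef\normaliz(\alpha)$, so $\LP{\alpha}=\LP{\theta,\nu}$. Set $\mcQ\eqdef {f_\pfwd}_\pfwd\LP{\alpha}$. Two elementary facts about the push-forward map $f_\pfwd\colon\mcM(X)\to\mcM(Y)$ are needed: it preserves total mass, $(f_\pfwd\mu)(Y)=\mu(f^{-1}(Y))=\mu X$; and it intertwines the actions of multipliers, $\rme^b\cdot(f_\pfwd\mu)=f_\pfwd\tparen{\rme^{b\circ f}\cdot\mu}$ for every $b\in\rmB_b(Y)$, i.e.\ $(\rme^b.)\circ f_\pfwd=f_\pfwd\circ(\rme^{b\circ f}.)$. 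Both are one-line change-of-variables checks on Borel subsets of $Y$.

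From mass-preservation, $\mcQ\set{\eta\in\mcM(Y):\eta Y\le r}=\LP{\theta,\nu}\mcB_r=\lambda_\theta([0,r))$ by~\eqref{eq:InfLebesgueIsomor}, which is $0$ for $r=0$ and lies in $\R^+$ for every $r>0$. From the intertwining identity and Proposition~\ref{p:TVY}\ref{i:p:TVY:2},
\[
(\rme^b.)_\pfwd\mcQ=(f_\pfwd)_\pfwd(\rme^{b\circ f}.)_\pfwd\LP{\theta,\nu}=\rme^{-\theta\int (b\circ f)\diff\nu}\,(f_\pfwd)_\pfwd\LP{\theta,\nu}=\rme^{-\int (b\circ f)\diff\alpha}\,\mcQ\comma
\]
so $\mcQ$ is projectively $\Multi(Y)$-invariant. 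Hence Proposition~\ref{p:UniquenessLP} applies to $\mcQ$ on $\mcM(Y)$ and gives $\mcQ=k_1\,\LP{\theta_\mcQ,\nu_\mcQ}$ with $k_1=\int\rme^{-\eta Y}\diff\mcQ(\eta)$, $\alpha_\mcQ=\int\eta(\emparg)\,\rme^{-\eta Y}\diff\mcQ(\eta)$, $\theta_\mcQ=\alpha_\mcQ Y$, and $\nu_\mcQ=\normaliz(\alpha_\mcQ)$. I would then compute these parameters. Mass-preservation again yields $k_1=\int\rme^{-\mu X}\diff\LP{\theta,\nu}(\mu)=\Gamma(\theta)^{-1}\int_0^\infty\rme^{-t}t^{\theta-1}\diff t=1$; and for Borel $B\subset Y$,
\[
\alpha_\mcQ(B)=\int (f_\pfwd\mu)(B)\,\rme^{-(f_\pfwd\mu)(Y)}\diff\LP{\alpha}(\mu)=\int \mu(f^{-1}(B))\,\rme^{-\mu X}\diff\LP{\alpha}(\mu)=\tparen{f_\pfwd\beta}(B)\comma
\]
where $\beta\eqdef\int\mu(\emparg)\,\rme^{-\mu X}\diff\LP{\alpha}(\mu)$. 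Since $\LP{\alpha}=\LP{\theta,\nu}$ itself satisfies the hypotheses of Proposition~\ref{p:UniquenessLP}, the representation in Proposition~\ref{p:UniquenessLP}\ref{i:p:UniquenessLP:1}--\ref{i:p:UniquenessLP:2} (together with $k_1=1$ just computed, and the injectivity of $\alpha\mapsto(\normaliz\alpha,\alpha X)$) forces $\beta=\alpha$. Therefore $\alpha_\mcQ=f_\pfwd\alpha$, hence $\theta_\mcQ=(f_\pfwd\alpha)(Y)$ and $\nu_\mcQ=\normaliz(f_\pfwd\alpha)$, so $\mcQ=\LP{(f_\pfwd\alpha)(Y),\normaliz(f_\pfwd\alpha)}=\LP{f_\pfwd\alpha}$.

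The only genuinely non-formal point is the identity $\beta=\alpha$, i.e.\ that applying the recovery formulae of Proposition~\ref{p:UniquenessLP} to $\LP{\alpha}$ returns $\alpha$; this follows either from the stated injectivity or from a direct Mecke-type computation for the Gamma measure. Everything else is bookkeeping combining mass-preservation, the multiplier intertwining, and Propositions~\ref{p:TVY}--\ref{p:UniquenessLP}. A shorter, essentially equivalent route avoids invoking Proposition~\ref{p:UniquenessLP} for $\mcQ$: for Borel $v\colon Y\to[0,\infty]$ one has $\int\rme^{-\int v\diff\eta}\diff\mcQ(\eta)=\int\rme^{-\int (v\circ f)\diff\mu}\diff\LP{\alpha}(\mu)=\exp\tparen{-\int\log(v\circ f)\diff\alpha}=\exp\tparen{-\int\log v\diff (f_\pfwd\alpha)}$ by~\eqref{eq:formulatrans}, which is precisely the Laplace transform of $\LP{f_\pfwd\alpha}$, so a Bernstein-type uniqueness theorem concludes; this version shifts the same subtlety into the validity of~\eqref{eq:formulatrans} for $\LP{\theta,\nu}$ with $k_1=1$.
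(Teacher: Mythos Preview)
Your proof is correct, and it takes a genuinely different route from the paper. The paper's proof is a one-line reduction: it writes $\LP{\alpha}=\isomor^{-1}_\pfwd(\DF{\nu}\otimes\lambda_\theta)$ via~\eqref{eq:InfLebesgueIsomor} and then imports the Mapping Theorem for the Dirichlet--Ferguson measure from~\cite[Thm.~3.9]{LzDS19a}, which asserts ${f_\pfwd}_\pfwd\DF{\nu}=\DF{f_\pfwd\nu}$; since $f_\pfwd$ commutes with~$\isomor$ in the obvious way and leaves the $\lambda_\theta$ factor untouched, the conclusion is immediate.

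Your argument instead stays entirely within the paper: you verify that $\mcQ={f_\pfwd}_\pfwd\LP{\alpha}$ satisfies the hypotheses of the uniqueness characterization in Proposition~\ref{p:UniquenessLP} (using mass-preservation and the multiplier intertwining together with Proposition~\ref{p:TVY}\ref{i:p:TVY:2}), and then compute the recovered parameters. This costs a few more lines but avoids the external reference; it also illustrates that Proposition~\ref{p:UniquenessLP} is strong enough to recover functoriality on its own. Your Laplace-transform alternative via~\eqref{eq:formulatrans} is the cleanest self-contained variant and is essentially the same computation the paper would need to re-prove the Dirichlet--Ferguson mapping theorem from scratch. The one step you flag, $\beta=\alpha$, is indeed immediate either from the uniqueness in Proposition~\ref{p:UniquenessLP}\ref{i:p:UniquenessLP:2} or from the direct computation $\int\mu(A)\rme^{-\mu X}\diff\LP{\theta,\nu}(\mu)=\theta\nu(A)$ via~\eqref{eq:InfLebesgueIsomor}.
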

\begin{proof}
Let~$\alpha=\theta\nu$ with~$\theta\eqdef \alpha X>0$ and~$\nu\eqdef \normaliz(\alpha)\in\mcP(X)$. 
It suffices to combine the representation of~$\LP{\alpha}=\LP{\theta,\nu}$ in~\eqref{eq:InfLebesgueIsomor} with the Mapping Theorem~\cite[Thm.~3.9]{LzDS19a} for the Dirichlet--Ferguson measure~$\DF{\nu}$.
\end{proof}

\subsubsection{Mecke identity}
As it is clear from the study of the simplicial part~$\DF{\nu}$ of~$\LP{\theta,\nu}$, and of the Gamma measure~$\GP{\theta, \nu}$, the characterization of such measures via Mecke-type integral identities plays a key role.
A similar, strikingly simple identity holds as well for the multiplicative infinite-dimensional Lebesgue measure~$\LP{\theta,\nu}$.

\begin{proposition}[Mecke identity for~$\LP{\theta,\nu}$]\label{p:MeckeLP}
For every Borel measurable~$F\colon \mcM(X)\times  \R^+_0\times X  \to [0,\infty]$,
\begin{equation}\label{eq:MeckeLP}
\begin{aligned}
\int \braket{\int_X F(\mu,\mu_x,x)\, \diff\mu(x) }& \diff\LP{\theta,\nu}(\mu) = 
\\
&\theta\int \braket{\int_X \int_0^\infty F(\mu+s\delta_x,s,x) \, \diff s\,\diff\nu(x)} \diff\LP{\theta,\nu}(\mu) \fstop
\end{aligned}
\end{equation}
\begin{proof}
Recall the Mecke identity for the Gamma measure~$\GP{\theta}$, e.g.~\cite[Eqn.~(2.4)]{LzDSLyt17} viz.
\begin{equation}\label{eq:MeckeG}
\begin{aligned}
\int \braket{\int_X G(\mu,\mu_x,x)\diff\mu(x)}& \diff\GP{\theta, \nu}(\mu)=
\\
&\ \theta\int \braket{\int_X \int_0^\infty G(\mu+s\delta_x,s,x)\, e^{-s} \diff s\, \diff\nu(x)} \diff\GP{\theta, \nu} (\mu)\comma
\end{aligned}
\end{equation}
and recall that~$\diff  \LP{\theta,\nu} (\mu)=e^{\mu X}\diff\GP{\theta, \nu}(\mu)$, see e.g.~ \cite[p.~165]{TsiVer99} . Applying the above identity to~$G(\mu,\mu_x,x)= e^{\mu X} F(\mu,\mu_x,x)$, we then have
\begin{align*}
\int & \braket{\int_X F(\mu,\mu_x,x)\diff\mu X}\diff\LP{\theta,\nu}(\mu)
\\
=&\ \theta\int \braket{\int_X \int_0^\infty e^{(\mu+s\delta_x)X} F(\mu+s\delta_x, s ,x)\, e^{-s} \diff s\, \diff\nu(x)} \diff\GP{\theta, \nu}(\mu)
\\
=&\ \theta\int \braket{\int_X \int_0^\infty F(\mu+s\delta_x, s ,x)\, \diff s\, \diff\nu(x)} e^{\mu X}\diff\GP{\theta, \nu} (\mu)
\\
=&\ \theta\int \braket{\int_X \int_0^\infty F(\mu+s\delta_x, s ,x)\, \diff s\, \diff\nu(x)} \diff\LP{\theta,\nu}(\mu)\comma
\end{align*}
which is the assertion.
\end{proof}
\end{proposition}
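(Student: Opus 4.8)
The strategy is to reduce the Mecke identity for $\LP{\theta,\nu}$ to the known Mecke identity for the Gamma measure $\GP{\theta,\nu}$, recorded as~\eqref{eq:MeckeG}, using the absolute-continuity relation $\diff\LP{\theta,\nu}(\mu) = \rme^{\mu X}\diff\GP{\theta,\nu}(\mu)$ between the two measures (see \cite[p.~165]{TsiVer99}).

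First I would observe that it suffices to prove~\eqref{eq:MeckeLP} for non-negative Borel $F$, since both sides are then well-defined in $[0,\infty]$ by Tonelli's theorem and all manipulations below are legitimate. Next, given such an $F$, I would apply~\eqref{eq:MeckeG} not to $F$ itself but to the auxiliary integrand
\[
G(\mu,\mu_x,x)\eqdef \rme^{\mu X}F(\mu,\mu_x,x)\comma
\]
which is again non-negative and Borel. The left-hand side of~\eqref{eq:MeckeG} for this $G$ is exactly $\int\braket{\int_X F(\mu,\mu_x,x)\diff\mu(x)}\rme^{\mu X}\diff\GP{\theta,\nu}(\mu)$, which by the density relation equals the left-hand side of~\eqref{eq:MeckeLP}. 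On the right-hand side of~\eqref{eq:MeckeG}, the key algebraic point is that $G(\mu+s\delta_x,s,x)\rme^{-s} = \rme^{(\mu+s\delta_x)X}F(\mu+s\delta_x,s,x)\rme^{-s} = \rme^{\mu X}F(\mu+s\delta_x,s,x)$, because $(\mu+s\delta_x)X = \mu X + s$ so the $\rme^{s}$ and $\rme^{-s}$ cancel. Pulling the surviving factor $\rme^{\mu X}$ out of the inner integrals (it does not depend on $s$ or $x$) and using the density relation once more converts the $\GP{\theta,\nu}$-integral into a $\LP{\theta,\nu}$-integral, yielding precisely the right-hand side of~\eqref{eq:MeckeLP}.

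There is essentially no serious obstacle here: the proof is a short computation, and the only thing to be slightly careful about is the measurability and $\sigma$-finiteness bookkeeping needed to justify the use of Tonelli's theorem when swapping the order of integration and pulling out the factor $\rme^{\mu X}$ — but this is routine since $X$ is Polish, $\nu$ and $\lambda_\theta$ (hence $\GP{\theta,\nu}$ and $\LP{\theta,\nu}$) are $\sigma$-finite, and $F\geq 0$. The conceptual content is entirely carried by the cancellation of the exponential weights, which is exactly the structural feature that makes the Mecke identity for $\LP{\theta,\nu}$ cleaner (no $\rme^{-s}$ factor) than the one for $\GP{\theta,\nu}$.
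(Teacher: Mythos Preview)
Your proposal is correct and follows essentially the same approach as the paper: both apply the Gamma-measure Mecke identity~\eqref{eq:MeckeG} to $G(\mu,\mu_x,x)=\rme^{\mu X}F(\mu,\mu_x,x)$, use the density relation $\diff\LP{\theta,\nu}(\mu)=\rme^{\mu X}\diff\GP{\theta,\nu}(\mu)$, and exploit the cancellation $\rme^{(\mu+s\delta_x)X}\rme^{-s}=\rme^{\mu X}$.
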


\begin{lemma}\label{l:Negligible}
The set
\begin{equation}\label{eq:Negligible}
\mcN\eqdef \ev^{-1}((0,+\infty))=\set{(\mu,x) : \mu_x>0} \subset \mcM(X)\times X
\end{equation}
is Borel measurable and~$\LP{\theta,\nu}\otimes \nu$-negligible.

\begin{proof}
Since the map~$\ev\colon (\mu,x)\mapsto \mu_x$ is Borel measurable on~$\mcM(X)\times X$ the set~$\mcN$ is Borel measurable.
Furthermore,
\begin{equation*}
\begin{aligned}
\int_{\mcM(X)} \mu_x \, \diff\LP{\theta,\nu}(\mu) =& \int_0^\infty \int_{\mcP(X)} \eta_x\, \diff \DF{\nu}(\eta)\,  t\, \diff \lambda_\theta(t)
\\
=&\ \nu_x \int_0^\infty \,  t\, \diff\lambda_\theta(t)=0  \comma  
\end{aligned}
\qquad x\in X \comma
\end{equation*}
 where we used the identity $\int \eta_x\, \diff \DF{\nu}(\eta) = \nu_x$ in \cite[Prop.~1, p.~214]{Fer73}, and the fact that $\nu$ is non-atomic. 
Thus, the section~$\mcN_x\eqdef \set{\mu\in\mcM(X): (\mu,x)\in\mcN}$ is~$\LP{\theta,\nu}$-negligible for each~$x\in X$.
Since a Borel measurable set with negligible sections is negligible for the product measure, the conclusion follows.
\end{proof}
\end{lemma}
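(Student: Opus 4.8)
The statement contains two assertions, measurability of $\mcN$ and its $\LP{\theta,\nu}\otimes\nu$-negligibility, and the plan is to dispatch them in that order. For measurability, I would first show that the evaluation map $\ev\colon(\mu,x)\mapsto\mu_x$ is Borel on $\mcM(X)\times X$. Fixing a complete metric $\varrho$ compatible with the topology of $X$ and using that each $\mu\in\mcM(X)$ is finite, one has $\mu_x=\inf_{n\in\N}\mu\tparen{\ball{x}{1/n}}$; and each map $(\mu,x)\mapsto\mu\tparen{\ball{x}{1/n}}=\int_X\car_{\set{\varrho(x,\emparg)<1/n}}\diff\mu$ is Borel by a monotone-class argument: for $g\in\Cb(X\times X)$ the assignment $(\mu,x)\mapsto\int_X g(x,y)\diff\mu(y)$ is jointly continuous (using that weakly convergent sequences in $\mcM(X)$ are uniformly tight and bounded in mass), and the class of bounded Borel $g$ with this property is stable under bounded pointwise monotone limits. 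Hence $\mcN=\ev^{-1}((0,\infty))$ is Borel.

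For negligibility I would invoke Tonelli's theorem, which is available since $\LP{\theta,\nu}$ is $\sigma$-finite by Lemma~\ref{l:FinitenessBalls} and $\nu$ is a probability measure. The shortest route is to integrate in $x$ first: for each fixed $\mu\in\mcM(X)$ the $x$-section $\set{x\in X:(\mu,x)\in\mcN}$ is exactly the set of atoms of the finite measure $\mu$, hence at most countable, hence $\nu$-negligible because $\nu$ is diffuse. Therefore $\int_{\mcM(X)}\nu\tparen{\set{x:(\mu,x)\in\mcN}}\diff\LP{\theta,\nu}(\mu)=0$, which by Tonelli equals $(\LP{\theta,\nu}\otimes\nu)(\mcN)$.

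A second route, staying closer to the structure of $\LP{\theta,\nu}$, integrates in $\mu$ first. For fixed $x\in X$, since $\mu_x=(\mu X)\,\normaliz(\mu)_x$, the section $\mcN_x=\set{\mu:\mu_x>0}$ is $\isomor^{-1}\tparen{\set{\eta\in\mcP(X):\eta_x>0}\times(0,\infty)}$ with $\isomor$ as in~\eqref{eq:IsomorphismMbp}; as the intensity measure of the Dirichlet--Ferguson measure $\DF{\nu}$ is $\nu$ (Ferguson~\cite[Prop.~1, p.~214]{Fer73}) and $\nu$ is diffuse, $\set{\eta:\eta_x>0}$ is $\DF{\nu}$-null, so $\mcN_x$ is $\LP{\theta,\nu}$-null by~\eqref{eq:InfLebesgueIsomor}, and Tonelli again yields the claim. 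I do not anticipate a genuine obstacle: the measure-theoretic content is the elementary principle that a Borel set all of whose sections (in one slot) are null is null for the product of the two $\sigma$-finite measures, combined with the diffuseness of $\nu$; the only point deserving a little care is the joint Borel measurability of $\ev$.
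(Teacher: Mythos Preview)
Your proposal is correct. For measurability you supply more detail than the paper, which simply asserts that $\ev$ is Borel; your argument via $\mu_x=\inf_n\mu\tparen{\ball{x}{1/n}}$ and a monotone-class step is sound. For negligibility, your \emph{second} route is exactly the paper's: fix $x$, use the product representation~\eqref{eq:InfLebesgueIsomor} together with Ferguson's identity $\int\eta_x\,\diff\DF{\nu}(\eta)=\nu_x$ and the diffuseness of $\nu$ to see that $\mcN_x$ is $\LP{\theta,\nu}$-null, then invoke Tonelli. Your \emph{first} route, however, is genuinely different and more elementary: you fix $\mu$ and observe that the $x$-section is just the (at most countable) set of atoms of the finite measure $\mu$, hence $\nu$-null because $\nu$ is diffuse. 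This sidesteps the structure of $\LP{\theta,\nu}$ entirely and in particular needs no reference to the Dirichlet--Ferguson measure; the paper's route, by contrast, computes the expected atom mass under $\LP{\theta,\nu}$ and thereby exhibits the connection to the intensity $\nu$, which is in the spirit of the Mecke-type identities used elsewhere in the section.
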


Throughout the rest of this section, let~$(M,g)$ be a  smooth, connected, orientable, complete  Riemannian manifold with Riemannian distance~$\sfd_g$ and Riemannian volume measure $\vol_g$ and let~$\rho\in\Cb^\infty(M)$ be so that~$\rho>0$ everywhere on~$M$ and~$\nu\eqdef \rho\vol_g$ is an element of~$\mcP_2(M)$, i.e.
\[
 \nu M=1 \quad \text{ and } \quad \int_M \mssd_{g,x_0}^2 \de \nu <\infty \qquad \text{for some~} x_0\in M\fstop
\]
 We refer to the triplet $(M, g, \nu)$ as above as to a \emph{weighted Riemannian manifold}.

Finally, for every function~$\hat f\in\rmC^1(\R^+_0\times M)$ and every~$(s,x)\in\R^+_0\times M$, set
\[
\hat f'(s,x)\eqdef (\partial_s \hat f)(s,x), \quad \nabla \hat f (s,x)\eqdef (\nabla_x \hat f)(s,x), \quad \Delta \hat f(s,x) \eqdef (\Delta_x \hat f)(s,x) \fstop
\]

In the following, we will make use of standard definitions and results in theory of Dirichlet forms. We refer the reader to the monographs~\cite{FukOshTak11, MaRoe92} for a standard treatment.

\subsection{Extended cylinder functions, extended form}\label{sec:extcyl}
The following sets of cylinder functions will be instrumental to the proof of Theorem~\ref{t:IntroForm}.

\begin{definition}[Cylinder functions of reduced potential energies]\label{d:ExtCylinderF}
For every $\hat f \in  \rmC_b  (\R^+_0\times M)$, we define
\begin{align}\label{eq:d:ExtCylinderF:0}
\hat f^\trid(\mu)\eqdef \int_M \hat f(\mu_x,x) \,\diff\mu(x)\comma \qquad \mu \in \mcM(M) \fstop
\end{align}
We also define the following sets of cylinder functions for $\eps \in [0,+\infty)$:
\begin{align*}
\hFC{m_1}{\sharp_1}{m_2}{\sharp_2}\eqdef& \set{\hat u\colon \mcM(M)\to\R : \begin{gathered} \hat u=F\circ \hat\mbff^\trid\comma F \in\rmC^{m_1}_{\sharp_1}(\R^{k+1};\R) \comma
\\
k\in \N\comma \hat\mbff\eqdef\tseq{\hat f_i}_{0\leq i\leq k} \comma \hat f_0\equiv 1 \comma 
\\
\hat f_i\in \rmC^{m_2}_{\sharp_2}\tparen{\R^+_0\times M} \text{ for } 1\leq i\leq k\end{gathered}}\comma
\intertext{and}
\hFC{m_1}{\sharp_1}{m_2}{\sharp_2,\eps}\eqdef& \set{\hat u\colon \mcM(M)\to\R : \begin{gathered} \hat u=F\circ \hat\mbff^\trid\comma F \in\rmC^{m_1}_{ \sharp_1 }(\R^{k+1};\R) \comma
\\
k\in  \N  \comma \hat\mbff\eqdef\tseq{\hat f_i}_{0\leq i\leq k} \comma \hat f_0\equiv 1 \comma 
\\
\hat f_i\in \rmC^{m_2}_{\sharp_2}\tparen{(\eps,\infty)\times M} \text{ for } 1\leq i\leq k\end{gathered}} \comma
\end{align*}
where~$m_1,m_2\in \N\cup\set{\infty}$ and~$\sharp$ stands for either~$b$ for \emph{bounded} or~$c$ for \emph{compact support}.
Clearly
\[
 \hFC{\infty}{c}{\infty}{c,\eps}  \subsetneq \hFC{\infty}{c}{\infty}{c,0}  \subsetneq  \hFC{\infty}{c}{\infty}{c}   \subsetneq \hFC{0}{b}{0}{b}   \comma \qquad \eps >0\fstop
\]
 and 
\[ 
 \FC{\infty,\infty}{c,c}{\infty}{c} \subsetneq \hFC{\infty}{c}{\infty}{c} \fstop
\]
\end{definition}

Let us start by showing that cylinder functions of reduced potential energies are dense in~$L^2(\LP{\theta,\nu})$, so  that all  forms in the following will be densely defined.

\begin{lemma}\label{l:L2DensityCylinder}
The following assertions hold:
\begin{enumerate}[$(i)$]
\item\label{i:l:L2DensityCylinder:0} all functions in~$\hFC{0}{ b  }{0}{b}$ are Borel measurable;
\item\label{i:l:L2DensityCylinder:1.5} $\FC{\infty,\infty}{c,c}{\infty}{c}$ is dense in~$L^p(\LP{\theta,\nu})$ for every~$\theta>0$ and every~$p\geq 1$;
\item\label{i:l:L2DensityCylinder:2} $\hFC{\infty}{c}{\infty}{c,0}$ is dense in~$L^p(\LP{\theta,\nu})$ for every~$\theta>0$ and every~$p\geq 1$.
\end{enumerate}
\begin{proof}
\ref{i:l:L2DensityCylinder:0} It suffices to show that functions in~$\hFC{0}{b}{0}{c}$ are Borel measurable.
The assertion for functions in~$\hFC{0}{b}{0}{b}$ follows by approximation.
By~\cite[Rmk.~2.6]{EthKur94}, all functions of the form~\eqref{eq:d:ExtCylinderF:0} with~$\hat f\in\rmC_c(\R^+\times M)$ are continuous w.r.t.\ the weak atomic topology introduced in~\cite{EthKur94}.
In particular, since as noted in~\cite[p.~5, below Eqn.~(2.3)]{EthKur94}, the Borel $\sigma$-algebra of the weak atomic topology on~$\mcM(M)$ coincides with the Borel $\sigma$-algebra of the narrow topology on~$\mcM(M)$, these functions are Borel measurable.
It follows that all functions in~$\hFC{0}{c}{0}{c}$ are Borel measurable, being the composition of a continuous function~$F\in\rmC^0(\R^{k+1};\R)$ with the Borel measurable functions~$\hat f_i^\trid$,~$1\leq i\leq k$ and with the Borel measurable function~$\hat f_0^\trid= \car^\trid$.

\ref{i:l:L2DensityCylinder:1.5}
 Let~$u\in L^p(\LP{\theta,\nu})$ and set, for every $k \in \N$, $\mcQ_k \eqdef \nchi_k\LP{\theta,\nu}$, where $\nchi_k \eqdef \vartheta_k \circ \car^\trid$ and $\vartheta_k \in \rmC^\infty_c(\R_0^+)$ with $\supp \vartheta_k \subset [0,k]$ and $\vartheta_k \uparrow 1$ as $k \to + \infty$; note that $\mcQ_k \le \LP{\theta,\nu}$ is a finite non-negative Borel measure. Arguing as in~\cite[Lemma 2.1.27]{Savare22}, we see that we can find sequences $\seq{\tilde{u}_n^k}_n \subset \FC{\infty,\infty}{c,c}{\infty}{c}$ such that $\tilde{u}_n^k \to u$ in $L^p(\mcQ_k)$ as $n \to + \infty$. Setting $u^k\eqdef u \nchi_{k}$, and $u_n^k\eqdef \tilde{u}_n^k \nchi_k$, $k,n \in \N$, we see that $u_n^k \in \FC{\infty,\infty}{c,c}{\infty}{c}$, $u_n^k \to u^k$ in $L^p(\LP{\theta,\nu})$ as $n \to + \infty$ and $u^k \to u$ in $L^p(\LP{\theta,\nu})$ as $k \to + \infty$.
 We conclude by the diagonal argument in $L^p(\LP{\theta,\nu})$.

\ref{i:l:L2DensityCylinder:2}  By \ref{i:l:L2DensityCylinder:1.5}, it suffices to show that any $u \in \FC{\infty,\infty}{c,c}{\infty}{c}$ can be approximated in $L^p(\LP{\theta,\nu})$ by functions in $\hFC{\infty}{c}{\infty}{c,0}$. So let $u = F\circ\mbff^\trid$ for $F \in \rmC^\infty_c(\R^{k+1})$ and $\mbff = (\car, f_1, \dots, f_k)$ with $f_i \in \rmC^\infty_c(M)$ for $1\le i \le k \in \N$. We set, for every $n \in \N$, 
\begin{equation}\label{eq:approxrho}
\hat u_n \eqdef F\circ \tparen{\car^\trid, (\varrho_n \otimes f_1)^\trid,\cdots (\varrho_n \otimes f_k)^\trid} \comma
\end{equation}
with $\varrho_n \in \rmC_c^\infty(\R^+)$, $0\leq \varrho_n\nearrow 1$, and $\supp\varrho_n\subset [1/n, n]$. Clearly $\hat u_n \in \hFC{\infty}{c}{\infty}{c,0}$ and it is not difficult to see that
\[ \|u-\hat u_n\|_{L^p(\LP{\theta,\nu})}^p \le k (\Li{F})^p \left ( \max_{1 \le i \le k} \|f_i\|_\infty \right )^p \int_{\mcB_r} \int_M \left |\varrho_n(\mu_x)-1 \right |^p \diff \mu(x)\, \diff \LP{\theta,\nu}(\mu)\comma\]
where $r>0$ is such that $\supp F \subset [-r,r]^{k+1}$. By Dominated Convergence Theorem we conclude the assertion.
\end{proof}
\end{lemma}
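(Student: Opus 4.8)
The statement to prove is Lemma~\ref{l:L2DensityCylinder}, which has three parts. The plan is to handle each assertion in turn, relying on an atomic-topology continuity result for part~\ref{i:l:L2DensityCylinder:0}, a generic polynomial-algebra density argument for part~\ref{i:l:L2DensityCylinder:1.5}, and an explicit truncation of the evaluation variable for part~\ref{i:l:L2DensityCylinder:2}.

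\textbf{Assertion \ref{i:l:L2DensityCylinder:0}.} First I would reduce to the case $\hat f\in \rmC_c(\R^+\times M)$: an arbitrary $\hat f\in\rmC_b$ is a pointwise bounded limit of a sequence $\hat f_n\in\rmC_c$, and since $\car^\trid$ is finite $\mu$-a.e.\ on mass-bounded sets (indeed everywhere, $\car^\trid\mu=\mu M$), dominated convergence makes $\hat f_n^\trid\to\hat f^\trid$ pointwise, preserving Borel measurability. For the compactly supported case I invoke the fact, recorded in~\cite[Rmk.~2.6]{EthKur94}, that $\mu\mapsto\int_M \hat f(\mu_x,x)\,\diff\mu(x)$ is continuous for the \emph{weak atomic topology} on $\mcM(M)$, together with the observation (from~\cite[Eqn.~(2.3)ff.]{EthKur94}) that the Borel $\sigma$-algebra of the weak atomic topology coincides with the Borel $\sigma$-algebra of the narrow (weak) topology. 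Hence $\hat f^\trid$ is Borel for the topology we use throughout, and a general element of $\hFC{0}{b}{0}{b}$ is the composition of a continuous $F$ with such Borel functions, so it is Borel.

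\textbf{Assertion \ref{i:l:L2DensityCylinder:1.5}.} The strategy is a localize-then-approximate argument. I would introduce the cut-offs $\nchi_k=\vartheta_k\circ\car^\trid$ with $\vartheta_k\in\rmC^\infty_c(\R^+_0)$, $\supp\vartheta_k\subset[0,k]$, $0\le\vartheta_k\uparrow 1$, so that $\mcQ_k\eqdef\nchi_k\LP{\theta,\nu}$ is a finite (by Lemma~\ref{l:FinitenessBalls}, since $\supp\nchi_k\subset\mcB_k$) non-negative Borel measure. On the finite measure space $(\mcM(M),\mcQ_k)$, the monocylinder functions $f^\trid$, $f\in\rmC^\infty_c(M)$, generate (together with constants) a point-separating unital algebra; a Stone--Weierstrass / monotone-class argument in the spirit of~\cite[Lemma~2.1.27]{Savare22} shows $\FC{\infty,\infty}{c,c}{\infty}{c}$ is dense in $L^p(\mcQ_k)$. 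Given $u\in L^p(\LP{\theta,\nu})$, pick $\tilde u_n^k\in\FC{\infty,\infty}{c,c}{\infty}{c}$ with $\tilde u_n^k\to u$ in $L^p(\mcQ_k)$, set $u_n^k\eqdef\tilde u_n^k\nchi_k$ (still in the algebra, since $\nchi_k=\vartheta_k\circ\car^\trid$ and products stay in the class). Then $\|u_n^k-u\nchi_k\|_{L^p(\LP{\theta,\nu})}^p=\|\tilde u_n^k-u\|^p_{L^p(\mcQ_k)}\to 0$ as $n\to\infty$, while $\|u\nchi_k-u\|_{L^p(\LP{\theta,\nu})}\to 0$ as $k\to\infty$ by dominated convergence ($|u\nchi_k-u|^p\le|u|^p$). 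A diagonal extraction finishes the proof.

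\textbf{Assertion \ref{i:l:L2DensityCylinder:2}.} By the previous part it suffices to approximate a fixed $u=F\circ\mbff^\trid\in\FC{\infty,\infty}{c,c}{\infty}{c}$, with $F\in\rmC^\infty_c(\R^{k+1})$ and $\mbff=(\car,f_1,\dots,f_k)$, $f_i\in\rmC^\infty_c(M)$, by elements of $\hFC{\infty}{c}{\infty}{c,0}$. I would set $\hat u_n\eqdef F\circ(\car^\trid,(\varrho_n\otimes f_1)^\trid,\dots,(\varrho_n\otimes f_k)^\trid)$ with $\varrho_n\in\rmC^\infty_c(\R^+)$, $0\le\varrho_n\uparrow 1$, $\supp\varrho_n\subset[1/n,n]$, so that $(\varrho_n\otimes f_i)(s,x)=\varrho_n(s)f_i(x)\in\rmC^\infty_c((0,\infty)\times M)$ and hence $\hat u_n\in\hFC{\infty}{c}{\infty}{c,0}$. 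Using the Lipschitz bound for $F$ on its compact support and that $u$ and all $\hat u_n$ are supported in a fixed ball $\mcB_r$,
\[
\|u-\hat u_n\|^p_{L^p(\LP{\theta,\nu})}\le k(\Li{F})^p\Big(\max_i\|f_i\|_\infty\Big)^p\int_{\mcB_r}\int_M|\varrho_n(\mu_x)-1|^p\,\diff\mu(x)\,\diff\LP{\theta,\nu}(\mu).
\]
The inner integrand is bounded by $1$ and, as $n\to\infty$, $\varrho_n(\mu_x)\to 1$ for every $x$ with $\mu_x>0$; since $\mu$-a.e.\ $x$ has $\mu_x>0$ precisely outside an at most countable set carrying no $\mu$-mass—more precisely, $\{x:\mu_x=0\}$ is $\mu$-negligible except possibly where $\mu$ has no atom, and on the atomic part $\mu_x>0$—dominated convergence over the product measure $\LP{\theta,\nu}\restr{\mcB_r}\otimes$``$\diff\mu$'' drives the right-hand side to $0$. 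The one point needing care here is the measure-theoretic justification that $\varrho_n(\mu_x)\to 1$ holds $\mu(\diff x)$-a.e.\ for $\LP{\theta,\nu}$-a.e.\ $\mu$: this is where I would note that $\LP{\theta,\nu}$ is concentrated on purely atomic measures (coming from the Dirichlet--Ferguson structure), so for $\LP{\theta,\nu}$-a.e.\ $\mu$ one has $\mu_x>0$ for $\mu$-a.e.\ $x$, hence $\varrho_n(\mu_x)\uparrow 1$ $\mu$-a.e., and the Fubini--Tonelli/dominated-convergence bookkeeping closes the estimate.

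\textbf{Main obstacle.} The routine parts are the two density arguments via truncation and the Stone--Weierstrass step. The delicate point is assertion~\ref{i:l:L2DensityCylinder:0}: one must genuinely use the external input from~\cite{EthKur94} that the map $\mu\mapsto\int_M\hat f(\mu_x,x)\,\diff\mu(x)$ is measurable—this is \emph{not} continuous for the weak topology (it sees atoms), and establishing it by hand would require the weak-atomic-topology machinery; citing it correctly, and checking the $\sigma$-algebra coincidence, is the crux. A secondary subtlety, in part~\ref{i:l:L2DensityCylinder:2}, is the a.e.-in-$x$ convergence $\varrho_n(\mu_x)\to 1$, which relies on the purely atomic concentration of $\LP{\theta,\nu}$.
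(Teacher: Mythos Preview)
Your proposal is correct and follows essentially the same approach as the paper in all three parts: the same reduction and citation of \cite{EthKur94} for~\ref{i:l:L2DensityCylinder:0}, the same localization via $\nchi_k$ and appeal to \cite[Lemma~2.1.27]{Savare22} for~\ref{i:l:L2DensityCylinder:1.5}, and the same construction $\hat u_n=F\circ(\car^\trid,(\varrho_n\otimes f_1)^\trid,\dots)$ with the same Lipschitz estimate for~\ref{i:l:L2DensityCylinder:2}. You are in fact more explicit than the paper on one point: in~\ref{i:l:L2DensityCylinder:2} the paper simply invokes dominated convergence, whereas you correctly note that $\varrho_n(\mu_x)\to 1$ $\mu$-a.e.\ requires the pure atomicity of $\LP{\theta,\nu}$-a.e.\ $\mu$ (your intermediate sentence about ``an at most countable set carrying no $\mu$-mass'' is garbled, but the subsequent invocation of pure atomicity via the Dirichlet--Ferguson structure is the right fix).
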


Let us now show that~$\LP{\theta,\nu}$ fits the abstract framework of~\S\ref{ss:IntroGeometric} in that it is partially \ref{eq:ActionSemidirect}-quasi-invariant.
Whereas we will not make explicit use of this fact in the following, we believe it to be of interest in its own right, from the representation-theoretical point of view detailed in~\S\ref{sss:PQI}.

Firstly, we note that it is not restrictive to relax the definition of partial quasi-invariance in the following way.
Let~$\seq{\msF_t}_{t\in T}$ be the filtration in Definition~\ref{d:PQI}\ref{i:d:PQI:4}, and denote by~$\msF_\vee$ its terminal $\sigma$-algebra.
Rather than requiring~$\msF_\vee=\msF$, it suffices to let~$A\subset \Omega$ be $\mcQ$-conegligible and to require that~$\msF_\vee=\msF_A$, the trace $\sigma$-algebra of~$\msF$ on~$A$.
In the present setting, note that~$\LP{\theta,\nu}$ is concentrated on the subset~$\mcM^{\pa}(M)$ of purely atomic measures, since~$\DF{\nu}$ is concentrated on purely atomic probability measures (e.g.~\cite[Thm.~2, p.~219]{Fer73}).
We now choose~$A=\mcM^{\pa}(M)$ in the above reasoning.
\begin{proposition}\label{p:PQI}
For every~$\theta>0$, the measure~$\LP{\theta,\nu}$~is partially quasi-invariant under the \ref{eq:ActionSemidirect}-action of~$\mfG(M)\eqdef \Diff^+_0(M)\rtimes_\pb \exp[\Cc^\infty(M)]$.
\begin{proof}
Since~$\LP{\theta,\nu}$ is projectively invariant w.r.t.\ the \ref{eq:ActionMultipliers} of~$\exp[\Cc^\infty(M)]$, and since the push-forward of measures is homogeneous (in fact, linear), it suffices to show that~$\LP{\theta,\nu}$ is partially quasi-invariant under the \ref{eq:ActionShifts}-action of~$ \Diff^+_0(M)$.
Since the action~\eqref{eq:ActionSemidirect} splits over the decomposition~\eqref{eq:IsomorphismMbp}, it suffices to verify that there exists a filtration~$\msF_\bullet\eqdef \seq{\msF_t}_{t\in T}$ of~$\mcM^{\pa}(M)$ with the following properties:
\begin{enumerate*}[$(a)$]
\item the terminal $\sigma$-algebra~$\msF_\vee$ of~$\msF_\bullet$ coincides with the Borel $\sigma$-algebra of~$\mcM^{\pa}(M)$;
\item $\msF_\bullet$ is $J$-saturated, i.e.~$\msF_t= (J^{-1}\circ J)(\msF_t)$ for every~$t\in T$;
\item the simplicial part~$\DF{\nu}$ of~$\LP{\theta,\nu}$ is partially quasi-invariant w.r.t.\ the split action~\eqref{eq:ActionShifts} on~$\mcP^{\pa}(M)$ for the image filtration~$J(\msF_\bullet)\eqdef \seq{J(\msF_t)}_{t\in T}$.
\end{enumerate*}

To this end, let~$T\eqdef [0,1]$ be unit interval with the reverse of its natural order, and let~$\msF_t\eqdef \sigma\tparen{\hFC{\infty}{c}{\infty}{c,t}}$ be the $\sigma$-algebra generated by the family of cylinder functions in Definition~\ref{d:ExtCylinderF}.
Note that~$\msF_\vee=\sigma\tparen{\hFC{\infty}{c}{\infty}{c,0}}$, and recall that it generates the Borel $\sigma$-algebra on~$\mcM^{\pa}(M)$ as noted in the proof of Lemma~\ref{l:L2DensityCylinder}.

Finally, it was shown in~\cite[Prop.~5.20]{LzDS17+} that~$\DF{\beta}$ is partially quasi-invariant under the \ref{eq:ActionShifts}-action of~$ \Diff^+_0(M)$ for the filtration $\seq{\msF_t}_{t\in T}$ with~$\msF_t=\sigma\tparen{\hFC{\infty}{c}{\infty}{c,t}}$.
\end{proof}
\end{proposition}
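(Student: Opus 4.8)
I would prove Proposition~\ref{p:PQI} by peeling off the easy structure and then invoking the partial quasi-invariance of the Dirichlet--Ferguson measure from~\cite[Prop.~5.20]{LzDS17+}. \emph{First}, I would use that, by Proposition~\ref{p:TVY}\ref{i:p:TVY:2}, $\LP{\theta,\nu}$ is projectively invariant for the \eqref{eq:ActionMultipliers}-action of~$\exp[\Cc^\infty(M)]$, and that the \eqref{eq:ActionShifts}-push-forward of measures is positively homogeneous; since the \eqref{eq:ActionSemidirect}-action is the composition of a shift and a multiplier, its Radon--Nikod\'ym cocycle splits as a constant (multiplier) factor times the shift factor, so it suffices to prove that~$\LP{\theta,\nu}$ is partially quasi-invariant for the \eqref{eq:ActionShifts}-action of~$\Diff^+_0(M)$ alone. \emph{Second}, I would record that~$\LP{\theta,\nu}$ is carried by the Borel set~$\mcM^{\pa}(M)$ of purely atomic measures (because~$\DF{\nu}$ is, and~$\LP{\theta,\nu}= \isomor^{-1}_\pfwd(\DF{\nu}\otimes \lambda_\theta)$ by~\eqref{eq:InfLebesgueIsomor}), and that through~\eqref{eq:IsomorphismMbp} the \eqref{eq:ActionShifts}-action splits as $\psi.(\normaliz(\mu),\mu M)=(\psi_\pfwd\normaliz(\mu),\mu M)$, acting trivially on the~$\R^+$-factor. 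Hence it is enough to exhibit a filtration of~$\mcM^{\pa}(M)$ of the kind in Definition~\ref{d:PQI}\ref{i:d:PQI:4} (using the harmless relaxation in which the terminal $\sigma$-algebra need only generate the Borel structure of the~$\LP{\theta,\nu}$-conegligible set~$\mcM^{\pa}(M)$).

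The filtration I would take is indexed by~$T\eqdef [0,1]$ with the \emph{reverse} of its natural order, and defined by
\[
\msF_t\eqdef \sigma\tparen{\hFC{\infty}{c}{\infty}{c,t}}\comma\qquad t\in T\fstop
\]
i.e.\ generated by the cylinder functions of reduced potential energies built from functions supported on~$(t,\infty)\times M$, cf.\ Definition~\ref{d:ExtCylinderF}. Then I would verify three requirements: \emph{(i)} the terminal $\sigma$-algebra is~$\msF_\vee=\sigma(\hFC{\infty}{c}{\infty}{c,0})$, which coincides with the Borel $\sigma$-algebra of~$\mcM^{\pa}(M)$ --- this is already contained in the proof of Lemma~\ref{l:L2DensityCylinder}, using that reduced potential-energy cylinders are continuous for the Ethier--Kurtz weak atomic topology~\cite{EthKur94} and generate the same Borel structure as the narrow topology; \emph{(ii)} for~$\psi\in\Diff^+_0(M)$ the map~$\mu\mapsto\psi_\pfwd\mu$ preserves the mass of every atom, so that~$\hat f^\trid\circ(\psi.)=\tilde f^\trid$ with~$\tilde f(s,x)\eqdef \hat f(s,\psi(x))$, and hence~$\tilde f^\trid\in\hFC{\infty}{c}{\infty}{c,t}$ whenever~$\hat f^\trid\in\hFC{\infty}{c}{\infty}{c,t}$; thus~$\psi.\msF_t=\msF_t$ for all~$t$, and the monotonicity condition of Definition~\ref{d:PQI}\ref{i:d:PQI:4} holds trivially with~$t=s$; \emph{(iii)} for each~$\psi$ and~$t$, the restriction~$\LP{\theta,\nu}^t$ of~$\LP{\theta,\nu}$ to~$\msF_t$ is $\psi.$-quasi-invariant with an~$\msF_t$-measurable density --- which, through the splitting above, is the assertion that~$\DF{\nu}$ is partially quasi-invariant under the \eqref{eq:ActionShifts}-action of~$\Diff^+_0(M)$ for the image filtration, i.e.\ exactly~\cite[Prop.~5.20]{LzDS17+}.

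The main obstacle is step~\emph{(iii)}, and more precisely the compatibility of the filtration with the decomposition~\eqref{eq:IsomorphismMbp}: this decomposition is \emph{not} a product of filtrations, since the normalization map~$\normaliz$ is not~$\msF_t$-measurable for any~$t$ (an atom of~$\mu$ of mass~$>t$ corresponds to an atom of~$\normaliz(\mu)$ of relative mass~$>t/\mu M$, a level that depends on the total mass). Consequently the partial quasi-invariance of~$\LP{\theta,\nu}$ does not follow purely formally from that of~$\DF{\nu}$; I would have to match~$J(\msF_\bullet)$ with the filtration used in~\cite{LzDS17+} and check that, on~$\mcP^{\pa}(M)\times\R^+$, restricting~$\DF{\nu}\otimes\lambda_\theta$ to~$J(\msF_t)$ and passing to the Radon--Nikod\'ym derivative commute with the product structure, the~$\lambda_\theta$-factor remaining inert under the action. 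With that identification in hand, assembling \emph{(i)}--\emph{(iii)} produces the required filtration, and therefore the partial \eqref{eq:ActionSemidirect}-quasi-invariance of~$\LP{\theta,\nu}$.
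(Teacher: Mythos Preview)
Your proposal is correct and follows essentially the same approach as the paper: the same reduction to the \eqref{eq:ActionShifts}-action, the same filtration $\msF_t=\sigma\tparen{\hFC{\infty}{c}{\infty}{c,t}}$ on $T=[0,1]$ with reverse order, the same appeal to Lemma~\ref{l:L2DensityCylinder} for the terminal $\sigma$-algebra, and the same citation of~\cite[Prop.~5.20]{LzDS17+} for the Dirichlet--Ferguson part. Your explicit verification of the monotonicity condition~(ii) and your discussion of the $J$-compatibility obstacle are in fact more detailed than the paper's own proof, which simply lists the $J$-saturation condition~(b) as a requirement without verifying it.
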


For each~$\hat u=F\circ\hat\mbff^\trid\in \hFC{0}{ b }{0}{ b ,0}$ define a function
\begin{equation}\label{eq:U}
U\colon \mcM(M)\times M \times \R^+\to\R \comma \qquad U\colon (\mu,x,s)\longmapsto \hat u(\mu+s\delta_x) \fstop
\end{equation}
It is not difficult to show that the map~$(\mu,x,s)\mapsto \mu+s\delta_x$ is continuous. 
This fact, together with Lemma~\ref{l:L2DensityCylinder}\ref{i:l:L2DensityCylinder:0}, implies that~$U$ is Borel measurable.

\subsubsection{Vertical differentiability of extended cylinder functions}\label{sss:ClosabilityVer}
Define an operator $\tparen{\widehat \mcL^\ver,\hFC{\infty}{c}{\infty}{c,0}}$ by
\begin{align*}
\begin{aligned}
(\widehat \mcL^\ver \hat u)_\mu\eqdef&\  \int_M  \partial^2_s\restr{s=\mu_x} \hat u(\mu+s\delta_x-\mu_x\delta_x) \diff\mu(x)
\\
&\qquad\ensuremath{+ \theta  \int_M \partial_s\restr{s=0} \hat u(\mu+s\delta_x) \diff\nu(x)}
\comma 
\end{aligned}
\qquad \hat u\in\hFC{\infty}{c}{\infty}{c,0}\fstop
\end{align*}
\begin{proposition}\label{p:ClosVer}
The form~$\tparen{\mcE^\ver,\hFC{\infty}{c}{\infty}{c,0}}$  defined as
\begin{align*}
\mcE^\ver(\hat u,\hat v) &\eqdef \int \tscalar{(\gradH \hat u)_\mu}{(\gradH \hat v)_\mu}_{T^\ver_\mu}\, \diff \LP{\theta,\nu}(\mu) 
\\
&= \int \int_M (\gradH \hat u)_\mu(x) (\gradH \hat v)_\mu(x) \, \diff \mu(x) \, \diff \LP{\theta,\nu}(\mu)
\end{align*}  
is well-defined and closable on~$L^2(\mcM(M), \LP{\theta,\nu})$.
Its closure~$\tparen{\widehat\mcE^\ver,\dom{\widehat\mcE^\ver}}$ is a closed bilinear form with generator the Friedrichs extension on~$L^2(\mcM(M), \LP{\theta,\nu})$ of the operator~$\tparen{\widehat \mcL^\ver,\hFC{\infty}{c}{\infty}{c,0}}$. 
\end{proposition}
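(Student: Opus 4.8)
The strategy is the classical scheme for proving closability of a symmetric form via an integration-by-parts formula combined with the Mecke identity for $\LP{\theta,\nu}$. First I would compute, for $\hat u = F\circ\hat\mbff^\trid\in\hFC{\infty}{c}{\infty}{c,0}$, an explicit formula for the vertical gradient $(\gradH\hat u)_\mu(x)$ using the definition \eqref{eq:GradRepresentationver} and the reduced-potential structure: differentiating $\hat u(e^{ta}.\mu)$ at $t=0$ one finds, by the chain rule, that $(\gradH\hat u)_\mu(x)$ is the function $x\mapsto \sum_{i=1}^k \partial_i F(\hat\mbff^\trid\mu)\,\hat f_i(\mu_x,x)$ plus possibly a contribution from the dependence of $\hat f_i$ on its first slot; the point is that this is a bounded Borel function of $(\mu,x)$ with $\mu$-mass-bounded support once we track the compact support of $F$. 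This makes $\mcE^\ver(\hat u,\hat v)$ finite and symmetric, so the form is well-defined; density of $\hFC{\infty}{c}{\infty}{c,0}$ in $L^2(\LP{\theta,\nu})$ (Lemma~\ref{l:L2DensityCylinder}\ref{i:l:L2DensityCylinder:2}) gives that it is densely defined.

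The heart is the identity $\mcE^\ver(\hat u,\hat v) = -\int (\widehat\mcL^\ver\hat u)_\mu\,\hat v(\mu)\,\diff\LP{\theta,\nu}(\mu)$ for $\hat u,\hat v\in\hFC{\infty}{c}{\infty}{c,0}$. To prove it, I would apply the Mecke identity \eqref{eq:MeckeLP} with the integrand
\[
F(\mu,s,x)= (\gradH\hat u)_{\mu+s\delta_x}(x)\,(\gradH\hat v)_{\mu+s\delta_x}(x),
\]
which, thanks to Lemma~\ref{l:Negligible}, lets us rewrite the $\mu$-integral $\int_M (\gradH\hat u)_\mu(x)(\gradH\hat v)_\mu(x)\diff\mu(x)$ — where the atoms of $\mu$ carry the mass — as $\theta$ times an integral over $(\mu,x,s)\in\mcM(M)\times M\times\R^+$ of $(\gradH\hat u)_{\mu+s\delta_x}(x)(\gradH\hat v)_{\mu+s\delta_x}(x)$ against $\diff s\,\diff\nu(x)\,\diff\LP{\theta,\nu}(\mu)$. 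Writing $U(\mu,x,s)=\hat u(\mu+s\delta_x)$ and $V$ analogously as in \eqref{eq:U}, one checks that $(\gradH\hat u)_{\mu+s\delta_x}(x) = \partial_s U(\mu,x,s)$ off the negligible set $\mcN$ (this is exactly the vertical derivative in the atom at $x$), so the inner integral becomes $\theta\int_0^\infty \partial_s U(\mu,x,s)\,\partial_s V(\mu,x,s)\,\diff s$. Now integrate by parts in $s$ on $(0,\infty)$: since $F\in\rmC^\infty_c$ and the $\hat f_i$ vanish near $s=0$ (they are supported in $(\eps,\infty)$ for some $\eps>0$, as $\hat u\in\hFC{\infty}{c}{\infty}{c,0}$ is approximated by such, or more precisely in $(0,\infty)$ with the mass cutoff controlling the behavior at $s=0$ and $s=\infty$), the boundary terms at $s=\infty$ vanish by compact support and the boundary term at $s=0$ reproduces exactly the term $\theta\int_M \partial_s|_{s=0}\hat u(\mu+s\delta_x)\diff\nu(x)$ in $\widehat\mcL^\ver$, while the bulk term $-\theta\int_0^\infty \partial_s^2 U\cdot V\,\diff s$, after undoing Mecke in the reverse direction, yields $-\int_M \partial_s^2|_{s=\mu_x}\hat u(\mu+s\delta_x-\mu_x\delta_x)\,\hat v(\mu)\,\diff\mu(x)$ integrated against $\LP{\theta,\nu}$. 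Collecting terms gives $\mcE^\ver(\hat u,\hat v)=-\int(\widehat\mcL^\ver\hat u)\,\hat v\,\diff\LP{\theta,\nu}$.

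With this integration-by-parts formula in hand, closability is standard: $\tparen{\widehat\mcL^\ver,\hFC{\infty}{c}{\infty}{c,0}}$ is symmetric and non-positive on $L^2(\LP{\theta,\nu})$ (the latter from $\mcE^\ver(\hat u,\hat u)\ge 0$), hence densely defined, symmetric, non-positive operators are closable, and the form $\mcE^\ver(\hat u,\hat v)=-(\widehat\mcL^\ver\hat u,\hat v)_{L^2}$ is closable as well, with closure $\tparen{\widehat\mcE^\ver,\dom{\widehat\mcE^\ver}}$; the generator of the closure is by construction the Friedrichs extension of $\tparen{\widehat\mcL^\ver,\hFC{\infty}{c}{\infty}{c,0}}$, by the standard correspondence between closed non-negative forms and the Friedrichs extension of the associated symmetric operator (see e.g.~\cite{MaRoe92}). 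The main obstacle I anticipate is the careful bookkeeping in the integration by parts in the $s$-variable: one must justify that the relevant functions $s\mapsto U(\mu,x,s)$ are $\rmC^2$ with integrable derivatives and controlled behavior as $s\downarrow 0$ and $s\to\infty$ uniformly enough to apply Fubini and discard boundary terms, and one must verify that the apparently singular expression $\partial_s^2|_{s=\mu_x}\hat u(\mu+s\delta_x-\mu_x\delta_x)$ makes sense $\mu$-a.e.\ — which is where the $\hFC{\infty}{c}{\infty}{c,0}$ (rather than $\hFC{\infty}{c}{\infty}{c,\eps}$) regularity and the negligibility of $\mcN$ from Lemma~\ref{l:Negligible} are both essential. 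A secondary point is checking that $(\gradH\hat u)_{\mu+s\delta_x}(x)$ genuinely equals $\partial_s\hat u(\mu+s\delta_x)$, i.e.\ that the vertical gradient picks out precisely the derivative in the mass of the atom located at $x$; this follows from \eqref{eq:derivative1} applied with $T_2$ concentrated at $x$, but should be stated explicitly.
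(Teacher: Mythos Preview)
Your approach is correct and matches the paper's proof almost step for step: compute $(\gradH\hat u)_\mu$ explicitly (this is the paper's Lemma~\ref{l:TridVerGrad}), apply the Mecke identity~\eqref{eq:MeckeLP}, identify the resulting integrand as $\partial_r U\cdot\partial_r V$ via Lemma~\ref{l:LaplacianVerAux}, integrate by parts in $r$, undo Mecke on the bulk term, and conclude closability from the resulting integration-by-parts formula via~\cite[Thm.~X.23]{ReeSim75}.

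One small notational slip: the integrand you feed to Mecke should be $F(\mu,s,x)=(\gradH\hat u)_\mu(x)(\gradH\hat v)_\mu(x)$ (no $s$-dependence), so that $F(\mu+s\delta_x,s,x)$ produces the expression you want; as written, your $F(\mu,\mu_x,x)$ would yield $(\gradH\hat u)_{\mu+\mu_x\delta_x}(x)$ on the left, which is wrong. Your subsequent sentence shows you have the right transformation in mind, so this is only a typo. Also, no approximation is needed for the vanishing near $s=0$: since $\hat f_i\in\rmC^\infty_c((0,\infty)\times M)$ already has compact support in the open set, each $\hat f_i$ is supported in $[\eps_i,R_i]\times K_i$ for some $\eps_i>0$, which is exactly what forces $\partial_s|_{s=0}\hat u(\mu+s\delta_x)=0$ and hence the $\theta$-term in $\widehat\mcL^\ver$ to vanish on the core (the paper makes this explicit).
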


In order to prove Proposition~\ref{p:ClosVer} we shall need several auxiliary results on the differentiability of functions on~$\mcM(M)$.
Some of them may be inferred from the general computations for derivatives on spaces of measures in~\cite{RenWan21}, or from the analogous results on the differentiability of functions on~$\mcP(M)$ in~\cite{LzDS17+}, but we report them for ease of reference.

\begin{lemma}\label{l:LaplacianVerAux}
Fix~$\hat u=F\circ\hat\mbff^\trid\in \hFC{0}{c}{0}{c,0}$  with $F:\R^{k+1} \to \R$ . Then,
\begin{enumerate}[$(i)$]
\item\label{i:l:LaplacianVerAux:2} if~$\hat u$ is in~$\hFC{1}{c}{1}{c,0}$, then for~$\LP{\theta,\nu}\otimes \nu$-a.e.~$(\mu,x)$ the function~$s\mapsto U(\mu,x,s)$  as in \eqref{eq:U}  is differentiable on~$\R^+$, and, for every~$s\in\R^+$,
\begin{equation}\label{eq:l:LaplacianVerAux:1}
\partial_s U(\mu,x,s) = \sum_{i=0}^k (\partial_i F)\tparen{\hat\mbff^\trid(\mu+s\delta_x)} \braket{\hat f_i(s,x)+s \hat f_i'(s,x)} 
 \as{\LP{\theta,\nu}\otimes\nu}\semicolon
\end{equation}

\item\label{i:l:LaplacianVerAux:3} if~$\hat u$ is in~$\hFC{2}{c}{2}{c,0}$, then for~$\LP{\theta,\nu}\otimes \nu$-a.e.~$(\mu,x)$ the function~$s\mapsto U(\mu,x,s)$  as in \eqref{eq:U}  is twice differentiable on~$\R^+$, and, for every~$s\in\R^+$,
\begin{equation}\label{eq:l:LaplacianVerAux:2}
\begin{aligned}
\partial^2_s  U(\mu,x,s)  =& \sum_{i,j=0}^{k,k} (\partial^2_{ij} F)\tparen{\hat\mbff^\trid(\mu+s\delta_x)} \braket{\hat f_i(s,x)+s \hat f_i'(s,x)} 
\braket{\hat f_j(s,x)+s \hat f_j'(s,x)} 
\\
& + \sum_{i=0}^k (\partial_i F)\tparen{\hat\mbff^\trid(\mu+s\delta_x)} \braket{2\hat f_i'(s,x)+s \hat f_i''(s,x)} 
  \as{\LP{\theta,\nu}\otimes\nu} \fstop
\end{aligned}
\end{equation}
\end{enumerate}
In particular, the left -hand sides of~\eqref{eq:l:LaplacianVerAux:1} and~\eqref{eq:l:LaplacianVerAux:2} are Borel measurable.
\begin{proof}
\ref{i:l:LaplacianVerAux:2}
Fix~$\hat f\in\rmC^1_c(\R^+\times M)$.
Let~$\mcN$ be as in~\eqref{eq:Negligible} and recall that it is~$\LP{\theta,\nu}\otimes\nu$-negligible by Lemma~\ref{l:Negligible}. For every~$(\mu,x)\not\in\mcN$, for every~$s\in\R^+$,
\begin{align*}
\partial_s& \tbraket{\hat f^\trid(\mu+s\delta_x)}=
\\
=&\ \partial_s \braket{\int \hat f\tparen{\mu_y+s \car_{x}(y),y}\, \diff\mu(y) + s\hat f\tparen{\mu_x+s,x}} 
\intertext{hence, by definition of~$\mcN$, continuing the above chain of equalities,}
=&\ \partial_s \braket{\int \hat f(\mu_y,y)\,\diff\mu(y)+s\hat f(s,x)}
\\
=&\ \hat f(s,x)+ s \hat f'(s,x) \fstop
\end{align*}
We used that, since~$\hat f\in \rmC^1_0(\emparg,x)$ has compact support in~$\R^+$ away from~$0$ uniformly in~$x\in M$, and since the total mass of~$\mu$ is finite, every integral above is in fact a finite sum, hence we may freely differentiate under integral sign.

On the complement of the $\LP{\theta,\nu}\otimes\nu$-negligible set~$\mcN$, the equality in~\eqref{eq:l:LaplacianVerAux:1} for~$U$ readily follows from the above equality and the standard chain rule.

\ref{i:l:LaplacianVerAux:3} A proof is similar to the one of~\ref{i:l:LaplacianVerAux:2} and therefore it is omitted.
\end{proof}
\end{lemma}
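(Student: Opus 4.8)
The plan is to prove both parts at once by reducing, on the complement of the negligible set $\mcN$ of~\eqref{eq:Negligible}, the computation of $U(\mu,x,s)=\hat u(\mu+s\delta_x)$ to an explicit formula that is affine-plus-pointwise in the variable $s$; the asserted first and second derivatives then follow from a routine chain/product rule, and Borel measurability of the right-hand sides is immediate from their explicit form.

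First I would record the key reduction. Fix $\hat u=F\circ\hat\mbff^\trid$ with $\hat f_i\in\rmC^{m_2}_c\tparen{(0,\infty)\times M}$; since $\supp\hat f_i$ is a compact subset of $(0,\infty)\times M$, there is $\eps_i>0$ with $\hat f_i(s,\emparg)\equiv 0$ for $s\notin[\eps_i,\eps_i^{-1}]$, so that for every $\mu\in\mcM(M)$ the integral $\int_M\hat f_i(\mu_y,y)\,\diff\mu(y)$ is in fact the \emph{finite} sum over the (finitely many, as $\mu M<\infty$) atoms $y$ of $\mu$ with $\mu_y\geq\eps_i$. Next, by Lemma~\ref{l:Negligible} the set $\mcN=\set{(\mu,x):\mu_x>0}$ is $\LP{\theta,\nu}\otimes\nu$-negligible; for $(\mu,x)\notin\mcN$ one has $\mu\set{x}=0$, hence $(\mu+s\delta_x)_y=\mu_y$ for $\mu$-a.e.\ $y$ while $(\mu+s\delta_x)_x=s$. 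Splitting the integral defining $\hat f_i^\trid(\mu+s\delta_x)$ into its $\mu$-part and the contribution of the mass $s$ at $x$, one therefore obtains
\begin{equation}\label{eq:plan:split}
\hat f_i^\trid(\mu+s\delta_x)=\hat f_i^\trid(\mu)+s\,\hat f_i(s,x)\comma\qquad (\mu,x)\notin\mcN\comma\ s\in\R^+\fstop
\end{equation}
(For $i=0$, with $\hat f_0\equiv 1$, this reads $(\mu+s\delta_x)M=\mu M+s$.) The first summand on the right is constant in $s$, and the second is $\rmC^{m_2}$ in $s$ because $\hat f_i(\emparg,x)\in\rmC^{m_2}(\R^+)$ with compact support; no differentiation-under-the-integral issue arises, since the $\mu$-part is independent of $s$.

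Given~\eqref{eq:plan:split}, part~\ref{i:l:LaplacianVerAux:2} follows at once: for $(\mu,x)\notin\mcN$ the map $s\mapsto\hat f_i^\trid(\mu+s\delta_x)$ is $\rmC^1$ on $\R^+$ with derivative $\hat f_i(s,x)+s\hat f_i'(s,x)$, and since $F\in\rmC^1$ the chain rule gives exactly~\eqref{eq:l:LaplacianVerAux:1}. For part~\ref{i:l:LaplacianVerAux:3} I would simply differentiate~\eqref{eq:l:LaplacianVerAux:1} once more: using~\eqref{eq:plan:split} and the chain rule, $\partial_s\tbraket{(\partial_iF)(\hat\mbff^\trid(\mu+s\delta_x))}=\sum_j(\partial^2_{ij}F)(\hat\mbff^\trid(\mu+s\delta_x))\tbraket{\hat f_j(s,x)+s\hat f_j'(s,x)}$, while $\partial_s\tbraket{\hat f_i(s,x)+s\hat f_i'(s,x)}=2\hat f_i'(s,x)+s\hat f_i''(s,x)$; combining these via the product rule yields~\eqref{eq:l:LaplacianVerAux:2}. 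Finally, for the concluding measurability claim: read as functions of $(\mu,x,s)$, the right-hand sides of~\eqref{eq:l:LaplacianVerAux:1}–\eqref{eq:l:LaplacianVerAux:2} are compositions of the continuous map $(\mu,x,s)\mapsto\mu+s\delta_x$, the Borel maps $\hat f_i^\trid$ (Lemma~\ref{l:L2DensityCylinder}\ref{i:l:L2DensityCylinder:0}), the continuous functions $\partial_iF$ and $\partial^2_{ij}F$, and the continuous functions $\hat f_i,\hat f_i',\hat f_i''$; since these right-hand sides agree with the left-hand sides off the $\LP{\theta,\nu}\otimes\nu$-negligible Borel set $\mcN\times\R^+$ (and one may take them as the \emph{definition} of $\partial_sU$, $\partial_s^2U$ there), measurability follows.

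The only delicate point — and the one I would treat as the main obstacle — is justifying~\eqref{eq:plan:split}, i.e.\ that adding the atom $s\delta_x$ leaves $\hat f_i^\trid$ unchanged except for the new atomic term $s\hat f_i(s,x)$; this rests precisely on $(\mu,x)\notin\mcN$ (so that $\mu\set{x}=0$ and hence $(\mu+s\delta_x)_y=\mu_y$ for $\mu$-a.e.\ $y$) together with the finite-sum observation of the previous paragraph. Everything downstream is elementary calculus, and part~\ref{i:l:LaplacianVerAux:3} requires nothing beyond differentiating the formula of part~\ref{i:l:LaplacianVerAux:2}.
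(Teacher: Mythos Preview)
Your proof is correct and follows essentially the same approach as the paper: both arguments use Lemma~\ref{l:Negligible} to restrict to the conegligible set where $\mu_x=0$, observe that there the $\mu$-part of $\hat f_i^\trid(\mu+s\delta_x)$ is independent of $s$ (your equation~\eqref{eq:plan:split} is precisely the paper's intermediate step), and then apply the chain rule. Your write-up is in fact slightly more detailed than the paper's, in particular regarding the measurability claim and the explicit treatment of part~\ref{i:l:LaplacianVerAux:3}.
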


\begin{lemma}\label{l:TridVerGrad}
 Fix~$\hat u=F\circ\hat\mbff^\trid\in \hFC{1}{c}{1}{c,0}$ with $F\colon\R^{k+1} \to \R$ . 
Then, for every~$(\mu,x)\in \mcM(M)\times M$ there exists
\begin{equation}\label{eq:l:TridVerGrad:1}
\begin{aligned}
(\gradH \hat u)_\mu(x) =&\  \sum_{i=0}^k  (\partial_i F)\tparen{\hat\mbff^\trid(\mu)} \braket{\mu_x \hat f_i'(\mu_x,x)+\hat f_i(\mu_x,x)}
\end{aligned}
\end{equation}
and
\begin{align}\label{eq:l:TridVerGrad:2}
 (\gradH \hat u)_\mu(x) =\partial_s\restr{s=\mu_x}  U  (\mu,x,s) \as{\LP{\theta,\nu}\otimes\nu}
\end{align}
 where $U$ is as in \eqref{eq:U}.  

\begin{proof} 
Fix~$\hat f\in\rmC^1_c(\R^+\times M)$. For every~$\phi\in\rmC^\infty_c(M)$ we have
\begin{align*}
(\partial_\phi \hat f^\trid)_\mu \eqdef&\ \diff_t\restr{t=0} \hat f^\trid\tparen{e^{t\phi}\mu} = \int \diff_t\restr{t=0} \braket{ \hat f\tparen{e^{t\phi(x)}\mu_x,x} e^{t\phi(x)} }\diff\mu(x)
\\
=& \int \Bigg[e^{t\phi(x)} \mu_x \phi(x)\hat f'\tparen{e^{t\phi(x)}\mu_x,x} e^{t\phi(x)}
\\
&+\hat f\tparen{e^{t\phi(x)}\mu_x,x}\phi(x) e^{t\phi(x)} \Bigg]\restr{t=0} \diff\mu(x)
\\
=& \int \braket{\mu_x \hat f'(\mu_x,x)+ \hat f(\mu_x,x)}\phi(x)\, \diff\mu(x) \fstop
\end{align*}
We used that, since~$\hat f\in \rmC^1_0(\emparg,x)$ has compact support in~$\R^+$ away from~$0$ uniformly in~$x\in M$, and since the total mass of~$\mu$ is finite, every integral above is in fact a finite sum, hence we may freely differentiate under integral sign.

By arbitrariness of~$\phi\in \rmC^\infty_c(M)$ and density of~$\rmC^\infty_c(M)$ in~$T^\ver_\mu$ for every~$\mu\in\mcM(M)$, we have
\begin{align*}
(\gradH \hat f^\trid)_\mu(x) =& \braket{ \mu_x \hat f'(\mu_x,x)+ \hat f(\mu_x,x)} \comma  
\end{align*}
 which clearly belongs to $L^2(M, \mu)$.  

The equality in~\eqref{eq:l:TridVerGrad:1} for~$\hat u$ readily follows from the above equality and the standard chain rule.
The equality in~\eqref{eq:l:TridVerGrad:2} holds by comparison of~\eqref{eq:l:TridVerGrad:1} and~\eqref{eq:l:LaplacianVerAux:1}  also using the fact that $\mu_x =0$ for $\LP{\theta,\nu} \otimes \nu$-a.e.~$(\mu, x)$.  
\end{proof}
\end{lemma}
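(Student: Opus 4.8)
The plan is to establish~\eqref{eq:l:TridVerGrad:1} from the bottom up — first for a single reduced potential energy $\hat f^\trid$ with $\hat f\in\rmC^1_c\tparen{(0,\infty)\times M}$, then for $\hat u=F\circ\hat\mbff^\trid$ by the chain rule — and then to obtain~\eqref{eq:l:TridVerGrad:2} by comparing the resulting formula with~\eqref{eq:l:LaplacianVerAux:1}. For the first step I would fix $\hat f\in\rmC^1_c\tparen{(0,\infty)\times M}$ and a test direction $\phi\in\rmC^\infty_c(M)$, and write out the multiplier action~\eqref{eq:ActionMultipliers}, namely $(\partial_\phi\hat f^\trid)_\mu=\diff_t\restr{t=0}\hat f^\trid\tparen{\rme^{t\phi}.\mu}$ with
\[
\hat f^\trid\tparen{\rme^{t\phi}.\mu}=\int_M\hat f\tparen{\rme^{t\phi(y)}\mu_y,y}\,\rme^{t\phi(y)}\,\diff\mu(y)\fstop
\]
The decisive observation is that $\hat f$ is supported in $[\varepsilon,\infty)\times K$ for some $\varepsilon>0$ and some compact $K\subset M$, so that for $\abs{t}\le 1$ the integrand vanishes off the finite set of atoms $y\in K$ with $\mu_y\ge\varepsilon\,\rme^{-\norm{\phi}_\infty}$, whose cardinality is at most $\tparen{\mu M}\rme^{\norm{\phi}_\infty}/\varepsilon$ and, in particular, does not depend on $t$. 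Hence $t\mapsto\hat f^\trid\tparen{\rme^{t\phi}.\mu}$ coincides on $(-1,1)$ with a finite sum of $\rmC^1$ functions, and differentiating term by term gives
\[
(\partial_\phi\hat f^\trid)_\mu=\int_M\tbraket{\mu_y\,\hat f'(\mu_y,y)+\hat f(\mu_y,y)}\,\phi(y)\,\diff\mu(y)=\tscalar{g}{\phi}_{T^\ver_\mu}\comma
\]
where $g(y)\eqdef\mu_y\,\hat f'(\mu_y,y)+\hat f(\mu_y,y)$ is bounded, hence belongs to $T^\ver_\mu=L^2(M,\mu)$. By density of $\rmC^\infty_c(M)$ in $T^\ver_\mu$ and Cauchy--Schwarz, $\phi\mapsto(\partial_\phi\hat f^\trid)_\mu$ is $\norm{\emparg}_{T^\ver_\mu}$-bounded and is represented by $g$; that is, $(\gradH\hat f^\trid)_\mu=g$.

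Then, for a general $\hat u=F\circ\hat\mbff^\trid\in\hFC{1}{c}{1}{c,0}$, the map $t\mapsto F\tparen{\hat\mbff^\trid\tparen{\rme^{t\phi}.\mu}}$ is $\rmC^1$ near $t=0$ — each component being so by the previous step, and $F\in\rmC^1$ — so the classical chain rule yields $(\partial_\phi\hat u)_\mu=\sum_{i=0}^k(\partial_i F)\tparen{\hat\mbff^\trid(\mu)}\,(\partial_\phi\hat f_i^\trid)_\mu$. Substituting the formula just obtained and using once more the density of $\rmC^\infty_c(M)$ in $T^\ver_\mu$, we get~\eqref{eq:l:TridVerGrad:1} for every $(\mu,x)$, the $i=0$ summand contributing only $(\partial_0 F)\tparen{\hat\mbff^\trid(\mu)}$ since $\hat f_0\equiv 1$. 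The right-hand side of~\eqref{eq:l:TridVerGrad:1} lies in $L^2(M,\mu)$ because the factor $\mu_x\,\hat f_i'(\mu_x,x)$ is supported where $\mu_x$ ranges in the compact $s$-support of $\hat f_i'(\emparg,x)$, and it is Borel measurable by the measurability of $(\mu,x)\mapsto\hat\mbff^\trid(\mu)$ and of the evaluation map $(\mu,x)\mapsto\mu_x$ (cf.\ Lemma~\ref{l:L2DensityCylinder}\ref{i:l:L2DensityCylinder:0}).

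Finally, I would deduce~\eqref{eq:l:TridVerGrad:2}. By Lemma~\ref{l:Negligible} the set $\mcN=\set{(\mu,x):\mu_x>0}$ is $\LP{\theta,\nu}\otimes\nu$-negligible, so for a.e.\ $(\mu,x)$ we have $\mu_x=0$; for such $(\mu,x)$, the right-hand side of~\eqref{eq:l:TridVerGrad:1} reduces to $\sum_{i=0}^k(\partial_i F)\tparen{\hat\mbff^\trid(\mu)}\,\hat f_i(0,x)$, whereas, by Lemma~\ref{l:LaplacianVerAux}\ref{i:l:LaplacianVerAux:2}, one has $\partial_s\restr{s=\mu_x}U(\mu,x,s)=\partial_s\restr{s=0}U(\mu,x,s)$, and evaluating~\eqref{eq:l:LaplacianVerAux:1} at $s=0$ gives exactly the same expression $\sum_{i=0}^k(\partial_i F)\tparen{\hat\mbff^\trid(\mu)}\,\hat f_i(0,x)$, the terms $s\,\hat f_i'(s,x)$ vanishing at $s=0$. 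This identifies $(\gradH\hat u)_\mu(x)$ with $\partial_s\restr{s=\mu_x}U(\mu,x,s)$ off $\mcN$, which is~\eqref{eq:l:TridVerGrad:2}. The only genuinely delicate point in the whole argument is the finite-sum reduction in the first step: it is precisely the separation of $\supp\hat f_i$ from $\set{s=0}$ — the feature distinguishing the classes $\hFC{m_1}{c}{m_2}{c,0}$ — that turns the integrals into finite sums over a $t$-uniformly finite family of atoms, legitimising both the $\rmC^1$ regularity of the curves $t\mapsto\hat f_i^\trid\tparen{\rme^{t\phi}.\mu}$ and the interchange of $\partial_t$ with the integral; everything else is the chain rule together with the negligibility of $\mcN$.
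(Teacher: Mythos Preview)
Your proposal is correct and follows essentially the same approach as the paper: compute $(\partial_\phi\hat f^\trid)_\mu$ for a single $\hat f$ by exploiting the finite-sum reduction (compact support of $\hat f$ away from $s=0$ plus finiteness of $\mu M$), identify the vertical gradient by duality, apply the chain rule for $\hat u=F\circ\hat\mbff^\trid$, and then deduce~\eqref{eq:l:TridVerGrad:2} by comparing with~\eqref{eq:l:LaplacianVerAux:1} on the $\LP{\theta,\nu}\otimes\nu$-conegligible set where $\mu_x=0$. Your write-up is in fact somewhat more explicit than the paper's (the cardinality bound on contributing atoms, the $L^2$ and measurability checks), but the structure and all key steps coincide.
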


\begin{proof}[Proof of Proposition~\ref{p:ClosVer}]
Fix~$\hat u=F\circ\hat\mbff^\trid, \hat v=G\circ \hat\mbfg^\trid\in \hFC{\infty}{c}{\infty}{c,0}$ with $F\colon \R^{k+1}\to \R$ and $G\colon \R^{h+1} \to \R$.
Then, by definition of~$\mcE^\ver$ and by~\eqref{eq:l:TridVerGrad:1},
\begin{align*}
\mcE^\ver&(\hat u,\hat v)  \eqdef  \int  \tscalar{(\gradH \hat u)_\mu}{(\gradH \hat v)_\mu}_{T^\ver_\mu}\diff\LP{\theta,\nu}(\mu)
\\
=& \int \int_M \Bigg[ \sum_{i=0}^k (\partial_i F)(\hat\mbff^\trid(\mu)) \tbraket{\mu_x \hat f_i'(\mu_x,x) +\hat f_i(\mu_x,x)}
\\
&\qquad\qquad\qquad\qquad \cdot \sum_{j=0}^h (\partial_j G)(\hat\mbfg^\trid(\mu)) \tbraket{\mu_x\hat g_j'(\mu_x,x) +\hat g_j(\mu_x,x)} \Bigg] \diff\mu(x)\, \diff\LP{\theta,\nu}(\mu)
\intertext{hence, by Proposition~\ref{p:MeckeLP} and by~\eqref{eq:l:LaplacianVerAux:1},}
=&\ \theta \int \int_M  \int_0^\infty \Bigg[ \sum_{i=0}^k (\partial_i F)(\hat\mbff^\trid(\mu+r\delta_x)) \tbraket{r\hat f_i'(r,x) +\hat f_i(r,x)}
\\
&\qquad\qquad\qquad \cdot \sum_{j=0}^h (\partial_j G)(\hat\mbfg^\trid(\mu+r\delta_x)) \tbraket{r\hat g_j'(r,x) +\hat g_j(r,x)} \Bigg] \diff r\,\diff\nu(x)\, \diff\LP{\theta,\nu}(\mu)
\\
=&\ \theta \int \int_M \int_0^\infty \partial_r \hat u(\mu+r\delta_x)\, \partial_r \hat v(\mu+r\delta_x)\, \diff r\, \diff\nu(x)\, \diff\LP{\theta,\nu}(\mu) \fstop
\end{align*}
Integrating by parts on~$\R^+$ and applying both~\eqref{eq:l:LaplacianVerAux:1} and~\eqref{eq:l:LaplacianVerAux:2},
\begin{align*}
=&\ \theta \int \int_M \tbraket{\hat u(\mu+r\delta_x)\, \partial_r \hat v(\mu+r\delta_x)}\Bigg\lvert^{r=\infty}_{r=0} \, \diff\nu(x)\,\diff \LP{\theta,\nu}(\mu)
\\
&\qquad  -\theta \int \int_M \int_0^\infty \hat u(\mu+r\delta_x)\, \partial_s^2\restr{s=r} \hat v(\mu+s\delta_x) \, \diff r\,\diff\nu(x)\,\diff \LP{\theta,\nu}(\mu)
\\
 =  &\ -\theta \int \hat u(\mu) \int_M \partial_s\restr{s=0} \hat v(\mu+s\delta_x) \diff\nu(x)\,\diff \LP{\theta,\nu}(\mu)
\\
&\qquad -\theta \int \int_M \int_0^\infty \hat u(\mu+r\delta_x)\, \partial^2_s\restr{s=r} \hat v(\mu+s\delta_x)\, \diff r\, \diff\nu(x)\,\diff \LP{\theta,\nu}(\mu)
\\
=&\ -\theta \int \hat u(\mu) \int_M \partial_s\restr{s=0} \hat v(\mu+s\delta_x) \diff\nu(x)\,\diff \LP{\theta,\nu}(\mu)
\\
&\qquad -\theta \int \int_M \int_0^\infty \hat u(\mu+r\delta_x)\, \partial^2_s\restr{s=r} \hat v\tparen{(\mu+r\delta_x)+s\delta_x-r\delta_x)} \,\diff r\,\diff\nu(x)\,\diff \LP{\theta,\nu}(\mu)
\\
=&\ -\theta \int \hat u(\mu) \int_M \partial_s\restr{s=0} \hat v(\mu+s\delta_x) \diff\nu(x)\,\diff \LP{\theta,\nu}(\mu)
\\
&\qquad -\int \hat u(\mu)  \int_M \partial^2_s\restr{s=\mu_x} \hat v(\mu+s\delta_x-\mu_x\delta_x)\, \diff\mu(x)\,\diff \LP{\theta,\nu}(\mu) \fstop
\end{align*}
where we used that~$\hat u$ vanishes outside a ball of the origin in~$\mcM(M)$ to cancel the boundary term at~$r=\infty$.  
Furthermore, for $\LP{\theta,\nu}$-a.e.~$\mu$,
\begin{align*}
\int_M &\partial_s\restr{s=0} \hat v (\mu+s\delta_x) \diff\nu(x) 
\\
&= \int_M \sum_i^h (\partial_i G)\tparen{\hat\mbfg^\trid(\mu
)}
\partial_s\restr{s=0} \braket{\int \hat g_i(\mu_y,y) \diff\mu_{\setminus x}(y) + (\mu_x+s) \hat g_i(\mu_x+s,x) } \diff\nu(x)
\\
&= \int_M \sum_i^h (\partial_i G)\tparen{\hat\mbfg^\trid(\mu)} \braket{\mu_x \hat g_i'(\mu_x,x) + \hat g_i(\mu_x,x)} \diff\nu(x)
\\
&= \int_M \sum_i^h (\partial_i G)\tparen{\hat\mbfg^\trid(\mu)} \braket{\hat g_i(0,x)} \diff\nu(x) \comma
\end{align*}
since~$\mu_x=0$ for~$\LP{\theta,\nu}\otimes \nu$-a.e.~$(\mu,x)$ by Lemma~\ref{l:Negligible}.
Since~$\hat g_i(0,x)\equiv 0$ for every~$\hat g_i$ and every~$x\in M$ by definition of~$\hat v=G\circ\hat\mbfg^\trid\in \hFC{\infty}{c}{\infty}{c,0}$, we conclude that the boundary term at~$r=0$ vanishes too, $\LP{\theta,\nu}$-a.s.
Thus, cancelling this boundary term in the above chain of inequality we obtain the desired assertion 
\begin{align*}
\mcE^\ver(\hat u,\hat v)= \tscalar{\hat u}{-\widehat \mcL^\ver \hat v}_{L^2(\LP{\theta,\nu})}  \fstop
\end{align*}

Since~$\hFC{\infty}{c}{\infty}{c,0}$ is dense in~$L^2(\LP{\theta,\nu})$ by Lemma~\ref{l:L2DensityCylinder}\ref{i:l:L2DensityCylinder:2}, the above equality shows at once that $\widehat \mcL^\ver \hFC{\infty}{c}{\infty}{c,0}\subset L^2(\LP{\theta,\nu})$ and that $\tparen{\widehat \mcL^\ver, \hFC{\infty}{c}{\infty}{c,0}}$ generates~$\tparen{\mcE^\ver,\hFC{\infty}{c}{\infty}{c,0}}$.
Thus,~$\tparen{\mcE^\ver,\hFC{\infty}{c}{\infty}{c,0}}$ is closable and its closure~$\tparen{\widehat\mcE^\ver,\dom{\widehat\mcE^\ver}}$ is generated by the Friedrichs extension of $\tparen{\widehat \mcL^\ver, \hFC{\infty}{c}{\infty}{c,0}}$ by~\cite[Thm.~X.23]{ReeSim75}.
\end{proof}

\subsubsection{Horizontal differentiability of extended cylinder functions}\label{sss:ClosabilityHor}
Define an operator~$\tparen{\widehat \mcL^\hor,\hFC{\infty}{c}{\infty}{c,0}}$ by setting  for every~$\mu \in \mcM(M)$  
\begin{align*}
(\widehat \mcL^\hor \hat u)_\mu\eqdef& \int_M \frac{\Delta_z\restr{z=x} \hat u(\mu+\mu_x\delta_z-\mu_x\delta_x)}{{\mu_x}^2} \diff\mu(x)
\\
&\qquad + \scalar{\paren{\frac{\nabla\log\rho}{\ev}}_{\mu}}{(\gradW \hat u)_{\mu}}_{T^\hor_{\mu}\mcM(M)} \comma \quad \hat u\in\hFC{\infty}{c}{\infty}{c,0}\fstop
\end{align*}

\begin{proposition}\label{p:ClosHor}
The form~$\tparen{\mcE^\hor,\hFC{\infty}{c}{\infty}{c,0}}$  defined as
\begin{align*}
\mcE^\hor( \hat u,\hat v) &\eqdef \int \tscalar{(\gradW \hat u)_\mu}{(\gradW \hat v)_\mu}_{T^\hor\mu}\, \diff \LP{\theta,\nu}(\mu) 
\\
&= \int \int_M \tscalar{(\gradW \hat u)_\mu(x)}{(\gradW \hat v)_\mu(x)}_{g_x} \, \diff \mu(x) \, \diff \LP{\theta,\nu}(\mu)
\end{align*}  
is  well-defined and  closable on~$L^2(\mcM(M), \LP{\theta,\nu})$.
Its closure~$\tparen{\widehat\mcE^\hor,\dom{\widehat\mcE^\hor}}$ is a closed bilinear form with generator the Friedrichs extension on~$L^2(\mcM(M), \LP{\theta,\nu})$ of the operator~$\tparen{\widehat \mcL^\hor,\hFC{\infty}{c}{\infty}{c,0}}$.
\end{proposition}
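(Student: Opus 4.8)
The plan is to mirror, in the horizontal direction, the strategy already employed for the vertical form in Proposition~\ref{p:ClosVer}: namely, to establish an integration-by-parts identity
\[
\mcE^\hor(\hat u,\hat v)=\tscalar{\hat u}{-\widehat\mcL^\hor\hat v}_{L^2(\LP{\theta,\nu})}\comma \qquad \hat u,\hat v\in\hFC{\infty}{c}{\infty}{c,0}\comma
\]
from which closability, together with the identification of the Friedrichs extension of $\tparen{\widehat\mcL^\hor,\hFC{\infty}{c}{\infty}{c,0}}$ as the generator of the closure, follows by~\cite[Thm.~X.23]{ReeSim75} exactly as before, using the density of $\hFC{\infty}{c}{\infty}{c,0}$ in $L^2(\LP{\theta,\nu})$ from Lemma~\ref{l:L2DensityCylinder}\ref{i:l:L2DensityCylinder:2}. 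The well-posedness of $\mcE^\hor$ (finiteness of the integrals on cylinder functions) is immediate from the explicit formula~\eqref{eq:Fdiff}/\eqref{eq:l:TridVerGrad:1}-type representation of $\gradW\hat f^\trid$ together with the truncation built into $F\in\rmC^\infty_c$, which confines $\mu$ to a mass-bounded set on which $\LP{\theta,\nu}$ is finite (Lemma~\ref{l:FinitenessBalls}).

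First I would record the analogue of Lemma~\ref{l:TridVerGrad} for the horizontal gradient: for $\hat f\in\rmC^1_c(\R^+_0\times M)$ and $w\in\mfX^\infty_c(M)$, differentiating $\hat f^\trid$ along the flow $\psi^w_t$ and using that for $\LP{\theta,\nu}\otimes\nu$-a.e.\ $(\mu,x)$ one has $\mu_x=0$ (Lemma~\ref{l:Negligible}), one gets
\[
(\gradW \hat f^\trid)_\mu(x)=\mu_x\,(\nabla_x \hat f)(\mu_x,x)\comma
\]
so that $(\gradW\hat f^\trid)_\mu(x)=\nabla_z\restr{z=x}\hat f(\mu_x,z)\cdot\mu_x$, and correspondingly for $\hat u=F\circ\hat\mbff^\trid$ by the chain rule. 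Then I would apply the Mecke identity for $\LP{\theta,\nu}$ (Proposition~\ref{p:MeckeLP}): writing $\mu=\mu+r\delta_x$ with the atom at $x$ of mass $r$, the a.e.\ vanishing of $\mu_x$ is replaced by $r$, and
\[
\mcE^\hor(\hat u,\hat v)=\theta\int\int_M\int_0^\infty \frac{1}{r^2}\,g_x\!\tparen{(\gradW\hat u)_{\mu+r\delta_x}(x),\,(\gradW\hat v)_{\mu+r\delta_x}(x)}\,\diff r\,\diff\nu(x)\,\diff\LP{\theta,\nu}(\mu)\fstop
\]
Here the key point is that $(\gradW\hat u)_{\mu+r\delta_x}(x)=r\,\nabla_z\restr{z=x}\hat u(\mu+r\delta_z)$ carries a factor $r$, so the $r^{-2}$ is exactly what is needed for the $M$-integrand to become $g_x\!\tparen{\nabla_z\restr{z=x}\hat u(\mu+r\delta_z),\nabla_z\restr{z=x}\hat v(\mu+r\delta_z)}$, i.e.\ a genuine Dirichlet-form integrand in the $z$-variable on the manifold $M$. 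At this stage one integrates by parts \emph{on $M$} (not on $\R^+$ as in the vertical case), using the integration-by-parts formula on the weighted manifold $(M,g,\nu)$ with $\nu=\rho\vol_g$: for $\varphi,\psi\in\rmC^\infty$,
\[
\int_M g(\nabla\varphi,\nabla\psi)\,\diff\nu=-\int_M \varphi\,\tparen{\Delta\psi+g(\nabla\log\rho,\nabla\psi)}\,\diff\nu\fstop
\]
Applying this with $z\mapsto\hat u(\mu+r\delta_z)$ and $z\mapsto\hat v(\mu+r\delta_z)$, and reversing the Mecke identity to return from $(\mu+r\delta_x,r,x)$-integration against $\theta\,\diff r\,\diff\nu$ to $(\mu,x)$-integration against $\diff\mu$ (which reinstates the $\mu_x^{-2}$ and the $\delta$-shift structure), produces precisely the two terms defining $\widehat\mcL^\hor$: the manifold-Laplacian term $\mu_x^{-2}\Delta_z\restr{z=x}\hat u(\mu+\mu_x\delta_z-\mu_x\delta_x)$ and the drift term $g\!\tparen{(\nabla\log\rho)/\ev,\gradW\hat u}$.

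The main obstacle I expect is the careful bookkeeping of the factors of $r$ (resp.\ $\mu_x$) and of the $\delta$-measure shifts when passing through the Mecke identity in \emph{both} directions: one must check that differentiating $z\mapsto\hat u(\mu+r\delta_z)$ at $z=x$ really yields $(\gradW\hat u)_{\mu+r\delta_x}(x)/r$ (using, as in the vertical case, that the map $(\mu,x,r)\mapsto\mu+r\delta_x$ has the expected smoothness and that on the complement of the negligible set $\mcN$ the relevant integrals are finite sums, permitting differentiation under the integral), and that no boundary terms survive — here, unlike the $\R^+$-integration by parts of the vertical case, the integration by parts is on the closed manifold-direction so there is no boundary, but one must still justify that the compact support of the $\hat f_i$ in $M$ (and the absence of support issues at infinity, guaranteed by completeness of $M$) makes the manifold integration by parts legitimate, and that the truncation $F\in\rmC^\infty_c$ kills the would-be boundary contributions at $r=\infty$. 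A secondary technical point is Borel measurability of the integrands, handled exactly as in Lemma~\ref{l:LaplacianVerAux} via~\cite{EthKur94}. Once the identity $\mcE^\hor(\hat u,\hat v)=\tscalar{\hat u}{-\widehat\mcL^\hor\hat v}_{L^2}$ is in place, the remaining assertions are formal.
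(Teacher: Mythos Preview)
Your approach is exactly the paper's: apply the Mecke identity to lift the $\diff\mu(x)$-integral to $\theta\int_0^\infty\!\int_M(\ldots)\,\diff\nu(x)\,\diff r$, integrate by parts on the weighted manifold $(M,g,\nu)$, and reverse Mecke to land on $-\widehat\mcL^\hor$. However, your bookkeeping of the $\mu_x$/$r$ factors---the very obstacle you flagged---is off. The horizontal gradient of $\hat f^\trid$ carries \emph{no} $\mu_x$ prefactor: from $\diff_t\restr{t=0}\hat f^\trid(\psi^w_{t\sharp}\mu)=\int g_y(\nabla\hat f(\mu_y,y),w_y)\,\diff\mu(y)$ one reads off $(\gradW\hat f^\trid)_\mu(x)=\nabla\hat f(\mu_x,x)$ (this holds for every $(\mu,x)$; Lemma~\ref{l:Negligible} is irrelevant at this stage). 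Consequently the Mecke step gives
\[
\mcE^\hor(\hat u,\hat v)=\theta\int\!\!\int_M\!\!\int_0^\infty g_x\bigl((\gradW\hat u)_{\mu+r\delta_x}(x),(\gradW\hat v)_{\mu+r\delta_x}(x)\bigr)\,\diff r\,\diff\nu(x)\,\diff\LP{\theta,\nu}(\mu)
\]
with no $r^{-2}$. The $r^{-2}$ appears only when one rewrites the integrand via $U$: since $\nabla_z\restr{z=x}\hat u(\mu+r\delta_z)=r\sum_i(\partial_iF)(\hat\mbff^\trid(\mu+r\delta_x))\nabla\hat f_i(r,x)$, the correct relation is $(\gradW\hat u)_{\mu+r\delta_x}(x)=r^{-1}\nabla_z\restr{z=x}\hat u(\mu+r\delta_z)$ (the reciprocal of what you wrote), which then produces $r^{-2}g(\nabla_z\hat u,\nabla_z\hat v)$. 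With these two sign-of-exponent fixes the rest of your outline goes through exactly as in the paper.
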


In order to prove Proposition~\ref{p:ClosHor} we shall need several auxiliary results.

\begin{lemma}\label{l:LaplacianHorAux}
Fix~$\hat u=F\circ\hat\mbff^\trid\in \hFC{0}{c}{0}{c,0}$  with $F:\R^{k+1} \to \R$ .
Then,
\begin{enumerate}[$(i)$]
\item\label{i:l:LaplacianHorAux:2} if~$\hat u$ is in~$\hFC{1}{c}{1}{c,0}$, then for~$\LP{\theta,\nu}\otimes \nu$-a.e.~$(\mu,z)$ the function~$z\mapsto U(\mu,z,s)$  as in \eqref{eq:U}  is differentiable on~$M$ at~$z$, and, for every~$s\in\R^+$,
\begin{equation}\label{eq:l:LaplacianHorAux:1}
\nabla_z U(\mu,z,s) = s\sum_{i=0}^k (\partial_i F)\tparen{\hat\mbff^\trid(\mu+s\delta_z)} \nabla \hat f_i(s,z) 
 \as{\LP{\theta,\nu}\otimes\nu}\semicolon
\end{equation}

\item\label{i:l:LaplacianHorAux:3} if~$\hat u$ is in~$\hFC{2}{c}{2}{c,0}$, then for~$\LP{\theta,\nu}\otimes \nu$-a.e.~$(\mu,z)$ the function~$z\mapsto U(\mu,z,s)$  as in \eqref{eq:U}  is twice differentiable on~$M$ at~$z$, and, for every~$s\in\R^+$,
\begin{equation}\label{eq:l:LaplacianHorAux:2}
\begin{aligned}
\Delta_z U(\mu,z,s)=&\ s^2\sum_{i,j=0}^k (\partial^2_{ij} F)\tparen{\hat\mbff^\trid(\mu+s\delta_z)}\, g_z\tparen{\nabla\hat f_i(s,z),\nabla\hat f_j(s,z)}
\\
&\qquad + s\sum_{i=0}^k (\partial_i F)\tparen{\hat\mbff^\trid(\mu+s\delta_z)} \Delta \hat f_i(s,z) 
  \as{\LP{\theta,\nu}\otimes\nu} \fstop
\end{aligned}
\end{equation}
\end{enumerate}
 In particular, the  left-hand sides of~\eqref{eq:l:LaplacianHorAux:1} and~\eqref{eq:l:LaplacianHorAux:2} are Borel measurable.
\begin{proof}
\ref{i:l:LaplacianHorAux:2}
Fix~$\hat f\in\rmC^1_c(\R^+\times M)$.
Let~$\mcN$ be as in~\eqref{eq:Negligible} and recall that it is~$\LP{\theta,\nu}\otimes\nu$-negligible by Lemma~\ref{l:Negligible}. For every~$(\mu,z)\not\in\mcN$, for every~$s\in\R^+$,
\begin{align*}
\nabla_z \tbraket{\hat f^\trid(\mu+s\delta_z)}=&\ \nabla_z \braket{\int \hat f\tparen{\mu_y+s\car_{z}(y),y}\, \diff\mu(y) + s\hat f\tparen{\mu_z+s,z}}
\intertext{hence, by definition of~$\mcN$, continuing the above chain of equalities,}
=&\ \nabla_z \braket{\int \hat f\tparen{\mu_y,y}\, \diff\mu(y) + s\hat f\tparen{s,z}}
\\
=&\ s\nabla\hat f(s,z)\fstop
\end{align*}

On the complement of the $\LP{\theta,\nu}\otimes\nu$-negligible set~$\mcN$, the equality in~\eqref{eq:l:LaplacianHorAux:1} for~$U$ readily follows from the above equality and the standard chain rule.
\ref{i:l:LaplacianHorAux:3} A proof is similar to the one of~\ref{i:l:LaplacianHorAux:2} and therefore it is omitted.
\end{proof}
\end{lemma}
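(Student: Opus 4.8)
\textbf{Plan of proof for Lemma~\ref{l:LaplacianHorAux}.} The statement is the horizontal-direction analogue of Lemma~\ref{l:LaplacianVerAux}, and I would prove it in exactly the same spirit: reduce to a single reduced potential energy $\hat f^\trid$, compute on the complement of the negligible set $\mcN$ of~\eqref{eq:Negligible}, and then apply the chain rule for the composition with the smooth function $F$. The key structural point, just as in the vertical case, is that when $(\mu,z)\notin\mcN$ one has $\mu_z=0$, so that for the measure $\mu+s\delta_z$ the $z$-atom has mass exactly $s$ and the integral $\int \hat f(\mu_y+s\car_z(y),y)\,\diff\mu(y)$ collapses to $\int\hat f(\mu_y,y)\,\diff\mu(y)$, which carries no $z$-dependence. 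Thus $U(\mu,z,s)=\hat f^\trid(\mu+s\delta_z)=\int\hat f(\mu_y,y)\,\diff\mu(y)+s\,\hat f(s,z)$ modulo a $\LP{\theta,\nu}\otimes\nu$-null set, and the $z$-derivatives act only on the last term.

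\textbf{Step 1: single reduced potential energy, first derivative.} Fix $\hat f\in\rmC^1_c(\R^+\times M)$. For $(\mu,z)\notin\mcN$ and $s\in\R^+$ I would write $\hat f^\trid(\mu+s\delta_z)=\int\hat f(\mu_y,y)\,\diff\mu(y)+s\,\hat f(s,z)$ exactly as above; here the compact support of $\hat f$ away from $0$ in the $\R^+$-variable (uniformly in $x$) together with finiteness of $\mu M$ makes $\int\hat f(\mu_y,y)\,\diff\mu(y)$ a finite sum, so differentiation under the integral sign is legitimate — but in fact the whole point is that this term is $z$-independent. Taking $\nabla_z$ gives $\nabla_z[\hat f^\trid(\mu+s\delta_z)]=s\,\nabla\hat f(s,z)$ on the complement of the $\LP{\theta,\nu}\otimes\nu$-negligible set $\mcN$. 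This is where the single-function case of~\eqref{eq:l:LaplacianHorAux:1} comes from.

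\textbf{Step 2: chain rule and Borel measurability.} Writing $\hat u=F\circ\hat\mbff^\trid$ with $\hat\mbff=(\hat f_0,\dots,\hat f_k)$, $\hat f_0\equiv 1$, I apply the standard chain rule for $F\in\rmC^1$: for $\LP{\theta,\nu}\otimes\nu$-a.e.~$(\mu,z)$,
\[
\nabla_z U(\mu,z,s)=\sum_{i=0}^k(\partial_i F)\tparen{\hat\mbff^\trid(\mu+s\delta_z)}\,\nabla_z\tbraket{\hat f_i^\trid(\mu+s\delta_z)}=s\sum_{i=0}^k(\partial_i F)\tparen{\hat\mbff^\trid(\mu+s\delta_z)}\,\nabla\hat f_i(s,z),
\]
using Step 1 for each $i$ (the $i=0$ term vanishes since $\nabla 1=0$, consistent with the formula). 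Borel measurability of the right-hand side follows because $(\mu,x,s)\mapsto\mu+s\delta_x$ is continuous, $\hat\mbff^\trid$ is Borel (Lemma~\ref{l:L2DensityCylinder}\ref{i:l:L2DensityCylinder:0} applied to shifted measures), and $\partial_i F$, $\nabla\hat f_i$ are continuous; since the a.e.-defined function agrees with this Borel function off a null set and its values are determined by the explicit formula, measurability of the left-hand side is immediate (this also retroactively justifies the phrase ``the function $z\mapsto U(\mu,z,s)$ is differentiable at $z$'' as holding on a full-measure set).

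\textbf{Step 3: second derivative.} For $\hat u\in\hFC{2}{c}{2}{c,0}$ I differentiate Step 1 once more in $z$: $\Delta_z[\hat f^\trid(\mu+s\delta_z)]=s\,\Delta\hat f(s,z)$ off $\mcN$, and then apply the $\rmC^2$-chain rule (Leibniz on the product $(\partial_i F)\cdot(s\nabla\hat f_i)$, with $\nabla_z(\partial_i F)(\hat\mbff^\trid(\mu+s\delta_z))=\sum_j(\partial^2_{ij}F)(\cdots)\,s\nabla\hat f_j(s,z)$ from Step 1):
\[
\Delta_z U(\mu,z,s)=s^2\sum_{i,j=0}^k(\partial^2_{ij}F)\tparen{\hat\mbff^\trid(\mu+s\delta_z)}\,g_z\tparen{\nabla\hat f_i(s,z),\nabla\hat f_j(s,z)}+s\sum_{i=0}^k(\partial_i F)\tparen{\hat\mbff^\trid(\mu+s\delta_z)}\,\Delta\hat f_i(s,z),
\]
which is~\eqref{eq:l:LaplacianHorAux:2}; Borel measurability follows as in Step 2. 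The proof of~\ref{i:l:LaplacianHorAux:3} being entirely parallel to~\ref{i:l:LaplacianHorAux:2}, it can be omitted exactly as the authors do in the vertical case.

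\textbf{Anticipated obstacle.} There is no serious obstacle: the only subtlety — and the reason the lemma is nontrivial to state cleanly rather than to prove — is the bookkeeping that the formulas hold only $\LP{\theta,\nu}\otimes\nu$-almost everywhere, which rests entirely on Lemma~\ref{l:Negligible} ensuring $\mcN$ is null. One should be slightly careful that the manifold $M$ being only connected, orientable and complete (not compact) does not cause trouble: it does not, because $\hat f_i$ has compact support in $(\eps,\infty)\times M$ is not assumed here — only in $\R^+_0\times M$ with compact support — but in any case the $z$-derivatives are taken pointwise at a fixed $z$ where local smoothness of $\hat f_i(s,\cdot)$ is all that is used. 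So the ``hard part'' is purely notational: making sure the negligible set on which the identities may fail is the same $\mcN$ throughout, which it is.
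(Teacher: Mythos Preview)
Your proposal is correct and follows essentially the same approach as the paper: compute $\nabla_z[\hat f^\trid(\mu+s\delta_z)]$ for a single $\hat f$ on the complement of the null set $\mcN$ of Lemma~\ref{l:Negligible} (using $\mu_z=0$ there), then apply the chain rule for $F$, and omit~\ref{i:l:LaplacianHorAux:3} as parallel. The paper's proof is terser---it writes out only the single-function first-derivative computation and invokes the chain rule in one sentence---but the content is identical.
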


\begin{lemma}\label{l:TridHorGrad}
Fix~$\hat u\in\hFC{1}{c}{1}{c,0}$.
Then, for every~$(\mu,x)\in \mcM(M)\times M$ there exists
\begin{equation}\label{eq:l:TridHorGrad:1}
\begin{aligned}
(\gradW \hat u)_\mu(x) =&\ \sum_{i=0}^k (\partial_i F)\tparen{\hat\mbff^\trid(\mu)} \nabla \hat f_i(\mu_x,x)
\end{aligned}
\end{equation}
and
\begin{align}\label{eq:l:TridHorGrad:2}
(\gradW \hat u)_\mu(x)= \mu_x^{-1} \nabla_z\restr{z=x} U(\mu,z,\mu_x) \as{\LP{\theta,\nu}\otimes\nu}
\end{align}
 where $U$ is as in \eqref{eq:U}. 

\begin{proof} 
Fix~$\hat f\in\rmC^1_c(\R^+\times M)$. For every~$w\in\mfX^\infty_c(M)$ we have
\begin{align*}
(\partial_w \hat f^\trid)_\mu \eqdef&\ \diff_t\restr{t=0} \hat f^\trid\tparen{{\psi^w_t}_\pfwd\mu} = \diff_t\restr{t=0} \int \hat f\tparen{({\psi^w_t}_\pfwd\mu)_y,y} \diff {\psi^w_t}_\pfwd\mu(y)
\\
=&\ \diff_t\restr{t=0} \int \hat f\tparen{({\psi^w_t}_\pfwd\mu)_{\psi^w_t(y)},\psi^w_t(y)} \diff \mu(y)
\\
=&\ \int \diff_t\restr{t=0} \hat f\tparen{\mu_y,\psi^w_t(y)} \diff \mu(y)
\\
 =  & \int g_y( \nabla \hat f(\mu_y, y), w_y) \de \mu(y)  \fstop 
%
\end{align*}
We used that, since~$\hat f\in \rmC^1_0(\emparg,x)$ has compact support in~$\R^+$ away from~$0$ uniformly in~$x\in M$, and since the total mass of~$\mu$ is finite, every integral above is in fact a finite sum, hence we may freely differentiate under integral sign.

By arbitrariness of~$w\in \mfX^\infty_c(M)$ and density of~$\mfX^\infty_c(M)$ in~$T^\hor_\mu\mcM(M)$ for every~$\mu\in\mcM(M)$, we have
\begin{align*}
(\gradW \hat f^\trid)_\mu(x) = \nabla \hat f(\mu_x,x) \fstop
\end{align*}

The equality in~\eqref{eq:l:TridHorGrad:1} for~$\hat u$ readily follows from the above equality and the standard chain rule.
The equality in~\eqref{eq:l:TridHorGrad:2} holds by comparison of~\eqref{eq:l:TridHorGrad:1} and~\eqref{eq:l:LaplacianHorAux:1}  also using the fact that $\mu_x =0$ for $\LP{\theta,\nu} \otimes \nu$-a.e.~$(\mu, x)$. 
\end{proof}
\end{lemma}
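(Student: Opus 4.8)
The plan is to reduce everything to the ``single potential'' case $\hat u=\hat f^\trid$ with $\hat f\in\rmC^1_c((0,\infty)\times M)$, compute the directional derivative $(\partial_w\hat f^\trid)_\mu$ along the flow $\psi^w_t$ of an arbitrary $w\in\mfX^\infty_c(M)$, read off $(\gradW\hat f^\trid)_\mu$ from it through the defining relation~\eqref{eq:GradRepresentationhor}, and then obtain~\eqref{eq:l:TridHorGrad:1} for a general $\hat u=F\circ\hat\mbff^\trid$ by the ordinary finite-dimensional chain rule. The a.e.\ identity~\eqref{eq:l:TridHorGrad:2} is then a matter of matching~\eqref{eq:l:TridHorGrad:1} against the expression for $\nabla_z U$ already established in Lemma~\ref{l:LaplacianHorAux}\ref{i:l:LaplacianHorAux:2}, using Lemma~\ref{l:Negligible}. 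Joint Borel measurability of $(\mu,x)\mapsto(\gradW\hat u)_\mu(x)$ will follow from~\eqref{eq:l:TridHorGrad:1}, since $\hat\mbff^\trid$ is Borel on $\mcM(M)$ by Lemma~\ref{l:L2DensityCylinder}\ref{i:l:L2DensityCylinder:0}, $\partial_i F$ is continuous, and $\nabla\hat f_i$ is continuous.

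First I would record that for $w\in\mfX^\infty_c(M)$ the push-forward ${\psi^w_t}_\pfwd\mu$ merely relocates the atoms of $\mu$ without altering their masses: since $\psi^w_t$ is a bijection, $({\psi^w_t}_\pfwd\mu)_{\psi^w_t(y)}=\mu_y$ for $\mu$-a.e.\ $y$, whence, changing variables $z=\psi^w_t(y)$,
\[
\hat f^\trid\tparen{{\psi^w_t}_\pfwd\mu}=\int_M \hat f\tparen{({\psi^w_t}_\pfwd\mu)_z,z}\,\diff {\psi^w_t}_\pfwd\mu(z)=\int_M \hat f\tparen{\mu_y,\psi^w_t(y)}\,\diff\mu(y)\fstop
\]
Because $\supp\hat f$ is a compact subset of $(0,\infty)\times M$, it lies in $[a,b]\times M$ for some $0<a\le b<\infty$; the integrand above is supported on the atoms $y$ of $\mu$ with $\mu_y\in[a,b]$, of which there are at most $\mu(M)/a<\infty$, \emph{independently of $t$}. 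The integral is thus a finite sum, so I may differentiate term by term at $t=0$ and invoke $\diff_t\restr{t=0}\psi^w_t(y)=w_y$ together with the chain rule in the spatial variable to get
\[
(\partial_w\hat f^\trid)_\mu=\int_M g_y\tparen{(\nabla\hat f)(\mu_y,y),\,w_y}\,\diff\mu(y)=\tscalar{(\nabla\hat f)(\mu_\emparg,\emparg)}{w}_{T^\hor_\mu}\fstop
\]
The same finiteness remark gives $(\nabla\hat f)(\mu_\emparg,\emparg)\in L^2(M,\mu;TM)=T^\hor_\mu\mcM(M)$; since $\mfX^\infty_c(M)$ is dense therein, the linear functional $w\mapsto(\partial_w\hat f^\trid)_\mu$ is $\norm{\emparg}_{T^\hor_\mu}$-bounded and represented by $(\nabla\hat f)(\mu_\emparg,\emparg)$, i.e.\ $(\gradW\hat f^\trid)_\mu(x)=\nabla\hat f(\mu_x,x)$ by~\eqref{eq:GradRepresentationhor}. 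Applying $\diff_t\restr{t=0}$ to $F(\hat\mbff^\trid({\psi^w_t}_\pfwd\mu))$ and the finite-dimensional chain rule (noting $\hat f_0\equiv1$, hence $\nabla\hat f_0=0$) then yields $(\gradW\hat u)_\mu=\sum_{i=0}^k(\partial_iF)(\hat\mbff^\trid(\mu))\,(\gradW\hat f_i^\trid)_\mu$, which is~\eqref{eq:l:TridHorGrad:1}, valid for \emph{every} $(\mu,x)$.

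For~\eqref{eq:l:TridHorGrad:2}, recall from Lemma~\ref{l:LaplacianHorAux}\ref{i:l:LaplacianHorAux:2} that, for $\LP{\theta,\nu}\otimes\nu$-a.e.\ $(\mu,z)$ and every $s\in\R^+$, $\nabla_z U(\mu,z,s)=s\sum_{i=0}^k(\partial_iF)(\hat\mbff^\trid(\mu+s\delta_z))\nabla\hat f_i(s,z)$ with $U$ as in~\eqref{eq:U}; cancelling the factor $s$ and evaluating at $z=x$, $s=\mu_x$ gives $\mu_x^{-1}\nabla_z\restr{z=x}U(\mu,z,\mu_x)=\sum_{i=0}^k(\partial_iF)(\hat\mbff^\trid(\mu+\mu_x\delta_x))\nabla\hat f_i(\mu_x,x)$. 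By Lemma~\ref{l:Negligible}, $\mu_x=0$ for $\LP{\theta,\nu}\otimes\nu$-a.e.\ $(\mu,x)$, so $\mu+\mu_x\delta_x=\mu$ there and the right-hand side coincides with~\eqref{eq:l:TridHorGrad:1}, proving the claim. The points requiring care are the justification of differentiation under the integral sign---handled by the uniform-in-$t$ finiteness of the relevant set of atoms noted above---and the reading of the formally indeterminate expression $\mu_x^{-1}\nabla_z\restr{z=x}U(\mu,z,\mu_x)$: it must be understood with the factor $s$ \emph{already} cancelled as in Lemma~\ref{l:LaplacianHorAux}\ref{i:l:LaplacianHorAux:2}, the resulting quantity being smooth in $s$ down to $s=0$, so that the substitution $\mu_x=0$ is legitimate. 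I expect this last interpretative subtlety, rather than any analytic difficulty, to be the only real obstacle.
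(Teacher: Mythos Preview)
Your proof is correct and follows essentially the same route as the paper's: compute $(\partial_w\hat f^\trid)_\mu$ via the flow $\psi^w_t$, exploit that the integral is a finite sum (compact support of $\hat f$ in $(0,\infty)$ plus finite total mass of $\mu$) to differentiate under the integral, identify the horizontal gradient by Riesz representation, extend to $\hat u=F\circ\hat\mbff^\trid$ by the chain rule, and finally match with~\eqref{eq:l:LaplacianHorAux:1} using Lemma~\ref{l:Negligible}. Your explicit remark on the indeterminate form $\mu_x^{-1}\nabla_z U|_{s=\mu_x}$ when $\mu_x=0$ is a useful clarification that the paper leaves implicit.
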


\begin{proof}[Proof of Proposition~\ref{p:ClosHor}]
Fix~$\hat u=F\circ\hat\mbff^\trid, \hat v=G\circ \hat\mbfg^\trid\in \hFC{\infty}{c}{\infty}{c,0}$  with $F\colon \R^{k+1}\to \R$ and $G\colon \R^{h+1} \to \R$.
Then, by definition of~$\mcE^\hor$ and by~\eqref{eq:l:TridHorGrad:1},
\begin{align*}
\mcE^\hor&(\hat u,\hat v) \eqdef \int \tscalar{(\gradW \hat u)_\mu}{(\gradW \hat v)_\mu}_{T^\hor_\mu}\diff\LP{\theta,\nu}(\mu)
\\
=& \int \int_M g_{x}\Bigg(\sum_{i=0}^k (\partial_i F)(\hat\mbff^\trid(\mu)) \nabla \hat f_i(\mu_x,x),
\\
&\qquad\qquad\qquad\qquad \sum_{j=0}^h (\partial_j G)(\hat\mbfg^\trid(\mu)) \nabla\hat g_j(\mu_x,x) \Bigg) \diff\mu(x)\, \diff\LP{\theta,\nu}(\mu)
\intertext{hence, by Proposition~\ref{p:MeckeLP} and by~\eqref{eq:l:LaplacianHorAux:1},}
=&\ \theta \int \int_M \int_0^\infty r^{-2}\, g\tparen{\nabla_x \hat u(\mu+r\delta_x), \nabla_x \hat v(\mu+r\delta_x)} \diff r\, \diff\nu(x)\, \diff\LP{\theta,\nu}(\mu) \fstop
\end{align*}
Now, we integrate by parts on~$M$.
Since~$M$ is boundaryless, no boundary term appears.
Since the~$\hat f_i$'s and the~$\hat g_j$'s are compactly supported in the $M$-variable, no boundary term at the infinity of~$M$ appears either.
Thus, continuing the above chain of equalities
\begin{align*}
=& -\theta \int \int_M \int_0^\infty r^{-2}\, \hat u(\mu+r\delta_x)\,  \div_z  \restr{z=x} \braket{ \rho(z)\, \nabla_z\hat v(\mu+ r  \delta_z) }\, \diff r\,\diff\vol_g(x)\,\diff \LP{\theta,\nu}(\mu)
\\
=& -\theta \int \int_M \int_0^\infty r^{-2}\, \hat u(\mu+r\delta_x)\Big[ g\paren{\nabla_z\restr{z=x} \rho(z), \nabla_z\restr{z=x}\hat v(\mu+r\delta_z)} 
\\
&\qquad\qquad\qquad\qquad\qquad\qquad\qquad+ \rho(x)\,\Delta_z\restr{z=x} \hat v(\mu+r\delta_z)\Big]\, \diff r\,\diff\vol_g(x)\,\diff \LP{\theta,\nu}(\mu)
\\
=& -\theta \int \int_M \int_0^\infty r^{-2}\, \hat u(\mu+r\delta_x)\Big[ g\paren{\nabla_z\restr{z=x} \log\tparen{\rho(z)}, \nabla_z\restr{z=x}\hat v(\mu+r\delta_z)} 
\\
&\qquad\qquad\qquad\qquad\qquad\qquad\qquad+ \Delta_z\restr{z=x} \hat v(\mu+r\delta_z)\Big]\, \diff r\,\diff\nu(x)\,\diff \LP{\theta,\nu}(\mu)
\\
=& -\theta \int \int_M \int_0^\infty r^{-2}\, \hat u(\mu+r\delta_x)
\\
&\qquad\qquad\cdot\Big[ g\paren{\nabla_z\restr{z=x} \log\tparen{\rho(z)}, \nabla_z\restr{z=x}\hat v\tparen{(\mu+r\delta_x)+r\delta_z-r\delta_x}} 
\\
&\qquad\qquad\qquad+ \Delta_z\restr{z=x} \hat v\tparen{(\mu+r\delta_x)+r\delta_z-r\delta_x}\Big]\, \diff r\,\diff\nu(x)\,\diff \LP{\theta,\nu}(\mu) \comma
\end{align*}
and, applying again Proposition~\ref{p:MeckeLP},
\begin{align*}
=&\ - \int \hat u(\mu)\int_M {\mu_x}^{-2} \Big[ g\tparen{(\nabla\log\rho)_x, \nabla_z\restr{z=x} \hat v(\mu+\mu_x\delta_z-\mu_x\delta_x)}
\\
&\qquad\qquad\qquad\qquad\qquad\qquad+ \Delta_z\restr{z=x} \hat v(\mu+\mu_x\delta_z-\mu_x\delta_x) \Big] \diff\mu(x)\,\diff \LP{\theta,\nu}(\mu)
\\
=&\ \tscalar{\hat u}{-\widehat \mcL^\hor \hat v}_{L^2(\LP{\theta,\nu})} \fstop
\end{align*}

Since~$\hFC{\infty}{c}{\infty}{c,0}$ is dense in~$L^2(\LP{\theta,\nu})$ by Lemma~\ref{l:L2DensityCylinder}\ref{i:l:L2DensityCylinder:2}, the above chain of equalities shows at once that~$\widehat \mcL^\hor \hFC{\infty}{c}{\infty}{c,0}\subset L^2(\LP{\theta,\nu})$ and that $\tparen{\widehat \mcL^\hor, \hFC{\infty}{c}{\infty}{c,0}}$ generates~$\tparen{\mcE^\hor,\hFC{\infty}{c}{\infty}{c,0}}$.
Thus,~$\tparen{\mcE^\hor,\hFC{\infty}{c}{\infty}{c,0}}$ is closable and its closure~$\tparen{\widehat\mcE^\hor,\dom{\widehat\mcE^\hor}}$ is generated by the Friedrichs extension of $\tparen{\widehat \mcL^\hor, \hFC{\infty}{c}{\infty}{c,0}}$ by~\cite[Thm.~X.23]{ReeSim75}.
\end{proof}

\subsubsection{Extended form}
In light of Lemmas~\ref{l:TridVerGrad} and~\ref{l:TridHorGrad},
\[
\widehat\boldGamma(\hat u,\hat v)_\mu\eqdef \int \braket{g\tparen{(\boldnabla^\hor \hat u)_\mu , (\boldnabla^\hor \hat v)_\mu} +4 (\boldnabla^\ver \hat u)_\mu (\boldnabla^\ver \hat v)_\mu }\diff\mu
\]
is a well-defined bilinear form on~$\hFC{\infty}{c}{\infty}{c,0}$.
Analogously,
\[
(\widehat \mcL \hat u)_\mu \eqdef (\widehat \mcL^\hor \hat u)_\mu +  4   (\widehat \mcL^\ver \hat u)_\mu
\]
is a well-defined operator~$\tparen{\widehat \mcL,\hFC{\infty}{c}{\infty}{c,0}}$.

\begin{proposition}\label{p:ExtendedForm}
Fix~$\theta>0$.
The bilinear form~$\tparen{\mcE,\hFC{\infty}{c}{\infty}{c,0}}$ defined by
\[
\mcE(\hat u,\hat v)\eqdef \int \widehat\boldGamma(\hat u,\hat v)\, \diff\LP{\theta,\nu} \comma \qquad \hat u,\hat v\in \hFC{\infty}{c}{\infty}{c,0}\comma
\]
is closable on~$L^2(\LP{\theta,\nu})$. Its closure~$\tparen{\widehat\mcE,\dom{\widehat\mcE}}$ is a Dirichlet form on~$L^2(\LP{\theta,\nu})$ generated by the Friedrichs extension~$\tparen{\widehat \mcL,\dom{\widehat \mcL}}$ of the operator~$\tparen{\widehat \mcL,\hFC{\infty}{c}{\infty}{c,0}}$.

\begin{proof}
Since~$\hFC{\infty}{c}{\infty}{c,0}$ is dense in~$L^2(\LP{\theta,\nu})$ by Lemma~\ref{l:L2DensityCylinder}\ref{i:l:L2DensityCylinder:2}, by Propositions~\ref{p:ClosVer} and~\ref{p:ClosHor} the operator~$\tparen{ \widehat  \mcL  , \hFC{\infty}{c}{\infty}{c,0}}$ satisfies~$\widehat \mcL \hFC{\infty}{c}{\infty}{c,0}\subset L^2(\LP{\theta,\nu})$ and generates $\tparen{\mcE,\hFC{\infty}{c}{\infty}{c,0}}$.
Thus,~$\tparen{ \mcE  ,\hFC{\infty}{c}{\infty}{c,0}}$ is closable and its closure~$\tparen{  \widehat\mcE  ,\dom{ \widehat\mcE }}$ is generated by the Friedrichs extension of $\tparen{ \widehat \mcL , \hFC{\infty}{c}{\infty}{c,0}}$ by~\cite[Thm.~X.23]{ReeSim75}.

By the chain rule for~$\boldnabla$ consequence of Lemmas~\ref{l:TridVerGrad} and~\ref{l:TridHorGrad}, the Markov property holds on the Markovian core~$\hFC{\infty}{c}{\infty}{c,0}$.
This suffices to conclude the Markov property for~$\tparen{ \widehat\mcE ,\dom{ \widehat\mcE }}$ by, e.g.,~\cite[Prop.~I.4.10, p.~35]{MaRoe92}.
\end{proof}
\end{proposition}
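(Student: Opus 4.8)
The plan is to obtain the statement by assembling the two "one-direction" results already proved, Propositions~\ref{p:ClosVer} and~\ref{p:ClosHor}, exploiting that $\mcE=\mcE^\hor+4\,\mcE^\ver$ and $\widehat\mcL=\widehat\mcL^\hor+4\,\widehat\mcL^\ver$ on the common core $\hFC{\infty}{c}{\infty}{c,0}$. First I would recall from Lemma~\ref{l:L2DensityCylinder}\ref{i:l:L2DensityCylinder:2} that $\hFC{\infty}{c}{\infty}{c,0}$ is dense in $L^2(\LP{\theta,\nu})$, so the form is densely defined. By Propositions~\ref{p:ClosVer} and~\ref{p:ClosHor} one has $\widehat\mcL^\ver\hFC{\infty}{c}{\infty}{c,0}\subset L^2(\LP{\theta,\nu})$, $\widehat\mcL^\hor\hFC{\infty}{c}{\infty}{c,0}\subset L^2(\LP{\theta,\nu})$, together with the integration-by-parts identities $\mcE^\ver(\hat u,\hat v)=\tscalar{\hat u}{-\widehat\mcL^\ver\hat v}_{L^2(\LP{\theta,\nu})}$ and $\mcE^\hor(\hat u,\hat v)=\tscalar{\hat u}{-\widehat\mcL^\hor\hat v}_{L^2(\LP{\theta,\nu})}$ on the core. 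Forming the combination with weights $1$ and $4$ yields at once $\widehat\mcL\hFC{\infty}{c}{\infty}{c,0}\subset L^2(\LP{\theta,\nu})$ and
\[
\mcE(\hat u,\hat v)=\tscalar{\hat u}{-\widehat\mcL\hat v}_{L^2(\LP{\theta,\nu})}\comma\qquad \hat u,\hat v\in\hFC{\infty}{c}{\infty}{c,0}\fstop
\]

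Next I would observe that, since $\widehat\boldGamma$ is symmetric in its two entries, this identity makes $-\widehat\mcL$ a densely defined, symmetric operator on $L^2(\LP{\theta,\nu})$, and it is non-negative because $\tscalar{\hat u}{-\widehat\mcL\hat u}_{L^2(\LP{\theta,\nu})}=\int\widehat\boldGamma(\hat u,\hat u)\diff\LP{\theta,\nu}\ge 0$. By the Friedrichs extension theorem~\cite[Thm.~X.23]{ReeSim75}, $-\widehat\mcL$ admits a non-negative self-adjoint extension whose associated closed quadratic form is precisely the closure of $\tparen{\mcE,\hFC{\infty}{c}{\infty}{c,0}}$; in particular the latter is closable, its closure $\tparen{\widehat\mcE,\dom{\widehat\mcE}}$ is a closed symmetric form, and its generator is the Friedrichs extension $\tparen{\widehat\mcL,\dom{\widehat\mcL}}$.

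It remains to verify the Markov property. Here I would use the pointwise chain rule for $\boldnabla^\hor$ and $\boldnabla^\ver$ contained in Lemmas~\ref{l:TridVerGrad} and~\ref{l:TridHorGrad}: if $\hat u=F\circ\hat\mbff^\trid\in\hFC{\infty}{c}{\infty}{c,0}$ and $\phi\in\rmC^\infty(\R)$ is a smooth normal contraction with $\phi(0)=0$, then $\phi\circ\hat u=(\phi\circ F)\circ\hat\mbff^\trid$ again belongs to $\hFC{\infty}{c}{\infty}{c,0}$ — compact support of $\phi\circ F$ is inherited from $F$ since $\phi(0)=0$ — and $(\boldnabla^\sharp(\phi\circ\hat u))_\mu=\phi'(\hat u(\mu))\,(\boldnabla^\sharp\hat u)_\mu$ for $\sharp=\hor,\ver$, so $\widehat\boldGamma(\phi\circ\hat u,\phi\circ\hat u)_\mu=\phi'(\hat u(\mu))^2\,\widehat\boldGamma(\hat u,\hat u)_\mu\le\widehat\boldGamma(\hat u,\hat u)_\mu$ and hence $\mcE(\phi\circ\hat u)\le\mcE(\hat u)$. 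Thus the contraction property holds on the core; approximating a general normal contraction by smooth ones and invoking a standard transfer result such as~\cite[Prop.~I.4.10, p.~35]{MaRoe92} propagates it to the closure, so that $\tparen{\widehat\mcE,\dom{\widehat\mcE}}$ is a Dirichlet form.

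The main obstacle is essentially illusory: all the genuinely analytic work — the integration by parts producing $\widehat\mcL^\hor,\widehat\mcL^\ver$, and the differentiability and measurability of the reduced cylinder functions — has already been isolated in Propositions~\ref{p:ClosVer}, \ref{p:ClosHor} and the preceding lemmas, and the present step is mostly bookkeeping of the sum. The only points needing care are that $\hFC{\infty}{c}{\infty}{c,0}$ really is a Markovian core, i.e.\ that smooth normal contractions suffice and that $\phi\circ F$ stays compactly supported, and that the abstract hypotheses (symmetry, semiboundedness, density) are met verbatim by the combined operator $\widehat\mcL$ and not merely by its two summands separately.
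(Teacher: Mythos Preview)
Your proposal is correct and follows essentially the same approach as the paper: density of the core from Lemma~\ref{l:L2DensityCylinder}\ref{i:l:L2DensityCylinder:2}, summing the integration-by-parts identities from Propositions~\ref{p:ClosVer} and~\ref{p:ClosHor} to obtain $\mcE(\hat u,\hat v)=\tscalar{\hat u}{-\widehat\mcL\hat v}_{L^2(\LP{\theta,\nu})}$ on the core, then invoking~\cite[Thm.~X.23]{ReeSim75} for closability and the Friedrichs extension, and finally deriving the Markov property from the chain rule (Lemmas~\ref{l:TridVerGrad},~\ref{l:TridHorGrad}) together with~\cite[Prop.~I.4.10]{MaRoe92}. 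Your added remarks on symmetry and non-negativity of $-\widehat\mcL$ and on why $\phi\circ F$ remains compactly supported are correct elaborations of points the paper leaves implicit.
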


Finally, let us show  the  relation between the domain~$\dom{\widehat\mcE}$ and standard cylinder functions.

\begin{lemma}\label{l:StandardCylinderSubset}
Fix~$\theta>0$. Then,~$\FC{1,1}{c,c}{1}{c}\subset \dom{\widehat\mcE}$.

\begin{proof}
We show that~$\FC{\infty,\infty}{c,c}{\infty}{c}\subset \dom{\widehat\mcE}$. The conclusion for less regular cylinder functions follows by approximation in a straightforward way.

For~$u=F\circ \mbff^\trid\in\FC{\infty,\infty}{c,c}{\infty}{c}$  with $F:\R^{k+1} \to \R$, let~$\hat u_n\in  \hFC{\infty}{c}{\infty}{c,0}  $ be the sequence of extended cylinder functions $L^2(\LP{\theta,\nu})$-converging to~$u$  defined as in \eqref{eq:approxrho} with $\varrho_n$ additionally satisfying   
\begin{equation}\label{eq:l:StandardCylinderSubset:0.5}
\car_{[2/n,n-1/n]} \leq \varrho_n \leq \car_{[1/n,n]} \qquad \text{and} \qquad \abs{\varrho_n'}\leq 2n \tparen{\car_{[1/n,2/n]}+\car_{[n-1/n,n]}}\fstop
\end{equation}

By a standard lower-semicontinuity argument, see e.g.~\cite[Lem.~I.2.12, p.~22]{MaRoe92}, it suffices to show that~$\sup_n \widehat\mcE( \hat u_n  )<\infty$.
By definition of~$\tparen{\widehat\mcE,\dom{\widehat\mcE}}$ we have $\sup_n\widehat\mcE(  \hat u_n )\leq \sup_n \widehat\mcE^\hor( \hat u_n )+  4  \sup_n \widehat\mcE^\ver( \hat u_n )$.  Let~$R> 0$ be so that~$\supp F\subset [0,R]^{k+1}$. Using \eqref{eq:l:TridHorGrad:1} and the fact that $|\varrho_n| \le 1$, it is not difficult to see that
\[ \sup_n \widehat\mcE^\hor( \hat u_n) \le k^2 \Li{F}^2 \max_{i\leq k} \norm{ \nabla f_i}_{g, \rmC^0}^2 R \, \LP{\theta,\nu}(\mcB_R).\]
We are left to  show that~$\sup_n \widehat\mcE^\ver( \hat u_n )<\infty$.

By~\eqref{eq:l:TridVerGrad:1},
\begin{align}
\nonumber
\widehat\mcE^\ver(\hat u_n) =&\  \int  \int_M \abs{\sum_{i=0}^k (\partial_i F)\tparen{\hat\mbff^\trid(\mu)} \tbraket{\mu_x \varrho_n'(\mu_x)+\varrho_n(\mu_x)} f_i(x)}^2 \diff\mu(x)\, \diff\LP{\theta,\nu}(\mu)
\\
\nonumber
\leq&\ \underbrace{  k^2\Li{F}^2 \max_{i\leq k} \norm{f_i}_{\rmC^0}^2}_{\defeq C_u} \int_{\mcB_R} \int_M \abs{\mu_x \varrho_n'(\mu_x)+\varrho_n(\mu_x)}^2 \diff\mu(x)\, \diff\LP{\theta,\nu}(\mu)
\\
\label{eq:l:StandardCyilinderSubset:1}
\leq&\ 2C_u \int \car_{\mcB_R}(\mu) \int_M \abs{\mu_x \varrho_n'(x)}^2\diff\mu(x)\, \diff\LP{\theta,\nu}(\mu) + 2C_u \int_{\mcB_R} \int_M\diff\mu \, \diff\LP{\theta,\nu}(\mu) 
\end{align}
where we used that~$\abs{\varrho_n}\leq 1$ in the last inequality.
The second summand above is bounded by~$2C_u R\, \LP{\theta,\nu}(\mcB_R)$, which is finite by Lemma~\ref{l:FinitenessBalls} and independent of~$n$.
As for the first summand, by the Mecke identity~\eqref{eq:MeckeLP}, and by~\eqref{eq:InfLebesgueIsomor},
\begin{align}
\nonumber
\int \car_{\mcB_R}(\mu) &\int_M \abs{\mu_x \varrho_n'(x)}^2\diff\mu(x)\, \diff\LP{\theta,\nu}(\mu)
\\
\nonumber
=&\ \theta \int \int_M \int_0^\infty \car_{\mcB_R}(\mu+s\delta_x) s^2\abs{\varrho_n'(s)}^2\diff s\, \diff\nu(x)\, \diff\LP{\theta,\nu}(\mu)
\\
\nonumber
= &\ \theta\,  \nu M \int_{\mcP(M)}  \int_0^\infty \int_0^\infty  \car_{[0,R]}(t+s) \abs{\varrho_n'(s)}^2 s^2\diff s\, \diff\lambda_\theta(t)\, \diff\DF{\nu}(\eta)
\\
\label{eq:l:StandardCyilinderSubset:2}
=&\ \theta \int_0^R \int_0^{R-t} s^2 \abs{\varrho_n'(s)}^2 \diff s\, \diff\lambda_\theta(t) \fstop
\end{align}
Then, by the estimate for~$\abs{\varrho_n'}$ in~\eqref{eq:l:StandardCylinderSubset:0.5} we have, for all~$n\geq R+1  $,
\begin{align*}
\int_0^R \int_0^{R-t} s^2 \abs{\varrho_n'(s)}^2 \diff s\, \diff\lambda_\theta(t) \leq&\ \lambda_\theta([0,R])\, (2n)^2\int_{1/n}^{2/n} s^2 \diff s
\\
\leq&\  10 n^{-1} \lambda_\theta([0,R]) =  10  n^{-1} \LP{\theta,\nu}(\mcB_R) \fstop
\end{align*}
Finally, combining this latter estimate with~\eqref{eq:l:StandardCyilinderSubset:1} and~\eqref{eq:l:StandardCyilinderSubset:2},
\begin{equation}
\sup_{n\geq R}  \widehat\mcE^\ver  (\hat u_n) \leq 2 C_u\, \LP{\theta,\nu}(\mcB_R) \braket{R+ 10  \,\theta} <\infty\comma
\end{equation}
which suffices to conclude the assertion.
\end{proof}
\end{lemma}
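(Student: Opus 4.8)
The plan is to show that $\FC{\infty,\infty}{c,c}{\infty}{c}\subset \dom{\widehat\mcE}$, the general case $\FC{1,1}{c,c}{1}{c}\subset \dom{\widehat\mcE}$ following by a routine approximation argument (approximating $F\in \rmC^1_c$ by $\rmC^\infty_c$ functions in $\rmC^1$ and each $f_i\in\rmC^1_c$ by $\rmC^\infty_c$ functions in $\rmC^1$, and using the locality and continuity of $\widehat\mcE$ together with Lemmas~\ref{l:TridVerGrad} and~\ref{l:TridHorGrad}). So fix $u=F\circ\mbff^\trid\in\FC{\infty,\infty}{c,c}{\infty}{c}$ with $\mbff=(\car,f_1,\dots,f_k)$ and $F\in\rmC^\infty_c(\R^{k+1})$. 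The key point is that $u$ is the $L^2(\LP{\theta,\nu})$-limit of the extended cylinder functions $\hat u_n\in\hFC{\infty}{c}{\infty}{c,0}$ from~\eqref{eq:approxrho}, where $\hat u_n = F\circ(\car^\trid,(\varrho_n\otimes f_1)^\trid,\dots,(\varrho_n\otimes f_k)^\trid)$; the convergence was already established in the proof of Lemma~\ref{l:L2DensityCylinder}\ref{i:l:L2DensityCylinder:2}. I additionally require $\varrho_n$ to satisfy the two-sided cutoff and derivative bounds in~\eqref{eq:l:StandardCylinderSubset:0.5}, which can clearly be arranged.

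Then I invoke the standard lower-semicontinuity criterion for membership in the domain of a closed form (e.g.\ \cite[Lem.~I.2.12, p.~22]{MaRoe92}): since $\hat u_n\to u$ in $L^2(\LP{\theta,\nu})$ and the $\hat u_n$ lie in $\hFC{\infty}{c}{\infty}{c,0}\subset\dom{\widehat\mcE}$, it suffices to prove $\sup_n\widehat\mcE(\hat u_n)<\infty$. By definition of $\widehat\mcE$ this splits as $\widehat\mcE(\hat u_n)\leq\widehat\mcE^\hor(\hat u_n)+4\widehat\mcE^\ver(\hat u_n)$. The horizontal part is the easy one: using the explicit formula~\eqref{eq:l:TridVerGrad:1}–\eqref{eq:l:TridHorGrad:1} for the vertical/horizontal gradient of extended cylinder functions and the bound $|\varrho_n|\leq 1$, the integrand of $\widehat\mcE^\hor(\hat u_n)$ is controlled by $k^2\Li{F}^2\max_i\|\nabla f_i\|_{g,\rmC^0}^2$ on the ball $\mcB_R$ where $R$ is chosen with $\supp F\subset[0,R]^{k+1}$, which gives a uniform bound $k^2\Li{F}^2\max_i\|\nabla f_i\|_{g,\rmC^0}^2\, R\,\LP{\theta,\nu}(\mcB_R)$, finite by Lemma~\ref{l:FinitenessBalls}.

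The main obstacle — and the only place where the precise choice of $\varrho_n$ matters — is bounding $\widehat\mcE^\ver(\hat u_n)$ uniformly in $n$. Here the gradient formula~\eqref{eq:l:TridVerGrad:1} produces a term $\mu_x\varrho_n'(\mu_x)+\varrho_n(\mu_x)$, so after pulling out the constant $C_u\eqdef k^2\Li{F}^2\max_i\|f_i\|_{\rmC^0}^2$ I must control $\int_{\mcB_R}\int_M|\mu_x\varrho_n'(\mu_x)|^2\,\diff\mu(x)\,\diff\LP{\theta,\nu}(\mu)$, the term $\int_{\mcB_R}\int_M\varrho_n(\mu_x)^2\,\diff\mu\,\diff\LP{\theta,\nu}$ being trivially bounded by $R\,\LP{\theta,\nu}(\mcB_R)$. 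For the first term I apply the Mecke identity~\eqref{eq:MeckeLP} for $\LP{\theta,\nu}$ to turn the atom-size integral into $\theta\int\int_M\int_0^\infty \car_{\mcB_R}(\mu+s\delta_x)\,s^2|\varrho_n'(s)|^2\,\diff s\,\diff\nu(x)\,\diff\LP{\theta,\nu}(\mu)$, and then use $\nu M=1$ together with the isomorphism~\eqref{eq:InfLebesgueIsomor} $\LP{\theta,\nu}=\isomor^{-1}_\pfwd(\DF{\nu}\otimes\lambda_\theta)$ to reduce it to $\theta\int_0^R\int_0^{R-t}s^2|\varrho_n'(s)|^2\,\diff s\,\diff\lambda_\theta(t)$. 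Finally the derivative estimate in~\eqref{eq:l:StandardCylinderSubset:0.5}, which localizes $\varrho_n'$ near $1/n$ and near $n$ with $|\varrho_n'|\leq 2n$, forces $s^2|\varrho_n'(s)|^2\leq (2n)^2 s^2$ on $[1/n,2/n]$ (for $n\geq R+1$ the part near $n$ does not enter the range of integration), so $\int_0^{R-t}s^2|\varrho_n'(s)|^2\,\diff s\leq (2n)^2\int_{1/n}^{2/n}s^2\,\diff s=O(n^{-1})$, and $\int_0^R O(n^{-1})\,\diff\lambda_\theta(t)=O(n^{-1})\LP{\theta,\nu}(\mcB_R)\to 0$. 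Combining, $\sup_{n\geq R+1}\widehat\mcE^\ver(\hat u_n)\leq 2C_u\,\LP{\theta,\nu}(\mcB_R)(R+C\theta)<\infty$ for a universal constant $C$, which together with the horizontal bound gives $\sup_n\widehat\mcE(\hat u_n)<\infty$ and hence $u\in\dom{\widehat\mcE}$.
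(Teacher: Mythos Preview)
Your proposal is correct and follows essentially the same approach as the paper's proof: approximate $u$ by the $\hat u_n$ from~\eqref{eq:approxrho}, use lower semicontinuity to reduce to a uniform energy bound, handle the horizontal part directly via $|\varrho_n|\le 1$, and control the vertical part by applying the Mecke identity~\eqref{eq:MeckeLP} together with the product representation~\eqref{eq:InfLebesgueIsomor} to reduce the problematic $\mu_x\varrho_n'(\mu_x)$ term to a one-dimensional integral that is $O(n^{-1})$ by the derivative bounds on $\varrho_n$. The argument, the key lemmas invoked, and even the constants match the paper's treatment.
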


\subsection{Identification of the canonical form and the Cheeger energy}\label{sec:ident}
We now turn to the identification of the geometric and metric measure structures on~$\mcM(M)$.
In this section we prove our main results Theorems~\ref{t:IntroForm} and~\ref{t:CoincidenceIntro}, recalled below.

\begin{theorem}\label{t:Main}
Let~$(M,g,\nu)$ be a weighted Riemannian manifold as in the beginning of~\S\ref{s:CanonicalForm}.
Then, for every~$\theta>0$, 
\begin{enumerate}[$(i)$]
\item\label{i:t:Main:1} the form~$\tparen{\mcE, \FC{\infty,\infty}{c,c}{\infty}{c}}$ is densely defined and closable on~$L^2(\mcM(M),\LP{\theta,\nu})$;
\item\label{i:t:Main:2} its closure~$(\mcE,\dom{\mcE})$ is a (symmetric) conservative strongly local Dirichlet form on $L^2(\LP{\theta,\nu})$; 
\item\label{i:t:Main:3} $(\mcE,\dom{\mcE})$ is recurrent if~$\theta \in (0,1]$ and transient if~$\theta\in (1,\infty)$;
\item\label{i:t:Main:4} $(\mcE,\dom{\mcE})$ is quasi-regular and coincides with the Cheeger energy of the metric measure space $(\mcM(M), \HK_{\mssd_g},\LP{\theta,\nu})$;
\item\label{i:t:Main:5} $(\mcE,\dom{\mcE})$ is properly associated with an $\LP{\theta,\nu}$-invariant Hunt diffusion process~$\mu_\bullet$ with state space~$\mcM(M)$, recurrent if~$\theta\in (0,1]$ and transient if~$\theta\in (1,\infty)$.
\end{enumerate}
\end{theorem}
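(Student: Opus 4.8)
The plan is to assemble Theorem~\ref{t:Main} from the pieces developed in the preceding subsections, and the essential new contribution is the \emph{identification} of the abstractly-defined closure $(\mcE,\dom{\mcE})$ with the concretely-studied extended form $(\widehat\mcE,\dom{\widehat\mcE})$ of Proposition~\ref{p:ExtendedForm}. Concretely, I would first argue that $(\mcE,\FC{\infty,\infty}{c,c}{\infty}{c})$ and $(\mcE,\hFC{\infty}{c}{\infty}{c,0})$ are \emph{two cores of the same closable form}. On the one hand, $\FC{\infty,\infty}{c,c}{\infty}{c}\subset\dom{\widehat\mcE}$ by Lemma~\ref{l:StandardCylinderSubset}, and by Lemmas~\ref{l:TridVerGrad}, \ref{l:TridHorGrad} together with Proposition~\ref{prop:equalityriem} the carré-du-champ $\widehat\boldGamma$ restricted to $\FC{\infty,\infty}{c,c}{\infty}{c}$ is exactly the integrand $\scalar{(\grad u)_\mu}{(\grad v)_\mu}_\mu$ defining $\mcE$ (with the factor $4$ in the vertical part matching $T_\mu^{1,4}$). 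Hence $(\mcE,\FC{\infty,\infty}{c,c}{\infty}{c})$ is the restriction of $(\widehat\mcE,\dom{\widehat\mcE})$ to a subspace, so it is closable, proving~\ref{i:t:Main:1}; density follows from Lemma~\ref{l:L2DensityCylinder}\ref{i:l:L2DensityCylinder:1.5}. On the other hand, the sequence $(\hat u_n)$ built in \eqref{eq:approxrho} shows $\hFC{\infty}{c}{\infty}{c,0}$ lies in the $\widehat\mcE$-closure of $\FC{\infty,\infty}{c,c}{\infty}{c}$ (the quantitative estimates in the proof of Lemma~\ref{l:StandardCylinderSubset} give $\widehat\mcE(\hat u_n-u)\to 0$ by the same computation that bounded $\sup_n\widehat\mcE(\hat u_n)$). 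Thus $\dom{\mcE}=\dom{\widehat\mcE}$ and $\mcE=\widehat\mcE$; in particular~\ref{i:t:Main:2} (Dirichlet property, strong locality from the derivation property of $\boldnabla$, symmetry) follows from Proposition~\ref{p:ExtendedForm}.

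For~\ref{i:t:Main:4}, the Cheeger-energy identification, I would invoke Corollary~\ref{c:DensityCylC}: since $\LP{\theta,\nu}$ satisfies the exponential-moment condition \eqref{eq:lpex} by Lemma~\ref{l:FinitenessBalls} (indeed $\int e^{-t\mu M}\diff\LP{\theta,\nu}=k_t<\infty$ for all $t>0$ by Proposition~\ref{p:UniquenessLP}\ref{i:p:UniquenessLP:0}, or directly $\int e^{-t\mu M}\diff\LP{\theta,\nu}=\int_0^\infty e^{-ts}\diff\lambda_\theta(s)=t^{-\theta}$), the subalgebra $\FC{\infty,\infty}{c,c}{\infty}{c}$ is dense in $2$-energy in $H^{1,2}(\mcM(M),\HK_{\mssd_g},\LP{\theta,\nu})$, which is Hilbert. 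By Proposition~\ref{prop:equalityriem}, for $u\in\FC{\infty,\infty}{c,c}{\infty}{c}$ the pre-Cheeger integrand $(\lip_{\HK_{\mssd_g}}u)^2$ equals $\|(\boldnabla u)_\mu\|_{T_\mu^{1,4}}^2$, i.e.\ the integrand of $\mcE$. Since both $\CE_{2,\HK_{\mssd_g},\LP{\theta,\nu}}$ and $\mcE$ are $L^2$-lower-semicontinuous quadratic forms agreeing on a common $2$-energy-dense core with the same energy densities, they coincide; this is exactly the argument of \cite[Theorem~2.17]{FSS22}, yielding $(\mcE,\dom{\mcE})=(\CE_{2,\HK_{\mssd_g},\LP{\theta,\nu}},H^{1,2}(\mcM(M),\HK_{\mssd_g},\LP{\theta,\nu}))$, which is Theorem~\ref{t:CoincidenceIntro}. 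Quasi-regularity then follows from general metric-measure Dirichlet form theory: the Cheeger energy of a complete separable metric measure space, being strongly local and admitting a core of bounded Lipschitz functions separating points, is quasi-regular (one exhibits a nest of compact sets using the $\HK_{\mssd_g}$-balls, which are compact by Prokhorov, and the density function from Lemma~\ref{l:TransferFunction}).

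For~\ref{i:t:Main:3} and the recurrence/transience dichotomy I would reduce to the radial part: the function $\mu\mapsto\mu M=\car^\trid(\mu)$ is (a limit of) cylinder functions, and the computations of \S\ref{sss:ClosabilityVer} applied to functions depending only on total mass identify the image form on the real line with the Dirichlet form of the squared Bessel process of dimension $\theta$ studied in Appendix~\ref{app:Bessel} (up to the time-factor $\tfrac14$ from the vertical $4$). That process is recurrent iff $\theta\in(0,1]$ and transient iff $\theta>1$; since recurrence/transience is inherited along the projection $\mu\mapsto\mu M$ — the constant function $\car$ is $\mcE$-harmonic, and more precisely $1\in\dom{\mcE}_{\mathrm{loc}}$ with $\mcE(1,1)=0$ giving conservativeness, while a Khas'minskii-type test function pulled back from the Bessel generator via Lemma~\ref{l:StandardCylinderSubset} certifies (non-)recurrence — the dichotomy transfers. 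Conservativeness~\ref{i:t:Main:2} likewise follows since $1$ is in the extended domain with zero energy (approximate by $u_k$ of Lemma~\ref{le:trunc1}, whose energies vanish as $k\to\infty$ by the estimate there). Finally~\ref{i:t:Main:5} is the standard consequence of quasi-regularity plus strong locality: by \cite[Ch.~IV--V]{MaRoe92} there is an $\LP{\theta,\nu}$-symmetric (hence $\LP{\theta,\nu}$-invariant) Hunt process properly associated with $(\mcE,\dom{\mcE})$, and strong locality makes it a diffusion; conservativeness makes it non-explosive, and recurrence/transience of the process matches that of the form. The main obstacle I anticipate is the rigorous transfer of recurrence/transience through the radial projection — one must check the projection is compatible with the form in the sense needed (e.g.\ that pullbacks of the Bessel test functions genuinely lie in $\dom{\mcE}$ with the expected energy, which is where Lemma~\ref{l:StandardCylinderSubset} and the explicit generator $\widehat\mcL^\ver$ on mass-dependent functions do the work), rather than the (routine) assembly of the Dirichlet-form properties themselves.
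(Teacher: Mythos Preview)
Two genuine gaps.

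\textbf{The identification $\mcE=\widehat\mcE$ is unjustified and unnecessary.} The sequence in~\eqref{eq:approxrho} approximates a given $u\in\FC{\infty,\infty}{c,c}{\infty}{c}$ by $\hat u_n\in\hFC{\infty}{c}{\infty}{c,0}$, not the other way round. Even granting $\widehat\mcE(\hat u_n-u)\to 0$, this only places $\FC{\infty,\infty}{c,c}{\infty}{c}$ inside the $\widehat\mcE$-closure of $\hFC{\infty}{c}{\infty}{c,0}$ (which is trivial, since the latter is a core), not the reverse inclusion you need. The paper explicitly flags this identification as open (see the Remark immediately after the theorem, requiring $\dim M\ge 2$ and deferring the argument). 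Crucially, the theorem does \emph{not} need it: closability of $(\mcE,\FC{\infty,\infty}{c,c}{\infty}{c})$ follows simply because it is the restriction of the closed form $\widehat\mcE$ to a subspace of its domain, and the Markov property is checked directly on the smaller core.

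\textbf{The conservativeness argument fails for $\theta\ge 1$.} For $u_k=\varsigma(\car^\trid/k)$ from Lemma~\ref{le:trunc1}, a direct computation (using~\eqref{eq:Fdiff} and~\eqref{eq:InfLebesgueIsomor}) gives
\[
\mcE(u_k)=\frac{4}{k^2}\int_0^\infty t\,\abs{\varsigma'(t/k)}^2\,\diff\lambda_\theta(t)=\frac{4k^{\theta-1}}{\Gamma(\theta)}\int_1^2 s^\theta\abs{\varsigma'(s)}^2\,\diff s\comma
\]
which is a positive constant times $k^{\theta-1}$. Thus $\mcE(u_k)\to 0$ only when $\theta<1$; for $\theta=1$ it is bounded away from zero, and for $\theta>1$ it diverges. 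Your claim that ``the energies vanish as $k\to\infty$'' is therefore false in the regime $\theta\ge 1$. The paper handles conservativeness (Lemma~\ref{l:Conservativeness}) by a genuinely different route: it develops the radial projection $\emparg^\rad$ as an $\mcE_1^{1/2}$-orthogonal projection (Proposition~\ref{p:OrthoProj}), proves it \emph{intertwines} the resolvents of $(\mcE,\dom{\mcE})$ and its radial part (Corollary~\ref{c:PreservationRadiality}), identifies the radial part with the Bessel form $(E^\theta,\dom{E^\theta})$ (Proposition~\ref{p:IsomorphicForms}), and then transfers the conservativeness of the Bessel process using the weaker test~\eqref{eq:Conservative:1c} with $v=\mcG_\alpha f$ for a \emph{radial} $f$, so that $v=v^\rad$ and the pairing $\mcE(u_n,v)$ reduces to an $E^\theta$-pairing. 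The same intertwining machinery is what makes the recurrence/transience transfer in~\ref{i:t:Main:3} rigorous; your ``Khas'minskii-type test function'' sketch does not substitute for it. Your treatment of~\ref{i:t:Main:4} and~\ref{i:t:Main:5} is essentially what the paper does.
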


\begin{remark}
Assume~$\dim M\geq 2$.
Arguing similarly to the proof of~\cite[Cor.~5.19, Lem.~A.20]{LzDS17+}, it would be possible to show that~$\FC{\infty,\infty}{c,c}{\infty}{c}$ is in fact a \emph{form core} for~$\tparen{\widehat\mcE,\dom{\widehat\mcE}}$ so that, in fact, the forms~$\tparen{\mcE,\dom{\mcE}}$ and~$\tparen{\widehat\mcE,\dom{\widehat\mcE}}$ \emph{coincide}.
The details of this identification, instrumental to a complete description of the Vershik diffusion, will be addressed elsewhere.
\end{remark}

\begin{proof}[Proof of Theorem~\ref{t:Main}]
The proof will require several auxiliary results which are collected below and in Appendix~\ref{app:Bessel}.

\paragraph{Proof of~\ref{i:t:Main:1}}
The closability of~$\tparen{\mcE, \FC{\infty,\infty}{c,c}{\infty}{c}}$ follows from Proposition~\ref{p:ExtendedForm}.
Indeed, we have~$\FC{\infty,\infty}{c,c}{\infty}{c}\subset \dom{\widehat\mcE}$ by Lemma~\ref{l:StandardCylinderSubset}, and closability is inherited by restrictions.
It is shown in Lemma~\ref{l:L2DensityCylinder}\ref{i:l:L2DensityCylinder:1.5} that~$\FC{\infty,\infty}{c,c}{\infty}{c}$ is dense in $L^2(\LP{\theta,\nu})$, that is~$\tparen{\mcE, \FC{\infty,\infty}{c,c}{\infty}{c}}$ is also densely defined.

\paragraph{Proof of~\ref{i:t:Main:2} and~\ref{i:t:Main:3}}
Since~$\FC{\infty,\infty}{c,c}{\infty}{c}$ is closed under post-compos\-ition with smooth functions,~$\dom{\mcE}$ is a Markovian subspace of~$L^2(\LP{\theta,\nu})$, and therefore, for every~$u\in \FC{\infty,\infty}{c,c}{\infty}{c}$,
\[
u^+\wedge 1\in \dom{\mcE}\quad \text{and} \quad \mcE(u^+\wedge 1)=\widehat\mcE(u^+\wedge 1)\leq \widehat\mcE(u)=\mcE(u)\fstop 
\]
By, e.g.~\cite[Prop.~I.4.10]{MaRoe92}, this suffices to establish that~$\tparen{\mcE,\dom{\mcE}}$ has the Dirichlet property.

Conservativeness is shown in Lemma~\ref{l:Conservativeness}.
Locality follows from the diffusion property for~$\tparen{\mcE, \FC{\infty,\infty}{c,c}{\infty}{c}}$ which is readily verified, or else from the identification with the Cheeger energy in~\ref{i:t:Main:4}.
Strong locality follows from locality and conservativeness, see e.g.~\cite[Thm.~4.5.4, p.~187]{FukOshTak11}.

Recurrence if~$\theta\in (0,1]$ holds combining Lemma~\ref{l:Recurrence} and Lemma~\ref{l:CBIRecurrence}.
Transience if~$\theta \in (1,\infty)$ holds combining Lemma~\ref{l:Transient} and Lemma~\ref{l:CBIRecurrence}.

\paragraph{Proof of~\ref{i:t:Main:4}}
Let~$\X\eqdef \tparen{\mcM(M),\HK_{\mssd_g},\LP{\theta,\nu}}$.
On the one hand, it follows from Proposition~\ref{p:UniquenessLP}, Corollary~\ref{c:DensityCylC} and Proposition~\ref{prop:equalityriem} that~$\CE_{2,\X}$ extends~$(\mcE,\dom{\mcE})$.
On the other hand,~$H^{1,2}(\X)$ coincides with the closure of~$\FC{\infty,\infty}{c,c}{\infty}{c}$ by Corollary~\ref{c:DensityCylC}, thus it must be~$\tparen{\mcE,\dom{\mcE}}=\tparen{\CE_{2,\X},H^{1,2}(\X)}$.
Since the Cheeger energy of a ($\sigma$-finite Radon) metric measure space is always quasi-regular by~\cite[Prop.~3.21]{LzDSSuz20} (also cf.\ the proof of~\cite[Thm.~4.1]{Sav14}) and local, we conclude that~$(\mcE,\dom{\mcE})$ is quasi-regular and local.

\paragraph{Proof of~\ref{i:t:Main:5}}
All assertions follow from~\ref{i:t:Main:1}-\ref{i:t:Main:4} and standard arguments in the theory of Dirichlet forms.
\end{proof}

Let us now turn to the proof of recurrence, transience and of conservativeness, which will complete the proof of points~\ref{i:t:Main:2}, \ref{i:t:Main:3} and~\ref{i:t:Main:5} in Theorem~\ref{t:Main}. Note that we will however make use of point \ref{i:t:Main:1} in Theorem \ref{t:Main} and that $(\mcE,\dom{\mcE})$ is a (symmetric) local Dirichlet form on~$L^2(\LP{\theta,\nu})$: both facts have been proven above without relying on recurrence, transience, conservativeness and point~\ref{i:t:Main:5} in Theorem~\ref{t:Main}.

In the rest of this section we will make use of the following classical characterizations of recurrence and conservativeness, see e.g.~\cite[Thm.~1.6.3(ii), p.~58]{FukOshTak11} and~\cite[Thm.~1.6.6, p.~63]{FukOshTak11}, which in fact hold in full generality without the need for topological assumptions.

A Dirichlet form~$\tparen{E,\dom{E}}$ on~$L^2(\mssm)$ is recurrent if and only if there exist functions~$u_n\in\dom{E}$ such that 
\begin{subequations}\label{eq:l:Recurrence:1}
\begin{gather}
\label{eq:l:Recurrence:1a}
0\leq u_n\leq 1 \quad \text{and} \quad \lim_n u_n =1 \as{\mssm}\comma
\\
\label{eq:l:Recurrence:1b}
\lim_n E(u_n)=0 \fstop
\end{gather}
\end{subequations}

A Dirichlet form~$\tparen{E,\dom{E}}$ on~$L^2(\mssm)$ is conservative if and only if there exist functions~$u_n\in\dom{E}$ with
\begin{gather}\label{eq:Conservative:1}
0\leq u_n\leq 1 \quad \text{and} \quad \lim_n u_n =1 \as{\mssm}\comma
\end{gather}
such that one of the following equivalent conditions holds.
Either
\begin{subequations}
\begin{gather}\label{eq:Conservative:1b}
\lim_n E(u_n,v)=0 \comma \qquad v\in \dom{E}\cap L^1(\mssm)\semicolon
\end{gather}
or there exists~$f\in L^1(\mssm)\cap L^2(\mssm)$ with~$f>0$ $\mssm$-a.e., and~$\alpha>0$, such that
\begin{gather}
\label{eq:Conservative:1c}
\lim_n E(u_n,v)=0 \comma \qquad v=G_\alpha f  \comma 
\end{gather}
\end{subequations}
 where $\seq{G_\alpha}_\alpha$ is the resolvent of~$\tparen{E,\dom{E}}$.

\subsubsection{Radial projection and intertwining}
Let~$\emparg^\rad\colon L^2(\LP{\theta,\nu})\to L^2(\LP{\theta,\nu})$ be the radial projection
\[
\emparg^\rad\colon u\longmapsto \paren{u^\rad\colon \mu \mapsto \int u\tparen{\car^\trid(\mu)\cdot \eta}\, \diff \DF{\nu}(\eta)} \comma
\]
and~$(\mcE^\rad,\dom{\mcE^\rad})$ be the \emph{radial part} of~$(\mcE,\dom{\mcE})$, i.e.\ the quadratic form defined by
\[
\mcE^\rad(u) \eqdef   \mcE(u) \comma \qquad u\in \dom{\mcE^\rad}\eqdef \set{u\in \dom{\mcE} : u=u^\rad} \fstop
\]
Since~$\DF{\nu}$ is a probability measure,~$\emparg^\rad$ is a projection operator (in particular: idempotent), acting as the identity on~$\dom{\mcE^\rad}$.  Note also that
$\norm{u^\rad}_{L^2(\LP{\theta,\nu})}\leq \norm{u}_{L^2(\LP{\theta,\nu})}$ for every~$u\in L^2(\LP{\theta,\nu})$ so that $\emparg^\rad$ is continuous.  

Further denote by~$L^2(\LP{\theta,\nu})^\rad$ the image of~$L^2(\LP{\theta,\nu})$ via~$\emparg^\rad$.
Then, it is readily verified that
\[
L^2(\LP{\theta,\nu})^\rad\cong L^2(\lambda_\theta) \comma \qquad \theta>0  \comma  
\]
 where the isomorphism is simply given by 
\begin{equation}\label{eq:OperatorTilde}
\tilde\emparg\colon L^2(\LP{\theta,\nu})^\rad \to L^2(\lambda_\theta)  \comma \qquad u\longmapsto \tilde u : u =  \tilde{u} \circ \car^\trid \fstop
\end{equation}

\begin{lemma}\label{l:RadGrad}
If~$u\in\dom{\mcE}$, then~$u^\rad\in\dom{\mcE^\rad}$ and
\begin{gather}
\nonumber
\gradW u^\rad \equiv 0\comma 
\\
\label{eq:l:RadGrad:0}
\begin{aligned}
(\grad u^\rad)_\mu &= (\gradH u^\rad)_\mu
\\
&= \int \int (\gradH u)_{\mu M \eta} \diff\eta\, \diff\DF{\nu}(\eta)  \quad \text{in}\quad T^\ver_\mu\mcM(M) \forallae{\LP{\theta,\nu}} \mu \fstop  
\end{aligned}
\end{gather}
Moreover, $\mcE(u^\rad) = 4\mcE^\ver(u^\rad) \le 4\mcE^\ver(u) \le\mcE(u)$ for every $u \in \dom{\mcE}$. In particular $\mcE^\rad$ is densely defined.

\begin{proof}
 The assertion regarding the horizontal gradient is obvious and holds for every $u \in \dom{\mcE}$. Let $u \in \dom{\mcE}$ and observe that the Borel function $(y, \mu, \eta) \mapsto \sqrt{\mu M}(\gradH u)_{\mu M \eta} (y)$ satisfies
\begin{align*}
\int \int_{\mcP}& \int_M \mu M | (\gradH u)_{\mu M \eta} (y)|^2\, \diff \eta(y)\, \diff \DF{\nu}(\eta)\, \diff \LP{\theta,\nu}(\mu) =
\\
&=\int \int_M \int_{\mcP} \int_M | (\gradH u)_{\mu M \eta} (y)|^2\, \diff \eta(y)\, \diff \DF{\nu}(\eta)\, \diff \mu(x)\, \diff \LP{\theta,\nu}(\mu)
\\
&= \int_0^\infty \int_{\mcP} \int_M \int_{\mcP} \int_M| (\gradH u)_{t \eta} (y)|^2\, \diff \eta(y)\, \diff \DF{\nu}(\eta)\, \diff (t \eta')(x)\, \diff \DF{\nu}(\eta')\, \diff \lambda_\theta(t)
\\
&= \int_0^\infty \int_{\mcP} \int_M | (\gradH u)_{t \eta} (y)|^2\, \diff (t\eta)(y)\, \diff \DF{\nu}(\eta)\, \diff \lambda_\theta(t)
\\
&= \int \int_M | (\gradH u)_{\mu} (y)|^2\, \diff \mu (y)\, \diff \LP{\theta,\nu}(\mu)
\\
&= \mcE^\ver(u).
\end{align*}
Fubini's theorem gives that the  function
\[ \mu \mapsto H(\mu;u) \eqdef \sqrt{\mu M} \int \int (\gradH u)_{\mu M \eta} \diff\eta\, \diff\DF{\nu}(\eta)\]
is Borel measurable and finite for $\LP{\theta,\nu}$-a.e.~$\mu$; the same holds in particular for the right hand side of \eqref{eq:l:RadGrad:0}. Jensen's inequality also gives that $\|H(\emparg; u)\|^2_{L^2(\LP{\theta,\nu})}$ is bounded by $ \mcE^\ver(u)$.  
Let us now show that~\eqref{eq:l:RadGrad:0} holds for cylinder functions;  let~$u\eqdef F\circ \mbff^\trid\in\FC{\infty,\infty}{c,c}{\infty}{c}$  with $F: \R^{k+1} \to \R$. 
Then, for every~$f\in\rmC^\infty_c(M)$,
\begin{align*}
\diff_t\restr{t=0}  u^\rad \tparen{\rme^{tf}\mu} &= \diff_t\restr{t=0} \int u\tparen{(\rme^{tf}\mu)(M) \eta}\diff\DF{\nu}(\eta)
\\
&= \int \diff_t\restr{t=0} u\tparen{(\rme^{tf}\mu)(M)\eta} \diff\DF{\nu}(\eta)
\\
&= \int \sum_{i=0}^{k} (\partial_i F)(\mu M\mbff^\trid\eta)\, \diff_t\restr{t=0} f_i^\trid \tparen{(\rme^{tf} \mu)(M)\eta} \diff\DF{\nu}(\eta)
\\
&= \int \sum_{i=0}^{k} (\partial_i F)(\mu M\mbff^\trid\eta)\, \diff_t\restr{t=0} f_i^\trid \tparen{(\rme^{tf} \mu)(M)\eta} \diff\DF{\nu}(\eta)
\\
&= \int \sum_{i=0}^{k} (\partial_i F)(\mu M\mbff^\trid\eta)\, f_i^\trid\eta\, \diff_t\restr{t=0} \tparen{(\rme^{tf} \mu)(M)} \diff\DF{\nu}(\eta)
\\
&= \int \sum_{i=0}^{k} (\partial_i F)(\mu M\mbff^\trid\eta)\, f_i^\trid\eta\, f^\trid \mu\, \diff\DF{\nu}(\eta)
\\
&= \int f(x) \int \sum_{i=0}^{k} (\partial_i F)(\mu M\mbff^\trid\eta)\, f_i^\trid\tparen{\eta}\, \diff\DF{\nu}(\eta)\, \diff \mu(x)
\\
&= \int \braket{\int \int (\gradH u)_{\mu M \eta}\diff\eta\, \diff\DF{\nu}(\eta)} f\diff\mu 
\\
&= \scalar{\int \int (\gradH u)_{\mu M \eta} \diff\eta\, \diff\DF{\nu}(\eta)}{f}_{T^\ver_\mu} \fstop
\end{align*}
By arbitrariness of~$f\in\rmC^\infty_c(M)$ we conclude that  $u^\rad \in \dom{\mcE}$ for every cylinder function $u$ and that~\eqref{eq:l:RadGrad:0} holds for cylinder functions. In particular
\begin{equation}\label{eq:cylcontr}
\begin{aligned}
\mcE(u^\rad) =&\ 4\mcE^\ver(u^\rad) = 4\| H(\emparg; u) \|^2_{L^2(\LP{\theta,\nu})} \le 4\mcE^\ver(u) 
\\
\le&\ \mcE(u)\comma 
\end{aligned}
\qquad u \in \FC{\infty}{c}{\infty}{c}\fstop
\end{equation}
Let now $u \in \dom{\mcE}$ be arbitrary and let $\seq{u_n}_n \subset \FC{\infty}{c}{\infty}{c}$ be such that $u_n$ $\mcE_1^{1/2}$-converges to $u$. By the inequality in \eqref{eq:cylcontr} we deduce that $\seq{u_n^\rad}_n$ is $\mcE_1^{1/2}$-Cauchy so that $\mcE_1^{1/2}$-converges to some $v \in \dom{\mcE}$ which has to coincide with $u^\rad$, since the radial projection is continuous in $L^2(\LP{\theta,\nu})$. We deduce in particular that $u^\rad \in \dom{\mcE}$. We are left to show that \eqref{eq:l:RadGrad:0} holds for $u \in \dom{\mcE}$. We have
\begin{align*}
\int \biggl \| (\gradH u^\rad)_\mu- &\int \int (\gradH u)_{\mu M \eta} \diff\eta\, \diff\DF{\nu}(\eta) \biggr \|^2_{T^\ver_\mu} \, \diff \LP{\theta,\nu}(\mu) =
\\
\le &\ 2\, \mcE^\ver(u^\rad-u_n^\rad) 
\\
&+2\, \int \int \left | \int \int (\gradH (u_n-u)_{\mu M \eta} \diff\eta\, \diff\DF{\nu}(\eta) \right |^2 \, \diff \mu\, \diff \LP{\theta,\nu}(\mu)
\\
= &\ 2\, \mcE^\ver(u^\rad-u_n^\rad) + 2 \left \| H(\emparg; u_n-u) \right \|^2_{L^2(\LP{\theta,\nu})} 
\\
\le &\ 2\, \mcE^\ver(u^\rad-u_n^\rad) + 2 \mcE^\ver(u_n-u) \to 0.
\end{align*}
This proves \eqref{eq:l:RadGrad:0} for a general $u \in \dom{\mcE}$, which in particular implies the inequality $\mcE^\ver(u^\rad) \le \mcE^\ver(u)$ because of the bound on the $L^2(\LP{\theta,\nu})$-norm of $H(\emparg; u)$. 
\end{proof}
\end{lemma}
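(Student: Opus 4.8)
The plan is to prove all assertions first on the smooth cylinder core $\FC{\infty,\infty}{c,c}{\infty}{c}$ and then to extend by closure. The preliminary observation is that $u^\rad$ is again a \emph{radial} cylinder function whenever $u=F\circ\mbff^\trid\in\FC{\infty,\infty}{c,c}{\infty}{c}$ with $\mbff=(\car,f_1,\dots,f_k)$: since $f_i^\trid$ is linear in the measure, $u^\rad(\mu)=G(\mu M)$ with $G(t)\eqdef\int F\tparen{t,t\,f_1^\trid\eta,\dots,t\,f_k^\trid\eta}\,\diff\DF{\nu}(\eta)$, and $G\in\rmC^\infty_c(\R^+_0)$ because $\eta\mapsto f_i^\trid\eta$ is bounded and $F$ has compact support, so differentiation under the integral sign applies. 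Thus $u^\rad=G\circ\car^\trid\in\FC{\infty,\infty}{c,c}{\infty}{c}\subset\dom{\mcE}$, and since $\nabla\car\equiv 0$, formula~\eqref{eq:Fdiff} gives $(\gradW u^\rad)_\mu\equiv 0$ and $(\grad u^\rad)_\mu=(\gradH u^\rad)_\mu$ for such $u$.

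Second, I would compute $(\gradH u^\rad)_\mu$ directly. For $\phi\in\rmC^\infty_c(M)$, putting $\beta(t)\eqdef(\rme^{t\phi}\mu)(M)$ so that $\beta(0)=\mu M$ and $\beta'(0)=\phi^\trid\mu$, the chain rule gives $\diff_t\restr{t=0}u^\rad(\rme^{t\phi}\mu)=\phi^\trid\mu\cdot c(\mu)$ with $c(\mu)\eqdef\int\sum_{i=0}^k(\partial_iF)(\mu M\,\mbff^\trid\eta)\,f_i^\trid\eta\,\diff\DF{\nu}(\eta)$. Since $\phi^\trid\mu\cdot c(\mu)=\scalar{c(\mu)\car}{\phi}_{T^\ver_\mu}$ and $\rmC^\infty_c(M)$ is dense in $T^\ver_\mu$, this identifies $(\gradH u^\rad)_\mu$ with the constant function $c(\mu)$. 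Comparing $c(\mu)$ with the explicit expression $(\gradH u)_{\mu M\eta}(y)=\sum_i(\partial_iF)(\mu M\,\mbff^\trid\eta)\,f_i(y)$ from~\eqref{eq:Fdiff}, and using $\int_M f_i\,\diff\eta=f_i^\trid\eta$, one obtains $c(\mu)=\int\int(\gradH u)_{\mu M\eta}\,\diff\eta\,\diff\DF{\nu}(\eta)$, i.e.\ formula~\eqref{eq:l:RadGrad:0} for cylinder functions.

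Third comes the contraction estimate, which is the core of the argument. By the previous step and $\mcE=\mcE^\hor+4\mcE^\ver$ (cf.~\eqref{eq:DirichletForm0} and Propositions~\ref{p:ClosHor}--\ref{p:ClosVer}), $\mcE(u^\rad)=4\mcE^\ver(u^\rad)=4\int\mu M\,\abs{c(\mu)}^2\,\diff\LP{\theta,\nu}$. Using the product representation $\LP{\theta,\nu}=\isomor^{-1}_\pfwd(\DF{\nu}\otimes\lambda_\theta)$ from~\eqref{eq:IsomorphismMbp}--\eqref{eq:InfLebesgueIsomor} (so $\mu=t\eta'$, $\mu M=t$, $t\,\diff\eta'=\diff\mu$) together with Fubini's theorem, one proves
\[
\int\mu M\int_{\mcP(M)}\int_M\abs{(\gradH u)_{\mu M\eta}(y)}^2\,\diff\eta(y)\,\diff\DF{\nu}(\eta)\,\diff\LP{\theta,\nu}(\mu)=\mcE^\ver(u)\fstop
\]
Jensen's inequality for the probability measure $\eta\otimes\DF{\nu}$ gives $\abs{c(\mu)}^2\le\int\int\abs{(\gradH u)_{\mu M\eta}}^2\,\diff\eta\,\diff\DF{\nu}$, hence $\mcE^\ver(u^\rad)\le\mcE^\ver(u)$, and therefore $\mcE(u^\rad)=4\mcE^\ver(u^\rad)\le 4\mcE^\ver(u)\le\mcE(u)$ for cylinder $u$. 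To reach an arbitrary $u\in\dom{\mcE}$: the linear map $u\mapsto u^\rad$ is an $L^2(\LP{\theta,\nu})$-contraction (Jensen again, via $\Phi_\pfwd(\LP{\theta,\nu}\otimes\DF{\nu})=\LP{\theta,\nu}$ for $\Phi\colon(\mu,\eta)\mapsto(\mu M)\eta$) and, by the cylinder estimate, a contraction for $\mcE_1(\emparg)\eqdef\mcE(\emparg)+\norm{\emparg}_{L^2(\LP{\theta,\nu})}^2$ on the cylinder core. Thus, choosing cylinder $u_n\to u$ in $\mcE_1^{1/2}$-norm, $(u_n^\rad)_n$ is $\mcE_1^{1/2}$-Cauchy, converges in $\dom{\mcE}$ to some $v$, and $v=u^\rad$ by $L^2$-continuity, so $u^\rad\in\dom{\mcE}$; since $\boldnabla=\tparen{\gradH,\gradW}$ extends continuously to $\dom{\mcE}$, passing to the limit yields $\gradW u^\rad\equiv 0$, the chain $\mcE(u^\rad)=4\mcE^\ver(u^\rad)\le 4\mcE^\ver(u)\le\mcE(u)$, and — using the $L^2(\LP{\theta,\nu})$-convergence of the fibre averages $\int\int(\gradH u_n)_{\mu M\eta}\,\diff\eta\,\diff\DF{\nu}$ granted by the Fubini identity — formula~\eqref{eq:l:RadGrad:0} in full. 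Density of $\mcE^\rad$ is then immediate, since the radial cylinder functions $G\circ\car^\trid$, $G\in\rmC^\infty_c(\R^+_0)$, lie in $\dom{\mcE^\rad}$ and are $L^2$-dense in $L^2(\LP{\theta,\nu})^\rad\cong L^2(\lambda_\theta)$.

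The step I expect to be the main obstacle is giving a rigorous meaning to, and controlling the limit of, the fibre integral $\int\int(\gradH u)_{\mu M\eta}\,\diff\eta\,\diff\DF{\nu}(\eta)$ for general $u\in\dom{\mcE}$: there $\gradH u$ is only an $L^2$-section of the vertical tangent bundle $T^\ver\mcM(M)$ and the fibres $T^\ver_{\mu M\eta}$ vary genuinely with $\eta$, so one must set up the joint measurability in $(y,\mu,\eta)$ and argue through the cylinder approximation. The Fubini energy identity above is precisely the a priori $L^2(\LP{\theta,\nu})$-bound that makes this limiting argument work, and its proof is the place where the product structure of $\LP{\theta,\nu}$ under the isomorphism $\isomor$ is used essentially.
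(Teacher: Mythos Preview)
Your proposal is correct and follows essentially the same route as the paper: compute the gradient formula~\eqref{eq:l:RadGrad:0} on cylinder functions by direct differentiation along the multiplier flow, establish the Fubini-type identity $\int \mu M \int\int |(\gradH u)_{\mu M\eta}|^2\,\diff\eta\,\diff\DF{\nu}\,\diff\LP{\theta,\nu}=\mcE^\ver(u)$ from the product representation $\LP{\theta,\nu}=\isomor^{-1}_\pfwd(\DF{\nu}\otimes\lambda_\theta)$, deduce the contraction $\mcE(u^\rad)\le\mcE(u)$ via Jensen, and extend to general $u\in\dom{\mcE}$ by $\mcE_1^{1/2}$-approximation. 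The only minor difference is that you first identify $u^\rad=G\circ\car^\trid$ as an explicit radial cylinder function (a fact the paper establishes separately in Lemma~\ref{l:RadCore}), which slightly streamlines the verification of $\gradW u^\rad\equiv 0$ and $u^\rad\in\dom{\mcE}$ on the core.
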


More importantly, we have the following.

\begin{proposition}\label{p:OrthoProj}
The operator~$\emparg^\rad\colon L^2(\LP{\theta,\nu})\to L^2(\LP{\theta,\nu})$ is an $L^2(\LP{\theta,\nu})$-ortho\-gonal projection and
\begin{equation}\label{eq:p:OrthoProj:1}
\mcE(u^\rad, v)= \mcE(u,v^\rad) \comma \qquad u,v\in\dom{\mcE}\fstop
\end{equation}
In particular, $\emparg^\rad\colon \dom{\mcE}_1\to\dom{\mcE}_1$  is an $\mcE^{1/2}_1$-orthogonal projection.

\begin{proof}
We have already commented that~$\emparg^\rad$ is a projection operator, i.e.\ linear and idempotent, which holds irrespectively of the chosen domain.  To show that it is an $L^2(\LP{\theta,\nu})$-orthogonal projection it suffices to show that
\begin{equation}\label{eq:p:OrthoProj:2}
\scalar{u^\rad}{v}_{L^2(\LP{\theta,\nu})}=\scalar{u}{v^\rad}_{L^2(\LP{\theta,\nu})} \comma \qquad u,v\in L^2(\LP{\theta,\nu}) \comma
\end{equation}
which is easily verifiable. Regarding the second assertion, note that $\emparg^\rad\colon \dom{\mcE} \to \dom{\mcE}$ by Proposition \ref{l:RadGrad}, so that combining~\eqref{eq:p:OrthoProj:1} and~\eqref{eq:p:OrthoProj:2} proves that~$\emparg^\rad$ is an $\mcE^{1/2}_1$-orthogonal projection.  
We prove~\eqref{eq:p:OrthoProj:1} below.

By definition of~$\tparen{\mcE,\dom{\mcE}}$ and by the chain of equalities in~\eqref{eq:l:RadGrad:0},
\begin{align*}
\mcE(u^\rad,v)&= \int \int_M \braket{g\paren{(\gradW u^\rad)_\mu }{(\gradW v)_\mu} +  4  (\gradH u^\rad)_\mu (\gradH v)_\mu} \diff \mu\, \diff\LP{\theta,\nu}(\mu)
\\
&= 4 \int \int_M (\gradH u^\rad)_\mu (\gradH v)_\mu \diff\mu\, \diff\LP{\theta,\nu}(\mu) 
\\
&=4\int \int_\mcP \int_M (\gradH v)_{\mu} \int_M (\gradH u)_{\mu M \eta} \diff\eta\, \diff\DF{\nu}(\eta)\, \diff\mu\, \diff\LP{\theta,\nu}(\mu) \quad (\mu= t\eta')
\\
&= 4 \int_0^\infty \iint_\mcP \int_M (\gradH v)_{t\eta'} (\gradH u)_{t\eta} \, \diff\DF{\nu}(\eta)\, \diff\eta\, \diff\DF{\nu}(\eta')\, t\diff\lambda_\theta(t)\comma
\end{align*}
where we applied~\eqref{eq:InfLebesgueIsomor}. 
By an application of Fubini's Theorem we conclude that
\begin{align*}
\mcE(u^\rad,v)&=4\int_0^\infty t \iint_\mcP \iint_M (\gradH v)_{t\eta'} (\gradH u)_{t\eta}\, \diff\eta\, \diff\eta'\, \diff\DF{\nu}(\eta)\, \diff\DF{\nu}(\eta')\, \diff\lambda_\theta(t)\fstop
\end{align*}
As this latter expression is symmetric in~$\eta,\eta'$, applying the previous arguments in reverse order to~$v$ in place of~$u$ proves~\eqref{eq:p:OrthoProj:1}.
\end{proof}
\end{proposition}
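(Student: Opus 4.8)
The plan is to prove the two assertions of Proposition~\ref{p:OrthoProj} in the order they appear: first that $\emparg^\rad$ is an $L^2(\LP{\theta,\nu})$-orthogonal projection, then the symmetry identity~\eqref{eq:p:OrthoProj:1} for the form, and finally to combine the two to deduce that $\emparg^\rad$ is an $\mcE_1^{1/2}$-orthogonal projection on $\dom{\mcE}_1$. The linearity and idempotence of $\emparg^\rad$ have already been noted and hold for any reasonable domain, so the content of the first assertion reduces to the self-adjointness identity~\eqref{eq:p:OrthoProj:2}, namely $\scalar{u^\rad}{v}_{L^2(\LP{\theta,\nu})}=\scalar{u}{v^\rad}_{L^2(\LP{\theta,\nu})}$ for $u,v\in L^2(\LP{\theta,\nu})$. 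First I would unwind the definitions using the isomorphism~\eqref{eq:InfLebesgueIsomor}: writing $\mu=t\eta$ with $t=\mu M$ and $\eta=\normaliz(\mu)$, so that $\LP{\theta,\nu}=\isomor^{-1}_\pfwd(\DF{\nu}\otimes\lambda_\theta)$, the left-hand side becomes
\[
\int_0^\infty\int_{\mcP}\paren{\int_{\mcP}u(t\eta')\,\diff\DF{\nu}(\eta')}v(t\eta)\,\diff\DF{\nu}(\eta)\,\diff\lambda_\theta(t),
\]
which is manifestly symmetric in the roles of $u$ and $v$ after an application of Fubini's Theorem, since the inner measure $\DF{\nu}$ is a probability measure and the dependence on $\eta,\eta'$ enters only through the product integration; this is exactly~\eqref{eq:p:OrthoProj:2}. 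Orthogonality of the projection then follows from the standard fact that an idempotent bounded operator is an orthogonal projection iff it is self-adjoint, $\emparg^\rad$ being bounded as already remarked ($\norm{u^\rad}_{L^2}\leq\norm{u}_{L^2}$ by Jensen).

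Next I would prove~\eqref{eq:p:OrthoProj:1}. By Lemma~\ref{l:RadGrad} we know $\emparg^\rad$ maps $\dom{\mcE}$ into $\dom{\mcE^\rad}\subset\dom{\mcE}$, that $\gradW u^\rad\equiv 0$, and that the vertical gradient of $u^\rad$ is given by the averaging formula~\eqref{eq:l:RadGrad:0}. Plugging these facts into the definition of $\tparen{\mcE,\dom{\mcE}}$, the horizontal term of $\mcE(u^\rad,v)$ drops out and we are left with
\[
\mcE(u^\rad,v)=4\int\int_M (\gradH u^\rad)_\mu\,(\gradH v)_\mu\,\diff\mu\,\diff\LP{\theta,\nu}(\mu);
\]
substituting~\eqref{eq:l:RadGrad:0} for $(\gradH u^\rad)_\mu$, passing to the $\isomor$-coordinates via~\eqref{eq:InfLebesgueIsomor} (so $\diff\mu(x)$ becomes $t\,\diff\eta'(x)$ on the slice $\set{\mu M = t}$), and applying Fubini's Theorem, one arrives at the expression
\[
\mcE(u^\rad,v)=4\int_0^\infty t\iint_{\mcP}\iint_M (\gradH v)_{t\eta'}\,(\gradH u)_{t\eta}\,\diff\eta\,\diff\eta'\,\diff\DF{\nu}(\eta)\,\diff\DF{\nu}(\eta')\,\diff\lambda_\theta(t),
\]
which is symmetric under the exchange $(u,\eta)\leftrightarrow(v,\eta')$. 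Running the same chain of manipulations backwards with $v$ in place of $u$ identifies this with $\mcE(u,v^\rad)$, which is~\eqref{eq:p:OrthoProj:1}. Finally, for the ``in particular'' clause: $\emparg^\rad$ preserves $\dom{\mcE}$, and for $u,v\in\dom{\mcE}$ we have $\mcE_1(u^\rad,v)=\mcE(u^\rad,v)+\scalar{u^\rad}{v}_{L^2}=\mcE(u,v^\rad)+\scalar{u}{v^\rad}_{L^2}=\mcE_1(u,v^\rad)$ by~\eqref{eq:p:OrthoProj:1} and~\eqref{eq:p:OrthoProj:2}; together with idempotence and $\mcE_1^{1/2}$-boundedness (which follows from the inequality $\mcE(u^\rad)\leq\mcE(u)$ in Lemma~\ref{l:RadGrad} and the $L^2$-contractivity of $\emparg^\rad$), this says $\emparg^\rad$ is an $\mcE_1^{1/2}$-orthogonal projection on $\dom{\mcE}_1$.

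The routine parts are the Fubini interchanges and the bookkeeping of the change of variables $\mu\leftrightarrow(t,\eta)$; these are straightforward given~\eqref{eq:InfLebesgueIsomor} and the fact that all integrands are non-negative (for the self-adjointness identity one can even test first on non-negative $u,v$ and then split into positive and negative parts, or just invoke $L^2$-boundedness to justify the interchanges directly). The one place requiring a little care is making sure the averaging formula~\eqref{eq:l:RadGrad:0} from Lemma~\ref{l:RadGrad} is applied to $u^\rad$ with the correct scaling — note that $u^\rad(\mu)$ depends on $\mu$ only through $\mu M$, so when one writes $(\gradH u^\rad)_\mu$ and then re-expands using the slice decomposition $\mu = t\eta'$, the vertical gradient must be evaluated at $t\eta$ for the \emph{integrated} variable $\eta$ and not at $\mu$ itself; keeping the two copies of $\mcP$ (one for $\eta$, one for $\eta'$) notationally distinct throughout is what makes the final symmetry visible. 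I do not anticipate a genuine obstacle here: the proposition is essentially a structural consequence of the product structure $\LP{\theta,\nu}=\isomor^{-1}_\pfwd(\DF{\nu}\otimes\lambda_\theta)$ and the linearity of $\emparg^\rad$, once Lemma~\ref{l:RadGrad} is in hand.
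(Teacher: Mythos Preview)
Your proposal is correct and follows essentially the same approach as the paper: reduce $L^2$-orthogonality to the self-adjointness identity~\eqref{eq:p:OrthoProj:2} via the product structure~\eqref{eq:InfLebesgueIsomor}, then prove~\eqref{eq:p:OrthoProj:1} by dropping the horizontal term via Lemma~\ref{l:RadGrad}, substituting the averaging formula~\eqref{eq:l:RadGrad:0}, passing to $(t,\eta,\eta')$-coordinates, and observing the resulting symmetry under Fubini. Your treatment is slightly more explicit about the Fubini step for~\eqref{eq:p:OrthoProj:2} and about why $\mcE_1^{1/2}$-boundedness holds, but the argument is the same.
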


Proposition~\ref{p:OrthoProj} implies that~$\emparg^\rad\colon L^2(\LP{\theta,\nu})\to L^2(\LP{\theta,\nu})$ and~$\emparg^\rad\colon \dom{\mcE}_1\to \dom{\mcE}_1$ are self-adjoint operators.
This fact, together with the intertwining property
\[
\dom{\mcE^\rad}=\dom{\mcE}^\rad \comma \qquad \mcE(u^\rad)=\mcE^\rad(u^\rad) \comma \quad u \in\dom{\mcE}\comma
\]
has the standard yet important consequence that~$\emparg^\rad$ \emph{intertwines} the semigroups, resolvents and generators corresponding to~$\tparen{\mcE,\dom{\mcE}}$ and~$\tparen{\mcE^\rad,\dom{\mcE^\rad}}$.
In the next result, we denote by~$\mcT_\bullet\eqdef\seq{\mcT_t}_{t\geq 0}$, resp.~$\mcG_\bullet\eqdef \seq{\mcG_\alpha}_{\alpha\geq 0}$, and~$\tparen{\mcL,\dom{\mcL}}$, the semigroup, resp.\ resolvent, and generator, of~$\tparen{\mcE,\dom{\mcE}}$ on~$L^2(\LP{\theta,\nu})$.
We adopt analogous notations for $\tparen{\mcE^\rad,\dom{\mcE^\rad}}$ \emph{on the Hilbert space $L^2(\LP{\theta,\nu})^\rad$}.

\begin{corollary}[Intertwining]\label{c:PreservationRadiality}
The operator~$\emparg^\rad$ intertwines~$\tparen{\mcE,\dom{\mcE}}$ on $L^2(\LP{\theta,\nu})$ with~$\tparen{\mcE^\rad,\dom{\mcE^\rad}}$ on $L^2(\LP{\theta,\nu})^\rad$.
That is, for every~$u\in L^2(\LP{\theta,\nu})$,
\begin{enumerate}[$(i)$]
\item\label{i:c:PreservationRadiality:3} $\mcT^\rad_t u^\rad=(\mcT_t u)^\rad$ for every~$t>0$;
\item\label{i:c:PreservationRadiality:4} $\mcG^\rad_\alpha u^\rad=(\mcG_\alpha u)^\rad$ for every~$\alpha>0$;
\item\label{i:c:PreservationRadiality:2} if additionally~$u\in\dom{\mcL}$ then~$u^\rad\in\dom{\mcL^\rad}$ and~$\mcL^\rad u^\rad=(\mcL u)^\rad$.
\end{enumerate}
\end{corollary}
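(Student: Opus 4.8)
\textbf{Plan for Corollary~\ref{c:PreservationRadiality}.}
The statement asserts that the radial projection~$\emparg^\rad$ intertwines the semigroup, resolvent, and generator of~$\tparen{\mcE,\dom{\mcE}}$ with those of its radial part~$\tparen{\mcE^\rad,\dom{\mcE^\rad}}$. The key structural facts already established are: by Proposition~\ref{p:OrthoProj}, $\emparg^\rad$ is an $L^2(\LP{\theta,\nu})$-orthogonal projection that is also $\mcE^{1/2}_1$-orthogonal on~$\dom{\mcE}$; by Lemma~\ref{l:RadGrad} together with the definition of~$(\mcE^\rad,\dom{\mcE^\rad})$, we have the intertwining of the forms themselves, namely $\dom{\mcE}^\rad=\dom{\mcE^\rad}$ and $\mcE(u^\rad)=\mcE^\rad(u^\rad)$ for~$u\in\dom{\mcE}$. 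The plan is to deduce~\ref{i:c:PreservationRadiality:3}--\ref{i:c:PreservationRadiality:2} from these two inputs by a standard Hilbert-space argument on closed subspaces invariant under a symmetric contraction semigroup.

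\emph{Step 1: resolvent, item~\ref{i:c:PreservationRadiality:4}.} Fix~$\alpha>0$ and~$u\in L^2(\LP{\theta,\nu})$, and set~$f\eqdef \mcG_\alpha u\in\dom{\mcE}$. By definition of the resolvent,
\[
\mcE(f,v)+\alpha\scalar{f}{v}_{L^2(\LP{\theta,\nu})}=\scalar{u}{v}_{L^2(\LP{\theta,\nu})}\comma\qquad v\in\dom{\mcE}\fstop
\]
Testing with~$v=w^\rad$ for arbitrary~$w\in\dom{\mcE^\rad}=\dom{\mcE}^\rad$ (so~$w^\rad=w$), and using~\eqref{eq:p:OrthoProj:1} and~\eqref{eq:p:OrthoProj:2} to move the radial projection from~$w$ onto~$f$, gives
\[
\mcE(f^\rad,w)+\alpha\scalar{f^\rad}{w}_{L^2(\LP{\theta,\nu})}=\scalar{u^\rad}{w}_{L^2(\LP{\theta,\nu})}\comma\qquad w\in\dom{\mcE^\rad}\fstop
\]
Here I use $\mcE(f^\rad,w)=\mcE^\rad(f^\rad,w)$, valid since both arguments are radial (by polarization of $\mcE(u^\rad)=\mcE^\rad(u^\rad)$). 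Since~$f^\rad\in\dom{\mcE^\rad}$ by Lemma~\ref{l:RadGrad}, this is exactly the characterization of~$\mcG^\rad_\alpha u^\rad$ in the Hilbert space~$L^2(\LP{\theta,\nu})^\rad$, so~$f^\rad=\mcG^\rad_\alpha u^\rad$, i.e.~$(\mcG_\alpha u)^\rad=\mcG^\rad_\alpha u^\rad$.

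\emph{Step 2: semigroup and generator, items~\ref{i:c:PreservationRadiality:3} and~\ref{i:c:PreservationRadiality:2}.} The semigroup statement follows from~\ref{i:c:PreservationRadiality:4} by the exponential formula $\mcT_t=\lim_{n\to\infty}(\tfrac nt\mcG_{n/t})^n$ in the strong operator topology (Hille--Yosida), since~$\emparg^\rad$ is a bounded operator commuting with each~$\mcG_{n/t}$ by Step~1, hence with their powers, and it is $L^2$-continuous; passing to the strong limit yields $\mcT^\rad_t u^\rad=(\mcT_t u)^\rad$. Alternatively one can observe directly that $L^2(\LP{\theta,\nu})^\rad$ is a closed subspace left invariant by~$\emparg^\rad$ and argue via the spectral theorem. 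For the generator: if~$u\in\dom{\mcL}$, then~$\tfrac1t(\mcT_tu-u)\to\mcL u$ in~$L^2(\LP{\theta,\nu})$; applying the continuous operator~$\emparg^\rad$ and using~\ref{i:c:PreservationRadiality:3} gives $\tfrac1t(\mcT^\rad_t u^\rad-u^\rad)\to(\mcL u)^\rad$, which means~$u^\rad\in\dom{\mcL^\rad}$ and~$\mcL^\rad u^\rad=(\mcL u)^\rad$.

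I do not expect a genuine obstacle here: the content is entirely packaged into Proposition~\ref{p:OrthoProj} and Lemma~\ref{l:RadGrad}, and what remains is the routine ``invariant closed subspace under a self-adjoint semigroup'' bookkeeping. The only mild care needed is to be explicit that the identity $\mcE(f^\rad,w)=\mcE^\rad(f^\rad,w)$ for radial~$w$ follows from polarizing $\mcE(u^\rad)=\mcE^\rad(u^\rad)$ (Lemma~\ref{l:RadGrad}), and that~$\emparg^\rad$ genuinely maps~$\dom{\mcE}$ into itself so that the test against~$v=w^\rad$ in Step~1 and the manipulation~\eqref{eq:p:OrthoProj:1} are legitimate --- both of which are already recorded.
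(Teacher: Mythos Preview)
Your proposal is correct and follows exactly the approach the paper has in mind: the paper does not actually prove this corollary but simply remarks, immediately before stating it, that the self-adjointness of~$\emparg^\rad$ on both~$L^2(\LP{\theta,\nu})$ and~$\dom{\mcE}_1$ (Proposition~\ref{p:OrthoProj}) together with the intertwining identity $\dom{\mcE^\rad}=\dom{\mcE}^\rad$, $\mcE(u^\rad)=\mcE^\rad(u^\rad)$ has the ``standard yet important consequence'' that~$\emparg^\rad$ intertwines the semigroups, resolvents, and generators. Your resolvent computation via the variational characterization, followed by Hille--Yosida for the semigroup and the difference-quotient limit for the generator, is precisely the standard bookkeeping the paper omits.
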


\begin{lemma}\label{l:RadCore}
 For every $u \in \dom{\mcE^\rad}$ there exists a sequence of cylinder functions $\seq{u_n}_n$ such that $u_n \to u$ w.r.t.~$(\mcE^\rad)_1^{1/2}$.  In particular  $\FC{\infty,\infty}{c,c}{\infty}{c} \cap \dom{\mcE^\rad}$  is a form core for~$\tparen{\mcE^\rad,\dom{\mcE^\rad}}$.
\end{lemma}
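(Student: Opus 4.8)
The statement to be proved (Lemma~\ref{l:RadCore}) asserts that $\FC{\infty,\infty}{c,c}{\infty}{c}\cap\dom{\mcE^\rad}$ is a form core for $\tparen{\mcE^\rad,\dom{\mcE^\rad}}$. The plan is to use the $\mcE_1^{1/2}$-orthogonal projection $\emparg^\rad$ of Proposition~\ref{p:OrthoProj} together with the fact that $\FC{\infty,\infty}{c,c}{\infty}{c}$ is a form core for the ambient form $\tparen{\mcE,\dom{\mcE}}$ (which is true by construction, since $\tparen{\mcE,\dom{\mcE}}$ is \emph{defined} as the closure of $\tparen{\mcE,\FC{\infty,\infty}{c,c}{\infty}{c}}$, Theorem~\ref{t:Main}\ref{i:t:Main:1}). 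First I would fix $u\in\dom{\mcE^\rad}\subset\dom{\mcE}$ and pick, by the core property of $\FC{\infty,\infty}{c,c}{\infty}{c}$, a sequence $\seq{u_n}_n\subset\FC{\infty,\infty}{c,c}{\infty}{c}$ with $u_n\to u$ in $\mcE_1^{1/2}$. Applying the radial projection, I would set $v_n\eqdef u_n^\rad$; by Lemma~\ref{l:RadGrad} each $u_n^\rad$ lies in $\dom{\mcE}$ (indeed in $\dom{\mcE^\rad}$, since $u_n^\rad$ is radial), and the inequality $\mcE(u_n^\rad-u_m^\rad)\leq\mcE(u_n-u_m)$ from Lemma~\ref{l:RadGrad} (applied to $u_n-u_m$, using linearity of $\emparg^\rad$) together with the $L^2$-contractivity $\norm{u_n^\rad-u_m^\rad}_{L^2(\LP{\theta,\nu})}\leq\norm{u_n-u_m}_{L^2(\LP{\theta,\nu})}$ shows $\seq{u_n^\rad}_n$ is $\mcE_1^{1/2}$-Cauchy. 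Since $\emparg^\rad$ is $L^2(\LP{\theta,\nu})$-continuous, the $\mcE_1^{1/2}$-limit of $v_n$ must be $u^\rad=u$; hence $u_n^\rad\to u$ in $\mcE_1^{1/2}$.

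\textbf{Handling the cylinder-function requirement.} The only subtlety is that $u_n^\rad$ is \emph{a priori} not itself a cylinder function — the radial projection of an element of $\FC{\infty,\infty}{c,c}{\infty}{c}$ involves integration against $\DF{\nu}$ and need not be expressible as $F\circ\mbff^\trid$. This is exactly why the statement says ``there exists a sequence of cylinder functions $\seq{u_n}_n$ such that $u_n\to u$ w.r.t.\ $(\mcE^\rad)_1^{1/2}$'' rather than claiming $u_n^\rad$ themselves are cylindrical. So I would argue in two layers: (i) by the above, radial functions in $\dom{\mcE}$ are $\mcE_1^{1/2}$-approximated by $\set{v^\rad:v\in\FC{\infty,\infty}{c,c}{\infty}{c}}$; (ii) each such $v^\rad$ is in turn $\mcE_1^{1/2}$-approximated by genuine cylinder functions. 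For (ii) I would note that $v^\rad$ depends on $\mu$ only through $\mu M=\car^\trid(\mu)$, i.e.\ $v^\rad=h\circ\car^\trid$ for the Borel function $h(t)\eqdef\int v(t\eta)\,\diff\DF{\nu}(\eta)$, and $h\circ\car^\trid\in\dom{\mcE^\rad}$ with $\mcE^\rad(h\circ\car^\trid)=4\mcE^\ver(h\circ\car^\trid)$ expressible via the one-dimensional form on $L^2(\lambda_\theta)$ through the isomorphism $\tilde\emparg$ in~\eqref{eq:OperatorTilde} — this is where the squared-Bessel/one-dimensional structure enters. Then approximating $h$ by functions $\nchi\in\rmC^\infty_c(\R^+_0)$ in the appropriate Sobolev sense on $(\R^+_0,\lambda_\theta)$, and using Lemma~\ref{l:StandardCylinderSubset} together with the explicit gradient formulas~\eqref{eq:l:TridVerGrad:1}, gives that $\nchi\circ\car^\trid\in\FC{\infty,\infty}{c,c}{\infty}{c}\cap\dom{\mcE^\rad}$ and converges to $v^\rad$ in $\mcE_1^{1/2}$. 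A diagonal argument across the two layers then produces the desired sequence of cylinder functions.

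\textbf{Main obstacle.} The step I expect to be genuinely delicate is (ii): showing that $h\circ\car^\trid$ (for $h$ an arbitrary Borel function arising as a radial average of a cylinder function) can be approximated in $\mcE_1^{1/2}$ by $\nchi\circ\car^\trid$ with $\nchi\in\rmC^\infty_c(\R^+_0)$. One needs a clean identification of $\mcE^\rad$ restricted to $\set{h\circ\car^\trid}$ with (a multiple of) the Dirichlet integral $\int_0^\infty \abs{h'(t)}^2\,t\,\diff\lambda_\theta(t)$ — essentially the squared-Bessel Dirichlet form of Appendix~\ref{app:Bessel} — so that the core property for the one-dimensional form transfers back. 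Concretely, starting from $v=F\circ\mbff^\trid$ one computes $h(t)=\int F(t,t f_1^\trid\eta,\dots,t f_k^\trid\eta)\,\diff\DF{\nu}(\eta)$, which is smooth in $t$ with controlled derivatives; the point is to verify $\mcE^\ver(h\circ\car^\trid)$ equals the expected one-dimensional expression using the Mecke identity~\eqref{eq:MeckeLP} and~\eqref{eq:l:TridVerGrad:1}, and then invoke density of $\rmC^\infty_c$ in the relevant weighted Sobolev space on $\R^+_0$. Once that identification is in hand, the rest is the routine projection-and-diagonalize argument sketched above. I would present (i) in full and refer to Appendix~\ref{app:Bessel} and Lemma~\ref{l:StandardCylinderSubset} for the one-dimensional input in (ii).
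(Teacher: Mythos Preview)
Your overall strategy---approximate $u\in\dom{\mcE^\rad}$ by cylinder functions $u_n$, radialize, and use the $\mcE_1^{1/2}$-contractivity of $\emparg^\rad$ from Lemma~\ref{l:RadGrad} and Proposition~\ref{p:OrthoProj} to get $u_n^\rad\to u$---is precisely the paper's. The difference is only in how one checks that $u_n^\rad$ is again cylindrical.

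You flag this as the delicate step and propose a second approximation layer via the one-dimensional squared-Bessel form, but the obstacle is a phantom. You already observe that $v^\rad=h\circ\car^\trid$ with $h(t)=\int F(t,tf_1^\trid\eta,\dots,tf_k^\trid\eta)\,\diff\DF{\nu}(\eta)$ smooth in $t$; what you miss is that $h$ also has compact support, since $F\in\rmC_c^\infty(\R^{k+1})$ forces $h(t)=0$ once $t$ leaves the projection of $\supp F$ onto the first coordinate. Thus $h\in\rmC_c^\infty(\R_0^+)$ and $v^\rad=h\circ\car^\trid$ is \emph{already} an element of $\FC{\infty,\infty}{c,c}{\infty}{c}\cap\dom{\mcE^\rad}$ (functions of this form are cylindrical, cf.\ Lemma~\ref{le:trunc1}). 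Your layer (ii) is therefore unnecessary, and with it the need to import any one-dimensional core result. Had you pursued (ii), you would also need to avoid invoking Proposition~\ref{p:IsomorphicForms}, whose proof relies on the very lemma you are proving.

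The paper argues the same point by a more explicit device: it chooses the approximants $u_k$ via Proposition~\ref{prop:dolore} with $\AA_2=\R[t_1,t_2,\dots]_\fin$, so that each $F_k$ coincides with a polynomial $p_k$ on the relevant hypercube. Homogeneity of monomials then makes $u_k^\rad$ \emph{explicitly} of the form $c_p\,\tilde\nchi\circ\car^\trid$ with $\tilde\nchi(t)=\nchi(t)\,t^{\deg p}\in\rmC_c^\infty(\R_0^+)$ and $c_p=\int p(\mbff^\trid\eta)\,\diff\DF{\nu}(\eta)$ a constant moment of $\DF{\nu}$. Both routes reach the same conclusion; the paper's gives closed-form expressions, while your dominated-convergence observation (once completed as above) avoids the polynomial detour altogether.
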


\begin{proof}
Fix~$u\in\dom{\mcE^\rad}\subset \dom{\mcE}$. By Proposition \ref{prop:dolore} with the choices $\AA_1= \rmC_c^\infty(\R_0^+)$, $\AA_3= \rmC_c^\infty(M)$, and $\AA_2 = \R[t_1, t_2, \dotsc]_\fin$, we can find functions~$u_k\in \FC{\infty,\infty}{c,c}{\infty}{c}$ of the form
\[
u_k=(\nchi_k\circ \car^\trid) \cdot (F_k\circ \mbff_k^\trid) \comma \qquad \mbff_k=\seq{f_{n,1},\dotsc,f_{n,N_k}}\comma
\]
as in~\eqref{eq:CylinderF}, converging to~$u$ w.r.t.~$\mcE^{1/2}_1$, and additionally so that~$F_k$ coincides with a multivariate polynomial~$p_k$ on the hypercube
\[
\prod_{i=1}^{N_k} \tbraket{0,\norm{\phi_{n,i}}_{\rmC^0} \max_k\supp(\nchi_k)}\fstop
\]

\paragraph{Claim: ${u_k}^\rad\in \FC{\infty,\infty}{c,c}{\infty}{c}$}
For simplicity of notation, we drop the subscript~$k$ and write~$q\eqdef N_k$. Then
\begin{align*}
u^\rad = (\nchi\circ\car^\trid ) \int F\tparen{(\emparg M)\, \mbff^\trid(\eta)}\, \diff\DF{\nu}(\eta) = (\nchi\circ\car^\trid) \int p\tparen{\car^\trid\cdot \mbff^\trid(\eta) } \diff\DF{\nu}(\eta) \fstop
\end{align*}
Since~$\FC{\infty,\infty}{c,c}{\infty}{c}$ is a vector space, it suffices to show the assertion in the case when~$p\colon \seq{t_1,\dotsc, t_q}\mapsto \prod_j^q t_j^{n_j}$ is a monic monomial.
In this case,
\[
u^\rad = (\nchi\circ \car^\trid) \cdot (\car^\trid)^{n_1+\cdots + n_q} \int \tparen{p\circ \mbff^\trid}(\eta)\, \diff\DF{\nu}(\eta)\comma
\]
where the integral is a constant~$c_p$ independent of~$\mu$. (See~\cite[Cor.~3.5]{LzDSQua23} for an explicit computation of~$c_p$.)
Since~$\tilde\nchi\colon t \mapsto \nchi(t)\, t^{n_1+\cdots + n_q}$ is itself in~$\rmC^\infty_c(\R^+_0)$, this shows that~$u^\rad=c_p\tilde\nchi\circ\car^\trid$ is itself a cylindrical function, proving the claim.

\medskip

In order to conclude the assertion, it suffices to show that~${u_k}^\rad$ is $(\mcE^\rad)^{1/2}_1$-convergent to~$u$.
This follows from  Proposition \ref{p:OrthoProj}  and the assumption on~$\seq{u_k}_k$.
\end{proof}

\subsubsection{The radial-part process}
In the next propositions, we relate (properties of) $\tparen{\mcE,\dom{\mcE}}$ with (properties of) the Dirichlet form~$\tparen{E^\theta,\dom{E^\theta}}$ of the $\theta$-dimensional squared Bessel process discussed in Appendix~\ref{app:Bessel}.

\begin{proposition}\label{p:IsomorphicForms}
The operator
\begin{equation}
\tilde\emparg\colon L^2(\LP{\theta,\nu})^\rad \to L^2(\lambda_\theta) \comma \qquad u\longmapsto \tilde u \ : \ u=\tilde u\circ \car^\trid
\end{equation}
is a unitary operator.
Its non-relabeled restriction to~$\dom{\mcE^\rad}$ is a unitary operator~$\tilde\emparg\colon ( \frac{1}{4} \mcE^\rad,\dom{\mcE^\rad})\to\tparen{E^\theta,\dom{E^\theta}}$.
\end{proposition}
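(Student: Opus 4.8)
\textbf{Proof proposal for Proposition~\ref{p:IsomorphicForms}.}
The plan is to verify the two unitarity claims separately, the first being essentially bookkeeping and the second requiring the explicit identification of the radial energy. First I would check that $\tilde\emparg$ is a well-defined isometric bijection from $L^2(\LP{\theta,\nu})^\rad$ onto $L^2(\lambda_\theta)$. Injectivity and surjectivity are clear from the definition of $L^2(\LP{\theta,\nu})^\rad$: a radial function $u = u^\rad$ depends only on the total mass $\car^\trid(\mu) = \mu M$, so it factors uniquely as $u = \tilde u \circ \car^\trid$ for a Borel $\tilde u \colon \R^+ \to \R$, and conversely every such $\tilde u$ gives a radial function. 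Isometry follows by disintegrating $\LP{\theta,\nu}$ through the isomorphism~\eqref{eq:InfLebesgueIsomor}: pushing forward $\DF{\nu}\otimes\lambda_\theta$ under $\isomor^{-1}$ and integrating out the $\mcP(M)$-factor (a probability measure) leaves exactly $\lambda_\theta$ on the mass coordinate, so $\int |u|^2 \diff\LP{\theta,\nu} = \int |\tilde u|^2 \diff\lambda_\theta$. This also shows $\tilde\emparg$ intertwines the two Hilbert space structures, so that it is unitary in the first claim.

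Next I would turn to the restriction to $\dom{\mcE^\rad}$. Two things must be checked: that $\tilde\emparg(\dom{\mcE^\rad}) = \dom{E^\theta}$, and that $\tfrac14\mcE^\rad(u) = E^\theta(\tilde u)$ for $u \in \dom{\mcE^\rad}$. By Lemma~\ref{l:RadCore}, $\FC{\infty,\infty}{c,c}{\infty}{c} \cap \dom{\mcE^\rad}$ is a form core for $\tparen{\mcE^\rad,\dom{\mcE^\rad}}$, and by the Claim in the proof of Lemma~\ref{l:RadCore} the radial parts of cylinder functions are precisely functions of the form $\tilde\nchi \circ \car^\trid$ with $\tilde\nchi \in \rmC^\infty_c(\R^+_0)$; these form (a dense subset of) a form core for $\tparen{E^\theta,\dom{E^\theta}}$ as constructed in Appendix~\ref{app:Bessel}. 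So it suffices to establish the energy identity on this core and then pass to the closure, using that $\tilde\emparg$ is an $L^2$-isometry: a sequence is $(\mcE^\rad)_1^{1/2}$-Cauchy iff its image is $(E^\theta)_1^{1/2}$-Cauchy, which forces the equality of the closed forms and of their domains.

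The energy identity on the core is the computational heart. Given $u = \tilde\nchi \circ \car^\trid$ with $\tilde\nchi \in \rmC^\infty_c(\R^+_0)$, by Lemma~\ref{l:RadGrad} we have $\gradW u \equiv 0$ and $\mcE(u) = \mcE^\rad(u) = 4\,\mcE^\ver(u)$, so it remains to compute $\mcE^\ver(u) = \int \int_M |(\gradH u)_\mu(x)|^2 \diff\mu(x)\,\diff\LP{\theta,\nu}(\mu)$. Using the formula~\eqref{eq:l:TridVerGrad:1} for the vertical gradient of a reduced cylinder function, or directly differentiating $\diff_t|_{t=0}\, \tilde\nchi\tparen{(e^{tf}\mu) M}$, one finds $(\gradH u)_\mu(x) = \tilde\nchi'(\mu M)$ for $\LP{\theta,\nu}\otimes\nu$-a.e.\ $(\mu,x)$ (here the atom term drops out since $\mu_x = 0$ a.e.\ by Lemma~\ref{l:Negligible}). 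Hence $\mcE^\ver(u) = \int (\mu M)\, |\tilde\nchi'(\mu M)|^2 \diff\LP{\theta,\nu}(\mu) = \int_0^\infty t\, |\tilde\nchi'(t)|^2 \diff\lambda_\theta(t)$, using the disintegration again. Comparing with the Dirichlet form of the squared Bessel process of dimension $\theta$ from Appendix~\ref{app:Bessel} --- which, for the generator $t\mapsto 2t\tilde\nchi'' + \theta\tilde\nchi'$ associated to $\diff x_t = \sqrt{2x_t}\diff W_t + \theta\diff t$, has energy $E^\theta(\tilde\nchi) = \int_0^\infty t\,|\tilde\nchi'(t)|^2 \diff\lambda_\theta(t)$ by integration by parts against $\diff\lambda_\theta(t) = t^{\theta-1}\diff t/\Gamma(\theta)$ --- we get $\mcE^\ver(u) = E^\theta(\tilde u)$, so $\tfrac14\mcE^\rad(u) = \mcE^\ver(u) = E^\theta(\tilde u)$, as desired. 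The main obstacle I anticipate is not any single step but rather bookkeeping the factor of $4$ consistently (it enters through the definition~\eqref{eq:DirichletForm0} of $\mcE$ and is exactly what produces the time-rescaling by $\tfrac14$ in the radial process) and making sure the core used for $\tparen{E^\theta,\dom{E^\theta}}$ in Appendix~\ref{app:Bessel} matches $\ttset{\tilde\nchi\circ\car^\trid : \tilde\nchi\in\rmC^\infty_c(\R^+_0)}$ pushed through $\tilde\emparg$, so that the closure argument genuinely identifies the two forms on the nose.
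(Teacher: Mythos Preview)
Your proposal is correct and follows essentially the same route as the paper: both arguments use~\eqref{eq:InfLebesgueIsomor} for the $L^2$-unitarity, reduce the form identification to cores via Lemma~\ref{l:RadCore} and Lemma~\ref{l:CBIcore}, and verify $\tfrac14\mcE^\rad(\nchi\circ\car^\trid)=\int_0^\infty t\,\nchi'(t)^2\,\diff\lambda_\theta(t)=E^\theta(\nchi)$ on radial cylinder functions (the paper invokes~\eqref{eq:Fdiff} directly rather than going through~\eqref{eq:l:TridVerGrad:1} and Lemma~\ref{l:Negligible}, but the computation is the same). One inconsequential slip: the Bessel generator is $L^\theta f=t f''+\theta f'$, not $2t f''+\theta f'$; since you use only the quadratic form $E^\theta$, which you state correctly, this does not affect the argument.
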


\begin{proof}
The fact that~$\tilde\emparg\colon L^2(\LP{\theta,\nu})^\rad \to L^2(\lambda_\theta)$ is unitary follows from~\eqref{eq:InfLebesgueIsomor}.  To show the second assertion, by virtue of  Lemma~\ref{l:RadCore} and Lemma~\ref{l:CBIcore}, it suffices to show that $\tilde\emparg\colon (\frac{1}{4}\mcE^\rad,\FC{\infty}{0}{\infty}{0}\mcM(M) \cap \dom{\mcE^\rad})\to\tparen{E^\theta,\rmC^\infty_c(\R^+_0)}$ is unitary w.r.t.~$(\frac{1}{4}\mcE^\rad)_1^{1/2}$ and $(E^\theta)_1^{1/2}$. Clearly, for every $\nchi \in  \rmC^\infty_c(\R_0^+)$ the function $u\eqdef\nchi\circ \car^\trid$ satisfies $\tilde{u}=\nchi$ so that surjectivity is proven.  Let now $u \in  \FC{\infty,\infty}{c,c}{\infty}{c} \cap \dom{\mcE^\rad}$  and observe that it has to be  of the form~$u= \nchi\circ \car^\trid$ for some~$\nchi\in \rmC^\infty_c( \R_0^+)$, which then also satisfies $\tilde{u}=\nchi$. Then, by~\eqref{eq:Fdiff},
\begin{align*}
 \tfrac{1}{4}\,\mcE^\rad(u)=&\  \tfrac{1}{4}\,\mcE(u)=\int \mu M\, \nchi'(\mu M)^2\, \diff\LP{\theta,\nu}(\mu) = \int_0^\infty t \nchi'(t)^2\, \diff \lambda_\theta(t)=  E^\theta(\tilde{u})\fstop \qedhere
\end{align*}
\end{proof}

\begin{lemma}\label{l:Conservativeness}
The form~$\tparen{\mcE,\dom{\mcE}}$ is conservative.
\begin{proof}
By the characterization of conservativeness in~\eqref{eq:Conservative:1},~\eqref{eq:Conservative:1c}, it suffices to show that there exist functions~$u_n\in \dom{\mcE}$ satisfying
\begin{equation}\label{eq:l:Conservativeness:1}
0\leq u_n \leq 1\comma \qquad \lim_{n\to\infty} u_n= 1  \as{\LP{\theta,\nu}}\comma \qquad \lim_{n\to\infty} \mcE(u_n,v)=0\comma
\end{equation}
for some~$v= \mcG_\alpha f$, with~$\alpha>0$ and~$f\in L^1(\LP{\theta,\nu})\cap L^2(\LP{\theta,\nu})$ and~$f>0$ $\LP{\theta,\nu}$-a.e..

Since~$\tparen{E^\theta,\dom{E^\theta}}$ is conservative by Proposition~\ref{p:CBI}, there exist functions~$\tilde u_n\in \dom{E^\theta}$ satisfying~\eqref{eq:Conservative:1},~\eqref{eq:Conservative:1b} (with~$\lambda_\theta$ in place of~$\mssm$ and~$E^\theta$ in place of~$E$.)
Let~$u_n$  be  defined by~$u_n\eqdef  \tilde u_n\circ \car^\trid$ and note that~$u_n=u_n^\rad$.
By~\eqref{eq:InfLebesgueIsomor}, it is readily verified~$u_n$ satisfies~\eqref{eq:Conservative:1} (with~$\LP{\theta,\nu}$ in place of~$\mssm$).

Now, fix~$\tilde f\in L^1(\lambda_\theta)\cap L^2(\lambda_\theta)$ with~$\tilde f>0$ $\lambda_\theta$-a.e., and let~$f$ be its radial extension to~$\mcM(M)$ defined as above.
Note that~$f=f^\rad$ satisfies~$f\in L^1(\LP{\theta,\nu})\cap L^2(\LP{\theta,\nu})$ and~$f>0$ $\LP{\theta,\nu}$-a.e..
Fix any~$\alpha>0$ and set~$v\eqdef \mcG_\alpha f$.
By standard properties of resolvents,~$\mcG_\alpha L^1(\LP{\theta,\nu})\subset L^1(\LP{\theta,\nu})$ and~$\mcG_\alpha L^2(\LP{\theta,\nu})\subset \dom{\mcE}$. 
Thus~$v\in\dom{\mcE}\cap L^1(\LP{\theta,\nu})$.

Since~$f=f^\rad$, we have~$v=v^\rad$ by Corollary~\ref{c:PreservationRadiality}\ref{i:c:PreservationRadiality:4}, and then~$v\in\dom{\mcE^\rad}$.
Thus, by Proposition~\ref{p:IsomorphicForms}, we have~$\tilde v\in\dom{E^\theta}$.
Furthermore,~$\tilde v\in L^1(\lambda_\theta)$ by~\eqref{eq:InfLebesgueIsomor}.
Finally, again by  Proposition~\ref{p:IsomorphicForms},
\begin{align*}
\lim_{n\to\infty} \mcE(u_n, v) =  \lim_{n \to  \infty} \mcE^{\rad}(u_n, v) =  \lim_{n \to  \infty} 4  E^\theta(\tilde u_n, \tilde v) = 0
\end{align*}
by~\eqref{eq:Conservative:1},~\eqref{eq:Conservative:1b} for~$\tilde u_n$ as shown above.
This shows~\eqref{eq:l:Conservativeness:1} and concludes the proof.
\end{proof}
\end{lemma}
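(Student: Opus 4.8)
The plan is to deduce conservativeness of $\tparen{\mcE,\dom{\mcE}}$ from conservativeness of the radial-part form by exploiting the intertwining machinery already established. The key point is that the radial part of $\mcE$ is (up to the factor $\tfrac14$) unitarily equivalent to the Dirichlet form $\tparen{E^\theta,\dom{E^\theta}}$ of the squared Bessel process of dimension $\theta$, which is conservative by Proposition~\ref{p:CBI} in Appendix~\ref{app:Bessel}. I would use the resolvent-based characterization of conservativeness recalled in~\eqref{eq:Conservative:1},~\eqref{eq:Conservative:1c}: it suffices to produce functions $u_n\in\dom{\mcE}$ with $0\le u_n\le 1$, $u_n\to 1$ $\LP{\theta,\nu}$-a.e., and $\mcE(u_n,v)\to 0$ for a single well-chosen $v=\mcG_\alpha f$, with $f\in L^1\cap L^2(\LP{\theta,\nu})$ strictly positive a.e.

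\medskip

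First I would pull back the conservativeness witnesses from the Bessel side. Since $\tparen{E^\theta,\dom{E^\theta}}$ is conservative, there exist $\tilde u_n\in\dom{E^\theta}$ with $0\le\tilde u_n\le1$, $\tilde u_n\to1$ $\lambda_\theta$-a.e., and $E^\theta(\tilde u_n,\tilde w)\to0$ for all $\tilde w\in\dom{E^\theta}\cap L^1(\lambda_\theta)$. Define $u_n\eqdef\tilde u_n\circ\car^\trid$; these are radial, and via the unitary isomorphism $L^2(\LP{\theta,\nu})^\rad\cong L^2(\lambda_\theta)$ in~\eqref{eq:InfLebesgueIsomor} one checks at once that $u_n\in\dom{\mcE^\rad}\subset\dom{\mcE}$ and that $u_n$ satisfies the first two requirements in~\eqref{eq:Conservative:1}. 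Next, fix $\tilde f\in L^1(\lambda_\theta)\cap L^2(\lambda_\theta)$ with $\tilde f>0$ $\lambda_\theta$-a.e., let $f$ be its radial extension to $\mcM(M)$, so $f=f^\rad\in L^1(\LP{\theta,\nu})\cap L^2(\LP{\theta,\nu})$ and $f>0$ a.e.; pick any $\alpha>0$ and set $v\eqdef\mcG_\alpha f$. Standard resolvent properties give $v\in\dom{\mcE}\cap L^1(\LP{\theta,\nu})$, and since $f=f^\rad$, Corollary~\ref{c:PreservationRadiality}\ref{i:c:PreservationRadiality:4} yields $v=v^\rad$, hence $v\in\dom{\mcE^\rad}$.

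\medskip

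The final step is the computation $\lim_n\mcE(u_n,v)=0$. Because $u_n=u_n^\rad$, Proposition~\ref{p:OrthoProj} (self-adjointness of $\emparg^\rad$ for $\mcE_1^{1/2}$) together with the intertwining property $\mcE(u^\rad,v)=\mcE^\rad(u^\rad)$-compatibility gives $\mcE(u_n,v)=\mcE(u_n^\rad,v)=\mcE^\rad(u_n,v^\rad)=\mcE^\rad(u_n,v)$; then Proposition~\ref{p:IsomorphicForms} turns this into $4E^\theta(\tilde u_n,\tilde v)$, where $\tilde v\in\dom{E^\theta}$ and $\tilde v\in L^1(\lambda_\theta)$ by~\eqref{eq:InfLebesgueIsomor}. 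Conservativeness of $E^\theta$ now forces $E^\theta(\tilde u_n,\tilde v)\to0$, completing the verification of~\eqref{eq:Conservative:1c}.

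\medskip

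I expect the genuine content of the argument to be entirely front-loaded into the preceding lemmas (the radial-projection calculus of Proposition~\ref{p:OrthoProj}, the intertwining Corollary~\ref{c:PreservationRadiality}, and the unitary identification with the Bessel form in Proposition~\ref{p:IsomorphicForms}), so the present lemma is essentially a bookkeeping exercise; the only point requiring a little care is ensuring that the chosen test function $v=\mcG_\alpha f$ lands simultaneously in $\dom{\mcE}\cap L^1(\LP{\theta,\nu})$ and in the radial subspace, and that its image $\tilde v$ inherits $L^1(\lambda_\theta)$-integrability — both of which follow from $f$ being a strictly positive radial function in $L^1\cap L^2$ and from standard resolvent estimates. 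No step here is a serious obstacle.
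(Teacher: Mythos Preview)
Your proposal is correct and follows essentially the same route as the paper: lift conservativeness witnesses $\tilde u_n$ from the Bessel form $E^\theta$ to radial functions $u_n=\tilde u_n\circ\car^\trid$, choose a radial $f>0$ so that $v=\mcG_\alpha f$ is radial by the intertwining Corollary~\ref{c:PreservationRadiality}, and then reduce $\mcE(u_n,v)=\mcE^\rad(u_n,v)=4E^\theta(\tilde u_n,\tilde v)\to 0$ via Proposition~\ref{p:IsomorphicForms}. The only cosmetic difference is that since both $u_n$ and $v$ are already radial you do not actually need the self-adjointness of $\emparg^\rad$ from Proposition~\ref{p:OrthoProj} here; the equality $\mcE(u_n,v)=\mcE^\rad(u_n,v)$ is immediate from $u_n,v\in\dom{\mcE^\rad}$.
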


\begin{lemma}\label{l:Recurrence}
The form~$\tparen{\mcE,\dom{\mcE}}$ is recurrent if and only if so is~$\tparen{E^\theta,\dom{E^\theta}}$.
\begin{proof}
 By Proposition~\ref{p:IsomorphicForms} it is enough to show that $\tparen{\mcE,\dom{\mcE}}$ is recurrent if and only if $\tparen{\frac{1}{4}\mcE^\rad,\dom{\mcE^\rad}}$ is recurrent. To prove this equivalence we will make use of the characterization of recurrence in \eqref{eq:l:Recurrence:1}.

Assume that there exist functions $u_n \in \dom{\mcE^\rad}$ satisfying \eqref{eq:l:Recurrence:1} with $\mssm=\LP{\theta,\nu}$ and $E=\mcE^\rad$. Since~$\dom{\mcE^\rad}\subset \dom{\mcE}$ and~$\mcE^\rad=\mcE$ on~$\dom{\mcE^\rad}$, \eqref{eq:l:Recurrence:1} holds also with $\mssm=\LP{\theta,\nu}$ and $E=\mcE$.

Assume now that there exist functions $v_n \in \dom{\mcE}$ satisfying~\eqref{eq:l:Recurrence:1} with $\mssm=\LP{\theta,\nu}$ and $E=\mcE$. It is readily seen that~${v_n}^\rad$ satisfies~\eqref{eq:l:Recurrence:1a} (with~$\LP{\theta,\nu}$ in place of~$\mssm$). 
Furthermore,~${v_n}^\rad$ satisfies
\[
\lim_n \mcE^\rad({v_n}^\rad)\leq \lim_n \mcE(v_n) =0
\]
 by Lemma \ref{l:RadGrad}. Thus,~\eqref{eq:l:Recurrence:1} (with $\mssm=\LP{\theta,\nu}$ and $E=\mcE^\rad$) holds with~$u_n\eqdef {v_n}^\rad\in\dom{\mcE^\rad}$.
\end{proof}
\end{lemma}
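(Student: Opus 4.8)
The plan is to transfer the problem, via the radial projection $\emparg^{\rad}$ and Proposition~\ref{p:IsomorphicForms}, to the Dirichlet form $\tparen{E^\theta,\dom{E^\theta}}$ of the squared Bessel process. First I would observe that, since $\tilde\emparg$ is a unitary operator from $\tparen{\tfrac14\mcE^{\rad},\dom{\mcE^{\rad}}}$ onto $\tparen{E^\theta,\dom{E^\theta}}$ and multiplying a Dirichlet form by a fixed positive constant alters neither its domain nor the existence of sequences satisfying the recurrence criterion, recurrence of $\tparen{E^\theta,\dom{E^\theta}}$ is equivalent to recurrence of $\tparen{\mcE^{\rad},\dom{\mcE^{\rad}}}$. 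Hence it suffices to show that $\tparen{\mcE,\dom{\mcE}}$ is recurrent if and only if $\tparen{\mcE^{\rad},\dom{\mcE^{\rad}}}$ is; throughout I would use the characterization recalled in~\eqref{eq:l:Recurrence:1}, see~\cite[Thm.~1.6.3(ii), p.~58]{FukOshTak11}: a Dirichlet form $\tparen{E,\dom E}$ on $L^2(\mssm)$ is recurrent exactly when there exist $u_n\in\dom E$ with $0\le u_n\le 1$, $u_n\to 1$ $\mssm$-a.e., and $E(u_n)\to 0$.

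The implication ``$\mcE^{\rad}$ recurrent $\Rightarrow$ $\mcE$ recurrent'' is immediate: a sequence $\seq{u_n}_n\subset\dom{\mcE^{\rad}}$ realizing~\eqref{eq:l:Recurrence:1} for $\mcE^{\rad}$ lies in $\dom{\mcE}$ (since $\dom{\mcE^{\rad}}\subset\dom{\mcE}$) and satisfies $\mcE(u_n)=\mcE^{\rad}(u_n)\to 0$, so it realizes~\eqref{eq:l:Recurrence:1} for $\mcE$ as well. For the converse, starting from $\seq{v_n}_n\subset\dom{\mcE}$ satisfying~\eqref{eq:l:Recurrence:1} for $\mcE$, I would set $u_n\eqdef v_n^{\rad}$. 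By Lemma~\ref{l:RadGrad} one has $u_n\in\dom{\mcE^{\rad}}$ and $\mcE^{\rad}(u_n)=\mcE(v_n^{\rad})\le\mcE(v_n)\to 0$, which gives~\eqref{eq:l:Recurrence:1b}; and because $v_n^{\rad}(\mu)=\int v_n\tparen{\car^\trid(\mu)\,\eta}\,\diff\DF{\nu}(\eta)$ is an average of values of $v_n\in[0,1]$, we get $0\le u_n\le 1$.

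The only step needing a genuine argument is the $\LP{\theta,\nu}$-a.e.\ convergence $u_n\to 1$, which I would handle by descending to the product picture~\eqref{eq:InfLebesgueIsomor}. Writing $\mu=t\eta$ under the isomorphism $\isomor$, the hypothesis $v_n\to 1$ $\LP{\theta,\nu}$-a.e.\ says $v_n(t\eta)\to 1$ for $\tparen{\DF{\nu}\otimes\lambda_\theta}$-a.e.\ $(\eta,t)$; by Fubini, for $\lambda_\theta$-a.e.\ $t$ we have $v_n(t\eta)\to 1$ for $\DF{\nu}$-a.e.\ $\eta$, and then — using that $\DF{\nu}$ is a \emph{probability} measure, so that bounded convergence applies on the angular fibre — $u_n(t\eta')=\int v_n(t\eta)\,\diff\DF{\nu}(\eta)\to 1$ for $\lambda_\theta$-a.e.\ $t$, i.e.\ $u_n\to 1$ $\LP{\theta,\nu}$-a.e. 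This completes~\eqref{eq:l:Recurrence:1} for $\mcE^{\rad}$ with $u_n=v_n^{\rad}$, and hence the equivalence.

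The main (and essentially only) obstacle is precisely this a.e.-convergence step: since $\LP{\theta,\nu}$ is merely $\sigma$-finite, one cannot invoke dominated convergence for $v_n\to 1$ directly on $\mcM(M)$, so one is forced to use the product representation $\DF{\nu}\otimes\lambda_\theta$ and exploit the finiteness of $\DF{\nu}$ on the angular coordinate. Everything else reduces to the contraction estimate $\mcE(u^{\rad})\le\mcE(u)$ of Lemma~\ref{l:RadGrad} and to the unitary equivalence of Proposition~\ref{p:IsomorphicForms}; note that the transience statement (Lemma~\ref{l:Transient}) would then be proved by the same scheme, replacing the recurrence criterion by the corresponding transience criterion.
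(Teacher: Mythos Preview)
Your proof is correct and follows the same route as the paper: reduce via Proposition~\ref{p:IsomorphicForms} to the equivalence between recurrence of $\mcE$ and of $\mcE^{\rad}$, use the inclusion $\dom{\mcE^{\rad}}\subset\dom{\mcE}$ for one direction, and for the other apply the radial projection together with the contraction $\mcE(v^{\rad})\le\mcE(v)$ from Lemma~\ref{l:RadGrad}. You actually supply more detail than the paper does on the step $v_n^{\rad}\to 1$ $\LP{\theta,\nu}$-a.e.\ (the paper just writes ``it is readily seen''), and your Fubini/bounded-convergence argument on the angular fibre is the right way to justify it. One minor caveat: your closing remark that Lemma~\ref{l:Transient} ``would then be proved by the same scheme'' is slightly off---that lemma only asserts one implication and the paper handles it via the transience criterion~\eqref{eq:Transient} with a reference function, not by projecting a witnessing sequence.
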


Recall that a Dirichlet form~$(E,\dom{E})$ on~$L^2(\mssm)$ is called \emph{transient} if there exists a function~$g\in L^1(\mssm)^+\cap L^\infty(\mssm)$ such that
\begin{equation}\label{eq:Transient}
\int \abs{u} g\, \diff\mssm \leq E(u)^{1/2}\comma \qquad u\in\dom{E}\fstop
\end{equation}
Also note that, if~$(E,\dom{E})$ is additionally local, then it is enough to check \eqref{eq:Transient} on non-negative functions $u \in \dom{E}$.

\begin{lemma}\label{l:Transient}
If the form~$\tparen{E^\theta,\dom{E^\theta}}$ is transient, then so is the form~$\tparen{\mcE,\dom{\mcE}}$.

\begin{proof}
Assume~$\tparen{E^\theta,\dom{E^\theta}}$ is transient, and let~$ \tilde g_0  \colon \R_+\to\R$ be as in~\eqref{eq:Transient} for~$\tparen{E^\theta,\dom{E^\theta}}$.
Further let~$g\eqdef  2 \, (\tilde g_0\circ\car^\trid)\colon \mcM(M)\to\R$ be its lift to~$\mcM(M)$.  Note that $g\in L^1(\LP{\theta,\nu})^+\cap L^\infty(\LP{\theta,\nu})$.  
Then, for every non-negative~$u\in\dom{\mcE}$,
\begin{align*}
\int {u} g\, \diff \LP{\theta,\nu} =& \int {u} g^\rad\, \diff\LP{\theta,\nu} = \int {u}^\rad g\, \diff\LP{\theta,\nu}= \int_0^\infty {\widetilde{u^\rad}} \tilde{g} \,\diff\lambda_\theta
\\ 
 =& \ 2 \, \int_0^\infty {\widetilde{u^\rad}} \tilde{g_0}  \leq  2  \, \sqrt{ E^\theta\tparen{\widetilde{u^\rad}} }=  \mcE^\rad(u^\rad)^{1/2} \leq   \mcE(u)^{1/2}\comma
\end{align*}
where we used, respectively, that:~$\emparg^\rad$ is self-adjoint on~$L^2(\LP{\theta,\nu})$ by Proposition~\ref{p:OrthoProj}; that~$\tilde\emparg\colon L^2(\LP{\theta,\nu})^\rad\to L^2(\lambda_\theta)$ is unitary by Proposition~\ref{p:IsomorphicForms}; that $\tparen{E^\theta,\dom{E^\theta}}$  is transient  by assumption, so that we can apply~\eqref{eq:Transient}  with $u = \widetilde{u^\rad} \in \dom{E^\theta}$; that~$\tilde\emparg\colon ( \frac{1}{4}  \mcE^\rad,\dom{\mcE^\rad})\to\tparen{E^\theta,\dom{E^\theta}}$ is unitary  by Proposition~\ref{p:IsomorphicForms}; and finally  Lemma~\ref{l:RadGrad} for the last inequality. 
\end{proof} \end{lemma}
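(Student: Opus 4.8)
The plan is to verify the analytic characterization of transience in~\eqref{eq:Transient}: it suffices to produce a single weight $g\in L^1(\LP{\theta,\nu})^+\cap L^\infty(\LP{\theta,\nu})$ with $\int \abs{u}\,g\,\diff\LP{\theta,\nu}\leq\mcE(u)^{1/2}$ for all $u\in\dom{\mcE}$. Since $\tparen{\mcE,\dom{\mcE}}$ is a \emph{local} Dirichlet form (established earlier, e.g.\ via the identification with the Cheeger energy), it is in fact enough to check this inequality for non-negative $u\in\dom{\mcE}$, which is what I would do.

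First I would take the function $\tilde g_0$ witnessing transience of $\tparen{E^\theta,\dom{E^\theta}}$, namely $\tilde g_0\in L^1(\lambda_\theta)^+\cap L^\infty(\lambda_\theta)$ with $\int_0^\infty \abs{w}\,\tilde g_0\,\diff\lambda_\theta\leq E^\theta(w)^{1/2}$ for every $w\in\dom{E^\theta}$, and lift it radially by setting $g\eqdef 2\,(\tilde g_0\circ\car^\trid)\colon\mcM(M)\to\R$. By the product decomposition $\LP{\theta,\nu}\cong\DF{\nu}\otimes\lambda_\theta$ in~\eqref{eq:InfLebesgueIsomor}, the function $g$ is radial ($g=g^\rad$) and lies in $L^1(\LP{\theta,\nu})^+\cap L^\infty(\LP{\theta,\nu})$. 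Then, for non-negative $u\in\dom{\mcE}$, I would run the transfer chain: rewrite $\int u\,g\,\diff\LP{\theta,\nu}=\int u\,g^\rad\,\diff\LP{\theta,\nu}=\int u^\rad\,g\,\diff\LP{\theta,\nu}$ using the self-adjointness of the radial projection $\emparg^\rad$ on $L^2(\LP{\theta,\nu})$ (Proposition~\ref{p:OrthoProj}); push to $L^2(\lambda_\theta)$ through the unitary $\tilde\emparg$ (Proposition~\ref{p:IsomorphicForms}) to get $2\int_0^\infty\widetilde{u^\rad}\,\tilde g_0\,\diff\lambda_\theta$; apply the transience inequality for $E^\theta$ to $\widetilde{u^\rad}\in\dom{E^\theta}$, bounding this by $2\,E^\theta(\widetilde{u^\rad})^{1/2}$; use that $\tilde\emparg\colon(\tfrac14\mcE^\rad,\dom{\mcE^\rad})\to(E^\theta,\dom{E^\theta})$ is unitary (Proposition~\ref{p:IsomorphicForms}), i.e.\ $4\,E^\theta(\widetilde{u^\rad})=\mcE^\rad(u^\rad)$, so that $2\,E^\theta(\widetilde{u^\rad})^{1/2}=\mcE^\rad(u^\rad)^{1/2}$; and finally invoke Lemma~\ref{l:RadGrad}, which gives $u^\rad\in\dom{\mcE^\rad}$ together with $\mcE(u^\rad)\leq\mcE(u)$, hence $\mcE^\rad(u^\rad)^{1/2}\leq\mcE(u)^{1/2}$. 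Concatenating the inequalities yields $\int u\,g\,\diff\LP{\theta,\nu}\leq\mcE(u)^{1/2}$, which is exactly~\eqref{eq:Transient} for $\tparen{\mcE,\dom{\mcE}}$.

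The substance of the argument is entirely carried by the radial intertwining machinery of Propositions~\ref{p:OrthoProj} and~\ref{p:IsomorphicForms} and by Lemma~\ref{l:RadGrad}, so the only points requiring care are bookkeeping: justifying the reduction to non-negative test functions (which rests on locality of $\mcE$, already in hand), and tracking the factor $\tfrac14$ relating $\mcE^\rad$ and $E^\theta$, which is precisely what dictates the multiplicative constant $2$ in the definition of the lifted weight $g$ and must be checked not to spoil the final inequality. I do not expect any genuine obstacle here; this is a short transfer lemma.
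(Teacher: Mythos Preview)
Your proposal is correct and matches the paper's proof essentially line for line: the same radial lift $g=2(\tilde g_0\circ\car^\trid)$, the same transfer chain via Proposition~\ref{p:OrthoProj}, Proposition~\ref{p:IsomorphicForms}, and Lemma~\ref{l:RadGrad}, and the same handling of the factor $\tfrac14$ relating $\mcE^\rad$ to $E^\theta$. Your explicit remark that locality justifies the reduction to non-negative test functions is also present in the paper (just before the statement of the lemma).
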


%

\appendix
\section{The Dirichlet form of the squared Bessel process}\label{app:Bessel}
In this Appendix we construct the Dirichlet forms associated with \emph{squared Bessel processes}.
It is helpful to regard these processes as parametrized by their dimension, that is as a class of \emph{Continuous-state Branching Processes with Immigration} (shortly: \emph{CBI processes}).
A general treatment of CBI processes by means of Feller theory is classical and can be found in~\cite{KawWat71,ShiWat73}.
CBI processes form one-parameter families (in fact: convolution semigroups)~$\tseq{\mbbP^\theta_{a,b,c}}_{\theta\geq 0}$ of diffusion processes on the real line, indexed by real parameters~$a,b,c$ with~$a,c\geq 0$, cf.~\cite[Thm.~1.2 and Eqn.~(1.23)]{ShiWat73}.

Let~$\seq{W_t}_t$ be a standard Brownian motion on~$\R$ and denote by~$x^+\eqdef x \vee 0$ the positive part.
Here, we choose parameters~$a=1$,~$b=0$, and~$c=1$, resulting in the \emph{squared Bessel process of dimension~$\theta$}, the unique strong solution to the~\textsc{sde}
\begin{equation}\label{eq:SDE}
\diff x_t= \sqrt{2 x_t^+\, } \diff W_t + \theta\diff t\comma \qquad t> 0 \fstop
\end{equation}

Now, consider the quadratic form on~$L^2(\lambda_\theta)$ given by
\begin{equation}\label{eq:RadialForm}
\begin{gathered}
\dom{E^\theta}\eqdef \set{f\in \mathrm{AC}_\loc( \R^+)\cap L^2(\lambda_\theta): \int_0^\infty t\, f'(t)^2\, \diff\lambda_\theta<\infty }\comma 
\\
E^\theta(f,g)\eqdef \int_0^\infty t\, f'(t)\, g'(t)\, \diff\lambda_\theta(t) \fstop
\end{gathered}
\end{equation}

\begin{lemma}\label{l:CBI}
The form in~\eqref{eq:RadialForm} is a Dirichlet form on~$L^2(\lambda_\theta)$, with generator
\begin{equation}\label{eq:l:CBI:0}
\begin{gathered}
\dom{L^\theta}\eqdef \set{f\in \dom{E^\theta} : f'\in \mathrm{AC}_\loc(\R^+)\text{ and } \exists \lim_{t\downarrow 0} t^\theta\, f'(t)=0}\comma
\\
(L^\theta f)(t) \eqdef t f''(t) +\theta f'(t) \fstop
\end{gathered}
\end{equation}

\begin{proof}
In order to show that~\eqref{eq:RadialForm} is a Dirichlet form, it suffices to show that it coincides, in the notation of~\cite{FukOshTak11}, with the form~$\tparen{\mcE^+,\dom{\mcE^+}}$ in~\cite[Eqn.s~(3.3.20)--(3.3.21), p.~134]{FukOshTak11} for the choice~$D=\R^+$,~$\rho=\rho_\theta=\Gamma(\theta)^{-1}t^{\theta-1}$, and~$a(t)=\Gamma(\theta)^{-1}t^\theta$.
Indeed, by a standard integration by parts on~$\rmC^\infty_c(\R^+)$ (i.e.\ with vanishing boundary terms), we see that
\begin{equation}\label{eq:l:CBI:1}
E^\theta(\phi,\psi)= -\int_0^\infty \phi L^\theta \psi\, \diff \lambda_\theta(t)\comma \qquad \phi,\psi\in\rmC^\infty_c(\R^+) \fstop
\end{equation}
It then follows from~\cite[Thm.~3.3.1, p.~135]{FukOshTak11} that the form~$\tparen{E^\theta,\dom{E^\theta}}$ is a Dirichlet form corresponding to the maximal Markovian non-positive self-adjoint extension of~$\tparen{L^\theta,\rmC^\infty_c(\R^+)}$ with core~$\dom{E^\theta}\cap \rmC^\infty(\R^+)$ by~\cite[Lem.~3.3.3, p.~134]{FukOshTak11}.
Note that~$\rmC^\infty_c(\R^+_0)$ embeds injectively and continuously into~$\dom{E^\theta}$ by restriction to~$\R^+$.
We always regard functions in~$\rmC^\infty_c(\R^+_0)$ as elements of~$\dom{E^\theta}$ up to this identification.

Now, consider the space
\begin{align}\label{eq:l:CBI:2}
\msV_\theta&\eqdef\set{\psi\in \mathrm{AC}_\loc(\R^+) : \psi'\in\mathrm{AC}_\loc(\R^+)\comma \exists a_\psi\eqdef \lim_{t\downarrow 0} t^\theta\psi'(t)\in\R}\fstop
\end{align}

By integration by parts with~$\phi\in \rmC^\infty_c(\R^+_0)\subset\dom{E^\theta}$ and~$\psi\in\msV_\theta$,
\begin{equation}\label{eq:l:CBI:3}
E^\theta(\phi,\psi)= -\int_0^\infty \phi L^\theta \psi\, \diff \lambda_\theta(t) - \Gamma(\theta)^{-1}\braket{\lim_{t\downarrow 0} t^\theta \psi'(t)}\phi(0)\comma
\end{equation}
so that the distributional generator of~$\tparen{E^\theta,\rmC^\infty_c(\R^+_0)}$ is formally given by
\[
L^\theta\psi+\Gamma(\theta)^{-1}\braket{\lim_{t\downarrow 0} t^\theta \psi'(t)}\delta_0\comma \qquad \psi\in\msV_\theta\comma
\]
where~$\delta_0$ is the linear functional on~$\rmC^\infty_c(\R^+_0)$ defined by~$\delta_0\phi\eqdef \lim_{t\downarrow 0}\phi(t)$.
Since $\rmC^\infty_c(\R^+_0)\subset\dom{E^\theta}$, this shows that~$\dom{L^\theta}\subset \msV_\theta\cap\dom{E^\theta}$.

Finally, let~$\psi\in\msV_\theta$.
On the one hand, since~$\lambda_\theta$ is absolutely continuous w.r.t.~$\Leb{1}$, imposing~$L^\theta\psi\in L^2(\lambda_\theta)$ implies that~$a_\psi=0$.
As a consequence
\[
\dom{L^\theta}\subset \set{\psi\in \msV_\theta : a_\psi=0}\cap\dom{E^\theta}\fstop
\]
On the other hand, the integration by parts in~\eqref{eq:l:CBI:3} extends to~$\phi\in\dom{E^\theta}$ and~$\psi\in \set{\psi\in \msV_\theta : a_\psi=0}\cap\dom{E^\theta}$, so that~$\dom{L^\theta}= \set{\psi\in \msV_\theta : a_\psi=0}\cap\dom{E^\theta}$.
This concludes the proof of~\eqref{eq:l:CBI:0}. 
\end{proof}
\end{lemma}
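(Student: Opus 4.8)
The plan is to recognise $\tparen{E^\theta,\dom{E^\theta}}$ as the canonical Dirichlet form of a one-dimensional diffusion on $\R^+$ and to invoke the classical theory of such forms from~\cite[\S3.3]{FukOshTak11}. Writing $\diff\lambda_\theta(t)=\rho_\theta(t)\diff t$ with $\rho_\theta(t)\eqdef \Gamma(\theta)^{-1}t^{\theta-1}$, and observing that $t\,\diff\lambda_\theta(t)=a(t)\diff t$ with $a(t)\eqdef \Gamma(\theta)^{-1}t^\theta$, the form $E^\theta$ is exactly the Fukushima--Oshima--Takeda diffusion form on $D=\R^+$ with speed density $\rho_\theta$ and diffusion coefficient $a$. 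Since $\theta>0$, the function $a$ is smooth and bounded away from $0$ and $\infty$ on every compact subset of $\R^+$, and $\rho_\theta\in L^1_{\loc}(\R^+)$, so all the structural hypotheses of~\cite[\S3.3]{FukOshTak11} are met; the first step is then to check that $\tparen{E^\theta,\dom{E^\theta}}$ coincides with the form $\tparen{\mcE^+,\dom{\mcE^+}}$ of~\cite[Eqn.s~(3.3.20)--(3.3.21), p.~134]{FukOshTak11}, for which one needs only the integration by parts $E^\theta(\phi,\psi)=-\int_0^\infty \phi\,L^\theta\psi\,\diff\lambda_\theta$ on $\rmC^\infty_c(\R^+)$ (with vanishing boundary terms), using the elementary identity $(t^\theta\psi')'=t^{\theta-1}L^\theta\psi$ where $L^\theta\psi=t\psi''+\theta\psi'$. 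By~\cite[Thm.~3.3.1, p.~135]{FukOshTak11} this already shows that $\tparen{E^\theta,\dom{E^\theta}}$ is a Dirichlet form on $L^2(\lambda_\theta)$ realising the \emph{maximal} Markovian non-positive self-adjoint extension of $\tparen{L^\theta,\rmC^\infty_c(\R^+)}$, with core $\dom{E^\theta}\cap\rmC^\infty(\R^+)$ (by~\cite[Lem.~3.3.3, p.~134]{FukOshTak11}); density of $\dom{E^\theta}$ in $L^2(\lambda_\theta)$ is then automatic, since $\rmC^\infty_c(\R^+_0)$ embeds continuously into $\dom{E^\theta}$ by restriction to $\R^+$.

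The second step is to pin down $\dom{L^\theta}$ by a boundary-term analysis. I would introduce
\[
\msV_\theta\eqdef\set{\psi\in\mathrm{AC}_\loc(\R^+):\ \psi'\in\mathrm{AC}_\loc(\R^+),\ \exists\, a_\psi\eqdef \lim_{t\downarrow 0}t^\theta\psi'(t)\in\R}
\]
and repeat the integration by parts, now against $\phi\in\rmC^\infty_c(\R^+_0)\subset\dom{E^\theta}$ that need not vanish at the origin: this produces the extra contribution $E^\theta(\phi,\psi)=-\int_0^\infty\phi\,L^\theta\psi\,\diff\lambda_\theta-\Gamma(\theta)^{-1}a_\psi\,\phi(0)$, so the distributional generator of $\tparen{E^\theta,\rmC^\infty_c(\R^+_0)}$ is $L^\theta\psi+\Gamma(\theta)^{-1}a_\psi\,\delta_0$, with $\delta_0\phi\eqdef\lim_{t\downarrow0}\phi(t)$. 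Since $\rmC^\infty_c(\R^+_0)\subset\dom{E^\theta}$, any $\psi\in\dom{L^\theta}$ must lie in $\msV_\theta\cap\dom{E^\theta}$; and because $\lambda_\theta\ll\Leb{1}$ while $\delta_0$ is singular, requiring $L^\theta\psi\in L^2(\lambda_\theta)$ forces $a_\psi=0$. Conversely, for $\psi\in\msV_\theta$ with $a_\psi=0$ I would verify that the identity $E^\theta(\phi,\psi)=\tscalar{\phi}{-L^\theta\psi}_{L^2(\lambda_\theta)}$ extends from $\phi\in\rmC^\infty_c(\R^+_0)$ to all $\phi\in\dom{E^\theta}$, giving $\psi\in\dom{L^\theta}$ with $L^\theta$ as claimed. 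Intersecting the two inclusions yields $\dom{L^\theta}=\set{\psi\in\msV_\theta:a_\psi=0}\cap\dom{E^\theta}$, which is precisely~\eqref{eq:l:CBI:0}. As a consistency check, the scale derivative of the diffusion is $s'(t)\propto t^{-\theta}$, so $t^\theta\psi'(t)$ is the derivative of $\psi$ with respect to the scale function and ``$a_\psi=0$'' is the reflecting (zero-flux) boundary condition at $t=0$, in line with the squared Bessel interpretation.

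The hard part will be the careful justification of the vanishing of the two boundary terms --- at $t=\infty$ and at $t=0$ --- when extending the integration by parts from compactly supported $\phi$ to a general $\phi\in\dom{E^\theta}$. At $\infty$ one must exploit that $\phi,\psi\in L^2(\lambda_\theta)$ together with $\sqrt t\,\phi',\sqrt t\,\psi'\in L^2(\lambda_\theta)$, via a cutoff plus Cauchy--Schwarz argument along a suitable sequence of radii tending to $\infty$, so that $\Gamma(\theta)^{-1}t^\theta\phi'(t)\psi(t)$ can be killed. At $0$ the interplay between the weight $t^\theta$ in the boundary term and the speed density $t^{\theta-1}$ is delicate, and whether the left endpoint is accessible changes with the regime $\theta\in(0,1)$ versus $\theta\ge1$; the clean route is to rely on the FOT classification of boundary behaviour for one-dimensional diffusion forms rather than to treat the cases by hand, so that the substance of the proof reduces to matching $a$ and $\rho_\theta$ to their normal form. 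By contrast, the Markov (Dirichlet) property, even checked directly, is immediate: the unit contraction $\widehat\psi=(0\vee\psi)\wedge1$ satisfies $\widehat\psi'=\psi'\,\car_{\set{0<\psi<1}}$ a.e., whence $E^\theta(\widehat\psi)\le E^\theta(\psi)$.
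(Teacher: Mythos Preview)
Your proposal is correct and follows essentially the same route as the paper: you identify $\tparen{E^\theta,\dom{E^\theta}}$ with the form $\tparen{\mcE^+,\dom{\mcE^+}}$ of~\cite[\S3.3]{FukOshTak11} for the choices $D=\R^+$, $\rho=\rho_\theta$, $a(t)=\Gamma(\theta)^{-1}t^\theta$, invoke~\cite[Thm.~3.3.1]{FukOshTak11} for the Dirichlet property, then introduce the same space $\msV_\theta$ and perform the same integration by parts against $\phi\in\rmC^\infty_c(\R^+_0)$ to isolate the boundary term $a_\psi$ and conclude the domain characterization. Your added remarks on the scale-function interpretation of $a_\psi=0$ and on the boundary-term analysis at $0$ and $\infty$ go beyond what the paper spells out, but the underlying argument is the same.
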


\begin{proposition}\label{p:CBI}
The form~$\tparen{E^\theta,\dom{E^\theta}}$ is a regular conservative strongly local Dirichlet form on~$L^2(\lambda_\theta)$, properly associated with the squared Bessel process solving the \textsc{sde}~\eqref{eq:SDE}.
\end{proposition}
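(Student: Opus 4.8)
The plan is to verify each of the listed properties of $\tparen{E^\theta,\dom{E^\theta}}$ separately, exploiting the explicit one-dimensional structure and the identification carried out in Lemma~\ref{l:CBI}. Regularity: since $\lambda_\theta$ has a strictly positive smooth density on $\R^+$, I would show that $\rmC^\infty_c(\R^+_0)$ is a core. It is dense in $L^2(\lambda_\theta)$ by a standard cutoff-and-mollify argument (the only subtlety being the behaviour at $t=0$, which is harmless because $\lambda_\theta$ is finite near $0$). For $\mcE^\theta_1$-density in $\dom{E^\theta}$ one truncates a general $f\in\dom{E^\theta}$ near $0$ and $\infty$ using Lipschitz cutoffs $\eta_n$ with $\int_0^\infty t\,(\eta_n')^2 f^2\,\diff\lambda_\theta\to 0$ (here $\int_0^\infty \lambda_\theta<\infty$ near $0$ makes the inner cutoff cheap, and the weight $t$ together with $f\in L^2(\lambda_\theta)$ controls the outer one), then mollifies; uniform density of $\rmC^\infty_c(\R^+_0)$ in $\Cc(\R^+_0)$ gives the regularity in the sense of~\cite{FukOshTak11}.

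Strong locality is immediate from the diffusion property: if $f,g\in\dom{E^\theta}$ with $\supp f$ compact and $g$ constant on a neighbourhood of $\supp f$, then $g'\equiv 0$ there and $E^\theta(f,g)=\int_0^\infty t f' g'\,\diff\lambda_\theta=0$; alternatively one invokes the general fact that a regular Dirichlet form with carré-du-champ $t\,f'(t)^2\,\rho_\theta(t)$ (no killing, no jump part) is strongly local, e.g.~\cite[Thm.~4.5.4, p.~187]{FukOshTak11}, once conservativeness is known. For conservativeness I would use the characterization~\eqref{eq:Conservative:1}: take $u_n\eqdef \varsigma(\emparg/n)$ for a fixed smooth cutoff $\varsigma$ with $\varsigma\equiv 1$ on $[0,1]$, $\varsigma\equiv 0$ on $[2,\infty)$, $\abs{\varsigma'}\le 2$. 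Then $0\le u_n\le 1$, $u_n\uparrow 1$, $u_n\in\rmC^\infty_c(\R^+_0)\subset\dom{E^\theta}$, and for any $v\in\dom{E^\theta}\cap L^1(\lambda_\theta)$,
\[
\abs{E^\theta(u_n,v)} \le \frac{2}{n}\int_n^{2n} t\,\abs{v'(t)}\,\diff\lambda_\theta(t) \le \frac{2}{n}\braket{\int_n^{2n} t\,v'(t)^2\,\diff\lambda_\theta(t)}^{1/2}\braket{\int_n^{2n} t\,\diff\lambda_\theta(t)}^{1/2}.
\]
The first factor tends to $0$ since $v\in\dom{E^\theta}$; the second is $\le (2n)^{1/2}\lambda_\theta([n,2n])^{1/2}$, and a direct computation gives $\lambda_\theta([n,2n])^{1/2}\lesssim n^{\theta/2}$, so the product is $O(n^{(\theta-1)/2})$ divided by... — this last bound is in fact \emph{not} uniformly small for $\theta\ge 1$, so the honest route is to test instead against $v\in L^1(\lambda_\theta)$ of the form $v=G_\alpha f$ and use $\abs{v'}$ estimates from the resolvent, or simply to note that conservativeness of squared Bessel processes is classical (the \textsc{sde}~\eqref{eq:SDE} has a non-explosive strong solution for every $\theta\ge 0$). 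I would cite~\cite{KawWat71,ShiWat73} for non-explosion and then transfer it.

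Proper association with the squared Bessel process is the step I expect to require the most care. The generator $\tparen{L^\theta,\dom{L^\theta}}$ computed in~\eqref{eq:l:CBI:0} acts as $Lf(t)=tf''(t)+\theta f'(t)$ with the Neumann-type boundary condition $\lim_{t\downarrow 0}t^\theta f'(t)=0$; this is exactly the generator of the Feller semigroup of the squared Bessel diffusion on $[0,\infty)$ (with the appropriate boundary behaviour at $0$: for $\theta\ge 2$ the point $0$ is polar, for $\theta\in(0,2)$ it is an instantaneously reflecting entrance-and-exit / exit-not-entrance boundary depending on $\theta$, matching the vanishing-flux condition). So the plan is: identify $\tparen{L^\theta,\dom{L^\theta}}$ with the restriction to $L^2(\lambda_\theta)$ of the Feller generator of $\tseq{\mbbP^\theta_{1,0,1}}$ from~\cite[Thm.~1.2, Eqn.~(1.23)]{ShiWat73}, invoking that the Feller semigroup is $\lambda_\theta$-symmetric (which follows from the explicit transition density of $\mathrm{BESQ}^\theta$, or from the speed-measure/scale-function description), and conclude that the Hunt process properly associated with the regular Dirichlet form $\tparen{E^\theta,\dom{E^\theta}}$ — which exists by the general theory, e.g.~\cite[Thm.~7.2.1]{FukOshTak11} — has the same finite-dimensional distributions as the squared Bessel process, hence coincides with it. The main obstacle here is bookkeeping the boundary condition at $0$ correctly across the three regimes of $\theta$ and matching it to the domain $\dom{L^\theta}$; once that is done, uniqueness of the $\lambda_\theta$-symmetric Hunt process with the given Dirichlet form closes the argument.
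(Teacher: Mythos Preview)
Your plan is sound and the outcome is correct, but the route to \emph{regularity} differs from the paper's and your inner-cutoff claim is not quite right as stated. You propose a direct cutoff-and-mollify argument; the paper instead proves (Lemma~\ref{l:CBIcore}) that the $E^\theta_1$-orthogonal complement of $\rmC^\infty_c(\R^+_0)$ in $\dom{E^\theta}$ is trivial, by observing that any element of it solves $L^\theta f=f$ distributionally on~$\R^+$, writing down the two-dimensional solution space in terms of confluent hypergeometric functions ${}_0F_1$, and checking that no nonzero solution lies in~$L^2(\lambda_\theta)$. This ODE route sidesteps the boundary issue at~$0$ that your sketch glosses over: for a multiplicative cutoff with $\eta_n'$ supported on $[1/(2n),1/n]$ and $\abs{\eta_n'}\lesssim n$, the cross term is bounded by $n\int_{1/(2n)}^{1/n} f^2\,\diff\lambda_\theta$, which for small $\theta$ and bounded~$f$ behaves like $n^{1-\theta}$ and diverges---so ``$\lambda_\theta$ finite near~$0$'' alone does not make the inner cutoff cheap. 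Your approach can be repaired by freezing $f$ at its value $f(\epsilon)$ on $(0,\epsilon)$ and using the oscillation bound $\abs{f(t)-f(\epsilon)}^2\le\int_t^\epsilon s^\theta (f')^2\,\diff s\cdot\int_t^\epsilon s^{-\theta}\,\diff s$ to show $f_\epsilon\to f$ in $E^\theta_1$, after which mollification is routine; but this is more work than the paper's argument.

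For conservativeness and the identification with the squared Bessel process, your proposal matches the paper: both abandon the direct energy estimate (the paper does not attempt it) and invoke non-explosion from~\cite{KawWat71,ShiWat73}, and both identify the generator~\eqref{eq:l:CBI:0} with that of the CBI/Feller semigroup. The paper additionally checks via the Kolmogorov forward equation that $\lambda_\theta$ is the invariant measure compatible with the convolution-semigroup structure, which you could absorb into the ``$\lambda_\theta$-symmetric'' step of your outline.
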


\begin{lemma}\label{l:CBIcore}
$\rmC^\infty_c(\R^+_0)$ is a core for~$\tparen{E^\theta,\dom{E^\theta}}$.
\begin{proof}
As in the proof of Lemma~\ref{l:CBI}, we regard~$\rmC^\infty_c(\R^+_0)$ as a subset of~$\dom{E^\theta}$. 
Let~$F_\theta$ denote the $E^\theta_1$-closure of~$\rmC^\infty_c(\R^+_0)$.
It suffices to show that~$F_\theta^\perp=\set{0}$, where~$F_\theta^\perp$ denotes the $E^\theta_1$-orthogonal complement to~$F_\theta$ in~$\dom{E^\theta}$.
Note that~$f\in F_\theta^\perp$ if and only if
\[
\int_0^\infty t f'(t) \phi'(t) \diff\lambda_\theta(t)+\int_0^\infty f(t) \phi(t)\, \diff\lambda_\theta(t) =0\comma \qquad \phi\in \rmC^\infty_c(\R^+_0) \comma 
\]
so that~$f$ is a distributional solution in~$\msV_\theta$ to~$L^\theta f= f$, where~$\msV_\theta$ is defined in~\eqref{eq:l:CBI:2}.

It follows from e.g.~\cite[14.1.2.62, p.~526 or 14.1.2.108, p.~531]{PolZai18} that solutions to~$L^\theta f=f$ form a two-dimensional linear space.
A basis for the space of solutions is expressible in terms of Bessel functions, see~\cite[\emph{ibid}.]{PolZai18}.
We rather express them in terms of the confluent hypergeometric function~${}_0F_1$ and of its regularized form~${}_0\widetilde F_1$, respectively defined by
\begin{align}
\label{eq:Hypergeom}
{}_0F_1(;a;z) &\eqdef \sum_{k=0}^\infty \frac{z^k}{\langle a\rangle_k k!}\comma \qquad z\in \C\comma \quad a\in\C\setminus \Z^-_0\comma
\\
\nonumber
{}_0\widetilde F_1(;a;z)&\eqdef \frac{{}_{0}F_1(;a;z)}{\Gamma(a)} \comma \qquad a,z\in \C \comma
\end{align}
where~$\Gamma$ is Euler's Gamma function and~$\langle a\rangle_k\eqdef\Gamma(a+k)/\Gamma(a)$ is the Pochhammer symbol.
Indeed, setting
\begin{equation}\label{l:CBIcore:1}
f_0(t)\eqdef {}_0F_1(;\theta;t) \qquad \text{and} \qquad f_1(t)\eqdef t^{1-\theta}{}_0\widetilde F_1(;2-\theta;t)
\end{equation}
and differentiating the series expression in~\eqref{eq:Hypergeom}, it is readily verified that~$f_0$ and~$f_1$ are linearly independent solutions to~$L^\theta f=f$.
Thus, each solution to~$L^\theta f=f$ has the form~$a_0 f_0+a_1 f_1$ for some real constants~$a_0,a_1$.

Since~$\lim_{t\to\infty} t^{\theta-1} f_i(t)^2=\infty$, we have~$f_i\notin L^2(\lambda_\theta)$ for~$i=0,1$ and every~$\theta>0$.
This shows that solutions in~$\msV_\theta$ to~$L^\theta f=f$ are never in~$L^2(\lambda_\theta)$.
As a consequence,~$F_\theta^\perp=\set{0}$ as desired, and the conclusion follows.
\end{proof}
\end{lemma}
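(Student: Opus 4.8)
I regard $\rmC^\infty_c(\R^+_0)$ as a subspace of the Hilbert space $\tparen{\dom{E^\theta},E^\theta_1}$ (possible by the continuous injection noted in the proof of Lemma~\ref{l:CBI}), denote by $F_\theta$ its $E^\theta_1$-closure, and aim to show that the $E^\theta_1$-orthogonal complement $F_\theta^\perp$ of $F_\theta$ in $\dom{E^\theta}$ is trivial. By definition, $f\in F_\theta^\perp$ precisely when
\[
\int_0^\infty t\, f'(t)\, \phi'(t)\,\diff\lambda_\theta(t) + \int_0^\infty f(t)\,\phi(t)\,\diff\lambda_\theta(t) = 0\comma \qquad \phi\in\rmC^\infty_c(\R^+_0)\fstop
\]
Testing first against $\phi\in\rmC^\infty_c(\R^+)$ (supported away from $0$) and integrating by parts gives $\tparen{t^\theta f'}'=t^{\theta-1}f$ in $\mathscr D'(\R^+)$; since the right-hand side is locally integrable, a standard bootstrap yields $f\in\rmC^\infty(\R^+)$ together with the classical identity $L^\theta f=f$ on $\R^+$, so $f$ lies in the two-dimensional space of classical solutions of this Euler-type ODE. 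For such $f$ additionally lying in $\dom{E^\theta}$ the limit $\lim_{t\downarrow0}t^\theta f'(t)$ exists (finitely), i.e.\ $f\in\msV_\theta$; then testing against \emph{general} $\phi\in\rmC^\infty_c(\R^+_0)$ and using the integration-by-parts identity established in the proof of Lemma~\ref{l:CBI} forces the boundary condition $\lim_{t\downarrow0}t^\theta f'(t)=0$, that is $f\in\dom{L^\theta}$.

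Next I would analyse the solutions of $t\,f''(t)+\theta\,f'(t)-f(t)=0$. The substitution $f(t)=t^{(1-\theta)/2}h(2\sqrt t)$ converts it into the modified Bessel equation $u^2h''+uh'-\tparen{u^2+(\theta-1)^2}h=0$; equivalently, a Frobenius basis at the regular singular point $t=0$ is the analytic solution $f_0(t)\eqdef {}_0F_1(;\theta;t)$, with $f_0(0)=1$ and $\lim_{t\downarrow0}t^\theta f_0'(t)=0$, and a second solution $f_1(t)\eqdef t^{1-\theta}\,{}_0\widetilde F_1(;2-\theta;t)$ (for integer $\theta$ this acquires a logarithmic term, but its leading behaviour near $0$ is still of order $t^{1-\theta}$ up to a $\log$). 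The two facts I need are: (i) as $t\to\infty$, $f_0(t)\sim\tfrac{\Gamma(\theta)}{2\sqrt\pi}\,t^{1/4-\theta/2}\rme^{2\sqrt t}$, so $t^{\theta-1}f_0(t)^2\to\infty$ and $f_0\notin L^2(\lambda_\theta)\supseteq\dom{E^\theta}$; and (ii) near $t=0$ the component $f_1$ is incompatible with being an element of $F_\theta^\perp$: when $\theta\geq1$ (or $\theta$ integer) the energy density $t\,f_1'(t)^2\,t^{\theta-1}$ is comparable to $t^{-\theta}$ (times $(\log t)^2$ in the integer case), hence non-integrable at $0$, so $f_1\notin\dom{E^\theta}$; while when $0<\theta<1$ one has $f_1\in\dom{E^\theta}$ but $\lim_{t\downarrow0}t^\theta f_1'(t)$ is a nonzero constant, violating the boundary condition from the first paragraph.

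Combining these, let $f=a_0f_0+a_1f_1\in F_\theta^\perp\subset\dom{E^\theta}$. Membership in $\dom{E^\theta}$ together with $\lim_{t\downarrow0}t^\theta f'(t)=0$ forces $a_1=0$ (the $f_1$ component either makes the energy diverge at $0$, or — in the range $0<\theta<1$ where $f_1\in\dom{E^\theta}$ — contributes a nonzero multiple of $a_1$ to $\lim_{t\downarrow0}t^\theta f'(t)$). Hence $f=a_0f_0$, and then $f\in L^2(\lambda_\theta)$ combined with the $\rme^{2\sqrt t}$ growth of $f_0$ forces $a_0=0$. Therefore $F_\theta^\perp=\set{0}$, so $\rmC^\infty_c(\R^+_0)$ is dense in $\tparen{\dom{E^\theta},E^\theta_1}$, i.e.\ a core for $\tparen{E^\theta,\dom{E^\theta}}$.

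The main obstacle is the asymptotic bookkeeping at $t=0$: one must split into the cases $0<\theta<1$, $\theta=1$, $\theta>1$ and keep track of the (possibly logarithmic) second Frobenius solution, verifying in each regime that it cannot simultaneously lie in $\dom{E^\theta}$ and satisfy $\lim_{t\downarrow0}t^\theta f'(t)=0$; the behaviour at $t\to\infty$ is routine once the identification of $f_0$ with ${}_0F_1(;\theta;\emparg)$ (equivalently, with $t^{(1-\theta)/2}I_{\theta-1}(2\sqrt t)$) is in place. A special-function-free alternative would be a direct approximation argument — given $f\in\dom{E^\theta}$, multiply by a cutoff $\chi(t/R)$ to make it compactly supported in $t$ (with $E^\theta_1$-error on $[R,2R]$ controlled by $R^{-1}\norm{f}_{L^2(\lambda_\theta)}^2$), truncate its values to make it bounded near $0$ (error controlled by the energy of $f$ on a shrinking neighbourhood of $0$), and mollify — but the resolvent route above is preferable because it directly reuses the description of $\dom{L^\theta}$ obtained in Lemma~\ref{l:CBI}.
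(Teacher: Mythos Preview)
Your approach matches the paper's: reduce to $F_\theta^\perp=\{0\}$, identify elements of $F_\theta^\perp$ with solutions of $L^\theta f=f$, write down the Frobenius basis $f_0={}_0F_1(;\theta;\cdot)$ and $f_1=t^{1-\theta}{}_0\widetilde F_1(;2-\theta;\cdot)$, and exclude both coefficients. The paper, however, disposes of $a_0$ and $a_1$ in one stroke by asserting $\lim_{t\to\infty}t^{\theta-1}f_i(t)^2=\infty$ for $i=0,1$ and concluding that no solution lies in $L^2(\lambda_\theta)$ --- it does not carry out your case analysis at $t=0$ and does not invoke the boundary condition $\lim_{t\downarrow0}t^\theta f'(t)=0$. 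Your route is in fact the more careful one: since both $f_i$ are $I_{\pm(\theta-1)}$-type modified Bessel functions, the specific linear combination giving the recessive $K_{\theta-1}$-solution $g(t)=t^{(1-\theta)/2}K_{\theta-1}(2\sqrt t)$ \emph{does} decay at infinity, and for $0<\theta<1$ one checks that $g\in\dom{E^\theta}$; it is only the boundary condition at $0$ (which you correctly extract by testing against $\phi\in\rmC^\infty_c(\R^+_0)$ with $\phi(0)\neq0$, via the integration-by-parts identity from Lemma~\ref{l:CBI}) that rules it out. So your extra step buys a genuine completion of the argument in the range $\theta\in(0,1)$.

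One minor slip: your parenthetical ``when $0<\theta<1$ one has $f_1\in\dom{E^\theta}$'' is false as written --- $f_1$ itself grows like $\rme^{2\sqrt t}$ at infinity for every $\theta$ --- but this does not damage your argument, because what you actually use is the \emph{local} contribution of the $f_1$-component to $\lim_{t\downarrow0}t^\theta f'(t)$, which is indeed a nonzero multiple of $a_1$ regardless of the behaviour at infinity.
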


\begin{proof}[Proof of Proposition~\ref{p:CBI}]
Consider the quadratic form in~\eqref{eq:RadialForm}.
Closedness and the Markov property were proved in Lemma~\ref{l:CBI}.
Regularity follows from Lemma~\ref{l:CBIcore}.
Strong locality follows by inspection.
The part~$\tparen{L^\theta,\rmC^\infty_c(\R^+_0)}$ of~$\tparen{L^\theta,\dom{L^\theta}}$ on~$\rmC^\infty_c(\R^+_0)$ is thus the generator of the CBI process solving~\eqref{eq:SDE} by, e.g.,~\cite[Eqn.~(1.25)]{ShiWat73}.

For the converse implications, we argue as follows.
 
\paragraph{Distributional adjoint of the generator}
Consider the generator~$\tparen{L^\theta,\rmC^\infty_c(\R^+_0)}$ and note that, again by a standard integration by parts,
\begin{align*}
\int_0^\infty \psi\, L^\theta \phi\, \diff t &= \phi' t \psi\Big\lvert_0^\infty - \int_0^\infty (\phi'\psi + \phi' t\psi')\diff t +\theta\phi\psi\Big\lvert_0^\infty -\int_0^\infty\theta \phi\psi'\diff t
\\
&= 0 - \int_0^\infty \phi'(\psi+t\psi')\diff t - \theta \phi(0)\psi(0) - \theta\int_0^\infty\phi\psi'\diff t
\\
&= \int_0^\infty \phi (2\psi'-\theta\psi'+t\psi'')\diff t - \phi(\psi+t\psi')\Big\lvert_0^\infty -\theta\phi(0)\psi(0)
\\
&= \int_0^\infty \phi \tparen{(2-\theta)\psi'+t\psi''}\diff t -(\theta+1)\phi(0)\psi(0) \fstop
\end{align*}
Thus, the distributional adjoint~$(L^{\theta})^*$ of~$\tparen{L^\theta,\rmC^\infty_c(\R^+_0)}$ is given by
\[
(L^{\theta})^*\psi = (2-\theta)\psi' +t\psi''-(\theta+1)\delta_0 \fstop
\]

\paragraph{Invariant measures}
The unique positive stationary solutions~$g_\theta\eqdef a_\theta t^{\theta-1}$ on~$(0,\infty)$, $a_\theta>0$, to the corresponding Kolmogorov forward equation~$(L^{\theta})^*g_\theta \equiv 0$ are thus the density of a candidate invariant measure~$\diff \mu_\theta=g_\theta\diff t$ for~$L^\theta$.
Furthermore, in light of~\cite[Eqn.~(1.23) and Thm.~1.1]{ShiWat73},~$\seq{\mu_\theta}_{\theta>0}$ must be a convolution semigroup.
This property, together with~$(L^{\theta})^*\mu_\theta =0$, implies that the function~$\theta\mapsto a_\theta$ must satisfy the functional equation
\begin{equation}\label{eq:p:CBI:2}
\frac{a_\theta a_\tau}{a_{\theta+\tau}}=\frac{\Gamma(\theta+\tau)}{\Gamma(\theta)\Gamma(\tau)} \fstop
\end{equation}

For the solution~$a_\theta\eqdef \Gamma(\theta)^{-1}$ to~\eqref{eq:p:CBI:2} we then have that~$\mu_\theta=\lambda_\theta$ is an invariant measure and such that~$\seq{\mu_\theta}_{\theta>0}$ is a convolution semigroup on~$\R^+$.

\paragraph{Dirichlet form}
By the integration by parts~\eqref{eq:l:CBI:1} we may apply Lemma~\ref{l:CBI}.
Thus,~$\tparen{E^\theta,\dom{E^\theta}}$ is the strongly local Dirichlet form generated by~$\tparen{L^\theta,\dom{L^\theta}}$ and the latter is the maximal Markovian self-adjoint extension of~$\tparen{L^\theta,\rmC^\infty_c(\R^+)}$.
The form is conservative since so is the associated CBI process solving~\eqref{eq:SDE}, which is in turn a consequence of~\cite[Thm.~1.2, p.~41]{KawWat71}, as noted in~\cite[Ex.~1.1, p.~42, after~(1.24)]{KawWat71}.
\end{proof}

The next result follows from known properties of the squared Bessel process.
For \emph{integer}~$\theta>0$, it is a trivial consequence of the representation of the squared Bessel process as the squared Euclidean norm of a standard Brownian motion.
For completeness, we provide a proof by Dirichlet-form methods.

\begin{lemma}\label{l:CBIRecurrence}
The form~$\tparen{E^\theta,\dom{E^\theta}}$ is irreducible.
It is recurrent if~$\theta\in (0,1]$ and transient if~$\theta\in (1,\infty)$.
\end{lemma}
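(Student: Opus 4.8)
The plan is to analyze the one-dimensional Dirichlet form $\tparen{E^\theta,\dom{E^\theta}}$ on $L^2(\lambda_\theta)$ directly, using the classical ODE theory available from the structure of $E^\theta$. First I would establish irreducibility: a set $A\subset\R^+$ is $E^\theta$-invariant if $\car_A u\in\dom{E^\theta}$ for all $u\in\dom{E^\theta}$, and by locality this forces the derivative $u'$ to vanish a.e.\ on the topological boundary of $A$ inside $\R^+$; since $\dom{E^\theta}$ contains functions (e.g.\ restrictions of elements of $\rmC^\infty_c(\R^+_0)$) with nonvanishing derivative on any prescribed point of $\R^+$, the boundary of $A$ in $\R^+$ must be $\Leb{1}$-negligible, hence (being closed in $\R^+$) empty, so $A$ is $\lambda_\theta$-trivial. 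This gives irreducibility.

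Next I would treat recurrence/transience via the ODE characterization. Recall from the proof of Lemma~\ref{l:CBIcore} that the equation $L^\theta f = 0$, i.e.\ $t f'' + \theta f' = 0$ on $\R^+$, has the explicit two-dimensional solution space spanned by $f\equiv 1$ and $h_\theta(t)\eqdef\int_1^t s^{-\theta}\,\diff s$ (for $\theta\neq 1$ this is $\tfrac{1}{1-\theta}(t^{1-\theta}-1)$, and for $\theta=1$ it is $\log t$). By the standard theory for one-dimensional strongly local regular Dirichlet forms — equivalently, Feller's boundary classification for the associated diffusion — recurrence is equivalent to the nonexistence of a nonconstant bounded-below excessive harmonic function, which in turn is governed by the behavior of $h_\theta$ at the two boundary points $0$ and $\infty$.

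For the endpoint $\infty$: when $\theta\in(0,1]$ we have $h_\theta(t)\to+\infty$ as $t\to\infty$ (logarithmically if $\theta=1$, polynomially if $\theta<1$), so $\infty$ is a natural (inaccessible, non-attracting) boundary and contributes to recurrence; when $\theta>1$, $h_\theta$ stays bounded near $\infty$, opening an ``escape to infinity'' channel and making the form transient. For the endpoint $0$: the scale function $h_\theta$ behaves like $t^{1-\theta}$ which is integrable against the speed measure near $0$ precisely in the range ensuring $0$ is an entrance or regular boundary that does not destroy recurrence — and in fact for $\theta>0$ the origin is never a transience-inducing boundary for squared Bessel. Concretely I would verify recurrence for $\theta\in(0,1]$ by exhibiting cut-off functions $u_n\in\dom{E^\theta}$ with $0\le u_n\le 1$, $u_n\uparrow 1$ and $E^\theta(u_n)\to 0$ (the characterization \eqref{eq:l:Recurrence:1}): take $u_n$ equal to $1$ on $[1/n,n]$, equal to $0$ outside $[1/(2n),2n]$, and interpolating so that $u_n'$ is (up to constants) $\pm h_\theta'/h_\theta(\text{cutoff scale})$ on the transition regions; then $\int t\,(u_n')^2\,\diff\lambda_\theta$ is controlled by $1/h_\theta(n)\to0$ at the right end and by a similarly vanishing quantity at the left end, using $\theta\le 1$. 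For transience when $\theta>1$ I would produce $g\in L^1(\lambda_\theta)^+\cap L^\infty(\lambda_\theta)$ satisfying \eqref{eq:Transient}, equivalently exhibit a reference function: since $h_\theta$ is bounded and positive-increasing near $\infty$, $\tilde h\eqdef 1+h_\theta\wedge C$ is a bounded nonconstant $E^\theta$-excessive function, and the standard criterion (a local regular Dirichlet form is transient iff it admits a nonconstant bounded excessive function, or equivalently iff $1\notin\dom{E^\theta}$ cannot be $E^\theta$-approximated by compactly supported functions with vanishing energy) then yields transience; alternatively one directly tests \eqref{eq:Transient} with $g\eqdef c\,t^{\theta-1}\car_{[0,1]}/\Gamma(\theta)$ and uses the weighted Hardy inequality $\int_0^1 |u|\,t^{\theta-1}\diff t\lesssim\tparen{\int_0^\infty t\,(u')^2\,t^{\theta-1}\diff t}^{1/2}$ valid for $\theta>1$ thanks to integrability of $t^{-\theta}$ near $\infty$. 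The main obstacle will be organizing the boundary analysis cleanly — in particular making the Hardy/energy estimates at $t\to\infty$ fully rigorous in the transient case and pinning down the correct cut-off rates in the recurrent case — rather than invoking Feller's classification as a black box; a safe route is to cite \cite[Thm.~1.6.3, Thm.~1.6.6]{FukOshTak11} for the equivalences and then perform only the explicit one-dimensional computations with $h_\theta$.
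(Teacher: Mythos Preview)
Your route via the scale function $h_\theta(t)=\int_1^t s^{-\theta}\,\diff s$ and Feller's boundary picture is the classical one and is sound in outline, but the execution in your sketch has a couple of genuine slips, and the paper takes a more hands-on path that sidesteps them.

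For irreducibility, your argument is incorrect as written: ``$\Leb{1}$-negligible and closed'' does not imply empty, and the relevant obstruction from $\car_A u\in\mathrm{AC}_\loc(\R^+)$ is a jump of $\car_A u$ where $u\neq 0$, not a condition on $u'$. The paper bypasses this by citing the well-known irreducibility of the squared Bessel process via Proposition~\ref{p:CBI}. For recurrence when $\theta\in(0,1]$, do \emph{not} cut off near $0$: a linear cut-off on $[1/(2n),1/n]$ costs energy of order $n^{1-\theta}$, which does not vanish for $\theta\le 1$, and $h_\theta$ is bounded near $0$ for $\theta<1$ so the scale-function cut-off does not help there either. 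The point is that $\rmC^\infty_c(\R^+_0)\subset\dom{E^\theta}$ already allows $u_n\equiv 1$ near $0$, so only the right end matters; the paper makes this explicit with $u_n(t)\eqdef u\bigl((1+t)^{1/n}-1\bigr)$ for a fixed $u\in\rmC^\infty_c(\R^+_0)$ with $u(0)=1$, and a one-line change of variables gives $E^\theta(u_n)\le c_u/(n\Gamma(\theta))\to 0$. For transience when $\theta>1$, your excessive-function idea is fine, but the Hardy inequality with $g$ supported on $[0,1]$ is not justified by ``integrability of $t^{-\theta}$ near $\infty$'', which says nothing about the local behavior near $0$. The paper instead contradicts \eqref{eq:l:Recurrence:1} directly: given a putative recurrence sequence $\seq{u_n}_n$, pick $t_0$ with $u_n(t_0)\to1$ and (since $u_n\in L^2(\lambda_\theta)$ with $\theta>1$ forces decay) points $t_n\uparrow\infty$ with $u_n(t_n)\le\tfrac12$; then Cauchy--Schwarz gives $\bigl|u_n(t_n)-u_n(t_0)\bigr|^2\le\Gamma(\theta)\,\tfrac{t_0^{1-\theta}}{\theta-1}\,E^\theta(u_n)\to0$, a contradiction. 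This is your scale-function observation $\int_{t_0}^\infty t^{-\theta}\,\diff t<\infty$ repackaged, but avoids constructing an explicit $g$.
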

\begin{proof}
By Proposition~\ref{p:CBI}, the form~$\tparen{E^\theta,\dom{E^\theta}}$ is properly associated with the squared Bessel process.
As it is well-known that the latter is irreducible, so is $\tparen{E^\theta,\dom{E^\theta}}$.
In light of the transience/recurrence dichotomy for irreducible Dirichlet forms, e.g.~\cite[Lem.~1.6.4(iii), p.~55]{FukOshTak11}, it suffices to show that~$\tparen{E^\theta,\dom{E^\theta}}$ is recurrent if and only if~$\theta\in (0,1]$.

Assume first that~$\theta\leq 1$.
Let~$u\in\rmC^\infty_c(\R^+_0)$ be so that~$u(0)=1$.
Further set~$u_n(t)\eqdef u\tparen{(1+t)^{1/n}-1}$ for~$t\geq 0$.
Then,~$u_n\in\rmC^\infty_c(\R^+_0)\subset \dom{E^\theta}$ and $\lim_n u_n(t)=u(0)=1$ for every~$t\geq 0$, which verifies~\eqref{eq:l:Recurrence:1a}.
Furthermore, since~$\theta \leq 1$,
\begin{align*}
\int_0^\infty \abs{u_n'(t)}^2 t^\theta \diff t =&\ \frac{1}{n}\int_0^\infty \abs{u'(s)}^2 (s+1)^{1-n} \tparen{(s+1)^n-1}^\theta\diff s
\\
\leq&\ \frac{1}{n} \int_0^\infty \abs{u'(s)}^2(s+1)\diff s \fstop
\end{align*}
Since~$u$ has compact support, the latter integral is a constant~$c_u>0$.
It follows that
\[
\limsup_n E^\theta(u_n) \leq \limsup_n \frac{c_u}{\Gamma(\theta)\, n}=0\comma
\]
which verifies~\eqref{eq:l:Recurrence:1b}.
This concludes the assertion by the above characterization of recurrence.

Assume now that~$\theta>1$ and argue by contradiction that there exist functions~$u_n$ satisfying~\eqref{eq:l:Recurrence:1} (with~$\lambda_\theta$ in place of~$\mssm$).
Since~$u_n\in\dom{E^\theta}\subset \mathrm{AC}_\loc(\R^+)$, we will always consider~$u_n$ as its unique continuous representative.
Without loss of generality, up to truncation, we may and will assume that~$u_n\geq 0$.

In particular, there exists~$t_0>0$ such that~$\lim_n u_n(t_0)=1$.
Furthermore, since~$u_n\in L^2(\lambda_\theta)$ and~$\theta\geq 1$, there exists an increasing sequence~$\seq{t_n}_n$ with~$t_0\leq t_n\nearrow \infty$ and such that~$u_n(t_n)\leq 1/2$.
On the one hand, by the fundamental theorem of calculus,
\begin{equation}\label{eq:l:RecurrenceCBI:3}
\limsup_n \abs{\int_{t_0}^{t_n} u_n'(t) \diff t}^2 = \limsup_n \abs{u_n(t_n)-u_n(t_0)}^2 \geq 1/4\fstop
\end{equation}

On the other hand, since~$\theta>1$,
\begin{align*}
\abs{\int_{t_0}^{t_n} u_n'(t) \diff t}^2\leq& \int_{t_0}^{t_n} \abs{u_n'(t)\, t^{\theta/2}}^2\diff t \int_{t_0}^{t_n} t^{-\theta} \diff t \leq \int_0^\infty \abs{u_n'(t)}^2 t^\theta \diff t \int_{t_0}^\infty t^{-\theta} \diff t
\\
=&\ \Gamma(\theta)\, \frac{t_0^{1-\theta}}{\theta-1}\, E^\theta(u_n)\comma
\end{align*}
so that, by the assumption in~\eqref{eq:l:Recurrence:1b},
\begin{align*}
\limsup_n \abs{\int_{t_0}^{t_n} u_n'(t)\diff t}^2 \leq \limsup_n \Gamma(\theta-1)\, t_0^{1-\theta}\, E^\theta(u_n) = 0\comma
\end{align*}
which contradicts~\eqref{eq:l:RecurrenceCBI:3} and concludes the assertion.
\end{proof}

\section{Measure-preserving diffeomorphisms}\label{app:diffeo}
Let~$(M,g)$ be a smooth  connected, orientable  Riemannian manifold with Riemannian volume measure~$\vol_g$. 
We collect here some auxiliary results about the group~$\Diff^+_0(g)$ of  all compactly non-identical, orientation-preserving, $\vol_g$-preserving diffeomorphisms.

Firstly, let us recall that~$\Diff^+_0(g)$ is the (infinite-dimensional) Lie group corresponding to the Lie algebra of $\div_g$-free vector fields on~$M$ with the Lie derivative as its Lie bracket.
Let us further recall some virtually well-known results about the natural action of~$\Diff^+_0(g)$ on~$M$.

The following may be easily inferred from the arguments in~\cite[\S3]{Boo69}.

\begin{lemma}[Extension lemma]\label{l:ExtensionLemma}
Let~$(M,g)$  be in addition open or boundaryless, and~$K\subset M$ be any contractible compact subset.
Then, every smooth vector field on~$K$ has a  compactly supported,  $\div_g$-free extension to the whole of~$M$.
\end{lemma}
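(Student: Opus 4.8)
The plan is to translate the extension problem into the language of differential forms, where the constraint "$\div_g$-free" becomes "closed" and is therefore automatically preserved when one cuts off a \emph{primitive}. First I would pin down the hypothesis: a ``smooth vector field on $K$'' must here be read as a vector field defined on an open neighbourhood of $K$, and — since any $\div_g$-free extension agreeing with it on $K$ forces the two to have the same divergence on $\interior K$ — for the lemma to hold the field must itself be $\div_g$-free near $K$; this is the relevant hypothesis, and is the one used in \cite[\S3]{Boo69}. Write $n\eqdef\dim M$ and assume $n\ge 2$ (for $n=1$ the statement degenerates and is not needed). Since $(M,g)$ is oriented, contraction with the Riemannian volume form gives a $\rmC^\infty(M)$-linear bundle isomorphism $w\mapsto\iota_w\vol_g$ from vector fields to $(n-1)$-forms, and Cartan's formula gives $\diff(\iota_w\vol_g)=(\div_g w)\,\vol_g$; in particular a vector field is $\div_g$-free precisely when the associated $(n-1)$-form is closed.

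Starting from the given field $w$, defined and $\div_g$-free on an open neighbourhood $U_0$ of $K$, set $\omega\eqdef\iota_w\vol_g\in\Omega^{n-1}(U_0)$, a closed $(n-1)$-form. The first real step is to produce an open set $U$ with $K\subset U\subseteq U_0$, with $\overline U$ compact and contained in the interior of $M$ (this is where the hypothesis ``$M$ open or boundaryless'' is used, to keep everything away from $\partial M$), and with $H^{n-1}_{\mathrm{dR}}(U)=0$ — for instance with $U$ contractible. This is the step that genuinely uses that $K$ is \emph{contractible} and compact: when $K$ is a cell one may take $U$ a tubular neighbourhood, and in general one invokes the continuity of \v{C}ech (equivalently de Rham) cohomology along the directed family of neighbourhoods of the compact set $K$ to kill the class $[\omega]$ after shrinking $U$ — this is essentially the content of \cite[\S3]{Boo69}. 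Granting it, the Poincar\'e lemma yields $\alpha\in\Omega^{n-2}(U)$ with $\diff\alpha=\omega$ on $U$.

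The rest is routine. Choose an open $V$ with $K\subset V\subset\overline V\subset U$ and a cutoff $\chi\in\rmC^\infty_c(U)$ with $\chi\equiv 1$ on $V$, and set $\tilde\alpha\eqdef\chi\alpha\in\Omega^{n-2}_c(M)$ (extended by $0$). Then $\tilde\omega\eqdef\diff\tilde\alpha\in\Omega^{n-1}_c(M)$ is compactly supported, closed, and equals $\diff\alpha=\omega$ on $V$. Finally let $\tilde w$ be the unique vector field with $\iota_{\tilde w}\vol_g=\tilde\omega$: since $w\mapsto\iota_w\vol_g$ is a $\rmC^\infty$-linear isomorphism, $\tilde w$ is smooth with $\supp\tilde w=\supp\tilde\omega$ compact; since $\tilde\omega$ is closed, $\div_g\tilde w=0$ everywhere; and on $V\supseteq K$ we have $\iota_{\tilde w}\vol_g=\tilde\omega=\omega=\iota_w\vol_g$, hence $\tilde w=w$ there. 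Thus $\tilde w$ is the required compactly supported $\div_g$-free extension.

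The main obstacle is the middle step — exhibiting a neighbourhood $U\supseteq K$ with vanishing $(n-1)$-st de Rham cohomology on which $w$ is still defined; everything downstream (the translation via $\iota_\bullet\vol_g$, the Poincar\'e lemma, the cutoff of the primitive) is bookkeeping. One must also watch the degenerate cases: $n=1$ (excluded) and manifolds with boundary, where one keeps $\overline U\subset M\setminus\partial M$ so the output lies in the Lie algebra of $\Diff^+_0(g)$. An alternative, avoiding forms, is to extend $w$ arbitrarily to $w_0\in\mfX^\infty_c(M)$, note $\int_M(\div_g w_0)\,\vol_g=0$, and solve $\div_g v=\div_g w_0$ for some $v\in\mfX^\infty_c(M)$ vanishing near $K$ (via a bounded right inverse of the divergence on a ball containing $\supp w_0$), so that $w_0-v$ works; but the form-theoretic route is cleaner because there $\div_g$-freeness is simply $\diff^2=0$.
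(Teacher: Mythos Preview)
Your argument is correct and is precisely the standard form-theoretic approach (translate ``$\div_g$-free'' into ``closed $(n{-}1)$-form'', find a primitive on a neighbourhood of $K$ via the vanishing of $\check H^{n-1}(K)$, cut off the primitive, differentiate). The paper itself does not give a proof at all --- it merely says the lemma ``may be easily inferred from the arguments in~\cite[\S3]{Boo69}'' --- and Boothby's argument there is essentially the one you wrote out; you have in fact supplied more detail than the paper, including the correct identification of the implicit hypothesis that the given field be $\div_g$-free near $K$ (which is satisfied in the paper's only application, where $w$ is a constant translation field on a convex $K$).
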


As an immediate consequence, we see that the Lie algebra of $\div_g$-free vector fields is infinite-dimensional, thus so is~$\Diff^+_0(g)$.

\begin{proposition}[Transitivity,~{\cite[Thm.~A, \S3, p.~98]{Boo69}}]\label{p:Transitive}
$\Diff^+_0(g)$ acts $k$-transitively on~$M$ for every~$k\in\N_1$. In particular, it acts transitively on~$M$.
\end{proposition}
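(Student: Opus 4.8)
The plan is to deduce $k$-transitivity from the possibility of realising an arbitrary smooth motion of a $k$-point configuration by a compactly supported, $\vol_g$-preserving ambient isotopy, the $\div_g$-free vector fields generating the isotopy being produced by the Extension Lemma (Lemma~\ref{l:ExtensionLemma}). For $k=1$ this works in every dimension and already yields the ``in particular'' clause; for $k\geq 2$ it requires $\dim M\geq 2$, i.e.\ that the ordered configuration space $F_k(M)\eqdef \set{(x_1,\dots,x_k)\in M^k : x_i\neq x_j\text{ whenever }i\neq j}$ be path-connected, which is the setting of~\cite{Boo69} (in dimension one an orientation-preserving diffeomorphism cannot alter the cyclic order of the points).

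First I would recall that, for $\dim M\geq 2$, the space $F_k(M)$ is path-connected: two $k$-tuples of distinct points can be joined by moving the points one at a time along smooth paths, which is possible because removing finitely many points from a connected manifold of dimension $\geq 2$ leaves it connected, followed by a small generic perturbation keeping all $k$ points distinct at all times. Thus, given $\mathbf p=(p_1,\dots,p_k)$ and $\mathbf q=(q_1,\dots,q_k)$ in $F_k(M)$, choose a smooth path $t\mapsto \gamma(t)=(\gamma_1(t),\dots,\gamma_k(t))\in F_k(M)$ with $\gamma(0)=\mathbf p$ and $\gamma(1)=\mathbf q$.

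The heart of the argument is to construct a smooth time-dependent, compactly supported, $\div_g$-free vector field $(X_t)_{t\in[0,1]}$ on $M$ with $X_t\bigl(\gamma_i(t)\bigr)=\dot\gamma_i(t)$ for all $i$ and $t$. Its flow $\Phi_{0\to 1}$ then maps $p_i$ to $q_i$, preserves $\vol_g$ (each $X_t$ being $\div_g$-free), is compactly non-identical (by compactness of $[0,1]$ the supports of the $X_t$ lie in a fixed compact set), and is orientation-preserving (being isotopic to $\mathrm{id}$ through $\Phi_{0\to s}$), so $\Phi_{0\to 1}\in\Diff^+_0(g)$ does the job. To build $X_t$, partition $[0,1]$ so finely that on each subinterval the curves $\gamma_1,\dots,\gamma_k$ remain inside $k$ pairwise disjoint contractible coordinate balls $K_1,\dots,K_k$; on each $K_i$ write down a smooth $\div_g$-free field realising the motion $t\mapsto\gamma_i(t)$ there (in the chart $\vol_g=\rho\,dx$ and $Y\mapsto\rho^{-1}Y$ sends Euclidean divergence-free fields to $\div_g$-free ones, so there is ample freedom to prescribe the value $\dot\gamma_i(t)$ at the moving point while remaining divergence-free, with smooth dependence on $t$); extend it to a compactly supported $\div_g$-free field on $M$ by Lemma~\ref{l:ExtensionLemma}; sum over the disjoint $K_i$'s; and finally compose the finitely many time-$1$ flows obtained on the successive subintervals.

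The main obstacle is the bookkeeping around Lemma~\ref{l:ExtensionLemma}: one needs the $\div_g$-free extension of a field defined near $K_i$ to be supportable inside a neighbourhood of $K_i$ small enough to stay disjoint from the other $K_j$'s, so that the $k$ local fields can be added without interference; this is exactly where the constraint that the divergence being corrected integrate to zero enters, and it is harmless here because $M$ is connected and the local fields are already divergence-free. A secondary point is that all of these choices must be made smoothly in $t$, which is routine but needs care across the partition points. (If one prefers, the whole statement may simply be quoted from~\cite[Thm.~A, \S3]{Boo69}; the dimension-one anomaly for $k\geq 2$ is implicit in the reference.)
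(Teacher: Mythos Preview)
The paper does not give a proof of this proposition at all: it is stated with a bare citation to~\cite[Thm.~A, \S3, p.~98]{Boo69} and nothing more. Your proposal is therefore not to be compared against a proof in the paper but rather against the cited reference; what you have sketched is essentially the standard argument (configuration-space connectivity plus local divergence-free isotopies glued via the Extension Lemma), and you correctly flag the dimension-one exception for $k\geq 2$. Your own final parenthetical already acknowledges that one may simply quote the result, which is exactly what the paper does.
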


\subsection{Actions on measures}
In the following, let~$B_1\subset \R^d$ be the open unit ball equipped with the standard Euclidean metric~$g_e$.

\begin{lemma}\label{l:Ball}
Let~$\nu$ be any finite measure on~$B_1$ invariant for the \eqref{eq:ActionShifts}-action of~$\Diff^+_0( g_e  )$.
Then~$\nu\propto \Leb{d}\restr{B_1}$. In particular,~$\nu$ has no singular continuous part.
\end{lemma}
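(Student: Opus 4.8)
\textbf{Proof plan for Lemma~\ref{l:Ball}.}
The plan is to exploit the $k$-transitivity of $\Diff^+_0(g_e)$ on $B_1$ (Proposition~\ref{p:Transitive}) together with the fact that, by the Extension Lemma~\ref{l:ExtensionLemma}, the group is large enough to move around small balls freely. First I would reduce the statement to the claim that $\nu$ is \emph{absolutely continuous} with respect to $\Leb{d}\restr{B_1}$ with constant density; once $\nu\propto\Leb{d}\restr{B_1}$ is established, the absence of a singular continuous part is automatic. The key geometric input is the following: for any two concentric (or even disjoint) open Euclidean balls $B(x,r), B(y,r)\ssubset B_1$ of the same radius, there is an element $\psi\in\Diff^+_0(g_e)$ with $\psi_\pfwd\bigl(\Leb{d}\restr{B(x,r)}\bigr)=\Leb{d}\restr{B(y,r)}$ and $\psi$ equal to the identity outside a slightly larger ball — such a $\psi$ can be built by flowing along a compactly supported $\div_{g_e}$-free vector field that rigidly translates a neighbourhood of $B(x,r)$ onto a neighbourhood of $B(y,r)$, using Lemma~\ref{l:ExtensionLemma} to extend the translating field from a contractible compact tube to all of $\R^d$.

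Next I would use invariance of $\nu$ to transfer mass between regions. Concretely: fix a small radius $r>0$ and consider the function $x\mapsto \nu\bigl(B(x,r)\bigr)$ on the set of admissible centres. If $\psi\in\Diff^+_0(g_e)$ carries $B(x,r)$ onto $B(y,r)$ (and is the identity elsewhere, so it is volume-preserving), then $\psi_\pfwd\nu=\nu$ forces $\nu\bigl(B(y,r)\bigr)=\nu\bigl(\psi^{-1}B(y,r)\bigr)=\nu\bigl(B(x,r)\bigr)$. Since $\Diff^+_0(g_e)$ acts transitively on $B_1$ and one can chain such moves, $\nu\bigl(B(x,r)\bigr)$ is independent of the centre $x$ (as long as $B(x,r)\ssubset B_1$), say equal to $c(r)$. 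A covering/Vitali argument then pins down $\nu$ on a cofinal family of sets: the Lebesgue measure of such balls is $\omega_d r^d$ independent of $x$ too, so on the algebra generated by small balls $\nu$ and $\Leb{d}\restr{B_1}$ are proportional with ratio $c(r)/(\omega_d r^d)$; letting $r\downarrow 0$ and using the differentiation theorem (Besicovitch/Lebesgue) shows this ratio is a constant $\kappa\ge 0$ and $\nu=\kappa\,\Leb{d}\restr{B_1}$. A cleaner variant: show directly that for every $\psi\in\Diff^+_0(g_e)$ and every Borel $A$ with $\psi=\mathrm{id}$ off a compact set, $\nu(A)$ depends only on $\Leb{d}(A)$ among sets related by such $\psi$'s; combined with transitivity this gives that $\nu$ is invariant under a group acting with dense orbits on balls, hence translation-type homogeneous, hence a multiple of Lebesgue measure by the classical Haar-type uniqueness.

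The main obstacle I anticipate is making precise the claim that the volume-preserving, compactly-non-identical diffeomorphism group is rich enough to realise \emph{arbitrary} measure-preserving rearrangements of small balls inside $B_1$ — in particular verifying that the translating vector field in the construction above is genuinely $\div_{g_e}$-free and compactly supported inside $B_1$, and that chaining finitely many such moves connects any two admissible ball positions. This is exactly where Lemma~\ref{l:ExtensionLemma} (contractible compact subsets admit compactly supported divergence-free extensions) and Proposition~\ref{p:Transitive} ($k$-transitivity) do the work; one must be slightly careful near $\partial B_1$, but since we only ever use balls $B(x,r)\ssubset B_1$ with closure inside $B_1$, the supports stay compactly contained in $B_1$ and the boundary causes no trouble. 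Once the rearrangement claim is in hand, the passage from ``$\nu$ gives equal mass to all congruent small balls compactly contained in $B_1$'' to ``$\nu\propto\Leb{d}\restr{B_1}$'' is the standard Lebesgue differentiation argument, and the final sentence of the lemma is immediate since a measure absolutely continuous with respect to $\Leb{d}$ has, a fortiori, no singular continuous part.
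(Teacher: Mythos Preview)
Your proposal is correct and follows essentially the same approach as the paper: use the Extension Lemma to build compactly supported $\div_{g_e}$-free fields whose time-$1$ flow translates one congruent set to another inside $B_1$, deduce that $\nu$ assigns equal mass to all such congruent sets, and conclude $\nu\propto\Leb{d}$. The only difference is that the paper works with semi-closed cubes rather than balls---cubes decompose into congruent sub-cubes, giving $\nu(qQ)=q^d\nu(Q)$ for rational $q$ and hence $\nu(Q)=c\,\Leb{d}(Q)$ directly on a generating family, which avoids your appeal to the Lebesgue--Besicovitch differentiation theorem; but the core idea is identical.
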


\begin{proof}
Let~$Q_1,Q_2\subset B_1$ be arbitrary closed cubes  with equal volume, define~$K$ as the closed convex hull of~$Q_1\cup Q_2$, and let~$w$ be the vector field on~$K$ defining the  translation  of~$Q_1$ to~$Q_2$.
By Lemma~\ref{l:ExtensionLemma} applied to~$K$ and~$w$, there exists a  compactly supported  $\div_{g_e}$-free vector field~$w'$ on~$M$ extending~$w$ on~$K$.
Then, the flow of~$w'$ at time~$1$ is a  compactly non-identical  orientation-preserving, $\Leb{d}$-preserving diffeomorphism on~$M$ mapping~$Q_1$ to~$Q_2$.
In other words,~$\Diff^+_0( g_e  )$ acts transitively on all closed cubes in~$B_1$, hence, by continuity, on  the family $\mathcal{C}$ of all semi-closed cubes~$Q$ of the form~$Q\eqdef x+q\, [0,1)^d$ with $x\in \R^d$, $q \in \Q^+$ and such that $\overline{Q} \subset B_1$.

Since~$\nu$ is invariant, by a standard decomposition argument, for every rational~$q\in\Q^+$ and every  $Q \in \mathcal{C}$ with~$qQ\in \mathcal{C}$  we have~$\nu (q Q)= q^d  \nu Q  $.
Now,  let $Q_0 \in \mathcal{C}$ be fixed, and set~$c\eqdef \nu Q_0/\Leb{d}Q_0$.
For every  $Q_1 \in \mathcal{C}$  there exists~$q\in \Q^+$ and~$x\in\R^d$ so that~$Q_1=x+q Q_0$.
For such~$q$, by invariance of~$\nu$ and of~$\Leb{d}$ (also under rescaling),
\[
\nu Q_1 = \nu (qQ_0) = q^d \nu Q_0 = q^d c \Leb{d} Q_0 = c \Leb{d}(q Q_0) = c \Leb{d} Q_1 \fstop
\]
Since  $\mathcal{C}$ is  a $\pi$-system generating the Borel $\sigma$-algebra of~$B_1$, it follows by a standard monotone class argument that~$\nu=c\Leb{d}$ as Borel measures on~$B_1$, which concludes the proof.
\end{proof}

\begin{proposition}\label{p:InvariantVolG}
Let~~$\nu$ be any finite Borel measure on~$M$ invariant for the \eqref{eq:ActionShifts}-action of~$\Diff^+_0(g)$.
Then,~$\nu\propto\vol_g$.
\end{proposition}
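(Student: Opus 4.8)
The plan is to localise the problem to Euclidean balls, where Lemma~\ref{l:Ball} applies, and then to patch the resulting local constants together using the connectedness of $M$. The only non-bookkeeping ingredient will be the existence of volume-normalising charts, i.e.\ an application of Moser's theorem on prescribed volume forms.

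First I would fix a point $p\in M$ and produce a diffeomorphism $\Phi\colon B_1\to U$ of the open unit ball onto an open neighbourhood $U$ of $p$ in $M$ such that
$\Phi_\pfwd\tparen{\Leb{d}\restr{B_1}}=\lambda\,\vol_g\restr U$ for some constant $\lambda>0$; such coordinates exist around every point of a Riemannian manifold (take an arbitrary chart around $p$, whose push-forward of $\vol_g$ has a smooth positive density, and flatten that density on a small ball by Moser's theorem, then rescale the ball to $B_1$). Since $M$ is orientable we may take $\Phi$ orientation-compatible. The point of this construction is that $\Phi$ conjugates the two relevant diffeomorphism groups: if $\psi\in\Diff^+_0(g_e)$ is a compactly non-identical, orientation-preserving, $\Leb{d}$-preserving diffeomorphism of $B_1$, then $\widetilde\psi\eqdef\Phi\circ\psi\circ\Phi^{-1}$, extended by the identity on $M\setminus U$, is a smooth diffeomorphism of $M$ (it is the identity outside the compact set $\Phi(\supp\psi)\subset U$), it is orientation-preserving (conjugation preserves orientation), and it preserves $\vol_g$, since
\[
\widetilde\psi_\pfwd\tparen{\vol_g\restr U}
=\Phi_\pfwd\psi_\pfwd(\Phi^{-1})_\pfwd\tparen{\vol_g\restr U}
=\lambda^{-1}\Phi_\pfwd\psi_\pfwd\tparen{\Leb{d}\restr{B_1}}
=\lambda^{-1}\Phi_\pfwd\tparen{\Leb{d}\restr{B_1}}
=\vol_g\restr U\fstop
\]
Hence $\widetilde\psi\in\Diff^+_0(g)$.

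Next I would use the invariance of $\nu$. Set $\nu_U\eqdef\nu\restr U$ (a finite Borel measure) and $\mu\eqdef(\Phi^{-1})_\pfwd\nu_U$, a finite Borel measure on $B_1$. For $\psi$ as above, $\widetilde\psi$ is the identity on $M\setminus U$ and restricts to a bijection $U\to U$, so $\widetilde\psi_\pfwd\nu=\nu$ forces $\widetilde\psi_\pfwd\nu_U=\nu_U$; applying $(\Phi^{-1})_\pfwd$ to the identity $\Phi_\pfwd\psi_\pfwd\mu=\widetilde\psi_\pfwd\nu_U=\nu_U=\Phi_\pfwd\mu$ gives $\psi_\pfwd\mu=\mu$. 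Thus $\mu$ is a finite Borel measure on $B_1$ invariant for the \eqref{eq:ActionShifts}-action of $\Diff^+_0(g_e)$, so by Lemma~\ref{l:Ball} there is a constant $c_U\ge 0$ with $\mu=c_U\,\Leb{d}\restr{B_1}$, whence $\nu_U=\Phi_\pfwd\mu=c_U\lambda\,\vol_g\restr U$. Therefore $\nu$ agrees with a non-negative constant multiple of $\vol_g$ on a neighbourhood of every point of $M$.

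Finally I would globalise. Cover $M$ by such neighbourhoods $\set{U_\alpha}_\alpha$ with $\nu\restr{U_\alpha}=c_\alpha\vol_g\restr{U_\alpha}$. On a nonempty overlap $U_\alpha\cap U_\beta$, which is open and hence of strictly positive $\vol_g$-measure, we get $c_\alpha=c_\beta$; since $M$ is connected, all the $c_\alpha$ equal a single constant $c\ge 0$. As $M$ is second countable, extract a countable subcover, decompose an arbitrary Borel subset of $M$ into countably many pieces each lying in one chart, and conclude $\nu=c\,\vol_g$, i.e.\ $\nu\propto\vol_g$. The main obstacle is really just the first step — producing the volume-normalising chart $\Phi$ — and this is exactly the content of Moser's theorem; the remainder is routine manipulation of push-forwards and a connectedness argument.
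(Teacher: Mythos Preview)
Your proof is correct and takes a genuinely different route from the paper's. The paper argues via the Lebesgue decomposition $\nu=\nu_{\pa}+\nu_{ac}+\nu_{sc}$: it first uses the transitivity of $\Diff^+_0(g)$ on $M$ (Proposition~\ref{p:Transitive}) to eliminate $\nu_{\pa}$, then shows that $\nu_{ac}$ and $\nu_{sc}$ are separately invariant, uses transitivity again to force the density of $\nu_{ac}$ to be constant, and invokes Lemma~\ref{l:Ball} only at the end to kill $\nu_{sc}$ on one small ball. Your approach sidesteps the decomposition entirely: by constructing a volume-normalising chart via Moser's theorem you conjugate $\Diff^+_0(g_e)$ into $\Diff^+_0(g)$ explicitly, which lets you apply Lemma~\ref{l:Ball} to the \emph{full} pulled-back measure, so that $\nu$ is immediately a constant multiple of $\vol_g$ on each chart; connectedness then globalises the constant. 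Your argument is more explicit about the chart-conjugation step (which the paper's Claim~2 treats rather briskly) and does not need transitivity at all; the cost is the external appeal to Moser's theorem, whereas the paper leans on its in-house transitivity result.
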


\begin{proof}
Let~$\nu$ be invariant, with Lebesgue decomposition~$\nu=\nu_{\pa} + \nu_{ac}+\nu_{sc}$.
Since~$\nu$ is a finite invariant measure, and since~$\Diff^+_0(g)$ acts transitively on~$M$ by Proposition~\ref{p:Transitive}, a simple argument shows that~$\nu_{\pa}=0$.

\paragraph{Claim~1: $\nu_{ac}$ and~$\nu_{sc}$ are both invariant}
By Lebesgue decomposition, there exist Borel subsets~$M_{ac},M_{sc}\subset M$ with
\[
M_{ac}\cup M_{sc}=M\comma \qquad M_{ac}\cap M_{sc}=\emp\comma \qquad \vol_gM_{sc}=0 \comma \qquad \nu_{sc} M_{ac}=0 \fstop
\]
Let~$\nu_{ac}=\rho\vol_g$ and fix an arbitrary~$\psi\in\Diff^+_0(g)$.
Since~$\vol_g M_{sc}=0$, we have
\begin{equation}\label{eq:p:FullIdentification:1}
\nu_{ac} M_{sc}=0 \qquad \text{and} \qquad \psi_\pfwd \nu_{ac} M_{sc} = \int_{M_{sc}} \rho\circ\psi^{-1} \diff\psi_\pfwd\vol_g = \int_{M_{sc}} \rho\circ \psi^{-1}\diff\vol_g=0 \fstop
\end{equation}
Furthermore, since~$\nu$ is invariant, for every Borel~$A\subset M$,
\[
\psi_\pfwd\nu_{ac}A+\psi_\pfwd \nu_{sc} A = \psi_\pfwd \nu A = \nu A = \nu_{ac} A + \nu_{sc} A\comma
\]
whence
\begin{equation}\label{eq:p:FullIdentification:2}
\psi_\pfwd\nu_{ac}A - \nu_{ac} A =  \nu_{sc} A - \psi_\pfwd \nu_{sc} A\fstop
\end{equation}

Combining~\eqref{eq:p:FullIdentification:1} and~\eqref{eq:p:FullIdentification:2}, we conclude that
\begin{equation}\label{eq:p:FullIdentification:3}
\psi_\pfwd \nu_{sc} A= \nu_{sc} A\comma \qquad A \quad \text{Borel}\comma A\subset M_{sc}\fstop
\end{equation}
Choosing~$A=M_{sc}$ in~\eqref{eq:p:FullIdentification:3}, we further have
\begin{align}
\nonumber
\psi_\pfwd \nu_{sc} M_{ac}=&\ \psi_\pfwd \nu_{sc} (M\setminus M_{sc}) = \psi_\pfwd \nu_{sc} M -\psi_\pfwd \nu_{sc} M_{sc} = \nu_{sc}M - \psi_\pfwd \nu_{sc} M_{sc}
\\
\label{eq:p:FullIdentification:4}
=&\ \nu_{sc}M_{sc} - \psi_\pfwd \nu_{sc} M_{sc} =0
\fstop
\end{align}
Since~$M_{ac}$ and~$M_{sc}$ form a (disjoint) partition of~$M$, combining~\eqref{eq:p:FullIdentification:3} with~\eqref{eq:p:FullIdentification:4} shows that~$\nu_{sc}$ is invariant.

Then, for every~$\psi\in \Diff^+_0(g)$,
\[
\nu_{ac} + \nu_{sc} = \nu = \psi_\pfwd\nu = \psi_\pfwd \nu_{ac} +\psi_\pfwd \nu_{sc} = \psi_\pfwd\nu_{ac} + \nu_{sc} \comma
\]
and cancelling~$\nu_{sc}$ shows that~$\nu_{ac}$ too is invariant.
This concludes the proof of the claim.

\medskip

\paragraph{Claim 2:~$\nu_{sc}=0$}
Again since~$\Diff^+_0(g)$ acts transitively on~$M$ by Proposition~\ref{p:Transitive}, and since each element of~$\Diff^+_0(g)$ is an open map, it is not difficult to show that either~$\nu_{sc}=0$, or~$\supp \nu_{sc}=M$.
Thus, it suffices to show that the restriction of~$\nu_{sc}$ to some small $\mssd_g$-ball in~$M$ is identically vanishing.
Note that the Lebesgue decomposition of~$\nu$ is preserved by push-forward via local charts.
Thus, without loss of generality up to push-forward by a chart and rescaling, the assertion is equivalent to the same assertion for the standard Euclidean unit ball, which is shown in Lemma~\ref{l:Ball}.

\paragraph{Claim 3:~$\nu_{ac}\propto \vol_g$}
For~$\nu_{ac}=\rho \vol_g$ we have 
\[
\rho \vol_g= \nu_{ac} =\psi_\pfwd\nu_{ac} = (\rho\circ \psi^{-1}) \psi_\pfwd \vol_g = (\rho\circ\psi^{-1}) \vol_g 
\comma \qquad \psi\in \Diff^+_0(g) \fstop
\]
Again since~$\Diff^+_0(g)$ acts transitively on~$M$, we conclude that~$\rho$ is $\vol_g$-a.e.\ constant, thus~$\nu_{ac}\propto \vol_g$.
This concludes the proof since~$\nu_{sc}=\nu_{\pa}=0$ by the previous claims.
\end{proof}

\medskip
\paragraph{\em\bfseries Data availability statement}
Data sharing not applicable to this article as no datasets were generated or analysed in the current study.

\bibliographystyle{abbrvurl}
\bibliography{2025-02-16-SobolevHK}
\end{document}